\DeclareSymbolFont{timesoperators}{T1}{ptm}{m}{n}
\renewcommand{\operator@font}{\mathgroup\symtimesoperators}
\colorlet{symbols}{blue!30!black!50}
\colorlet{testcolor}{green!60!black}
\definecolor{purple}{rgb}{0.55,0.05,0.8}
\newtheorem{example}[lemma]{Example}
\let\oldskull\skull
\def\skull{\mathord{\oldskull}}
\def\sol{{\mathop{\mathrm{sol}}}}
\def\lsol{{\mathop{\mathrm{lsol}}}}
\def\pr{\textcolor{purple}}
\def\expan{\mcb{H}} 
\def\initial{{\mathop{\mathrm{init}}}}
\def\restr{\mathbin{\upharpoonright}}
\def\Tran{\mathrm{Tran}}
\DeclareMathAlphabet{\mathbbm}{U}{bbm}{m}{n}
\DeclareFontFamily{U}{BOONDOX-calo}{\skewchar\font=45 }
\DeclareFontShape{U}{BOONDOX-calo}{m}{n}{
  <-> s*[1.05] BOONDOX-r-calo}{}
\DeclareFontShape{U}{BOONDOX-calo}{b}{n}{
  <-> s*[1.05] BOONDOX-b-calo}{}
\DeclareMathAlphabet{\mcb}{U}{BOONDOX-calo}{m}{n}
\SetMathAlphabet{\mcb}{bold}{U}{BOONDOX-calo}{b}{n}
\setlist{noitemsep,topsep=4pt}
\newcommand*{\bigcdot}{}
\DeclareRobustCommand*{\bigcdot}{%
  \mathbin{\mathpalette\bigcdot@{}}%
}
\newcommand*{\bigcdot@scalefactor}{.5}
\newcommand*{\bigcdot@widthfactor}{1.15}
\newcommand*{\bigcdot@}[2]{%
  \sbox0{$#1\vcenter{}$}
  \sbox2{$#1\cdot\m@th$}%
  \hbox to \bigcdot@widthfactor\wd2{%
    \hfil
    \raise\ht0\hbox{%
      \scalebox{\bigcdot@scalefactor}{%
        \lower\ht0\hbox{$#1\bullet\m@th$}%
      }%
    }%
    \hfil
  }%
}
\def\symbol#1{\textcolor{symbols}{#1}}
\def\1{\mathbf{\symbol{1}}}
\def\bone{\mathbf{1}}
\def\higgsvec{\mathbf{V}}
\let\p\partial
\def\bPsi{\boldsymbol{\Psi}}
\def\tbPsi{\tilde{\boldsymbol{\Psi}}}
\def\id{\mathrm{id}}
\def\PPi{\boldsymbol{\Pi}}
\def\bbrho{\bar{\boldsymbol{\rho}}}
\def\brho{\boldsymbol{\rho}}
\def\Lab{\mathfrak{L}}
\def\can{\textnormal{\scriptsize can}}
\def\BPHZ{\textnormal{\tiny \textsc{bphz}}}
\def\YMH{\textnormal{\tiny \textsc{ymh}}}
\def\YM{\textnormal{\tiny \textsc{ym}}}
\def\Higgs{\textnormal{\tiny {Higgs}}}
\def\Gauge{\textnormal{\tiny {Gauge}}}
\def\GaugeH{\textnormal{\tiny {GaugeH}}}
\def\even{\textnormal{\tiny even}}
\def\A{\textnormal{A}}
\def\h{\textnormal{h}}
\def\lead{\textnormal{\tiny lead}}
\def\moll{\chi}
\def\Hol#1{#1\textnormal{-H{\"o}l}}
\def\hol{\textnormal{hol}}
\def\gr#1{#1\textnormal{-gr}}
\def\state{\mathcal{S}}
\def\init{\mathcal{I}}
\let\ymhflow\CF
\def\flow{\CE}
\def\Area{\textnormal{Area}}
\def\fancynorm#1{{\talloblong #1 \talloblong}}
\def\heatgr#1{{|\!|\!| #1 |\!|\!|}}
\newcommand{\mrd}{\mathrm{d}}
\newcommand{\roof}[1]{\lceil #1 \rceil}
\colorlet{darkblue}{blue!90!black}
\colorlet{darkgreen}{green!50!black}
\def\s{\mathfrak{s}}
\def\KK{\mathfrak{K}}
\def\reg{\mathrm{reg}}
\def\Func{\mathbf{F}}
\let\eref\eqref
\newcommand{\e}{\varepsilon}
\def\MM{\mathscr{M}}
\def\${|\!|\!|}
\def\E{\mathbf{E}}
\def\T{\mathbf{T}}
\def\P{\mathbf{P}}
\def\-{\mbox{-}}
\def\f{\mathfrak{f}}
\def\bXi{\boldsymbol{\Xi}}
\def\Poly{\mathop{\mathrm{Poly}}}
\newcommand{\mfu}{\mathfrak{u}}
\newcommand{\mfU}{\mathfrak{U}}
\newcommand{\mfF}{\mathfrak{F}}
\newcommand{\mfT}{\mathfrak{T}}
\newcommand{\mfn}{\mathfrak{n}}
\newcommand{\mfo}{\mathfrak{o}}
\newcommand{\mfL}{\mathfrak{L}}
\newcommand{\mfa}{\mathfrak{a}}
\newcommand{\mfr}{\mathfrak{r}}
\newcommand{\mft}{\mathfrak{t}}
\newcommand{\mfm}{\mathfrak{m}}
\newcommand{\mfl}{\mathfrak{l}}
\newcommand{\mfz}{\mathfrak{z}}
\newcommand{\mfh}{\mathfrak{h}}
\newcommand{\mfq}{\mathfrak{q}}
\newcommand{\mfb}{\mathfrak{b}}
\newcommand{\mfG}{\mathfrak{G}}
\newcommand{\mfO}{\mathfrak{O}}
\newcommand{\mfg}{\mathfrak{g}}
\newcommand{\mfk}{\mathfrak{k}}
\newcommand{\mcH}{\mathcal{H}}
\newcommand{\mcA}{\mathcal{A}}
\newcommand{\mcR}{\mathcal{R}}
\newcommand{\mcC}{\mathcal{C}}
\newcommand{\mcB}{\mathcal{B}}
\newcommand{\mcJ}{\mathcal{J}}
\newcommand{\mcU}{\mathcal{U}}
\newcommand{\mcL}{\mathcal{L}}
\newcommand{\mcF}{\mathcal{F}}
\newcommand{\mcY}{\mathcal{Y}}
\newcommand{\mcD}{\mathcal{D}}
\newcommand{\mcX}{\mathcal{X}}
\def\symset{\mcb{s}}
\newcommand{\mbF}{\mathbf{F}}
\newcommand{\mbW}{\mathbf{W}}
\newcommand{\mbX}{\mathbf{X}}
\newcommand{\ad}{\mathrm{ad}}
\newcommand{\Ad}{\mathrm{Ad}}
\newcommand{\End}{\mathrm{End}}
\newcommand{\noise}{\mathrm{noise}}
\def\cS{\mathscr{S}}
\def\cD{\mathscr{D}}
\def\sig{\boldsymbol{\sigma}}
\let\tree\sigma
\def\emptyset{{\centernot\Circle}}
\def\act{\bigcdot}
\def\bUpsilon{\boldsymbol{\Upsilon}}
\def\bbUpsilon{\boldsymbol{\bar{\Upsilon}}}
\def\Cas{\mathord{\mathrm{Cas}}}
\def\Cov{\mathrm{Cov}}
\def\bCov{\mathbf{Cov}}
\numberwithin{equation}{section}
\def\dash{\leavevmode\unskip\kern0.18em--\penalty\exhyphenpenalty\kern0.18em}
\def\slash{\leavevmode\unskip\kern0.15em/\penalty\exhyphenpenalty\kern0.15em}
\def\hstar{\mathbin{\hat{*}}}
\let\emph\textit
\definecolor{pagebackground}{rgb}{1,1,1}
    \pgfmathsetlength{\pgf@xb}{\pgfkeysvalueof{/pgf/outer xsep}}%
    \pgfmathsetlength{\pgf@yb}{\pgfkeysvalueof{/pgf/outer ysep}}%
\colorlet{greennode}{green!50!black}
\colorlet{rednode}{red!50!black}
\colorlet{lbluenode}{blue!25}
\colorlet{dbluenode}{blue}
\colorlet{orangenode}{orange}
\definecolor{connection}{rgb}{0.7,0.1,0.1}
\tikzset{
root/.style={circle,fill=black!50,inner sep=0pt, minimum size=3mm},
        var/.style={circle,fill=black!10,draw=black,inner sep=0pt, minimum size=3mm},
        kernel/.style={semithick,shorten >=2pt,shorten <=2pt},
        kernel1/.style={thick},
        kernels/.style={snake=zigzag,shorten >=2pt,shorten <=2pt,segment amplitude=1pt,segment length=4pt,line before snake=2pt,line after snake=5pt,},
        rho/.style={densely dashed,semithick,shorten >=2pt,shorten <=2pt},
           testfcn/.style={dotted,semithick,shorten >=2pt,shorten <=2pt},
           tau/.style={circle,inner sep=1pt,draw=black,fill=white,text=black,thin},
        renorm/.style={shape=circle,fill=white,inner sep=1pt},
        labl/.style={shape=rectangle,fill=white,inner sep=1pt},
        xi/.style={very thin,circle,fill=lbluenode,draw=symbols,inner sep=0pt,minimum size=1.2mm},
	xis/.style={very thin,diamond,fill=lbluenode,draw=symbols,inner sep=0pt,minimum size=1.5mm},
        xigreen/.style={very thin,circle,fill=greennode,draw=symbols,inner sep=0pt,minimum size=1.2mm},
        xigreen1/.style={very thin,rectangle,fill=greennode,draw=symbols,inner sep=0pt,minimum size=1.2mm},
        xired/.style={very thin,circle,fill=rednode,draw=symbols,inner sep=0pt,minimum size=1.2mm},
        xilblue/.style={very thin,circle,fill=lbluenode,draw=symbols,inner sep=0pt,minimum size=1.2mm},
        xiorange/.style={very thin,circle,fill=orangenode,draw=symbols,inner sep=0pt,minimum size=1.2mm},
        xix/.style={crosscircle,fill=lbluenode,draw=symbols,inner sep=0pt,minimum size=1.2mm},
 %
xix-green-red/.style={circle, fill=greennode!70!white,draw=rednode,inner sep=0pt,minimum size=1.6mm,append after command={node [inner sep=0pt,minimum size=0.8mm,thick, draw = rednode, cross out]{}}},
xix-green-red1/.style={rectangle, fill=greennode!70!white,draw=rednode,inner sep=0pt,minimum size=1.5mm,append after command={node [inner sep=0pt,minimum size=1mm,thick, draw = rednode, cross out]{}}},
	xib/.style={very thin,circle,fill=lbluenode,draw=symbols,inner sep=0pt,minimum size=1.6mm},
	xisb/.style={very thin,diamond,fill=lbluenode,draw=symbols,inner sep=0pt,minimum size=1.9mm},
	xie/.style={very thin,circle,fill=greennode,draw=symbols,inner sep=0pt,minimum size=1.6mm},
	xid/.style={very thin,circle,fill=lbluenode,draw=symbols,inner sep=0pt,minimum size=1.6mm},
	xibx/.style={crosscircle,fill=lbluenode,draw=symbols,inner sep=0pt,minimum size=1.6mm},
	kernels2/.style={ultra thick,draw=symbols,segment length=12pt},
	not/.style={thin,regular polygon, regular polygon sides=3,draw=connection,fill=connection,inner sep=0pt,minimum size=1.2mm},
	notlblue/.style={thin,regular polygon, regular polygon sides=3,draw=lbluenode,fill=lbluenode,inner sep=0pt,minimum size=1.2mm},
	notorange/.style={thin,regular polygon, regular polygon sides=3,draw=orangenode,fill=orangenode,inner sep=0pt,minimum size=1.2mm},
	notgreen/.style={thin,regular polygon, regular polygon sides=3,draw=greennode,fill=greennode,inner sep=0pt,minimum size=1.2mm},
	>=stealth,
  }
\def\DeclareSymbol#1#2#3{%
	\expandafter\gdef\csname MH@symb@#1\endcsname{\tikzsetnextfilename{symbol#1}%
	\tikz[baseline=#2,scale=0.15,draw=symbols,line join=round]{#3}}%
	\expandafter\gdef\csname MH@symb@#1s\endcsname{\scalebox{0.75}{\tikzsetnextfilename{symbol#1}%
	\tikz[baseline=#2,scale=0.15,draw=symbols,line join=round]{#3}}}%
	\expandafter\gdef\csname MH@symb@#1ss\endcsname{\scalebox{0.65}{\tikzsetnextfilename{symbol#1}%
	\tikz[baseline=#2,scale=0.15,draw=symbols,line join=round]{#3}}}%
	}
\def\<#1>{\ifmmode\mathchoice{\csname MH@symb@#1\endcsname}{\csname MH@symb@#1\endcsname}{\csname MH@symb@#1s\endcsname}{\csname MH@symb@#1ss\endcsname}\else\csname MH@symb@#1\endcsname\fi}
\let\f\frac
\begin{document}

\title{Stochastic quantisation of Yang--Mills--Higgs in 3D}

\author{Ajay Chandra$^1$, Ilya Chevyrev$^2$, Martin Hairer$^{1,3}$, and Hao Shen$^4$}

\institute{Imperial College London, \email{\{achandra,m.hairer\}@imperial.ac.uk} \and University of Edinburgh, \email{ichevyrev@gmail.com} 
\and EPFL, \email{martin.hairer@epfl.ch}
\and  University of 
Wisconsin--Madison, \email{pkushenhao@gmail.com}}

\maketitle

\begin{abstract}
We define a state space and a Markov process
associated to the stochastic quantisation equation of Yang--Mills--Higgs (YMH) theories.
The state space $\state$ is a nonlinear metric space of distributions, elements of which can be used as initial conditions for the
(deterministic and stochastic) YMH flow
with good continuity properties.
Using gauge covariance of the deterministic YMH flow, we extend gauge equivalence $\sim$ to $\state$ and thus define a quotient space of ``gauge orbits'' $\mfO$.
We use the theory of regularity structures to prove local in time solutions to the renormalised stochastic YMH flow.
Moreover, by leveraging symmetry arguments in the small noise limit, we show that there is a 
unique choice of
renormalisation counterterms such that these solutions are gauge covariant in law.
This allows us to define a canonical Markov process on $\mfO$ (up to a potential finite time blow-up) associated to the stochastic YMH flow.  \\[.4em]
\noindent {\scriptsize \textit{Keywords:} Yang--Mills, stochastic quantisation, regularity structures, gauge invariance}\\
\noindent {\scriptsize\textit{MSC classification:} 60H15, 60L30, 81T13} 
\end{abstract}


\setcounter{tocdepth}{2}
\tableofcontents

\section{Introduction}\label{sec:intro}
The purpose of this paper is to construct and study the Langevin dynamic associated to the Euclidean Yang--Mills--Higgs (YMH) measure on the torus $\T^{d}$, for $d=3$.
Postponing precise definitions to Section~\ref{sec:notation},
the YMH measure should be the probability measure formally given by 
\begin{equ}\label{eq:YMH_measure}
	\mrd\mu_\YMH(A,\Phi)  \propto \exp\big[-
	S_{\YMH}(A,\Phi)\big]\, \mrd A\, \mrd \Phi\;,
\end{equ}
where $\mrd A$ (resp.\ $\mrd \Phi$) is a formal Lebesgue measure on the space of principal $G$-connections on a principal bundle $\CP\to\T^d$ (resp.\ associated vector bundle $\CV$), for $G$ a compact Lie group with Lie algebra $\mfg$.
The YMH action $S_{\YMH}(A,\Phi)$ is given by 
\begin{equ}\label{eq:YM_energy}
	S_{\YMH}(A,\Phi) \eqdef \int_{\T^{d}}
	\Big(|F_A(x)|^2 +  |\mrd_A \Phi (x)|^2 + m^2|\Phi (x)|^2 + \frac12 |\Phi(x)|^4\Big)\, \mrd x\;,
\end{equ}
where $F_A$ is the curvature $2$-form of $A$ and $\mrd_{A}$ denotes the covariant 
derivative defined by the connection $A$.
In particular, this includes the pure Yang--Mills (YM) action,
in which the Higgs field $\Phi$ is absent (i.e., the associated vector bundle has dimension $0$).

The YMH action is an important component of the Standard Model Lagrangian, and the mathematical study of the YMH measure on various manifolds has a long history.
Much work has been done to give a meaning to~\eqref{eq:YMH_measure} for the \textit{pure} YM theory in $d=2$, including $\R^2$ and compact surfaces.
These works are primarily based on an elegant integrability property of 2D YM; a sample list of contributions is~\cite{Migdal75,GKS89, Driver89, Fine91, Sengupta97, Levy03}.
In the Abelian case $G=U(1)$ on $\R^d$ the measure \eqref{eq:YMH_measure} is essentially a Gaussian measure (e.g. \cite{MR0838562}), and \cite{Gross83} and \cite{Driver87} proved  convergence of the $U(1)$ lattice Yang--Mills theory to the continuum theory in $d=3$ and $d=4$ respectively.
Still in the Abelian case, one can also make sense of the full YMH theory in $d=2$~\cite{BFS79,BFS80,BFS81}
and $d=3$~\cite{King86I,King86II}.
A form of ultraviolet stability in finite volume (i.e.\ on $\T^d$) was also shown for the pure YM theory in $d=4$ using a continuum regularisation in~\cite{MRS93}  and in $d=3,4$ using renormalisation group methods on the lattice in a series of works by Balaban culminating in~\cite{Balaban85IV,Balaban87,Balaban89}.
The above list of works is far from exhaustive
and we refer to the surveys~\cite{JW06, LevySengupta17, Chatterjee18}
for further references and history.
We mention that the construction of the pure YM measure in $d=3$ and of the non-Abelian YMH measure in $d=2$ (and $d=3$), even in finite volume, is open.

In this paper we will always assume that the bundles $\CP$ and $\CV$ are trivial.
Upon fixing a global section of $\CP$, we can therefore identify
a connection $A$ with a $\mfg$-valued $1$-form and a section of $\CV$ with a function $\Phi\colon\T^d\to\higgsvec$, where $\higgsvec$ is a real finite-dimensional inner product space carrying an orthogonal representation $\brho$ of $G$.
We will often drop $\brho$ from our notation and simply write $g\phi$ for $\brho(g)\phi$, and similarly $h\phi$ for $h\in\mfg$ will denote the derivative representation of $\mfg$ on $\higgsvec$.

Besides the non-existence of the Lebesgue measure on infinite-dimensional spaces
and the usual divergencies which arise from the non-quadratic terms in an action, a major problem when 
trying to give a meaning to \eqref{eq:YMH_measure} is the fact that it is (formally) invariant under the action 
of an infinite-dimensional group of transformation.
Indeed, 
assuming for now that all the objects are smooth,
a \textit{gauge transformation} $g $ is an element 
$g \in \mfG^\infty \eqdef \mcC^\infty(\T^d,G)$ which acts on 
$(A,\Phi)$ by
\begin{equ}[e:gauge-transformation]
	g\act A =   A^g \eqdef \Ad_g(A)  - (\mrd g)g^{-1} \;,
	\qquad
	\mbox{and}
	\qquad
	g\act \Phi =  \Phi^g \eqdef  g\Phi\;.
\end{equ}
Geometrically, gauge transformations are automorphisms of the principal bundle $\CP$, so
$(A^g,\Phi^g)$ can be interpreted as representing $(A,\Phi)$ in a new coordinate system i.e., using a different global section of $\CP$.
An important feature of gauge theory is that all coordinate systems should give rise to the same physical quantities.
In particular, one can verify that 
\begin{equ}
	S_{\YMH}(A^g, \Phi^g) = S_{\YMH}(A,\Phi)\;,
\end{equ}
which suggests that $\mu_\YMH$ should be invariant under the action of 
any gauge transformations $g$. Since the group of all gauge transformations is infinite-dimensional,
such a measure cannot exist as a \textit{bona fide} $\sigma$-additive probability measure.

A potential way to make rigorous sense of~\eqref{eq:YMH_measure} is through stochastic quantisation, which was introduced in the physics literature for gauge theories by Parisi--Wu~\cite{ParisiWu}.
In this approach, one aims to construct $\mu_{\YMH}$ as the invariant measure of the Langevin dynamic associated to the action $S_{\YMH}$.
Formally, this is given by 
\begin{equs}[e:SYMH_no_deturck]
	\partial_t A =& -\mrd_A^* F_A - \mathbf{B}(\mrd_{A}\Phi\otimes\Phi) + \xi\;,\\
	\partial_t\Phi  =& - \mrd_{ A}^* \mrd_A \Phi - m^{2} \Phi - |\Phi|^2 \Phi + \zeta\;,
\end{equs}
where $\mrd_{A}^{\ast}$ denotes the adjoint of $\mrd_{A}$, and   $\mathbf{B}\colon \higgsvec \otimes \higgsvec \rightarrow \mathfrak{g}$  is the unique  $\R$-linear form such that
\begin{equ}[e:def-B]
	\scal{\mathbf{B}(u\otimes v),h}_{\mfg} =  
	\scal{u,hv}_\higgsvec
\end{equ}
for all $u,v \in \higgsvec$ and $h \in \mfg$.
The noises $\xi$ and $\zeta$ are independent space-time white noises
with respect to our metrics, i.e.\ for $\xi = \xi_i \mrd x_i$,   
\begin{equs}[e:explicit_covar_noise]
	\E[\xi_{i}(t,x) \otimes \xi_{j}(s,y)] &= \delta_{ij} \delta(t-s)\delta(x-y)\, \Cas\;,
	\\
	\E[\zeta(t,x) \otimes \zeta(s,y)] &= \delta(t-s)\delta(x-y)\,  \Cov \;.
\end{equs}
Here $\Cas\in \mfg\otimes_s \mfg$ is the quadratic Casimir element 
and $ \Cov \in \higgsvec\otimes_s \higgsvec$ is identity map if we identify 
$\higgsvec \simeq \higgsvec^*$ using the metric on $\higgsvec$.
We normalise the Casimir similarly and note that these covariances satisfy the invariant properties
$(\Ad_g \otimes \Ad_g) \Cas =\Cas$  and 
$(g \otimes g) \Cov =\Cov$ for every $g\in G$.
The dynamic \eqref{e:SYMH_no_deturck} is  formally
invariant {\it in law} under any {\it time-independent} gauge transformation $g$ by
\eqref{e:gauge-transformation}, namely, if $(A,\Phi)$ a solution to \eqref{e:SYMH_no_deturck} then, 
for any fixed time-independent gauge transformation $g$ we have that $(A^{g},\Phi^{g})$ solves \eqref{e:SYMH_no_deturck} with $(\xi,\zeta)$ replaced by the rotated noise $(\Ad_{g} \xi,g \zeta)$ which is equal in law. 
In particular, the bilinear form 
$\mathbf{B}$ satisfies the covariance property $\Ad_g \mathbf{B}(u\otimes v) = \mathbf{B}(gu\otimes gv)$.

In spatial coordinates,
\eqref{e:SYMH_no_deturck} reads
\begin{equs}[eq:langevin_coordinate]
	\partial_t A_i
	&= \Delta A_i  - \partial_{ji}^2 A_j + [A_j,2\partial_j A_i - \partial_i A_j + [A_j,A_i]]\\
	{}& \qquad 
	+ [\partial_j A_j, A_i] -\mathbf{B} ( (\partial_{i} \Phi + A_{i}\Phi)\otimes \Phi) + \xi_i \;,\\
	\partial_t\Phi  &= 
	\Delta \Phi + (\partial_{j}A_{j})\Phi + 2 A_{j} \partial_{j}\Phi + A_{j}^{2}\Phi -  m^2 \Phi - |\Phi|^2 \Phi + \zeta\;,
\end{equs}
for $i\in [d] = \{1,\ldots, d\}$ with the summation over $j$  implicit.
A major problem with this equation is the lack of parabolicity in the equations for $A$, which is a reflection of the invariance of the action $S_\YMH$ under the gauge group.
As discussed in \cite{CCHS2d},
this problem can be circumvented by taking any sufficiently regular time-dependent $0$-form $\omega\colon[0,T]\to \CC^\infty(\T^d,\mfg)$
and consider instead of \eqref{e:SYMH_no_deturck}
the equation
\begin{equs}[eq:SYMH_deturck]
	\partial_t A =& -\mrd_A^* F_A 
	+\mrd_A\omega
	-\mathbf{B}(\mrd_{A}\Phi\otimes\Phi)+ \xi\;,\\
	\partial_t\Phi  =& - \mrd_{ A}^* \mrd_A \Phi - \omega\Phi - m^{2} \Phi - |\Phi|^2 \Phi + \zeta\;.
\end{equs}
Then, at least formally, solutions to \eqref{eq:SYMH_deturck} are gauge equivalent in law to those of \eqref{e:SYMH_no_deturck}
under a time-dependent gauge transformation. 
To get a parabolic flow for $A$ in \eqref{eq:SYMH_deturck} a convenient choice of $\omega$ is $\omega = -\mrd^* A=\partial_jA_j$   which yields the 
so-called DeTurck--Zwanziger term~\cite{zwanziger81,deturck83,Donaldson} in the first equation of \eqref{eq:SYMH_deturck}. 
After making this choice our main focus will be the stochastic Yang--Mills--Higgs (SYMH) 
 flow which in coordinates reads
\begin{equs}
	\partial_t A_i &= \Delta A_i + \big[A_j,2\partial_j A_i - \partial_i A_j + [A_j,A_i]\big] 
	- \mathbf{B}((\partial_{i} \Phi + A_{i}\Phi) \otimes \Phi)+ \xi_i \;,\\
	\partial_t\Phi  &= 
	\Delta \Phi + 2 A_{j} \partial_{j}\Phi + A_{j}^{2}\Phi -  m^2 \Phi - |\Phi|^2 \Phi + \zeta\;. \label{eq:SYMH}
\end{equs}

%
The system \eqref{eq:SYMH} is formally gauge covariant in the following sense: if $(A,\Phi)$ is a solution to \eqref{eq:SYMH} then, given a time evolving gauge transformation $g$ solving $g^{-1} \partial_{t}g = -\mrd_A^* (g^{-1}  \mrd g)$,
$(A^{g},\Phi^{g})$ solves the same equation with $(\xi,\zeta)$ replaced by $(\Ad_{g}\xi,g\zeta)$ which, 
since $g$ is adapted, is equal in law to $(\xi,\zeta)$.

\subsection{Outline of the paper}

In Sections~\ref{sec:notation} and~\ref{subsec:main_theorems} we introduce important notation and summarise the main theorems.

In Section~\ref{sec:state_space} we construct a nonlinear metric space $\state$ of distributions which serves as the state space for SYMH on $\T^3$.
By using the regularising and gauge covariant properties of the deterministic YMH flow (with DeTurck--Zwanziger term), we define regularised gauge-invariant observables on $\state$ which have good continuity properties and show that gauge equivalence $\sim$ extends to $\state$ in a canonical way.
Furthermore sufficiently regular gauge transformations $g\in\mcC^\rho(\T^3,G)$ act on $\state$ and preserve $\sim$.
The main idea in the definition of $\state$ is to specify how the heat semigroup $t\mapsto \CP_t X$ behaves 
on elements $X\in\state$;
the space $\state$ is nonlinear because we force control on quadratic terms of the form $\CP_t X \cdot \nabla \CP_t X$ arising from the most singular terms in~\eqref{eq:SYMH}.
In Section~\ref{sec:SHE} we show that the stochastic heat equation defines a continuous stochastic process with values in $\state$.

In Section~\ref{sec:sym_renorm_reg_struct} we build on the ``basis free'' regularity structures framework in~\cite{CCHS2d} to show that certain symmetries are preserved by BPHZ renormalisation.
This is then used in Section~\ref{sec:renorm-A} to show that SYMH can be renormalised with mass counterterms to admit local solutions through mollifier approximations, which defines a continuous stochastic process with values in $\state$ (possibly with blow-up).

In Section~\ref{sec:gauge} we show that there exists a choice for the mass renormalisation so that SYMH is genuinely (not only formally) gauge covariant in law.
That is, the pushforwards to $\mfO = \state/{\sim}$ of the laws of two solutions $(A(t),\Phi(t))$ and $(\bar A(t),\bar \Phi(t))$ to SYMH for $t>0$ with initial conditions $(A(0),\Phi(0))=(a,\phi)$ and $(\bar A(0),\bar \Phi(0))=(a^g,\phi^g)$ respectively are equal (modulo subtleties involving restarting the equation).
This is done through an argument based on preservation of gauge symmetry in the small noise limit,
which is inspired by~\cite{StringManifold}.
In Section~\ref{sec:Markov} we show that there exists a Markov process on the space of ``gauge orbits'' $\mfO$, which is unique in a suitable sense and onto which the solution to SYMH from Section~\ref{sec:gauge} projects.

Finally in Appendix~\ref{app:Singular modelled distributions}
we collect some results concerning modelled distributions
with singular behaviour at $t=0$, which allows us 
to construct solutions to our SPDEs starting from suitable singular initial conditions. In Appendix~\ref{app:YMH flow without DeTurck term}
we collect some results on the deterministic YMH flow
which are useful for defining gauge equivalence $\sim$ on $\state$ regularised observables.
In Appendix~\ref{app:evolving_rough_g},
we extend the well-posedness result for $(A,\Phi)$ in
Section~\ref{sec:renorm-A} to a coupled
$(A,\Phi,g)$ system
which is used in Sections~\ref{sec:gauge} and~\ref{sec:Markov}.
In Appendix~\ref{app:injectivity}
we prove that our solution maps are injective in law as functions
of renormalisation constants, which is useful in showing 
gauge covariance in Section~\ref{sec:gauge}.

\subsection{Related works and open problems}

Let us mention several other works related to this one.
The idea to use the regularising properties of the deterministic YMH flow to define regularised gauge-invariant observables for singular gauge fields
was advocated in~\cite{CG13,GrossSmoothen} and more recently in~\cite{Sourav_flow,Sourav_state};
see also~\cite{BHST87_regularization,NN06,Luscher10,Fodor12, MR3133916} for related ideas in the physics literature.
The results in~\cite{Sourav_flow,Sourav_state} are closely related to those obtained in 
Sections~\ref{sec:state_space} and~\ref{sec:SHE}; see Remarks~\ref{rem:Sourav_exponents},~\ref{rem:Sourav_state},
and~\ref{rem:Sourav_flow} for a brief comparison.

In further work, Gross established a solution theory for the YM flow with initial conditions in $H^{1/2}$ in~\cite{Gross2016FiniteAction,GrossSingular}. This space is natural since it is scaling critical for the 
YM flow in three dimensions. (This is in the sense that it has small-scales scaling exponent 
$\alpha = -1$, so that $\Delta u$,
$u\cdot Du$ and $u^3$ all have the same scaling exponent $\alpha -2 = 2\alpha-1 = 3\alpha = -3$.)
Note however that the solution to the \textit{stochastic} YM flow does not belong to that 
space. Instead, it belongs 
to $\CC^{-\f12-\kappa}$ for any $\kappa > 0$ (but not to $\CC^{-\f12}$) which, although it is a 
space of rather badly behaved distributions, has scaling
exponent $-\f12-\kappa$ which is subcritical for the YM flow. Unfortunately, these spaces are 
sufficiently badly behaved so that there do exist vector-valued $X \in \CC^{-\f12}$ for which the map 
$t \mapsto \CP_t X \otimes \nabla \CP_t X$ fails to be locally integrable 
at $t=0$ (this does not happen when $u \in H^{\f12}$).
In fact, there exists no Banach space of distributions supporting the 3D Gaussian free field to which the YM flow extends continuously~\cite{C23}.

There are also number of recent results, from a probability theory perspective, on lattice gauge theories in $d=3$ and $d=4$ and their scaling limits.
Some works in this direction include~\cite{Chatterjee16} on the YM free energy,~\cite{Chatterjee20Wilson,Cao20,FLV20,FLV21} on the analysis of Wilson loops with discrete gauge group and~\cite{CC22, GS21} with $G=U(1)$,
and~\cite{Chatterjee21Confine} on the confinement problem.
See also~\cite{SSZZ22,SZZ23,SZZ24} for work on lattice gauge theory using Langevin dynamics.

The idea of stochastic quantisation was introduced in the physics literature by Nelson~\cite{Nelson66} and Parisi--Wu~\cite{ParisiWu}.
With the development of regularity structures~\cite{Hairer14} and paracontrolled distributions~\cite{GIP15} to solve singular SPDEs (see also~\cite{Kupiainen2016,Duch21,CF24}),
this idea has been  applied to the rigorous construction and study of scalar quantum fields, especially the $\Phi^4_3$ theory~\cite{MW17Phi43,MoinatWeber20,HS21,GH21,AK20}. See also~\cite{BG20,BG21,GM24} for another stochastic analytic approach.

We finally mention some earlier works of the authors.
In~\cite{CCHS2d} we studied the Langevin dynamic on for the pure YM measure on $\T^2$ (though the results therein carry over without fundamental problems to the YMH theory).
Because the equations in $d=2$ are much less singular than in $d=3$, we were able to obtain stronger results with different methods.
Namely,
\begin{itemize}
\item The state space constructed in~\cite{CCHS2d} was a linear Banach space and came with an action of a gauge group which determined completely the ``gauge equivalence'' relation $\sim$; 
we do not know here if there exists a gauge group acting on $\state$ which determines $\sim$ (we suspect it does not with the current definitions).

\item Gauge covariance of the SYM process in~\cite{CCHS2d} was shown through a direct computation of the renormalisation constants coming from just three stochastic objects.
Here such a computation is infeasible by hand due to the presence of a large number of logarithmic 
divergences, which is why we rely on more subtle symmetry arguments.

\item The SYMH~\eqref{eq:SYMH} for $d=2$ (and its deterministic version for any $d$)
admits arbitrary initial conditions in the H{\"o}lder--Besov space $\CC^\eta$ only for $\eta>-\frac12$, and the state space considered in~\cite{CCHS2d} embeds into such a H{\"o}lder--Besov space.
Here $\state$ embeds at best into $\CC^{\eta}$ for $\eta<-\frac12$, which causes significant complications in the 
short-time analysis.
This problem becomes even worse for the multiplicative noise versions of~\eqref{eq:SYMH}
considered in Section~\ref{sec:gauge} to the extent that we require a substantial change to the
fixed point problem when restarting the equation, in particular solving for a suitable `remainder',
in order to obtain maximal solutions.
\end{itemize}

We also mention that the first work to study the stochastic quantisation of a gauge theory using regularity structures is~\cite{Shen21} (using a lattice regularisation of $\T^2$ with $G=U(1)$ and a Higgs field),
and the first work to give a representation of the YM measure on $\T^2$ as a random variable taking values in a (linear) state space of distributional connections for which certain Wilson loops 
are defined pathwise is~\cite{Chevyrev19YM}.
The state space in~\cite{Chevyrev19YM} served as the basis for that in~\cite{CCHS2d},
and part of the definition of $\state$ in the present work is inspired by these works (see Sections~\ref{subsec:final_state_space} and~\ref{subsec:est_gauge_trans}).

We close with some open problems.
One of the main questions is whether the Markov process on $\mfO$ constructed in this paper admits an invariant measure, which should then be unique due to the strong 
Feller~\cite{HM16} and full support~\cite{HS19} properties of the solution.
We do conjecture that such an invariant measure exists, which then yields a reasonable 
candidate for the YMH measure on $\T^3$.
Unfortunately, we do not even know how to show the weaker statement that the 
Markov process survives for all times.
For the pure YM measure and Langevin dynamic on $\T^2$, this question was recently answered in~\cite{CS23}.

Another question is whether one can strengthen or change the construction of $(\state,\sim)$ 
in such a way that there exists a topological group $\bar \mfG$ containing 
$\mfG^\infty = \CC^\infty(\T^3,G)$ as a dense subgroup, acting on $\state$, having
closed orbits, and such that $\sim$ is given by its orbits.
For $d=2$, it was shown in~\cite{CCHS2d} that (the closure of smooth functions in) $\CC^{\alpha}(\T^2,G)$, for some $\alpha\in(\frac12,1)$ is such a group;
this is both aesthetically pleasing and a tool to simplify a number of arguments.
The lack of a nice gauge group in $d=3$, for example, leads to difficulties in studying 
the topology of $\mfO$
(we only know it is separable and completely Hausdorff here as opposed to Polish in~\cite{CCHS2d}),
and complicates the construction of the Markov process on $\mfO$.

We finally mention that it would be of interest to extend the results of this paper to $\R^3$, but this problem is entirely open.
In fact, this problem is open even in the 2D setting of~\cite{CCHS2d,CS23}. Regarding global-in-time solutions, see the very recent progress \cite{BringmannCaoHiggs} for Abelian-Higgs on $\T^2$.

\subsection{Notation and conventions}
\label{sec:notation}

We collect some conventions and notations used throughout the article.
Part of this notation follows~\cite{CCHS2d}.
We equip the torus $\T^3=\R^3/\Z^3$ with the geodesic distance denoted by $|x-y|$, and $\R\times \T^3 $ the parabolic distance $|(t,x)-(s,y)|=\sqrt{|t-s|}+|x-y|$.
We will tacitly identify $\T^3$ with the set $[-\frac12,\frac12)^3$.

Throughout the article we fix a compact Lie group $G$, with associated Lie algebra $\mfg$,
endowed with an adjoint invariant metric denoted by  $\langle \;,\;\rangle_\mfg$.
We will often assume $G$ to be embedded into a space of matrices.
Let $\higgsvec$
be a real vector space  of finite dimension $\dim \higgsvec \ge 0$
endowed with 
a scalar product $\langle \;,\;\rangle_\higgsvec$ and an orthogonal representation $\brho$ of $G$.
As mentioned before, we often drop $\brho$ from our notation and simply write $g\phi\equiv \brho(g)\phi$ and $h\phi\equiv\brho(h)\phi$ for $g\in G$, $h\in\mfg$, and $\phi\in\higgsvec$, where $\brho(h)\phi$ is understood as the derivative representation.

\begin{remark}
Even if $\higgsvec$ is a complex vector space endowed with a Hermitian inner product 
$\langle \;,\;\rangle_\higgsvec$, and the representation of $G$
on $\higgsvec$ is unitary, we instead view $\higgsvec$ as a real vector space
with $\dim_\R (\higgsvec) = 2\dim_\C (\higgsvec)$ endowed with 
the Euclidean inner product given by $\Re \langle \;,\;\rangle_\higgsvec$ and view the representation as an orthogonal 
representation on  $\higgsvec$. In fact, the definition of the YMH action \eqref{eq:YM_energy},
the deterministic part of the SPDE \eqref{e:SYMH_no_deturck} which is its gradient flow, 
and the definition of white noise in \eqref{e:explicit_covar_noise} all only depend on $\Re \langle \;,\;\rangle_\higgsvec$ and not on $\Im \langle \;,\;\rangle_\higgsvec$.
\end{remark}

A ``gauge field'' $A$ is an equivariant connection on the trivial\footnote{We emphasise again that 
the case of non-trivial bundles is of course 
very interesting but will not be treated here.} principal bundle
$\CP\simeq\T^3\times G$ viewed as a $1$-form $A=(A_1,A_2,A_3)\colon\T^3\to \mfg^3$,
which determines a covariant derivative $\mrd_{A}$ on the associated bundle
$\CV=\CP\otimes_{\brho}\higgsvec \simeq \T^3\times \higgsvec$ by
$\mrd_A\Phi=\mrd\Phi+A\Phi= (\partial_i\Phi+A_i\Phi)\,\mrd x_i$
(see e.g.\ \cite[Sec.~2.1.1]{DK90}).
A ``Higgs field'' is a section of $\CV$, viewed simply as a function $\Phi\colon\T^3\to\higgsvec$.
The curvature of a gauge field $A$ is the $\mfg$-valued $2$-form $(F_A)_{ij}=\partial_iA_j -\partial_jA_i+[A_i,A_j]$.

A space-time {\it mollifier} $\moll$ is a compactly supported smooth function on  $\R\times\R^3$  such that $\int\moll = 1$ and 
for every $i\in \{1,2,3\}$, $\moll$ is invariant under $x_i \mapsto -x_i$ along with $(x_1,x_2,x_3) \mapsto (x_{\sigma(1)},x_{\sigma(2)},x_{\sigma(3)})$ for any permutation $\sigma$ on $\{1,2,3\}$.
$\moll$ is called {\it non-anticipative}
if it is supported in the set $\{(t,x): t \geq 0\}$.

Assume that we are given a finite-dimensional normed space $(E, |\cdot|)$ 
and a metric space $(F,d)$. 
For $\alpha\in(0,1]$,  we define as usual
\begin{equ}
\mcC^\alpha(F,E)
=\{ f\colon F\to E \;:\;
|f|_{\mcC^{\alpha}} 
\eqdef |f|_{\infty} + |f|_{\Hol\alpha} < \infty \}\;,
\end{equ}
where $|f|_{\Hol\alpha} \eqdef \sup_{x\neq y\in F} \frac{|f(x)-f(y)|}{d(x,y)^\alpha} < \infty
$ denotes the H{\"o}lder seminorm and $|f|_\infty = \sup_{x\in F} |f(x)|$ denotes the sup norm.

For $\alpha>1$, we define 
$\mcC^\alpha(\T^3,E)$
(resp.\ $\mcC^\alpha(\R\times\T^3,E)$)  to be the space of $k\eqdef\roof\alpha-1$ times differentiable functions (resp.\ $k_0$-times differentiable in $t$ and $k_1$-times differentiable in $x$ for all $2k_0+k_1\le k$), with $(\alpha-k)$-H{\"o}lder continuous $k$-th derivatives.

For $\alpha<0$, let $r \eqdef -\roof{\alpha-1}$ and we define
\begin{equ}
\mcC^\alpha(\T^3,E)
= \{ \xi \in \mcD'(\T^3,E)\;:\;
|\xi|_{\mcC^\alpha} \eqdef \sup_{\lambda\in(0,1]}\sup_{\psi \in \mcB^r} \sup_{x \in \T^3} \lambda^{-\alpha} |\scal{\xi,\psi^\lambda_x}| < \infty\}\;,
\end{equ}
where $\mcB^r$ denotes the set of all smooth functions $\psi \in \mcC^\infty(\T^3)$ with $|\psi|_{\mcC^r} \leq 1$ and support in $\{z\in \T^3\,:\,|z|\leq \frac14\}$
and where $\psi^\lambda_x(y) \eqdef \lambda^{-3}\psi(\lambda^{-1}(y-x))$.

For $\alpha=0$, we define $\mcC^0$ to simply be $L^\infty(\T^3,E)$,
and use $\mcC(\T^3,E)$ to denote the space of continuous functions,
both spaces being equipped with the sup norm.
For any $\alpha\in\R$, we denote by $\mcC^{0,\alpha}$ the closure  of smooth functions in $\mcC^\alpha$.
We drop $E$ from the notation
whenever it is clear from the context. 

If $\CB$ is a space of $\mfg$-valued distributions equipped with a (semi-)norm $|\cdot|$,
then $\Omega\CB$ denotes the space of $\mfg$-valued distributional $1$-forms $A= A_i\, \mrd x_i$ where $A_i\in\CB$, equipped with the corresponding (semi-)norm $|A|\eqdef \sum_{i=1}^3 |A_i|$.
When $\CB$ is of the form $\mcC(\T^3,\mfg)$, $\mcC^{\alpha}(\T^3,\mfg)$, etc., we write simply $\Omega\mcC$, $\Omega\mcC^{\alpha}$, etc. for $\Omega\CB$.
For $\rho\in[0,\infty]$,
we write $\mfG^\rho \eqdef \CC^\rho(\T^3,G)$\label{mfG page ref}
and let $\mfG^{0,\rho}$\label{mfG^0 page ref}
denote the closure of smooth functions in $\mfG^\rho$,
where we understand $G$ as embedded into a space of matrices.
We often call $\mfG^\rho$ a \textit{gauge group} and its elements \textit{gauge transformations}.

In the remainder of the article, unless otherwise stated, we denote $E=\mfg^3\oplus \higgsvec$. \label{def of E}
This implies $\CD'(\T^3,E)\simeq \Omega\CD'\oplus \CD'(\T^3,\higgsvec)$ and, for $X\in\CD'(\T^3,E)$, we write $X=(A,\Phi)$ for the corresponding decomposition.
In particular, the configuration space of smooth (connection-Higgs) pairs $(A,\Phi)$ is $\CC^\infty(\T^3,E)$.

We similarly combine noises into a single variable $\xi = ((\xi_i)_{i=1}^{3},\zeta)$,
which in view of \eqref{e:explicit_covar_noise} has covariance 
$\E[\xi(t,x) \otimes \xi(s,y)] = \delta(t-s)\delta(x-y)  \bCov$, with 
\begin{equ}[e:def-bCov]
\bCov = \Cas^{\oplus 3} \oplus \Cov\;.
\end{equ}
Note that $E$ carries a representation of $G$ given for $g\in G$ by
\begin{equ}\label{eq:group_action_E}
E\ni x = ((x_i)_{i=1}^3,y)\mapsto g x \eqdef ((\Ad_g x_i)_{i=1}^3,gy) \in E\;.
\end{equ}
If $g$ and $x$ are functions (or distributions) with values in $G$ and $E$ respectively, we let $g\xi$ denote the above operation pointwise whenever it makes sense.

For $X,Y\in\CC^\infty(\T^3,E)$ we write $X\sim Y$ if there exists $g\in\mfG^{\infty}$
such that $X^g=Y$, where $X^g=(A^g,\Phi^g)$ is defined in~\eqref{e:gauge-transformation}.
Recall that $X\mapsto X^g$ defines a left group action of $\mfG^\infty$ on $\CC^\infty(\T^3,E)$.
We denote by $\mfO^\infty \eqdef \CC^\infty(\T^3,E)/{\sim}$ the corresponding quotient space.  \label{mfO infinity}
(The action $X\mapsto X^g = g\act X$ of $\mfG^\infty$ on $\CC^\infty(\T^3,E)$ should not be confused with the action of $\mfG^\infty$ on $\CC^\infty(\T^3,E)$ given by~\eqref{eq:group_action_E}.)


We will often use the following streamlined notation for writing the nonlinear terms of our equations. 
For any $X\in E$ and $i \in \{1,2,3\} $, we write $X |_{\mfg_i} \in \mfg$
and $X |_{\higgsvec} \in \higgsvec$
to be the projections of $X$ onto the $i$-th component of $X$ which is a copy of $\mfg$
and the last component of $X$
which is a copy of $\higgsvec$ respectively.
Given any $X\in E$ and $\p X = (\p_1 X,\p_2 X,\p_3 X)\in E^3$,
where $\p_i X$ is just a generic element in $E$ (which does not necessarily
mean a derivative in general), and similarly $\bar X\in E$ and $\p \bar X$,
 we introduce
the shorthand notation
$X\p \bar X, X^3 \in E$ defined as 
  \begin{equs}[2]
X\p \bar X |_{\mfg_i}
&=
\big[A_j,2\partial_j \bar A_i - \partial_i \bar A_j \big] 
- \mathbf{B}(\partial_{i} \bar \Phi \otimes \Phi) \;, 
&\,
X\p \bar X |_{\higgsvec}
&=2 A_{j} \partial_{j}\bar\Phi \;,
\\ \notag
X^3 |_{\mfg_i}
&= 
\big[A_j,   [A_j,A_i]\big] - \mathbf{B}(( A_{i}\Phi) \otimes \Phi)  \;,
\qquad&\,
X^3 |_{\higgsvec}
&=
- |\Phi|^2 \Phi	\label{e:XdX-X3}
 \end{equs}
 where, as above, the summation over $j$ is implicit and we have written
\begin{equ}[e:def-A_i and Phi]
 A_i \eqdef X|_{\mfg_i} \;,\;
 \p_j A_i \eqdef \p_j X|_{\mfg_i} \;,\;
 \Phi \eqdef X|_{\higgsvec} \;,\;
 \p_j \Phi \eqdef \p_j X|_{\higgsvec}\;,
 \end{equ}
and $\bar A_i, \partial_j \bar A_i, \bar\Phi, \partial_j \bar \Phi$ are understood in the analogous way.

Recall the following notation from \cite[Sec.~1.5.1]{CCHS2d}.
Given a metric
space $F$, we extend it with a cemetery state $\skull$ by setting
$\hat F = F\sqcup\{\skull\}$ and postulating that the complement of every closed ball in $F$ is a neighbourhood of $\skull$ in $\hat F$.
We then recall the definition of the metric space $F^\sol$\label{ref:Fsol} from \cite[Sec.~1.5.1]{CCHS2d}, which should be thought of as the space of continuous trajectories with values in $\hat F$ which can 
blow up in finite time but cannot be ``reborn''. The purpose of the rather convoluted definition of
the metric of  $F^\sol$ is to guarantee that it is separable and complete (provided that $F$ is).

\subsection{Main theorems}
\label{subsec:main_theorems}

We first collect our results on the state space of our 3D stochastic YMH process,
with references to more precise statements in later sections. Here, we will use the notation 
$\CF_t$ for the DeTurck--YMH flow (the solution to \eqref{eq:SYMH} with $\xi = \zeta = 0$ and the $\Phi |\Phi|^2$ term dropped\footnote{Dropping the $\Phi |\Phi|^2$ term is done for purely technical convenience since this allows us 
to reuse results from \cite{HongTian2004} and has no effect on our statements \dash see Remark~\ref{rem:ymh_flow_deturck}.}) at time $t$. 
Standard parabolic PDE theory shows that this is well-posed for short times for all initial conditions
in $\CC^\nu(\T^3,E)$ as soon as $\nu > -\f12$. In other words, writing $\CO_t \subset \CC^\nu$
for the set of initial conditions admitting a solution up to time $t$, these are a decreasing family of open sets
with $\bigcup_{t \in (0,\eps]}\CO_t = \CC^\nu$. On the other hand, the projection $\tilde \CF_t$ of 
the DeTurck--YMH flow onto $\mfO^\infty$ is globally well-posed for all initial conditions in $\CC^\nu$. (This allows for
a time-dependent gauge transformation which doesn't change the projection of the flow to $\mfO^\infty$ but
can prevent it from blowing up, see Appendix~\ref{app:YMH flow without DeTurck term}, in particular
Corollary~\ref{cor:flow_continuous}.)
Recalling that $E=\mfg^3\oplus \higgsvec$, we can state our results regarding the state space 
$\CS$ (see Definition~\ref{def:state} below) as follows.

\begin{theorem}[State space]\label{thm:main_state_space}
For every $\eta\in (-\frac12-\kappa,-\frac12)$, where $\kappa>0$ is sufficiently small,
there exists a complete (nonlinear) metric space $(\state,\Sigma)$ of $E$-valued distributions 
satisfying the following properties.
\begin{enumerate}[label=(\roman*)]
\item There is a canonical embedding $\CS \hookrightarrow \CC^\eta(\T^3,E)$ and there exists $\bar\nu < 0$ such that 
$\CC^{\bar\nu}(\T^3,E) \hookrightarrow \state$ densely. Furthermore, $\CS$ is closed under scalar multiplication 
when viewed as a space of distributions.
See Lemmas~\ref{lem:heatgr_Besov_embed} and~\ref{lem:perturbation}\ref{pt:fancynorm_0}.

\item The deterministic DeTurck--YMH flow $X \mapsto \ymhflow_t(X)\in\CC^\infty(\T^3,E)$
extends continuously to the closure of $\CO_{t+s}$ in $\CS$ for every $s,t>0$. It follows that 
$t \mapsto \ymhflow_t(X)$ is well-posed for every $X \in \CS$ and every 
sufficiently small $t$ (depending on $X$) and 
the flow on gauge orbits $X\mapsto \tilde\ymhflow_t\in\mfO^\infty(X)$ extends continuously to
all of $\CS$ for every $t>0$. One furthermore has
\begin{equ}
\lim_{t\to0} \Sigma(\mcF_t(X),X)=0 \qquad \forall X \in\CS\;.
\end{equ}
See Proposition~\ref{prop:YM_flow_minus_heat} and Lemma~\ref{lem:Sigmas_converg}.

\item\label{pt:gauge_equiv_extend} Define the equivalence relation on $\state$ by $X\sim Y \Leftrightarrow\tilde\ymhflow(X) = \tilde\ymhflow(Y)$.
Then $\sim$ extends
the notion of gauge-equivalence defined for smooth functions.
Moreover, the quotient space $\mfO\eqdef \state/{\sim}$ 
is a separable completely Hausdorff space.
See Proposition~\ref{prop:Hausdorff}.   

\item\label{pt:rho_gauge} There exists $\rho \in (\frac12,1)$ and a continuous left group action $\mfG^{\rho}\times\state \ni (g,X)\mapsto X^g \in\state$
for which $X\sim X^g$
and which agrees with the action~\eqref{e:gauge-transformation} for smooth $g$ 
and $X$. See Proposition~\ref{prop:group_action}.

\item\label{pt:bound_g} There exist $\nu\in(0,\frac12)$ and $C,q>0$ such that, for all $g \in \mfG^ \rho$ and $X \in \CS$, one has
$|g|_{\CC^\nu} \leq C(1+\Sigma(X,0)+\Sigma(X^g,0))^q$.  See Theorem~\ref{thm:g_bound}.
\end{enumerate}
\end{theorem}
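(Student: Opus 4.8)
To prove Theorem~\ref{thm:main_state_space}\ref{pt:bound_g}, the plan is to leverage the regularising deterministic DeTurck--YMH flow $\ymhflow_t$ together with its gauge covariance. Since $G$ is compact and matrix-embedded, $|g|_\infty\le C_0$ for a universal $C_0$, so it suffices to bound the Hölder seminorm $|g|_{\Hol\nu}$ for some $\nu$ slightly below $\eta+1\in(\tfrac12-\kappa,\tfrac12)$; any such $\nu$ lies in $(0,\tfrac12)$. A direct argument from the identity $(\mrd g)g^{-1}=\Ad_gA-A^g$ alone does not close: bounding the product $\Ad_gA=gAg^{-1}$ for $A\in\CC^\eta$ with $\eta<-\tfrac12$ costs $|g|_{\Hol\nu}$ on the right-hand side, leaving a quadratic inequality $Y\lesssim 1+(|A|_{\CC^\eta}+|A^g|_{\CC^\eta})Y^2$ for $Y=|g|_{\CC^{\eta+1}}$. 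The flow is used precisely to break this circularity, the point being that the gauge transformation relating the two flows obeys \emph{parabolic} estimates driven only by the controlled connection, never seeing $|g|_{\CC^\rho}$.

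First I would fix $t_\ast\eqdef c(1+\Sigma(X,0)+\Sigma(X^g,0))^{-q_0}$, with $c,q_0>0$ chosen using Proposition~\ref{prop:YM_flow_minus_heat} and the quantitative short-time bounds underlying Theorem~\ref{thm:main_state_space}(ii) and the embedding $\CS\hookrightarrow\CC^\eta$, so that both $\ymhflow_t(X)=(\hat A_t,\hat\Phi_t)$ and $\ymhflow_t(X^g)=(\hat A_t',\hat\Phi_t')$ are well defined and smooth on $t\in(0,t_\ast]$, with $|\hat A_t|_{\CC^\theta}\le C(1+\Sigma(X,0))^{q_1}t^{(\eta-\theta)/2}$ for $\theta\in\{0,\nu\}$ and similarly for $\hat A_t'$ with $\Sigma(X^g,0)$. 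Since $X\sim X^g$ by Theorem~\ref{thm:main_state_space}\ref{pt:rho_gauge}, the projections to $\mfO^\infty$ coincide, so $\ymhflow_t(X^g)=\ymhflow_t(X)^{g_t}$ for a smooth-for-$t>0$ gauge transformation $g_t$; by the deterministic gauge-covariance results and the coupled-flow techniques of Appendices~\ref{app:YMH flow without DeTurck term} and~\ref{app:evolving_rough_g}, one may take $g_0=g$, $t\mapsto g_t$ continuous, and $g_t$ solving a parabolic gauge-flow equation $\partial_tg_t=\Delta g_t+\mcN(g_t,\mrd g_t,\hat A_t)$ whose nonlinearity $\mcN$ is algebraic and quadratic and, crucially, does not involve $\hat\Phi_t$ (gauge transformations act on the Higgs field without derivatives, so the $\Phi$-component is covariant ``for free'').

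Next I would derive two estimates on $g_t$. From $\hat A_t'=\Ad_{g_t}\hat A_t-(\mrd g_t)g_t^{-1}$ and $|g_t^{\pm1}|_\infty\le C_0$ one gets $|\mrd g_t|_{\CC^0}\le C_0(C_0^2|\hat A_t|_{\CC^0}+|\hat A_t'|_{\CC^0})$, hence $|g_t|_{\CC^1}\le C(1+\Sigma(X,0)+\Sigma(X^g,0))^{q_1}t^{\eta/2}$ on $(0,t_\ast]$. Feeding this and the $|\hat A_r|_{\CC^\theta}$-bounds into standard parabolic Schauder estimates for the gauge-flow equation on subintervals of $(0,t_\ast]$ — carefully tracking the $r^{\eta/2}$ blow-up of $\hat A_r$ and of $\mrd g_r$ as $r\to0$, and splitting the Duhamel term via time-Hölder regularity of the nonlinearity — yields $|g_r|_{\CC^{2+\nu}}\le C(1+\Sigma(X,0)+\Sigma(X^g,0))^{q_2}r^{(\eta-1-\nu)/2}$, so $|\partial_rg_r|_{\Hol\nu}\le C(1+\Sigma(X,0)+\Sigma(X^g,0))^{q_2}r^{(\eta-1-\nu)/2}$. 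The exponent $(\eta-1-\nu)/2$ exceeds $-1$ exactly when $\nu<\eta+1$, which we arranged, so integrating gives
\begin{equ}
	|g_r-g_s|_{\Hol\nu}\le C(1+\Sigma(X,0)+\Sigma(X^g,0))^{q_2}\,\big(r^{(\eta+1-\nu)/2}-s^{(\eta+1-\nu)/2}\big)\;,\qquad 0<s\le r\le t_\ast\;.
\end{equ}

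Finally I would let $s\to0$. As $g\in\mfG^\rho\subset\CC^\nu$ and the gauge flow is continuous in time, $g_s\to g$ (at least in $\CC^0$); together with the Cauchy bound above this upgrades to $g_s\to g$ in $\CC^\nu$ and $|g-g_{t_\ast}|_{\Hol\nu}\le C(1+\Sigma(X,0)+\Sigma(X^g,0))^{q_2}t_\ast^{(\eta+1-\nu)/2}\le C(1+\Sigma(X,0)+\Sigma(X^g,0))^{q_2}$ (using $t_\ast\le1$). Interpolating $|g_{t_\ast}|_\infty\le C_0$ against the $\CC^1$-bound at $t=t_\ast$ gives $|g_{t_\ast}|_{\Hol\nu}\le C|g_{t_\ast}|_\infty^{1-\nu}|g_{t_\ast}|_{\CC^1}^{\nu}\le C(1+\Sigma(X,0)+\Sigma(X^g,0))^{q_1\nu}t_\ast^{\eta\nu/2}=C(1+\Sigma(X,0)+\Sigma(X^g,0))^{q_1\nu+q_0|\eta|\nu/2}$; summing $|g|_\infty$, $|g-g_{t_\ast}|_{\Hol\nu}$ and $|g_{t_\ast}|_{\Hol\nu}$ and taking $q$ above all exponents that appeared completes the bound. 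The main obstacle I expect is the second estimate on $g_t$: running the parabolic Schauder machinery for the gauge-flow equation with a coefficient $\hat A_r$ (and solution gradient $\mrd g_r$) that blows up like $r^{\eta/2}$ as $r\to0$, while keeping the $r$-integral convergent and the dependence on $\Sigma(X,0),\Sigma(X^g,0)$ polynomial; this works only because of the margin $\eta\approx-\tfrac12$ (so $(\eta-1-\nu)/2>-1$ for $\nu<\eta+1$) and relies on the quantitative short-time inputs from Section~\ref{sec:state_space} and the evolving-rough-$g$ theory of Appendix~\ref{app:evolving_rough_g}.
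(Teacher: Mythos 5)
Your proposal only addresses part~(v) of the theorem; parts (i)--(iv) are separate results (embeddings, well-posedness of $\ymhflow$, backwards uniqueness, continuity of the group action) that a complete proof would also have to supply. I restrict my comments to (v).

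For (v) your route is genuinely different from the paper's. Both arguments pass to the evolving gauge transformation $g_t=\CH^X_t(g)$ relating $\ymhflow_t(X)$ and $\ymhflow_t(X^g)$ and exploit smoothing at positive times, but the mechanisms diverge. The paper controls $|g_t|_{\Hol\gamma}$ for $\gamma=\frac12+$ directly through the holonomy\slash Young-ODE estimate $|g_t|_{\Hol\gamma}\lesssim|\ymhflow_{A,t}(X)|_{\gr\gamma}+|\ymhflow_{A,t}(X^g)|_{\gr\gamma}$ (Lemma~\ref{lem:holonomy_estimates}), and Lemma~\ref{lem:heat_interpolate} shows these line-integral norms blow up only like $t^{0-}$ \dash this is exactly where the $\heatgr{\cdot}_{\alpha,\theta}$ half of $\Sigma$ enters. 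That near-absence of blow-up is what the transfer back to $t=0$ (Lemma~\ref{lem:init_cond_bound}\ref{pt:Hol_estimate_g}) requires, since its hypothesis $\gamma-\nu<\tfrac{2\kappa}{3}$ tolerates only very mild rates. You instead extract only $|\mrd g_t|_\infty\lesssim|\hat A_t|_\infty+|\hat A_t'|_\infty\lesssim t^{\eta/2}$ from the pointwise identity \dash a rate far too singular for the paper's transfer lemma \dash and compensate with a different transfer: a restarted parabolic bootstrap giving $|g_r|_{\CC^{2+\nu}}\lesssim r^{(\eta-1-\nu)/2}$, followed by time-integration of $\partial_r g_r$, which converges precisely for $\nu<\eta+1$. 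The exponent counting is consistent (the free-evolution part of the restart at $r/2$ gives exactly $r^{-(1+\nu)/2}\cdot r^{\eta/2}$, and the Duhamel contributions are strictly better because $\eta>-1$), so what your approach buys, if it closes, is a more elementary argument; what the paper's buys is a blow-up rate so mild that no bootstrap is needed.

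Two cautions before this can be accepted. First, your bound would depend only on $\Theta(X)+\Theta(X^g)$ and never on $\heatgr{A}_{\alpha,\theta}$, i.e.\ it would prove a strictly stronger statement than Theorem~\ref{thm:g_bound} and render the second half of $\Sigma$ superfluous for point~(v) \dash the very component the paper introduces to imply it (Remark~\ref{rem:two_parts}). That is not a contradiction, but it concentrates the entire burden on the step you yourself flag: the bootstrap $|g_u|_{\CC^{1+\nu'}}\lesssim u^{(\eta-\nu')/2}$ for some $\nu'\in(\nu,\eta+1)$ and then $|g_r|_{\CC^{2+\nu}}\lesssim r^{(\eta-1-\nu)/2}$ must be written out with all constants tracked; in particular you must rule out blow-up of $g_t$ on $(0,t_*]$ via the a priori $\CC^1$ bound rather than via the fixed-point existence time of Lemma~\ref{lem:init_cond_bound}\ref{pt:short_time_exist_g}, which degenerates with $|g|_{\CC^\nu}$ and would smuggle a dependence on $|g|_{\CC^\rho}$ into $t_*$. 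Second, the identity $\ymhflow_t(X^g)=\ymhflow_t(X)^{g_t}$ with $g_t\to g$ is established in the paper only for $g\in\mfG^{0,\rho}$ (by density from smooth data), so your write-up should either restrict to $\mfG^{0,\rho}$ as Theorem~\ref{thm:g_bound} does or justify the extension to all of $\mfG^\rho$.
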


\begin{remark}\label{rem:observables}
A consequence of Theorem~\ref{thm:main_state_space}\ref{pt:gauge_equiv_extend} is that classical gauge-invariant observables
 (Wilson loops, string observables, etc) have ``smoothened'' analogues defined on
 $\mfO$ obtained by precomposing the classical observable with $\tilde\ymhflow_t$ for some (typically small) $t>0$. 
These smoothened observables are sufficient to separate points in $\mfO$, see Section~\ref{subsec:regularised_Wilson}.
\end{remark}

\begin{remark}
The significance of point~\ref{pt:bound_g}
may seem unclear at this stage.
However, this estimate is crucial in the construction of the Markov process on $\mfO$ in Section~\ref{sec:Markov}.
\end{remark}

\begin{remark}\label{rem:two_parts}
The definition of the metric $\Sigma$ is given in two parts: $\Sigma(X,Y)=\Theta(X,Y)+\heatgr{X-Y}$.
The metric $\Theta$ is defined in Section~\ref{sec:Space of initial conditions}
and guarantees continuity of $\tilde\ymhflow_t$ with respect to the initial condition.
The norm $\heatgr{\cdot}$
is defined in Section~\ref{subsec:est_gauge_trans} and implies point~\ref{pt:bound_g}.
Both metrics come with several parameters, the final values of which are given at the beginning of Section~\ref{sec:renorm-A}.
\end{remark}

Fix i.i.d.\ $\mfg$-valued white noises $(\xi_i)_{i=1}^3$ on $\R\times\T^3$ and an independent $\higgsvec$-valued space-time white noise $\zeta$ on $\R \times \T^{3}$ and write
$\xi^\eps_i \eqdef \xi_i * \moll^\eps$ along with $\zeta^\eps \eqdef \zeta * \moll^{\eps}$.
Here $\moll$ is a mollifier as in Sec.~\ref{sec:notation}
and $\moll^{\eps}(t,x) =\eps^{-5} \moll(\eps^{-2}t,\eps^{-1}x)$.
For each $\eps\in(0,1]$ consider the system of SPDEs on $\R_+ \times \T^3$
with $i \in \{1,2,3\}$\footnote{One would have a term $- m^2 \Phi $ here in view of \eqref{eq:YM_energy}, but we absorb it into the term $C^\eps_{\Phi} \Phi$.} 
\begin{equs}
\partial_t A_i &= 
\Delta A_i  + [A_j,2\partial_j A_i - \partial_i A_j + [A_j,A_i]] 
\\
&\qquad\qquad -\mathbf{B}((\partial_{i} \Phi + A_{i}\Phi) \otimes \Phi)+ C^{\eps}_{\A} A_i +  \xi^\eps_i \;,
\\
\partial_t\Phi  &= 
\Delta \Phi  
+ 2 A_{j} \partial_{j}\Phi + A_{j}^{2}\Phi  - |\Phi|^2 \Phi + C^{\eps}_{\Phi} \Phi +  \zeta^{\eps} \;,\\
(A(0),& \Phi(0)) = (a,\phi) \in \state\;,
		\label{eq:SPDE_for_A}
\end{equs}
where the summation over $j$ is again implicit, and we fix some choice of $(C_{\A}^{\eps}, C_{\Phi}^{\eps}: \eps \in (0,1])$ with 
\[
C^{\eps}_{\A} \in L_{G}(\mfg,\mfg)\;,
\qquad
\mbox{and}
\qquad 
C_{\Phi}^{\eps}\in L_{G}(\higgsvec,\higgsvec)\;. 
\]   
Here $L_G(\higgsvec,\higgsvec)$ (resp.\   $L_G(\mfg,\mfg)$) is the space of 
all the linear operators from $\higgsvec$  (resp.\  $\mfg$) to itself which commute with the  action of $G$ (resp.\ adjoint action of $G$).
Recall also that $\mathbf{B}\colon \higgsvec \otimes \higgsvec \rightarrow \mathfrak{g}$  is the bilinear form 
determined by~\eqref{e:def-B}.  In view of the notation introduced in Section~\ref{sec:notation}, 
we  may also write \eqref{eq:SPDE_for_A} as 
\begin{equ}[e:simpleNotation]
\partial_t X = \Delta X + X \partial X + X^3 
+( (C_{\A}^{\eps})^{\oplus 3} \oplus C_{\Phi}^{\eps} ) X + \xi^\eps\;,
\end{equ}
where for any $ C \in L(\mfg,\mfg) $ we write $C^{\oplus 3}$ for $C\oplus C\oplus C\in L(\mfg^3,\mfg^3)$.

\begin{remark}\label{rem:V=g}
One particular example in our setting is that $\CV$ is the adjoint bundle, i.e.\ $\higgsvec=\mfg$, and $G$ acts on $\higgsvec$ by adjoint action. In this case, $\Phi$ is also $\mfg$-valued and
the bilinear form is simply given by the Lie bracket $ \mathbf{B}(\mrd_{A}\Phi\otimes\Phi)  = - [\mrd_{A}\Phi, \Phi]$.
\end{remark}
 
Recall the definition of $\state^\sol$
from Section~\ref{sec:notation} (see $F^\sol$ below \eqref{e:def-A_i and Phi} for any metric space $F$).

\begin{theorem}[Local existence]\label{thm:local_exist}
Consider any $\mathring{C}_{\A} \in L_{G}(\mfg,\mfg) $
and a space-time mollifier $\moll$.   
Then there exist $(C_{\YM}^{\eps}, C_{\Higgs}^{\eps})_{\eps \in (0,1]}$ 
with $C_{\YM}^{\eps} \in L_{G}(\mfg,\mfg)$ and  $C_{\Higgs}^{\eps} \in L_{G}(\higgsvec,\higgsvec)$ and which depend only on $\moll$, such that the following statements hold.

\begin{enumerate}[label=(\roman*)]
\item\label{pt:loc_sols}
The solution $(A,\Phi)$ to the system \eqref{eq:SPDE_for_A}
where
\begin{equ}
C_{\A}^{\eps} 
= 
	C_{\YM}^{\eps}
	+\mathring{C}_{\A} \;,
\qquad
C_{\Phi}^{\eps} 
=  C_{\Higgs}^{\eps}
 -m^2 
\end{equ}
converges in $\state^\sol$ in probability as $\eps \to 0$. 

\item
The limit in item~\ref{pt:loc_sols} depends only on $\mathring C_\A$ and not on $\moll$. 
\end{enumerate}
\end{theorem}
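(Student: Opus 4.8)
The plan is to cast the system \eqref{eq:SPDE_for_A} as a fixed point problem in the setting of regularity structures, following the general machinery of \cite{Hairer14} together with the basis-free formulation and symmetry results established in Section~\ref{sec:sym_renorm_reg_struct} and applied in Section~\ref{sec:renorm-A}. First I would observe that the nonlinearities $X\p X$ and $X^3$, together with the smallest regularity $\eta<-\tfrac12$ of initial data in $\state$, dictate a (finite) regularity structure generated by the rule associated to the right-hand side of \eqref{e:simpleNotation}; subcriticality holds because $\eta>-\tfrac12-\kappa$ with $\kappa$ small, precisely as in the scaling discussion in the introduction (the relevant scaling exponent for the stochastic flow is $-\tfrac12-\kappa$, subcritical for YM flow, whereas $-\tfrac12$ is not attainable). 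The admissible models are the BPHZ (canonical renormalised) models $Z^\eps$ built from $\xi^\eps$; by the general convergence theorem for BPHZ models (the black-box results of \cite{CH16,BHZ19,BCCH21} in the form used in Section~\ref{sec:renorm-A}) these converge in probability in the space of models as $\eps\to0$. The fixed point map is then solved in a space of (singular at $t=0$) modelled distributions using Appendix~\ref{app:Singular modelled distributions}; since the initial condition lies only in $\state$ and not in a Hölder--Besov space of regularity $>-\tfrac12$, the short-time analysis must be carried out with the weighted modelled-distribution spaces of that appendix, and the embedding $\CC^\nu\hookrightarrow\state$ of Theorem~\ref{thm:main_state_space}(i) together with the convergence $\Sigma(\mcF_t(X),X)\to0$ is what lets one interpret the reconstruction as an element of $\state^\sol$. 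This gives a maximal solution $(A,\Phi)$ in $\state^\sol$, continuous in the initial data and the model, hence converging in probability as $\eps\to0$ once the counterterms are chosen so that the renormalised equation is the one written in \eqref{eq:SPDE_for_A}.

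The key computational step is to identify which counterterms the BPHZ renormalisation produces and to check that they have the claimed form. Concretely, one expands the BPHZ character on the finitely many trees of negative homogeneity appearing in the solution, and shows that — after using the symmetries of the problem (the $G$-covariance \eqref{eq:group_action_E}, the orthogonal/adjoint invariance of the covariances $\Cas,\Cov$, the reflection symmetry of the mollifier $\moll$, and the specific algebraic structure of $\mathbf{B}$ and of the cubic term) established in Section~\ref{sec:sym_renorm_reg_struct} — every divergent contribution is proportional either to $A_i$ or to $\Phi$, with coefficient an element of $L_G(\mfg,\mfg)$ resp.\ $L_G(\higgsvec,\higgsvec)$. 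No terms proportional to $\nabla A$, to $A_i A_j$, to $\Phi^2$, etc.\ survive: those are excluded by parity in $x$ (the mollifier symmetry kills odd-in-momentum integrals), by the vanishing of certain traces, and by the covariance constraints. This produces $(C^\eps_{\YM},C^\eps_{\Higgs})$ depending only on $\moll$; the free constant $\mathring C_\A\in L_G(\mfg,\mfg)$ is then simply added by hand (it corresponds to a finite, non-divergent, choice of model normalisation that does not affect convergence). This is the step I expect to be the main obstacle, not because any single diagram is hard but because, as remarked in the introduction comparing with \cite{CCHS2d}, there are many logarithmically divergent trees and a direct hand computation is infeasible; the whole point of Section~\ref{sec:sym_renorm_reg_struct} is to replace that computation by a structural argument showing the counterterm necessarily lies in the small space $L_G\oplus L_G$.

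For part (ii), independence of $\moll$, I would argue as follows. Two mollifiers $\moll,\bar\moll$ give renormalised solutions that differ only through (a) the choice of BPHZ counterterms $(C^\eps_{\YM},C^\eps_{\Higgs})$ vs.\ $(\bar C^\eps_{\YM},\bar C^\eps_{\Higgs})$, and (b) the limiting model. By the general theory the limiting renormalised model is \emph{canonical} — independent of the mollifier used to approximate the noise — provided the counterterms are chosen as the BPHZ ones; equivalently, the difference $C^\eps_{\YM}-\bar C^\eps_{\YM}$ (and similarly for Higgs) converges to a finite limit, and the two families of solutions, both started from $(a,\phi)\in\state$, converge to the \emph{same} limit in $\state^\sol$ because the finite shift in the counterterm is exactly compensated by the finite shift in the model. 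Concretely, one compares the two fixed point problems at the level of the abstract equation: the BPHZ renormalised equation is, by construction, the one displayed in \eqref{eq:SPDE_for_A} with the \emph{same} analytic nonlinearity, and the reconstruction of its solution against the canonical limiting model is mollifier-independent. Thus the limit depends only on $\mathring C_\A$ (which enters as an explicit, mollifier-independent term) and on the law of the noise, as claimed. The only care needed here is that "depends only on $\mathring C_\A$" is a statement about the law in $\state^\sol$, and one invokes the injectivity-in-constants result of Appendix~\ref{app:injectivity} only later (in Section~\ref{sec:gauge}); for the present theorem it suffices that the map $\mathring C_\A\mapsto(\text{law of solution})$ is well-defined and mollifier-independent.
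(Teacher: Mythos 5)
Your overall strategy coincides with the paper's: lift \eqref{eq:SPDE_for_A} to an abstract fixed point problem, use BPHZ model convergence, and reduce the identification of the counterterms to the symmetry machinery of Section~\ref{sec:sym_renorm_reg_struct} (this is exactly how Proposition~\ref{prop:mass_term} is proved, via parity of the mollifier and noise together with $G$-covariance, plus a power-counting argument showing the counterterm is \emph{linear} in $X$). However, your analytic step contains a genuine gap that the paper flags explicitly in Remark~\ref{rem:BPHZ_consts}: the black-box fixed point theory does \emph{not} apply directly. The term $\mathbf{1}_{+}\boldsymbol{\Xi}$ lives in $\cD^{\infty,-5/2-}$ and $\mcX\partial\mcX$ has time-weight exponent below $-2$, so the standard reconstruction and integration results (\cite[Prop.~6.16]{Hairer14}, which require $\eta\wedge\alpha>-2$) fail, and ``weighted modelled-distribution spaces'' alone do not fix this. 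The paper's resolution is to linearise around $\CP X_0$ and the stochastic convolution $\tilde\bPsi$, writing $\mcX=\CP X_0+\tilde\bPsi+\hat\mcX$ with $\hat\mcX$ in the vanishing-at-$t=0$ spaces $\hat\cD^{\gamma,\eta}$, and to supply the reconstructions of the non-integrable products $\tilde\Psi_\eps\partial\tilde\Psi_\eps$, $\CP X_0\partial\tilde\Psi_\eps$, $\tilde\Psi_\eps\partial\CP X_0$ \emph{by hand} as space-time distributions (Lemmas~\ref{lem:tildePsi-cov} and~\ref{lem:PX0Psi-prob}), feeding them into the modified integration operators $\CG^{\omega}$ of Lemma~\ref{lem:Schauder-input}. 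The control on $\CP X_0\partial\CP X_0$ is precisely what the nonlinear part $\fancynorm{\cdot}_{\beta,\delta}$ of the metric on $\state$ provides; none of this is automatic.

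A second, related gap: your claim that the embedding $\CC^\nu\hookrightarrow\state$ ``lets one interpret the reconstruction as an element of $\state^\sol$'' cannot work, since the solution at fixed positive time lies only in $\CC^{-1/2-\kappa}$ and not in any $\CC^\nu$ with $\nu>-\frac12$. Convergence in $\state^\sol$ (rather than merely as space-time distributions) requires a separate argument: one decomposes $X^\eps=\Psi^\eps+\hat X^\eps$, shows the SHE part converges in $\CC([0,T],\state)$ (Section~\ref{sec:SHE}), shows the remainder converges in $\CC([0,\tau],\CC^{\eta+\frac12-\kappa})$ by explicit second-moment estimates, and then invokes the perturbation lemmas (Lemma~\ref{lem:perturbation}) to control $\Sigma(X^\eps,X)$. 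One also needs to restart the equation after the initial interval (via the generalised Da Prato--Debussche decomposition) to obtain maximal solutions. Your part~(ii) is essentially correct and matches the paper: mollifier-independence of the limiting BPHZ model plus stability of the fixed point in its coefficients.
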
 

\begin{remark}\label{rem:BPHZ_consts}
See Theorem~\ref{thm:local-exist-sigma} for a slightly more general version of Theorem~\ref{thm:local_exist}.
The operators $C^\eps_{\YM}$ and $C^\eps_{\Higgs}$ will be determined by the BPHZ character that deforms the canonical lift of $\xi^{\eps}$ into the BPHZ lift of $\xi^{\eps}$.

While the equation \eqref{eq:SPDE_for_A} does fall under the ``blackbox'' local existence theory of 
\cite{Hairer14,CH16,BHZ19,BCCH21} and the vectorial regularity structures of \cite{CCHS2d}, this theory does not directly give us Theorem~\ref{thm:local_exist}. 
There are three issues we must overcome:  (i) we want our solution to take values in the non-standard state space $\CS$, (ii) we want to start the dynamic from arbitrary, rough (not necessary ``modelled'') initial data in $\CS$, and (iii) we must verify that the renormalisation counter-term is given by $((C_{\YM}^{\eps})^{\oplus 3} \oplus C_{\Higgs}^{\eps})X$ for some $C_{\YM}^{\eps} \in L_{G}(\mfg,\mfg)$ and $C_{\Higgs}^{\eps} \in L_G(\higgsvec,\higgsvec)$ which is not obvious from the formulae for counterterms provided in \cite{BCCH21, CCHS2d}.  
\end{remark}

Our next result is about gauge covariance of the limiting solution,
provided that the operator $\mathring{C}_{\A}$ is suitably chosen. 
See \cite[Sec.~2.2]{CCHS2d} for a discussion in the 2D case (which extends \textit{mutatis mutandis} to the 3D case) on gauge covariance, and lack thereof before the limit, from a geometric perspective.
To formulate this result, we consider a gauge transformation $g(0)$ acting on the initial condition $(a,\phi)$ of $(A,\Phi)$ as in~\eqref{eq:SPDE_for_A},
and define a new dynamic $(B,\Psi)$ with initial condition $g(0)\act (a,\phi)$
such that $(B,\Psi)=g\act (A,\Phi)$ for some suitable time-dependent gauge-transformation $g$.
The transformation $g$ is chosen in such a way as to ensure that $(B,\Psi)$ converges in law to 
the solution to SYMH with initial condition $g(0)\act (a,\phi)$, provided that $\mathring{C}_{\A}$ is suitably chosen.
The resulting dynamics $(B,\Psi)$ and $g$ satisfy the equations
\begin{equs}[2]
\partial_t B_i &= 
\Delta B_i 
+[B_j,2\partial_j B_i - \partial_i B_j + [B_j,B_i]]
 -\mathbf{B}((\partial_{i} \Psi + B_{i}\Psi) \otimes \Psi)
 \\
&\qquad\qquad\qquad\qquad
+ C^{\eps}_{\A} B_i + C^{\eps}_{\A} (\partial_i g) g^{-1} +g\xi^\eps_i g^{-1} \;,
\\
\partial_t\Psi  &= 
\Delta \Psi 
+ 2 B_{j} \partial_{j}\Psi + B_{j}^{2}\Psi  - |\Psi|^2 \Psi + C_{\Phi}^{\eps} \Psi +  g\zeta^{\eps}\;,
\\
(\partial_t g) g^{-1} &= \partial_j((\partial_j g)g^{-1})+ [B_j, (\partial_j g)g^{-1}]\;,\label{eq:SPDE_for_B}
\\
& \qquad (B(0),\Psi(0))  = g(0)\act (a, \phi)\;,
\quad
g(0) \in \mfG^{0,\rho}\;.
\end{equs}
Above and for the rest of this section, we let $\rho \in (\frac12,1)$ be as in Theorem~\ref{thm:main_state_space}\ref{pt:rho_gauge}.

As mentioned above, for this choice of $(B,\Psi)$ and $g$, one has $(B,\Psi)=g\act (A,\Phi)$
for any fixed $\eps > 0$. 
Furthermore, $g$ as given by the solutions to \eqref{eq:SPDE_for_B} also solves
\begin{equ}\label{eq:SPDE_for_g_wrt_A}
g^{-1}(\partial_t g)
= \partial_j(g^{-1}\partial_j g)+ [A_j,g^{-1}\partial_j g]\;.
\end{equ}
Note that  \eqref{eq:SPDE_for_g_wrt_A} with $A$ given as in Theorem~\ref{thm:local_exist} is also classically ill-posed as $\eps \downarrow 0$ but can be shown to converge using regularity structures (however, it might blow up before $A$ does).  
Since the products in $g \act (A, \Phi)$ are well defined in the spaces where convergence takes place, this gives one way of seeing that the solutions to \eqref{eq:SPDE_for_B} also converge as $\eps \downarrow 0$. 

For any $\mathring{C}_{\A} \in L(\mfg,\mfg)$, let $\mathcal{A}_{\mathring{C}_{\A}}\colon \state \rightarrow \state^{\sol}$ be the solution map taking initial data $(a,\phi) \in \state$ to the limiting maximal solution of \eqref{eq:SPDE_for_A} promised by Theorem~\ref{thm:local_exist}. \label{SYM_solmap}
Our main result on the construction of a gauge covariant process can be stated as follows. 
\begin{theorem}[Gauge covariance]\label{theo:meta}
There exists a unique $\mathring{C}_{\A} \in L_{G}(\mfg,\mfg)$, independent of our choice of mollifier $\moll$, with the following properties.
\begin{enumerate}[label=(\roman*)]
 \item\label{pt:gauge_covar_informal} For all $g(0) \in \mathfrak{G}^{\rho}$ and $(a,\phi) \in \state$, one has, modulo finite time blow-up 
 \[
 g\act \mathcal{A}_{\mathring{C}_{\A}}(a,\phi)
\eqlaw \mathcal{A}_{\mathring{C}_{\A}} \big(g(0)\act (a,\phi) \big)
\quad \quad \quad
\text{}
\]
 where  $g$ is given by \eqref{eq:SPDE_for_g_wrt_A}
with $A$ therein given by the corresponding component of $\mathcal{A}_{\mathring{C}_{\A}}(a,\phi)$
and initial condition $g(0)$. 
See Theorem~\ref{thm:gauge_covar} for a precise statement.

\item \label{pt:Markov_informal}
There exists a unique
Markov process $\mathscr{X}$ 
on $\mfO$ such that, for every $(a,\phi) \in \CS$, if $\mathscr{X}$ is started from $[(a,\phi)]$ then
there exists a random time $t > 0$ such that, for all $s \in [0,t]$, $\mathscr{X}_s = [\mathcal{A}_{\mathring{C}_{\A}}(a,\phi)_s]$.  
See Theorem~\ref{thm:Markov_process}.
\end{enumerate}
\end{theorem}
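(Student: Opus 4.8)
\emph{Overview and Part~(i): identifying the limiting dynamics.} The plan is to prove part~\ref{pt:gauge_covar_informal} first and to deduce part~\ref{pt:Markov_informal} from it together with the global well-posedness on $\mfO$ of the projected flow $\tilde\ymhflow$ from Theorem~\ref{thm:main_state_space}. Fix $\mathring C_\A\in L_G(\mfg,\mfg)$ and $g(0)\in\mfG^{0,\rho}$. For each $\eps>0$ one has the pathwise identity $(B,\Psi)=g\act(A,\Phi)$ between the solutions of~\eqref{eq:SPDE_for_A} and~\eqref{eq:SPDE_for_B}, so the first step is to pass to the limit $\eps\to0$ in it. I would do this by invoking well-posedness of the coupled $(A,\Phi,g)$ system from Appendix~\ref{app:evolving_rough_g}, which provides a renormalised limit in a $\sol$-type state space over $\state$ (with $g$ possibly blowing up before $(A,\Phi)$), and observing that $(A,\Phi,g)\mapsto g\act(A,\Phi)$ is continuous into $\state$; this yields a limiting process $(B,\Psi)$ started from $g(0)\act(a,\phi)$ equal in law to $g\act\mathcal{A}_{\mathring C_\A}(a,\phi)$, up to the blow-up time of $g$. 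The conceptually central step is to recognise which renormalised equation $(B,\Psi)$ solves: using the BPHZ analysis of the extended regularity structure and, crucially, the symmetry results of Section~\ref{sec:sym_renorm_reg_struct} (the basis-free framework plus preservation of gauge symmetry under BPHZ renormalisation), one shows $(B,\Psi)$ solves the renormalised SYMH system~\eqref{eq:SPDE_for_A} with initial datum $g(0)\act(a,\phi)$ and with the finite renormalisation operator $\mathring C_\A$ replaced by $\mathring C_\A+\Delta(\mathring C_\A)$, where the ``gauge defect'' $\Delta$ takes values in $L_G(\mfg,\mfg)$ (again by the symmetry analysis, which also shows the Higgs-sector counterterm is unchanged) and where I expect $\mathring C_\A\mapsto\Delta(\mathring C_\A)$ to be affine.

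\emph{Existence and uniqueness of $\mathring C_\A$.} Gauge covariance for a given $\mathring C_\A$ is then equivalent, by injectivity in law of $C\mapsto\mathcal{A}_{C}$ from Appendix~\ref{app:injectivity}, to the equation $\Delta(\mathring C_\A)=0$. To show this has a unique solution I would run the small-noise argument in the spirit of~\cite{StringManifold}: replace $\xi$ by $\delta\xi$ and expand in $\delta$. At $\delta=0$ the dynamics reduce to the deterministic DeTurck--YMH flow, for which $(B,\Psi)=g\act(A,\Phi)$ holds \emph{exactly} --- the explicit term $\mathring C_\A(\partial_i g)g^{-1}$ in~\eqref{eq:SPDE_for_B} is precisely what makes the linear term $\mathring C_\A A_i$ transform covariantly --- so $\Delta\equiv0$ to leading order and its $\delta$-expansion starts only at the order at which genuinely singular diagrams first contribute. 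Since the BPHZ counterterm at that order is forced and the contributing diagrams carry the $G$-symmetry, one identifies the affine map $\Delta$ with an invertible one, so $\Delta(\mathring C_\A)=0$ has a unique root $\mathring C_\A=\mathring C_A$; its independence of the mollifier follows from that of the limiting solution (Theorem~\ref{thm:local_exist}\,(ii)), which forces the same for $\Delta$. Since $(A,\Phi)$, $(B,\Psi)$ and $g$ blow up at possibly different random times, the covariance identity is really an identity of processes stopped at a common stopping time, and a localisation/continuation argument is needed to phrase it ``modulo finite-time blow-up'', which is the content of Theorem~\ref{thm:gauge_covar}.

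\emph{Part~(ii): the Markov process on $\mfO$.} With $\mathring C_A$ fixed, part~(i) shows that the law of the $\mfO$-valued path $t\mapsto[\mathcal{A}_{\mathring C_A}(a,\phi)_t]$ depends on $(a,\phi)$ only through its gauge orbit: if $(\bar a,\bar\phi)=g(0)\act(a,\phi)$ with $g(0)\in\mfG^{0,\rho}$ --- such transformations act on $\state$ and preserve $\sim$ by Theorem~\ref{thm:main_state_space}\ref{pt:rho_gauge} --- then $g\act\mathcal{A}_{\mathring C_A}(a,\phi)\eqlaw\mathcal{A}_{\mathring C_A}(\bar a,\bar\phi)$, and since $g\act X$ lies in the orbit of $X$ until $g$ blows up (using Theorem~\ref{thm:main_state_space}\ref{pt:gauge_equiv_extend}), the two $\mfO$-valued laws agree on a random interval $[0,t]$, $t>0$. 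To assemble a genuine Markov process I would define its transition kernel from an orbit $o\in\mfO$ by first applying a short projected deterministic flow $\tilde\ymhflow_{\eps'}$ to obtain a \emph{smooth} orbit in $\mfO^\infty$, lifting to a smooth representative, running the limiting SYMH dynamics $\mathcal{A}_{\mathring C_A}$, projecting back to $\mfO$, and passing to the limit $\eps'\to0$; one then checks independence of the choice of representative (gauge covariance), stability in $\eps'$, and Chapman--Kolmogorov, the estimate of Theorem~\ref{thm:main_state_space}\ref{pt:bound_g} on gauge transformations in terms of the state being the key tool that makes these checks work despite our not being able to restart the SPDE directly in $\state$. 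Separability and the Hausdorff property of $\mfO$ (Theorem~\ref{thm:main_state_space}\ref{pt:gauge_equiv_extend}) then allow one to build the process from its finite-dimensional distributions, and uniqueness follows since any process with the stated local-agreement property has, for small times, the transition kernel just described, which the Markov property and Chapman--Kolmogorov propagate to all times; this is Theorem~\ref{thm:Markov_process}.

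\emph{Main obstacle.} The crux is the existence half of part~(i): controlling the gauge defect $\Delta$ for the \emph{full}, non-perturbative noise and showing it has a zero. This is a genuinely 3D phenomenon --- in~\cite{CCHS2d} the analogous constant was pinned down by a direct computation with only three stochastic objects, whereas the proliferation of logarithmic subdivergences here makes that infeasible, so $\Delta$ must be extracted abstractly from the symmetry structure of the renormalised extended system and then controlled through the small-noise expansion. A secondary difficulty, also absent in 2D, is that the $(A,\Phi,g)$ system is too singular to restart, so every statement above holds only ``modulo finite-time blow-up'' and the Markov process on $\mfO$ must be manufactured by patching local pieces rather than read off a maximal solution.
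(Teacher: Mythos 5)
There is a genuine gap in your treatment of part~(i), at the point where you assert that the limit of $(B,\Psi)=g\act(A,\Phi)$ ``solves the renormalised SYMH system with $\mathring C_\A$ replaced by $\mathring C_\A+\Delta(\mathring C_\A)$''. The system~\eqref{eq:SPDE_for_B} satisfied by $(B,\Psi)$ is driven by $g\xi^\eps=g(\moll^\eps*\xi)$, i.e.\ the gauge rotation is applied \emph{after} mollification, and this noise is \emph{not} equal in law to $\xi^\eps$ for $\eps>0$ (nor does the limit obviously identify with SYMH driven by white noise). The paper's resolution is to introduce a second, auxiliary system~\eqref{eq:SPDE_for_bar_A} in which the rotation is applied \emph{before} mollification, $\moll^\eps*(\bar g\xi)$; only for that system, with $\moll$ non-anticipative and $\bar g$ adapted, does It\^o isometry give $\bar g\xi\eqlaw\xi$ and hence an identification in law with $\mathcal{A}_{\mathring C_\A}(g(0)\act(a,\phi))$. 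The two systems are renormalised within one regularity structure and acquire \emph{different} gauge-sector counterterms ($C^\eps_{\Gauge}$ versus $C^{0,\eps}_{\Gauge}$ in Proposition~\ref{prop:renorm_g_eqn}), and $\check C$ is precisely the limit of $C^\eps_{\Gauge}-C^{0,\eps}_{\Gauge}-C^\eps_{\YM}$. Your single ``gauge defect'' $\Delta$ conflates these two discrepancies, and your claim that $\Delta$ is an invertible affine function of $\mathring C_\A$ is also off: by parity and power counting the counterterms are independent of $\mathring C_\A$ (Remark~\ref{rem:bare-m-renorm_2}), so the equation for $\mathring C_\A$ is solved by inspection once the $\eps$-dependent constants are known to converge. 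That convergence is the real difficulty: the small-noise rescaling only yields \emph{boundedness} (Lemmas~\ref{lem:consts-soft1_bounded}, \ref{lem:consts-soft2_bounded}), and one must rule out oscillating subsequential limits via the injectivity-in-law of the solution map in the constants (Appendix~\ref{app:injectivity}), which is also how mollifier-independence is obtained --- it does not simply follow from Theorem~\ref{thm:local_exist}(ii), since $C^{0,\eps}_{\Gauge}$ lives on the rotated-noise system and is only controlled for non-anticipative mollifiers.

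Your construction in part~(ii) also leaves the hardest point unaddressed. Regularising the initial orbit by $\tilde\ymhflow_{\eps'}$ and lifting to a smooth representative only handles time zero; the process may approach blow-up in $\state$ while its orbit still contains bounded representatives, so one must repeatedly \emph{restart} from a new representative chosen measurably in the orbit. The paper does this via a measurable selection $S$ with $\bar\Sigma(S(X))\le 2m(X)$ (Theorem~\ref{thm:selection}), the notion of generative measure encoding jumps within the orbit at stopping times, and the coupling of Proposition~\ref{prop:gauge_covar_couple} (which upgrades part~(i) from initial data related by a $\mfG^{0,\rho}$ element to merely gauge-equivalent initial data, using the regularising flow and Theorem~\ref{thm:g_bound}) to prove that all such constructions project to the same law on $\mfO$. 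Without the selection and the coupling, neither existence past the first near-blow-up nor uniqueness of the projected process follows from your sketch.
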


\begin{remark}\label{rem:not_precise}
One reason why statement~\ref{pt:gauge_covar_informal} above is not precise is that
it is not clear that $g\act \mathcal{A}_{\mathring{C}_{\A}}(a,\phi)$ as given above
belongs to $\state^{\sol}$ \dash we cannot exclude that both $g$ and $\mathcal{A}_{\mathring{C}_{\A}}(a,\phi)$ blow up at some finite time $T$
but in such a way that $g\act \mathcal{A}_{\mathring{C}_{\A}}(a,\phi)$ converges to a finite limit at $T$.
\end{remark}

\subsection*{Acknowledgements}

{\small
We would like to thank Leonard Gross for helpful discussions. We are also very grateful to the
referees whose many comments allowed to substantially improve the article.
IC acknowledges support from the EPSRC via the New Investigator Award EP/X015688/1.
MH gratefully acknowledges support from the Royal Society through a 
research professorship, grant RP\textbackslash R1\textbackslash 191065. HS gratefully acknowledges support by NSF grants DMS-1954091 and CAREER DMS-2044415.}

\section{State space and gauge-invariant observables}
\label{sec:state_space}

We construct in this section the metric space $(\state,\Sigma)$ described in Theorem~\ref{thm:main_state_space}.
As mentioned in Remark~\ref{rem:two_parts},
we first define a metric space $(\init,\Theta)$ in Section~\ref{sec:Space of initial conditions} which will serve as the space of initial conditions for the (stochastic and deterministic) YMH flow.
The final state space $(\state, \Sigma)$ is defined in Section~\ref{subsec:final_state_space} using an additional norm on $\init$.
The definitions of the spaces $\init$ and $\state$ will depend on several parameters,
and we will formulate a condition \eqref{eq:CI} (which stands for {\it initial}) for the parameters
 under which we can solve the deterministic initial value problem in the space $\CI$,
and a condition 
 \eqref{eq:CGI} (resp. \eqref{eq:CGS})
 under which we have a continuous left group action of $\mathfrak G^{0,\rho}$ on $\CI$ (resp. on $\CS$).

\begin{remark}
All the results of this section hold with $\T^3$ replaced by $\T^d$ for $d=2,3$.
Furthermore, the only result which requires $d\leq 3$ is
the global existence of the deterministic YMH flow without DeTurck term $\flow$ from Appendix~\ref{app:YMH flow without DeTurck term}, which is used in Definition~\ref{def:ymhflow}
(and even this could be disposed of by redefining $\sim$ in Definition~\ref{def:ymhflow} to use only local in time solutions).
Note that long-time existence in $d=4$ of the YM flow (i.e.\ no Higgs component) was recently shown in~\cite{Waldron19}.
However, the state space $\init$ in Definition~\ref{def:init} below would not support the Gaussian free field in dimension $d \geq 4$ (or distributions of similar regularity).
Furthermore, the results of all subsequent sections break down badly in $d\ge 4$.
\end{remark}

We write ``Let $\eta=\alpha-$, $\beta=\nu+$, etc. Then \ldots''
to indicate that there exists $\eps>0$ such that for all $\eta\in(\alpha-\eps,\alpha)$, $\beta\in(\nu,\nu+\eps)$, etc. ``\ldots''
holds.
If ``\ldots'' involves a statement of the form ``there exists $\rho=\gamma-$'', this means that there exists $\rho < \gamma$ and $\rho\to \gamma$ as $\eta\to\alpha$, $\beta\to\nu$, etc.,
and similarly for ``$\rho=\gamma+$''.

We will also use the shorthands $x\leq \Poly(K)y$ and $x\lesssim y$
to denote that
$x\leq C (K+1)^q y$ and $x\leq C y$ respectively
for some $C,q>0$ which, unless otherwise stated, depend only the Greek letters $\alpha$, $\beta$, etc.

\subsection{Space of initial conditions}
\label{sec:Space of initial conditions}

\begin{definition}\label{def:N-B}
Recall the notation $E=\mfg^3\oplus \higgsvec$.
For $X\in\CD'(\T^3,E)$,
define
\begin{equ}
\CP X \colon(0,\infty) \to \CC^\infty(\T^3,E)\;,
\qquad \CP_t X \eqdef e^{t\Delta}X\;,
\end{equ}
the solution to the heat equation with initial condition $X$,
and
\begin{equ}\label{eq:CN_def}
\CN(X) \colon (0,\infty) \to \CC^\infty(\T^3, E\otimes E^3)\;,\qquad \CN_t(X) \eqdef \CP_t X\otimes \nabla \CP_t X\;.
\end{equ}
For $\delta$, $\beta\in\R$,
let $\CB^{\beta,\delta}$ denote the Banach space of continuous functions $Y\colon(0,1]\to\CC^\beta(\T^3,E\otimes E^3)$ for which \label{def:B-beta-delta}
\begin{equ}
|Y|_{\CB^{\beta,\delta}} \eqdef \sup_{t\in(0,1)} t^\delta |Y_t|_{\CC^{\beta}} < \infty\;.
\end{equ}
\end{definition}

For $\eta\in \R$,
we define the space $D(\CN) \eqdef \{X\in \CC^\eta(\T^3,E) \,:\, \CN(X)\in\CB^{\beta,\delta}\}$
endowed with the topology induced from $\CC^\eta$.
Although, for the parameters $\eta$, $\beta$, $\delta$ in the regime we care about, the function 
$\CN\colon D(\CN) \to \CB^{\beta,\delta}$ is not continuous,
 the continuity of $\CC^\eta(\T^3,E)\ni X\mapsto \CN_t(X)\in\CC^\beta(\T^3,E\otimes E^3)$ for each $t>0$
 easily yields the following:
 
\begin{lemma}\label{lem:closed_graph}
For every $\eta$, $\beta$, $\delta\in\R$, the graph of $\CN\colon D(\CN)\to \CB^{\beta,\delta}$ is closed.\qed
\end{lemma}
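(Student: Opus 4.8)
The plan is a routine two-limits argument, exploiting that $\CC^\eta(\T^3,E)$, $\CB^{\beta,\delta}$ and $\CC^\beta(\T^3,E\otimes E^3)$ are all metric spaces. Let $(X_n)_{n\ge1}$ be a sequence in $D(\CN)$ with $X_n\to X$ in $\CC^\eta(\T^3,E)$ and $\CN(X_n)\to Y$ in $\CB^{\beta,\delta}$ for some $Y\in\CB^{\beta,\delta}$; I must show $X\in D(\CN)$ and $\CN(X)=Y$. This says precisely that the graph, viewed inside $\CC^\eta(\T^3,E)\times\CB^{\beta,\delta}$, is closed (and a fortiori closed in $D(\CN)\times\CB^{\beta,\delta}$).

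The first step is to extract convergence at each fixed time from the $\CB^{\beta,\delta}$-convergence: for every $t\in(0,1]$,
\begin{equ}
t^{\delta}\,\bigl|\CN_t(X_n)-Y_t\bigr|_{\CC^\beta}\le\bigl|\CN(X_n)-Y\bigr|_{\CB^{\beta,\delta}}\longrightarrow 0\;,
\end{equ}
so $\CN_t(X_n)\to Y_t$ in $\CC^\beta(\T^3,E\otimes E^3)$. The second step uses the continuity, for each fixed $t>0$, of the map $\CC^\eta(\T^3,E)\ni Z\mapsto\CN_t(Z)=\CP_tZ\otimes\nabla\CP_tZ\in\CC^\beta(\T^3,E\otimes E^3)$ -- this is exactly the observation recorded just before the lemma, which holds because $\CP_t=e^{t\Delta}$ and $\nabla\CP_t$ map $\CC^\eta$ boundedly into $\CC^m$ for every $m$ (with a $t$-dependent bound) while pointwise multiplication is bounded bilinear into $\CC^\beta$ on sufficiently regular Hölder spaces. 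Feeding in $Z=X_n\to X$ yields $\CN_t(X_n)\to\CN_t(X)$ in $\CC^\beta$.

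Finally, uniqueness of limits in the Hausdorff space $\CC^\beta(\T^3,E\otimes E^3)$ forces $\CN_t(X)=Y_t$ for all $t\in(0,1]$, i.e.\ $\CN(X)=Y$ as elements of $\CB^{\beta,\delta}$; in particular $X\in D(\CN)$ by definition of $D(\CN)$, and $(X,\CN(X))=(X,Y)$. I do not anticipate any genuine obstacle: the only non-bookkeeping ingredient is the fixed-$t$ continuity of $\CN_t$, which the text already grants, and the sole point deserving a word of care is that the weighted sup-norm defining $\CB^{\beta,\delta}$ dominates the $\CC^\beta$-norm at each individual $t$, which is immediate from its definition.
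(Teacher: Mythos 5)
Your argument is correct and is exactly the one the paper intends: the lemma is stated with no written proof precisely because, as the sentence preceding it indicates, closedness of the graph follows from the fixed-$t$ continuity of $X\mapsto\CN_t(X)$ combined with the fact that the $\CB^{\beta,\delta}$-norm dominates the $\CC^\beta$-norm at each fixed $t\in(0,1]$. Your two-limits write-up fills in that routine argument faithfully, including the observation that $\CN(X)=Y\in\CB^{\beta,\delta}$ forces $X\in D(\CN)$.
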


\begin{definition}\label{def:init}
For $X,Y\in\CD'(\T^3,E)$ and $\delta,\beta,\eta\in\R$, define the (extended) pseudometric and (extended) metric
\begin{equ}
\fancynorm{X;Y}_{\beta,\delta} \eqdef |\CN(X)-\CN(Y)|_{\CB^{\beta,\delta}}\;,
\quad
\Theta_{\eta,\beta,\delta}(X,Y) \eqdef |X-Y|_{\CC^\eta} + \fancynorm{X;Y}_{\beta,\delta}\;.
\end{equ}
Let $\init=\init_{\eta,\beta,\delta}$
denote the closure of smooth functions under the metric $\Theta\equiv \Theta_{\eta,\beta,\delta}$.
We define the shorthands $\fancynorm{X}_{\beta,\delta}\eqdef \fancynorm{X;0}_{\beta,\delta}$ and $\Theta(X)\eqdef \Theta(X,0)$.
We will often drop the reference to $\eta,\beta,\delta$ in the notation $(\init,\Theta)$.
Unless otherwise stated, we equip $\init$ with the metric $\Theta$.
\end{definition}

\begin{remark}
By Lemma~\ref{lem:closed_graph},
$\init$ can be identified with a subset of $\CC^{0,\eta}$.
\end{remark}

\begin{remark}
We will later choose $\eta$, $\beta$, $\delta$ such that the additive stochastic heat equation 
defines a continuous $\init$-valued process.
An (essentially optimal) example is $\eta=-\frac12-$, $\delta\in (\frac34,1)$, 
and $\beta = -2(1-\delta)-$.
\end{remark}


For a (possibly time-dependent) distribution $X$ taking values in $E$,
we will often write $X=(A,\Phi)$ to denote the two components of $X$ in the decomposition $\CD'(\T^3,E)\simeq \Omega\CD'\oplus \CD'(\T^3,\higgsvec)$. 

\begin{definition}\label{def:space-CI}
We say that $(\eta,\beta,\delta)\in \R^3$ satisfies condition~\eqref{eq:CI} if
\begin{equation}\label{eq:CI}
\begin{split}
&\eta\in(-2/3,0]\;,\quad
\beta \in (-1,0]\;,\quad
\delta\in(0,1)\;,\\
&\hat\beta\eqdef \beta+2(1-\delta) \in (-1/2,0]\;,\quad
\text{and}
\quad
\eta+\hat\beta>-1\;.
\end{split}\tag{$\CI$}
\end{equation} 
\end{definition}

\begin{remark}\label{rem:Sourav_exponents}
The space $\init$ is essentially the same as the space of possible initial conditions
appearing in \cite[Thm.~2.9]{Sourav_flow}, provided that the exponents $\gamma_i$ appearing there
are identified with $\gamma_1 = -\eta/2$, $\gamma_2 = \delta-1-\beta/2$. In particular,
the condition $\eta+\hat\beta>-1$ in \eqref{eq:CI} 
(guaranteeing that the initial data is scaling subcritical) 
corresponds to their condition $\gamma_1 + \gamma_2 < 1/2$.
\end{remark}

\begin{proposition}[Local well-posedness of YMH flow]\label{prop:YM_flow_minus_heat}
Suppose that $(\eta,\beta,\delta)$ satisfy~\eqref{eq:CI}.
For $T>0$, let $B$ denote the Banach space of functions
\begin{equ}
R\in\CC([0,T],\CC^{\hat\beta}(\T^3,E))
\end{equ}
for which
\begin{equ}
|R|_B \eqdef \sup_{t\in(0,T)} |R_t|_{\CC^{\hat\beta}} + t^{-\frac{\hat\beta}{2}}|R_t|_\infty + t^{\frac12-\frac{\hat\beta}{2}}|R_t|_{\CC^1} < \infty\;.
\end{equ}
Then for all $X=(A,\Phi)\in\init$
and $T\leq \Poly (\Theta(X)^{-1})$, there exists a unique function $\CR(X)\in B$ such that
\begin{equ}
\ymhflow(X) \eqdef (a,\phi)\eqdef \CR(X)+\CP X \colon (0,T]\to\CC^\infty(\T^3,E)
\end{equ}
solves the YMH  flow (with DeTurck term)
\begin{equs}\label{eq:ymh_flow_deturck}
\partial_t a =& -\mrd_a^* F_a - \mrd_a \mrd^* a -\mathbf{B}(\mrd_{a}\phi\otimes\phi)\;,\\
\partial_t\phi  =& - \mrd_{a}^* \mrd_a \phi + (\mrd^*a)\phi\;,
\end{equs}
with initial condition $X$ in the sense that
$\lim_{t\to 0} |\CR_t(X)|_{\CC^{\hat\beta}} = 0$.
Furthermore,
\begin{equ}
|\CR(X)|_{B} \lesssim \fancynorm{X}_{\beta,\delta}+1\;,
\end{equ}
where the proportionality constant depends only on $\eta,\beta,\delta$,
and the map $\init\ni X\mapsto \CR(X)\in B$ is locally Lipschitz continuous.
\end{proposition}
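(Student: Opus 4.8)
The plan is to recast~\eqref{eq:ymh_flow_deturck} in mild (Duhamel) form via the ansatz $\ymhflow(X)=(a,\phi)=\CR(X)+\CP X$, isolating the singular initial datum in the explicitly known heat evolution $\CP X$ and solving a fixed point for the remainder $\CR(X)$ in the weighted space $B$. Writing $Q$ for the collection of quadratic and cubic terms on the right-hand side of~\eqref{eq:ymh_flow_deturck} --- equivalently $Q(Y)=Y\p Y+Y^3$ in the notation of~\eqref{e:XdX-X3} with the cubic Higgs component $-|\Phi|^2\Phi$ dropped --- and splitting $Q=Q^{(2)}+Q^{(3)}$ into its homogeneous quadratic and cubic parts, the statement reduces to showing that
\begin{equ}
	\mathcal{M}_X(R)_t \eqdef \int_0^t \CP_{t-s}\big[ Q(\CP_s X + R_s)\big]\,\mrd s
\end{equ}
admits, for $T\lesssim_{\Theta(X)}1$, a unique fixed point $R=\CR(X)$ in $B$, satisfying the stated bound, vanishing at $t=0$ in $\CC^{\hat\beta}$, and depending locally Lipschitz continuously on $X$.

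The analytic input I would assemble first: (i) the Schauder estimates $|\CP_r f|_{\CC^{\alpha+2\theta}}\lesssim r^{-\theta}|f|_{\CC^\alpha}$ and $|\nabla\CP_r f|_{\CC^{\alpha+2\theta-1}}\lesssim r^{-\theta}|f|_{\CC^\alpha}$ for $\theta\ge0$, together with the convolution bound $\int_0^t(t-s)^{-\theta}s^{-\delta}\,\mrd s\lesssim t^{1-\theta-\delta}$ valid for $\theta,\delta<1$; (ii) the multiplicative inequality $|fg|_{\CC^{\alpha\wedge\gamma}}\lesssim|f|_{\CC^\alpha}|g|_{\CC^\gamma}$, valid precisely when $\alpha+\gamma>0$; (iii) the a priori bounds $|\CP_s X|_{\CC^{\eta+2\theta}}\lesssim s^{-\theta}\Theta(X)$ for $\theta\ge0$ and --- the one genuinely nonlinear input --- $|\CN_s(X)|_{\CC^\beta}\le\fancynorm{X}_{\beta,\delta}s^{-\delta}$, coming directly from $X\in\init\subset D(\CN)$; since every term of $Q^{(2)}(\CP_s X)$ is a fixed linear image of a permutation of the tensor slots of $\CN_s(X)=\CP_s X\otimes\nabla\CP_s X$ (permuting slots does not affect spatial regularity), this controls the whole quadratic-in-$\CP X$ part in $\CC^\beta$ by $\fancynorm{X}_{\beta,\delta}s^{-\delta}$; (iv) for $R\in B$, the interpolated bounds $|R_s|_{\CC^\gamma}\lesssim s^{(\hat\beta-\gamma)/2}|R|_B$ for $\gamma\in[0,1]$ (and $|R_s|_{\CC^\gamma}\lesssim|R|_B$ for $\gamma\le\hat\beta$), and likewise for $\nabla R_s$.

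Next I would expand $Q(\CP_s X+R_s)$ multilinearly and estimate each resulting monomial after applying $\CP_{t-s}$ and integrating over $s$. The single \emph{dangerous} contribution, $\int_0^t\CP_{t-s}Q^{(2)}(\CP_s X)\,\mrd s$, lands in $\CC^{\beta+2(1-\delta)}=\CC^{\hat\beta}$ with norm $\lesssim\fancynorm{X}_{\beta,\delta}$ by the convolution bound with $\theta=1-\delta<1$; this is the source of the asserted bound $|\CR(X)|_B\lesssim\fancynorm{X}_{\beta,\delta}+1$. Every other monomial carries at least one factor of $R$, or is the purely cubic $Q^{(3)}(\CP_s X)$, and for each one the Hölder regularities of the factors can be tuned --- distributing the $s^{-\theta}$ smoothing cost among the $\CP_s X$-factors and choosing at which regularity $\le1$ to feed each $R$-factor --- so that all products are legitimate \emph{and} the residual $s$-singularity integrates against $\CP_{t-s}$ to a strictly positive power of $T$ times a polynomial in $\Theta(X)$ of fixed degree and in $|R|_B$ of degree $\le3$. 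This is exactly where the exponent constraints of~\eqref{eq:CI} enter: $\eta>-2/3$ is what makes the triple product $Q^{(3)}(\CP_s X)$ integrable (one must spend $s^{-\theta}$ of smoothing per factor with $-\eta/2<\theta<1/3$); $\hat\beta>-1/2$ is what makes the remainder-quadratic term $R\cdot\nabla R$ work (the sharpest balance of regularities leaves an $s$-singularity of order $\hat\beta-1/2$, integrable precisely because $\hat\beta>-1/2$); and $\eta+\hat\beta>-1$ governs the mixed terms $\CP_s X\cdot\nabla R_s$ and $\nabla\CP_s X\cdot R_s$. It follows that $\mathcal{M}_X$ preserves the ball $\{\,|R|_B\le C(\fancynorm{X}_{\beta,\delta}+1)\,\}$ of $B$ and is a contraction there once $T\lesssim_{\Theta(X)}1$; the contraction mapping principle gives the unique such fixed point $\CR(X)$, and uniqueness in all of $B$ follows by the standard argument that any $B$-valued solution must lie in this ball on a short enough initial subinterval. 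Parabolic regularity then makes $(a,\phi)=\CR(X)+\CP X$ smooth, hence a classical solution of~\eqref{eq:ymh_flow_deturck}, on $(0,T]$.

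Two points remain. For the vanishing $\lim_{t\to0}|\CR_t(X)|_{\CC^{\hat\beta}}=0$: this is false for a generic element of $D(\CN)$ equipped with only the $s^{-\delta}$-bound on $\CN$, so one uses crucially that $X\in\init$ is a $\Theta$-limit of smooth $X_n$; for smooth data the statement holds classically (then $\sup_s|\CN_s(X_n)|_{\CC^\beta}<\infty$), and the estimate $|\mathcal{M}_{X_n}(R)-\mathcal{M}_X(R)|_B\lesssim\fancynorm{X_n;X}_{\beta,\delta}(\dots)$ together with the contraction yields $\CR(X_n)\to\CR(X)$ in $B$, in particular \emph{uniformly} in $t\in[0,T]$, so $\CR(X)$ inherits continuity at $t=0$ with value $0$. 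For the local Lipschitz continuity: since $\fancynorm{X;Y}_{\beta,\delta}$ and $|X-Y|_{\CC^\eta}$ are both dominated by $\Theta(X,Y)$ and the monomial estimates are multilinear, one obtains $|\mathcal{M}_X(R)-\mathcal{M}_Y(R')|_B\lesssim(\Theta(X,Y)+|R-R'|_B)\,P(\Theta(X),\Theta(Y),|R|_B,|R'|_B)$ for a fixed polynomial $P$, and the usual fixed-point stability estimate concludes. The main obstacle throughout is the bookkeeping in the contraction step: one must check simultaneously that the dangerous term produces exactly the sharp $\CC^{\hat\beta}$-bound $\lesssim\fancynorm{X}_{\beta,\delta}$ and that \emph{every} remaining monomial integrates to a positive power of $T$ --- which is precisely the content of $\eta>-2/3$, $\hat\beta>-1/2$, $\eta+\hat\beta>-1$ and forces the delicate choice of how to split smoothing among factors and at which Hölder regularity to insert each $R$; the vanishing at $t=0$, which forces the use of the closure structure of $\init$ rather than just the graph $D(\CN)$, is a second subtle point.
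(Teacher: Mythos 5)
Your proposal follows essentially the same route as the paper: the identical ansatz $\ymhflow(X)=\CR(X)+\CP X$, the same Duhamel fixed point for the remainder in the weighted space $B$, the same key estimate $\int_0^t|\CP_{t-s}\CN_s(X)|_{\CC^{\hat\beta}}\,\mrd s\lesssim\fancynorm{X}_{\beta,\delta}$ for the dangerous quadratic term, and the same matching of each condition in~\eqref{eq:CI} to the monomial it controls. The only cosmetic difference is the vanishing at $t=0$: the paper deduces it directly from $\lim_{t\to0}t^{\delta}|\CN_t(X)|_{\CC^\beta}=0$ (which holds since $\init$ is the closure of smooth functions), whereas you pass through convergence $\CR(X_n)\to\CR(X)$ in $B$ for smooth approximants — both rest on the same closure structure and both work.
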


\begin{remark}\label{rem:ymh_flow_deturck}
In coordinates,~\eqref{eq:ymh_flow_deturck}  reads
\begin{equs}
\partial_t a_i
&= \Delta a_i  + [a_j,2\partial_j a_i - \partial_i a_j + [a_j,a_i]]
-\mathbf{B} ( (\partial_{i} \phi + a_{i}\phi)\otimes \phi)\;,\\
\partial_t\phi  &= 
\Delta \phi + 2 a_{j} \partial_{j}\phi + a_{j}^{2}\phi \;,
\end{equs}
with implicit summation over $j$. We will use the flow induced by this equation to define our space of ``gauge orbits'' in Section~\ref{subsec:gauge_orbits}.
For this purpose, we could equally have used
\textit{any} regularising and gauge covariant flow with nonlinearities of the same order,
such as the deterministic analogue of~\eqref{eq:SYMH}.
\end{remark}

\begin{proof}
For $X\in\init$, 
consider 
any map $\CM^X\colon B \to B$ of the form
\begin{equ}
\CM^X_t(R)
\eqdef \int_0^t \CP_{t-s}\Big((\CP_s X +R_s)\partial (\CP_s X + R_s) + P(R_s,\CP_s X)\Big) \mrd s\;,
\end{equ}
where $P\colon E\times E\to  E$ is a polynomial of degree at most $3$ such that $P(0)=0$.
Observe that
$|\CP_s X|_{\infty} \lesssim s^{\frac\eta2}|X|_{\CC^\eta}$ and
$| \partial \CP_sX|_{\infty}\lesssim s^{\frac\eta2-\frac12}|X|_{\CC^\eta}$.
Furthermore, since $\frac{\beta-\hat\beta}2=-(1-\delta)$,
\begin{equ}[e:PN-bound]
\int_0^t|\CP_{t-s}(\CN_s(X))|_{\CC^{\hat\beta}}\mrd s\lesssim \int_0^t(t-s)^{-(1-\delta)} s^{-\delta}\fancynorm{X}_{\beta,\delta} \mrd s \asymp \fancynorm{X}_{\beta,\delta}\;,
\end{equ}
and the same bound holds with $|\cdot|_{\CC^{\hat\beta}}$ replaced by $|\cdot|_\infty$ and $|\cdot|_{\CC^1}$, $(t-s)^{-(1-\delta)}$ replaced by $(t-s)^{\frac\beta2}$ and $(t-s)^{-\frac12 +\frac\beta2}$,
and the final $\fancynorm{X}_{\beta,\delta}$ replaced by $t^{\frac{\hat\beta}2}\fancynorm{X}_{\beta,\delta}$ and $t^{-\frac12+\frac{\hat\beta}2}\fancynorm{X}_{\beta,\delta}$ respectively.

It readily follows that for $\kappa=\frac12\min\{\eta+\hat\beta+1,2\hat\beta+1,3\eta+2\}$ and all $R\in B$
\begin{equ}
|\CM^X(R)|_B \lesssim \fancynorm{X}_{\beta,\delta}+T^{\kappa}(|X|^3_{\CC^\eta}+|X|_{\CC^\eta}+|R|^3_B+|R|_B)
\end{equ}
and
for $\bar X\in\init$, $\bar R \in B$, and denoting $Q(x)\eqdef x^2+x$,
\begin{equs}
|\CM^X(R)-\CM^{\bar X}(\bar R)|_B
&\lesssim \fancynorm{X;\bar X}_{\beta,\delta} + T^\kappa|X-\bar X|_{\CC^\eta} Q(|X|_{\CC^\eta}+|\bar X|_{\CC^\eta})
\\
&\qquad+ T^{\kappa}|R-\bar R|_B Q(|X|_{\CC^\eta} + |\bar X|_{\CC^\eta} + |R|_B + |\bar R|_B)\;.
\end{equs}
(The terms $\CP_s X \partial R_s$, $R_s \partial\CP_sX$ account for the condition 
$\eta+\hat\beta>-1$,
$R_s \partial R_s$ accounts for $\hat\beta> - \frac12$,
and $(\CP_sX)^3$ accounts for $\eta>-\frac23$.)
It follows that for $T\leq \Poly (\Theta(X)^{-1})$ sufficiently small,
$\CM^X$ stabilises the ball in $B$ of radius $\asymp\fancynorm{X}_{\beta,\delta}+1$, is a contraction on this ball,
and the unique fixed point of $\CM^X$ is a locally Lipschitz function of $X\in\init$.
Moreover, since $\init$ is the closure of smooth functions,
\begin{equ}
\lim_{t\to 0} t^{\delta}|\CN_t(X)|_{\CC^\beta} = 0\;,
\end{equ}
and thus $\lim_{t\to 0} |\CM^X_t(R)|_{\CC^{\hat\beta}} = 0$ for all $R\in B$.
It remains to observe that the desired $\CR(X)$ is the fixed point of a map $\CM^X$ of the above form.
\end{proof}

\begin{definition}\label{def:ymhflow}
Let $(\eta,\beta,\delta)$ satisfy~\eqref{eq:CI}.
For $X\in\init$,
let $\ymhflow(X)\in\CC^\infty((0,T_X)\times\T^3,E)$ denote the solution to~\eqref{eq:ymh_flow_deturck}
with initial condition $X$  where $T_X$ denotes the maximal existence time of the solution in $\CC^\infty(\T^3,E)$.
We write $\ymhflow_t(X)=(\ymhflow_{A,t}(X),\ymhflow_{\Phi,t}(X))$ for its decomposition in
$\CC^\infty(\T^3,E)\simeq \Omega\CC^\infty\oplus \CC^\infty(\T^3,\higgsvec)$.

Define $\tilde\ymhflow(X)\colon(0,\infty)\to\mfO^\infty$ by
\begin{equ}
\tilde\ymhflow_t(X)
\eqdef
\begin{cases}
[\ymhflow_t(X)] \quad&\textnormal{ if } t\in(0,T_X/2]\;,
\\
[\flow_{t-T_X/2}(\ymhflow_{T_X/2})] \quad&\textnormal{ if } t>T_X/2\;,
\end{cases}
\end{equ}
where $\flow$
is the flow of the YMH equation without the DeTurck and $\Phi^3$ terms, see Definition~\ref{def:flow_no_deturck}.

We define an equivalent relation $\sim$ on $\init$
by $X\sim Y \Leftrightarrow \tilde\ymhflow(X)=\tilde\ymhflow(Y)$.
Let $[X]$ denote the equivalence class of $X\in\init$.
\end{definition}

\begin{remark}
We will see below in Proposition~\ref{prop:back_unique}
that $\sim$ extends the notion of gauge equivalence for smooth functions.
\end{remark}

\begin{remark}
By Lemma~\ref{lem:deturck_to_no_deturck},
$\ymhflow_t(X)\in\tilde\ymhflow_t(X)$
for all $t\in (0,T_X)$.
This allows us interpret $\tilde\ymhflow$ as an extension of $\ymhflow$ to all times modulo gauge equivalence.
\end{remark}

\begin{remark}\label{rem:Sourav_state}
In~\cite{Sourav_state}, the authors introduce a topological space $\mcX$ as a candidate state space of the 3D Yang--Mills measure.
Roughly speaking, $\mcX$ contains all functions $X\colon (0,\infty)\to \mfO^\infty$ which solve the YM flow $\ymhflow$ 
with the DeTurck--Zwanziger term modulo gauge equivalence (with no restrictions at $t=0$ although some 
restrictions could easily be added).
They furthermore show in~\cite{Sourav_flow} that the Gaussian free field has a natural representative 
as a probability measure on $\mcX$. Note that we will use a space we call $\mcX$ in Section~\ref{subsec:final_state_space} which
is unrelated to the space $\mcX$ in~\cite{Sourav_state}.

The space $\init/{\sim}$ (and thus also the smaller space $\mfO = \state/{\sim}$ that we ultimately 
work with) embeds canonically into $\mcX$.
The main difference is that $\init$ and $\state$ are actual spaces of distributional connections 
that are not themselves defined in terms of the flow $\ymhflow$, although the analytic conditions we 
impose control how fast $\ymhflow_t$ can blow up at time $t=0$. One advantage is that this yields a large 
number of continuous operations on the state space $\mfO$.
Furthermore, we later show that the SHE takes values in $\CC(\R_+,\state)$, thus effectively recovering the main result of~\cite{Sourav_flow}; 
see Remark~\ref{rem:Sourav_flow}.

A result in~\cite{Sourav_state} that we do not consider here is a tightness criterion for $\mcX$
although it is possible in principle to formulate such criteria for $\init$ and $\state$ using 
the compact embedding results in Section~\ref{subsec:embeddings}.
\end{remark}

\subsection{Backwards uniqueness on gauge orbits for the YMH flow}
\label{subsec:backwards_unique}

In this subsection we show a backwards uniqueness property for the YMH flow which will be a key step in defining our space of ``gauge orbits''.
For $\rho\in(\frac12,1]$, recall from~\cite[Sec.~3]{CCHS2d} the spaces $\Omega_{\gr\rho}$ and $\Omega^1_{\gr\rho}$, the closure of $\mfg$-valued $1$-forms  in $\Omega_{\gr\rho}$.
More precisely, $\Omega_{\gr\rho}$ is equipped with a norm $|\cdot|_{\gr\rho}$, defined in the same way as in \cite[Def.~3.7]{CCHS2d} except that $\T^2$ is replaced by $\T^3$. In Section~\ref{def:heatgr} we will introduce a generalisation of it,
but for this subsection we only need $\Omega^1_{\gr\rho}$.
Here we only recall that 
$\Omega^1_{\gr\rho}$ is embedded into $\CC^{\rho-1}$, see e.g.
\eqref{eq:CC_gr_bound} below.
For\label{omega rho page ref}
\begin{equ}
X=(A,\Phi),\; \bar X=(\bar A,\bar \Phi)\in \Omega^\rho\eqdef \Omega^1_{\gr\rho}\times \CC^{\rho-1}(\T^3,\higgsvec)\;,
\end{equ}
we say that $X\sim\bar X$ in $\Omega^\rho$ if there exists $g\in\mfG^\rho$
such that $\bar X=X^g \eqdef (\Ad_g A - (\mrd g)g^{-1},g\Phi)$.
(Here, we recall the action of $\mfG^\rho$ on $\Omega^1_{\gr\rho}$ in \cite[Section 3.4, Definition 3.26]{CCHS2d};
in particular the formal expression $A^g=\Ad_g A - (\mrd g)g^{-1}$ is defined there as an element of 
$\Omega^1_{\gr\rho}$.)
Remark that $\sim$ in $\Omega^\rho$ extends the usual notion of gauge equivalence of smooth functions.
Remark also that $\Omega^\rho$ embeds into $\init$ for some parameters satisfying~\eqref{eq:CI},
so that the YMH flow $\ymhflow$ is well-defined on $\Omega^\rho$.
The following is the main result of this subsection.

\begin{proposition}\label{prop:back_unique}
Let $\rho\in(\frac12,1]$ and $X, Y \in \Omega^\rho$.
The following statements are equivalent.
\begin{enumerate}[label=(\roman*)]
\item\label{pt:X_Y_sim} $X\sim Y$ in $\Omega^\rho$.
\item\label{pt:all_t} $\tilde\ymhflow(X)=\tilde\ymhflow(Y)$, i.e.\ $X\sim Y$ in $\init$.
\item\label{pt:one_t} $\tilde\ymhflow_t(X) = \tilde\ymhflow_t(Y)$ for some $t >0$.
\end{enumerate}
\end{proposition}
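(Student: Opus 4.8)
The plan is to prove the cycle of implications $\ref{pt:X_Y_sim}\Rightarrow\ref{pt:all_t}\Rightarrow\ref{pt:one_t}\Rightarrow\ref{pt:X_Y_sim}$, the first two being essentially formal and the last being the substantive backwards-uniqueness statement.

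\textbf{Step 1: $\ref{pt:X_Y_sim}\Rightarrow\ref{pt:all_t}$.} Suppose $Y=X^g$ for some $g\in\mfG^\rho$. I would first invoke gauge covariance of the DeTurck--YMH flow: if $\ymhflow_t(X)=(a(t),\phi(t))$ solves~\eqref{eq:ymh_flow_deturck}, then there is a time-dependent gauge transformation $h(t)$ (with $h(0)=g$) solving the appropriate parabolic equation such that $h(t)\act(a(t),\phi(t))$ solves~\eqref{eq:ymh_flow_deturck} with initial condition $X^g=Y$. This is the standard trick that the DeTurck term is precisely the infinitesimal gauge transformation which makes the flow parabolic, so the flow modulo gauge is intrinsic; one needs to check that $h(t)$ exists for rough initial gauge transformation $g\in\mfG^\rho$, which should follow from the well-posedness theory for the coupled $(A,\Phi,g)$ system alluded to in Appendix~\ref{app:evolving_rough_g} (or directly from parabolic regularity once $a$ is smooth for positive times). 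Hence $[\ymhflow_t(X)]=[\ymhflow_t(Y)]$ for $t$ in the common existence interval; in particular the maximal existence times coincide. Past $T_X/2$ one similarly uses gauge covariance of the no-DeTurck flow $\flow$ (from Appendix~\ref{app:YMH flow without DeTurck term}), which is manifestly defined on gauge orbits, to conclude $\tilde\ymhflow(X)=\tilde\ymhflow(Y)$ as maps $(0,\infty)\to\mfO^\infty$.

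\textbf{Step 2: $\ref{pt:all_t}\Rightarrow\ref{pt:one_t}$.} Trivial: evaluate at any single $t>0$.

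\textbf{Step 3: $\ref{pt:one_t}\Rightarrow\ref{pt:X_Y_sim}$.} This is the heart of the matter. Suppose $\tilde\ymhflow_t(X)=\tilde\ymhflow_t(Y)$ for some fixed $t>0$. For positive times the flow values $\ymhflow_s(X),\ymhflow_s(Y)$ are smooth, and by definition of $\tilde\ymhflow$ (passing through $\flow$ if $t>T_X/2$, and using gauge covariance of $\flow$ to transport back to time $T_X/2$, where $\tilde\ymhflow=[\ymhflow_{T_X/2}]$) we may assume without loss of generality that at some small positive time $s_0$ we have $\ymhflow_{s_0}(X)\sim\ymhflow_{s_0}(Y)$ as smooth configurations, i.e.\ they are related by a \emph{smooth} gauge transformation $g_0$. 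The strategy is then: (a) use smooth gauge covariance to rotate the entire trajectory $\ymhflow_\cdot(X)$ by an appropriate smooth time-dependent gauge transformation $g(\cdot)$ on $(0,s_0]$ so that $g(s)\act\ymhflow_s(X)=\ymhflow_s(Y)$ for all $s\in(0,s_0]$ --- this is possible because the gauge transformation intertwining two trajectories of~\eqref{eq:ymh_flow_deturck} with the same orbit is itself the solution of a (well-posed) parabolic equation driven by the flow, hence uniquely determined and smooth; (b) prove that $g(s)$ extends continuously (in $\mfG^\rho$) to $s=0$ with limit some $g\in\mfG^\rho$ satisfying $Y=X^g$. Part (b) is where the $\heatgr{\cdot}$-type control / the estimate from Theorem~\ref{thm:g_bound} (point~\ref{pt:bound_g} of Theorem~\ref{thm:main_state_space}) enters: the bound $|g(s)|_{\CC^\nu}\lesssim(1+\Theta(\ymhflow_s(X))+\Theta(\ymhflow_s(Y)))^q$ together with the convergence $\ymhflow_s(X)\to X$, $\ymhflow_s(Y)\to Y$ in $\init$ as $s\to0$ (Proposition~\ref{prop:YM_flow_minus_heat}) yields a uniform $\CC^\nu$ bound on $g(s)$; by compact embedding $\CC^\nu\hookrightarrow\CC^{\rho'}$ one extracts a limit $g$ along a subsequence, and identifying the limit via $Y^{g^{-1}}=\lim (\ymhflow_s(X))^{g(s)^{-1}}=\lim \ymhflow_s(Y)=Y$... wait, more carefully: $g(s)\act\ymhflow_s(X)=\ymhflow_s(Y)$, letting $s\to0$ in the space $\Omega^{\rho'}$ (using continuity of the gauge action in this scale of spaces, cf.\ Proposition~\ref{prop:group_action} and the estimates of Section~\ref{subsec:est_gauge_trans}) gives $g\act X=Y$, and finally a bootstrap using the defining equation $g^{-1}\partial_s g = \partial_j(g^{-1}\partial_j g)+[A_j,g^{-1}\partial_j g]$ together with $A\in\Omega^1_{\gr\rho}$ upgrades $g$ to $\mfG^\rho$-regularity, so $X\sim Y$ in $\Omega^\rho$.

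\textbf{Main obstacle.} The delicate point is Step 3(b): controlling the gauge transformation $g(s)$ uniformly as $s\downarrow 0$ and showing it converges to an honest element of $\mfG^\rho$ rather than degenerating. Naively $g(s)$ could oscillate or lose regularity as the flow is ``turned off''. This is exactly why the paper builds the extra norm $\heatgr{\cdot}$ into the metric $\Sigma$ and proves Theorem~\ref{thm:g_bound}; for the present proposition on $\Omega^\rho$ one needs the analogue on initial data already lying in $\Omega^\rho$, which should come from the gauge transformation estimates in Section~\ref{subsec:est_gauge_trans}. A secondary subtlety is handling the switch from $\ymhflow$ to $\flow$ at $t=T_X/2$ and making sure the ``$\tilde{}$'' construction genuinely only depends on the gauge orbit --- this is a matter of carefully invoking gauge covariance of both flows (Appendix~\ref{app:YMH flow without DeTurck term}), but it must be done so that an equality at a \emph{single} late time $t$ propagates back to an equality of orbits at time $T_X/2$, which uses uniqueness of the no-DeTurck flow on orbits.
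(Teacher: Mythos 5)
Your Steps 1 and 2 match the paper. The genuine gap is in Step 3(a): you claim that from $\ymhflow_{s_0}(X)\sim\ymhflow_{s_0}(Y)$ at a \emph{single} time $s_0$ one can produce a time-dependent $g(s)$ with $g(s)\act\ymhflow_s(X)=\ymhflow_s(Y)$ for all \emph{earlier} times $s\in(0,s_0]$, because the intertwining gauge transformation ``solves a well-posed parabolic equation driven by the flow''. But that parabolic equation is well-posed only \emph{forward} in time: starting from data at $s_0$ it yields the intertwining for $s\ge s_0$, not for $s<s_0$. Propagating gauge equivalence backwards from one time is precisely the backwards-uniqueness statement being proved, so as written the argument is circular, and your subsequent compactness analysis of $g(s)$ as $s\downarrow 0$ has nothing to start from.

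The paper closes this gap with an idea absent from your proposal: real-analyticity in time. For smooth $X,Y$ one passes to the flow $\flow$ \emph{without} DeTurck term (Lemma~\ref{lem:deturck_to_no_deturck}), for which the gauge transformation intertwining two gauge-equivalent trajectories is a single \emph{time-independent} $g\in\mfG^\infty$ valid for all $s\ge t$ (Lemma~\ref{lem:gauge_for_flow}). Lemma~\ref{lem:YMH} shows $s\mapsto\flow_s(X)$ is real-analytic on $\R_+$ (via analytic continuation of the DeTurck flow to a complex sector plus the ODEs for the gauge transformation), so $\flow_s(X)^g-\flow_s(Y)$ vanishes on $[t,\infty)$ and hence, by uniqueness of analytic continuation, on all of $\R_+$; evaluating at $s=0$ gives $X\sim Y$. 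The extension from smooth data to general $X,Y\in\Omega^\rho$ is then done by closedness of the relation $\sim$ in $\Omega^\rho\times\Omega^\rho$ (Lemma~\ref{lem:gauge_orbits_closed_rho}) together with $\ymhflow_s(X)\to X$ in $\Omega^{\bar\rho}$ as $s\to0$ --- the uniform control of gauge transformations you invoke in Step 3(b) is indeed the mechanism behind that closedness lemma, so that part of your plan is in the right spirit, but it cannot substitute for the analyticity argument at the smooth level.
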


\begin{remark}
In the setting of Proposition~\ref{prop:back_unique},
it also holds that if $\ymhflow_t(X)=\ymhflow_t(Y)$ for some $t \in (0,T_X\wedge T_Y)$, then $X=Y$.
This follows from Lemma~\ref{lem:analyticBasic} below or from classical backwards uniqueness statements for parabolic
PDEs, e.g.\ \cite[Thm.~2.2]{Chen98}.
\end{remark}

The proof of Proposition~\ref{prop:back_unique},
which is based on analytic continuation and the YMH flow without DeTurck term,
is given at the end of this subsection.
Denote by $\CK_0$ the real Banach space of pairs $(A,\Phi)\in\init$ which are
continuously differentiable and
write $\CK$ for the complexification of $\CK_0$. Note that even if $\higgsvec$ happens to have a complex structure already, we view it as real in this construction, so that the $\Phi$-component
of an element of $\CK$ takes values in $\higgsvec \oplus \higgsvec$ endowed with its canonical complex structure. A similar remark applies to $\mfg$.
We also write $\CL_0 \subset \CK_0$ and $\CL \subset \CK$ for the 
subspaces consisting of \textit{twice} continuously differentiable functions,
endowed with the corresponding norms.

For the remainder of the subsection, fix $\alpha<\pi/2$ and $T>0$ and define the sector $S_\alpha = \{t \in \C \,:\, \Re(t) \in [0,T] \,\&\, |{\arg t}| \le \alpha\}$.
Write $\CB \CK$ for the set of holomorphic functions
$X \colon S_\alpha \to \CK$ such that 
\begin{equ}
\|X\|_{\CB \CK} \eqdef \sup_{t \in S_\alpha} \|X(t)\|_{\CK} < \infty\;,
\end{equ}
and similarly for $\CB \CL$.

\begin{lemma}\label{lem:analyticBasic}
Let $X_0\in \CL_0$.
Then, for $T$ sufficiently small,
$\ymhflow(X_0)\colon[0,T]\to \CL_0$ admits a (necessarily unique) analytic continuation to an element of $\CB \CL$.
\end{lemma}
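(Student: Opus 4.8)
The plan is to by-pass the real solution and instead solve the Duhamel (mild) form of the DeTurck--YMH flow~\eqref{eq:ymh_flow_deturck} directly on the complex sector $S_\alpha$ by a Banach fixed point argument in $\CB\CL$, and only at the very end to identify the restriction to $[0,T]\subset\R$ with $\ymhflow(X_0)$. Write the flow schematically as $\partial_t X=\Delta X+F(X)$ with $F(X)=X\partial X+X^3$ in the notation of~\eqref{e:XdX-X3}. The structural facts I would set up first are: (i) $F$ is a fixed polynomial losing exactly one spatial derivative, so $F$ maps $\mcC^2(\T^3)$ (with values in the complexification of $E$) boundedly and analytically into $\mcC^1(\T^3)$, with $\|F(X)\|_{\mcC^1}\lesssim(1+\|X\|_{\mcC^2})^3$ and the analogous local Lipschitz bound on differences; and (ii) since $\Delta$ generates a bounded analytic semigroup, for $|\arg z|<\pi/2$ the operator $e^{z\Delta}$ is convolution with a complex Gaussian and satisfies, uniformly over $z\in S_\alpha$ (here $\alpha<\pi/2$ is used), the sectorial smoothing bounds $\|e^{z\Delta}\|_{\mcC^a\to\mcC^b}\lesssim_\alpha|z|^{-(b-a)/2}$ for $0\le a\le b$, with $z\mapsto e^{z\Delta}Y$ holomorphic on $\mathrm{int}(S_\alpha)$ into $\mcC^\infty$ and continuous up to the boundary, in particular $e^{z\Delta}Y\to Y$ in $\mcC^2$ as $z\to0$ whenever $Y\in\mcC^2$. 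I would also note at the outset that on the subspace $\CL$ the ambient $\init$-structure is harmless: for $\mcC^1$-functions $Y$ one has $\fancynorm{Y}_{\beta,\delta}\lesssim|Y|_{\mcC^1}^2$ and $|Y|_{\mcC^\eta}\lesssim|Y|_{\mcC^1}$, so every estimate below can be carried out in the $\mcC^1/\mcC^2$ scale.

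For $X\in\CB\CL$ I then define
\begin{equ}
\CM^{X_0}(X)(t)\eqdef e^{t\Delta}X_0+\int_0^t e^{(t-s)\Delta}F(X(s))\,\mrd s\;,\qquad t\in S_\alpha\;,
\end{equ}
with the integral taken along the segment $[0,t]\subset\C$; writing $s=\lambda t$ gives $t-s=(1-\lambda)t$, so $t-s$ lies on the same ray as $t$ and $|t-s|\le|t|\le T/\cos\alpha$. Applying the smoothing bound with $(a,b)=(1,2)$ and $\|F(X(s))\|_{\mcC^1}\lesssim(1+\|X\|_{\CB\CL})^3$ bounds the integral term in $\mcC^2$ by $\lesssim_\alpha T^{1/2}(1+\|X\|_{\CB\CL})^3$, and it tends to $0$ in $\mcC^2$ as $t\to0$; together with the properties of $e^{t\Delta}X_0$ this shows $\CM^{X_0}(X)\in\CB\CL$. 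The same computation on differences, using the local Lipschitz bound on $F$, shows that for $T=T(\|X_0\|_{\CL_0})$ small enough $\CM^{X_0}$ maps the ball of radius $1+\|X_0\|_{\CL_0}$ in $\CB\CL$ into itself and is a contraction there; I let $X$ be its unique fixed point.

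To see that $X$ is genuinely holomorphic I would run the Picard iteration $X^{(0)}(t)=e^{t\Delta}X_0$, $X^{(n+1)}=\CM^{X_0}(X^{(n)})$: the zeroth iterate is holomorphic on $\mathrm{int}(S_\alpha)$, and if $X^{(n)}$ is holomorphic then after the substitution $s=\lambda t$ the integrand $t\mapsto t\,e^{(1-\lambda)t\Delta}F(X^{(n)}(\lambda t))$ is holomorphic for each $\lambda\in[0,1)$ (a composition of holomorphic maps, times $t$) and is dominated by $\lesssim(1-\lambda)^{-1/2}$ locally uniformly in $t$, so one may differentiate under the integral sign, making $\CM^{X_0}(X^{(n)})$ holomorphic. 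Since $X^{(n)}\to X$ uniformly on $S_\alpha$, $X$ is holomorphic on $\mathrm{int}(S_\alpha)$ and continuous on $S_\alpha$, i.e.\ $X\in\CB\CL$. Restricting to real $t\in[0,T]$, the function $X|_{[0,T]}$ solves the real Duhamel equation with initial condition $X_0$, and since a $\mcC^2$-valued continuous function lies in the space $B$ of Proposition~\ref{prop:YM_flow_minus_heat}, uniqueness there gives $X|_{[0,T]}-\CP X_0=\CR(X_0)$, i.e.\ $X|_{[0,T]}=\ymhflow(X_0)$; this also delivers the first assertion of the lemma, that $\ymhflow(X_0)$ is $\CL_0$-valued and continuous on $[0,T]$. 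Finally, since the open segment $(0,T)$ is contained in the connected open set $\mathrm{int}(S_\alpha)$ (it is convex, being the intersection of a strip with an open sector of angle $<\pi$), the identity theorem for Banach-space-valued holomorphic functions shows that any $\CB\CL$-element agreeing with $\ymhflow(X_0)$ on $[0,T]$ equals $X$, which gives uniqueness of the continuation.

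This is essentially the standard analytic-semigroup machinery, so I do not expect a serious obstacle; the one thing to get right is the bookkeeping forced by the derivative inside $F$, namely the choice of the $\mcC^1/\mcC^2$ pair and the recovery of one derivative from the integrable sectorial smoothing $|t-s|^{-1/2}$, together with the (routine but slightly fiddly) justification of holomorphy of the Duhamel term in the presence of this integrable singularity. A purely abstract alternative would be to quote a general theorem on time-analyticity of solutions of semilinear parabolic equations with analytic nonlinearities and apply it to $\ymhflow(X_0)$, but the fixed-point route above is self-contained and simultaneously re-derives the real solution with its $\CL_0$-regularity.
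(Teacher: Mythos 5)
Your proof is correct and follows essentially the same route as the paper: both rewrite the flow in Duhamel form with the reparametrised integral along the ray $[0,t]$, use the sectorial smoothing bound $\|e^{z\Delta}\|_{\CK\to\CL}\lesssim|z|^{-1/2}$ together with the holomorphy and boundedness-on-bounded-sets of $F\colon\CL\to\CK$, and close a Banach fixed point argument in $\CB\CL$ with contraction constant of order $T^{1/2}$. The extra details you supply (holomorphy of the iterates via differentiation under the integral, identification of the real restriction with $\ymhflow(X_0)$, and uniqueness via the identity theorem) are exactly the points the paper leaves implicit.
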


\begin{proof}
Note that $X\eqdef\ymhflow(X_0)$ solves
\begin{equ}
\partial_t X = \Delta X + F(X)\;,
\end{equ}
for a holomorphic function $F \colon \CL \to \CK$ (see~\cite{Mujica} for a definition) 
that is bounded
on bounded sets. This is equivalent to the integral equation
\begin{equ}[e:FPhol]
X_t = e^{t\Delta} X_0 + t \int_0^1 e^{tu\Delta}F(X_{t(1-u)})\mrd u\;,
\end{equ}
which we now interpret as a fixed point problem on a space of holomorphic functions 
on $S_\alpha$.
Recall that $\|e^{t\Delta} u\|_{\CL} \lesssim |t|^{-1/2}\|u\|_{\CK}$ for all $t \in S_\alpha$ as can easily be seen from the explicit expression of the
heat kernel. It immediately follows that the operator 
$\CM \colon Y \mapsto (t \mapsto t \int_0^1 e^{tu\Delta}Y_{t(1-u)}\mrd u)$ is bounded
from $\CB \CK$ into $\CB \CL$ with norm of order $T^{1/2}$, and therefore that 
the fixed point problem \eqref{e:FPhol} admits a unique solution in 
$\CB \CL$.
\end{proof}

\begin{lemma}\label{lem:analyticg}
Let $H \in \CB \CK$ and let $g$ denote the solution to
\begin{equ}[e:evolgGeneric]
g^{-1} \partial_t g = H\;,
\end{equ}
with some initial condition $g_0\in \CC^1$.
Set $h_i = g^{-1}\partial_i g$ and $U = \brho(g)$ for some 
finite-dimensional representation $\brho$ of $G$.
Then, $h_i$ and $U$ can
be extended uniquely to holomorphic functions $S_\alpha \to \CC$ and
$S_\alpha \to \CC^1$ respectively. 
\end{lemma}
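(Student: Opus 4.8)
The plan is to reduce the statement to the elementary fact that a linear ODE in a Banach space with holomorphic, uniformly bounded coefficients has a holomorphic solution. First I would derive \emph{closed linear} evolution equations for $U$ and $h_i$. From $g^{-1}\partial_t g=H$ one has $\partial_t g=gH$ and hence $\partial_t g^{-1}=-g^{-1}(\partial_t g)g^{-1}=-Hg^{-1}$; therefore, with $\brho$ also denoting the derived Lie-algebra representation, $U=\brho(g)$ obeys $\partial_t U=U\,\brho(H)$, and
\begin{equ}
\partial_t h_i=(\partial_t g^{-1})\partial_i g+g^{-1}\partial_i(\partial_t g)=-Hh_i+h_iH+\partial_i H=[h_i,H]+\partial_i H\;.
\end{equ}
The key point is that the right-hand sides contain no bare $g$: the two quadratic-in-$g$ contributions to $\partial_t h_i$ have combined into the commutator $[h_i,H]$.

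Next I would set up the function spaces so that these become genuine linear ODEs with good coefficients. I would view $U$ as valued in $\CC^1(\T^3,\End V_\brho)$ and $h_i$ as valued in $\CC(\T^3,\mfg_\C)$ (or the relevant matrix space). Since $H\in\CB\CK$ is, in particular, holomorphic and uniformly bounded on $S_\alpha$ with values in $\CC^1(\T^3,\cdot)$, the operator $V\mapsto V\brho(H(t))$ is bounded on $\CC^1$ uniformly in $t\in S_\alpha$ (boundedness of the product $\CC^1\times\CC^1\to\CC^1$), the operator $h\mapsto[h,H(t)]$ is bounded on $\CC$ uniformly in $t$, and the inhomogeneity $\partial_i H(t)$ is bounded in $\CC$; all three depend holomorphically on $t$. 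Then I would invoke the standard lemma: for a complex Banach space $B$, holomorphic bounded maps $\mcL\colon S_\alpha\to L(B)$ and $f\colon S_\alpha\to B$, and any $y_0\in B$, the equation $y(t)=y_0+\int_0^t(\mcL(s)y(s)+f(s))\,\mrd s$ (integrating over the segment $[0,t]$) has a unique solution $y\colon S_\alpha\to B$, obtained by Picard iteration: the $n$-th correction is bounded in sup-norm over $S_\alpha$ by a constant times $(T\sup_{S_\alpha}\|\mcL\|)^n/n!$, so the iterates converge uniformly, and each iterate is holomorphic, being an integral of a holomorphic integrand.

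Applying this with initial data $U(0)=\brho(g(0))\in\CC^1$ and $h_i(0)=g(0)^{-1}\partial_i g(0)\in\CC$ produces holomorphic extensions $U\colon S_\alpha\to\CC^1$ and $h_i\colon S_\alpha\to\CC$. On the real segment $[0,T]$ the genuine quantities $\brho(g)$ and $g^{-1}\partial_i g$ solve exactly these ODEs with these initial data (the computation above is valid there, $H$ and hence $g$ being $\CC^1$ in space), so by uniqueness of ODE solutions on $[0,T]$ the holomorphic functions just constructed restrict to $U$ and $h_i$ on $[0,T]$, i.e.\ they are the asserted analytic continuations; uniqueness of the continuation is the identity theorem, since $(0,T)$ lies in the interior of $S_\alpha$. (Alternatively, one can first extend $g$ itself via the linear ODE $\partial_t g=gH$, and $g^{-1}$ via $\partial_t g^{-1}=-Hg^{-1}$ — noting $\partial_t\det g=\det g\,\Trace H$, so $\det g$ never vanishes — and then read off the extensions of $U=\brho(g)$ and $h_i=g^{-1}\partial_i g$ by composition and multiplication.)

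The hard part is really just the bookkeeping in the first two steps: recognising that $h_i$ satisfies a \emph{closed} linear equation, and choosing the spaces so that all coefficient operators are bounded \emph{uniformly over the sector} — this is what forces $\CC^1$ for $U$ (so that multiplication is bounded bilinearly) but only $\CC$ for $h_i$, since $\partial_i H$ is merely $\CC^0$-valued. Once that is in place the holomorphic-ODE machinery is entirely routine.
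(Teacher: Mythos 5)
Your proposal is correct and follows essentially the same route as the paper: derive the closed linear ODEs $\partial_t h_i = \partial_i H + [h_i,H]$ and $\partial_t U = U\brho(H)$, interpret them as linear ODEs in $\CC$ and $\CC^1$ respectively, and conclude global holomorphic solvability on $S_\alpha$ from linearity of the equations and holomorphy of the coefficients. The paper's proof is just a terser version of your argument, leaving the Picard iteration and the uniqueness-of-continuation step implicit.
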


\begin{proof}
It suffices to note that $h_i$ and $U$ solve pointwise the ODEs
\begin{equ}
\partial_t h_i = \partial_i H + [h_i, H]\;,\qquad
\partial_t U = U \brho(H)\;.
\end{equ}
We can interpret these as ODEs in $\CC$ and $\CC^1$ respectively, which admit solutions 
since the right-hand sides are holomorphic functions of $H$, $h$ and $U$. 
These solutions are global since the equations are linear.
\end{proof}

Recall
that $\flow(X)$ denotes the solution of the YMH flow without DeTurck term and with initial condition $X$.

\begin{lemma}\label{lem:YMH}
For $X \in \CC^\infty(\T^3,E)$, $\flow(X)$ is real 
analytic on $\R_+$ with values in $\CC$.
\end{lemma}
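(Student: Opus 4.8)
The statement to establish is that for smooth initial data $X \in \CC^\infty(\T^3,E)$, the DeTurck-free YMH flow $t \mapsto \flow_t(X)$ is real analytic in $t$ (on the interval of existence $\R_+$, understood as ``up to blow-up'') with values in the space $\CC = \CC^0(\T^3,E\otimes\dots)$ of continuous functions (equivalently, in any $\CC^k$). The basic idea is to transfer the analyticity we already have for the DeTurck flow $\ymhflow$ (Lemma~\ref{lem:analyticBasic}), together with the analyticity of the gauge transformation relating the two flows (Lemma~\ref{lem:analyticg}), through the formula $\flow_t(X) = g(t)^{-1}\act\ymhflow_t(X)$ for the appropriate gauge transformation $g$.

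The plan is as follows. First I would recall that the DeTurck flow $\ymhflow$ and the DeTurck-free flow $\flow$ differ only by a time-dependent gauge transformation: there is $g\colon [0,T)\to\mfG^\infty$ solving an equation of the form $g^{-1}\partial_t g = -\mrd^*(\text{something built from }\ymhflow(X))$, say $g^{-1}\partial_t g = H$ with $H = \mrd^*\ymhflow_{A}(X)$ or the analogous expression (this is exactly the change of variables discussed around~\eqref{eq:SYMH_deturck} and in Appendix~\ref{app:YMH flow without DeTurck term}), such that $\flow_t(X) = (g(t)^{-1})\act\ymhflow_t(X)$, i.e. the $A$-component is $\Ad_{g^{-1}}\ymhflow_A - (\mrd g^{-1})g$ and the $\Phi$-component is $g^{-1}\ymhflow_\Phi$. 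Next, since $X$ is smooth, $\ymhflow(X)$ is smooth in space for each positive time and, by Lemma~\ref{lem:analyticBasic} applied with $X_0 = \flow_s(X)$ smooth (hence in $\CL_0$) as a new initial condition and restarting, $\ymhflow$ extends to a holomorphic $\CB\CL$-valued function on a sector $S_\alpha$ around each time; thus $H$, being a fixed smooth differential operator applied to $\ymhflow(X)$, is itself a holomorphic $\CB\CK$-valued function on such a sector. Then Lemma~\ref{lem:analyticg} applies verbatim: $h_i = g^{-1}\partial_i g$ and $U = \brho(g)$ (take $\brho$ faithful so $g$ itself is recovered) extend holomorphically to $S_\alpha\to\CC$ and $S_\alpha\to\CC^1$. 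Finally, the formula $\flow_t(X) = g(t)^{-1}\act\ymhflow_t(X)$ expresses $\flow(X)$ as a composition of holomorphic maps — products and sums of $g^{-1}$, $\ymhflow(X)$, and the spatial derivative $\mrd g^{-1}$, all holomorphic in $t$ on $S_\alpha$ with values in $\CC$ (using that $\CC^1\times\CC\to\CC$ multiplication and $\Ad$ are real-analytic/holomorphic) — hence $\flow(X)$ is holomorphic on $S_\alpha$, and its restriction to the real axis is real analytic with values in $\CC$.

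A couple of points need care. One must note that the interval of existence of $\flow(X)$ coincides with that of $\ymhflow(X)$ up to the (invertible, smooth, globally-defined since the ODE for $g$ is linear) gauge transformation $g$, so that ``real analytic on $\R_+$'' is interpreted as: on the maximal interval where the relevant flow exists, and near every such time $t_0$ there is a sector $S_\alpha$ (of $T$-width depending on $t_0$) to which the flow extends holomorphically — this is enough for real analyticity. Also, strictly speaking Lemma~\ref{lem:analyticBasic} gives a short-time analytic continuation from an initial condition in $\CL_0$; to cover all of $\R_+$ one restarts at a sequence of positive times $s_n$, using that $\flow_{s_n}(X)$ (equivalently $\ymhflow_{s_n}(X)$) is smooth, hence in $\CL_0$, and patches the local analytic continuations by uniqueness of analytic continuation.

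The main obstacle I anticipate is purely bookkeeping rather than conceptual: carefully matching the function spaces (the flow is valued in $\CC^\infty$ for positive times but the holomorphic-extension lemmas are stated in the fixed regularity classes $\CK,\CL$), and verifying that the specific nonlinear expression relating $\flow$ to $\ymhflow$ — which involves $\Ad_{g^{-1}}$, a spatial derivative of $g^{-1}$, and the representation $\brho$ acting on $\Phi$ — is genuinely a composition of holomorphic maps between the relevant complexified spaces, so that Lemma~\ref{lem:analyticg}'s conclusion ($h_i\in\CC$, $U\in\CC^1$) suffices to land $\flow(X)$ in $\CC$. Since all the ingredients (smoothing of the flow, holomorphy of $\ymhflow$, holomorphy of $g$) are already in hand, this should be a short argument.
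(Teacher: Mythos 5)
Your strategy is the paper's: relate the two flows by $\flow(X)=\ymhflow(X)^g$ with $g^{-1}\partial_t g=-\mrd^* a$ (Lemma~\ref{lem:deturck_to_no_deturck}), obtain short-time analyticity of $\ymhflow(X)$ from Lemma~\ref{lem:analyticBasic} and of $h$, $U$ from Lemma~\ref{lem:analyticg}, and then restart at positive (smooth) times and patch. One point is off, however: you claim the existence interval of $\flow(X)$ coincides with that of $\ymhflow(X)$ up to the gauge transformation, and accordingly weaken the conclusion to ``up to blow-up''. In fact the DeTurck flow $\ymhflow(X)$ (and the gauge transformation $g$) may blow up in finite time while $\flow(X)$ does not: the DeTurck-free flow exists globally by Lemma~\ref{lem:global_sol_no_deturck}, and it is exactly this global existence, combined with the fact that the short-time analyticity radius at each restart is bounded below in terms of the $\CC^2$ norm of the smooth datum $\flow_s(X)$, that lets the restart-and-patch argument cover all of $\R_+$ as the lemma asserts. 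With that ingredient added, your argument is complete.
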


\begin{proof}
Denoting $\ymhflow(X)= (a,\phi)$,
by Lemma~\ref{lem:deturck_to_no_deturck},
if $g$ solves the ODE
\begin{equ}
g^{-1} \partial_t g = -\mrd^* a\;,\quad g_0 = \id\;,
\end{equ}
then $\flow(X) = \ymhflow(X)^{g}= U(\ymhflow(X) - h)$ where $h$ (understood as $(h_1,h_2,h_3,0)$) and $U$ are defined from $g$ as in Lemma~\ref{lem:analyticg} but with with $\brho$ replaced by $\Ad^3\oplus\brho$ on $\mfg^3 \oplus \higgsvec$.
Lemma~\ref{lem:analyticBasic} implies that $\ymhflow(X)$ is real analytic with values
in $\CC^2$ for short times,
and Lemma~\ref{lem:analyticg} implies that $h$ and $U$ are real analytic
with values in $\CC$ and $\CC^1$ respectively.
It follows that $\flow(X)$ is real analytic for short times (with a lower bound on these times depending
only on its $\CC^2$ norm)
and thus on $\R_+$ since
$\flow(X)$ exists globally by Lemma~\ref{lem:global_sol_no_deturck}.
\end{proof}

\begin{lemma}\label{lem:gauge_orbits_closed_rho}
Let $\rho\in(\frac12,1]$.
The set $\{(X,Y)\in\Omega^\rho\times \Omega^\rho\,:\,X\sim Y \textnormal{ in } \Omega^\rho\}$ is closed in $\Omega^\rho\times \Omega^\rho$.
\end{lemma}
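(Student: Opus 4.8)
The plan is a compactness argument. Suppose $(X_n,Y_n)\to (X,Y)$ in $\Omega^\rho\times\Omega^\rho$ with $X_n\sim Y_n$ in $\Omega^\rho$ for each $n$, and pick $g_n\in\mfG^\rho$ with $Y_n=X_n^{g_n}$. Write $X_n=(A_n,\Phi_n)$, $Y_n=(\bar A_n,\bar\Phi_n)$, and likewise $X=(A,\Phi)$, $Y=(\bar A,\bar\Phi)$, so that $\bar A_n=\Ad_{g_n}A_n-(\mrd g_n)g_n^{-1}$ and $\bar\Phi_n=g_n\Phi_n$. First I would obtain the uniform bound $\sup_n|g_n|_{\CC^\rho}<\infty$: the sequences $(X_n)$ and $(Y_n)$ are bounded in $\Omega^\rho$, hence their connection components are bounded in $\Omega^1_{\gr\rho}$, and the bound then follows from the a priori estimate on gauge transformations relating two elements of $\Omega^1_{\gr\rho}$ established in~\cite[Sec.~3]{CCHS2d}. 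Since each $g_n$ takes values in the compact group $G$ and $(g_n)$ is bounded in $\CC^\rho$, the compact embedding $\CC^\rho\hookrightarrow\CC^{\rho'}$, valid for any $\rho'\in(\tfrac12,\rho)$ (such $\rho'$ exists as $\rho>\tfrac12$), lets me pass to a subsequence along which $g_n\to g$ in $\CC^{\rho'}$. The limit $g$ takes values in $G$ as $G$ is closed, and $g\in\CC^\rho$ by lower semicontinuity of the $\rho$-Hölder seminorm under uniform convergence, so $g\in\mfG^\rho$. Interpolating the uniform $\CC^\rho$ bound against uniform convergence also gives $\mrd g_n\to\mrd g$ in $\CC^{\rho'-1}$.

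Next I would pass to the limit in the two defining identities, using the continuous embedding $\Omega^1_{\gr\rho}\hookrightarrow\Omega\CC^{\rho-1}$, so that $A_n\to A$, $\bar A_n\to\bar A$ in $\Omega\CC^{\rho-1}$ and $\Phi_n\to\Phi$, $\bar\Phi_n\to\bar\Phi$ in $\CC^{\rho-1}$. Since $\rho'>\tfrac12$ and $\rho>\tfrac12$ one has $\rho'+(\rho-1)>0$ and $2\rho'-1>0$, so multiplication is continuous from $\CC^{\rho'}\times\CC^{\rho-1}$ into $\CC^{\rho-1}$ and from $\CC^{\rho'-1}\times\CC^{\rho'}$ into $\CC^{\rho'-1}$; hence $\Ad_{g_n}A_n\to\Ad_g A$, $(\mrd g_n)g_n^{-1}\to(\mrd g)g^{-1}$ and $g_n\Phi_n\to g\Phi$ in the corresponding negative-regularity Hölder spaces. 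Comparing these limits with $X_n^{g_n}=Y_n\to Y$, which also converges in $\Omega\CC^{\rho-1}\times\CC^{\rho-1}$, identifies $\bar A=\Ad_g A-(\mrd g)g^{-1}$ and $\bar\Phi=g\Phi$, i.e.\ $Y=X^g$ with $g\in\mfG^\rho$. Thus $X\sim Y$ in $\Omega^\rho$, which is the claim.

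The main obstacle is the uniform bound $\sup_n|g_n|_{\CC^\rho}<\infty$. Because for $\rho$ close to $\tfrac12$ the connections $A_n,\bar A_n$ have negative regularity $\rho-1<0$, the identity $\mrd g=(\Ad_g A-A^g)\,g$ cannot be solved for $g$ by an elementary bootstrap; instead one exploits that membership in $\Omega^1_{\gr\rho}$ controls line integrals (equivalently, holonomies), which transform under a gauge transformation by conjugation by $g$ and thereby pin down the Hölder norm of $g$ in terms of those of $A$ and $A^g$. This is precisely the estimate imported from~\cite{CCHS2d}; the remaining ingredients are the elementary compactness steps above together with routine continuity of multiplication in Hölder spaces.
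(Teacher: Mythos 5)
Your proof is correct and follows essentially the same route as the paper's: the a priori bound $|g|_{\Hol\rho}\lesssim |A|_{\gr\rho}+|A^g|_{\gr\rho}$ from~\cite[Prop.~3.35]{CCHS2d} plus the compact embedding $\mfG^\rho\hookrightarrow\mfG^{\bar\rho}$ to extract a limiting $g\in\mfG^\rho$, then passage to the limit in the gauge-action identity. The only (immaterial) difference is in the last step, where the paper invokes the joint continuity of $(g,A)\mapsto A^g$ in the $\Omega_{\gr\cdot}$ topologies from~\cite[Lem.~3.30,~3.32]{CCHS2d}, while you identify the limit in the weaker Hölder--Besov spaces via Young-type multiplication bounds, which suffices since the identity need only be verified distributionally.
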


\begin{proof}
Suppose $(X_n,Y_n)\to (X,Y)$ in $\Omega^\rho\times \Omega^\rho$
with $X_n^{g_n} = Y_n$.
Consider $\bar\rho\in(\frac12,\rho)$.
By the estimate $|g|_{\Hol\rho}\lesssim |A|_{\gr\rho}+|A^g|_{\gr\rho}$ (see~\cite[Prop.~3.35]{CCHS2d})
and
the compact embedding $\mfG^\rho\hookrightarrow \mfG^{\bar\rho}$,
it follows that $g_n \to g$ in $\mfG^{\bar\rho}$ along a subsequence with $g\in\mfG^\rho$.
Furthermore, by continuity of the maps
\begin{equs}
\mfG^{\bar\rho}\times \Omega_{\gr\rho}\ni (g,A)
&\mapsto
A^g \in \Omega_{\gr{\bar\rho}}\;,
\\
\mfG^{\bar\rho} \times \CC^{\rho-1}(\T^3,\higgsvec) \ni (g,\Phi)
&\mapsto
g\Phi \in\CC^{\rho-1}(\T^3,\higgsvec)\;,
\end{equs}
(see~\cite[Lem.~3.30,~3.32]{CCHS2d} for the first; the second is obvious),
we have $X^g - Y_n \to 0$ in $\Omega^{\bar\rho}$, which implies $X^g = Y$.
\end{proof}

\begin{proof}[of Proposition~\ref{prop:back_unique}]
The implication~\ref{pt:all_t}$\Rightarrow$\ref{pt:one_t} is trivial.
To show~\ref{pt:X_Y_sim}$\Rightarrow$\ref{pt:all_t},
suppose $X^g=Y$ for some $g\in\mfG^\rho$.
Then $\ymhflow_t(X)^{h_t}=\ymhflow_t(Y)$ for all sufficiently small $t>0$, where $h$ solves a parabolic PDE with initial condition $g$
(see~\cite[Sec.~2.2]{CCHS2d} or Remark~\ref{rem:H_meaning} below).
Hence \ref{pt:X_Y_sim}$\Rightarrow$\ref{pt:all_t}.
For the final implication \ref{pt:one_t}$\Rightarrow$\ref{pt:X_Y_sim}, suppose first that $X, Y\in\CC^\infty(\T^3,E)$.
By Lemma~\ref{lem:deturck_to_no_deturck}, $\flow_t(X)\sim \ymhflow_t(X)$, and likewise for $Y$, hence $\flow_t(X)\sim\flow_t(Y)$.
By Lemma~\ref{lem:gauge_for_flow}, there exists $g\in\mfG^\infty$
such that $\flow_s(X)^g = \flow_s(Y)$ for all $s \geq t$.
By Lemma~\ref{lem:YMH} and uniqueness of analytic continuations, it follows that 
$\flow_s(X)^g = \flow_s(Y)$ for all $s \geq 0$,
in particular $X\sim Y$.
The case of general $(X,Y)\in\Omega^\rho$ follows by Lemma~\ref{lem:gauge_orbits_closed_rho}
and the fact that $\lim_{s\to0}\ymhflow_s(X) = X$
in $\Omega^{\bar\rho}$
for any $\bar\rho\in(\frac12,\rho)$.
Indeed, if 
$\tilde\ymhflow_t(X) = \tilde\ymhflow_t(Y)$ for some $t >0$,
since $\ymhflow_s(X)$ and $\ymhflow_s(Y)$ are $\CC^\infty$
for all $s\in (0,T_X \wedge T_Y \wedge t)$,
one has $\ymhflow_s(X)\sim \ymhflow_s(Y)$ by the  $\CC^\infty$ case proved above,
and in particular $(\ymhflow_s(X), \ymhflow_s(Y))$
is in the set defined in Lemma~\ref{lem:gauge_orbits_closed_rho}.
One then has  $X\sim Y$ in $\Omega^\rho$ by sending $s\to 0$.
Hence~\ref{pt:one_t}$\Rightarrow$\ref{pt:X_Y_sim}.
\end{proof}

\subsection{Final state space}
\label{subsec:final_state_space}

In this subsection, we introduce the ``second half'' of our state space $(\state,\Sigma)$.
We do this by introducing an additional norm $\heatgr{\cdot}_{\alpha,\theta}$.
The role of this norm will become clear in Section~\ref{subsec:est_gauge_trans} where it will be used to control the H{\"o}lder norm of any gauge transformation $g$ in terms of $X$ and $X^g$ (Theorem~\ref{thm:g_bound} below).

Let $\mcX$\label{mcX page ref} denote the set of oriented line segments in $\T^3$ of length at most $\frac14$,
i.e.
\begin{equ}\label{eq:def_mcX}
\mcX \eqdef \T^3 \times B_{1/4}
\end{equ}
where $B_r \eqdef \{v \in \R^3 \,:\, |v| \leq r\}$, (the first coordinate of $\mcX$ is the initial point, the second coordinate is the direction).
We say that $\ell = (x,v)$ and $\bar \ell = (\bar x, \bar v)$ are joinable if $\bar x = x+v$,
there exists $c \in \R$ such that $\bar v = c v$, and $|v+\bar v| \le 1/4$. In that case, we define
$\ell \sqcup \bar \ell = (x, v+\bar v)$.

Let $\Omega$ denote the space of additive $\mfg$-valued functions on $\mcX$ (see~\cite[Def.~3.1]{CCHS2d} for the definition in 2D \dash precisely the same definition applies here with $\T^2$ and $\R^2$ replaced by $\T^3$ and $\R^3$).
Observe that every $A\in\Omega\CB$,
where $\CB$ denotes the space of bounded measurable $\mfg$-valued functions,
canonically defines an element of $\Omega$ by
\begin{equ}[eq:A_ell_def]
A(x,v) \eqdef \int_0^1 \sum_{i=1}^3 v_i A_i(x+tv)\,\mrd t\;.
\end{equ}

\begin{definition}\label{def:heatgr}
For $A\in\Omega$, $\alpha\in(0,1]$, and $t>0$, define
\begin{equ}
|A|_{\gr\alpha;<t} \eqdef \sup_{|\ell|<t} \frac{|A(\ell)|}{|\ell|^\alpha}\;,
\end{equ}
where the supremum is taken over all lines $\ell\in \mcX$ of length less than $t$.
For $A\in\Omega\CD'$ and
$\theta\in \R$, define the (extended) norm
\begin{equ}
\heatgr{A}_{\alpha,\theta} \eqdef \sup_{t\in(0,1)} |\CP_tA|_{\gr\alpha;<t^\theta}\;.
\end{equ}
For $\eta,\beta,\delta\in\R,\alpha\in(0,1],\theta\in\R$,
recalling Definition~\ref{def:init} and the notational convention $X=(A,\Phi)$,
define on $\CD'(\T^3,E)$ the (extended) metric
$\Sigma \equiv \Sigma_{\eta,\beta,\delta,\alpha,\theta}$ by
\begin{equ}[e:def-Sigma]
\Sigma(X,\bar X) \eqdef \Theta(X,\bar X) + \heatgr{A-\bar A}_{\alpha,\theta}\;.
\end{equ}
Similar to before, we use the shorthand $\Sigma(X) \eqdef \Sigma(X,0)$
and will often drop the reference to $\eta,\beta,\delta,\alpha,\theta$ in the notation $\Sigma$.
\end{definition}

\begin{definition}\label{def:state}
For $\alpha\in(0,1]$ and $\theta\in\R$, let $\state\equiv\state_{\eta,\beta,\delta,\alpha,\theta}\subset \init_{\eta,\beta,\delta}$  denote the closure of smooth functions 
under $\Sigma$.
Unless otherwise stated, we equip $\state$ with the metric $\Sigma$.
\end{definition}

Lemma~\ref{lem:heatgr_Besov_embed} below provides a basic relation between $\heatgr{\cdot}_{\alpha,\theta}$ and the H{\"o}lder--Besov spaces $|\cdot|_{\CC^\eta}$
that generalises the estimates
\begin{equ}\label{eq:CC_gr_bound}
|A|_{\CC^{\alpha-1}}\lesssim |A|_{\gr\alpha} \leq |A|_{\gr1} \asymp |A|_{L^\infty}
\end{equ}
(which hold for all $A\in\Omega\CC$ and $\alpha\in(0,1]$, see~\cite[Prop.~3.21]{Chevyrev19YM}).
We first state the following lemma.

\begin{lemma}\label{lem:local_to_global_gr}
For all $\alpha\in(0,1]$, $t\in(0,1)$, and $A\in\Omega$,
\begin{equ}
|A|_{\gr\alpha} \leq t^{-(1-\alpha)} |A|_{\gr\alpha;<t}\;.
\end{equ}
\end{lemma}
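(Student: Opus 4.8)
The plan is to exploit the additivity of $A$ together with the fact that, by definition of $\Omega$, every line segment on which $A$ is evaluated has length at most $\tfrac14$. First I would fix such an admissible segment $\ell$ and write $L\eqdef|\ell|\le\tfrac14$. If $L<t$, then $\ell$ itself appears in the supremum defining $|A|_{\gr\alpha;<t}$, so that $|A(\ell)|\le|A|_{\gr\alpha;<t}\,L^\alpha$; since $t\in(0,1)$ and $1-\alpha\ge0$ we have $t^{-(1-\alpha)}\ge1$, and hence $|A(\ell)|\le t^{-(1-\alpha)}|A|_{\gr\alpha;<t}\,L^\alpha$ in this case.

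If instead $L\ge t$, I would cut $\ell$ into $n\eqdef\lfloor L/t\rfloor+1$ consecutive sub-segments $\ell_1,\dots,\ell_n$ of equal length $L/n$. Since $n>L/t$, each $\ell_i$ has length $L/n<t$, and moreover $L/n\le L\le\tfrac14$, so each $\ell_i$ is again admissible and appears in the supremum defining $|A|_{\gr\alpha;<t}$. Additivity of $A$ then gives
\begin{equ}
|A(\ell)|\le\sum_{i=1}^n|A(\ell_i)|\le n\,|A|_{\gr\alpha;<t}\,(L/n)^\alpha=n^{1-\alpha}L^\alpha\,|A|_{\gr\alpha;<t}\;,
\end{equ}
so it remains to verify $n^{1-\alpha}\le t^{-(1-\alpha)}$, equivalently (as $1-\alpha\ge0$ and $x\mapsto x^{1-\alpha}$ is non-decreasing) $n\le 1/t$. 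Here one uses that the regime $L\ge t$ forces $t\le L\le\tfrac14$, so that $n\le L/t+1=(L+t)/t\le(\tfrac12)/t<1/t$, as required.

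Combining the two cases, $|A(\ell)|\le t^{-(1-\alpha)}|A|_{\gr\alpha;<t}\,|\ell|^\alpha$ for every admissible $\ell$; dividing by $|\ell|^\alpha$ and taking the supremum over all admissible $\ell$ yields $|A|_{\gr\alpha}\le t^{-(1-\alpha)}|A|_{\gr\alpha;<t}$. I do not expect a genuine obstacle: the only point requiring a little care is the strict inequality ``$<t$'' in the definition of $|A|_{\gr\alpha;<t}$ together with the behaviour when $t$ is close to $1$, and both are resolved by the observation that admissibility already constrains $|\ell|\le\tfrac14$, so that either $\ell$ is short enough on its own or else $t\le\tfrac14$ and the subdivision above stays within range.
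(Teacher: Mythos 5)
Your proof is correct and is essentially the paper's argument: the paper simply cites the standard subdivision argument of \cite[Ex.~4.24]{FrizHairer}, which is exactly your case split and the decomposition of $\ell$ into $n>|\ell|/t$ joinable pieces of length $<t$, with the bound $n\le 1/t$ coming from $|\ell|\le\tfrac14$. Nothing further is needed.
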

\begin{proof}
Identical to~\cite[Ex.~4.24]{FrizHairer}.
\end{proof}

The proof of the following lemma is obvious.

\begin{lemma}\label{lem:local_lower_exponent}
For all $0<\alpha\leq \beta\leq 1$, $t\in(0,1)$, and $A\in\Omega$,
\begin{equ}
|A|_{\gr\alpha;<t} \leq t^{\beta-\alpha}|A|_{\gr\beta;<t}\;.
\end{equ}
\end{lemma}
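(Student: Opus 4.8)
The plan is to reduce everything to a pointwise estimate on individual line segments, after which the seminorm bound follows by taking a supremum. First I would fix an arbitrary oriented line segment $\ell$ with $|\ell| < t$; since $t<1$ this also forces $0<|\ell|<1$. The only input needed is the elementary identity
\begin{equ}
\frac{|A(\ell)|}{|\ell|^\alpha} = \frac{|A(\ell)|}{|\ell|^\beta}\,|\ell|^{\beta-\alpha}\;.
\end{equ}
By the very definition of $|A|_{\gr\beta;<t}$ the first factor on the right is bounded by $|A|_{\gr\beta;<t}$, and since $\beta-\alpha\ge 0$ and $0<|\ell|<t<1$, monotonicity of $s\mapsto s^{\beta-\alpha}$ on $(0,1]$ gives $|\ell|^{\beta-\alpha}\le t^{\beta-\alpha}$. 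Hence $|A(\ell)|/|\ell|^\alpha \le t^{\beta-\alpha}|A|_{\gr\beta;<t}$ for every admissible $\ell$.

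Taking the supremum over all line segments of length less than $t$ on the left-hand side then yields $|A|_{\gr\alpha;<t}\le t^{\beta-\alpha}|A|_{\gr\beta;<t}$, which is the assertion. There is no real obstacle; the one point worth recording is that restricting to lengths $<t$ in \emph{both} seminorms is exactly what makes the argument work, since for long lines the factor $|\ell|^{\beta-\alpha}$ could exceed $t^{\beta-\alpha}$ and the bound would fail.
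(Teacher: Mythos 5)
Your proof is correct, and it is the standard argument the paper has in mind (the paper simply declares the proof obvious and omits it): writing $|A(\ell)|/|\ell|^\alpha = (|A(\ell)|/|\ell|^\beta)\,|\ell|^{\beta-\alpha}$ and using $|\ell|<t$ together with $\beta-\alpha\ge 0$ is exactly the point. Note only that monotonicity of $s\mapsto s^{\beta-\alpha}$ holds on all of $(0,\infty)$, so the restriction $t<1$ is not actually needed for this step.
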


We now have the ingredients in place for the generalisation of \eqref{eq:CC_gr_bound} we just announced.

\begin{lemma}\label{lem:heatgr_Besov_embed}
For $A\in\Omega\CD'$, $\alpha\in(0,1]$, and $\theta\geq0$,
\begin{equ}
|A|_{\CC^{(1+2\theta)(\alpha-1)}} \lesssim \heatgr{A}_{\alpha,\theta} \lesssim |A|_{\CC^{2\theta(\alpha-1)}}\;,
\end{equ}
where the proportionality constants depend only on $\alpha,\theta$.
\end{lemma}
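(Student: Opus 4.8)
The plan is to prove the two inequalities separately, in both cases combining the elementary scaling relations between the seminorms $|\cdot|_{\gr\alpha;<t}$ (Lemmas~\ref{lem:local_to_global_gr} and~\ref{lem:local_lower_exponent}), the embedding~\eqref{eq:CC_gr_bound}, the smoothing estimate for the heat semigroup, and the standard characterisation of Hölder--Besov spaces via the heat semigroup. Since smooth functions are dense in the spaces under consideration and all quantities appearing are (semi)norms, it suffices to argue for smooth $A$; then $\CP_t A\in\Omega\CC$ for every $t>0$, so~\eqref{eq:CC_gr_bound} applies to $\CP_t A$.

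\emph{Upper bound.} Fix $t\in(0,1)$. Every $\mfg$-valued $1$-form $B$ satisfies $|B(\ell)|\le|\ell|\,|B|_{L^\infty}$, hence $|B|_{\gr1;<s}\le|B|_{L^\infty}$ for all $s$, and then Lemma~\ref{lem:local_lower_exponent} (with $\alpha\le1$) gives $|\CP_t A|_{\gr\alpha;<t^\theta}\le t^{\theta(1-\alpha)}|\CP_t A|_{\gr1;<t^\theta}\le t^{\theta(1-\alpha)}|\CP_t A|_{L^\infty}$. By the Schauder estimate $|\CP_t A|_{L^\infty}=|e^{t\Delta}A|_{L^\infty}\lesssim t^{\theta(\alpha-1)}|A|_{\CC^{2\theta(\alpha-1)}}$, valid for $t\in(0,1]$ because $2\theta(\alpha-1)\le0$, the two powers of $t$ cancel and we obtain $|\CP_t A|_{\gr\alpha;<t^\theta}\lesssim|A|_{\CC^{2\theta(\alpha-1)}}$ uniformly in $t$; taking the supremum over $t$ yields $\heatgr{A}_{\alpha,\theta}\lesssim|A|_{\CC^{2\theta(\alpha-1)}}$.

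\emph{Lower bound.} Fix $t\in(0,1)$. By~\eqref{eq:CC_gr_bound} and then Lemma~\ref{lem:local_to_global_gr} applied at scale $t^\theta$, $|\CP_t A|_{\CC^{\alpha-1}}\lesssim|\CP_t A|_{\gr\alpha}\le t^{-\theta(1-\alpha)}|\CP_t A|_{\gr\alpha;<t^\theta}\le t^{\theta(\alpha-1)}\heatgr{A}_{\alpha,\theta}$. Writing $\sigma\eqdef\alpha-1$ and $\gamma\eqdef(1+2\theta)(\alpha-1)$, one has $(\sigma-\gamma)/2=\theta(1-\alpha)$ and $\gamma\le\sigma\le0$ (using $\alpha\le1$, $\theta\ge0$), so the above reads $\sup_{t\in(0,1]}t^{(\sigma-\gamma)/2}|\CP_t A|_{\CC^\sigma}\lesssim\heatgr{A}_{\alpha,\theta}$. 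The heat-semigroup characterisation of Hölder--Besov spaces (see e.g.\ \cite{MW17Phi43}) then gives $|A|_{\CC^\gamma}\lesssim\sup_{t\in(0,1]}t^{(\sigma-\gamma)/2}|\CP_t A|_{\CC^\sigma}$, which is the claim. In the degenerate case $\gamma=\sigma$, i.e.\ $\theta=0$ or $\alpha=1$, this last inequality reduces to the lower semicontinuity of $|\cdot|_{\CC^\sigma}$ under $\CP_t A\to A$ in $\CD'$.

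The only point requiring care is the bookkeeping of exponents in this last step: one is forced to take $\sigma=\alpha-1$ so that~\eqref{eq:CC_gr_bound} is applicable (being tailored to the exponent $\alpha-1$), and then the $t$-power produced by Lemma~\ref{lem:local_to_global_gr} pins down $\gamma=\sigma+2\theta(\alpha-1)=(1+2\theta)(\alpha-1)$; the inequality $\gamma\le\sigma$ needed for the semigroup characterisation to apply holds precisely because $\alpha\le1$ and $\theta\ge0$. Otherwise the argument is a routine assembly of already-established estimates, with all implicit constants depending only on $\alpha$ and $\theta$.
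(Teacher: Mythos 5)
Your proof is correct and follows essentially the same route as the paper's: both directions come from the heat-semigroup characterisation of $\CC^\gamma$ combined with~\eqref{eq:CC_gr_bound}, Lemma~\ref{lem:local_to_global_gr} and Lemma~\ref{lem:local_lower_exponent}, with exactly the cancellation of $t$-powers you describe (the paper merely writes the two bounds as one chain of inequalities). One cosmetic remark: the opening reduction to smooth $A$ is both unjustified as stated (smooth functions are not dense in $\CC^\gamma$, and the extended norms may be infinite) and unnecessary, since every estimate is applied to $\CP_tA$, which is smooth for $t>0$ for arbitrary $A\in\Omega\CD'$.
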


\begin{proof}
Denoting $\eta\eqdef(1+2\theta)(\alpha-1)$,
\begin{equs}
|A|_{\mcC^{\eta}}
&\asymp \sup_{t\in(0,1)}t^{\frac{\alpha-1-\eta}{2}}|\CP_t A|_{\mcC^{\alpha-1}}
\lesssim
\sup_{t\in(0,1)}t^{\frac{\alpha-1-\eta}{2}}|\CP_t A|_{\gr{\alpha}}
\\
&\leq \sup_{t\in(0,1)}t^{\frac{\alpha-1-\eta}{2}}t^{-\theta(1-\alpha)}|\CP_t A|_{\gr{\alpha};< t^\theta}
= \sup_{t\in(0,1)}|\CP_t A|_{\gr{\alpha};< t^\theta}
\\
&\leq \sup_{t\in(0,1)} t^{\theta(1-\alpha)}|\CP_t A|_{\gr1} \asymp |A|_{\CC^{-2\theta(1-\alpha)}}\;,
\end{equs}
where we used~\eqref{eq:CC_gr_bound} in the first line, Lemma~\ref{lem:local_to_global_gr} in the second line,
and Lemma~\ref{lem:local_lower_exponent}
in the third line.
\end{proof}

\subsection{Gauge transformations}
\label{sec:Gauge transformations}

Throughout this subsection, let us fix 
\begin{equ}[e:paramG]
\rho\in(1/2,1],
\quad \eta\in (-\rho,\rho-1],
\quad \beta\in(-\rho,0],
\quad\mbox{and}\quad \delta\in\R \;.
\end{equ}
Recall that $\CC^{0,\eta}$ denotes the closure of smooth functions in $\CC^\eta$.
Since $\rho+\eta>0$, $2\rho-1>0$, and $\eta\leq \rho-1$,
the group $\mfG^{0,\rho}$ (resp.\ $\mfG^{\rho}$)
acts continuously on $\CC^{0,\eta}(\T^3,E)$ (resp.\ $\CC^{\eta}(\T^3,E)$)
via $(A,\Phi) \mapsto (A,\Phi)^g \eqdef (\Ad_g A - (\mrd g) g^{-1},g\Phi)$.
The following result shows that, under further conditions, the action of $\mfG^{0,\rho}$ extends to $\init$.

\begin{definition}\label{def:admissible}
We say that $(\rho,\eta,\beta,\delta)\in\R^4$ satisfies condition~\eqref{eq:CGI} if
\begin{equation}\label{eq:CGI}
\begin{split}
&\rho\in [3/2-\delta, 1] \;,\quad
\eta\in [2-2\delta-\rho,\rho-1]\;,
\\
&\beta\in(-\rho,0]\;,\quad
\text{and}
\quad
\delta \in [1/2,1) \;.
\end{split}\tag{$\CG\CI$}
\end{equation}
We say that $(\rho,\eta,\beta,\delta,\alpha,\theta) \in\R^6$ satisfies condition~\eqref{eq:CGS} if
\begin{equation}\label{eq:CGS}
\begin{split}
&(\rho,\eta,\beta,\delta) \text{ satisfies condition~\eqref{eq:CGI}, and}
\\
&\alpha\in(0,1]\;,\quad
\theta\geq0\;,
\quad
\rho \geq 1+2\theta(\alpha-1)\;,
\quad \rho+(1+2\theta)(\alpha-1)>0\;.
\end{split}\tag{$\CG\CS$}
\end{equation}
\end{definition}

\begin{remark}\label{rem:GI-equiv}
The conditions \eqref{e:paramG} stated at the start of this subsection together with
$\eta \geq 2-2\delta-\rho$ and $\delta<1$
are equivalent to~\eqref{eq:CGI}. 
\end{remark}

\begin{proposition}\label{prop:group_action}
Suppose $(\rho,\eta,\beta,\delta)$ satisfies~\eqref{eq:CGI}.
Then $(g,X) \mapsto X^g$ defines a continuous left group action $\mfG^{0,\rho}\times \init \to \init$
which is uniformly continuous on every ball in $\mfG^{0,\rho}\times \init$.
If in addition $(\eta,\beta,\delta)$ satisfies~\eqref{eq:CI},
then $X^g \sim X$ for all $(g,X)\in\mfG^{0,\rho}\times \init$.
Finally, if $(\rho,\eta,\beta,\delta,\alpha,\theta)$ satisfies~\eqref{eq:CGS},
then $(g,X) \mapsto X^g$ defines a continuous left group action $\mfG^{0,\rho}\times \state \to \state$
which is uniformly continuous on every ball in $\mfG^{0,\rho}\times \state$.
\end{proposition}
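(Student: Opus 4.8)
The plan is to derive all three assertions from a single bilinear-type continuity estimate. Since $\init$, $\state$ and $\mfG^{0,\rho}$ are by construction closures of smooth objects under $\Theta$, $\Sigma$ and $|\cdot|_{\CC^\rho}$ respectively, it suffices to prove, for smooth data $(g,X),(\bar g,\bar X)$ ranging over a fixed ball $\{|g|_{\CC^\rho},|\bar g|_{\CC^\rho},\Sigma(X),\Sigma(\bar X)\le R\}$, a bound of the form $\Sigma(X^g,\bar X^{\bar g})\lesssim_R \Sigma(X,\bar X)+|g-\bar g|_{\CC^\rho}$ (and its analogue with $\Theta$ in place of $\Sigma$ when only \eqref{eq:CGI} is assumed). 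The map $(g,X)\mapsto X^g$ then extends to a map that is uniformly continuous on balls, and the left-action identities $X^{g_1g_2}=(X^{g_2})^{g_1}$, $X^{\id}=X$ pass to the limit from their validity for smooth $g,X$. The $|\cdot|_{\CC^\eta}$-component of $\Theta$ is handled by the continuity of the $\mfG^\rho$-action on $\CC^\eta(\T^3,E)$ recorded at the start of Section~\ref{sec:Gauge transformations}, which uses only $\rho+\eta>0$ and $\eta\le\rho-1$, both contained in \eqref{eq:CGI}, together with the usual paraproduct estimates for the differences. Thus the real content of the first assertion is the bound on $\fancynorm{X^g;\bar X^{\bar g}}_{\beta,\delta}=|\CN(X^g)-\CN(\bar X^{\bar g})|_{\CB^{\beta,\delta}}$.

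For that bound I would write $A^g_i=\Ad_g A_i-(\partial_i g)g^{-1}$, $\Phi^g=g\Phi$, and decompose $\CP_t(\Ad_g A)=\Ad_g\,\CP_t A+C_t$, where the commutator $C_t$ solves $(\partial_t-\Delta)C_t=(\Delta\Ad_g)\CP_t A+2\nabla\Ad_g\cdot\nabla\CP_t A$ with $C_0=0$; performing one integration by parts in the Duhamel formula so as to avoid the ill-defined product $\Delta\Ad_g\cdot\CP_sA$, one obtains $|C_t|_{\CC^{\rho-1}}\lesssim t^{(\eta+\rho)/2-}|A|_{\CC^\eta}$, i.e.\ a gain of $t^{\rho-1/2-}$ over $\CP_t A$. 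Substituting this (and the corresponding splitting of $\nabla\CP_t(\Ad_g A)$) into $\CP_tX^g\otimes\nabla\CP_tX^g$, the leading term is, schematically, the $g$-rotation of $\CN_t(X)$, which is controlled by $t^{-\delta}\fancynorm{X}_{\beta,\delta}$ times a polynomial in $|g|_{\CC^\rho}$ via the product estimate $\CC^\rho\times\CC^\beta\to\CC^\beta$, available precisely because $\beta>-\rho$. Every other term in the expansion carries at least one factor of $\nabla\Ad_g$, of the pure-gauge part $(\partial_i g)g^{-1}\in\CC^{\rho-1}$, or of $C_t$ — each strictly more regular than a generic factor once the Schauder bounds $|\CP_t A|_{\CC^s}\lesssim t^{(\eta-s)/2}|A|_{\CC^\eta}$ are used — and a direct if lengthy accounting shows each such term is $\lesssim t^{-\delta+\kappa}$ for some $\kappa>0$, the inequalities $\rho\ge 3/2-\delta$, $\eta\ge 2-2\delta-\rho$, $\eta\le\rho-1$, $\delta<1$ of \eqref{eq:CGI} being exactly what closes the exponents (the term $\CP_t((\partial_i g)g^{-1})\otimes\nabla\CP_t((\partial_i g)g^{-1})$ is the one that forces $\rho\ge 3/2-\delta$). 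Differences are estimated by applying the same multilinear bounds to a telescoping of $(g,X)$ to $(\bar g,\bar X)$, producing the factors $|X-\bar X|_{\CC^\eta}$ and $|g-\bar g|_{\CC^\rho}$.

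For the equivalence $X^g\sim X$ under the extra hypothesis \eqref{eq:CI} (which makes $\ymhflow$ well posed on $\init$, Proposition~\ref{prop:YM_flow_minus_heat}), I would argue exactly as in Proposition~\ref{prop:back_unique}. For smooth $X,g$ one has $X^g\sim X$ classically, and by gauge covariance of the DeTurck--YMH flow (cf.\ \cite[Sec.~2.2]{CCHS2d}) $\ymhflow_t(X^g)=\ymhflow_t(X)^{h_t}$ for small $t>0$, where $h$ solves a parabolic equation with initial datum $g$; since $h_t$ is then smooth for $t>0$, both $\ymhflow_t(X^g)$ and $\ymhflow_t(X)$ are smooth and gauge equivalent, hence $\tilde\ymhflow_t(X^g)=\tilde\ymhflow_t(X)$ for small $t$ and therefore for all $t$ by the deterministic definition of $\tilde\ymhflow$, i.e.\ $X^g\sim X$ in $\init$. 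For general $(g,X)$ one approximates by smooth data and passes to the limit: by the first assertion $X_n^{g_n}\to X^g$ in $\init$, by Proposition~\ref{prop:YM_flow_minus_heat} the maps $\ymhflow_t$ are continuous on $\init$, and the coupled $(A,\Phi,h)$-dynamic (Appendix~\ref{app:evolving_rough_g}) depends continuously on its data, so the identity and hence $\tilde\ymhflow_t(X^g)=\tilde\ymhflow_t(X)$ survive in the limit.

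Finally, for the action on $\state$ under \eqref{eq:CGS} it remains to bound $\heatgr{A^g-\bar A^{\bar g}}_{\alpha,\theta}$; by Lemma~\ref{lem:heatgr_Besov_embed} one has $\heatgr{\cdot}_{\alpha,\theta}\lesssim|\cdot|_{\CC^{2\theta(\alpha-1)}}$, and the conditions $\rho\ge 1+2\theta(\alpha-1)$ and $\rho+(1+2\theta)(\alpha-1)>0$ in \eqref{eq:CGS} place $2\theta(\alpha-1)$ in the range $(-\rho,\rho-1]$ on which $\mfG^{0,\rho}$ acts continuously, so the argument used for the $|\cdot|_{\CC^\eta}$-component applies verbatim; together with the first assertion this yields the continuous, uniformly-on-balls action $\mfG^{0,\rho}\times\state\to\state$. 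I expect the main obstacle to be the term-by-term bookkeeping in the second paragraph: the commutator expansion of $\CN_t(X^g)$ generates a sizeable list of terms of the shape (product of $\CC^\rho$- or $\CC^{\rho-1}$-data in $g$) $\times$ (heat-regularised bilinear expression in $A$), and one must verify that \emph{each} is integrable against $t^\delta$ as $t\downarrow0$ — the inequalities of \eqref{eq:CGI} being sharp, there is no slack, and the difference estimates must be organised so as never to require a product of two factors of negative Hölder regularity with nonpositive total exponent, which is exactly why the integrated-by-parts form of $C_t$ is essential rather than the naive Duhamel formula involving $\Delta\Ad_g$.
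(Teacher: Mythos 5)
Your treatment of the first two assertions is essentially sound and follows the same strategy as the paper: the key input is the commutator gain $|\CP_t(gh)-g\,\CP_t h|_\infty\lesssim t^{(\eta+\rho)/2}|g|_{\Hol\rho}|h|_{\CC^\eta}$ (and its gradient analogue), followed by the perturbation estimates for the $-(\mrd g)g^{-1}$ term and the telescoping to get uniform continuity on balls; your identification of $\CP_t((\p_ig)g^{-1})\otimes\nabla\CP_t((\p_ig)g^{-1})$ as the term forcing $\rho\ge\tfrac32-\delta$, and of the cross terms as forcing $\eta\ge2-2\delta-\rho$, matches the paper exactly. The only methodological difference in this part is how the commutator bound is obtained: you integrate by parts in the Duhamel formula for $(\partial_t-\Delta)C_t$, whereas the paper (Lemma~\ref{lem:commutators_Schwartz}, Corollary~\ref{cor:commutator_heat}) deduces it from the reconstruction theorem applied to the heat kernel viewed as a Schwartz test function; both give the same gain $t^{\rho/2}$ and either works. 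For $X^g\sim X$ the paper also passes from smooth data by density, but it closes the limit with Proposition~\ref{prop:back_unique} together with the closedness of the gauge relation in $\Omega^\rho\times\Omega^\rho$ (Lemma~\ref{lem:gauge_orbits_closed_rho}), which avoids having to control the time-evolved gauge transformation $h_t$ in the limit; your citation of Appendix~\ref{app:evolving_rough_g} is off target (that appendix treats the stochastic coupled system), though the underlying idea is the same.

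There is, however, a genuine gap in your final paragraph, i.e.\ in the action on $\state$. You propose to bound $\heatgr{A^g-\bar A^{\bar g}}_{\alpha,\theta}$ by $|A^g-\bar A^{\bar g}|_{\CC^{2\theta(\alpha-1)}}$ via Lemma~\ref{lem:heatgr_Besov_embed} and then run the $\CC^\eta$-argument ``verbatim''. This cannot work for the $\Ad_g A$ part: membership of $A$ in $\state$ only gives $A\in\CC^{(1+2\theta)(\alpha-1)}$ (the \emph{lower} bound in Lemma~\ref{lem:heatgr_Besov_embed}), and since $\alpha<1$ one has $(1+2\theta)(\alpha-1)<2\theta(\alpha-1)$, so neither $A$ nor $\Ad_gA$ need lie in $\CC^{2\theta(\alpha-1)}$, and convergence in $\state$ does not yield convergence in that space. (In the regime actually used, $\theta=0+$ and $\alpha=\tfrac12-$, so $2\theta(\alpha-1)=0-$ while $A$ is only a $\CC^{-1/2-}$ distribution.) The Besov-embedding route is legitimate only for the pure-gauge term $(\mrd g)g^{-1}\in\CC^{\rho-1}\subset\CC^{2\theta(\alpha-1)}$ (this is Lemma~\ref{lem:heatgr_dg}, using $\rho\ge1+2\theta(\alpha-1)$). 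For the rotation term one needs a direct estimate of the form $\heatgr{A-\Ad_gA}_{\alpha,\theta}\lesssim(|g-1|_\infty+|g|_{\Hol\rho})\heatgr{A}_{\alpha,\theta}$, which requires a separate argument: the commutator $\CP_t\Ad_gA-\Ad_g\CP_tA$ is small in $L^\infty\asymp|\cdot|_{\gr1}$ by Corollary~\ref{cor:commutator_heat}, while $\CP_tA-\Ad_g\CP_tA$ is controlled in $|\cdot|_{\gr\alpha;<t^\theta}$ by an estimate on line integrals over \emph{short} lines (the restriction of \cite[Lem.~3.32]{CCHS2d} to lines of length $<t^\theta$, using $\rho+\alpha>1$, which is where the condition $\rho+(1+2\theta)(\alpha-1)>0$ of \eqref{eq:CGS} enters). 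This is the content of Lemma~\ref{lem:heatgr_A_Ad_g_A} and is the one genuinely new ingredient your proposal is missing.
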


We break up the proof into several lemmas.

\begin{lemma}\label{lem:commutators_Schwartz}
Let $g\in \CC^\rho(\T^3)$, $h\in\CC^\eta(\T^3)$,
which we identify with periodic distributions on $\R^3$.
For every Schwartz function $\phi\in\CS(\R^3)$
\begin{equ}\label{eq:phi_Schwartz}
\sup_{x\in\R^3} \sup_{\lambda\in(0,1]}
\lambda^{-\eta-\rho}|\scal{gh,\phi^\lambda_x} - g(x)\scal{h,\phi^\lambda_x}| \lesssim |g|_{\Hol\rho}|h|_{\CC^\eta}\;,
\end{equ}
where 
$\phi^\lambda_x = \lambda^{-3}\phi(\frac{\cdot-x}{\lambda})$
and the proportionality constant depends only on $\rho,\eta,\phi$.
\end{lemma}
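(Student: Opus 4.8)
The plan is to reduce at once to a commutator and then control it by decomposing the Schwartz weight $\phi$ dyadically, so that everything comes down to testing $h\in\CC^\eta$ against Hölder functions. Since $\rho>0$, the function $g$ is continuous so $g(x)$ is well defined, and since $\eta+\rho>0$ the product $gh\in\CC^\eta$ is well defined with $\scal{gh,\psi}=\scal{h,g\psi}$ for every test function $\psi$ (true for smooth $g,h$, and preserved in the limit by density of smooth functions and continuity of the $\CC^\eta\times\CC^\rho$ pairing on compactly supported arguments). Hence
\[
\scal{gh,\phi^\lambda_x}-g(x)\scal{h,\phi^\lambda_x}=\scal{h,(g-g(x))\phi^\lambda_x}\;,
\]
and, translating, it is enough to show $|\scal{h,(g-g(0))\phi^\lambda_0}|\lesssim\lambda^{\eta+\rho}|g|_{\Hol\rho}|h|_{\CC^\eta}$ for all $\lambda\in(0,1]$. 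The case $\eta=0$ (where $\CC^0=L^\infty$) is immediate from $|g(y)-g(x)|\le|g|_{\Hol\rho}|y-x|^\rho$ and $\int|z|^\rho|\phi(z)|\,\mrd z<\infty$, so assume $\eta\in(-1,0)$.

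First I would write $\phi=\sum_{j\ge0}\phi_j$ with $\phi_0=\chi\phi$ and $\phi_j=\bigl(\chi(2^{-j}\cdot)-\chi(2^{-j+1}\cdot)\bigr)\phi$ for a fixed $\chi\in\CC^\infty(\R^3)$ equal to $1$ on $\{|z|\le1\}$ and supported in $\{|z|\le2\}$, so that $\phi_j$ is supported in $\{|z|\le2^{j+1}\}$ and, $\phi$ being Schwartz, $|\phi_j|_{\CC^r}\lesssim_{r,N}2^{-Nj}$ for all $r,N\ge0$. Then $(g-g(0))\phi^\lambda_0=\sum_{j\ge0}(\psi_j)^\lambda_0$ with $\psi_j(z)\eqdef(g(\lambda z)-g(0))\phi_j(z)$ supported in $\{|z|\le2^{j+1}\}$; splitting the increment $\psi_j(z)-\psi_j(z')$ into $[g(\lambda z)-g(\lambda z')]\phi_j(z)$ and $[g(\lambda z')-g(0)][\phi_j(z)-\phi_j(z')]$ and using $|g(\lambda z')-g(0)|\le|g|_{\Hol\rho}(2^{j+1}\lambda)^\rho$ on the support yields, after absorbing harmless powers of $2^j$,
\[
|\psi_j|_{\CC^\rho}\lesssim|g|_{\Hol\rho}\,\lambda^\rho\,2^{j}\,|\phi_j|_{\CC^1}\lesssim_N|g|_{\Hol\rho}\,\lambda^\rho\,2^{-Nj}\;.
\]

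The one non-routine ingredient is a refinement of the usual ``testing against a Schwartz function'' bound permitting test functions of finite Hölder regularity: for $\eta\in(-1,0)$ and $\gamma\in(-\eta,1]$ there is $C=C(\eta,\gamma)$ such that every $f\in\CC^\gamma(\R^3)$ supported in $\{|z-z_0|\le R\}$, $R\ge1$, satisfies $|\scal{h,f^\mu_{z_0}}|\le C\,R^{3+\eta+\gamma}\mu^\eta|f|_{\CC^\gamma}|h|_{\CC^\eta}$ for all $\mu\in(0,1]$. I would obtain this by localising $h$ with a smooth cut-off (to work with a compactly supported $\CC^\eta(\R^3)$ distribution) and then running a Littlewood--Paley argument: bound $\scal{\Delta_jh,f^\mu_{z_0}}$ by $|\Delta_jh|_\infty\,|f^\mu_{z_0}|_{L^1}$ on low-frequency blocks and by $|\tilde\Delta_jh|_\infty\,|\Delta_j(f^\mu_{z_0})|_{L^1}$ on high-frequency blocks, using the cancellation of the kernel of $\Delta_j$ to gain $\mu^{-\gamma}2^{-j\gamma}|f|_{\CC^\gamma}$ there, and sum the two geometric series, which converge precisely because $\eta<0$ and $\eta+\gamma>0$; the factor $R^{3+\eta+\gamma}$ tracks $|f^\mu_{z_0}|_{L^1}\lesssim R^3|f|_\infty$ and the volume of the enlarged support. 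Applying this with $\gamma=\rho$ (legitimate since $\rho>-\eta$), $f=\psi_j$, $R=2^{j+1}$, $\mu=\lambda$, and inserting the bound on $|\psi_j|_{\CC^\rho}$, gives $|\scal{h,(\psi_j)^\lambda_0}|\lesssim_N|g|_{\Hol\rho}|h|_{\CC^\eta}\,\lambda^{\eta+\rho}\,2^{-(N-3-\eta-\rho)j}$; choosing $N$ large ($N=10$, say) makes $\sum_{j\ge0}$ converge, and multiplying by $\lambda^{-\eta-\rho}$ gives the assertion, with a constant depending only on $\eta$, $\rho$ and finitely many Schwartz seminorms of $\phi$.

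The main obstacle is precisely this Hölder-regularity refinement of the testing estimate. Because $g$ is only $\Hol\rho$, the weight $(g-g(x))\phi^\lambda_x$ is itself merely $\CC^\rho$ rather than smooth, so the standard estimate for Schwartz test functions does not apply; and because $\phi$ is Schwartz rather than compactly supported, one cannot avoid the dyadic decomposition and must keep track of how the constant in the refined estimate grows with the support radius $2^{j+1}$, relying on the rapid decay $|\phi_j|_{\CC^r}\lesssim_{r,N}2^{-Nj}$ to absorb that growth. The commutator identity, the increment splitting and the geometric summations are then routine bookkeeping.
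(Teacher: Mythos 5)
Your proof is correct, and it reaches the estimate by a somewhat different route than the paper. Both arguments share the same skeleton: decompose the Schwartz function $\phi$ into well-localised pieces, prove the commutator bound on each piece, and sum using the rapid decay of $\phi$. The paper decomposes over the integer lattice, $\phi=\sum_{k\in\Z^3}\phi_k(\cdot-k)$ with $|\phi_k|_{\CC^1}\lesssim|k|^{-d-1-\rho}$, applies to each piece the bound for compactly supported test functions in $\CB^1$ \emph{recentred at $x+\lambda k$} (obtained directly from the reconstruction theorem applied to the modelled distribution $y\mapsto g(y)h$), and then pays the recentring error $|g(x)-g(x+\lambda k)|\lesssim|\lambda k|^\rho$, whose sum converges precisely because of the extra $|k|^{-\rho}$ in the decay of $\phi_k$. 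You instead move $g$ onto the test-function side via $\scal{gh,\phi^\lambda_x}-g(x)\scal{h,\phi^\lambda_x}=\scal{h,(g-g(x))\phi^\lambda_x}$, use a dyadic rather than lattice decomposition, and replace the reconstruction-theorem input by the ``enlarged test-function class'' estimate ($\CC^\eta$ distributions may be tested against $\CC^\gamma$ functions whenever $\gamma>-\eta$, at the cost of a polynomial factor in the support radius). These two local ingredients are essentially the same fact in different clothing -- the reconstruction bound for $y\mapsto g(y)h$ is itself proved by such a testing argument -- so neither route is shorter, but yours is self-contained modulo that one standard lemma, while the paper's leans on \cite[Prop.~4.14]{Hairer14}.

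Two small points to tighten. First, your justification of $\scal{gh,\psi}=\scal{h,g\psi}$ ``by density of smooth functions'' needs rephrasing: smooth functions are \emph{not} dense in $\CC^\rho$ or $\CC^\eta$ in norm. The standard fix is to mollify, which gives approximations bounded in $\CC^\rho\times\CC^\eta$ and convergent in $\CC^{\rho'}\times\CC^{\eta'}$ for any $\rho'<\rho$, $\eta'<\eta$ with $\rho'+\eta'>0$, and to use joint continuity of both the Young product and the pairing $\scal{h,g\psi}$ in these weaker topologies. Second, the exact exponent $R^{3+\eta+\gamma}$ in your testing lemma is not important (any polynomial in $R$ is absorbed by the $2^{-Nj}$ decay), but you should state it that way rather than commit to a specific power you have not verified.
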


\begin{proof}
The reconstruction theorem, using a modification of the proof of~\cite[Prop.~4.14]{Hairer14},
implies that
\begin{equ}\label{eq:phi_B1}
\sup_{x\in\R^3}\sup_{\lambda\in(0,1]}\sup_{\phi\in\CB^1} \lambda^{-\rho-\eta} |\scal{gh,\phi^\lambda_x}-g(x)\scal{h,\phi^\lambda_x}|\lesssim |g|_{\Hol\rho}|h|_{\CC^\eta}\;.
\end{equ}
To extend this to a Schwartz function $\phi$,
we can decompose $\phi=\sum_{k\in\Z^3}\phi_k(\cdot-k)$ where $\phi_k$ has support in a ball of radius $\sqrt d$ centered at $0$ and 
$|\phi_k|_{\CC^1}\lesssim |k|^{-d-1-\rho}$.
Then $\phi^\lambda_x=\sum_{k\in\Z^3}(\phi_k)_{x+\lambda k}^\lambda$
and~\eqref{eq:phi_B1} implies
\begin{equ}
|\scal{gh,(\phi_k)^\lambda_{x+\lambda k}} - g(x+\lambda k)\scal{h,(\phi_k)_{x+\lambda k}^\lambda}|\lesssim k^{-d-1-\rho}\lambda^{\rho+\eta}|g|_{\Hol\rho}|h|_{\CC^\eta}\;.
\end{equ}
Furthermore
\begin{equ}
|(g(x)-g(x+\lambda k))\scal{h,(\phi_k)_{x+\lambda k}^\lambda}|\lesssim |\lambda k|^{\rho}|g|_{\Hol\rho} k^{-d-1-\rho} \lambda^\eta|h|_{\CC^\eta}\;,
\end{equ}
from which~\eqref{eq:phi_Schwartz} follows by taking the sum over $k\in\Z^3$.
\end{proof}

\begin{corollary}\label{cor:commutator_heat}
Consider $g,h$ as in Lemma~\ref{lem:commutators_Schwartz}.
Then for all $t\in(0,1)$
\begin{equ}
|\CP_t(gh)-g\CP_t h|_\infty
\lesssim
t^{\frac{\eta+\rho}{2}}|g|_{\Hol\rho}|h|_{\CC^\eta}
\end{equ}
and
\begin{equ}
|\nabla \CP_t (gh)-g \nabla \CP_t h|_\infty
\lesssim
t^{\frac{\eta+\rho-1}{2}} |g|_{\Hol\rho}|X|_{\CC^\eta}\;,
\end{equ}
where the proportionality constants depend only on $\rho,\eta$.
\end{corollary}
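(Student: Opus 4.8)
The plan is to reduce both estimates to Lemma~\ref{lem:commutators_Schwartz} by realising $\CP_t$, and $\nabla\CP_t$, as the pairing against a fixed Schwartz profile, suitably rescaled. Recall that for a $\Z^3$-periodic distribution $f$ on $\R^3$ one has $\CP_t f(x) = \scal{f, p_t(x-\cdot)}$, where $p_t(z) = (4\pi t)^{-3/2} e^{-|z|^2/(4t)}$ is the Euclidean heat kernel (testing a periodic distribution against the full-space Gaussian automatically produces the torus heat kernel, so no separate periodisation argument is needed). Set $\phi \eqdef p_1 \in \CS(\R^3)$; since $p_1$ is an even Gaussian, a one-line scaling computation gives $p_t(x-\cdot) = \phi^{\sqrt t}_x$. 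For $t \in (0,1)$ we have $\sqrt t \in (0,1]$, so Lemma~\ref{lem:commutators_Schwartz} applies with $\lambda = \sqrt t$ and yields, uniformly in $x \in \T^3$,
\begin{equ}
|\CP_t(gh)(x) - g(x)\CP_t h(x)| = |\scal{gh,\phi^{\sqrt t}_x} - g(x)\scal{h,\phi^{\sqrt t}_x}| \lesssim t^{\frac{\eta+\rho}{2}}|g|_{\Hol\rho}|h|_{\CC^\eta}\;,
\end{equ}
where the implicit constant depends only on $\rho,\eta$ because $\phi=p_1$ is a fixed function. Taking the supremum over $x$ gives the first bound.

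For the second bound I would differentiate the kernel representation, $\partial_{x_i}\CP_t f(x) = \scal{f, \partial_{x_i}p_t(x-\cdot)}$, and set $\psi_i \eqdef \partial_i p_1 \in \CS(\R^3)$, which is an odd function. The same scaling computation, now keeping track of the chain rule and the parity of $\psi_i$, gives $\partial_{x_i}p_t(x-\cdot) = -t^{-1/2}(\psi_i)^{\sqrt t}_x$, hence $\partial_{x_i}\CP_t f(x) = -t^{-1/2}\scal{f,(\psi_i)^{\sqrt t}_x}$. Subtracting the two instances ($f=gh$, and $f=h$ multiplied by $g(x)$) and applying Lemma~\ref{lem:commutators_Schwartz} once more with $\phi=\psi_i$ and $\lambda=\sqrt t$ gives
\begin{equ}
|\partial_{x_i}\CP_t(gh)(x) - g(x)\partial_{x_i}\CP_t h(x)| = t^{-1/2}|\scal{gh,(\psi_i)^{\sqrt t}_x} - g(x)\scal{h,(\psi_i)^{\sqrt t}_x}| \lesssim t^{\frac{\eta+\rho-1}{2}}|g|_{\Hol\rho}|h|_{\CC^\eta}\;.
\end{equ}
Taking the supremum over $x$ and over $i\in\{1,2,3\}$ yields the claimed estimate (the $|X|_{\CC^\eta}$ appearing in the statement should read $|h|_{\CC^\eta}$).

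There is no serious obstacle here: the entire analytic content sits in Lemma~\ref{lem:commutators_Schwartz}, and the only points requiring a little care are (a) the identification of the torus heat kernel $p_t(x-\cdot)$, and its spatial gradient, with the rescalings $\phi^{\sqrt t}_x$ and $-t^{-1/2}(\psi_i)^{\sqrt t}_x$ of the fixed Schwartz profiles $\phi=p_1$ and $\psi_i=\partial_i p_1$; and (b) checking that $\sqrt t\in(0,1]$ whenever $t\in(0,1)$, so that Lemma~\ref{lem:commutators_Schwartz} applies verbatim with the constant inheriting only $\rho,\eta$ dependence. The factor $t^{-1/2}$ lost in differentiating the kernel is precisely what turns the exponent $\tfrac{\eta+\rho}{2}$ into $\tfrac{\eta+\rho-1}{2}$.
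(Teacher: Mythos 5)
Your proof is correct and is exactly the paper's argument: the paper's proof consists of the single line ``Apply Lemma~\ref{lem:commutators_Schwartz} with $\phi=\CP_1$ and $\phi=\nabla\CP_1$'', and you have simply spelled out the scaling identities $p_t(x-\cdot)=\phi^{\sqrt t}_x$ and $\partial_{x_i}p_t(x-\cdot)=-t^{-1/2}(\psi_i)^{\sqrt t}_x$ that make this application legitimate, including the source of the extra $t^{-1/2}$. You are also right that $|X|_{\CC^\eta}$ in the statement is a typo for $|h|_{\CC^\eta}$.
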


\begin{proof}
Apply Lemma~\ref{lem:commutators_Schwartz} with $\phi= \CP_1$ and $\phi=\nabla \CP_1$, where we interpret $\CP_t$ as the heat kernel at time $t>0$.
\end{proof}

For $g\in G$ and $(a,\phi)\in E$, let us denote $g(a,\phi) \eqdef (\Ad_g a, g\phi)\in E$.
We extend this action to $G\times E^3\to E^3$ diagonally.
In particular, $gX$ is well-defined as an element of $\CC^{\eta}(\T^3,E)$ (resp.\ $\CC^{\eta}(\T^3,E^3)$)
provided $g\in\mfG^{\rho}$
and $X\in \CC^{\eta}(\T^3,E)$
(resp.\ $X\in \CC^{\eta}(\T^3,E^3)$). 
Similarly for the spaces $\CC^{0,\eta}$ and $\mfG^{0,\rho}$. 
Denote further
\begin{equ}
g\CN_t(X) \eqdef (g\otimes g)\CN_t(X) = (g\CP_t X )\otimes (g \nabla \CP_t X)\;.
\end{equ}

\begin{lemma}\label{lem:commutators}
For $X,Y\in \CC^\eta(\T^3,E)$, $g\in\mfG^\rho$, and $t\in(0,1)$
\begin{multline*}
|g(\CN_t(X) - \CN_t(Y)) - (\CN_t(gX) - \CN_t(gY))|_{\infty}
\\
\lesssim t^{\eta +\frac{\rho-1}2}
|g|_{\CC^\rho}|g|_{\Hol\rho}
|X-Y|_{\CC^\eta}(|X|_{\CC^\eta}+|Y|_{\CC^\eta})\;,
\end{multline*}
where the proportionality constant depends only on $\eta,\rho$.
\end{lemma}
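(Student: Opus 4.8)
The plan is to reduce everything to the bilinearity of $W\mapsto\CN_t(W)$ and a short telescoping argument, and then invoke Corollary~\ref{cor:commutator_heat} together with the standard bounds $|\CP_t W|_\infty\lesssim t^{\eta/2}|W|_{\CC^\eta}$ and $|\nabla\CP_t W|_\infty\lesssim t^{(\eta-1)/2}|W|_{\CC^\eta}$ (valid for $\eta\le 0$) and the multiplication estimate $|gW|_{\CC^\eta}\lesssim|g|_{\CC^\rho}|W|_{\CC^\eta}$, which holds under the standing hypotheses $\rho\in(\frac12,1]$, $\eta\in(-\rho,\rho-1]$ of this subsection since then $\rho+\eta>0$ and $\eta\le 0$.

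First I would set $Z\eqdef X-Y$ and expand, using that $\CP_t$, $\nabla$ and $W\mapsto gW$ are linear while $W\mapsto\CN_t(W)$ is quadratic,
\begin{equ}
\CN_t(X)-\CN_t(Y)=\CP_t X\otimes\nabla\CP_t Z+\CP_t Z\otimes\nabla\CP_t Y\;,
\end{equ}
and likewise $\CN_t(gX)-\CN_t(gY)=\CP_t(gX)\otimes\nabla\CP_t(gZ)+\CP_t(gZ)\otimes\nabla\CP_t(gY)$, using $gX-gY=gZ$. Since $g$ acts diagonally, $g\CN_t(W)=(g\CP_t W)\otimes(g\nabla\CP_t W)$, so the quantity to be estimated is the sum over $(U,V)\in\{(X,Z),(Z,Y)\}$ of the ``generic'' difference $(g\CP_t U)\otimes(g\nabla\CP_t V)-\CP_t(gU)\otimes\nabla\CP_t(gV)$. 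For this generic term I would telescope through the mixed term $\CP_t(gU)\otimes(g\nabla\CP_t V)$, writing it as
\begin{equ}
\big(g\CP_t U-\CP_t(gU)\big)\otimes\big(g\nabla\CP_t V\big)+\CP_t(gU)\otimes\big(g\nabla\CP_t V-\nabla\CP_t(gV)\big)\;.
\end{equ}
On the first summand, Corollary~\ref{cor:commutator_heat} controls $|g\CP_t U-\CP_t(gU)|_\infty\lesssim t^{(\eta+\rho)/2}|g|_{\Hol\rho}|U|_{\CC^\eta}$, while $|g\nabla\CP_t V|_\infty\le|g|_\infty|\nabla\CP_t V|_\infty\lesssim|g|_{\CC^\rho}t^{(\eta-1)/2}|V|_{\CC^\eta}$. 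On the second summand, $|\CP_t(gU)|_\infty\lesssim t^{\eta/2}|gU|_{\CC^\eta}\lesssim t^{\eta/2}|g|_{\CC^\rho}|U|_{\CC^\eta}$ and Corollary~\ref{cor:commutator_heat} gives $|g\nabla\CP_t V-\nabla\CP_t(gV)|_\infty\lesssim t^{(\eta+\rho-1)/2}|g|_{\Hol\rho}|V|_{\CC^\eta}$. In both summands the powers of $t$ add up to $t^{(2\eta+\rho-1)/2}=t^{\eta+(\rho-1)/2}$, using $|a\otimes b|\le|a|\,|b|$, so the generic difference is $\lesssim t^{\eta+(\rho-1)/2}|g|_{\CC^\rho}|g|_{\Hol\rho}|U|_{\CC^\eta}|V|_{\CC^\eta}$. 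Summing over $(U,V)=(X,Z)$ and $(U,V)=(Z,Y)$ turns $|U|_{\CC^\eta}|V|_{\CC^\eta}$ into $|Z|_{\CC^\eta}\big(|X|_{\CC^\eta}+|Y|_{\CC^\eta}\big)=|X-Y|_{\CC^\eta}\big(|X|_{\CC^\eta}+|Y|_{\CC^\eta}\big)$, which is exactly the claimed bound with proportionality constant depending only on $\eta,\rho$.

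I do not anticipate a genuine obstacle: the content is bookkeeping of the $t$-exponents and of the two norms $|g|_{\CC^\rho}$ and $|g|_{\Hol\rho}$. The one point that must be arranged carefully is the telescoping, so that \emph{each} of the two resulting differences is a bona fide $g$-commutator of $\CP_t$ (resp.\ $\nabla\CP_t$) — this is what yields a factor $|g|_{\Hol\rho}$ together with the crucial extra power $t^{\rho/2}$ compared with the naive bound $|\CN_t(W)|_\infty\lesssim t^{\eta-1/2}$, and it is what makes the right-hand side quadratic in $|X|_{\CC^\eta}+|Y|_{\CC^\eta}$ and linear in $|X-Y|_{\CC^\eta}$. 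It is also worth recording at the outset that the exponent constraints $\rho\in(\frac12,1]$, $\eta\in(-\rho,\rho-1]$ are precisely what makes Corollary~\ref{cor:commutator_heat} and the multiplication estimate applicable.
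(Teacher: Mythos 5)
Your proof is correct and follows essentially the same route as the paper's: both arguments reduce the claim to a telescoping of the quadratic difference into terms each containing exactly one heat-semigroup commutator controlled by Corollary~\ref{cor:commutator_heat}, combined with the standard bounds $|\CP_tW|_\infty\lesssim t^{\eta/2}|W|_{\CC^\eta}$, $|\nabla\CP_tW|_\infty\lesssim t^{(\eta-1)/2}|W|_{\CC^\eta}$ and Young's product theorem $|gW|_{\CC^\eta}\lesssim|g|_{\CC^\rho}|W|_{\CC^\eta}$ (valid since $\eta>-\rho$). The only difference is cosmetic — you polarise via $Z=X-Y$ first and then telescope each bilinear piece, whereas the paper telescopes the full expression into four analogous terms directly — and the exponent bookkeeping is identical.
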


\begin{proof}
We use the shorthand $a_i,b_i$ for $i=1,2$ to denote $g\CP_t X, g\CP_t Y$ if $i=1$ and $g\nabla \CP_tX, g\nabla \CP_t Y$ if $i=2$, and $\bar a_i,\bar b_i$ to denote the same symbols except with the $g$ inside the $\CP_t$, e.g.\ $\bar a_1 = \CP_tgX,\bar b_2 = \nabla \CP_t gY$.
With this notation, and henceforth dropping $\otimes$,
the quantity we aim to bound is
$|(a_1a_2-b_1b_2)-(\bar a_1 \bar a_2 - \bar b_1 \bar b_2)|_\infty$.

By Corollary~\ref{cor:commutator_heat},
\begin{equ}
|a_1-\bar a_1|_\infty = |\CP_t g X - g \CP_t X|_\infty
\lesssim
t^{\frac{\eta+\rho}{2}}|g|_{\Hol\rho}|X|_{\CC^\eta}
\end{equ}
and
\begin{equ}
|a_2-\bar a_2|_\infty = |\nabla \CP_t g X - g \nabla \CP_t X|_\infty 
\lesssim
t^{\frac{\eta+\rho-1}{2}} |g|_{\Hol\rho}|X|_{\CC^\eta}\;,
\end{equ}
with similar inequalities for the ``$b$'' terms.
It follows that 
\begin{equs}
|(b_1-\bar b_1)(a_2 - b_2)|_\infty
&\lesssim
t^{\eta + \frac{\rho-1}2}|g|_{\Hol\rho}|X-Y|_{\CC^\eta}|Y|_{\CC^\eta}
\\
|(\bar a_1-\bar b_1)(b_2 - \bar b_2)|_\infty
&\lesssim
t^{\eta + \frac{\rho-1}2}|g|_{\CC^\rho}|g|_{\Hol\rho}|X-Y|_{\CC^\eta}|Y|_{\CC^\eta}
\end{equs}
where in the first line we used 
 the fact that multiplication by $g$ on $E$ preserves the $L^\infty$ norm,
 and in the second line Young's theorem for $\CC^\rho\times \CC^\eta$ by $\eta>-\rho$.
 In the same way
\begin{equs}
|((a_1-\bar a_1)-(b_1-\bar b_1))a_2|_\infty
&\lesssim
t^{\eta + \frac{\rho-1}2}|g|_{\CC^\rho}|X-Y|_{\CC^\eta}|X|_{\CC^\eta}\;,
\\
|\bar a_1((a_2-\bar a_2)-(b_2-\bar b_2))|_\infty
&\lesssim
t^{\eta + \frac{\rho-1}2}|g|_{\CC^\rho}|g|_{\Hol\rho}|X-Y|_{\CC^\eta}|X|_{\CC^\eta}\;.
\end{equs}
It remains to observe that $(a_1a_2-b_1b_2)-(\bar a_1 \bar a_2 - \bar b_1 \bar b_2)$
is the sum of the previous four terms inside the norms $|\cdot|_\infty$.
\end{proof}

\begin{lemma}\label{lem:gX_gY_bound}
Suppose $\eta + \frac{\rho-1}2 \geq -\delta$.
Then for $X,Y\in\CC^\eta(\T^3,E)$ and $g\in\mfG^\rho$
\begin{equ}
\fancynorm{gX;gY}_{\beta,\delta} 
\lesssim |g|_{\CC^\rho}^2\fancynorm{X;Y}_{\beta,\delta} +
|g|_{\CC^\rho}|g|_{\Hol\rho}|X-Y|_{\CC^\eta}(|X|_{\CC^\eta} + |Y|_{\CC^\eta})\;,
\end{equ}
where the proportionality constant depends only on $\eta,\beta,\rho,\delta$.
\end{lemma}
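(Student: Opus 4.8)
The plan is to peel off the ``commutator'' between the gauge transformation and the map $\CN_t$ and to estimate the two resulting pieces separately, using the weight $t^\delta$ built into $\fancynorm{\cdot;\cdot}_{\beta,\delta}$ together with the fact that $\beta\le 0$. For fixed $t\in(0,1)$ (for which all objects below are smooth functions of the space variable), and writing $g$ also for the pointwise action of $g\otimes g$ on $E\otimes E^3$-valued functions as in the paragraph preceding Lemma~\ref{lem:commutators}, I would decompose
\begin{equ}
\CN_t(gX)-\CN_t(gY) = g\bigl(\CN_t(X)-\CN_t(Y)\bigr) + E_t\;,\qquad E_t \eqdef \bigl(\CN_t(gX)-\CN_t(gY)\bigr)-g\bigl(\CN_t(X)-\CN_t(Y)\bigr)\;.
\end{equ}

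For the main term, the action of $g$ on $E\otimes E^3$ is multiplication by a matrix-valued function whose $\CC^\rho$-norm is $\lesssim|g|_{\CC^\rho}^2$ (using that $G$ is embedded into matrices and that $|g|_\infty$ is bounded since $G$ is compact, so that $\CC^\rho$ being a multiplication algebra for $\rho>0$ applies to its entries); and since $\beta\in(-\rho,0]$ one has $\rho+\beta>0$ and $\beta\le\rho$, so multiplication extends to a bounded bilinear map $\CC^\rho\times\CC^\beta\to\CC^\beta$ (Young's theorem, exactly as used in the proof of Lemma~\ref{lem:commutators}). Hence $|g(\CN_t(X)-\CN_t(Y))|_{\CC^\beta}\lesssim|g|_{\CC^\rho}^2\,|\CN_t(X)-\CN_t(Y)|_{\CC^\beta}$; multiplying by $t^\delta$ and taking the supremum over $t\in(0,1)$ bounds the contribution of this term by $|g|_{\CC^\rho}^2\,\fancynorm{X;Y}_{\beta,\delta}$.

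For the error term, I would invoke Lemma~\ref{lem:commutators}, which gives $|E_t|_\infty\lesssim t^{\eta+\frac{\rho-1}{2}}|g|_{\CC^\rho}|g|_{\Hol\rho}|X-Y|_{\CC^\eta}(|X|_{\CC^\eta}+|Y|_{\CC^\eta})$, together with the elementary bound $|\cdot|_{\CC^\beta}\lesssim|\cdot|_\infty$, valid since $\beta\le 0$. Multiplying by $t^\delta$ produces the prefactor $t^{\delta+\eta+\frac{\rho-1}{2}}$, which is at most $1$ on $(0,1)$ precisely because of the hypothesis $\eta+\frac{\rho-1}{2}\ge-\delta$; taking the supremum over $t$ then bounds the $E_t$-contribution by $|g|_{\CC^\rho}|g|_{\Hol\rho}|X-Y|_{\CC^\eta}(|X|_{\CC^\eta}+|Y|_{\CC^\eta})$. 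Adding the two contributions and recalling that $\fancynorm{gX;gY}_{\beta,\delta}=\sup_{t\in(0,1)}t^\delta|\CN_t(gX)-\CN_t(gY)|_{\CC^\beta}$ yields the claim, with constant depending only on $\eta,\beta,\rho,\delta$.

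I do not expect a genuine obstacle here: the argument is essentially a repackaging of Lemma~\ref{lem:commutators}. The only point requiring care is the exact balance between the small-$t$ divergence $t^{\eta+(\rho-1)/2}$ coming from the commutator estimate and the weight $t^\delta$ — absorbing the former into the latter is exactly what the standing hypothesis $\eta+\frac{\rho-1}{2}\ge-\delta$ makes possible — together with the routine bookkeeping of the $g\otimes g$-action on the tensor space $E\otimes E^3$.
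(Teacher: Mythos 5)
Your proof is correct and follows essentially the same route as the paper: decompose $\CN_t(gX)-\CN_t(gY)$ into $(g\otimes g)(\CN_t(X)-\CN_t(Y))$ plus the commutator error, bound the former by the multiplication estimate $|(g\otimes g)Z|_{\CC^\beta}\lesssim|g|_{\CC^\rho}^2|Z|_{\CC^\beta}$, and control the latter via Lemma~\ref{lem:commutators} together with the embedding $L^\infty\hookrightarrow\CC^\beta$ and the hypothesis $\eta+\frac{\rho-1}{2}\ge-\delta$ to absorb the small-$t$ divergence into the weight $t^\delta$. The paper's proof is just a terser statement of exactly this argument.
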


\begin{proof}
For
$Z \in \CC^\beta(\T^3,E\otimes E^3)$ and $g\in\mfG^\rho$,
\begin{equ}\label{eq:g_diagonal}
|(g\otimes g) Z|_{\CC^\beta} \lesssim |g|_{\CC^\rho}^2|Z|_{\CC^\beta}\;.
\end{equ}
The conclusion now follows by applying~\eqref{eq:g_diagonal} to $Z=\CN_t(X)-\CN_t(Y)$
together with Lemma~\ref{lem:commutators}.
\end{proof}

\begin{lemma}\label{lem:X_gX_bound}
Suppose $\eta + \frac{\rho-1}2 \geq -\delta$.
For $X\in\CC^\eta(\T^3,E)$ and $g\in\mfG^\rho$,
\begin{equ}
\fancynorm{X;gX}_{\beta,\delta} \lesssim |g-1|_{\CC^\rho}|g|_{\CC^\rho}\fancynorm{X}_{\beta,\delta}
+|g|_{\CC^\rho}|g|_{\Hol\rho}|X|_{\CC^\eta}^2\;, 
\end{equ}
where the proportionality constant depends only on $\eta,\rho,\beta,\delta$.
\end{lemma}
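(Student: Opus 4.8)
The plan is to repeat the proof of Lemma~\ref{lem:gX_gY_bound} with its second slot set to $0$. For a fixed $t\in(0,1)$ I would interpolate through the ``commuted'' object $g\CN_t(X)=(g\otimes g)\CN_t(X)$ and write
\begin{equ}
\CN_t(X)-\CN_t(gX)=(1-g\otimes g)\,\CN_t(X)+\big(g\CN_t(X)-\CN_t(gX)\big)\;.
\end{equ}
We may assume $\fancynorm{X}_{\beta,\delta}<\infty$, as otherwise there is nothing to prove; then $\CN_t(X)\in\CC^\beta$ with $t^\delta|\CN_t(X)|_{\CC^\beta}\le\fancynorm{X}_{\beta,\delta}$ for every $t$.

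For the first term, I would regard $1-g\otimes g$ as a $\CC^\rho$ function valued in $\End(E\otimes E^3)$ and estimate $|1-g\otimes g|_{\CC^\rho}\lesssim|g-1|_{\CC^\rho}|g|_{\CC^\rho}$ by splitting $g\otimes g-1=(g-1)\otimes 1+g\otimes(g-1)$ and using that $\CC^\rho$ is a multiplicative algebra and that $|g|_{\CC^\rho}\gtrsim1$ because $g$ is $G$-valued; note that this is where the smallness factor $|g-1|_{\CC^\rho}$ (degenerating as $g\to1$) comes from. The multiplication bound already used in~\eqref{eq:g_diagonal}, which requires only $\rho+\beta>0$ (true since $\beta\in(-\rho,0]$), then gives $|(1-g\otimes g)\CN_t(X)|_{\CC^\beta}\lesssim|g-1|_{\CC^\rho}|g|_{\CC^\rho}|\CN_t(X)|_{\CC^\beta}$; multiplying by $t^\delta$ and taking the supremum over $t$ produces the contribution $|g-1|_{\CC^\rho}|g|_{\CC^\rho}\fancynorm{X}_{\beta,\delta}$.

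For the second term, I would invoke Lemma~\ref{lem:commutators} with $Y=0$, using $\CN_t(0)=0$, which bounds $|g\CN_t(X)-\CN_t(gX)|_\infty\lesssim t^{\eta+\frac{\rho-1}{2}}|g|_{\CC^\rho}|g|_{\Hol\rho}|X|_{\CC^\eta}^2$. Since $\beta\le0$ we have $|\cdot|_{\CC^\beta}\lesssim|\cdot|_\infty$, and the hypothesis $\eta+\frac{\rho-1}{2}\ge-\delta$ ensures $t^{\delta+\eta+\frac{\rho-1}{2}}\le1$ on $(0,1)$, so multiplying by $t^\delta$ and taking the supremum over $t$ contributes $|g|_{\CC^\rho}|g|_{\Hol\rho}|X|_{\CC^\eta}^2$. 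Summing the two contributions yields the lemma. The only step requiring any thought is the composition estimate for $1-g\otimes g$ retaining the full factor $|g-1|_{\CC^\rho}$; the rest is a verbatim rerun of Lemmas~\ref{lem:commutators} and~\ref{lem:gX_gY_bound}, so I expect no real obstacle.
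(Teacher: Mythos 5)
Your proof is correct and follows essentially the same route as the paper: the paper also splits $\CN_t(X)-\CN_t(gX)$ into $(1-g\otimes g)\CN_t(X)$ plus the commutator term, bounds the former by $|g|_{\CC^\rho}|g-1|_{\CC^\rho}|\CN_t(X)|_{\CC^\beta}$ (stated there as~\eqref{eq:gZ_Z_diff} without proof, for which you supply the correct elementary argument), and handles the latter via Lemma~\ref{lem:commutators} with $Y=0$. No gaps.
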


\begin{proof}
For
$Z \in \CC^\beta(\T^3,E\otimes E^3)$ and $g\in\mfG^\rho$,
\begin{equ}\label{eq:gZ_Z_diff}
|(g\otimes g) Z-Z|_{\CC^\beta} \lesssim |g|_{\CC^\rho}|g-1|_{\CC^\rho}|Z|_{\CC^\beta}\;.
\end{equ}
The conclusion follows by combining~\eqref{eq:gZ_Z_diff} with Lemma~\ref{lem:commutators}.
\end{proof}

\begin{lemma}\label{lem:etas_to_beta=zero}
Consider $\bar\eta,\eta \leq 0$, $t\in(0,1)$, and $X,Y,\bar X,\bar Y\in\CD'(\T^3,E)$.
Then
\begin{multline*}
|\CP_t X \otimes \nabla \CP_t \bar{X} - \CP_t Y \otimes \nabla \CP_t \bar{Y} |_{\infty}
\lesssim t^{(\eta+\bar\eta-1)/2}(|X-Y|_{\CC^\eta}|\bar X|_{\CC^{\bar\eta}}
\\
+ |Y|_{\CC^{\eta}}|\bar X-\bar Y|_{\CC^{\bar\eta}})\;,
\end{multline*}
where the proportionality constant depends only on $\bar\eta$ and $\eta$.
\end{lemma}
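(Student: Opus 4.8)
The plan is to prove this bilinear estimate by splitting the difference into two pieces, each of which isolates a single factor, and then using the standard Schauder-type bound for the heat semigroup applied to a Hölder--Besov distribution. First I would write
\begin{equs}
\CP_t X \otimes \nabla \CP_t \bar X - \CP_t Y \otimes \nabla \CP_t \bar Y
&= (\CP_t X - \CP_t Y)\otimes \nabla \CP_t \bar X
\\
&\qquad + \CP_t Y \otimes (\nabla \CP_t \bar X - \nabla \CP_t \bar Y)\;,
\end{equs}
so that the first term carries $\CP_t(X-Y)$ and the second carries $\nabla\CP_t(\bar X - \bar Y)$, exactly matching the two summands on the right-hand side of the claimed inequality (after renaming $\bar X, \bar Y$ appropriately — note the statement as written uses a single pair $(X,Y)$ for both slots, so really one sets $\bar X = X$, $\bar Y = Y$ and the two error terms become $|X-Y|_{\CC^\eta}|X|_{\CC^{\bar\eta}}$ and $|Y|_{\CC^\eta}|\bar X - \bar Y|_{\CC^{\bar\eta}}$ with the obvious identifications).

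The key analytic input is the pair of elementary heat-kernel bounds: for any $\gamma \le 0$ and any $Z \in \CC^\gamma(\T^3,E)$ one has $|\CP_t Z|_\infty \lesssim t^{\gamma/2}|Z|_{\CC^\gamma}$ and $|\nabla \CP_t Z|_\infty \lesssim t^{(\gamma-1)/2}|Z|_{\CC^\gamma}$, with proportionality constants depending only on $\gamma$; these follow by testing against the (rescaled) heat kernel and its gradient, which lie in $\CB^r$ up to a fixed constant, just as in the bounds $|\CP_s X|_\infty \lesssim s^{\eta/2}|X|_{\CC^\eta}$ already used in the proof of Proposition~\ref{prop:YM_flow_minus_heat}. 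Applying the first bound with $\gamma = \eta$ to $\CP_t(X-Y)$ and $\CP_t Y$, and the second with $\gamma = \bar\eta$ to $\nabla \CP_t \bar X$ and $\nabla \CP_t(\bar X - \bar Y)$, and then using $|u\otimes v|_\infty \le |u|_\infty |v|_\infty$, the first summand is bounded by $t^{\eta/2} t^{(\bar\eta-1)/2}|X-Y|_{\CC^\eta}|\bar X|_{\CC^{\bar\eta}}$ and the second by $t^{\eta/2}t^{(\bar\eta-1)/2}|Y|_{\CC^\eta}|\bar X - \bar Y|_{\CC^{\bar\eta}}$. Summing gives the stated estimate with exponent $(\eta+\bar\eta-1)/2$.

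There is no real obstacle here: the statement is a routine bilinear Schauder estimate and the proof is a two-line triangle-inequality split followed by two applications of the heat-kernel smoothing bounds. The only mild point of care is bookkeeping of which norm sits in which slot (the $\nabla$ always costs an extra half power of $t$, hence the asymmetry is only apparent: both $\eta$ and $\bar\eta$ enter symmetrically once one accounts for the gradient), and making sure the sign constraints $\eta,\bar\eta \le 0$ are used precisely to guarantee that the heat-kernel bounds hold in the stated form (for positive exponents one would instead just bound by the $\CC^0$ norm, but negative exponents are what is needed downstream). I would therefore present it concisely, citing the elementary heat-kernel estimates as the analogue of those already invoked just above in the text.
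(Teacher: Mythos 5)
Your proof is correct and is essentially the paper's own argument: the same one-factor-at-a-time triangle-inequality split followed by the heat-kernel smoothing bounds $|\CP_t Z|_\infty \lesssim t^{\gamma/2}|Z|_{\CC^\gamma}$ and $|\nabla\CP_t Z|_\infty \lesssim t^{(\gamma-1)/2}|Z|_{\CC^\gamma}$. You also correctly flag that the first term on the right-hand side should read $|\bar X|_{\CC^{\bar\eta}}$ rather than $|X|_{\CC^{\bar\eta}}$ (a typo in the statement), and your exponents are the right ones.
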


\begin{proof}
Using $|\CP_tX|_{\infty} \lesssim t^{\eta/2}|X|_{\CC^\eta}$ and $|\nabla \CP_tX|_{\infty} \lesssim t^{(\eta+1)/2}|X|_{\CC^\eta}$, we obtain
\begin{equs}
|\CP_t X \otimes \nabla \CP_t \bar{X}
&- \CP_t Y \otimes \nabla \CP_t \bar{Y} |_{\infty} 
\\
&\lesssim
|\CP_t X \otimes \nabla \CP_t \bar{X} - \CP_t Y \otimes \nabla \CP_t \bar{X} |_{\infty}
\\
&\qquad+
|\CP_t Y \otimes \nabla \CP_t \bar{X} - \CP_t Y \otimes \nabla \CP_t \bar{Y} |_{\infty}
\\
&\lesssim t^{\frac{\eta+\bar\eta-1}{2}}(|X-Y|_{\CC^\eta}|\bar X|_{\CC^{\bar\eta}} + |Y|_{\CC^{\eta}}|\bar X-\bar Y|_{\CC^{\bar\eta}})\;,
\end{equs}
as claimed. 
\end{proof}

\begin{lemma}\label{lem:perturbation}
Consider $\bar\eta \leq 0$ and $X,Y,\bar X,\bar Y\in\CD'(\T^3,E)$. 
\begin{enumerate}[label=(\roman*)]
\item\label{pt:perturbation} 
Suppose $\eta+\bar\eta\geq 1-2\delta$.  Then
\begin{equs}
\fancynorm{X+\bar X;Y+\bar Y}_{\beta,\delta}
&\lesssim
\fancynorm{X;Y}_{\beta,\delta} + \fancynorm{\bar X;\bar Y}_{\beta,\delta}
\\
&\qquad
+ |X-Y|_{\CC^\eta}(|\bar X|_{\CC^{\bar \eta}}+|\bar Y|_{\CC^{\bar \eta}})
\\
&\qquad
+|\bar X-\bar Y|_{\CC^{\bar \eta}}(| X|_{\CC^\eta}+| Y|_{\CC^\eta})\;.
\end{equs}

\item\label{pt:fancynorm_0} 
Suppose $\bar\eta\ge \frac12-\delta$.
Then
\begin{equ}
\fancynorm{X;Y}_{0,\delta} \lesssim |X-Y|_{\CC^{\bar \eta}}(|X|_{\CC^{\bar \eta}}+|Y|_{\CC^{\bar \eta}})\;.
\end{equ}
\end{enumerate} 
The proportionality constants in both statements depend only on $\eta,\bar\eta,\beta,\delta$.
\end{lemma}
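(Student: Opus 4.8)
The plan is to reduce everything to Lemma~\ref{lem:etas_to_beta=zero} by using the bilinearity of the map $Z\mapsto \CP_t Z\otimes\nabla\CP_t Z$ and keeping careful track of which H\"older exponent sits in which tensor slot.

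For the first claim, I would expand, using linearity of $\CP_t$ and $\nabla$,
\begin{equ}
\CN_t(X+\bar X)-\CN_t(Y+\bar Y)
=\big(\CN_t(X)-\CN_t(Y)\big)+\big(\CN_t(\bar X)-\CN_t(\bar Y)\big)+C_t+\bar C_t\;,
\end{equ}
where $C_t\eqdef \CP_t X\otimes\nabla\CP_t\bar X-\CP_t Y\otimes\nabla\CP_t\bar Y$ and $\bar C_t$ is the same expression with the two tensor factors interchanged. Taking $|\cdot|_{\CC^\beta}$, using the triangle inequality, and multiplying by $t^\delta$, the first two brackets are controlled by $\fancynorm{X;Y}_{\beta,\delta}$ and $\fancynorm{\bar X;\bar Y}_{\beta,\delta}$ directly from the definition of $\fancynorm{\cdot}_{\beta,\delta}$. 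For the cross terms one is in the regime $\beta\le 0$, so $|\cdot|_{\CC^\beta}\lesssim|\cdot|_\infty$, and I would apply Lemma~\ref{lem:etas_to_beta=zero} to $C_t$ with the first slot carrying regularity $\eta$ and the second carrying $\bar\eta$, and to $\bar C_t$ with the two exponents swapped; this yields $|C_t|_\infty+|\bar C_t|_\infty\lesssim t^{(\eta+\bar\eta-1)/2}\big(|X-Y|_{\CC^\eta}(|\bar X|_{\CC^{\bar\eta}}+|\bar Y|_{\CC^{\bar\eta}})+|\bar X-\bar Y|_{\CC^{\bar\eta}}(|X|_{\CC^\eta}+|Y|_{\CC^\eta})\big)$. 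Multiplying by $t^\delta$ produces the factor $t^{\delta+(\eta+\bar\eta-1)/2}$, which is bounded on $(0,1)$ precisely because the hypothesis $\eta+\bar\eta\ge 1-2\delta$ gives $\delta+(\eta+\bar\eta-1)/2\ge 0$; taking the supremum over $t\in(0,1)$ finishes the first claim.

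For the second claim I would apply Lemma~\ref{lem:etas_to_beta=zero} directly with both tensor factors equal (so that the exponents called ``$\eta$'' and ``$\bar\eta$'' there are both taken to be our $\bar\eta$), to the distributions $X$ and $Y$, obtaining $|\CN_t(X)-\CN_t(Y)|_{\CC^0}=|\CN_t(X)-\CN_t(Y)|_\infty\lesssim t^{\bar\eta-1/2}|X-Y|_{\CC^{\bar\eta}}(|X|_{\CC^{\bar\eta}}+|Y|_{\CC^{\bar\eta}})$. Multiplying by $t^\delta$ gives $t^{\delta+\bar\eta-1/2}$, which is bounded on $(0,1)$ under the hypothesis $\bar\eta\ge1/2-\delta$; taking the supremum over $t$ yields $\fancynorm{X;Y}_{0,\delta}\lesssim|X-Y|_{\CC^{\bar\eta}}(|X|_{\CC^{\bar\eta}}+|Y|_{\CC^{\bar\eta}})$.

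I do not expect a serious obstacle: the statement is essentially pure bookkeeping on top of Lemma~\ref{lem:etas_to_beta=zero}. The only points needing a little attention are the asymmetric assignment of the regularity exponents to the two tensor slots when estimating $C_t$ and $\bar C_t$ (so that each factor is bounded with the correct power of $t$), the observation that the resulting exponent $\delta+(\eta+\bar\eta-1)/2$ is nonnegative \emph{exactly} under the stated hypotheses, and the passage from the $L^\infty$ bounds of Lemma~\ref{lem:etas_to_beta=zero} to $\CC^\beta$ bounds, which uses only $\beta\le 0$.
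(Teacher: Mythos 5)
Your proposal is correct and follows essentially the same route as the paper: bilinear expansion of $\CN_t$, the diagonal terms absorbed into $\fancynorm{X;Y}_{\beta,\delta}$ and $\fancynorm{\bar X;\bar Y}_{\beta,\delta}$, the cross terms handled by Lemma~\ref{lem:etas_to_beta=zero} together with the embedding $L^\infty\hookrightarrow\CC^\beta$ (valid since $\beta\le 0$), and the exponent identity $\delta+(\eta+\bar\eta-1)/2\ge 0\Leftrightarrow\eta+\bar\eta\ge 1-2\delta$; part~(ii) is likewise Lemma~\ref{lem:etas_to_beta=zero} with $\eta=\bar\eta$. No gaps.
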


\begin{proof}
\ref{pt:perturbation}
follows from applying
Lemma~\ref{lem:etas_to_beta=zero} and the embedding $L^\infty\hookrightarrow \CC^\beta$
to the two ``cross terms''
and from the fact that $\frac{\eta+\bar\eta-1}{2} \geq -\delta \Leftrightarrow \eta+\bar\eta \geq 1-2\delta$.
\ref{pt:fancynorm_0} follows directly from Lemma~\ref{lem:etas_to_beta=zero} with $\eta=\bar\eta$.
\end{proof}

\begin{lemma}\label{lem:X_Xg_Yg}
Suppose $\eta \geq 2-2\delta-\rho$
and $X,Y\in\CC^\eta(\T^3,E)$ and $g\in\mfG^\rho$.
Then
\begin{equation}\label{eq:X_Xg_bound}
\begin{split}
\fancynorm{X;X^g}_{\beta,\delta} \lesssim
|g|_{\CC^\rho}
 \Big(   &  |g-1|_{\CC^\rho}\fancynorm{X}_{\beta,\delta}
+|g|_{\CC^\rho}|g|_{\Hol\rho}^2
\\
&+|g|_{\Hol\rho} |X|_{\CC^\eta}(|g|_{\CC^\rho}+|X|_{\CC^\eta})
\Big)\;,
\end{split}
\end{equation}
and
\begin{equ}\label{eq:Xg_Yg_bound}
\fancynorm{X^g;Y^g}_{\beta,\delta} \lesssim
|g|_{\CC^\rho}^2(\fancynorm{X;Y}_{\beta,\delta} + |X-Y|_{\CC^\eta}(|X|_{\CC^\eta} + |Y|_{\CC^\eta}+|g|_{\Hol\rho}))\;,
\end{equ}
where the proportionality constants depend only on $\eta,\beta,\rho,\delta$.
\end{lemma}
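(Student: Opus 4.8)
The plan is to deduce both bounds from the previously established Lemmas~\ref{lem:gX_gY_bound}, \ref{lem:X_gX_bound} and the perturbation estimate Lemma~\ref{lem:perturbation} by isolating the inhomogeneous part of the gauge action. Writing $gX$ for the diagonal action $((\Ad_g A_i)_i,g\Phi)$ as in the subsection, one has the exact identity $X^g = gX + W$ with $W\eqdef(-(\mrd g)g^{-1},0)$, which lies in $\CC^{\rho-1}(\T^3,E)$ with $|W|_{\CC^{\rho-1}}\lesssim|g|_{\CC^\rho}|g|_{\Hol\rho}$ (the product estimate $\CC^{\rho-1}\cdot\CC^\rho\hookrightarrow\CC^{\rho-1}$ being valid since $\rho>\tfrac12$, together with $|\mrd g|_{\CC^{\rho-1}}\lesssim|g|_{\Hol\rho}$ and $|g^{-1}|_{\CC^\rho}\lesssim|g|_{\CC^\rho}$). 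I will also freely use $|gX|_{\CC^\eta}\lesssim|g|_{\CC^\rho}|X|_{\CC^\eta}$, valid since $\eta>-\rho$.

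Before the main computation I would record the exponent facts that make everything fit. First, the hypothesis $\eta\ge2-2\delta-\rho$ is precisely the condition $\eta+(\rho-1)\ge1-2\delta$ needed to apply Lemma~\ref{lem:perturbation}\ref{pt:perturbation} with second exponent $\bar\eta\eqdef\rho-1$ (note $\bar\eta\le0$ since $\rho\le1$). Second, combining $\eta\ge2-2\delta-\rho$ with the standing assumption $\eta\le\rho-1$ of this subsection forces $\rho\ge\tfrac32-\delta$, i.e.\ $\rho-1\ge\tfrac12-\delta$, which is exactly what is needed to invoke Lemma~\ref{lem:perturbation}\ref{pt:fancynorm_0} for $W$; and one checks $\eta+\tfrac{\rho-1}2\ge-\delta$ (using $\rho\le1$, $\delta\le1$), so that Lemmas~\ref{lem:gX_gY_bound} and~\ref{lem:X_gX_bound} apply. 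Verifying that the single stated hypothesis suffices to run all of these is the only genuinely delicate point; no new analytic estimate is required.

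For~\eqref{eq:X_Xg_bound}, apply Lemma~\ref{lem:perturbation}\ref{pt:perturbation} to the decomposition $X = X+0$ and $X^g = gX + W$ (first exponent $\eta$, second exponent $\rho-1$) to obtain
\begin{equ}
\fancynorm{X;X^g}_{\beta,\delta}\lesssim \fancynorm{X;gX}_{\beta,\delta} + \fancynorm{W}_{\beta,\delta} + |W|_{\CC^{\rho-1}}\bigl(|X-gX|_{\CC^\eta} + |X|_{\CC^\eta} + |gX|_{\CC^\eta}\bigr)\;.
\end{equ}
Then I would bound $\fancynorm{X;gX}_{\beta,\delta}$ by Lemma~\ref{lem:X_gX_bound}; bound $\fancynorm{W}_{\beta,\delta}\le\fancynorm{W}_{0,\delta}\lesssim|W|_{\CC^{\rho-1}}^2\lesssim|g|_{\CC^\rho}^2|g|_{\Hol\rho}^2$ via Lemma~\ref{lem:perturbation}\ref{pt:fancynorm_0} (using $\beta\le0$, so $|\cdot|_{\CC^\beta}\lesssim|\cdot|_{L^\infty}$); and control the remaining products using $|W|_{\CC^{\rho-1}}\lesssim|g|_{\CC^\rho}|g|_{\Hol\rho}$, $|gX|_{\CC^\eta}\lesssim|g|_{\CC^\rho}|X|_{\CC^\eta}$ and $|X-gX|_{\CC^\eta}\lesssim(1+|g|_{\CC^\rho})|X|_{\CC^\eta}$. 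Collecting the resulting monomials in $|g|_{\CC^\rho},|g|_{\Hol\rho},\fancynorm{X}_{\beta,\delta},|X|_{\CC^\eta}$ reproduces the right-hand side of~\eqref{eq:X_Xg_bound}.

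For~\eqref{eq:Xg_Yg_bound}, the key observation is that $X^g = gX+W$ and $Y^g = gY+W$ share the \emph{same} additive part $W$, so that Lemma~\ref{lem:perturbation}\ref{pt:perturbation} applied with first slot $(gX,gY)$ and second slot $(W,W)$ has its $\fancynorm{W;W}_{\beta,\delta}$ and $|W-W|_{\CC^{\rho-1}}$ contributions vanish, leaving
\begin{equ}
\fancynorm{X^g;Y^g}_{\beta,\delta}\lesssim \fancynorm{gX;gY}_{\beta,\delta} + |g(X-Y)|_{\CC^\eta}\,|W|_{\CC^{\rho-1}}\;.
\end{equ}
Inserting Lemma~\ref{lem:gX_gY_bound} for the first term and $|g(X-Y)|_{\CC^\eta}|W|_{\CC^{\rho-1}}\lesssim|g|_{\CC^\rho}^2|g|_{\Hol\rho}|X-Y|_{\CC^\eta}$ for the second, then absorbing $|g|_{\CC^\rho}|g|_{\Hol\rho}\le|g|_{\CC^\rho}^2$, yields~\eqref{eq:Xg_Yg_bound}. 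As indicated, the main obstacle is purely bookkeeping — tracking the powers of $|g|_{\CC^\rho}$, $|g|_{\Hol\rho}$, $\fancynorm{X}_{\beta,\delta}$ and $|X|_{\CC^\eta}$, and checking that the hypothesis $\eta\ge2-2\delta-\rho$ together with the standing conditions of the subsection licenses each invocation of Lemmas~\ref{lem:gX_gY_bound}, \ref{lem:X_gX_bound} and~\ref{lem:perturbation}.
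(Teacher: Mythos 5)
Your proof is correct and follows essentially the same route as the paper's: decompose $X^g = gX - (\mrd g)g^{-1}$, apply Lemma~\ref{lem:perturbation}\ref{pt:perturbation} with $\bar\eta=\rho-1$, and then invoke Lemmas~\ref{lem:X_gX_bound}, \ref{lem:perturbation}\ref{pt:fancynorm_0} and~\ref{lem:gX_gY_bound}, with the same exponent checks ($\eta+\rho-1\ge1-2\delta$, $\rho\ge\tfrac32-\delta$, $\eta+\tfrac{\rho-1}2\ge-\delta$). The only cosmetic difference is that you keep the cross term $|X-gX|_{\CC^\eta}|W|_{\CC^{\rho-1}}$ explicit before absorbing it, whereas the paper folds it in immediately.
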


\begin{proof}
Recall our assumption \eqref{e:paramG} on the parameters.
Observe that $\eta\in(-\rho,\rho-1]$, $\rho\le 1$, and $\eta+\rho \ge  2-2\delta$
together imply $\eta + \frac{\rho-1}2 \geq -\delta$
and $\rho\ge \frac32-\delta$.
Writing $\mrd g g^{-1}\in\CC^{\rho-1}(\T^3,E)$ as the element with zero second component in $E=\mfg^3\oplus \higgsvec$,
\begin{equs}
 \fancynorm{X; X^g}_{\beta, \delta}
&= \fancynorm{X;g X-\mrd gg^{-1}}_{\beta,\delta}
\\
&\lesssim \fancynorm{X;gX}_{\beta,\delta} + \fancynorm{\mrd g g^{-1}}_{\beta,\delta}
+|g|_{\Hol\rho}|g|_{\CC^\rho}(|X|_{\CC^\eta}+|gX|_{\CC^\eta})
\;,
\end{equs}
where we used Lemma~\ref{lem:perturbation}\ref{pt:perturbation}
with $\bar\eta=\rho-1$
(using $\eta+\rho \ge  2-2\delta$).
We then obtain~\eqref{eq:X_Xg_bound} from Lemmas~\ref{lem:X_gX_bound} 
and~\ref{lem:perturbation}\ref{pt:fancynorm_0} again with $\bar\eta=\rho-1$ (using $\rho\geq\frac32-\delta$).
In a similar way
\begin{equs}
\fancynorm{X^g;Y^g}_{\beta,\delta}
&= \fancynorm{gX-\mrd gg^{-1};gY-\mrd gg^{-1}}_{\beta,\delta}
\\
&\lesssim \fancynorm{gX;gY}_{\beta,\delta}
+ |gX-gY|_{\CC^\eta}|g|_{\Hol\rho}|g|_{\CC^\rho}
\\
&\lesssim |g|_{\CC^\rho}^2(\fancynorm{X;Y}_\beta + |X-Y|_{\CC^\eta}(|X|_{\CC^\eta} + |Y|_{\CC^\eta}+|g|_{\Hol\rho}))\;,
\end{equs}
where we used Lemma~\ref{lem:perturbation}\ref{pt:perturbation}
with $\bar\eta=\rho-1$ in the first bound
(using again $\eta+\rho > 2-2\delta$)
and Lemma~\ref{lem:gX_gY_bound}
in the second bound.
\end{proof}

\begin{lemma}\label{lem:heatgr_dg}
Let $\alpha\in(0,1]$ and $\theta\geq0$
such that $\rho \geq 1+2\theta(\alpha-1)$.
Then for all $g\in\mfG^\rho$
\begin{equ}
\heatgr{(\mrd g) g^{-1}}_{\alpha,\theta} \lesssim |g|_{\CC^\rho}|g|_{\Hol\rho}\;,
\end{equ}
where the proportionality constant depends only on $\rho,\alpha,\theta$.
\end{lemma}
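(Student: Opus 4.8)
The plan is to reduce the claim to a H\"older--Besov bound on the $\mfg$-valued $1$-form $(\mrd g)g^{-1}$, exploiting that $\heatgr{\cdot}_{\alpha,\theta}$ is controlled from above by a negative H\"older--Besov norm. Concretely, I would apply the upper bound of Lemma~\ref{lem:heatgr_Besov_embed} (valid for all $\alpha\in(0,1]$ and $\theta\ge0$) to $A=(\mrd g)g^{-1}\in\Omega\CD'$, giving
\[
\heatgr{(\mrd g)g^{-1}}_{\alpha,\theta}\lesssim |(\mrd g)g^{-1}|_{\CC^{2\theta(\alpha-1)}}\;,
\]
with implicit constant depending only on $\alpha,\theta$. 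Since $\alpha\le1$ and $\theta\ge0$ we have $2\theta(\alpha-1)\le0$, and the hypothesis $\rho\ge 1+2\theta(\alpha-1)$ is exactly $\rho-1\ge 2\theta(\alpha-1)$; hence the continuous embedding $\CC^{\rho-1}\hookrightarrow\CC^{2\theta(\alpha-1)}$ (constant depending only on $\rho,\alpha,\theta$) reduces matters to
\[
|(\mrd g)g^{-1}|_{\CC^{\rho-1}}=\sum_{i=1}^3 |(\partial_i g)g^{-1}|_{\CC^{\rho-1}}\lesssim |g|_{\CC^\rho}|g|_{\Hol\rho}\;.
\]

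For the per-component bound I would use three standard facts, all with constants depending only on $\rho$: (i) $\partial_i g\in\CC^{\rho-1}$ with $|\partial_i g|_{\CC^{\rho-1}}\lesssim |g|_{\Hol\rho}$ — testing $\partial_i g$ against $\psi^\lambda_x$ one moves the derivative onto $\psi^\lambda_x$, whose integral vanishes, so one subtracts the constant $g(x)$ and bounds by the $\rho$-H\"older seminorm; (ii) $g^{-1}\in\CC^\rho$ with $|g^{-1}|_{\CC^\rho}\lesssim |g|_{\CC^\rho}$, which follows from $G$ being a compact matrix group, so that $x\mapsto g(x)^{-1}$ is the composition of $g$ with the inversion map (Lipschitz on $G$) and $|g^{-1}|_\infty$ is bounded above and below uniformly; and (iii) the standing assumption $\rho>\frac12$, so that $(\rho-1)+\rho>0$ and Young's multiplication inequality gives $|(\partial_i g)g^{-1}|_{\CC^{\rho-1}}\lesssim |\partial_i g|_{\CC^{\rho-1}}|g^{-1}|_{\CC^\rho}$ (in the endpoint case $\rho=1$ this is simply $|(\partial_i g)g^{-1}|_\infty\le |\partial_i g|_\infty|g^{-1}|_\infty$). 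Chaining these three estimates and summing over $i$ completes the argument.

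A self-contained alternative avoids Lemma~\ref{lem:heatgr_Besov_embed} altogether: by Lemma~\ref{lem:local_lower_exponent} applied with exponent $1$, $|\CP_t A|_{\gr\alpha;<t^\theta}\le t^{\theta(1-\alpha)}|\CP_t A|_{\gr1;<t^\theta}\le t^{\theta(1-\alpha)}|\CP_t A|_\infty$, and the heat-kernel (Schauder) bound $|\CP_t A|_\infty\lesssim t^{(\rho-1)/2}|A|_{\CC^{\rho-1}}$ produces a power $t^{\theta(1-\alpha)+(\rho-1)/2}$ whose exponent is $\ge0$ precisely under the hypothesis on $(\rho,\alpha,\theta)$, so that the supremum over $t\in(0,1)$ is harmless. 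Either way no step is delicate; the only points to keep track of are that the hypothesis $\rho\ge 1+2\theta(\alpha-1)$ is consumed exactly when passing to the space $\CC^{\rho-1}$, and that the negative-regularity product $(\mrd g)g^{-1}$ is legitimate because $\rho>\frac12$ — which is why I would flag these as the ``main obstacle'', such as it is.
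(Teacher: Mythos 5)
Your proposal is correct and follows the paper's own proof exactly: bound $|(\mrd g)g^{-1}|_{\CC^{\rho-1}}\lesssim|g|_{\CC^\rho}|g|_{\Hol\rho}$ via Young's inequality (using the standing assumption $\rho>\tfrac12$), then apply the upper bound of Lemma~\ref{lem:heatgr_Besov_embed} together with $\rho-1\ge 2\theta(\alpha-1)$. Your self-contained alternative via Lemma~\ref{lem:local_lower_exponent} and the Schauder estimate is also valid, but it is not needed.
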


\begin{proof}
Since $\rho>\frac12$, $|(\mrd g) g^{-1}|_{\CC^{\rho-1}}\lesssim |g|_{\CC^\rho}|g|_{\Hol\rho}$.
Since $\rho-1\geq 2\theta(\alpha-1)$, the conclusion follows by Lemma~\ref{lem:heatgr_Besov_embed}.
\end{proof}

\begin{lemma}\label{lem:heatgr_A_Ad_g_A}
Let $\alpha\in(0,1]$ and $\theta\geq0$
such that $\rho \geq -\tilde\eta\eqdef -(1+2\theta)(\alpha-1)$.
Let $A\in\Omega\CC^\eta$ and $g\in\mfG^\rho$.
Then
\begin{equ}
\heatgr{A-\Ad_gA}_{\alpha,\theta}
\lesssim
(|g-1|_{\infty}+|g|_{\Hol\rho})\heatgr{A}_{\alpha,\theta}
\;,
\end{equ}
where the proportionality constant depends only on $\rho,\alpha,\theta$.
\end{lemma}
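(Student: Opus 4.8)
The plan is to reduce the estimate, one line segment at a time, to the commutator bound of Corollary~\ref{cor:commutator_heat} combined with Young's inequality applied \emph{along} the segment. We may assume $\heatgr{A}_{\alpha,\theta}<\infty$, so that $A\in\Omega\CC^{\tilde\eta}$ by Lemma~\ref{lem:heatgr_Besov_embed}. Write $f\colon\T^3\to\End(\mfg)$ for the function $f(y)\eqdef\id_\mfg-\Ad_{g(y)}$, acting component-wise on $\mfg^3$; since $\Ad\colon G\to\End(\mfg)$ is smooth with $\Ad_1=\id$ and $G$ is compact, one has $|f|_\infty\lesssim|g-1|_\infty$ and $|f|_{\Hol\rho}\lesssim|g|_{\Hol\rho}$, while $A-\Ad_gA=fA$. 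Fix $t\in(0,1)$ and a segment $\ell=(x,v)$ with $|v|<t^\theta$; it then suffices to show
\begin{equ}
|\CP_t(fA)(\ell)|\lesssim\bigl(|g-1|_\infty+|g|_{\Hol\rho}\bigr)\,|v|^\alpha\,\heatgr{A}_{\alpha,\theta}\;,
\end{equ}
with a constant depending only on $\rho,\alpha,\theta$, since taking suprema over $\ell$ and then over $t$ gives the claim.

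I would then split $fA=f(x)A+(f-f(x))A$. The constant endomorphism $f(x)$ commutes with $\CP_t$ and with the line integral, so the first term contributes $f(x)\bigl(\CP_t A(\ell)\bigr)$, bounded by $|f|_\infty|\CP_t A(\ell)|\le|g-1|_\infty|v|^\alpha\heatgr{A}_{\alpha,\theta}$ straight from the definition of $\heatgr{\cdot}_{\alpha,\theta}$ (using $|v|<t^\theta$). For the second term, apply Corollary~\ref{cor:commutator_heat} with $\eta=\tilde\eta$ to the function $f-f(x)$ — whose Hölder seminorm equals that of $f$ — to obtain component-wise
\begin{equ}
\CP_t\bigl((f-f(x))A_i\bigr)=(f-f(x))\,\CP_t A_i+S_{t,i}\;,\qquad|S_{t,i}|_\infty\lesssim t^{(\rho+\tilde\eta)/2}|g|_{\Hol\rho}\,|A|_{\CC^{\tilde\eta}}\;,
\end{equ}
which uses $\rho+\tilde\eta\ge0$. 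Pairing with $v$ and integrating over $\ell$ then gives
\begin{equ}
\int_0^1\bigl\langle v,\CP_t((f-f(x))A)(x+uv)\bigr\rangle\,\mrd u=\int_0^1\bigl(f(x+uv)-f(x)\bigr)\,\mrd F(u)+\int_0^1\langle v,S_t(x+uv)\rangle\,\mrd u\;,
\end{equ}
where $F(u)\eqdef\CP_t A(\ell_u)$ is the line integral of $\CP_t A$ over the sub-segment $\ell_u$ from $x$ to $x+uv$, so that $F(0)=0$ and $F'(u)=\langle v,\CP_t A(x+uv)\rangle$.

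The key observation is that $F$ is $\alpha$-Hölder in $u$ with the \emph{right} seminorm: by additivity of elements of $\Omega$ one has $F(u)-F(u')=\CP_t A(\ell_{u',u})$, where $\ell_{u',u}$ is the sub-segment from $x+u'v$ to $x+uv$, of length $|u-u'|\,|v|\le|v|<t^\theta$, so the definition of $\heatgr{\cdot}_{\alpha,\theta}$ yields $|F|_{\Hol\alpha}\le|v|^\alpha\heatgr{A}_{\alpha,\theta}$ on $[0,1]$. Since moreover $u\mapsto f(x+uv)-f(x)$ lies in $\CC^\rho([0,1])$, vanishes at $0$, and has $\rho$-Hölder seminorm $\lesssim|g|_{\Hol\rho}|v|^\rho$, and since $\rho+\alpha>1$ (which follows from $\rho\ge-\tilde\eta=(1+2\theta)(1-\alpha)$ together with $\rho>\tfrac12$), Young's inequality (e.g.\ \cite{FrizHairer}) bounds the first integral by $\lesssim|g|_{\Hol\rho}|v|^\rho|F|_{\Hol\alpha}\lesssim|g|_{\Hol\rho}|v|^{\rho+\alpha}\heatgr{A}_{\alpha,\theta}\le|g|_{\Hol\rho}|v|^\alpha\heatgr{A}_{\alpha,\theta}$. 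For the second integral, $\bigl|\int_0^1\langle v,S_t(x+uv)\rangle\,\mrd u\bigr|\le|v|\max_i|S_{t,i}|_\infty\le|v|^\alpha\,t^{\theta(1-\alpha)+(\rho+\tilde\eta)/2}|g|_{\Hol\rho}|A|_{\CC^{\tilde\eta}}$, using $|v|^{1-\alpha}\le t^{\theta(1-\alpha)}$; since the exponent $\theta(1-\alpha)+(\rho+\tilde\eta)/2$ is nonnegative (both summands are, by $\theta\ge0$, $\alpha\le1$, $\rho\ge-\tilde\eta$) and $t<1$, this is $\lesssim|v|^\alpha|g|_{\Hol\rho}|A|_{\CC^{\tilde\eta}}\lesssim|v|^\alpha|g|_{\Hol\rho}\heatgr{A}_{\alpha,\theta}$ by Lemma~\ref{lem:heatgr_Besov_embed}. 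Adding the three contributions proves the displayed bound.

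The only genuinely delicate point is the one just used — that the restriction of $\CP_t A$ to $\ell$ has $\CC^\alpha$-in-parameter regularity controlled by $\heatgr{A}_{\alpha,\theta}$ (rather than by its much larger $\CC^1$-norm), which is precisely what couples the ``$\gr\alpha$ at scales $<t^\theta$'' structure of $\heatgr{\cdot}$ to a Young estimate; once this is set up, the remainder is bookkeeping of powers of $t$, the only constraints to watch being $\theta(1-\alpha)+(\rho+\tilde\eta)/2\ge0$ and $\rho+\alpha>1$. (The borderline case $\rho+\alpha=1$ — possible only when $\theta=0$, $\alpha<1$, $\rho=1-\alpha$ — can be absorbed with an arbitrarily small loss of Hölder exponent, or simply does not arise in the applications of the lemma.)
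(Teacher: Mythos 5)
Your proof is correct and takes essentially the same route as the paper's: both reduce the estimate to the heat-semigroup commutator bound of Corollary~\ref{cor:commutator_heat} (controlled in $L^\infty$ using $\rho+\tilde\eta\ge0$) together with a Young estimate along each segment for the product of the $\rho$-H\"older function $\id-\Ad_g$ (frozen at the segment's start point) with $\CP_tA$, whose $\alpha$-H\"older regularity in the line parameter is exactly what $\heatgr{\cdot}_{\alpha,\theta}$ supplies. The paper merely outsources that Young step to \cite[Lem.~3.32]{CCHS2d}, so your explicit version --- including the remark on the borderline case $\rho+\alpha=1$, which the paper also elides --- is if anything slightly more complete.
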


\begin{proof}
By Lemma~\ref{lem:heatgr_Besov_embed},
$|A|_{\CC^{\tilde\eta}} \lesssim \heatgr{A}_{\alpha,\theta}$.
Since $\tilde\eta+\rho>0$, by Corollary~\ref{cor:commutator_heat}, for all $t\in(0,1)$
\begin{equs}
|\CP_t \Ad_g A-\Ad_g\CP_t A|_{\gr\alpha;<t^\theta}
&\leq
|\CP_t \Ad_g A-\Ad_g\CP_t A |_{\gr1}
\\
&\asymp
|\CP_t \Ad_g A-\Ad_g\CP_t A|_{L^\infty} 
\lesssim
t^{\frac{\eta+\rho}{2}}|g|_{\Hol\rho}\heatgr{A}_{\alpha,\theta}\;.
\end{equs}
On the other hand, $-\tilde\eta\geq 1-\alpha$, hence
$\rho+\alpha>1$,
and thus for all $t\in(0,1)$
\begin{equ}
|\CP_t A-\Ad_g \CP_tA|_{\gr\alpha;<t^\theta} \lesssim 
(|g-1|_{\infty}+|g|_{\Hol\rho})|\CP_tA|_{\gr\alpha;<t^\theta}
\end{equ}
(this follows by restricting the proof of~\cite[Lem.~3.32]{CCHS2d} to lines of length less than $t^\theta$),
from which the conclusion follows.
\end{proof}

\begin{proof}[of Proposition~\ref{prop:group_action}]
For $X,Y\in\init$ and $g\in\mfG^{\rho}$, by~\eqref{eq:X_Xg_bound} and the fact that $|g|_{\Hol\rho} \leq |g-1|_{\CC^\rho}$, 
\begin{equ}
\fancynorm{X;Y^g}_{\beta,\delta} \leq \fancynorm{X;Y}_{\beta,\delta}
+\fancynorm{Y;Y^g}_{\beta,\delta} \to 0
\end{equ}
as $(g,Y) \to (1,X)$ in $\mfG^{\rho}\times \init$.
Furthermore, for $h\in\mfG^{\rho}$, clearly $|X^g - Y^h|_{\CC^\eta} \to 0$ as $(g,X)\to(h,Y)$.
Therefore, by~\eqref{eq:Xg_Yg_bound},
\begin{equ}
\fancynorm{X^g;Y^h}_{\beta,\delta} = \fancynorm{X^g;(Y^{g^{-1}h})^g}_{\beta,\delta} \to 0
\end{equ}
as $(g,X)\to(h,Y)$.
It follows that, if $g\in\mfG^{0,\rho}$, then $X^g\in\init$.
Furthermore $(g,X) \mapsto X^g$ is a continuous left group action $\mfG^{0,\rho}\times \init \to \init$
and it is easy to see from the explicit form of the estimates~\eqref{eq:X_Xg_bound} and~\eqref{eq:Xg_Yg_bound} that this map is uniformly continuous on every ball in $\mfG^{0,\rho}\times\init$.

Suppose further that $(\eta,\beta,\delta)$ satisfies~\eqref{eq:CI}.
Then $X^g \sim X$ for all smooth $(g,X)\in\mfG^{0,\rho}\times \init$.
The fact that $X^g \sim X$ for all $(g,X)\in\mfG^{0,\rho}\times \init$ now follows from Proposition~\ref{prop:back_unique} together with the density of smooth functions in $\mfG^{0,\rho} \times \init$ and continuity of $\ymhflow$ on $\init$ by Proposition~\ref{prop:YM_flow_minus_heat}.

Finally, suppose further that $(\rho,\eta,\beta,\delta,\alpha,\theta)$ satisfies~\eqref{eq:CGS}.
Fix $(g,X) \in \mfG^{\rho}\times\state$.
Then for $(h,Y) \in \mfG^{\rho}\times\state$, by Lemmas~\ref{lem:heatgr_dg} and~\ref{lem:heatgr_A_Ad_g_A}
\begin{equ}
\heatgr{X-Y^h}_{\alpha,\theta} \leq \heatgr{X-Y}_{\alpha,\theta}
+
\heatgr{Y-\Ad_h Y}_{\alpha,\theta}
+ \heatgr{(\mrd h)h^{-1}}_{\alpha,\theta}
\to 0
\end{equ}
as $|h-1|_{\CC^\rho} +\heatgr{X-Y}_{\alpha,\theta} \to 0$.
Furthermore, by Lemma~\ref{lem:heatgr_A_Ad_g_A},
$\heatgr{X^g-Y^g}_{\alpha,\theta}\to 0$ as $\heatgr{Y- X}_{\alpha,\theta}\to0$,
and therefore
\begin{equ}
\heatgr{X^g-Y^h}_{\alpha,\theta} = \heatgr{X^g-(Y^{g})^{hg^{-1}}}_{\alpha,\theta} \to 0
\end{equ}
as $|h-g|_{\CC^\rho} +\heatgr{X-Y}_{\alpha,\theta} \to 0$.
It follows that $(g,X) \mapsto X^g$ is a continuous left group action $\mfG^{0,\rho}\times \state \to \state$
and uniform continuity on every ball in $\mfG^{0,\rho}\times\state$ is clear again from the explicit form of the estimates in Lemmas~\ref{lem:heatgr_dg} and~\ref{lem:heatgr_A_Ad_g_A}.
\end{proof}

\subsection{Estimates on gauge transformations}
\label{subsec:est_gauge_trans}

Recall that, by Proposition~\ref{prop:group_action}, if $(\rho,\eta,\beta,\delta)$ satisfies~\eqref{eq:CGI}, then $(g,X)\mapsto X^g$ is a continuous group action $\mfG^{0,\rho}\times\init\to \init$.

\begin{theorem}\label{thm:g_bound}
Let $\theta=0+$, $\alpha=\frac12-$, $\delta\in(\frac34,1)$, $\beta=-2(1-\delta)-$,
and
$\eta=-\frac12-$.
There exists $\nu=\frac12-$ such that
$|g|_{\CC^\nu} \leq \Poly(\Sigma(X)+\Sigma(X^g))$ for all $g\in\mfG^{0,\rho}$, $X\in\init$, and $\rho\in(\frac12,1]$ such that $(\rho,\eta,\beta,\delta)$ satisfies~\eqref{eq:CGI}.
\end{theorem}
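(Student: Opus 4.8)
The plan is to reconstruct $g$ from the pair $(X, X^g)$ by leveraging the heat-smoothed line-integral norm $\heatgr{\cdot}_{\alpha,\theta}$ that was built into $\Sigma$ precisely for this purpose, mimicking the strategy of \cite[Prop.~3.35]{CCHS2d} adapted to the present $d=3$ setting. First I would record the consequences of the hypotheses: with $\theta = 0+$, $\alpha = \frac12-$, we have $\tilde\eta \eqdef (1+2\theta)(\alpha-1) = -\frac12-$, so by Lemma~\ref{lem:heatgr_Besov_embed} one has $|A|_{\CC^{\tilde\eta}} \lesssim \heatgr{A}_{\alpha,\theta} \leq \Sigma(X)$ and likewise $|A^g|_{\CC^{\tilde\eta}} \lesssim \Sigma(X^g)$, and moreover $\rho + \tilde\eta > 0$ since $(\rho,\eta,\beta,\delta)$ satisfies~\eqref{eq:CGI}. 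Since the estimate is claimed only for $g\in\mfG^{0,\rho}$ it suffices by density and the continuity statements already proved to establish it for smooth $g$, so we may freely differentiate.

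The core analytic step is to obtain pointwise control on increments of $g$ along short line segments. Writing $A^g = \Ad_g A - (\mrd g)g^{-1}$, we have $(\mrd g) g^{-1} = \Ad_g A - A^g$ (as $\mfg$-valued $1$-forms), so the line integral of $(\mrd g)g^{-1}$ along a segment $\ell$ of length $<t^\theta$, after heat-smoothing at scale $t$, is controlled by $\heatgr{\Ad_g A - A^g}_{\alpha,\theta}$. Using $\heatgr{\Ad_g A}_{\alpha,\theta} \leq \heatgr{A}_{\alpha,\theta} + \heatgr{A - \Ad_g A}_{\alpha,\theta}$ and Lemma~\ref{lem:heatgr_A_Ad_g_A}, the right-hand side is $\lesssim (1 + |g|_{\Hol\rho})\bigl(\heatgr{A}_{\alpha,\theta} + \heatgr{A^g}_{\alpha,\theta}\bigr) \lesssim_{\Sigma(X)+\Sigma(X^g)} (1 + |g|_{\Hol\rho})$. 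On the other hand $(\mrd g)g^{-1}$, being the logarithmic derivative of $g$, integrates along $\ell$ to (a path-ordered exponential encoding) the increment of $g$; the heat regularisation costs only a factor that can be absorbed since one is comparing $\CP_t$ applied to a $\CC^{\rho-1}$ object at scale $t$ against $t^{\theta(1-\alpha)}$, and $\theta = 0+$ means this loss is negligible in the relevant exponents. Running this through the standard argument — bound the increment of $U \eqdef \brho(g)$ along $\ell$, recognise the leading term as $\brho'\bigl((\mrd g)g^{-1}(\ell)\bigr)$ plus a higher-order remainder, and use that $G$ is embedded in matrices so that controlling $U$ controls $g$ — one arrives at a bound of the form $|g|_{\Hol\nu} \lesssim_{\Sigma(X)+\Sigma(X^g)} 1 + |g|_{\Hol\rho}$ for a suitable $\nu = \frac12-$, valid a priori for $\nu < \tilde\eta + 1 = \frac12-$ and with $\nu \leq \rho$.

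The remaining step is to close the estimate: the bound has $|g|_{\Hol\rho}$ on the right with $\rho$ possibly larger than $\nu$, so one cannot immediately absorb it. Here I would interpolate: for any $\bar\rho \in (\nu, \rho)$ one has $|g|_{\Hol{\bar\rho}} \leq |g|_\infty^{1-\lambda}|g|_{\Hol\rho}^{\lambda}$ type inequalities, and since $G$ is compact $|g|_\infty$ is bounded by a universal constant, so $|g|_{\CC^{\bar\rho}}$ is controlled once we have a bound at the top regularity $\rho$. To get the latter without circularity, I would instead run the increment argument at the endpoint regularity directly: the logarithmic-derivative identity shows $|(\mrd g)g^{-1}|_{\CC^{\rho-1}}$ is what actually appears, and this equals $|\Ad_g A - A^g|_{\CC^{\rho-1}}$ — but $A, A^g$ only live in $\CC^{\tilde\eta}$, not $\CC^{\rho-1}$. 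This mismatch is the crux, and it is resolved by the nonlinear structure of $\state$: the $\heatgr{\cdot}_{\alpha,\theta}$ norm controls line integrals at \emph{all} scales $t$, not merely the distributional $\CC^{\tilde\eta}$ norm, and line integrals of $1$-forms are exactly a $\Hol{?}$-type quantity for the primitive; iterating the increment bound over dyadic scales (as in \cite[Ex.~4.24]{FrizHairer} / Lemma~\ref{lem:local_to_global_gr}, used in reverse) upgrades the scale-$t$ bound on $\CP_t(\mrd g)g^{-1}$ to a genuine Hölder bound on $g$ at regularity just below $\tilde\eta + 1 = \frac12$. Thus the output regularity $\nu = \frac12-$ is forced, and the $|g|_{\Hol\rho}$ term never genuinely needs to be absorbed at regularity above $\nu$ because the self-improvement happens within the $\nu$-scale hierarchy; the compactness of $G$ handles the $|g|_\infty$ normalisation.

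I expect the main obstacle to be the bookkeeping in this last upgrade step — making precise how the family of estimates $\{|\CP_t (\mrd g)g^{-1}|_{\gr\alpha;<t^\theta}\}_{t\in(0,1)}$, each of which is a statement about a smoothed object, combines to yield $|g|_{\CC^\nu} \lesssim 1$, and verifying that the geometric (non-commutative) corrections from path-ordering along $\ell$ are genuinely higher order in $|\ell|$ given only $\rho > \frac12$. This is where the condition $\alpha + \rho > 1$ (equivalently $\rho > \frac12$ since $\alpha = \frac12-$), already invoked in the proof of Lemma~\ref{lem:heatgr_A_Ad_g_A}, is essential: it ensures the pointwise commutator estimates needed to linearise $(\mrd g)g^{-1}$ close up.
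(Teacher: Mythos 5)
There is a genuine gap, and it is the circularity you yourself flag but do not resolve. Every identity you propose to use at time zero — $(\mrd g)g^{-1}=\Ad_gA-A^g$, the commutator bound of Corollary~\ref{cor:commutator_heat}, and Lemma~\ref{lem:heatgr_A_Ad_g_A} — requires as \emph{input} a quantitative bound on $|g|_{\Hol\rho}$ with $\rho>\frac12$; indeed the product $\Ad_gA$ is not even classically defined for $A\in\CC^{-\frac12-}$ and $g\in\CC^{\frac12-}$, since the regularities sum to a negative number. Since the best output your scheme could ever produce is $\nu=\frac12-$, the estimate $|g|_{\Hol\nu}\lesssim 1+|g|_{\Hol\rho}$ can never be closed: the "self-improvement within the $\nu$-scale hierarchy" does not exist, because the obstruction is not a loss of exponent between $\nu$ and $\rho$ but the failure of the defining identity itself below exponent $\frac12$. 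A second, related failure is that reconstructing $g$ from line integrals of $A$ and $A^g$ (the holonomy/Young argument of \cite[Prop.~3.35]{CCHS2d}) requires the $\gr\gamma$ norm for some $\gamma>\frac12$, whereas $\heatgr{\cdot}_{\alpha,\theta}$ only controls line integrals at exponent $\alpha=\frac12-$, below the Young threshold; the path-ordered exponentials you invoke do not converge there.

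The paper's proof avoids both problems by never working at time zero. One flows \emph{both} $X$ and $X^g$ by the deterministic DeTurck--YMH flow and uses the covariance identity $\ymhflow_t(X)^{\CH^X_t(g)}=\ymhflow_t(X^g)$, where $\CH^X(g)$ solves the parabolic PDE of Definition~\ref{def:CH} with initial condition $g$. At positive times the fields are smooth, so Lemma~\ref{lem:holonomy_estimates} gives $|\CH^X_t(g)|_{\Hol\gamma}\lesssim|\ymhflow_{A,t}(X)|_{\gr\gamma}+|\ymhflow_{A,t}(X^g)|_{\gr\gamma}$ for $\gamma=\frac12+$ with no a priori input on $g$. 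To get above the Young threshold one interpolates (Lemma~\ref{lem:heat_interpolate}) between $\heatgr{A}_{\alpha,\theta}$ and the sup-norm bound $|\CP_tA|_\infty\lesssim t^{\eta/2}|A|_{\CC^\eta}$ — this is where the $\Theta$ part of $\Sigma$, which your argument never uses, enters (together with Proposition~\ref{prop:YM_flow_minus_heat} for the remainder $\ymhflow_t(X)-\CP_tX$) — yielding $|\ymhflow_{A,t}(X)|_{\gr\gamma}\lesssim t^{\omega}(\Sigma(X)+1)$ with $\omega=0-$. Finally, $|g|_{\CC^\nu}$ with $\nu=\gamma+2\omega=\frac12-$ is recovered from the weighted bound on $\CH^X_t(g)$ as $t\downarrow0$ via the fixed-point argument of Lemma~\ref{lem:init_cond_bound}\ref{pt:Hol_estimate_g}, which is the rigorous replacement for your dyadic "upgrade" step. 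If you want to salvage your write-up, this time-evolution detour is the missing idea, not a bookkeeping refinement.
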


\begin{remark}
The conditions on $(\eta,\beta,\delta)$ in the above theorem
ensure that they satisfy \eqref{eq:CI}
and that there exists $\rho\in(\frac12,1]$ such that $(\rho,\eta,\beta,\delta)$ satisfies~\eqref{eq:CGI}.
On the other hand, we do not suppose in this subsection that $(\rho,\eta,\beta,\delta,\alpha,\theta)$ satisfies~\eqref{eq:CGS} since the continuity of the group action $\mfG^{0,\rho}\times\state\to\state$ will not be needed.
\end{remark}


We prove the theorem at the end of this subsection.
The main ingredients are Lemma~\ref{lem:init_cond_bound}\ref{pt:Hol_estimate_g},
which provides an estimate on $|g|_{\CC^\nu}$ in terms of a nonlinear PDE $\CH^X(g)$ with initial condition $g$ (this is rather generic and similar to how the heat flow characterises H{\"o}lder spaces),
and Lemma~\ref{lem:holonomy_estimates},
which estimates the (spatial) H{\"o}lder norm of $\CH^X(g)$ in terms of $\ymhflow(X)$ and $\ymhflow(X^g)$
(this estimate is based on holonomies and Young ODE theory, and was used extensively in~\cite{CCHS2d}).

We first require several lemmas and definitions.
Recall the notation $\ymhflow(X)=(\ymhflow_A(X),\ymhflow_\Phi(X))$
from Definition~\ref{def:ymhflow}.

\begin{definition}\label{def:CH}
Let $(\eta,\beta,\delta)$ satisfy~\eqref{eq:CI}, $X\in\init$ and $g\in\mfG^\nu$ for some $\nu>0$.
We denote by $\CH^X(g)\colon(0,T_{X,g})\to \mfG^\infty$
the solution to
\begin{equs}[eq:CH_equation]
\partial_t H H^{-1}
&= -\mrd^*_{\ymhflow_A(X)^H} (\mrd H H^{-1})
\\
&=
\partial_j((\partial_j H) H^{-1}) + [\Ad_H(\ymhflow_A(X)_j),(\partial_j H) H^{-1}]\;,
\\
H(0) &= g\;,
\end{equs}
where $T_{X,g}$ is the minimum between $T_X$ and the blow up time of $\CH^X(g)$ in $\mfG^\infty$.
\end{definition}

\begin{remark}
Using the bound $\sup_{t\in(0,T)}t^{-\frac{\eta\wedge\hat\beta}{2}}|\ymhflow_t(X)|_\infty<\infty$ for some $T>0$,
standard arguments show that, if $g \in \mfG^\nu$ for some $\nu>0$, then indeed a classical solution to~\eqref{eq:CH_equation} exists and is continuous with respect to $g \in \mfG^\nu$ (cf.\ Lemma~\ref{lem:init_cond_bound}\ref{pt:short_time_exist_g}).
\end{remark}

\begin{remark}
``$\CH$'' stands for ``harmonic'' since, if $\ymhflow_A(X)=0$, then $\CH^X(g)$ is the harmonic map flow with initial condition $g$.
\end{remark}

\begin{remark}\label{rem:H_meaning}
The significance behind Definition~\ref{def:CH}
is the identity
\begin{equ}\label{eq:CS_gauge_transform}
\ymhflow_t(X)^{\CH^X_t(g)} = \ymhflow_t(X^g)\;,\qquad \forall\; t\in [0,T_{X,g}\wedge T_{X^g})\;,
\end{equ}
which is valid for all smooth $(g,X)$ and therefore for all $X\in\init$ and $g\in\mfG^{0,\rho}$ whenever $(\rho,\eta,\beta,\delta)$ satisfies both~\eqref{eq:CI} and~\eqref{eq:CGI}.
In particular, $\CH^X(g)$ also solves
\begin{equs}
\partial_t H H^{-1}
&= -\mrd^*_{\ymhflow_A(X^g)} (\mrd H H^{-1})
\\
&= \partial_j((\partial_j H) H^{-1}) + [\ymhflow_A(X^g),(\partial_j H) H^{-1}]\;,
\end{equs}
which follows from  $ [\Ad_H(\ymhflow_A(X)_j),(\partial_j H) H^{-1}]=
[\ymhflow_A(X)^H_j,(\partial_j H) H^{-1}]$  in \eqref{eq:CH_equation}
and then
\eqref{eq:CS_gauge_transform}.
Moreover, a direct computation shows that $\CH^X_t(g)^{-1}=\CH^{X^g}_t(g^{-1})$ for all
$t\in[0,T_{X,g}\wedge T_{X^g,g^{-1}})$.
\end{remark}

The relation~\eqref{eq:CS_gauge_transform} yields the following lemma.

\begin{lemma}\label{lem:holonomy_estimates}
Let $(\rho,\eta,\beta,\delta)$ satisfy~\eqref{eq:CI} and~\eqref{eq:CGI}, $g\in\mfG^{0,\rho}$, and $X\in\init$.
Then $T_{X,g}=T_{X^g,g^{-1}} = T_X\wedge T_{X^g}$, and, for all $t\in(0,T_{X,g})$ and $\gamma\in(\frac12,1]$,
\begin{equ}\label{eq:Hol_g_bound}
|\CH^X_t(g)|_{\Hol\gamma} \lesssim |\ymhflow_{A,t}(X)|_{\gr\gamma} + |\ymhflow_{A,t}(X^g)|_{\gr\gamma}\;,
\end{equ}
where the proportionality constant depends only on $\gamma$.
\end{lemma}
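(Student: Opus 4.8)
The plan is to deduce everything from the gauge‑covariance identity \eqref{eq:CS_gauge_transform} of Remark~\ref{rem:H_meaning} together with the holonomy estimate \cite[Prop.~3.35]{CCHS2d}; the analytic content is essentially already in place, so the proof is mostly bookkeeping.

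First I would establish the identities for the existence times. Since $(\rho,\eta,\beta,\delta)$ satisfies both \eqref{eq:CI} and \eqref{eq:CGI}, Remark~\ref{rem:H_meaning} applies: writing $H=\CH^X(g)$, one has $\ymhflow_t(X)^{H_t}=\ymhflow_t(X^g)$ on $[0,T_{X,g})$, so $\ymhflow(X^g)$ exists at least on $[0,T_{X,g})$ and hence $T_{X^g}\ge T_{X,g}$; also $T_{X,g}\le T_X$ by definition. For the lower bound on $T_{X,g}$, note that $H$ is a classical solution on $(0,T_{X,g})$ (see the remark following Definition~\ref{def:CH}) and that, by \eqref{eq:CS_gauge_transform}, $(\mrd H_t)H_t^{-1}=\Ad_{H_t}\ymhflow_{A,t}(X)-\ymhflow_{A,t}(X^g)$. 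On any subinterval $[t_0,T']\subset(0,T_X\wedge T_{X^g})$ the right‑hand side, and then (by an induction on the number of spatial derivatives) every spatial derivative of it, is bounded, since $\ymhflow_t(X)$ and $\ymhflow_t(X^g)$ are smooth in $x$ for $t>0$; hence $H$ remains bounded in $\mfG^\infty$ on $[t_0,T']$ and therefore does not blow up before $T_X\wedge T_{X^g}$, i.e.\ $T_{X,g}\ge T_X\wedge T_{X^g}$. Combining these three facts gives $T_{X,g}=T_X\wedge T_{X^g}$. Finally, by Remark~\ref{rem:H_meaning} one has $\CH^{X^g}(g^{-1})=\CH^X(g)^{-1}$ and $(X^g)^{g^{-1}}=X$, so applying the previous step to the pair $(X^g,g^{-1})$ yields $T_{X^g,g^{-1}}=T_{X^g}\wedge T_X$ as well.

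Next I would prove the Hölder bound. Fix $t\in(0,T_{X,g})$ and set $a\eqdef\ymhflow_{A,t}(X)$, $b\eqdef\ymhflow_{A,t}(X^g)$, $H\eqdef\CH^X_t(g)$; all three are smooth, and by \eqref{eq:CS_gauge_transform} the $A$‑component of the identity reads $b=\Ad_H a-(\mrd H)H^{-1}$, i.e.\ $b=a^H$. The desired estimate $|H|_{\Hol\gamma}\lesssim|a|_{\gr\gamma}+|b|_{\gr\gamma}$ for $\gamma\in(\tfrac12,1]$, with constant depending only on $\gamma$, is exactly \cite[Prop.~3.35]{CCHS2d}: its proof expresses $H(x)^{-1}H(y)$ through the holonomies of $a$ and of $a^H$ along the line segment joining $x$ and $y$ (well defined by Young ODE theory because $\gamma>\tfrac12$), uses the elementary bounds $d_G(\mathrm{hol}_a(\ell),\id)\lesssim|a|_{\gr\gamma}|\ell|^\gamma$ (valid once $|a|_{\gr\gamma}|\ell|^\gamma$ is below a fixed threshold) together with the gauge transformation rule $\mathrm{hol}_{a^H}(\ell)=H(\ell_1)^{-1}\,\mathrm{hol}_a(\ell)\,H(\ell_0)$, and treats larger $|x-y|$ by concatenating $O(1)$ short segments, exploiting compactness of $G$. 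None of this uses the spatial dimension, so the argument carries over verbatim with $\T^2$ replaced by $\T^3$ and $\Omega_{\gr\rho}$ interpreted via \cite[Def.~3.1]{CCHS2d} on $\T^3$.

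The only point requiring any care is the maximality bookkeeping in the first paragraph — specifically, verifying that $\CH^X(g)$ cannot blow up in $\mfG^\infty$ before $T_X\wedge T_{X^g}$ — but this is immediate from \eqref{eq:CS_gauge_transform} once one observes that the flows $\ymhflow_t(X),\ymhflow_t(X^g)$ are smooth for positive times, so that $(\mrd H)H^{-1}$ and all its spatial derivatives are controlled on compact subintervals. Everything else is a direct appeal to Remark~\ref{rem:H_meaning} and \cite[Prop.~3.35]{CCHS2d}, so I do not anticipate a genuine obstacle.
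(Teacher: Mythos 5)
Your overall route is the paper's: the H\"older bound is exactly \eqref{eq:CS_gauge_transform} combined with \cite[Prop.~3.35, Eq.~(3.24)]{CCHS2d}, and the lower bound $T_{X,g}\ge T_X\wedge T_{X^g}$ is obtained, as in the paper, by showing that $\CH^X(g)$ cannot blow up in $\mfG^\infty$ while it stays controlled in a finite-regularity norm (your explicit induction on spatial derivatives of $(\mrd H)H^{-1}=\Ad_H\ymhflow_A(X)-\ymhflow_A(X^g)$ is a perfectly good substitute for the paper's appeal to the fact that $\CH^X(g)$ only blows up if it blows up in every $\CC^\nu$). The symmetry step via $\CH^X_t(g)^{-1}=\CH^{X^g}_t(g^{-1})$ also matches.

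There is, however, one genuinely circular step: your deduction of $T_{X^g}\ge T_{X,g}$. You write that the identity $\ymhflow_t(X)^{H_t}=\ymhflow_t(X^g)$ holds on $[0,T_{X,g})$ and conclude that $\ymhflow(X^g)$ therefore exists there. But Remark~\ref{rem:H_meaning} only asserts \eqref{eq:CS_gauge_transform} on $[0,T_{X,g}\wedge T_{X^g})$ — the identity is a statement about two flows where both are already defined, so it cannot by itself extend the existence interval of $\ymhflow(X^g)$. What is needed (and what the paper supplies) is a continuation criterion for the deterministic flow: $\ymhflow(X^g)$ can only blow up if $|\ymhflow_A(X^g)|_{\gr\alpha}+|\ymhflow_\Phi(X^g)|_{\CC^{\alpha-1}}$ blows up for $\alpha\in(\frac12,1]$, together with the bound $|A^h|_{\gr\alpha}+|h\Phi|_{\CC^{\alpha-1}}\le K(|A|_{\gr\alpha}+|\Phi|_{\CC^{\alpha-1}}+|h|_{\Hol\alpha})$. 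Arguing by contradiction, if $T_{X^g}<T_{X,g}$ then on $[0,T_{X^g})$ the identity does hold, the right-hand side of this bound stays finite as $t\nearrow T_{X^g}$ (since $T_{X^g}<T_{X,g}\le T_X$ and $H$ has not blown up), so $\ymhflow(X^g)$ cannot blow up at $T_{X^g}$ — a contradiction. Without this (or an equivalent) continuation argument, the inequality $T_{X^g}\ge T_{X,g}$, and hence the equality $T_{X,g}=T_X\wedge T_{X^g}$, is not established.
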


\begin{proof}
The bound~\eqref{eq:Hol_g_bound} follows from~\eqref{eq:CS_gauge_transform} and~\cite[Prop.~3.35, Eq.~3.24]{CCHS2d}.
Then $T_{X}\wedge T_{X^g} \leq T_{X,g}$
follows from the fact that, for any given $k > 0$, $\|\CH^X(g)\|_{\CC^k}$ only blows up if $|\CH^X(g)|_{\CC^\nu}$ does for all $\nu>0$.
Since $T_{X,g}\leq T_X$ by definition, it remains only to show that $T_{X,g} \leq T_{X^g}$
(which will show $T_{X,g}=T_X\wedge T_{X^g}$ and the fact that $T_{X^g,g^{-1}} = T_X\wedge T_{X^g}$ follows by symmetry).
But this follows from the facts that, for all $\gamma\in(\frac12,1]$,
$\ymhflow(X)$ can only blow up if $|\ymhflow_A(X)|_{\gr\gamma}+|\ymhflow_\Phi(X)|_{\CC^{\gamma-1}}$ does and that
$|A^g|_{\gr\gamma}+|g\Phi|_{\CC^{\gamma-1}} \leq K(|A|_{\gr\gamma}+|\Phi|_{\CC^{\gamma-1}}+|g|_{\Hol\gamma})$ for some increasing function $K\colon\R_+\to\R_+$.
\end{proof}

For $\eta\in\R$, $T>0$, a normed space $B$,
and $X\colon(0,T) \to \mcC(\T^3,B)$, 
denote
\begin{equ}
|X|_{\eta;T} \eqdef \sup_{t\in(0,T)} t^{-\eta/2} |X_t|_{\infty}\;.
\end{equ}

\begin{lemma}\label{lem:init_cond_bound}
Let $(\eta,\beta,\delta)$ satisfy~\eqref{eq:CI}, $\nu\in(0,1]$,
$g\in\mfG^{\nu}$, $X\in\init$, and $T>0$.

\begin{enumerate}[label=(\alph*)]
\item\label{pt:short_time_exist_g} Let $\gamma\in[\nu,2)$,
and define $
\kappa \eqdef \frac12\min\{\eta+1,\nu\}$.
Then there exists $c>0$ such that for all $S\in(0,T)$ such that $0<S^\kappa \le c |g|_{\mcC^{\nu}}^{-2}(|\ymhflow_A(X)|_{\eta;T}+1)^{-1}$,
\begin{equ}[e:defNorm]
|\CH^X(g)|_{\gamma,\nu;S} \eqdef \sup_{t\in(0,S)} t^{(\gamma-\nu)/2}|\CH^X_t(g)|_{\mcC^\gamma} \lesssim |g|_{\mcC^{\nu}}\;,
\end{equ}
where the proportionality constant depends only on $\eta,\nu,\gamma$.

\item\label{pt:Hol_estimate_g} Conversely, if $\gamma-\nu \in [0,\frac{2\kappa}3)$,
then
\begin{equ}
|g|_{\mcC^\nu}\leq \Poly ( |\ymhflow_A(X)|_{\eta;T} + |\CH^X(g)|_{\gamma,\nu;T}+T^{-1} )\;,
\end{equ}
where the relevant constants depend only on $\eta,\nu,\gamma$.
\end{enumerate}
\end{lemma}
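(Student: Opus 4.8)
The plan is to treat parts~\ref{pt:short_time_exist_g} and~\ref{pt:Hol_estimate_g} as a pair of complementary short-time estimates for the parabolic flow $\CH^X(g)$ defined by~\eqref{eq:CH_equation}, viewed as a perturbation of the harmonic map heat flow. Throughout I would abbreviate $H(t)=\CH^X_t(g)$ and $a(t)=\ymhflow_{A,t}(X)$, and use the mild (Duhamel) formulation
\begin{equ}
H(t) = \CP_t g + \int_0^t \CP_{t-s} \big( N(a(s),H(s)) \big)\,\mrd s\;,
\end{equ}
where $N(a,H)$ collects the lower-order terms $\partial_j((\partial_j H)H^{-1})\cdot(\cdots)$ and $[\Ad_H a_j,(\partial_j H)H^{-1}]$ after moving the principal Laplacian part $\partial_j((\partial_j H)H^{-1}) = \Delta H + (\text{quadratic in }\nabla H)$ appropriately; schematically $N$ is a sum of a term quadratic in $\nabla H$ (times bounded factors of $H,H^{-1}$) and a term of the shape $a\cdot \nabla H$ (again times bounded factors). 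The key analytic inputs are the standard heat-semigroup smoothing bounds $|\CP_t f|_{\CC^\gamma}\lesssim t^{-(\gamma-\nu)/2}|f|_{\CC^\nu}$ for $\gamma\ge\nu$, the Schauder-type estimate $\int_0^t |\CP_{t-s} f_s|_{\CC^\gamma}\,\mrd s$ with an integrable singularity as long as the exponents are chosen in the stated ranges, and the a priori control $|a|_{\eta;T}<\infty$ coming from Proposition~\ref{prop:YM_flow_minus_heat} (indeed $\sup_{t}t^{-\eta/2}|a_t|_\infty<\infty$ is built into the norm $|\cdot|_B$ there, using $\hat\beta\ge\eta$).

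For part~\ref{pt:short_time_exist_g}, I would run a contraction mapping argument in the space with norm $|\cdot|_{\gamma,\nu;S}$ for $\gamma\in[\nu,2)$, on the ball of radius $\asymp|g|_{\CC^\nu}$. The linear term contributes $|\CP_t g|_{\CC^\gamma}\lesssim t^{-(\gamma-\nu)/2}|g|_{\CC^\nu}$, which is exactly of the right form. For the nonlinear term, the quadratic-in-gradient piece produces, after the Duhamel integral, a factor $S^{\kappa'}$ times $|\CH^X(g)|_{\gamma,\nu;S}^2$ for a suitable $\kappa'>0$ determined by $\nu$ (this is where $\kappa\le\nu/2$ enters: one needs $\nu>0$ strictly so that squaring a $t^{-(\gamma-\nu)/2}$-bounded quantity still leaves an integrable singularity after one time integration, i.e. $\gamma-\nu<1$ and the gain is $S$ to a positive power proportional to $\min\{1,\nu\}$). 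The term $a\cdot\nabla H$ produces $|a|_{\eta;T}$ times $S$ to a power controlled by $\eta+1>0$ (here is where $\eta+1$ enters $\kappa$) times $|\CH^X(g)|_{\gamma,\nu;S}$. Collecting, one gets
\begin{equ}
|\CH^X(g)|_{\gamma,\nu;S} \lesssim |g|_{\CC^\nu} + S^\kappa\big(|g|_{\CC^\nu}^{-1}\cdot|g|_{\CC^\nu}^2 + |a|_{\eta;T}|g|_{\CC^\nu}\big)\cdot(\text{radius}) + \dots\;,
\end{equ}
so choosing $S^\kappa \le c|g|_{\CC^\nu}^{-2}(|a|_{\eta;T}+1)^{-1}$ closes the ball and gives a genuine contraction; the boundedness of $H$ and $H^{-1}$ in $L^\infty$ (which is needed to treat $\Ad_H$ and the $H^{-1}$ factors) follows because $H$ takes values in the compact matrix group $G$, so those factors cost nothing.

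For part~\ref{pt:Hol_estimate_g} I would go backwards: assuming $H$ is already known to exist up to time $T$ with $|\CH^X(g)|_{\gamma,\nu;T}<\infty$ (and $|a|_{\eta;T}<\infty$), I use the mild formula evaluated at a fixed time $t=T\wedge 1$ — or rather, integrate the identity $g = \CP_{-t}$ is not available, so instead I write $\CP_t g = H(t) - \int_0^t \CP_{t-s}N(a(s),H(s))\,\mrd s$ and estimate the right-hand side in $\CC^\nu$: the term $|H(t)|_{\CC^\nu}\lesssim t^{-(\gamma-\nu)/2}\cdot t^{(\gamma-\nu)/2}|H(t)|_{\CC^\gamma}$ is bounded by $|\CH^X(g)|_{\gamma,\nu;T}$ (choosing the evaluation time $\asymp T\wedge1$), and the Duhamel integral is bounded, using the same Schauder estimates as above with target regularity $\nu$, by a polynomial in $|\CH^X(g)|_{\gamma,\nu;T}$ and $|a|_{\eta;T}$ — this is exactly where the condition $\gamma-\nu<\tfrac{2\kappa}{3}$ is used, to guarantee that the cubic-type combination $H^{-1}(\nabla H)^2$-contribution to $N$, after one time integration landing in $\CC^\nu$, still has an integrable singularity (the factor $3$ reflecting that up to three factors each carrying a $t^{(\gamma-\nu)/2}$ weight can appear). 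Then since $|\CP_t g|_{\CC^\nu}\gtrsim$ does not directly bound $|g|_{\CC^\nu}$, I instead use that $g = H(0) = \lim_{t\to0}H(t)$ together with the quantitative bound $|g-\CP_t g|_{\CC^{\nu-}}\lesssim t^{\kappa''}(\cdots)$ and $|\CP_t g|_{\CC^\nu}\le |g|_{\CC^\nu}$ in the other direction — more cleanly, one bounds $|g|_{\CC^\nu}$ by interpolating: write $g = \CP_t g - (\CP_t g - g)$, note $\CP_t g$ is the sum estimated above, and $|\CP_t g - g|_{\CC^\nu}$ is controlled by the time-integral term which is itself small, so rearranging gives $|g|_{\CC^\nu}\lesssim$ (the bounded quantities) as claimed. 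The main obstacle I anticipate is the careful bookkeeping of exponents to verify that all the Duhamel integrals converge — in particular pinning down the precise meaning of $\kappa=\tfrac12\min\{\eta+1,\nu\}$ and checking the threshold $\gamma-\nu<\tfrac{2\kappa}{3}$ in part~\ref{pt:Hol_estimate_g}, since three $t^{(\gamma-\nu)/2}$-weighted factors plus the $t^{-(2-\gamma)/2}$-type singularity from reconstructing $\nabla^2$ from $\CC^\gamma$ data must still leave a positive power of $t$ after integration; getting these inequalities to line up (rather than the analysis itself, which is routine parabolic bootstrapping) is the delicate part.
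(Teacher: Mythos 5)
Your part~(a) is essentially the paper's argument: the paper also writes $h\eqdef\CH^X(g)$ as the fixed point of $h_t=e^{t\Delta}g+\int_0^te^{(t-s)\Delta}\big((\nabla h_s)^2h_s+(\nabla h_s)h_s^2\,\ymhflow_{A,s}(X)\big)\,\mrd s$ and runs a contraction on the ball of radius $2|g|_{\mcC^\nu}$, and your exponent bookkeeping ($\kappa\le\nu/2$ from the gradient-squared term, $\kappa\le(\eta+1)/2$ from the $a\cdot\nabla h$ term) is correct. One detail you gloss over: since $\gamma$ may be less than $1$, the single norm $|\cdot|_{\gamma,\nu;S}$ does not control $\nabla h$, so the contraction must be run in the intersection $\CC_{\gamma,\nu;S}\cap\CC_{1,\nu;S}\cap\CC_{\nu,\nu;S}$ (the $L^\infty$ bound from group-valuedness does not substitute for the $\CC^1$ weight). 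This is fixable and is exactly what the paper does.

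Part~(b), however, has a genuine gap at the decisive step. You bound $\CP_tg=h_t-\int_0^t(\cdots)\,\mrd s$ in $\mcC^\nu$ at a single time and then try to recover $|g|_{\mcC^\nu}$ by writing $g=\CP_tg-(\CP_tg-g)$ with ``$|\CP_tg-g|_{\mcC^\nu}$ controlled by the time-integral term, which is small''. This fails twice over: $\CP_tg-g$ has nothing to do with the Duhamel integral (that integral equals $h_t-\CP_tg$, a different object), and for generic $g\in\mcC^\nu$ the quantity $|\CP_tg-g|_{\mcC^\nu}$ does not tend to zero as $t\to0$ at all — the heat semigroup is not strongly continuous at the endpoint regularity, only in $\mcC^{\nu'}$ for $\nu'<\nu$ and with no quantitative rate unless $g$ is smoother. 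So no rearrangement of your identity yields a bound on $|g|_{\mcC^\nu}$. The mechanism that actually works, and that the paper uses, is the heat-flow characterization at the \emph{higher} regularity, $|g|_{\mcC^\nu}\asymp\sup_{t\in(0,1)}t^{(\gamma-\nu)/2}|e^{t\Delta}g|_{\mcC^\gamma}$, combined with the monotonicity $|e^{t\Delta}g|_{\mcC^\gamma}\le|e^{S\Delta}g|_{\mcC^\gamma}$ for $t\ge S$ to reduce the supremum to $t\in(0,S)$ at the cost of $S^{-(\gamma-\nu)/2}$. Substituting $e^{t\Delta}g=h_t-(\text{Duhamel})$ and using part~(a) then gives the self-referential bound
\begin{equ}
|g|_{\mcC^\nu}\lesssim S^{-(\gamma-\nu)/2}|h|_{\gamma,\nu;S}+S^{\bar\kappa}\big(|\ymhflow_A(X)|_{\eta;S}+1\big)|g|_{\mcC^\nu}^3\;,\qquad \bar\kappa\eqdef\kappa-\tfrac{\gamma-\nu}{2}\;,
\end{equ}
whose cubic term is absorbed by choosing $S$ comparable to a negative power of $|g|_{\mcC^\nu}$ (namely $S\asymp(\cdots)|g|_{\mcC^\nu}^{-2/\bar\kappa}$), leaving $|g|_{\mcC^\nu}\lesssim|g|_{\mcC^\nu}^{(\gamma-\nu)/\bar\kappa}|h|_{\gamma,\nu;T}$ up to constants depending on $|\ymhflow_A(X)|_{\eta;T}$ and $T^{-1}$. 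The hypothesis $\gamma-\nu<\tfrac{2\kappa}{3}$ is precisely what makes the exponent $(\gamma-\nu)/\bar\kappa$ strictly less than $1$, so that this last inequality can be closed. In particular your explanation of the $2\kappa/3$ threshold (``three $t^{(\gamma-\nu)/2}$-weighted factors in the Duhamel integral'') misidentifies its origin: the $3$ is the cubic power of $|g|_{\mcC^\nu}$ entering the absorption step, not an integrability constraint.
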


\begin{proof}
\ref{pt:short_time_exist_g}
Observe that $h\eqdef\CH^X(g)$ solves a fixed point of the form
\begin{equ}
h_t = \CM(h)_t \eqdef e^{t\Delta}g + \int_0^t e^{(t-s)\Delta}( (\nabla h_s)^2 h_s + (\nabla h_s) h_s^2 \ymhflow_{A,s}(X))\, \mrd s\;.
\end{equ}
It is easy to check that, for the given values of $S$ and for $c$ small enough, $\CM$
stabilises the ball of radius $2 |g|_{\CC^\nu}$ in the Banach space
$\CC_{\gamma,\nu;S}\cap\CC_{1,\nu;S}\cap\CC_{\nu,\nu;S}$
(using the notation obvious from \eqref{e:defNorm})
and is a contraction on this ball.


\ref{pt:Hol_estimate_g}
Observe that, for all $S>0$ and $t\in(0,S)$
\begin{equ}
|e^{(t-s)\Delta}(\nabla h_s) h_s^2\ymhflow_{A,s}(X)|_{\mcC^\gamma}
\lesssim
(t-s)^{-\gamma/2} s^{\frac\eta2+\frac{\nu-1}{2}}|\ymhflow_{A,s}(X)|_{\eta;S}|h|_{\nu,\nu;S}^2|h|_{1,\nu;S}\;,
\end{equ}
and
\begin{equ}
|e^{(t-s)\Delta}(\nabla h_s)^2 h_s|_{\mcC^\gamma}
\lesssim (t-s)^{-\gamma/2} s^{\nu-1}|h|_{1,\nu;S}^2|h|_{\nu,\nu;S}\;.
\end{equ}
By part~\ref{pt:short_time_exist_g}, $|h|_{\nu,\nu;S}+|h|_{1,\nu;S}\lesssim |g|_{\mcC^\nu}$ whenever $S\leq \Poly (|\ymhflow_A(X)|_{\eta;S}^{-1}) |g|_{\CC^{\nu}}^{-2/\kappa}$,
and thus
\begin{equ}
\sup_{t\in(0,S)}t^{(\gamma-\nu)/2}
|h_t-e^{t\Delta}g|_{\mcC^\gamma}
\lesssim S^{\kappa}(|\ymhflow_A(X)|_{\eta;S}+1)|g|^3_{\mcC^\nu}\;.
\end{equ}
On the other hand,
\begin{equ}
|g|_{\mcC^\nu}\asymp \sup_{t\in(0,1)}t^{(\gamma-\nu)/2}|e^{t\Delta}g|_{\mcC^{\gamma}}
\end{equ}
and,
for all $t\geq S$, $|e^{t\Delta}g|_{\mcC^\gamma}\leq |e^{S\Delta}g|_{\mcC^\gamma}$.
Therefore,
denoting $\bar\kappa \eqdef \kappa-\frac{\gamma-\nu}{2}\in(\frac{2\kappa}3,\kappa)$, 
\begin{equs}
|g|_{\mcC^\nu}
&\lesssim S^{-(\gamma-\nu)/2}\sup_{t\in(0,S)} t^{(\gamma-\nu)/2}|e^{t\Delta}g|_{\mcC^\gamma}
\\
&\lesssim S^{-(\gamma-\nu)/2}|h|_{\gamma,\nu;S} + S^{\bar\kappa}(|\ymhflow_A(X)|_{\eta;S}+1)|g|_{\mcC^\nu}^3\;.
\end{equs}
Hence, whenever $S \leq \Poly (|\ymhflow_A(X)|_{\eta;S}^{-1}) |g|_{\mcC^\nu}^{-2/\bar\kappa}$,
\begin{equ}\label{eq:g_h_bound}
|g|_{\mcC^\nu}\lesssim S^{-(\gamma-\nu)/2}|h|_{\gamma,\nu;S}\;.
\end{equ}
If $T\leq \Poly (|\ymhflow_A(X)|_{\eta;T}^{-1}) |g|_{\mcC^\nu}^{-2/\bar\kappa}$, then the conclusion follows.
Otherwise, we can choose $S\in(0,T)$ such that $S \asymp (|\ymhflow_A(X)|_{\eta;T}+1)^q |g|_{\mcC^\nu}^{-2/\bar\kappa}$ for some $q<0$
and such that~\eqref{eq:g_h_bound} holds,
and thus
\begin{equ}
|g|_{\mcC^\nu} \leq \Poly (|\ymhflow_A(X)|_{\eta;T}) |g|_{\mcC^\nu}^{(\gamma-\nu)/\bar\kappa}|h|_{\gamma,\nu;T}\;.
\end{equ}
Since $\gamma-\nu \in [0,\frac{2\kappa}{3}) \Leftrightarrow (\gamma-\nu)/\bar\kappa \in [0,1)$, the conclusion follows.
\end{proof}

The proof of the following lemma is routine.

\begin{lemma}[Interpolation]\label{lem:interpolation}
For $\alpha,\zeta\in(0,1]$, $\kappa\in[0,1]$, 
and $A\in\Omega$,
\begin{equ}
|A|_{\gr{(\kappa\alpha+(1-\kappa)\zeta)}} \leq |A|^\kappa_{\gr\alpha} |A|^{1-\kappa}_{\gr\zeta}\;.
\end{equ}
\end{lemma}

We will apply the following lemma with $\alpha=\frac12-$ and $\gamma=\frac12+$.

\begin{lemma}\label{lem:heat_interpolate}
For all $\eta\leq 0$, $\alpha\in(0,1]$, $\gamma\in[\alpha,1]$, $\theta\geq0$, and $A\in\Omega\CD'$,
\begin{equ}
\sup_{t\in(0,1)}t^{-(1-\kappa)\eta/2 + \theta(1-\alpha)\kappa} |\CP_t A|_{\gr\gamma}
\lesssim \heatgr{A}_{\alpha,\theta}^{\kappa}|A|^{1-\kappa}_{\CC^\eta}
\end{equ}
where $\kappa \eqdef \frac{\gamma-1}{\alpha-1}\in[0,1]$
and the proportionality constant depends only on $\eta$.
\end{lemma}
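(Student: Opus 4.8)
The plan is to prove the estimate by interpolating, at each fixed time $t\in(0,1)$, between the $\gr{\alpha}$-seminorm of $\CP_t A$ (which is controlled by $\heatgr{A}_{\alpha,\theta}$ up to a power of $t$) and the $\gr{1}$-seminorm of $\CP_t A$ (which is controlled by $|A|_{\CC^\eta}$ up to a power of $t$), the interpolation tool being Lemma~\ref{lem:interpolation}. First I would record the algebraic identity $\gamma = \kappa\alpha + (1-\kappa)\cdot 1$, which is precisely the defining relation $\kappa = (\gamma-1)/(\alpha-1)$ and which also makes $\kappa\in[0,1]$ transparent from $\gamma\in[\alpha,1]$ (here assuming $\alpha<1$; in the degenerate case $\alpha=1$ one necessarily has $\gamma=1$ and the statement, with the convention $\kappa=1$, follows directly from Lemma~\ref{lem:local_to_global_gr}). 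Applying Lemma~\ref{lem:interpolation} to $\CP_t A$ with $\zeta=1$ then gives, for every $t\in(0,1)$,
\begin{equ}
|\CP_t A|_{\gr\gamma} \leq |\CP_t A|_{\gr\alpha}^{\kappa}\,|\CP_t A|_{\gr 1}^{1-\kappa}\;.
\end{equ}

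Next I would bound the two factors separately. For the $\gr 1$ factor, using $|\CP_t A|_{\gr 1}\asymp|\CP_t A|_{L^\infty}$ from~\eqref{eq:CC_gr_bound} together with the standard heat-kernel smoothing estimate $|\CP_t A|_{L^\infty}\lesssim t^{\eta/2}|A|_{\CC^\eta}$, valid for $\eta\leq 0$ and $t\in(0,1)$. For the $\gr\alpha$ factor, Lemma~\ref{lem:local_to_global_gr} applied at scale $t^{\theta}\in(0,1]$, followed by the definition of $\heatgr{\cdot}_{\alpha,\theta}$, yields
\begin{equ}
|\CP_t A|_{\gr\alpha}\leq t^{-\theta(1-\alpha)}\,|\CP_t A|_{\gr\alpha;<t^\theta}\leq t^{-\theta(1-\alpha)}\,\heatgr{A}_{\alpha,\theta}\;.
\end{equ}

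Substituting these two bounds into the interpolation inequality gives
\begin{equ}
|\CP_t A|_{\gr\gamma}\lesssim t^{(1-\kappa)\eta/2-\kappa\theta(1-\alpha)}\,\heatgr{A}_{\alpha,\theta}^{\kappa}\,|A|_{\CC^\eta}^{1-\kappa}\;,
\end{equ}
so that after multiplying by $t^{-(1-\kappa)\eta/2+\theta(1-\alpha)\kappa}$ the power of $t$ cancels exactly, and taking the supremum over $t\in(0,1)$ produces the claimed bound. There is no genuine obstacle here: the argument is just two invocations of already-established lemmas combined with the heat-kernel bound, and the only points requiring care are the bookkeeping of the $t$-exponents and the algebraic check $\gamma=\kappa\alpha+(1-\kappa)$; the edge case $\alpha=1$ (where $\kappa$ is formally ill-defined) forces $\gamma=1$ and is immediate from Lemma~\ref{lem:local_to_global_gr} alone.
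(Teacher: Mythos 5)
Your proof is correct and follows essentially the same route as the paper: interpolate $|\CP_t A|_{\gr\gamma}$ between the $\gr\alpha$ and $\gr1$ seminorms via Lemma~\ref{lem:interpolation}, then bound the former with Lemma~\ref{lem:local_to_global_gr} and the definition of $\heatgr{\cdot}_{\alpha,\theta}$ and the latter with the heat-kernel smoothing estimate. Your extra remark on the degenerate case $\alpha=1$ is a harmless addition that the paper leaves implicit.
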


\begin{proof}
Observe that $|\CP_tA|_{\gr1} \lesssim |\CP_tA|_\infty \lesssim t^{\eta/2}|A|_{\CC^\eta}$,
and, by Lemma~\ref{lem:local_to_global_gr},
$|\CP_t A|_{\gr\alpha} \leq t^{-\theta(1-\alpha)}\heatgr{A}_{\alpha,\theta}$.
Since $\gamma=\kappa\alpha+(1-\kappa)$,
it follows from Lemma~\ref{lem:interpolation}
that
\begin{equ}
|\CP_tA|_{\gr\gamma}
\leq |\CP_tA|_{\gr\alpha}^{\kappa}|\CP_t A|^{1-\kappa}_{\gr1}
\lesssim t^{-\theta(1-\alpha)\kappa}\heatgr{A}_{\alpha,\theta}^{\kappa}t^{(1-\kappa)\eta/2}|A|^{1-\kappa}_{\CC^\eta} \;,
\end{equ}
as claimed.
\end{proof}

\begin{proof}[of Theorem~\ref{thm:g_bound}]
Take
$\gamma=\frac12+$
and denote $X=(A,\Phi)$.
Then, for all $T\leq \Poly (\Theta(X)^{-1})$ and $t\in(0,T)$,
\begin{equs}
|\ymhflow_{A,t}(X)|_{\gr\gamma}
&\leq |\CP_t A|_{\gr\gamma} + |\ymhflow_{A,t}(X)-\CP_tA|_{\gr\gamma}
\\
&\lesssim t^{(1-\kappa)\eta/2 - \theta(1-\alpha)\kappa}\heatgr{A}_{\alpha,\theta}^{\kappa}|A|^{1-\kappa}_{\CC^\eta} + t^{\hat\beta/2}(\fancynorm{X}_\beta+1)
\\
&\leq t^{\omega}(\Sigma(X)+1)\;,
\end{equs}
where $\kappa=\frac{\gamma-1}{\alpha-1}$ and $\omega=0-$,
and where we used Lemma~\ref{lem:heat_interpolate}
and Proposition~\ref{prop:YM_flow_minus_heat} in the second bound along with the trivial bound $|\cdot|_{\gr\gamma}\leq|\cdot|_{\gr1}\asymp |\cdot|_{\infty}$.

Define $\nu\eqdef \gamma+2\omega$.
Observe that $\nu=\frac12-$ for our choice of parameters.
It follows from Lemma~\ref{lem:holonomy_estimates} that  
\begin{equ}
|\CH^X(g)|_{\gamma,\nu;T} = \sup_{t\in(0,T)}t^{-\omega}|\CH^X_t(g)|_{\CC^\gamma} \lesssim \Sigma(X)+\Sigma(X^g)+1\;.
\end{equ}
Since $|\ymhflow_A(X)|_{\eta;T} \lesssim \Theta(X)+1$ by Proposition~\ref{prop:YM_flow_minus_heat},
it follows from Lemma~\ref{lem:init_cond_bound}\ref{pt:Hol_estimate_g} that $|g|_{\CC^\nu}\leq \Poly (\Sigma(A)+\Sigma(A^g) )$.
\end{proof}

\subsection{Gauge orbits}
\label{subsec:gauge_orbits}

Let us fix $(\eta,\beta,\delta)$ satisfying~\eqref{eq:CI} for the rest of the subsection.
Recall the relation $\sim$ on $\init$ from Definition~\ref{def:ymhflow} which extends  the usual notion of gauge equivalence.
Recall also the space $\state$ in Definition~\ref{def:state}.

\begin{definition}\label{def:orbits}
Define the quotient space under $\sim$ by $\mfO\eqdef \state/{\sim}$.
\end{definition}

\begin{lemma}\label{lem:gauge_orbits_closed}
The set $\{(X,Y)\in\init^2\,:\,X\sim Y\}$ is closed in $\init^2$.
\end{lemma}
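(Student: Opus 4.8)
The plan is to show that $\sim$ is closed on $\init^2$ by reducing to the corresponding statement for the YMH flow at positive times, where we already have good continuity.

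\textbf{Strategy.} Suppose $(X_n, Y_n) \to (X, Y)$ in $\init^2$ with $X_n \sim Y_n$ for each $n$. By Definition~\ref{def:ymhflow}, $X_n \sim Y_n$ means $\tilde\ymhflow(X_n) = \tilde\ymhflow(Y_n)$ as maps $(0,\infty) \to \mfO^\infty$; in particular $\tilde\ymhflow_t(X_n) = \tilde\ymhflow_t(Y_n)$ for every fixed $t > 0$. The goal is to pass to the limit $n \to \infty$ to conclude $\tilde\ymhflow_t(X) = \tilde\ymhflow_t(Y)$ for all sufficiently small $t$, which by Proposition~\ref{prop:back_unique}\ref{pt:one_t}$\Rightarrow$\ref{pt:all_t} (or rather its analogue: see below) forces $X \sim Y$.

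\textbf{Key steps.} First I would fix $t > 0$ small enough that $t < T_{X}$ and $t < T_Y$; by continuity of $X \mapsto \ymhflow(X)$ on $\init$ from Proposition~\ref{prop:YM_flow_minus_heat} (local Lipschitz continuity of $\CR$, hence of $\ymhflow = \CR + \CP X$ into $B$), for $n$ large we also have $t < T_{X_n} \wedge T_{Y_n}$ and $\ymhflow_t(X_n) \to \ymhflow_t(X)$, $\ymhflow_t(Y_n) \to \ymhflow_t(Y)$ in a suitable topology — say in $\CC^{\hat\beta}$, or better in $\Omega^{\bar\rho}$ for some $\bar\rho \in (\frac12, 1)$ after using parabolic smoothing to get into a H\"older space of positive regularity. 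Second, since $\ymhflow_t(X_n) \sim \ymhflow_t(Y_n)$ in the classical (smooth) sense — because $\tilde\ymhflow_t(X_n) = [\ymhflow_t(X_n)]$ in $\mfO^\infty$ for $t < T_{X_n}$, and likewise for $Y_n$ — Lemma~\ref{lem:gauge_orbits_closed_rho} (closedness of the gauge-equivalence relation in $\Omega^\rho \times \Omega^\rho$) applies to the limit: $\ymhflow_t(X) \sim \ymhflow_t(Y)$ in $\Omega^{\bar\rho}$, hence $\tilde\ymhflow_t(X) = \tilde\ymhflow_t(Y)$. Third, having established $\tilde\ymhflow_t(X) = \tilde\ymhflow_t(Y)$ for one (equivalently, all sufficiently small) $t > 0$, I invoke the backwards-uniqueness mechanism: for $X, Y \in \init$, the same analytic-continuation argument used in Proposition~\ref{prop:back_unique} shows $\tilde\ymhflow_t(X) = \tilde\ymhflow_t(Y)$ for some $t$ implies $\tilde\ymhflow(X) = \tilde\ymhflow(Y)$, i.e.\ $X \sim Y$ in $\init$. (Concretely: $\ymhflow_t(X) \in \Omega^{\bar\rho}$ and $\ymhflow_t(Y) \in \Omega^{\bar\rho}$ with $\ymhflow_t(X) \sim \ymhflow_t(Y)$ in $\Omega^{\bar\rho}$; by Proposition~\ref{prop:back_unique} applied to these two elements, $\flow_s(\ymhflow_t(X)) \sim \flow_s(\ymhflow_t(Y))$ for all $s \geq 0$, and since the semigroup property of the flow together with Lemma~\ref{lem:deturck_to_no_deturck} identifies $\flow$ of $\ymhflow_t(X)$ with the continued orbit of $X$, the equality $\tilde\ymhflow(X) = \tilde\ymhflow(Y)$ on all of $(0,\infty)$ follows.)

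\textbf{Main obstacle.} The delicate point is the first step: making precise the mode of convergence $\ymhflow_t(X_n) \to \ymhflow_t(X)$ and ensuring it is strong enough to feed into Lemma~\ref{lem:gauge_orbits_closed_rho}, which is stated for $\Omega^\rho$. Proposition~\ref{prop:YM_flow_minus_heat} gives continuity into the space $B$ with the weighted norm controlling $|R_t|_{\CC^{\hat\beta}}$, $|R_t|_\infty$, $|R_t|_{\CC^1}$; for $t$ fixed and positive this yields convergence in $\CC^1 \supset \Omega\CC^{\bar\rho}$ for any $\bar\rho < 1$, and parabolic regularity of~\eqref{eq:ymh_flow_deturck} then upgrades $\ymhflow_t(X_n)$ to smooth functions converging in $\CC^k$ for every $k$ on any compact time interval bounded away from $0$ — so convergence in $\Omega^{\bar\rho}$ of the connection component and in $\CC^{\bar\rho - 1}$ of the Higgs component is automatic. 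One must also take care that the existence times $T_{X_n} \wedge T_{Y_n}$ do not shrink to zero; this is handled by the quantitative bound $T \lesssim_{\Theta(X)} 1$ in Proposition~\ref{prop:YM_flow_minus_heat} together with $\Theta(X_n) \to \Theta(X)$, so a uniform lower bound on existence times is available along the sequence. With these observations the three steps close up routinely.
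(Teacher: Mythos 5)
Your proof is correct and follows essentially the same route as the paper's: convergence of $\ymhflow_t(X_n)$ and $\ymhflow_t(Y_n)$ at a fixed small $t>0$ via Proposition~\ref{prop:YM_flow_minus_heat} (into $\Omega_{\gr1}\times\CC$, which suffices for $\Omega^{\bar\rho}$), followed by the closedness of gauge equivalence in $\Omega^{\bar\rho}$ from Lemma~\ref{lem:gauge_orbits_closed_rho}. The paper leaves your third step (propagating $\tilde\ymhflow_t(X)=\tilde\ymhflow_t(Y)$ at one time to all times via Proposition~\ref{prop:back_unique}) implicit, so your write-up simply makes the same argument explicit.
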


\begin{proof}
Suppose $(X_n,Y_n)\to(X,Y)$ in $\init^2$ with $X_n\sim Y_n$.
Then,
by Proposition~\ref{prop:YM_flow_minus_heat},
\begin{equ}
|\ymhflow_t(X_n) - \ymhflow_t(X)|_{\Omega_{\gr 1}\times \CC(\T^3,\higgsvec)}\to 0
\end{equ}
for some $t>0$ sufficiently small, and similarly for $Y_n,Y$.
The conclusion now follows from Lemma~\ref{lem:gauge_orbits_closed_rho}.
\end{proof}

\begin{proposition}\label{prop:Hausdorff}
With the quotient topology, $\mfO$ is separable and completely Hausdorff.
\end{proposition}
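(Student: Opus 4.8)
The plan is to establish the two topological properties of $\mfO = \state/{\sim}$ separately, using the results already accumulated in this section.

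\textbf{Separability.} Since $\state$ is by definition the closure of smooth functions under the metric $\Sigma$, it is a separable metric space. The quotient map $\pi\colon \state \to \mfO$ is continuous and surjective by construction of the quotient topology, and the continuous image of a separable space is separable. Hence $\mfO$ is separable; this part is essentially immediate.

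\textbf{Completely Hausdorff.} Here I would use the regularised gauge-invariant observables. The key point is Theorem~\ref{thm:main_state_space}\ref{pt:gauge_equiv_extend} (equivalently the definition of $\sim$ via $\tilde\ymhflow$): two elements $X, Y \in \state$ are gauge-equivalent if and only if $\tilde\ymhflow_t(X) = \tilde\ymhflow_t(Y)$ for all $t > 0$, and by Proposition~\ref{prop:back_unique} (applied after the flow regularises $X$ into $\Omega^\rho$) it suffices that this holds for a single $t$. So fix a small $t > 0$. For any two distinct points $[X] \neq [Y]$ in $\mfO$, we have $\tilde\ymhflow_t(X) \neq \tilde\ymhflow_t(Y)$ as elements of $\mfO^\infty = \CC^\infty(\T^3,E)/{\sim}$. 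Since smooth gauge orbits can be separated by classical gauge-invariant observables (Wilson loops / string observables — this is the content referenced in Remark~\ref{rem:observables} and Section~\ref{subsec:regularised_Wilson}), there is a continuous function $W\colon \mfO^\infty \to \R$ (or into some metric space) with $W(\tilde\ymhflow_t(X)) \neq W(\tilde\ymhflow_t(Y))$. Then $\bar W \eqdef W \circ \tilde\ymhflow_t$ descends to a continuous function on $\mfO$ (it is $\sim$-invariant by definition of $\sim$, and continuous because $\tilde\ymhflow_t$ extends continuously to all of $\state$ by Theorem~\ref{thm:main_state_space}(ii)), and it separates $[X]$ from $[Y]$ by disjoint closed neighbourhoods of their images in $\R$. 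This gives the completely Hausdorff property: distinct points are separated by a continuous real-valued function.

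A cleaner packaging, which I would prefer, is to produce directly a continuous \emph{injection} $\mfO \hookrightarrow \mathcal{Y}$ into some metrisable (or at least completely Hausdorff) space, built from a countable family of such regularised observables; a space admitting a continuous injection into a completely Hausdorff space is itself completely Hausdorff. Concretely: choose $t>0$ fixed, let $(W_n)_{n\geq 1}$ be a countable separating family of bounded continuous gauge-invariant observables on $\mfO^\infty$ (these exist because $\mfO^\infty$ is itself separable and completely regular, or by explicit construction from Wilson loops along a countable dense family of loops), and set $\bar W_n = W_n \circ \tilde\ymhflow_t$. Each $\bar W_n$ is continuous on $\state$ and $\sim$-invariant, hence descends to continuous $\mfO \to \R$. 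By Proposition~\ref{prop:back_unique} the map $[X] \mapsto \tilde\ymhflow_t(X)$ is injective on $\mfO$, and since the $W_n$ separate points of $\mfO^\infty$, the combined map $[X] \mapsto (\bar W_n([X]))_n \in \R^{\mathbb{N}}$ is injective. As $\R^{\mathbb{N}}$ is metrisable hence completely Hausdorff, $\mfO$ is completely Hausdorff.

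\textbf{Main obstacle.} The delicate point is not the soft topology but ensuring that $[X] \mapsto \tilde\ymhflow_t(X)$ is genuinely injective on $\mfO$ and that enough gauge-invariant observables are available and continuous on the nonstandard space $\state$. Injectivity on $\mfO$ is precisely the backwards-uniqueness statement: one must check that $\tilde\ymhflow_t(X) = \tilde\ymhflow_t(Y)$ in $\mfO^\infty$ forces $X \sim Y$ in $\state$, which by Definition~\ref{def:ymhflow} is a tautology for the relation $\sim$ as defined, but one must make sure this relation is the ``right'' one, i.e. that it restricts to ordinary gauge equivalence — this is exactly Proposition~\ref{prop:back_unique} combined with the fact that $\ymhflow_s(X) \to X$ as $s \to 0$ in the relevant topology (Lemma~\ref{lem:Sigmas_converg} / Theorem~\ref{thm:main_state_space}(ii)). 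The continuity of the observables $\bar W_n$ on $\state$ reduces, via Theorem~\ref{thm:main_state_space}(ii), to continuity of classical Wilson-type observables on $\CC^\infty$ in the topology of, say, $\Omega\CC^1$, which is standard. Thus essentially all the work has already been done in earlier results of this section and in Section~\ref{subsec:regularised_Wilson}; the proof of Proposition~\ref{prop:Hausdorff} itself is a short assembly of these.
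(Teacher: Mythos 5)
Your proof is correct and its core mechanism is the same as the paper's: separability is inherited through the continuous surjection $\pi\colon\state\to\mfO$, and the completely Hausdorff property comes from the fact that $\sim$ is of the form $X\sim Y\Leftrightarrow f(X)=f(Y)$ for a continuous map $f=\tilde\ymhflow_t$ out of $\state$, which induces a continuous injection of $\mfO$ into the target. The only difference is in the choice of target: the paper stops at $\mfU=\mfO_\alpha$ (the metric space of gauge orbits from the 2D paper, $\alpha\in(\frac23,1]$), and since a metric space is automatically completely Hausdorff, no observables are needed at all. You instead compose further with Wilson-type observables to land in $\R$ (or $\R^{\mathbb N}$), which works but imports the extra inputs that $\mbW_x$ separates points of $\mfO^\infty$ and that holonomies are continuous on $\Omega_\alpha$ for $\alpha>\frac12$; your first packaging (one separating function per pair of distinct orbits) is the one that is actually justified by Section~\ref{subsec:regularised_Wilson}, whereas the countable separating family used for the $\R^{\mathbb N}$ embedding is asserted rather than constructed. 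Neither point is a gap for the statement at hand — complete Hausdorffness is a pairwise property — but the paper's route is shorter precisely because it avoids the detour through observables.
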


\begin{proof}
Separability follows from the fact that $\state$ is separable.
The relation $\sim$ is of the form $X\sim Y \Leftrightarrow f(X) = f(Y)$ for a continuous function
$f \colon \state \to \mfU$, where $\mfU$ a metric space;
for instance,
 $f=\tilde\ymhflow_t$ for $t>0$ (recall Proposition~\ref{prop:back_unique})
 and $\mfU=\mfO_\alpha$ for $\alpha\in(\frac23,1]$ as defined in~\cite[Sec.~3.6]{CCHS2d}.
Since $\mfU$ is completely Hausdorff, so is $\mfO$.
\end{proof}

\begin{remark}
In addition to the quotient topology,
another natural topology on $\mfO$
is given by taking one of the functions $f\colon\state\to\mfU$ mentioned in 
the proof of Proposition~\ref{prop:Hausdorff},
which necessarily induces a bijection $f\colon\mfO\to f(\mfO)\subset\mfU$,
and equip $\mfO$ with the corresponding subspace topology.
This leads to a weaker topology than the quotient topology
but which is metrisable and separable.
One loses, however, continuity of solutions to SYMH (modulo gauge equivalence) with respect to the initial condition under this topology,
and it is not clear whether there is a choice for $f$ (or some family of such functions) for which this weaker topology is \textit{completely} metrisable.
\end{remark}

Recall (see the last paragraph of Section~\ref{sec:notation}) the metrisable space $\hat\state\eqdef \state\cup\{\skull\}$.
We extend $\sim$ to $\hat\state$ by $Y\sim\skull \Leftrightarrow Y=\skull$.
In particular, we identify $\hat\mfO\eqdef \hat\state/{\sim}$ with $\mfO\cup\{\skull\}$.
Equip $\hat\mfO$ with the quotient topology.

\begin{proposition}\label{prop:hat_mfO_top}
A strict subset $O\subsetneq\hat\mfO$ is open if and only if $\skull\notin O$ and $O$ is open as a subset of $\mfO$.  
\end{proposition}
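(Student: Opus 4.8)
The plan is to unwind the definition of the quotient topology on $\hat\mfO = \hat\state/{\sim}$ and use the explicit description of $\sim$ on $\hat\state$. Recall that by construction $Y \sim \skull$ if and only if $Y = \skull$, so the equivalence class of $\skull$ in $\hat\mfO$ is the singleton $\{\skull\}$, and for $X \in \state$ the class $[X]$ in $\hat\mfO$ coincides with its class in $\mfO$. Let $\pi\colon \hat\state \to \hat\mfO$ be the quotient map and $\pi_0\colon \state\to\mfO$ its restriction. By definition of the quotient topology, a set $O\subseteq \hat\mfO$ is open iff $\pi^{-1}(O)$ is open in $\hat\state$, and similarly for $\mfO$.

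First I would prove the ``only if'' direction. Suppose $O\subsetneq\hat\mfO$ is open. Since $O\neq\hat\mfO$ and $\skull$ is the image of an isolated-type class, I claim $\skull\notin O$: if $\skull\in O$ then $\pi^{-1}(O)$ is an open subset of $\hat\state$ containing $\skull$; but $\hat\state = \state\cup\{\skull\}$ is the metric space obtained by adjoining a cemetery state, in which every open set containing $\skull$ must be all of $\hat\state$ (this is the defining property of $\hat F$ recalled from \cite[Sec.~1.5.1]{CCHS2d}: $\skull$ has no neighbourhood other than the whole space), forcing $\pi^{-1}(O) = \hat\state$ and hence $O = \hat\mfO$, contradicting $O\subsetneq\hat\mfO$. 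So $\skull\notin O$, which means $O\subseteq\mfO$. Then $\pi^{-1}(O) = \pi_0^{-1}(O)$ is open in $\hat\state$ and is contained in $\state$, hence is open in $\state$ (as $\state$ is an open subset of $\hat\state$, being the complement of the closed point $\{\skull\}$), so $O$ is open in $\mfO$ by definition of the quotient topology on $\mfO$.

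Next the ``if'' direction: suppose $O\subsetneq\hat\mfO$ with $\skull\notin O$ and $O$ open in $\mfO$. Then $\pi^{-1}(O) = \pi_0^{-1}(O)$, which is open in $\state$ since $O$ is open in $\mfO$; and since $\state$ is open in $\hat\state$, $\pi_0^{-1}(O)$ is open in $\hat\state$ as well. Moreover $\skull\notin\pi^{-1}(O)$ because $\pi(\skull)=\skull\notin O$. Hence $\pi^{-1}(O)$ is an open subset of $\hat\state$, so $O$ is open in $\hat\mfO$. This completes both implications.

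The argument is essentially a bookkeeping exercise, so there is no serious obstacle; the only point requiring a little care is the assertion that in $\hat\state$ the cemetery state $\skull$ has no proper open neighbourhood, i.e.\ the topology on $\hat F = F\sqcup\{\skull\}$ is such that the only open set containing $\skull$ is $\hat F$ itself. This is precisely how the metric on $\hat F$ is set up in \cite[Sec.~1.5.1]{CCHS2d} (one places $\skull$ at ``distance infinity'', or equivalently declares the only neighbourhood basis at $\skull$ to be $\{\hat F\}$), and it is also what is used implicitly throughout when one speaks of $\state^\sol$ and processes that ``cannot be reborn''. I would simply cite that reference for this fact and note that $\state$ is correspondingly open in $\hat\state$ with the subspace topology on $\state$ agreeing with its original metric topology.
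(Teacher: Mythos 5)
Your ``if'' direction is fine, but the ``only if'' direction rests on a false topological claim. You assert that in $\hat\state=\state\sqcup\{\skull\}$ the only open set containing $\skull$ is $\hat\state$ itself, and you attribute this to the cemetery-state construction of \cite[Sec.~1.5.1]{CCHS2d}. That is not how that construction works: $\hat\state$ is a \emph{metrisable} space (the paper says so explicitly just before the proposition), with $\skull$ playing the role of the point at infinity, so that $X_n\to\skull$ if and only if $\Sigma(X_n,0)\to\infty$. In particular $\skull$ has many proper open neighbourhoods, e.g.\ the complement in $\hat\state$ of any closed bounded ball of $\state$. (If your claim were true, $\hat\state$ could not be Hausdorff, let alone metrisable; and ``placing $\skull$ at distance infinity'' in the literal sense would instead make $\{\skull\}$ an isolated point, which is also not what is intended.) So the implication ``$O$ open, $\skull\in O$ $\Rightarrow$ $\pi^{-1}(O)=\hat\state$'' does not follow from general properties of $\hat F$.

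What actually closes this step — and it is the only genuinely nontrivial content of the proposition — is that $\pi^{-1}(O)$ is a \emph{saturated} open neighbourhood of $\skull$, combined with the fact that every gauge orbit $[X]\in\mfO$ contains a sequence converging to $\skull$ in $\hat\state$, i.e.\ every orbit is unbounded in $\state$ (one acts by increasingly wild gauge transformations; the paper cites \cite[Lem.~3.46]{CCHS2d} for this). Then every orbit meets $\pi^{-1}(O)$, and saturation forces $\pi^{-1}(O)$ to contain each orbit entirely, whence $\pi^{-1}(O)=\hat\state$ and $O=\hat\mfO$. Without this unboundedness of orbits the statement could genuinely fail, so the step cannot be dismissed as bookkeeping. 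The remainder of your argument — both directions once $\skull\notin O$ is established, using that $\state$ is open in $\hat\state$ and that preimages under $\pi$ restricted to $\state$ detect openness in $\mfO$ — is correct and coincides with the paper's.
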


\begin{proof}
Consider the collection $\tau\eqdef \{\hat\mfO\}\cup \{O\subset\mfO\,:\,O\text{ is open}\}$.
It is easy to verify that $\tau$ is a topology on $\hat\mfO$.
Let $\sigma$ denote the quotient topology on $\hat\mfO$.
We claim that the projection $\pi\colon\hat\state\to(\hat\mfO,\tau)$
is continuous,
from which it follows that $\tau\subset \sigma$.
Indeed, for $O\in\tau$, either $O=\hat\mfO$ in which case $\pi^{-1}(O)=\hat\state$, or $O\subset\mfO$ is open in which case $\pi^{-1}(O)$ is an open subset of $\state$ and therefore also an open subset in $\hat\state$.
Hence $\pi\colon\hat\state\to(\hat\mfO,\tau)$ is continuous as claimed.

We now claim that also $\sigma\subset\tau$, which then completes the proof.
Indeed, every equivalence class $[A]\in\mfO$ contains a sequence which converges to $\skull$ in $\hat\state$ (see~\cite[Lem.~3.46]{CCHS2d} for the proof of a similar statement).
Therefore, if $O\in\sigma$ and $\skull\in O$, then $O=\hat\mfO\in\tau$ since $\pi^{-1}(O)$ 
is open by definition of the quotient topology and contains $\skull$,
so that it must contain some element of every equivalence class $[A]\in\mfO$.
On the other hand, suppose $O\in\sigma$ and $\skull\notin O$.
Then $\pi^{-1}(O)\subset \state$ and $\pi^{-1}(O)$ is open as a subset of $\hat\state$ (by definition of $\sigma$).
It is easy to see that every open subset of $\hat\state$ which does not contain $\skull$ is also open in $\state$.
Hence $\pi^{-1}(O)$ is also open in $\state$, and therefore $O$ is open in $\mfO$, thus $O\in\tau$.
Hence $\sigma\subset \tau$ as claimed.
\end{proof}

\subsection{Embeddings and heat flow}
\label{subsec:embeddings}

In this subsection, we study compact embeddings for the spaces
$\init$ and $\state$ and the effect of the heat flow.
We first record the following lemma, the proof of which is straightforward.

\begin{lemma}[Lower semi-continuity]\label{lem:Theta_Sigma_lower_semicont}
Let $\eta,\beta,\delta,\theta,\kappa\in\R$, $\alpha\in(0,1]$.
The functions
\begin{equ}
|\cdot|_{\CC^{\eta}}\;,
\;\;
\fancynorm{\cdot}_{\beta,\delta}\;,\;\;
\heatgr{\cdot}_{\alpha,\theta}
\;\;
\colon\CC^\kappa\to [0,\infty]
\end{equ}
are lower semi-continuous.
\end{lemma}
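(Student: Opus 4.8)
The statement to prove is Lemma~\ref{lem:Theta_Sigma_lower_semicont}: the functionals $|\cdot|_{\CC^\eta}$, $\fancynorm{\cdot}_{\beta,\delta}$, and $\heatgr{\cdot}_{\alpha,\theta}$ are lower semi-continuous as maps $\CC^\kappa\to[0,\infty]$.

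The plan is to exploit the fact that each of these three functionals is, by its very definition, a supremum of a family of functions each of which is \emph{continuous} on $\CC^\kappa$; a pointwise supremum of continuous functions is automatically lower semi-continuous, so the whole lemma reduces to identifying, for each functional, the correct family of continuous test-functionals and checking continuity of each member.

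For $|\cdot|_{\CC^\eta}$: when $\eta<0$ we have $|\xi|_{\CC^\eta}=\sup_{\lambda,\psi,x}\lambda^{-\eta}|\scal{\xi,\psi^\lambda_x}|$, and for each fixed $(\lambda,\psi,x)$ the map $\xi\mapsto\lambda^{-\eta}|\scal{\xi,\psi^\lambda_x}|$ is continuous on $\CC^\kappa$ (indeed on $\CD'$ with its natural topology, but continuity on $\CC^\kappa$ is all we need, and it follows since $\psi^\lambda_x$ is a fixed smooth test function, hence testing against it is continuous in any reasonable distributional topology; concretely $|\scal{\xi_n-\xi,\psi^\lambda_x}|\lesssim |\xi_n-\xi|_{\CC^\kappa}$ up to a constant depending on $\psi^\lambda_x$). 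For $\eta\ge 0$ one uses instead the sup-norm / Hölder seminorm definitions, which are again suprema over fixed points/pairs of points of continuous functionals. Similarly $\fancynorm{X}_{\beta,\delta}=\sup_{t\in(0,1)}t^\delta|\CN_t(X)|_{\CC^\beta}$; here for each fixed $t$ the map $X\mapsto \CN_t(X)=\CP_tX\otimes\nabla\CP_tX$ is continuous from $\CC^\kappa$ into $\CC^\beta$ (this is exactly the continuity already invoked in the paragraph before Lemma~\ref{lem:closed_graph}, since $\CP_t$ is smoothing and multiplication of smooth functions is continuous), and $|\cdot|_{\CC^\beta}$ is continuous on $\CC^\beta$, so $X\mapsto t^\delta|\CN_t(X)|_{\CC^\beta}$ is continuous on $\CC^\kappa$; taking the sup over $t$ gives lower semi-continuity. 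The same works for $\fancynorm{X;Y}_{\beta,\delta}$ jointly in $(X,Y)$ if that is needed. For $\heatgr{A}_{\alpha,\theta}=\sup_{t\in(0,1)}|\CP_tA|_{\gr\alpha;<t^\theta}=\sup_{t}\sup_{|\ell|<t^\theta}|\CP_tA(\ell)|/|\ell|^\alpha$, each inner functional $A\mapsto |\CP_tA(\ell)|/|\ell|^\alpha$ is, for fixed $t$ and fixed line segment $\ell$, a continuous function of $A\in\CC^\kappa$: $\CP_tA$ depends continuously on $A$ with values in $\CC^\infty$, the line integral $\CP_tA(\ell)=\int_0^1\sum_i v_i(\CP_tA)_i(x+sv)\,\mrd s$ is a continuous linear functional of $\CP_tA$, and dividing by the fixed constant $|\ell|^\alpha$ preserves continuity.

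The only genuinely routine point to write out is the elementary lemma that a pointwise supremum $f=\sup_{i\in I}f_i$ of an arbitrary family of lower semi-continuous (in particular continuous) functions $f_i\colon\mcZ\to[0,\infty]$ on a topological space $\mcZ$ is itself lower semi-continuous — equivalently, that $\{f>c\}=\bigcup_{i}\{f_i>c\}$ is open for every $c$. I would state this one line and then apply it three times. I do not anticipate a real obstacle here: the content of the lemma is purely formal once one observes that every norm/seminorm in sight is defined as such a supremum and that the finitely many ``building-block'' maps (testing against a fixed mollifier, applying the heat semigroup for a fixed positive time, evaluating a smooth function, integrating over a fixed line segment, taking a Hölder difference quotient at two fixed points) are continuous on $\CC^\kappa$. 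The mild subtlety — worth a sentence — is that for $\heatgr{\cdot}$ and $\fancynorm{\cdot}$ the continuity of the building blocks relies crucially on evaluating the heat flow at strictly positive times $t\in(0,1)$, where $\CP_t\colon\CC^\kappa\to\CC^\infty$ is bounded and continuous; nothing is claimed about $t=0$, which is consistent with the fact that the functionals themselves are genuinely not continuous (only lower semi-continuous) on $\CC^\kappa$.

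\begin{proof}
Recall the elementary fact that if $\mcZ$ is a topological space and $(f_i)_{i\in I}$ is an arbitrary family of lower semi-continuous functions $\mcZ\to[0,\infty]$, then $f\eqdef\sup_{i\in I}f_i$ is lower semi-continuous; indeed $\{z\in\mcZ : f(z)>c\}=\bigcup_{i\in I}\{z : f_i(z)>c\}$ is open for every $c\in\R$. In particular a supremum of continuous $[0,\infty]$-valued functions is lower semi-continuous. We apply this with $\mcZ=\CC^\kappa(\T^3,E)$ (resp.\ $\CC^\kappa(\T^3,\mfg)$ in the last case) in each of the three cases; it thus suffices to exhibit each functional as a pointwise supremum of continuous functionals on $\CC^\kappa$.

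\emph{The norm $|\cdot|_{\CC^\eta}$.} If $\eta<0$, then by definition $|\xi|_{\CC^\eta}=\sup\{\lambda^{-\eta}|\scal{\xi,\psi^\lambda_x}| : \lambda\in(0,1],\,\psi\in\mcB^r,\,x\in\T^3\}$, where $r=-\roof{\eta-1}$. For each fixed triple $(\lambda,\psi,x)$ the test function $\psi^\lambda_x$ is a fixed smooth function, so $\xi\mapsto\lambda^{-\eta}|\scal{\xi,\psi^\lambda_x}|$ is continuous on $\CC^\kappa$ (it is bounded above by a constant multiple of $|\xi|_{\CC^\kappa}$, the constant depending on $\psi^\lambda_x$, $\lambda$ and $\eta$). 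If $\eta=0$ the functional is the $L^\infty$-norm $\sup_{x\in\T^3}|\xi(x)|$, again a supremum over $x$ of the continuous functionals $\xi\mapsto|\xi(x)|$. If $\eta\in(0,1]$ it is $\sup_x|\xi(x)|+\sup_{x\neq y}|\xi(x)-\xi(y)|/|x-y|^\eta$, i.e.\ a supremum of continuous functionals of the form $\xi\mapsto|\xi(x)|$ and $\xi\mapsto|\xi(x)-\xi(y)|/|x-y|^\eta$; the case $\eta>1$ is analogous using derivatives. In all cases lower semi-continuity follows.

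\emph{The pseudometric $\fancynorm{\cdot}_{\beta,\delta}$.} By Definition~\ref{def:init}, $\fancynorm{X}_{\beta,\delta}=\sup_{t\in(0,1)}t^\delta|\CN_t(X)|_{\CC^\beta}$. For each fixed $t\in(0,1)$, the map $\CC^\kappa(\T^3,E)\ni X\mapsto\CP_tX\in\CC^\infty(\T^3,E)$ is continuous, hence so is $X\mapsto\CN_t(X)=\CP_tX\otimes\nabla\CP_tX\in\CC^\beta(\T^3,E\otimes E^3)$, and composing with the continuous map $|\cdot|_{\CC^\beta}$ shows that $X\mapsto t^\delta|\CN_t(X)|_{\CC^\beta}$ is continuous on $\CC^\kappa$. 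Taking the supremum over $t\in(0,1)$ gives lower semi-continuity.

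\emph{The norm $\heatgr{\cdot}_{\alpha,\theta}$.} By Definition~\ref{def:heatgr}, $\heatgr{A}_{\alpha,\theta}=\sup_{t\in(0,1)}|\CP_tA|_{\gr\alpha;<t^\theta}=\sup\{|\CP_tA(\ell)|/|\ell|^\alpha : t\in(0,1),\,\ell\text{ a line with }|\ell|<t^\theta\}$. Fix $t\in(0,1)$ and a line segment $\ell=(x,v)$. The map $A\mapsto\CP_tA$ is continuous from $\CC^\kappa(\T^3,\mfg)$ into $\CC^\infty(\T^3,\mfg)$, and $B\mapsto B(\ell)=\int_0^1\sum_{i=1}^3 v_iB_i(x+sv)\,\mrd s$ is a continuous linear functional on $\CC^\infty(\T^3,\mfg)$; dividing by the fixed constant $|\ell|^\alpha$ preserves continuity. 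Hence $A\mapsto|\CP_tA(\ell)|/|\ell|^\alpha$ is continuous on $\CC^\kappa$, and taking the supremum over all admissible $(t,\ell)$ yields lower semi-continuity.
\end{proof}
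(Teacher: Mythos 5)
Your proof is correct, and since the paper omits the argument entirely (it only remarks that the proof is straightforward), your route — writing each functional as a pointwise supremum over fixed parameters $(\lambda,\psi,x)$, $t$, or $(t,\ell)$ of functionals that are continuous on $\CC^\kappa$, and invoking lower semi-continuity of suprema — is exactly the intended one. The one detail worth keeping explicit, which you do, is that continuity of the building blocks for $\fancynorm{\cdot}_{\beta,\delta}$ and $\heatgr{\cdot}_{\alpha,\theta}$ uses the smoothing of $\CP_t$ only at fixed $t>0$.
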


\begin{lemma}\label{lem:CP_contraction}
\begin{enumerate}[label=(\roman*)]
\item\label{pt:Theta_limit}
For all $\eta,\beta\in\R$ and $\delta\geq0$, there exists $C>0$ such that, for all $h\in[0,1]$ and $X\in\CD'(\T^3,E)$,
$\Theta(\CP_hX)\leq (1+Ch)\Theta(X)$.
\item\label{pt:heatgr_contraction} For all $\alpha\in(0,1]$, 
$A\in\Omega\CD'$, and $t,s\geq 0$,
$|\CP_sA|_{\gr\alpha;<t} \leq |A|_{\gr\alpha;<t}$.
In particular, $\CP_s$ is a contraction for $\heatgr{\cdot}_{\alpha,\theta}$ for any $\alpha\in(0,1]$ and $\theta\in\R$.
\end{enumerate}
\end{lemma}

\begin{proof}
\ref{pt:Theta_limit}
Note that $|\CP_hX|_{\CC^\eta} \leq (1+Ch)|X|_{\CC^\eta}$.
Furthermore
\begin{equs}
\fancynorm{\CP_hX}_{\beta,\delta}
&=\sup_{t\in(0,1)}t^\delta|\CN_t\CP_hX|_{\CC^\beta}
\\
&= \sup_{t\in(h,1+h)}(t-h)^\delta|\CN_tX|_{\CC^\beta}
\leq\fancynorm{X}_{\beta,\delta} + Ch|X|_{\CC^\eta}^2\;,
\end{equs}
where we used that $\sup_{t\in[1,1+h)}|\CN_tX-\CN_1 X|_{\CC^\beta}\leq C h |X|_{\CC^\eta}^2$ to get the last bound.
\ref{pt:heatgr_contraction} Consider $\ell=(x,v)$ with $|\ell|<t$.
Then, by the definition~\eqref{eq:A_ell_def},
\begin{equs}
|\CP_s A(\ell)| &= \Big| \int_0^{1} \int_{\T^3} \sum_{i=1}^3 v_i \CP_s(y)A_i(x+rv-y) \mrd y \mrd r \Big|
\\
&= \Big|\int_{\T^3} \CP_s(y) A(x-y,v)\mrd y\Big|
\leq |A|_{\gr\alpha;<t}|\ell|^\alpha\;,
\end{equs}
where we used that $|A(x-y,v)|\leq|A|_{\gr\alpha;<t}|\ell|^\alpha$
and $\int_{\T^3}\CP_s(y)\mrd y=1$.
\end{proof}

The proof of the following lemma is obvious.

\begin{lemma}\label{lem:diff_times}
For $s\in(0,1)$, $0<\alpha\leq \bar\alpha\leq1$, $\theta\geq 0$, and $A\in\Omega\CD'$,
\begin{equ}
\sup_{t\in (0,s]} |\CP_t A|_{\gr{\alpha};<t^\theta} \leq s^{\theta(\bar\alpha-\alpha)} \heatgr{A}_{\bar\alpha,\theta}\;.
\end{equ}
\end{lemma}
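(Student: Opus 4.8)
The plan is to reduce the statement to Lemma~\ref{lem:local_lower_exponent}, applied to the smooth $1$-form $\CP_t A$ at length scale $t^\theta$. First I would fix $t\in(0,s]$. Since $s<1$ and $\theta\geq0$ we have $t^\theta\leq s^\theta<1$, so Lemma~\ref{lem:local_lower_exponent} applies with the length parameter taken to be $t^\theta$ and with the exponents $\alpha\leq\bar\alpha$, giving
\begin{equ}
|\CP_t A|_{\gr{\alpha};<t^\theta}\;\leq\;(t^\theta)^{\bar\alpha-\alpha}\,|\CP_t A|_{\gr{\bar\alpha};<t^\theta}\;=\;t^{\theta(\bar\alpha-\alpha)}\,|\CP_t A|_{\gr{\bar\alpha};<t^\theta}\;.
\end{equ}
(To invoke Lemma~\ref{lem:local_lower_exponent}, which is stated for $A\in\Omega$, one uses that $\CP_t A$ is smooth for $t>0$ and hence defines a canonical element of $\Omega$ as recalled after~\eqref{eq:def_mcX}; this is the only bookkeeping point and presents no genuine difficulty.)

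Next I would estimate the two factors on the right separately. Since $\bar\alpha-\alpha\geq0$ and $\theta\geq0$, the function $t\mapsto t^{\theta(\bar\alpha-\alpha)}$ is nondecreasing on $(0,1)$, whence $t^{\theta(\bar\alpha-\alpha)}\leq s^{\theta(\bar\alpha-\alpha)}$ for every $t\in(0,s]$. For the second factor, since $(0,s]\subset(0,1)$, the definition of $\heatgr{\cdot}_{\bar\alpha,\theta}$ in Definition~\ref{def:heatgr} immediately gives $|\CP_t A|_{\gr{\bar\alpha};<t^\theta}\leq\heatgr{A}_{\bar\alpha,\theta}$. Combining these two bounds with the displayed inequality and then taking the supremum over $t\in(0,s]$ yields $\sup_{t\in(0,s]}|\CP_t A|_{\gr{\alpha};<t^\theta}\leq s^{\theta(\bar\alpha-\alpha)}\heatgr{A}_{\bar\alpha,\theta}$, which is the claim.

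There is no real obstacle here: the argument is a one-line consequence of Lemma~\ref{lem:local_lower_exponent} together with monotonicity of $t^{\theta(\bar\alpha-\alpha)}$, which is exactly why the statement is flagged as obvious. The only things to be careful about are the hypotheses $\alpha\leq\bar\alpha$ and $\theta\geq0$ (used to get both the applicability of Lemma~\ref{lem:local_lower_exponent} and the monotonicity) and $s<1$ (used to ensure $t^\theta<1$ so that the supremum defining $\heatgr{\cdot}_{\bar\alpha,\theta}$ dominates the term at scale $t^\theta$).
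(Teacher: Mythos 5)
Your proof is correct and is exactly the intended argument: the paper declares this lemma's proof obvious and gives none, and the one-line combination of Lemma~\ref{lem:local_lower_exponent} (applied to $\CP_tA$ at scale $t^\theta$), the monotonicity of $t\mapsto t^{\theta(\bar\alpha-\alpha)}$, and the definition of $\heatgr{\cdot}_{\bar\alpha,\theta}$ is precisely what is meant. Your attention to the hypotheses $\alpha\leq\bar\alpha$, $\theta\geq 0$, $s<1$ is the right bookkeeping.
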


\begin{proposition}[Compact embeddings]
\label{prop:compact}
Let $\eta<\bar\eta,\bar\delta<\delta$, $\beta,\kappa\in\R$,
$0<\alpha<\bar\alpha\leq1$, and $\theta,R>0$.
\begin{enumerate}[label=(\roman*)]
\item\label{pt:Theta_compact} If $X_n\to X$ in $\CC^\kappa(\T^3,E)$ and $\sup_n \Theta_{\bar\eta,\beta,\bar\delta}(X_n)<\infty$, then $\Theta(X_n,X)\to0$.
In particular, $\{X\in\CD'(\T^3,E)\,:\, \Theta_{\bar\eta,\beta,\bar\delta}(X) \leq R\}$
is a compact subset of $(\init,\Theta)$.

\item\label{pt:heatgr_compact} If $A_n\to A$ in $\Omega\CC^\kappa$ and $\sup_n \heatgr{A_n}_{\bar\alpha,\theta}<\infty$, then $\heatgr{A_n-A}_{\alpha,\theta}\to0$.
In particular, $\{A\in\Omega\CD'\,:\, \heatgr{A}_{\bar\alpha,\theta} \leq R\}$ is a compact subset of the closure of smooth functions under $\heatgr{\cdot}_{\alpha,\theta}$.

\item\label{pt:Sigma_compact} $\{X\in\CD'(\T^3,E)\,:\, \Sigma_{\bar\eta,\beta,\bar\delta,\bar\alpha,\theta}(X) \leq R\}$ is a compact subset of $(\state,\Sigma)$.
\end{enumerate}
\end{proposition}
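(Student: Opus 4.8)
\textbf{Proof plan for Proposition~\ref{prop:compact}.}
The three statements all have the same shape: a bounded set in a stronger (semi-)norm, precompact in a weaker topology $\CC^\kappa$, is actually compact in the target metric. The strategy throughout is the standard ``interpolation + Arzel\`a--Ascoli'' argument: the stronger exponent gives a quantitative gain over the weaker exponent, which upgrades $\CC^\kappa$-convergence to convergence in the weaker-but-still-useful (semi-)norm, and lower semi-continuity (Lemma~\ref{lem:Theta_Sigma_lower_semicont}) pins the limit down and shows the limit set lies inside the (closed) ball. I would prove \ref{pt:Theta_compact} and \ref{pt:heatgr_compact} first, then assemble \ref{pt:Sigma_compact}.

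For \ref{pt:Theta_compact}: suppose $X_n\to X$ in $\CC^\kappa$ with $M\eqdef\sup_n\Theta_{\bar\eta,\beta,\bar\delta}(X_n)<\infty$. Since $\bar\eta>\eta$ and the $X_n$ are bounded in $\CC^{\bar\eta}$ (and we may also assume $\kappa<\eta$, or else first embed), a standard interpolation inequality between $\CC^\kappa$ and $\CC^{\bar\eta}$ gives $|X_n-X|_{\CC^\eta}\to0$; in particular $X\in\CC^{0,\bar\eta}$. For the $\fancynorm{\cdot}_{\beta,\bar\delta}$ part note that $\CN_t(X_n)=\CP_t X_n\otimes\nabla\CP_t X_n$ converges to $\CN_t(X)$ in $\CC^\beta$ for each fixed $t\in(0,1)$ (continuity of $X\mapsto\CN_t(X)$, as used before Lemma~\ref{lem:closed_graph}), while the bound $t^{\bar\delta}|\CN_t(X_n)-\CN_t(X)|_{\CC^\beta}\le 2M$ together with $\bar\delta<\delta$ lets us trade a power of $t$ for smallness near $t=0$: split $(0,1)=(0,s]\cup(s,1)$, on $(0,s]$ use $t^\delta|\CN_t(X_n)-\CN_t(X)|_{\CC^\beta}\le s^{\delta-\bar\delta}\cdot 2M$, and on the compact interval $[s,1)$ use uniform (in $t$) convergence, which follows from equicontinuity of $t\mapsto\CN_t(X_n)$ on $[s,1]$ — itself a consequence of the $\bar\delta$-bound and parabolic smoothing. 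Hence $\fancynorm{X_n;X}_{\beta,\delta}\to0$, so $\Theta(X_n,X)\to0$. For the ``in particular'', note $\{\Theta_{\bar\eta,\beta,\bar\delta}\le R\}$ is bounded in $\CC^{\bar\eta}$, hence precompact in $\CC^\kappa$ for $\kappa<\bar\eta$; any sequence in it has a $\CC^\kappa$-convergent subsequence, the first part shows it $\Theta$-converges, and lower semi-continuity of $|\cdot|_{\CC^{\bar\eta}}$ and $\fancynorm{\cdot}_{\beta,\bar\delta}$ shows the limit still satisfies $\Theta_{\bar\eta,\beta,\bar\delta}\le R$, so it is also an element of $\init$; thus the ball is compact in $(\init,\Theta)$.

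For \ref{pt:heatgr_compact}: let $A_n\to A$ in $\Omega\CC^\kappa$ with $\sup_n\heatgr{A_n}_{\bar\alpha,\theta}<\infty$. Here the gain is in the grading exponent: by Lemma~\ref{lem:diff_times}, $\sup_{t\in(0,s]}|\CP_t(A_n-A)|_{\gr\alpha;<t^\theta}\le s^{\theta(\bar\alpha-\alpha)}\heatgr{A_n-A}_{\bar\alpha,\theta}\lesssim s^{\theta(\bar\alpha-\alpha)}$, which is small uniformly in $n$ for $s$ small. On the complementary range $t\in[s,1)$, for each such $t$ one has $\CP_t A_n\to\CP_t A$ in $\CC^\infty$ (heat smoothing of $\CC^\kappa$-convergence), hence in particular $|\CP_t(A_n-A)|_{\gr\alpha;<t^\theta}\le|\CP_t(A_n-A)|_{\gr1}\asymp|\CP_t(A_n-A)|_{L^\infty}\to0$, and this is uniform in $t\in[s,1]$ because $(t,n)\mapsto\CP_t A_n$ is equicontinuous on $[s,1]$ (again from heat smoothing plus the uniform $\heatgr{\cdot}_{\bar\alpha,\theta}$ bound controlling the data). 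Combining, $\heatgr{A_n-A}_{\alpha,\theta}\to0$. Compactness of $\{\heatgr{\cdot}_{\bar\alpha,\theta}\le R\}$ then follows as in \ref{pt:Theta_compact}: Lemma~\ref{lem:heatgr_Besov_embed} bounds the set in some $\CC^{\tilde\eta}$, giving $\CC^\kappa$-precompactness, and lower semi-continuity of $\heatgr{\cdot}_{\bar\alpha,\theta}$ (Lemma~\ref{lem:Theta_Sigma_lower_semicont}) keeps the limit in the ball.

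Finally \ref{pt:Sigma_compact} is just the combination: a set with $\Sigma_{\bar\eta,\beta,\bar\delta,\bar\alpha,\theta}\le R$ has both $\Theta_{\bar\eta,\beta,\bar\delta}\le R$ and $\heatgr{\cdot}_{\bar\alpha,\theta}\le R$, so it is bounded in some $\CC^\kappa$ (intersection of the two Besov embeddings), hence $\CC^\kappa$-precompact; a $\CC^\kappa$-convergent subsequence converges in $\Theta$ by \ref{pt:Theta_compact} and its $A$-component converges in $\heatgr{\cdot}_{\alpha,\theta}$ by \ref{pt:heatgr_compact}, hence it converges in $\Sigma=\Theta+\heatgr{\cdot}_{\alpha,\theta}$; lower semi-continuity of all three functionals keeps the limit in the $\Sigma_{\bar\eta,\beta,\bar\delta,\bar\alpha,\theta}\le R$ ball and in particular in $\state$. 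The main obstacle I anticipate is not conceptual but bookkeeping: verifying cleanly the equicontinuity/uniform-in-$t$ convergence on the compact ranges $[s,1]$ — i.e.\ that the uniform bound in the stronger (semi-)norm really does give, via parabolic regularisation, a uniform modulus of continuity for $t\mapsto\CN_t(X_n)$ resp.\ $t\mapsto\CP_t A_n$ away from $t=0$ — and making sure the interpolation inequalities used for the $\CC^\eta$-part are applied with admissible exponents ($\kappa<\eta<\bar\eta$, etc.). Everything else is a routine split-the-interval argument.
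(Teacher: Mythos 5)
Your argument for the convergence statements in \ref{pt:Theta_compact} and \ref{pt:heatgr_compact} is correct and is essentially the paper's proof: split $(0,1)$ at a small $s$, use the gap $\bar\delta<\delta$ (resp.\ $\alpha<\bar\alpha$, via Lemma~\ref{lem:diff_times}) together with the uniform bound and lower semi-continuity to make the contribution of $(0,s]$ small uniformly in $n$, and use convergence away from $t=0$ on $[s,1)$; part \ref{pt:Sigma_compact} is then the same combination of \ref{pt:Theta_compact} and \ref{pt:heatgr_compact} as in the paper.

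The one genuine (though small and fillable) gap is in the ``in particular'' claims. Compactness of $\{\Theta_{\bar\eta,\beta,\bar\delta}\leq R\}$ \emph{as a subset of} $(\init,\Theta)$ requires first that every element of this ball actually lies in $\init$, i.e.\ in the closure of smooth functions under $\Theta_{\eta,\beta,\delta}$ — and likewise for the $\heatgr{\cdot}_{\alpha,\theta}$-ball in \ref{pt:heatgr_compact}. Your step ``lower semi-continuity shows the limit still satisfies $\Theta_{\bar\eta,\beta,\bar\delta}\le R$, so it is also an element of $\init$'' asserts exactly this containment without proof: finiteness of the stronger extended metric does not by itself put a distribution in the closure of smooth functions. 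The fix is to apply your own first statement to the smooth approximants $X_n=\CP_{1/n}X$: they converge to $X$ in every $\CC^\kappa$, and Lemma~\ref{lem:CP_contraction} (contraction/convergence of $\Theta$ and of $|\cdot|_{\gr\alpha;<t}$ under the heat flow) gives $\sup_n\Theta_{\bar\eta,\beta,\bar\delta}(\CP_{1/n}X)<\infty$ (resp.\ $\heatgr{\CP_{1/n}A}_{\bar\alpha,\theta}\le\heatgr{A}_{\bar\alpha,\theta}$), so the first part of \ref{pt:Theta_compact} (resp.\ \ref{pt:heatgr_compact}) yields $\Theta(\CP_{1/n}X,X)\to0$ (resp.\ $\heatgr{\CP_{1/n}A-A}_{\alpha,\theta}\to0$), which is precisely the membership claim. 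This is how the paper closes the argument, and it is why Lemma~\ref{lem:CP_contraction} is cited there; the rest of your write-up needs no change.
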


\begin{proof}
\ref{pt:Theta_compact} Recall the compact embedding $\CC^{\bar\eta}\hookrightarrow \CC^{0,\eta}$.
Furthermore, for every
$s\in(0,1)$
\begin{equs}
\fancynorm{X_n ; X}_{\beta,\delta}
&\leq\sup_{t\in (0,s]} t^{\delta}\{|\CN_tX_n|_{\CC^\beta} + |\CN_tX|_{\CC^\beta}\}
+ \sup_{t\in (s,1)}t^\delta|\CN_{t}X_n-\CN_tX|_{\CC^\beta}\;.
\end{equs}
The first term is bounded above by $2s^{\delta-\bar\delta}\sup_n\fancynorm{X_n}_{\beta,\bar\delta}$ due to the lower semi-continuity of $\fancynorm{\cdot}_{\beta,\bar\delta}$ (Lemma~\ref{lem:Theta_Sigma_lower_semicont})
and can therefore be made arbitrarily small by choosing $s$ sufficiently small,
while the second term converges to $0$ as $n\to\infty$ for any fixed $s\in(0,1)$ 
since the operator norm of $\CP_t \colon \CC^\kappa \to \CC^1$ is bounded uniformly in $t>s$.
It follows that $\Theta(X_n,X)\to0$.
The final claim in~\ref{pt:Theta_compact} now follows from Lemma~\ref{lem:Theta_Sigma_lower_semicont}.

\ref{pt:heatgr_compact}
In a similar way, for every
$s\in(0,1)$
\begin{equs}
\heatgr{A_n - A}_{\alpha,\theta}
&\leq\sup_{t\in (0,s]} \{|\CP_tA_n|_{\gr\alpha;<t^{\theta}} + |\CP_tA|_{\gr\alpha;<t^{\theta}}\}
\\
&\qquad + \sup_{t\in (s,1)}|\CP_{t}A_n-\CP_tA|_{\gr\alpha}\;.
\end{equs}
The first term is bounded above by $2s^{\theta(\bar\alpha-\alpha)}\sup_n\heatgr{A_n}_{\bar\alpha,\theta}$ by Lemma~\ref{lem:diff_times}
and lower semi-continuity of $\heatgr{\cdot}_{\alpha,\theta}$ (Lemma~\ref{lem:Theta_Sigma_lower_semicont})
and can therefore be made arbitrarily small,
while the second term converges to $0$ as $n\to\infty$.
Hence $\heatgr{A_n-A}_{\alpha,\theta}\to0$.
The final claim in~\ref{pt:heatgr_compact} now follows from Lemma~\ref{lem:Theta_Sigma_lower_semicont}.

\ref{pt:Sigma_compact} This claim is a consequence of~\ref{pt:Theta_compact} and~\ref{pt:heatgr_compact}.
\end{proof}

\begin{proposition}\label{prop:Theta_cont_zero}
Let $\beta,\eta,\theta\in\R$, $\delta\geq 0$, and $\alpha\in(0,1]$.
\begin{enumerate}[label=(\roman*)]
\item\label{pt:heat_Theta} $\lim_{h\downarrow0}\Theta(\CP_hX,X)=0$ for all $X\in \init$.

\item\label{pt:heat_Sigma} $\lim_{h\downarrow0}\Sigma(\CP_hX,X)=0$ for all $X\in \state$.
\end{enumerate}
\end{proposition}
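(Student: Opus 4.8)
The plan is to prove Proposition~\ref{prop:Theta_cont_zero} by a standard ``density plus uniform continuity of the semigroup'' argument, treating the two parts essentially in parallel since $\Sigma = \Theta + \heatgr{\cdot}_{\alpha,\theta}$ and the $\heatgr{\cdot}_{\alpha,\theta}$-piece behaves in exactly the same way.

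\textbf{Step 1: reduce to smooth initial data.} By Definition~\ref{def:init} (resp.\ Definition~\ref{def:state}), $\init$ (resp.\ $\state$) is the closure of $\CC^\infty(\T^3,E)$ under $\Theta$ (resp.\ $\Sigma$). Fix $X\in\init$ and $\eps>0$, and pick a smooth $Y$ with $\Theta(X,Y)<\eps$. The heat semigroup is a contraction for $|\cdot|_{\CC^\eta}$, and I claim it is also ``almost contractive'' for $\fancynorm{\cdot;\cdot}_{\beta,\delta}$ in the sense that $\fancynorm{\CP_h X;\CP_h Y}_{\beta,\delta}\le \fancynorm{X;Y}_{\beta,\delta}$ for $h$ small; indeed this is exactly the computation already carried out in the proof of Lemma~\ref{lem:CP_contraction}\ref{pt:Theta_limit} applied to the difference (using bilinearity of $\CN_t(\cdot)-\CN_t(\cdot)$ in the relevant sense — more precisely $\CN_t(\CP_hX)-\CN_t(\CP_hY)$ is the time-$(t+h)$ version of $\CN_t(X)-\CN_t(Y)$ up to the reparametrisation $t\mapsto t+h$, whose supremum over $t\in(0,1)$ with weight $t^\delta$ is bounded by the supremum over $t\in(h,1)$ with the heavier weight $(t-h)^\delta$). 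Hence $\Theta(\CP_hX,\CP_hY)\le \Theta(X,Y)<\eps$ uniformly in small $h$. By the triangle inequality,
\begin{equ}
\Theta(\CP_hX,X)\le \Theta(\CP_hX,\CP_hY)+\Theta(\CP_hY,Y)+\Theta(Y,X)\le 2\eps+\Theta(\CP_hY,Y)\;,
\end{equ}
so it suffices to prove the statement for smooth $Y$.

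\textbf{Step 2: the smooth case.} For $Y\in\CC^\infty(\T^3,E)$ we have $\CP_hY\to Y$ in $\CC^\infty$, hence in $\CC^\eta$, as $h\downarrow 0$. For the $\fancynorm{\cdot}_{\beta,\delta}$-term, note that for fixed $Y$ the map $t\mapsto \CN_t(Y)=\CP_tY\otimes\nabla\CP_tY$ extends continuously (indeed smoothly) to $[0,\infty)$ with values in $\CC^\beta$, and that $\CN_t(\CP_hY)=\CN_{t+h}(Y)$; the weighted sup $\sup_{t\in(0,1)}t^\delta|\CN_{t+h}(Y)-\CN_t(Y)|_{\CC^\beta}$ then tends to $0$ as $h\downarrow 0$ by uniform continuity of $(t\mapsto \CN_t(Y))$ on the compact interval $[0,2]$ (the weight $t^\delta\le 1$ causes no trouble). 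This gives $\lim_{h\downarrow0}\Theta(\CP_hY,Y)=0$, completing part~\ref{pt:heat_Theta}.

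\textbf{Step 3: the $\heatgr{\cdot}_{\alpha,\theta}$-piece for part~\ref{pt:heat_Sigma}.} By Definition~\ref{def:heatgr}, $\Sigma(\CP_hX,X)=\Theta(\CP_hX,X)+\heatgr{\CP_h A - A}_{\alpha,\theta}$ where $X=(A,\Phi)$, so given part~\ref{pt:heat_Theta} it remains to bound the second term. Again reduce to smooth $A$: by Lemma~\ref{lem:CP_contraction}\ref{pt:heatgr_contraction}, $|\CP_s A|_{\gr\alpha;<t}\le|A|_{\gr\alpha;<t}$, so for a smooth approximant $B$ with $\heatgr{A-B}_{\alpha,\theta}<\eps$ one gets $\heatgr{\CP_h A-\CP_h B}_{\alpha,\theta}=\sup_t|\CP_h(\CP_t A-\CP_t B)|_{\gr\alpha;<t^\theta}\le\sup_t|\CP_t(A-B)|_{\gr\alpha;<t^\theta}=\heatgr{A-B}_{\alpha,\theta}<\eps$, and the triangle inequality reduces matters to smooth $B$. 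For smooth $B$, write $\heatgr{\CP_h B-B}_{\alpha,\theta}=\sup_{t\in(0,1)}|\CP_t(\CP_h B - B)|_{\gr\alpha;<t^\theta}$. Split at a small threshold $s$: for $t\le s$ use $|\CP_t(\CP_h B-B)|_{\gr\alpha;<t^\theta}\le|\CP_t(\CP_hB-B)|_{\gr 1}\asymp|\CP_t(\CP_hB-B)|_\infty\le|\CP_hB-B|_\infty$, and for $t\in(s,1)$ use Lemma~\ref{lem:local_to_global_gr}, $|\CP_t(\CP_hB-B)|_{\gr\alpha;<t^\theta}\le t^{\theta(1-\alpha)}|\CP_t(\CP_hB-B)|_{\gr\alpha}$, which $\to 0$ as $h\downarrow0$ for fixed $t$ (and is uniformly controlled by $|\CP_hB-B|_{\CC^1}$-type bounds over $t\in[s,1]$), using $|\CP_hB-B|_{\CC^1}\to 0$ for smooth $B$. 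Since $|\CP_hB-B|_\infty\to0$ as well, both pieces go to $0$ as $h\downarrow0$; as $\eps$ was arbitrary, $\lim_{h\downarrow0}\heatgr{\CP_hA-A}_{\alpha,\theta}=0$. Combining with Step 2 gives part~\ref{pt:heat_Sigma}.

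\textbf{Main obstacle.} The only genuinely delicate point is the claim in Step 1 that $\Theta(\CP_hX,\CP_hY)\le\Theta(X,Y)$ (or at least $\le\Theta(X,Y)+o(1)$) \emph{uniformly in $h$}, since without this the density argument does not close. This is handled exactly as in the proof of Lemma~\ref{lem:CP_contraction}\ref{pt:Theta_limit}: the semigroup property $\CN_t\circ\CP_h=\CN_{t+h}$ and the monotonicity $(t-h)^\delta\le t^\delta$ on $(h,1)$ together with $\sup_{t\in[1,1+h)}|\CN_t(X-Y)|_{\CC^\beta}\to|\CN_1(X-Y)|_{\CC^\beta}$; for the $\heatgr{\cdot}$-part the analogous contractivity is immediate from Lemma~\ref{lem:CP_contraction}\ref{pt:heatgr_contraction}. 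Everything else is routine continuity of the heat flow on smooth functions.
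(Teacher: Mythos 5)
Your proof is correct and follows essentially the same route as the paper: a density argument reducing to smooth initial data, combined with the uniform-in-$h$ (quasi-)contractivity of the heat semigroup for $\fancynorm{\cdot\,;\cdot}_{\beta,\delta}$ via the reparametrisation $\CN_t\circ\CP_h=\CN_{t+h}$ and for $\heatgr{\cdot}_{\alpha,\theta}$ via Lemma~\ref{lem:CP_contraction}\ref{pt:heatgr_contraction}, which is exactly how the paper shows the set where the limit holds is closed and contains the smooth functions. The only blemishes are cosmetic: in Step~1 the reparametrised supremum runs over $(h,1+h)$ rather than $(h,1)$, and the relevant quantity is $|\CN_tX-\CN_tY|_{\CC^\beta}$ rather than $|\CN_t(X-Y)|_{\CC^\beta}$ (the map $\CN_t$ is quadratic), but your ``Main obstacle'' paragraph already handles the $[1,1+h)$ tail correctly, so the argument closes.
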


\begin{proof}
To prove~\ref{pt:heat_Theta}, observe that
for all $h\in[0,1]$
\begin{equs}[eq:Ph_X_Y]
\fancynorm{\CP_hX;\CP_hY}_{\beta,\delta}
&=\sup_{t\in(0,1)}t^\delta|\CN_t\CP_hX-\CN_t\CP_hY|_{\CC^\beta}
\\
&= \sup_{t\in(h,1+h)}(t-h)^\delta|\CN_tX-\CN_tY|_{\CC^\beta}
\\
&\lesssim |X-Y|_{\CC^\eta}(|X|_{\CC^\eta}+|Y|_{\CC^\eta}) + \fancynorm{X;Y}_{\beta,\delta}
\end{equs}
for a proportionality constant depending only on $\eta,\beta,\delta$.
It readily follows that the set of $X\in\init$ such that $\lim_{h\downarrow0}\fancynorm{X;\CP_h X} = 0$ is closed in $\init$.
Furthermore, this set contains all smooth functions, from which~\ref{pt:heat_Theta} follows.
The claim \ref{pt:heat_Sigma} follows in the same way once we remark that $\heatgr{\CP_hA}_{\alpha,\theta} \leq \heatgr{A}_{\alpha,\theta}$ by Lemma~\ref{lem:CP_contraction}\ref{pt:heatgr_contraction} and therefore the set of $X\in\state$ for which $\lim_{h\downarrow0}\Sigma(\CP_hX,X)=0$ is closed in $\state$ and contains the smooth functions.
\end{proof}

\begin{lemma}\label{lem:heatgr_heat_flow}
Let $0<\bar\alpha\leq\alpha\leq1$,
$\theta\geq 0$, $\kappa\in(0,1)$, $h>0$,
and $A\in\Omega\CD'$.
Then
\begin{equ}
\heatgr{\CP_h A -A}_{\bar\alpha,\theta}
\lesssim h^{\kappa\theta(\alpha-\bar\alpha)}\heatgr{A}_{\alpha,\theta}\;,
\end{equ}
where the proportionality constant depends only on $\bar\alpha,\alpha,\theta,\kappa$.
\end{lemma}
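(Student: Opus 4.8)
The plan is to follow the same interpolation-and-contraction scheme used in the proof of Lemma~\ref{lem:heatgr_Besov_embed} and Lemma~\ref{lem:heat_interpolate}, exploiting that $\CP_h$ acts as a contraction for the $\gr\alpha$-seminorms (Lemma~\ref{lem:CP_contraction}\ref{pt:heatgr_contraction}) and as a smoothing operator which gains regularity at rate $h^{\theta(\alpha-\bar\alpha)}$ in the relevant scaling.

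First I would unpack the definition: we must bound $\sup_{t\in(0,1)}|\CP_t(\CP_hA-A)|_{\gr{\bar\alpha};<t^\theta}$. Split into two regimes according to whether $t\le h$ or $t>h$ (or more conveniently compare $t^\theta$ with a power of $h$). For the ``small $t$'' regime one simply uses the triangle inequality $|\CP_t\CP_hA-\CP_tA|_{\gr{\bar\alpha};<t^\theta}\le |\CP_{t+h}A|_{\gr{\bar\alpha};<t^\theta} + |\CP_tA|_{\gr{\bar\alpha};<t^\theta}$, then bounds each term by passing to $\gr\alpha$ via Lemma~\ref{lem:local_lower_exponent} (which costs a factor $t^{\theta(\alpha-\bar\alpha)}\le h^{\theta(\alpha-\bar\alpha)}$ when $t\le h$) and Lemma~\ref{lem:CP_contraction}\ref{pt:heatgr_contraction} (to remove the extra $\CP_h$ on the first term, since $|\CP_{t+h}A|_{\gr\alpha;<\cdot}\le|\CP_tA|_{\gr\alpha;<\cdot}$), ending at $\heatgr{A}_{\alpha,\theta}$ after invoking Lemma~\ref{lem:diff_times}. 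For the ``large $t$'' regime, write $\CP_h A - A = \int_0^h \Delta \CP_s A\,\mrd s$ and estimate $|\CP_t \Delta\CP_sA|_{\gr{\bar\alpha};<t^\theta}$. The key smoothing input is a bound of the form $|\CP_r \Delta B|_{\gr\alpha}\lesssim r^{-1}|B|_{\gr\alpha}$ together with the interpolation $|\CP_tB|_{\gr{\bar\alpha};<t^\theta}\le t^{\theta(\alpha-\bar\alpha)}|\CP_tB|_{\gr\alpha;<t^\theta}\le t^{\theta(\alpha-\bar\alpha)}\heatgr{B}_{\alpha,\theta}$; combining these and integrating $\int_0^h s^{-1}\cdots\mrd s$ against the appropriate power of $t$ yields the gain $h^{\kappa\theta(\alpha-\bar\alpha)}$ once $t>h$ (the exponent $\kappa<1$ gives the room needed to make the $s$-integral converge, exactly as the $\kappa$ in Lemma~\ref{lem:heatgr_heat_flow} suggests). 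Alternatively, and perhaps more cleanly, one interpolates directly: $\heatgr{\CP_hA-A}_{\bar\alpha,\theta}\lesssim \heatgr{\CP_hA-A}_{\alpha,\theta}^{?}\heatgr{\CP_hA-A}_{1,\theta}^{?}$ is not quite available, so instead estimate $\heatgr{\CP_hA-A}_{\bar\alpha,\theta}$ by interpolating between the trivial contraction bound $\heatgr{\CP_hA-A}_{\alpha,\theta}\lesssim\heatgr{A}_{\alpha,\theta}$ (from Lemma~\ref{lem:CP_contraction}) and a bound at a \emph{higher} exponent obtained from the heat-kernel smoothing; the weight $h^{\kappa\theta(\alpha-\bar\alpha)}$ then emerges from the interpolation weights.

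The main obstacle I expect is getting the \emph{quantitative $h$-dependence} right: the contraction bound alone gives no gain in $h$, so the gain must come entirely from the smoothing estimate on $\CP_h A - A$, and one must be careful that the scaling of the line-length cutoff $t^\theta$ interacts correctly with the $h$-regularisation. Concretely, one needs the inequality $|\CP_s A|_{\gr\alpha;<\ell}\lesssim (\ell^{1-\alpha}\, s^{-1/2})^{?}\cdots$ type bound controlling how much a short-time heat evolution can change the $\gr\alpha$-content on short lines — this is exactly the kind of estimate underlying Lemma~\ref{lem:diff_times} and the proof of \cite[Lem.~3.32]{CCHS2d}, and the honest work is checking that, after integrating $\partial_s \CP_s A = \Delta\CP_s A$ over $s\in(0,h)$ and comparing with line length $t^\theta$, one indeed collects the claimed power $h^{\kappa\theta(\alpha-\bar\alpha)}$ uniformly over $t\in(0,1)$. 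Everything else is routine bookkeeping with the seminorm definitions and the lemmas already established in this subsection.
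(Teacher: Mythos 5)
Your overall architecture matches the paper's: split $t\in(0,1)$ into a small-$t$ and a large-$t$ regime; in the small one pay $t^{\theta(\alpha-\bar\alpha)}$ via Lemma~\ref{lem:local_lower_exponent} to pass from $\gr{\bar\alpha}$ to $\gr{\alpha}$ and then use the contraction of Lemma~\ref{lem:CP_contraction}; in the large one extract a positive power of $h$ from smoothing. Your small-$t$ step is essentially the paper's first display (the paper keeps the splitting threshold $s$ free and optimises it at the end; you fix it at $h$, which is fine). Where you genuinely diverge is the large-$t$ mechanism: the paper does \emph{not} write $\CP_hA-A=\int_0^h\Delta\CP_rA\,\mrd r$. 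It instead bounds $|\CP_{t+h}A-\CP_tA|_{\gr{\bar\alpha};<t^\theta}$ by the $L^\infty$ norm, then by $t^{\bar\eta/2}h^{(\eta-\bar\eta)/2}|A|_{\CC^\eta}$ for an \emph{arbitrarily negative} auxiliary exponent $\bar\eta$, with $\eta=(1+2\theta)(\alpha-1)$ supplied by Lemma~\ref{lem:heatgr_Besov_embed}; balancing the two regimes at $s=h^\gamma$ gives $\gamma\nearrow1$ as $\bar\eta\to-\infty$, and this limit is precisely where the loss $\kappa<1$ comes from. Your FTC route can be made to work: factorise $\Delta\CP_{t+r}=\CP_{(t+r)/2}\,\Delta\CP_{(t+r)/2}$, use $\|\Delta G_\rho\|_{L^1}\lesssim\rho^{-1}$ together with sub-additivity of line integrals to compare the cutoffs $t^\theta$ and $((t+r)/2)^\theta$, and integrate $\int_0^h(t+r)^{-1}\mrd r\le h/t$; for $t\ge h$ this yields $t^{\theta(\alpha-\bar\alpha)-1}h\le h^{\theta(\alpha-\bar\alpha)}$ whenever $\theta(\alpha-\bar\alpha)\le1$, which is actually \emph{sharper} than the claim and covers every use of the lemma in the paper. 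Two caveats, though: first, your parenthetical that ``$\kappa<1$ gives the room needed to make the $s$-integral converge'' misattributes the role of $\kappa$ — the integral converges with no help from $\kappa$, and in your route $\kappa$ is simply not needed in the regime $\theta(\alpha-\bar\alpha)\le1$; second, in the formally allowed (but never used) regime $\theta(\alpha-\bar\alpha)>1$ your argument only produces $h^{1}$, which falls short of the claimed $h^{\kappa\theta(\alpha-\bar\alpha)}$ for $\kappa$ near $1$, whereas the paper's Besov argument with $\bar\eta\to-\infty$ does cover it.
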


\begin{proof}
We can assume that $h\in(0,1)$.
For all $0<t\leq s<1$
\begin{equ}
|\CP_{t+h}A-\CP_tA|_{\gr{\bar\alpha};<t^\theta}
\leq
s^{\theta(\alpha-\bar\alpha)}
|\CP_{t+h}A-\CP_tA|_{\gr\alpha;<t^\theta}
\leq
2s^{\theta(\alpha-\bar\alpha)}\heatgr{A}_{\alpha,\theta}
\end{equ}
where we used Lemma~\ref{lem:local_lower_exponent} in the first bound and Lemma~\ref{lem:CP_contraction}\ref{pt:heatgr_contraction} in the second bound.
On the other hand,
denote $\eta\eqdef (1+2\theta)(\alpha-1)$ so that $|A|_{\CC^\eta}\lesssim \heatgr{A}_{\alpha,\theta}$
by Lemma~\ref{lem:heatgr_Besov_embed}.
Then, for $0<s<t<1$ and $\bar\eta\leq \eta$,
\begin{equs}
|\CP_{t+h}A-\CP_tA|_{\gr{\bar\alpha};<t^\theta}
&\leq
|\CP_{t+h}A-\CP_tA|_{\gr1} \asymp |\CP_{t+h}A-\CP_tA|_{L^\infty}
\\
&\lesssim
t^{\bar\eta/2}|\CP_hA-A|_{\CC^{\bar\eta}}
\lesssim
t^{\bar\eta/2}h^{(\eta-\bar\eta)/2}|A|_{\CC^{\eta}}
\\
&\lesssim
s^{\bar\eta/2}h^{(\eta-\bar\eta)/2}\heatgr{A}_{\alpha,\theta}\;.
\end{equs}
Optimising over $s\in(0,1)$ so that $s^{\theta(\alpha-\bar\alpha)}=s^{\bar\eta/2}h^{(\eta-\bar\eta)/2}
\Leftrightarrow s = h^\gamma$
where $\gamma=\frac{\eta-\bar\eta}{2\theta(\alpha-\bar\alpha)-\bar\eta}$,
we see that
\begin{equ}
\heatgr{\CP_hA - A}_{\bar\alpha,\theta}
\lesssim
h^{\gamma\theta(\alpha-\bar\alpha)}
\heatgr{A}_{\alpha,\theta}\;.
\end{equ}
Note that $\eta\leq 2\theta(\alpha-\bar\alpha)$ (with strict inequality if $\theta>0$ and $\bar\alpha<1$),
hence $\gamma\nearrow1$ as $\bar\eta\to-\infty$,
from which the conclusion follows.
\end{proof}

\begin{lemma}\label{lem:fancynorm_heat_flow}
Let $\kappa,h\in(0,1)$,
$\beta,\eta\leq 0$,
$\delta<\bar\delta$ with $\bar\delta\geq0$,
and $X\in\CD'(\T^3,E)$.
Then		
\begin{equ}
\fancynorm{\CP_hX ; X}_{\beta,\bar\delta}
\lesssim
h^{\kappa(\bar\delta-\delta)}
(\fancynorm{X}_{\beta,\delta} + |X|_{\CC^\eta}^2)\;.
\end{equ}
where the proportionality constant depends only on $\eta,\kappa$.
\end{lemma}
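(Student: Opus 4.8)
The plan is to reduce the statement to two elementary one-parameter estimates and then optimise. Using $\CP_t\CP_h X=\CP_{t+h}X$ we have $\CN_t(\CP_h X)=\CN_{t+h}(X)$, so by Definition~\ref{def:init} the quantity to control is
\begin{equ}
\fancynorm{\CP_hX;X}_{\beta,\bar\delta}=\sup_{t\in(0,1)}t^{\bar\delta}\,\big|\CN_{t+h}(X)-\CN_t(X)\big|_{\CC^\beta}\;.
\end{equ}
Write $N\eqdef\fancynorm{X}_{\beta,\delta}+|X|_{\CC^\eta}^2$. I will prove, for all $t,h\in(0,1)$, a \emph{static} bound $|\CN_{t+h}(X)-\CN_t(X)|_{\CC^\beta}\lesssim t^{-\delta}N$ and, for every $\mu\geq0$, a \emph{dynamic} bound $|\CN_{t+h}(X)-\CN_t(X)|_{\CC^\beta}\lesssim h^{\mu}\,t^{\eta-\mu-1/2}\,|X|_{\CC^\eta}^2$, the constant in the latter depending on $\mu$ (and $\eta,\beta$). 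I will assume $\delta\geq0$; the case $\delta<0$ is degenerate and follows from $\fancynorm{X}_{\beta,\delta}\geq\fancynorm{X}_{\beta,0}$.

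The static bound is just the triangle inequality: $|\CN_t(X)|_{\CC^\beta}\leq t^{-\delta}\fancynorm{X}_{\beta,\delta}$ by definition of $\fancynorm{\cdot}_{\beta,\delta}$; $|\CN_{t+h}(X)|_{\CC^\beta}\leq(t+h)^{-\delta}\fancynorm{X}_{\beta,\delta}\leq t^{-\delta}\fancynorm{X}_{\beta,\delta}$ when $t+h<1$ (here $\delta\geq0$ is used), while $|\CN_{t+h}(X)|_{\CC^\beta}\lesssim|\CN_{t+h}(X)|_\infty\lesssim|X|_{\CC^\eta}^2\leq t^{-\delta}|X|_{\CC^\eta}^2$ when $t+h\geq1$. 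For the dynamic bound I set $D\eqdef\CP_{t+h}X-\CP_tX=\CP_t(\CP_hX-X)$ and expand
\begin{equ}
\CN_{t+h}(X)-\CN_t(X)=D\otimes\nabla\CP_tX+\CP_tX\otimes\nabla D+D\otimes\nabla D\;.
\end{equ}
Since $\beta\leq0$ one has $|fg|_{\CC^\beta}\lesssim|f|_\infty|g|_\infty$; combining this with the standard heat–semigroup bounds $|\CP_sX|_\infty\lesssim s^{\eta/2}|X|_{\CC^\eta}$, $|\nabla\CP_sX|_\infty\lesssim s^{(\eta-1)/2}|X|_{\CC^\eta}$, the smoothing estimates $|\CP_sW|_\infty\lesssim s^{\eta'/2}|W|_{\CC^{\eta'}}$, $|\nabla\CP_sW|_\infty\lesssim s^{(\eta'-1)/2}|W|_{\CC^{\eta'}}$, and $|\CP_hX-X|_{\CC^{\eta'}}\lesssim h^{(\eta-\eta')/2}|X|_{\CC^\eta}$ (all for $s\in(0,1)$, $\eta'\leq\eta\leq0$), with the choice $\eta'\eqdef\eta-2\mu$, one checks that each of the three terms is $\lesssim h^{\mu}t^{\eta-\mu-1/2}|X|_{\CC^\eta}^2$: in the first two terms the single factor $D$ (resp. $\nabla D$) supplies $h^{\mu}$, and in $D\otimes\nabla D$ one bounds $|D|_\infty$ by $t^{\eta/2}|X|_{\CC^\eta}$ (using $D=\CP_{t+h}X-\CP_tX$ directly, with no $h$-gain) and $|\nabla D|_\infty$ by $t^{(\eta'-1)/2}h^{\mu}|X|_{\CC^\eta}$, so that the exponent of $t$ comes out to $\eta-\mu-1/2$ in all three cases.

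Combining the two bounds, for every $t\in(0,1)$ and every $\mu\geq0$,
\begin{equ}
t^{\bar\delta}\,\big|\CN_{t+h}(X)-\CN_t(X)\big|_{\CC^\beta}\lesssim\min\!\big(t^{a},\,h^{\mu}t^{b}\big)\,N\;,\qquad a\eqdef\bar\delta-\delta>0\;,\quad b\eqdef\bar\delta+\eta-\mu-\tfrac12\;.
\end{equ}
I then fix $\mu$ large enough that $b<0$ and $\mu(1-\kappa)\geq\kappa(\tfrac12-\eta-\delta)$, which is possible since $\kappa\in(0,1)$ and fixes $\mu$ in terms of $\eta,\delta,\bar\delta,\kappa$ only; note $a-b=\mu+\tfrac12-\eta-\delta>0$. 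Using $\min(t^a,h^{\mu}t^b)\leq(t^a)^{\lambda}(h^{\mu}t^b)^{1-\lambda}$ with $\lambda\eqdef -b/(a-b)\in(0,1)$, the powers of $t$ cancel and $\min(t^a,h^{\mu}t^b)\leq h^{\mu a/(a-b)}$ \emph{uniformly in $t$}; by the choice of $\mu$, $\mu a/(a-b)=\mu(\bar\delta-\delta)/(\mu+\tfrac12-\eta-\delta)\geq\kappa(\bar\delta-\delta)$, and since $h<1$ this gives $\fancynorm{\CP_hX;X}_{\beta,\bar\delta}\lesssim h^{\kappa(\bar\delta-\delta)}N=h^{\kappa(\bar\delta-\delta)}(\fancynorm{X}_{\beta,\delta}+|X|_{\CC^\eta}^2)$.

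The only point requiring care is keeping $\mu$ a genuinely free parameter in the dynamic bound, unconstrained by anything like $\mu\leq1$: this is exactly what allows the final interpolation to reach the exponent $\kappa(\bar\delta-\delta)$ for an arbitrary $\kappa\in(0,1)$ and arbitrarily negative $\eta$, and it works precisely because only one of $D$, $\nabla D$ is estimated with its $h$-improved bound in the quadratic term $D\otimes\nabla D$. The remaining ingredients — the heat-kernel estimates, the separate treatment of $t+h\geq1$ in the static bound, and the degenerate case $\delta<0$ — are routine.
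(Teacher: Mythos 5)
Your proof is correct and is essentially the paper's own argument: the paper plays the same ``static'' bound $t^{\bar\delta-\delta}(\fancynorm{X}_{\beta,\delta}+|X|_{\CC^\eta}^2)$ against the same ``dynamic'' bound $h^{(\eta-\bar\eta)/2}\,t^{(\eta+\bar\eta-1)/2}|X|_{\CC^\eta}^2$ with a free amount of extra smoothing $\bar\eta\le\eta$ (your $\mu$ is exactly $(\eta-\bar\eta)/2$, and your three-term expansion of $\CN_{t+h}X-\CN_tX$ is the content of Lemma~\ref{lem:etas_to_beta=zero}), the only cosmetic difference being that the paper optimises a threshold $s$ in $t$ where you interpolate the pointwise minimum — both give the exponent $\mu(\bar\delta-\delta)/(a-b)$, which tends to $\bar\delta-\delta$ as $\mu\to\infty$, so any $\kappa<1$ is reachable. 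The one imprecision is your disposal of $\delta<0$: replacing $\delta$ by $0$ only yields $h^{\kappa\bar\delta}$, which for $h<1$ is weaker than the claimed $h^{\kappa(\bar\delta-\delta)}$; but your main argument covers that case directly once the static estimate for the $\CN_{t+h}$ term is run through $(t+h)^{-\delta}\lesssim t^{-\delta}+h^{-\delta}$ (the paper's proof is equally tacit on this point, and $\delta>0$ in all applications).
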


\begin{proof}
For all $0<t\leq s<1$
\begin{equ}
t^{\bar\delta}|\CN_{t+h}X-\CN_tX|_{\CC^\beta}
\leq s^{\bar\delta-\delta}t^\delta(|\CN_{t+h}X|_{\CC^\beta}+|\CN_tX|_{\CC^\beta})
\lesssim
s^{\bar\delta-\delta}(\fancynorm{X}_{\beta,\delta}+|X|_{\CC^\eta}^2)\;.
\end{equ}
On the other hand,
for $0<s<t<1$ and any $\bar\eta\leq\eta$
\begin{equs}
t^{\bar\delta}|\CN_{t+h}X-\CN_tX|_{\CC^\beta}
&\leq
|\CN_{t+h}X-\CN_tX|_{\infty}\label{eq:drop_t}
\\
&\lesssim
t^{(\bar\eta+\eta-1)/2}(|\CP_hX|_{\CC^\eta}+|X|_{\CC^\eta})|\CP_hX-X|_{\CC^{\bar\eta}}
\\
&\lesssim
s^{(\bar\eta+\eta-1)/2}h^{(\eta-\bar\eta)/2}|X|_{\CC^\eta}^2\;,
\end{equs}
where we used Lemma~\ref{lem:etas_to_beta=zero} in the second bound.
Optimising over $s\in(0,1)$ so that $s^{\bar\delta-\delta}=s^{(\bar\eta+\eta-1)/2}h^{(\eta-\bar\eta)/2}
\Leftrightarrow s = h^\gamma$
where $\gamma=\frac{\eta-\bar\eta}{2(\bar\delta-\delta)+1-\eta-\bar\eta}$,
we see that
\begin{equ}
\fancynorm{\CP_hX ; X}_{\beta,\bar\delta}
\lesssim
h^{\gamma(\bar\delta-\delta)}
(\fancynorm{X}_{\beta,\delta} + |X|_{\CC^\eta}^2)\;.
\end{equ}
Note that $\eta < 2(\bar\delta-\delta)+1-\eta$,
hence $\gamma\nearrow1$ as $\bar\eta\to-\infty$,
from which the conclusion follows.
\end{proof}

\begin{remark}
One could sharpen Lemma~\ref{lem:fancynorm_heat_flow} by not immediately dropping $t^{\bar\delta}$ in~\eqref{eq:drop_t},
but this only improves the result in the case $2\eta\geq 1-2\delta$,
which is not the case in which we will apply this lemma.
\end{remark}

\subsection{Smoothened gauge-invariant observables}
\label{subsec:regularised_Wilson}

We conclude this section with a discussion on the gauge-invariant observables defined on the space $\init$ (and therefore on $\state$).

\begin{definition}
A function $\CO\colon\CC^\infty(\T^3,E)\to \R$ is called \emph{gauge-invariant}
if $\CO$ factors through a function $\CO\colon\mfO^\infty\to\R$ (which we denote by the same symbol),
i.e.\
if $\CO(X)=\CO(Y)$ whenever $X\sim Y$.
A collection $\{\CO^i\}_{i\in I}$ of gauge-invariant observables on $\CC^\infty(\T^3,E)$ is called \emph{separating} if for all $X,Y\in\CC^\infty(\T^3,E)$
\begin{equ}
\CO^i(X)=\CO^i(Y) \;\;\forall i\in I\; \Leftrightarrow\; X\sim Y\;.
\end{equ}
The same definitions apply with $\CC^\infty(\T^3,E)$ replaced by $\init$.

If $\CO\colon\CC^\infty(\T^3,E)\to \R$
is a gauge-invariant observable then, recalling the YMH flow $\tilde\ymhflow\colon(0,\infty)\to \mfO^\infty$ from Definition~\ref{def:ymhflow}, we define its ``smoothened'' version $\CO_t\colon\init\to\R$ for $t>0$ by
\begin{equ}
\CO_t \eqdef \CO\circ\tilde \ymhflow_t\;.
\end{equ}
\end{definition}

For $X,Y\in\init$, recall that $X\sim Y$  if and only if $\tilde \ymhflow_t(X)=\tilde \ymhflow_t(Y)$ for some $t>0$ due to Proposition~\ref{prop:back_unique}.
Hence, given a separating collection $\{\CO^i\}_{i\in I}$ of gauge-invariant observables on $\CC^\infty(\T^3,E)$ and any $t>0$,
$\{\CO^i_t\}_{i\in I}$ is a separating collection of gauge-invariant observables on $\init$.

In the rest of this subsection, we describe a natural separating collection of gauge-invariant observables on $\CC^\infty(\T^3,E)$.\footnote{This construction is a simple adaptation of a well-known construction for pure gauge fields; for completeness, and since it is easy to do so, we give the details.}
For $x\in\T^3$ denote by $\CP_{x}$ the set of piecewise smooth paths $\gamma\colon[0,1]\to\T^3$ such that $\gamma_0=x$
and by $\CL_x\subset\CP_x$ the subset of loops, i.e.\ those $\gamma\in\CP_x$ such that $\gamma_1=x$.

Given $\gamma\in\CP_{x}$ for some $x\in\T^3$
and
$A\in\Omega\CC^\infty$,
the holonomy of $A$ along $\gamma$ is defined as 
$\hol(A,\gamma)\eqdef y_1$ where
$y$ satisfies the ODE
$\mrd y_t = y_t A(\gamma_t)\dot\gamma_t$ with $y_0=\id$
(in fact $A\in\Omega_\alpha$ for $\alpha\in(\frac12,1]$ suffices to define the holonomy as shown in~\cite[Thm.~2.1(i)]{CCHS2d}.

\begin{definition}
For $n\geq1$ let $\CA_{n}$ denote the set of all continuous functions $f\colon G^n\times\higgsvec^n\to\R$
such that for all $g,h_1,\ldots, h_n\in G$
and $\phi_1,\ldots,\phi_n\in\higgsvec$
\begin{equ}
f(gh_1g^{-1},\ldots,gh_ng^{-1},
g\phi_1,\ldots,g\phi_n)
=
f(h_1,\ldots,h_n,
\phi_1,\ldots,\phi_n)\;.
\end{equ}
Then, for any $x\in\T^3$, let $\mbW_x$ denote the set of all functions on $\CC^\infty(\T^3,E)$
of the form
\begin{equ}
(A,\Phi)\mapsto f(\hol(A,\ell^1), \ldots,\,\hol(A,\ell^n),\hol(A,\gamma^1)\Phi(\gamma^1_1),\ldots, \hol(A,\gamma^n)\Phi(\gamma^n_1))\;,
\end{equ}
where $n\geq 1$, $f\in\CA_n$, $\ell^1,\ldots,\ell^n\in\CL_{x}$, and $\gamma^1,\ldots,\gamma^n\in\CP_x$.
\end{definition}

\begin{remark}\label{rem:gaueg_invar}
Every $\CO\in\mbW_x$ is gauge-invariant
due to the identity $\hol(A^g,\gamma)=g(\gamma_0)\hol(A,\gamma)g(\gamma_1)^{-1}$.
In the absence of the Higgs field (i.e.\ $\higgsvec=\{0\}$ or when $f$ is independent of its $\higgsvec^n$-argument),
$\mbW_x$ 
is a separating collection for $\Omega\CC^\infty/{\sim}$, see~\cite[Prop.~2.1.2]{Sengupta92}.
\end{remark}

\begin{remark}
Elements of $\mbW_x$ of the form $(A,\Phi)\mapsto f(\hol(A,\ell))$, where $\ell\in\CL_x$ and $f\colon G\to\R$ is a class function,
are known as Wilson loops.
For many choices of $G$, Wilson loops also form a separating collection for $\Omega\CC^\infty$,
but this is harder to show than for $\mbW_x$;
see~\cite{Durhuus80,Sengupta94,Levy04} where this question is investigated.
\end{remark}

\begin{remark}
Elements of $\mbW_x$ of the form $(A,\Phi)\mapsto \scal{\Phi(x),\hol(A,\gamma)\Phi(\gamma_1)}_{\higgsvec}$
are known as string observables.
\end{remark}

We now show that $\mbW_x$ is a separating collection for all of $\CC^\infty(\T^3,E)/{\sim}$.

\begin{proposition}
Consider $x\in\T^3$ and
$(A,\Phi),(B,\Psi)\in\CC^\infty(\T^3,E)$.
The following statements are equivalent.
\begin{enumerate}[label=(\roman*)]
\item\label{pt:sim} $(A,\Phi)\sim(B,\Psi)$.
\item\label{pt:CO} $\CO(A,\Phi)=\CO(B,\Psi)$ for all $\CO\in\mbW_x$.
\end{enumerate}
\end{proposition}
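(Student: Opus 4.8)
The plan is to prove the equivalence by showing each direction separately, with the easy direction being \ref{pt:sim}$\Rightarrow$\ref{pt:CO} and the substantive direction being \ref{pt:CO}$\Rightarrow$\ref{pt:sim}.

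\textbf{The direction \ref{pt:sim}$\Rightarrow$\ref{pt:CO}.} Suppose $(A,\Phi)\sim(B,\Psi)$, so that $(B,\Psi)=(A,\Phi)^g=(\Ad_g A-(\mrd g)g^{-1},g\Phi)$ for some $g\in\mfG^\infty$. By the holonomy transformation rule $\hol(A^g,\gamma)=g(\gamma_0)\hol(A,\gamma)g(\gamma_1)^{-1}$ recalled in Remark~\ref{rem:gaueg_invar}, for any loop $\ell\in\CL_x$ one has $\hol(B,\ell)=g(x)\hol(A,\ell)g(x)^{-1}$, and for any path $\gamma\in\CP_x$ one has $\hol(B,\gamma)\Psi(\gamma_1)=g(x)\hol(A,\gamma)g(\gamma_1)^{-1}g(\gamma_1)\Phi(\gamma_1)=g(x)\hol(A,\gamma)\Phi(\gamma_1)$. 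Plugging these into an arbitrary element of $\mbW_x$ built from $f\in\CA_n$, the defining invariance of $f$ under simultaneous conjugation of the $G$-arguments and action on the $\higgsvec$-arguments (with the single group element $g(x)$) shows the value is unchanged. Hence $\CO(A,\Phi)=\CO(B,\Psi)$ for all $\CO\in\mbW_x$.

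\textbf{The direction \ref{pt:CO}$\Rightarrow$\ref{pt:sim}.} This is the heart of the matter, and the strategy is to explicitly reconstruct a gauge transformation from holonomy data. First, using the loop observables (Wilson-type functions) and the fact, recalled in Remark~\ref{rem:gaueg_invar}, that the pure-gauge subfamily of $\mbW_x$ is separating for $\Omega\CC^\infty/{\sim}$ (see~\cite[Prop.~2.1.2]{Sengupta92}), deduce that $A\sim B$ as connections, i.e.\ there is $g\in\mfG^\infty$ with $B=A^g$; moreover, by composing with a constant gauge transformation we may arrange $g(x)=\id$, since the residual freedom in $g$ at a fixed basepoint after fixing all loop holonomies at $x$ is exactly conjugation by $g(x)$, and we can absorb it. The task is then to show that with this choice one also has $\Psi=g\Phi$. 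Fix any $y\in\T^3$ and pick a path $\gamma\in\CP_x$ with $\gamma_1=y$. Applying hypothesis~\ref{pt:CO} to string-type observables $(A,\Phi)\mapsto\langle v,\hol(A,\gamma)\Phi(y)\rangle_\higgsvec$ for all $v\in\higgsvec$ (these lie in $\mbW_x$ after pairing appropriately, or more precisely we use $f(h,\phi)=\langle v,\phi\rangle$ composed with the invariance-symmetrised version — one checks such $f$ lies in $\CA_1$ up to the standard device of also recording $\hol(A,\ell)$ for $\ell$ a loop so that the conjugation-invariance is formally satisfied) gives $\hol(A,\gamma)\Phi(y)=\hol(B,\gamma)\Psi(y)$. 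Using $B=A^g$ and the path transformation rule with $g(x)=\id$, $\hol(B,\gamma)=\hol(A,\gamma)g(y)^{-1}$, so $\hol(A,\gamma)\Phi(y)=\hol(A,\gamma)g(y)^{-1}\Psi(y)$; cancelling the invertible holonomy yields $\Psi(y)=g(y)\Phi(y)$. Since $y$ was arbitrary, $\Psi=g\Phi$ and hence $(B,\Psi)=(A,\Phi)^g$, i.e.\ \ref{pt:sim} holds.

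\textbf{Main obstacle.} The genuine subtlety is the bookkeeping of basepoint-invariance: the functions in $\CA_n$ are required to be invariant under simultaneous conjugation/action by a \emph{single} $g\in G$, so to extract scalar quantities like $\langle v,\hol(A,\gamma)\Phi(y)\rangle$ one must package them together with enough loop holonomies based at $x$ to make the would-be observable genuinely gauge-invariant (it is, after all, only invariant because the gauge transformation relating $(A,\Phi)$ and $(B,\Psi)$ fixes the basepoint after our normalization). Concretely, I would first prove a lemma: after using the loop observables to fix $A\sim B$ via $g$ with $g(x)=\id$, any further $\CO\in\mbW_x$ evaluated on $(A,\Phi)$ versus $(A^g,g\Phi)$ collapses to the claimed pointwise identities because the only surviving group element is $g(x)=\id$. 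The remaining points — that string observables with all $v\in\higgsvec$ separate points of $\higgsvec$, and that holonomies are invertible — are routine. One should also note the case $\higgsvec=\{0\}$ is covered trivially by Remark~\ref{rem:gaueg_invar}, so we may assume $\dim\higgsvec\ge1$ throughout the second direction.
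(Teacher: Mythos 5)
The direction \ref{pt:sim}$\Rightarrow$\ref{pt:CO} is fine, but your argument for \ref{pt:CO}$\Rightarrow$\ref{pt:sim} has a genuine gap, in fact two related ones. First, the normalisation ``compose with a constant gauge transformation to arrange $g(x)=\id$'' is not available: the set of $g\in\mfG^\infty$ with $A^g=B$ is a coset of the stabiliser of $A$, which for an irreducible connection consists only of central constants, so the value $g(x)$ is pinned down (up to the centre) and in general cannot be made equal to $\id$. Second, and more fundamentally, the quantities $\scal{v,\hol(A,\gamma)\Phi(\gamma_1)}$ for a \emph{fixed} $v\in\higgsvec$ are not gauge-invariant and hence do not belong to $\mbW_x$; no ``standard device'' of appending loop holonomies makes $f(h,\phi)=\scal{v,\phi}$ land in $\CA_1$. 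Hypothesis \ref{pt:CO} only tells you that, for each finite collection of loops and paths, the tuple $\big(\hol(A,\ell^i),\hol(A,\gamma^j)\Phi(\gamma^j_1)\big)$ lies in the same $G$-orbit (under simultaneous conjugation on the $G$-factors and the representation on the $\higgsvec$-factors) as the corresponding tuple for $(B,\Psi)$, with an unknown orbit element depending a priori on the collection. Your step 3 silently upgrades this to equality of the vectors themselves, which is exactly what is not given. In the reducible case (e.g.\ $A=B=0$) your argument only yields that $\Phi(y)$ and $g(y)^{-1}\Psi(y)$ are $G$-conjugate for each $y$ separately, which does not produce a single smooth gauge transformation.

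The paper closes this gap by coupling the loops and paths from the outset: for each $\gamma$ and $\ell$ it introduces the compact sets $\Gamma_\gamma=\{g\in G: g\hol(A,\gamma)\Phi(\gamma_1)=\hol(B,\gamma)\Psi(\gamma_1)\}$ and $\Lambda_\ell=\{g\in G: g\hol(A,\ell)g^{-1}=\hol(B,\ell)\}$, observes that \ref{pt:CO} forces every finite intersection to be nonempty (else a continuous invariant $f$ would separate the two orbits), and uses compactness of $G$ together with separability of $\CP_x$, $\CL_x$ and smoothness to extract a single $g\in G$ lying in \emph{all} of these sets. Only then is the gauge transformation $h$ defined by $g\,\hol(A,\gamma)=\hol(B,\gamma)h(\gamma_1)$, after which both $A^h=B$ and $\Psi=\Phi^h$ follow at once. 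If you want to keep your two-stage structure you would need to reprove this finite-intersection/compactness step anyway, so you may as well run it for loops and paths simultaneously as the paper does.
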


\begin{proof}
This is similar to~\cite[Prop.~2.1.2]{Sengupta92}.
The direction~\ref{pt:sim}$\Rightarrow$\ref{pt:CO}
is Remark~\ref{rem:gaueg_invar}.
Suppose now that~\ref{pt:CO} holds.
For $\gamma\in\CP_x$
define $\Gamma_\gamma\eqdef\{g\in G\,:\,g\hol(A,\gamma)\Phi(\gamma_1)=\hol(B,\gamma)\Psi(\gamma_1)\}$,
and for $\ell\in\CL_x$ define
$\Lambda_\ell\eqdef \{g\in G\,:\,g\hol(A,\ell)g^{-1}=\hol(B,\ell)\}$.
Then~\ref{pt:CO} implies
\begin{equ}\label{eq:inter_nonempty}
\Big(\bigcap_{\gamma\in I}\Gamma_\gamma\Big)\cap \Big(\bigcap_{\ell\in J}\Lambda_\ell\Big)\neq\emptyset 
\end{equ}
for any finite subsets $I\subset\CP_x$, $J\subset\CL_x$.
This is due to the fact that if 
\[
f(h_1,\ldots,h_n, \phi_1,\ldots,\phi_n)
=f(\bar h_1,\ldots,\bar h_n,\bar\phi_1,\ldots,\bar\phi_n)
\qquad
\forall f\in \CA_{n}\;,
\]
then
there exists $g\in G$ such that $h_i=g \bar h_i g^{-1}$ and $\phi_i = g\bar \phi_i$ for all $1\le i\le n$.
Since $\Gamma_\gamma$ and $\Lambda_\ell$ are compact,~\eqref{eq:inter_nonempty} holds for any countable $I\subset\CP_x$, $J\subset\CL_x$ by Cantor's intersection theorem.
Since all objects are smooth and 
$\CP_x$ and $\CL_x$ are separable (say in the $C^1$-metric), 
one can choose 
$I\subset\CP_x$, $J\subset\CL_x$ to be countable dense subsets, and it follows that there exists
$g\in (\bigcap_{\gamma\in \CP_x}\Gamma_\gamma)\cap (\bigcap_{\ell\in \CL_x}\Lambda_\ell)$.

Define $h\in\CC^\infty(\T^3,G)$ by the identity $g\hol(A,\gamma)=\hol(B,\gamma)h(y)$, where $\gamma$ is any path in $\CP_{x}$ such that $\gamma_1=y$ (note that $h(y)$ is independent of the choice of $\gamma$).
Then $\hol(B,\gamma)h(\gamma_1)\Phi(\gamma_1)=g\hol(A,\gamma)\Phi(\gamma_1)=\hol(B,\gamma)\Psi(\gamma_1)$
for all $\gamma\in\CP_x$, and thus $\Psi=\Phi^h$.
Similarly $h(z)\hol(A,\gamma)h(y)^{-1}=\hol(B,\gamma)$ for all $z,y\in\T^3$ and $\gamma\in\CP_z$ with $\gamma_1=y$,
from which it follows that $A^h=B$.
\end{proof}

\section{Stochastic heat equation}
\label{sec:SHE}

The main goal of this section is to show that the stochastic heat equation (SHE)
is a continuous process with values in $(\state,\Sigma)$.
We analyse separately the two parts $\Theta$ and $\heatgr{\cdot}_{\alpha,\theta}$
which add to give $\Sigma$.

\subsection{The first half}
\label{subsec:1st-half}

Throughout this subsection, let $\xi$ denote either 
the $E$-valued space-time white noise $((\xi_i)_{i=1}^3,\zeta)$
 on $\R\times \T^3$
or its smoothened version $((\xi_i^\e)_{i=1}^3,\zeta^\e)$.
Then there exists $C_\xi > 0$ such that
$
\E[|\scal{\xi,\phi}|^2] \leq C_\xi|\phi|_{L^2(\R\times\T^3)}^2
$
for all smooth compactly supported $\phi \colon \R\times\T^3\to E$.
Let $\Psi$ solve $(\partial_t - \Delta) \Psi = \xi$ on $\R_+ \times \T^3$ with initial condition $\Psi(0)\in\CD'(\T^3,E)$. We will often write $\Psi_t=\Psi(t)$ for short.

\begin{definition}\label{def:space-CI'}
We say that $(\eta,\beta,\delta)\in \R^3$ satisfies condition~\eqref{eq:CI'} if
\begin{equation}\label{eq:CI'}
\begin{split}
&\eta\in(-2/3,-1/2)\;,\quad
\beta \in (-1/2,0]\;,\quad
\delta\in(3/4,1)\;,\\
&\hat\beta \eqdef \beta+2(1-\delta) < \eta+1/2 \;.
\end{split}\tag{$\CI'$}
\end{equation} 
\end{definition}

We introduce the homogenous version of the metric $\Theta$ by
\begin{equ}
\tilde\Theta_{\eta,\beta,\delta}(X,Y) = |X-Y|_{\CC^\eta} \vee \fancynorm{X;Y}_{\beta,\delta}^{1/2}\;.
\end{equ}

\begin{proposition}\label{prop:SHE_Nt_estimates}
Assume~\eqref{eq:CI'} and that $\Psi_0\in \init_{\eta,\beta,\delta}$.
Then for all  $T > 0$, there is a modification of $\Psi$
such that $\Psi\in \CC([0,T],\init_{\eta,\beta,\delta})$
and, for every $\bar\eta<\eta$, $\bar\beta<\beta$ and $\bar\delta>\delta$ there exists $\kappa>0$ such that $p\in [1,\infty)$,
\begin{equ}[eq:SHE_Theta_Holder]
\Big( \E \Big|\sup_{0\leq s<r \leq T} |r-s|^{-\kappa}\tilde\Theta_{\bar\eta,\bar\beta,\bar\delta}(\Psi_r,\Psi_s)\Big|^p\Big)^{1/p}\lesssim \fancynorm{\Psi_0}_{\beta,\delta}^{1/2}
+|\Psi_0|_{\CC^{\eta}}
+C_{\xi}^{1/2}
\end{equ}
and
\begin{equ}[eq:SHE_Theta_zero]
\Big( \E \sup_{r\in [0,T]} r^{-\kappa}\tilde\Theta_{\eta,\beta,\delta}(\Psi_r,\CP_r \Psi_0)^p \Big)^{1/p} \lesssim C_\xi^{1/4}(C_\xi^{1/4} + |\Psi_0|_{\CC^\eta}^{1/2})\;.
\end{equ}
\end{proposition}

For the proof, we require several lemmas.
Below we denote by $\hstar$ the spatial convolution (in contrast with $*$ for space-time convolution).
Below we write $G$ for the heat kernel associated to the heat semigroup $\CP$.\footnote{Writing $G$ for both heat kernel and the Lie group will not cause any confusion since it is clear from the context.}

\begin{lemma}\label{lem:CPf}
Let $d\geq 1$, $\gamma\in (0,d)$ and
$\alpha\in (0,\gamma)$.
Let  $\CP_t^{(k)}$ be the 
$k$th spatial derivative of the heat semigroup $\CP$
for  multiindex $|k|\le 1$.
Let  $f\in \CD'(\T^d)$ which is smooth except for the origin,
and satisfy the bound  $|f(x)|\lesssim |x|^{-\gamma}$ 
uniformly in $x\in \T^d\backslash \{0\}$.
Then for any $\omega\in \big(0,\min(\frac{\alpha}2,\frac12) \big)$ one has
\begin{equs}
|t^{\frac{\alpha+|k|}{2}} (\CP_t^{(k)}  f) (x)|
& \lesssim  |x |^{\alpha-\gamma}\;,
\\
|(t^{\frac{\alpha+|k|}{2}}\CP_t^{(k)}  f 
- \bar t^{\frac{\alpha+|k|}{2}}\CP_{\bar t}^{(k)}  f ) (x)|
& \lesssim
|t-\bar t|^\omega 
 |x |^{\alpha-\gamma-2\omega}\;,
\end{equs}
uniformly in $t\in (0,1)$ and $x\in \T^d\setminus \{0\}$.
\end{lemma}

\begin{proof}
By the bound $|G^{(k)}(t,x)|\lesssim \big(|x|+\sqrt{t}\big)^{-d-|k|} $
one has
\begin{equs}
|(\CP_t^{(k)}  f) (x)|
&\lesssim \Big(\big(|\cdot |+\sqrt{t}\big)^{-d-|k|}  \hstar |f| \Big)(x)
\lesssim
 t^{-\frac{\alpha+|k|}{2}}\Big( |\cdot|^{\alpha-d}  \hstar |f| \Big)(x)
\\
& \lesssim t^{-\frac{\alpha+|k|}{2}} |x |^{\alpha-\gamma}
\end{equs}
uniformly in $t\in (0,1)$,  where in the last step we applied \cite[Lem.~10.14]{Hairer14}.
The second bound follows in the same way using 
the bound on  $\partial_t G^{(k)}(t,x)$ by 
$\big(|x |+\sqrt{t}\big)^{-d-|k|-2}$ and interpolation.
\end{proof}

\begin{lemma}\label{lem:prob-obs} 
Suppose that $\tilde \Psi$ solves SHE with initial condition $\tilde\Psi(0)=0$.
Let $T>0$,
 $\delta\in (\frac12,1)$,
 and $\beta < 2(\delta-1)$.
 Then for $\kappa \eqdef \min\{2(\delta-1)-\beta,\frac12\}$,
 one has
\begin{equ}[e:N2diff]
\E \Big|
\Big(t^\delta \CN_t \tilde\Psi_r - \bar{t}^\delta\CN_{\bar t} \tilde\Psi_s \Big)
(\phi^\lambda)
\Big|^2
 \,\lesssim\, C_\xi^2\, \lambda^{2\beta}
(|t-\bar t|+|r-s|)^{\kappa}
\end{equ}
uniformly in $t,\bar t\in (0,1)$, $r,s\in[0,T]$, and $\lambda\in (0,1)$,
where $\phi^\lambda = \lambda^{-3}\phi(\cdot / \lambda)$ and $\phi\in\CC^\infty(\T^3)$ with support in $\{z\in\T^3\,:\,|z|\leq \frac14\}$.
\end{lemma}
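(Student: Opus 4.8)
The plan is to estimate the left-hand side of \eqref{e:N2diff} by writing $\CN_t(\tilde\Psi(r)) = \CP_t\tilde\Psi(r)\otimes\nabla\CP_t\tilde\Psi(r)$ and using the fact that $\CP_t\tilde\Psi(r)$ is a centred Gaussian field, so that the relevant fourth moment factorises into products of second moments via Wick's theorem (equivalence of Gaussian moments plays no role in the statement of the lemma itself, only covariance computations do). First I would telescope the difference
\begin{equ}
\CN_t(\tilde\Psi(r)) - \CN_t(\tilde\Psi(s)) = \big(\CP_t\tilde\Psi(r) - \CP_t\tilde\Psi(s)\big)\otimes \nabla\CP_t\tilde\Psi(r) + \CP_t\tilde\Psi(s)\otimes \nabla\big(\CP_t\tilde\Psi(r) - \CP_t\tilde\Psi(s)\big)\;,
\end{equ}
so it suffices to bound, for each of the two terms, the second moment of the pairing against $\phi^\lambda$. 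Testing against $\phi^\lambda_x$ and then against $\phi^\lambda$ as in the statement, each such term is a bilinear expression in the Gaussian field, whose variance is a sum (over Wick contractions) of integrals of products of two covariance kernels against $\phi^\lambda\otimes\phi^\lambda$.

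The key computation is the kernel estimate. Write $C_{r}(x,y) \eqdef \E[\CP_t\tilde\Psi(r)(x)\otimes \CP_t\tilde\Psi(r)(y)]$; since $\tilde\Psi$ solves SHE from zero initial data, $C_r(x,y) = \int_0^r \int_{\T^3} (\CP_{t+r-u}\hstar)(x,z)\,\bCov\,(\CP_{t+r-u}\hstar)(y,z)\,\mrd z\,\mrd u$, which reduces to a heat kernel integral of the form $\int_0^r \CP_{2(t+r-u)}(x-y)\,\mrd u \lesssim C_\xi\, |x-y|^{-1}$ (the standard SHE two-point bound at equal times, with the $t$-regularisation only helping). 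Differentiated versions gain one derivative: $\E[\nabla\CP_t\tilde\Psi(r)(x)\otimes\nabla\CP_t\tilde\Psi(r)(y)]$ and $\E[\CP_t\tilde\Psi(r)(x)\otimes\nabla\CP_t\tilde\Psi(r)(y)]$ behave like $|x-y|^{-3}$ and $|x-y|^{-2}$ respectively — but crucially, since we always pair one factor whose kernel comes from a \emph{difference} $\CP_t\tilde\Psi(r) - \CP_t\tilde\Psi(s)$, that factor picks up a small power of $|r-s|$ at the cost of worsening its singularity by a controlled amount. Precisely, I would use the bound $\E[(\CP_t\tilde\Psi(r) - \CP_t\tilde\Psi(s))(x)\otimes(\CP_t\tilde\Psi(r) - \CP_t\tilde\Psi(s))(y)] \lesssim C_\xi |r-s|^{\sigma} |x-y|^{-1-2\sigma}$ for suitable $\sigma\in(0,1)$ (and the corresponding differentiated/mixed versions with exponents shifted by $1$ or $2$), which follows from standard interpolation between the equal-time bound and the crude bound obtained by estimating $|\CP_{r'}\hstar - \CP_{s'}\hstar|$ using $\int_{s'}^{r'}\p_u\CP_u$. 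Here is where Lemma~\ref{lem:CPf} enters: it provides exactly the pointwise decay of $\CP_t^{(k)}\hstar f$ when $f$ has a homogeneous singularity $|x|^{-\gamma}$, which is needed to control the $t$-dependence (the factors $t^{-\delta}$, i.e.\ $t^{-(\alpha+|k|)/2}$ summed over the two differentiated kernels) and to keep the spatial singularities locally integrable against $\phi^\lambda$.

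With these kernel bounds in hand, the final step is bookkeeping: each of the two telescoped terms contributes a variance of the form $C_\xi^2\, t^{-2\delta}\, |r-s|^{\sigma} \int\int |\phi^\lambda(x)|\,|\phi^\lambda(y)|\, |x-y|^{-a}\,\mrd x\,\mrd y$ where $a < 3$ is the total spatial singularity exponent (bounded using $\beta < 2(\delta-1)$, which is precisely what guarantees $a<3$ and hence $\lesssim \lambda^{2\beta}$ after rescaling), and $\sigma$ can be taken up to $\kappa = \min\{2(\delta-1)-\beta, \tfrac12\}$ with the $\tfrac12$ cap coming from the boundedness of the relevant difference-interpolation exponent. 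I expect the main obstacle to be the careful tracking of the interplay between the three scaling parameters $t$, $\lambda$, $|r-s|$ simultaneously: one must split each kernel-product integral according to whether $|x-y| \lesssim \sqrt t$ or $|x-y|\gtrsim\lambda$, verify that the exponent $a = 1 + 2\sigma + (\text{0, 1, or 2 from derivatives})$ stays below $3$ for $\sigma$ up to the claimed $\kappa$, and confirm that the $t$-power collected from the two singular kernels never exceeds $t^{-2\delta}$ — this last point being where the precise statement $\delta\in(\tfrac12,1)$ is used, since $\delta = \tfrac12 + \tfrac12\cdot(\text{derivative count})/2$ roughly, i.e.\ each of the two kernel factors carries weight $t^{-\delta}$ and together $t^{-2\delta}$, matching the $\CB^{\beta,\delta}$ scaling built into $\fancynorm{\cdot}_{\beta,\delta}$.
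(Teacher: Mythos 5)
Your proposal is correct and follows essentially the same route as the paper's proof: Wick's theorem, the equal-time covariance bound $|C_{r,s}(x)|\lesssim C_\xi|x|^{-1}$ interpolated against the crude time-difference bound to get $|r-s|^{\kappa}|x|^{-1-2\kappa}$, two applications of Lemma~\ref{lem:CPf} with $\alpha=\delta-\frac12$ to produce $t^{-2\delta}$, and the final integration against $\phi^\lambda\otimes\phi^\lambda$ using $\beta<2(\delta-1)$. The only (cosmetic) difference is that you telescope the difference at the level of the Gaussian fields before taking expectations, whereas the paper telescopes the second differences of the covariance functionals $J^{(\ell)}$ after applying Wick's theorem; the resulting kernel estimates are identical.
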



\begin{proof}
Define $C_{r,s}(x) = \E \langle \tilde\Psi(r,0),\tilde\Psi(s,x)\rangle_E$.
For any $t,\bar t, r,s\ge 0$ and $x\in \T^3$, recalling $[3]=\{1,2,3\}$, we write
\begin{equs}
J_{t,\bar t,x}^{(1)}(r,s) &\eqdef 
\sum_{k\in [3]}(t\bar t)^\delta\,(G_t \hstar C_{r,s}\hstar \partial_k G_{\bar t})(x)\,
(\partial_k G_t \hstar C_{r,s}\hstar G_{\bar t})(x)\;,
\\
J_{t,\bar t,x}^{(2)}(r,s) &\eqdef 
\sum_{k\in [3]} (t \bar t)^\delta\,(G_t \hstar  C_{r,s}\hstar G_{\bar t})(x)\,
(\partial_k G_t \hstar C_{r,s} \hstar \partial_k G_{\bar t})(x)\;.
\end{equs}
Here note that 
$\CN_t (\tilde\Psi_r)$ has zero mean 
thanks to the spatial derivative in its definition
\eqref{eq:CN_def}.
By Wick's theorem for $x,\bar x\in \T^3$ one has
\begin{equs}[e:corr-N-rs]
{} & \E \Big\langle
\big( t^\delta\CN_t \tilde\Psi_r - \bar t^\delta\CN_{\bar t} \tilde\Psi_s \big)(x),
\big( t^\delta\CN_t \tilde\Psi_r - \bar t^\delta\CN_{\bar t} \tilde\Psi_s \big)(\bar x)
\Big\rangle_{E\otimes E^3}
\\
&=\sum_{\ell=1}^2 
\Big(
J_{t,t,x-\bar x}^{(\ell)}(r,r)
-J_{t,\bar t,x-\bar x}^{(\ell)}(r,s)
-J_{\bar t, t,x-\bar x}^{(\ell)}(s,r)
+J_{\bar t,\bar t,x-\bar x}^{(\ell)}(s,s)\Big)\;.
\end{equs}
It is easy to check that
\begin{equ}\label{eq:C_bounds}
|C_{r,s}(x)| \lesssim C_\xi |x|^{-1}\;,
\qquad
|(C_{r,r}-C_{r,s})(x)| 
\lesssim 
C_\xi
|r-s|^{1/2} |x|^{-2}\;,
\end{equ}
uniformly in $r,s\in [0,T]$ and $x\in \T^3\setminus\{0\}$.
Since $\kappa\in(0,\frac12]$,
interpolation between the two bounds in~\eqref{eq:C_bounds} implies
\begin{equ}
|(C_{r,r}-C_{r,s})(x)| 
\lesssim 
C_\xi
|r-s|^{\kappa} |x|^{-1-2\kappa}\;.
\end{equ}
Applying  Lemma~\ref{lem:CPf}
twice, with $\gamma=1$
and then with $\gamma=1-\alpha$ for any $\alpha\in(0,\frac12)$ therein, one has, for any $\tau,\bar\tau\in \{t,\bar t\}$, $\iota,\bar\iota\in \{r,s\}$,  $i,j \in \{0,1\}$,
\begin{equs}[e:PCP]
|(\tau^{\frac{\alpha+i}{2}}G_\tau^{(i)})
 \hstar C_{\iota,\bar\iota}\hstar
(\bar\tau^{\frac{\alpha+j}{2}}  G_{\bar\tau}^{(j)})(x)|
& \lesssim
C_\xi\,
|x|^{2\alpha -1}\;,
\\
|(\tau^{\frac{\alpha+i}{2}} G_\tau^{(i)} )
\hstar (C_{r,r}-C_{r,s})\hstar 
(\bar\tau^{\frac{\alpha+j}{2}} G_{\bar\tau}^{(j)})(x)|
& \lesssim
C_\xi\,
|x|^{2\alpha -1-2\kappa} |r-s|^{\kappa} \;,
\\
|(\tau^{\frac{\alpha+i}{2}}G_\tau^{(i)} )
\hstar C_{\iota,\bar\iota}\hstar
 (t^{\frac{\alpha+j}{2}}G_{t}^{(j)}-\bar{t}^{\frac{\alpha+j}{2}}G_{\bar t}^{(j)})(x)|
& \lesssim
C_\xi\,
|x|^{2\alpha -1-2\kappa} |t-\bar t|^\kappa\;.
\end{equs}
Here $G^{(0)}=G$ and $G^{(1)}$ denotes $\partial_k G$ for a generic $k\in [3]$.
Choose $\alpha\eqdef\delta-\frac12$.
By these bounds and obvious telescoping,
one then has the bound
\begin{equ}
|J_{t,t,x-\bar x}^{(\ell)}(r,r)
-J_{t,\bar t,x-\bar x}^{(\ell)}(r,s)|
\lesssim
C_\xi^2\,
(|t-\bar t|+|r-s|)^{\kappa}  |x-\bar x|^{4\delta-4-2\kappa}
\end{equ}
for $\ell\in \{1,2\}$.
The same bound holds with $r,s$ and $t,\bar t$ swapped
and thus holds for \eqref{e:corr-N-rs}.

It remains to integrate this bound
on \eqref{e:corr-N-rs} against the test functions $\phi^\lambda$.
Bounding $\phi^\lambda$
by constant $\lambda^{-3}$
over its support which has diameter proportional to
$\lambda$,
one obtains that the integral of $\eqref{e:corr-N-rs} $ against $\phi^\lambda(x)\phi^\lambda(\bar x)$
is bounded above by a multiple of 
$C_\xi^2 (|t-\bar t|+|r-s|)^{\kappa} $ times $\lambda^{4\delta-4-2\kappa} \leq  \lambda^{2\beta}$,
where used that $\kappa\leq 2(\delta-1)-\beta$ in the final bound.
\end{proof}

\begin{lemma}\label{lem:determ-part}
Let 
$\beta,\eta\leq 0$,
$\delta<\bar\delta$ with $\bar\delta\geq0$,
and $\Psi_0\in\CD'(\T^3,E)$.
Then		for  
$\kappa\in(0,2(\bar\delta-\delta))$,
writing 
\begin{equ}[e:def-CM]
\CM_{t,\bar t,r,s,\bar\delta}(\Psi_0) \eqdef t^{\bar\delta} \CN_t (\CP_r\Psi_0)
-{\bar t}^{\bar\delta} \CN_{\bar t} (\CP_s\Psi_0)\;,
\end{equ}
 one has 
\begin{equ}[e:determ-part]
| \CM_{t,\bar t,r,s,\bar\delta}(\Psi_0)|_{\CC^\beta}
\lesssim
(|t-\bar t|+|r-s|)^{\frac\kappa2}
(\fancynorm{\Psi_0}_{\beta,\delta} + |\Psi_0|_{\CC^\eta}^2)
\end{equ}
where the proportionality constant depends only on 
$\beta,\eta,\delta,\bar\delta,\kappa$.
\end{lemma}

\begin{proof}
By Lemma~\ref{lem:fancynorm_heat_flow},
\begin{equ}
t^{\bar\delta} |\CN_t (\CP_r\Psi_0)-\CN_t (\CP_s\Psi_0) |_{\CC^\beta}
\lesssim
| r-s|^{\tilde\kappa(\bar\delta-\delta)}
(\fancynorm{\Psi_0}_{\beta,\delta} + |\Psi_0|_{\CC^\eta}^2)
\end{equ}
for any $\tilde\kappa\in(0,1)$ and
$\delta<\bar\delta$,
where we used $\fancynorm{\CP_s\Psi_0}_{\beta,\delta}\lesssim
\fancynorm{\Psi_0}_{\beta,\delta}+|\Psi_0|_{\CC^\eta}^2$ and similarly for the $\CC^\eta$ norm.
By assumption on the indices,
$ |r-s|^{\tilde\kappa(\bar\delta-\delta)}\lesssim |r-s|^{\frac\kappa2}$ for $\tilde\kappa$ sufficiently close to $1$,
where we used $\kappa<2(\bar\delta-\delta)$.
Similarly,
\begin{equs}
t^{\bar\delta} |\CN_t (\CP_s\Psi_0) - \CN_{\bar t} (\CP_s\Psi_0) |_{\CC^\beta}
&=
t^{\bar\delta} |\CN_t (\CP_s\Psi_0) - \CN_{t} (\CP_{s+\bar t-t}\Psi_0) |_{\CC^\beta}
\\
&\lesssim
| t-\bar t|^{\frac\kappa2}
(\fancynorm{\Psi_0}_{\beta,\delta} + |\Psi_0|_{\CC^\eta}^2)\;.
\end{equs}
Finally, assuming $t\le \bar t$ without loss of generality,
\begin{equ}
|(t^{\bar\delta}-{\bar t}^{\bar\delta})
\CN_{\bar t} (\CP_s\Psi_0)|_{\CC^\beta}
\lesssim
| t-\bar t|^{\frac\kappa2}
|\bar t^{\bar\delta-\frac\kappa2} \CN_{\bar t}( \CP_s\Psi_0)|_{\CC^\beta}
\lesssim
| t-\bar t|^{\frac\kappa2}
\fancynorm{\Psi_0}_{\beta,\delta}
\end{equ}
where we again used $\kappa<2(\bar\delta-\delta)$ in the last bound.
Combining the above three bounds we obtain \eqref{e:determ-part}.
\end{proof}

\begin{lemma}\label{lem:mom-Psi-diff}
Assume~\eqref{eq:CI'} and that $\delta<\bar\delta < 1$ and
 $\beta> \bar\beta>-\frac12$.
Let
$0<\kappa < \min(2(\bar\delta-\delta),\eta-\beta+2\delta-3/2)$, 
 and $T>0$. Then for all  $r,s\in[0,T]$ and $p\in [1,\infty)$
\begin{equs}
\{\E 
 |t^{\bar\delta} \CN_t \Psi_r -\bar{t}^{\bar\delta} \CN_{\bar t} \Psi_s|_{\CC^{\bar\beta}}^{2p}\}^{1/p}
\lesssim
(|r-s|+|t-\bar t|)^\kappa
( \fancynorm{\Psi(0)}_{\beta,\delta}^2
+|\Psi(0)|_{\CC^\eta}^4
+C_{\xi}^{2})\;,
\end{equs}
where the proportionality constant depends only on 
$\eta,\beta,\bar\beta,\delta,\bar\delta,\kappa,T$.
\end{lemma}

\begin{proof}
Write $\Psi_s=\tilde\Psi_s+ \CP_s \Psi_0$
where $\tilde\Psi$ solves SHE with  $\tilde\Psi(0)=0$,
and assume $r>s$.
Since $\tilde\Psi$ and 
$\CN_t (\tilde\Psi_r)$ are mean-zero,
it is straightforward to check that
\begin{equs}[e:corr-N-diff]
 \E &\Big\langle
 \big(t^{\bar\delta}\CN_t (\Psi_r)-\bar t^{\bar\delta}\CN_t (\Psi_s) \big)(x),
\big(t^{\bar\delta}\CN_t (\Psi_r)-\bar t^{\bar\delta}\CN_t (\Psi_s) \big)(\bar x)
\Big\rangle_{E\otimes E^3}
\\
&= \eqref{e:corr-N-rs}
+ \langle
\CM_{t,\bar t,r,s,\bar\delta}(\Psi_0)(x),
\CM_{t,\bar t,r,s,\bar\delta}(\Psi_0)(\bar x)
\rangle_{E\otimes E^3}
\\
&\qquad \quad\;+
\sum_{i,j\in \{0,1\}}
\Big(
J_{t,t,x,\bar x}^{(i,j)}\,(r,r)
-J_{t,\bar t, x,\bar x}^{(i,j)}\,(r,s)
-J_{\bar t, t,x,\bar x}^{(i,j)}\,(s,r)
+J_{\bar t,\bar t,x,\bar x}^{(i,j)}\,(s,s)\Big)
\end{equs}
where in the first term on the right-hand side we replace $\delta$ in 
\eqref{e:corr-N-rs} by $\bar\delta$, 
and $\CM_{t,\bar t,r,s,\bar\delta}(\Psi_0)$ is as in \eqref{e:def-CM}.
Here for $i,j\in \{0,1\}$ we define
\begin{equs}
J_{t,\bar t, x,\bar x}^{(i,j)}\,&(r,s)
\\
&\eqdef
(t\bar t)^{\bar\delta}\,
\E \langle
\CP_{t+r}^{(i)}  \Psi_0(x)\otimes
\CP_t^{(1-i)} \tilde\Psi_r(x) ,
 \CP_{\bar t+s}^{(j)} \Psi_0(\bar x) \otimes
\CP_{\bar t}^{(1-j)} \tilde\Psi_s(\bar x) 
\rangle_{E\otimes E^3}
\end{equs}
where $\CP^{(i)}$ denotes the $i$-th spatial derivative of $\CP$
as in the proof of Lemma~\ref{lem:prob-obs}.

By Lemma~\ref{lem:prob-obs},
the integral of \eqref{e:corr-N-rs}
against 
$\phi^\lambda(x)\phi^\lambda(\bar x)$
 satisfies the  bound
\eqref{e:N2diff}.
By 
Lemma~\ref{lem:determ-part},
the integration of the
second term on the right-hand side of \eqref{e:corr-N-diff}
against $\phi^\lambda(x)\phi^\lambda(\bar x)$
is bounded by
\begin{equ}[e:mom-2nd]
 \lambda^{2\beta}
(|t-\bar t|+|r-s|)^{\frac\kappa2}
(\fancynorm{\Psi_0}_{\beta,\delta}^2
 + |\Psi_0|_{\CC^\eta}^4)\;.
\end{equ}

It remains to bound the last term in \eqref{e:corr-N-diff}.
We can prove for each $i,j\in \{0,1\}$,
\begin{equs}[e:Jij-Jij]
{}&
\Big| J_{t,t,x,\bar x}^{(i,j)}(r,r) 
-J_{t,\bar t, x,\bar x}^{(i,j)}(r,s)\Big| 
\\
&\lesssim
C_\xi (|t-\bar t|+|r-s|)^\kappa (t\vee \bar t)^{2\bar\delta-\beta-\frac32+\eta-\kappa} |x-\bar x|^{2\beta}
|\Psi_0|_{\CC^\eta}^2\;.
\end{equs}
Indeed, if $\bar t\notin (\frac{t}{2},2t)$,
we bound the two terms on the left-hand side of \eqref{e:Jij-Jij} separately,
and by the first inequality in \eqref{e:PCP} (with $\alpha=\beta+\frac12$)
and $|\CP_{t+r}^{(i)}  \Psi_0|_\infty
 \lesssim t^{\frac{\eta-|i|}{2}} |\Psi_0|_{\CC^\eta}$,
we have
\[
|J_{t,\bar t, x,\bar x}^{(i,j)}(r,s)|
\lesssim
C_\xi  (t \bar t)^{\bar\delta-\frac{\beta}{2}-\frac34+\frac{\eta}{2}} |x-\bar x|^{2\beta}
|\Psi_0|_{\CC^\eta}^2\;.
\]
This is then bounded by the right-hand side of \eqref{e:Jij-Jij} since 
$\bar\delta-\frac{\beta}{2}-\frac34+\frac{\eta}{2}>0$ by~\eqref{eq:CI'}
and $(t\vee \bar t)^\kappa \lesssim |t-\bar t|^\kappa$ by our assumption on $t,\bar t$.
The other term $J_{t,t,x,\bar x}^{(i,j)}(r,r) $ is bounded in the same way.
On the other hand,
if $\bar t \in (\frac{t}{2},2t)$,
we bound the left-hand side of \eqref{e:Jij-Jij} by obvious telescoping.
Using
 \eqref{e:PCP} (with  $\alpha=\beta+\frac12$ or $\alpha=\beta+\frac12+\kappa$),
$|\CP_{t+r}^{(i)}  \Psi_0|_\infty
 \lesssim t^{\frac{\eta-|i|}{2}} |\Psi_0|_{\CC^\eta}$,
 and
\begin{equ}
 |\CP_{t+r}^{(i)} \Psi_0-\CP_{t+s}^{(i)} \Psi_0|_\infty
 \lesssim 
 |r-s|^\kappa t^{\frac{\eta-|i|}{2}-\kappa} |\Psi_0|_{\CC^\eta} \;,
\end{equ}
we obtain a bound by
\[
C_\xi (|t-\bar t|+|r-s|)^\kappa
\, t^{q} \, \bar t^{\bar q} |x-\bar x|^{2\beta}
|\Psi_0|_{\CC^\eta}^2\;,
\]
where $q,\bar q\in \R$ are such that 
$q+\bar q=2\bar\delta-\beta-\frac32+\eta-\kappa$.
By our assumption on $t,\bar t$, one has $t^{q} \, \bar t^{\bar q} \lesssim (t\vee \bar t)^{q+\bar q}$,
so we obtain  \eqref{e:Jij-Jij}.
Here we recall again that $\eta-\beta+2\bar\delta-3/2>0$ 
by~\eqref{eq:CI'}, and we assume $\kappa<
\eta-\beta+2\delta-3/2$.
So one has $(t\vee \bar t)^{2\bar\delta-\beta-\frac32+\eta-\kappa} \le 1$. 
 The same bound holds with $r,s$ and $t,\bar t$ swapped.
So the last line of \eqref{e:corr-N-diff} is bounded by 
\[
C_\xi (|t-\bar t|+|r-s|)^\kappa  |x-\bar x|^{2\beta}
|\Psi_0|_{\CC^\eta}^2\;,
\]
The integration of the last line of \eqref{e:corr-N-diff}
against test functions $\phi^\lambda(x)\phi^\lambda(\bar x)$
is then bounded by $C_\xi$ times \eqref{e:mom-2nd}.
The lemma
then follows by collecting the above bounds
and using the equivalence of Gaussian moments.
\end{proof}

\begin{proof}[or Proposition~\ref{prop:SHE_Nt_estimates}]
It is standard to show that
for all $\bar\eta \in(-\frac23, \eta)$,
$0<\kappa <\frac{\eta-\bar\eta}{2}$,
$p\geq 1$ and $T > 0$
\begin{equ}
\E\Big[ \sup_{0 \leq s < r \leq T} 
\frac{|\Psi(r)-\Psi(s)|_{\CC^{\bar\eta}}^p}{|r-s|^{p\kappa}}\Big]^{1/p}
 \lesssim 
|\Psi_0|_{\CC^\eta} + C_\xi^{1/2}\;,
\end{equ}
where the proportionality constant depends only on $p,\eta,\bar\eta,\kappa,T$.
Recalling Def.~\ref{def:init},
it remains to bound the moments of, for a suitable modificaiton of $\Psi$ and for 
$\delta<\bar\delta< 1$
and $\kappa>0$ sufficiently small,
\begin{equ}[e:sup-sup]
 \sup_{0 \leq s < r \leq T} 
\frac{\fancynorm{\Psi(r);\Psi(s)}_{\beta,\bar\delta}}{|r-s|^{\kappa}}
=
\sup_{t\in (0,1)}  \sup_{0 \leq s < r \leq T} 
\frac{
 |t^{\bar\delta}(\CN_t \Psi_r - \CN_t \Psi_s)|_{\CC^\beta}
}{|r-s|^{\kappa}}\;.
\end{equ}
By Lemma~\ref{lem:mom-Psi-diff} below
 and the classical Kolmogorov continuity theorem over $(0,1)\times [0,T]$, one has, for every $p\in [1,\infty)$ and $\kappa\in (0,(\bar\delta-\delta)\wedge
 \frac{\eta-\beta+2\delta-3/2}{2})$,
\begin{equ}
\bigg(\E\Big|\sup_{\substack{t,\bar t\in (0,1),s,r\in[0,T] \\ (t,r)\neq (\bar t,s)}}
\frac{
 |t^{\bar\delta} \CN_t \Psi_r - \bar t^{\bar\delta}  \CN_{\bar t} \Psi_s |_{\CC^{\bar\beta}}
}{(|t-\bar t|+|r-s|)^{\kappa}} \Big|^{2p}
\bigg)^{1/p}
\lesssim \fancynorm{\Psi(0)}_{\beta,\delta}^2
+|\Psi(0)|_{\CC^\eta}^4
+C_{\xi}^{2}
\end{equ}
where the proportionality constant depends only on $p,\eta,\bar\eta,\beta,\delta,\bar\delta,\kappa,T$,
which in particular bounds the moments of \eqref{e:sup-sup} by restricting to $t=\bar t$
and proves~\eqref{eq:SHE_Theta_Holder}.

To prove~\eqref{eq:SHE_Theta_zero}, writing $\Psi_r = \CP_r\Psi_0 + \tilde\Psi_r$ as in the proof of Lemma~\ref{lem:mom-Psi-diff}, we have
\begin{equ}
\sup_{r\in[0,T]}r^{-\kappa}|\Psi_r - \CP_r\Psi_0|_{\CC^\eta} = \sup_{r\in[0,T]}r^{-\kappa}|\tilde\Psi_r|_{\CC^\eta} \leq |\tilde\Psi|_{\CC^\kappa([0,T],\CC^\eta)}
\end{equ}
which has finite moments of all orders for $\kappa>0$ sufficiently small.
Moreover
\begin{equs}[eq:PrPsi0]
\fancynorm{\CP_r\Psi_0+\tilde\Psi_r;\CP_r\Psi_0}_{\beta,\delta}
&\leq
\sup_{t\in (0,1)}t^{\delta}
\big(
|\CP_t\tilde\Psi_r\otimes \nabla \CP_{t+r} \Psi_0|_{\CC^\beta}
\\
&\quad+
|\CP_{t+r}\Psi_0\otimes \nabla \CP_{t} \tilde\Psi_r|_{\CC^\beta}
+
|\CP_t\tilde\Psi_r\otimes \nabla \CP_{t} \tilde\Psi_r|_{\CC^\beta}
\big)
\end{equs}
By Lemma~\ref{lem:prob-obs}, Kolmogorov's theorem, and equivalence of Gaussian moments,
we have
\begin{equ}
\Big(
\E \Big|
\sup_{0\leq s<r\leq T} |s-r|^{-\kappa} |\CP_t\tilde\Psi_r\otimes \nabla \CP_{t} \tilde\Psi_r-\CP_t\tilde\Psi_s\otimes \nabla \CP_{t} \tilde\Psi_s|
\Big|^p
\Big)^{1/p}
\lesssim
C_\xi\;.
\end{equ}
Furthermore, by~\eqref{e:Jij-Jij},
\begin{multline*}
\Big(
\E \Big|
\sup_{0\leq s<r\leq T} |s-r|^{-\kappa} |\CP_t\tilde\Psi_r\otimes \nabla \CP_{t+r} \Psi_0-\CP_t\tilde\Psi_s\otimes \nabla \CP_{t+s} \Psi_0|
\Big|^p
\Big)^{1/p}
\\
\lesssim
C_\xi^{1/2} |\Psi_0|_{\CC^\eta}
\end{multline*}
and likewise with $\CP_t\tilde\Psi_r\otimes \nabla \CP_{t+r} \Psi_0$ replaced by $\CP_{t+r}\Psi_0\otimes \nabla \CP_{t} \tilde\Psi_r$.
Restricting to $s=0$ and combining with~\eqref{eq:PrPsi0} concludes the proof of~\eqref{eq:SHE_Theta_zero}.

For the final claim that $\Psi$ is in $\CC([0,T],\init)$, remark that $[0,T]\ni r\mapsto \Psi_r \in \init$ is continuous at $0$ due to~\eqref{eq:SHE_Theta_zero} and due to continuity of $[0,T]\ni r\mapsto \CP_r\Psi_0 \in \init$ (Proposition~\ref{prop:Theta_cont_zero}\ref{pt:heat_Theta}).
Continuity at $r>0$ follows from the fact that $\tilde\Psi \in \CC([0,T],\init)$ due to~\eqref{eq:SHE_Theta_Holder}
and from Lemma~\ref{lem:perturbation}\ref{pt:perturbation}
once we write $\Psi_r = \tilde\Psi_r + \CP_r\Psi_0$ where $\CP\Psi_0\in \CC((0,T],L^\infty)$.
\end{proof}

\subsection{The second half}
\label{subsec:2nd-half}

Throughout this subsection, let $\xi$ be
an $\mfg^3$-valued
Gaussian random distribution on $\R\times \T^3$.
Suppose there exists $C_\xi > 0$ such that
\begin{equ}\label{eq:covariance_bound}
\E[|\scal{\xi,\phi}|^2] \leq C_\xi|\phi|_{L^2(\R\times\T^3)}^2
\end{equ}
for all smooth compactly supported $\phi \colon \R\times\T^3\to \mfg^3$.
Let $\Psi$ solve $(\partial_t - \Delta) \Psi = \xi$ on $\R_+ \times \T^3$ with initial condition $\Psi(0)\in\CD'(\T^3,\mfg^3)$.

\begin{proposition}\label{prop:SHE_heatgr_estimates}
Let $0 < \bar\alpha < \alpha < \frac12$,
$\theta\in(0,1]$,
and
$0<\kappa <\theta\min\{\alpha-\bar\alpha,\frac{1-2\bar\alpha}{4}\}$.
Then for all $p\geq 1$ and $T > 0$
\begin{equ}
\E\Big[ \sup_{0 \leq s < t \leq T} \frac{\heatgr{\Psi(t)-\Psi(s)}_{\bar\alpha,\theta}^p}{|t-s|^{p\kappa}}\Big]^{1/p} \lesssim \heatgr{\Psi(0)}_{\alpha,\theta} + C_\xi^{1/2}\;,
\end{equ}
where the proportionality constant depends only on $p,\alpha,\bar\alpha,\theta,\kappa,T$.
\end{proposition}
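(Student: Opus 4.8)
The plan is to mirror the structure of Section~\ref{subsec:1st-half}: prove a second-moment bound for $\heatgr{\Psi(r)-\Psi(s)}_{\bar\alpha,\theta}$ with a factor $|r-s|^{\kappa}$, then invoke equivalence of Gaussian moments and the Kolmogorov continuity theorem to upgrade to the $L^p$-supremum statement. As in the proof of Proposition~\ref{prop:SHE_Nt_estimates}, write $\Psi_s = \tilde\Psi_s + \CP_s\Psi(0)$ where $\tilde\Psi$ solves SHE with zero initial condition. The deterministic part is handled directly by Lemma~\ref{lem:heatgr_heat_flow}: for $0\le s<r\le T$, $\heatgr{\CP_r\Psi(0)-\CP_s\Psi(0)}_{\bar\alpha,\theta} = \heatgr{\CP_{r-s}(\CP_s\Psi(0))-\CP_s\Psi(0)}_{\bar\alpha,\theta}\lesssim (r-s)^{\tilde\kappa\theta(\alpha-\bar\alpha)}\heatgr{\CP_s\Psi(0)}_{\alpha,\theta}\le (r-s)^{\tilde\kappa\theta(\alpha-\bar\alpha)}\heatgr{\Psi(0)}_{\alpha,\theta}$, using Lemma~\ref{lem:CP_contraction}\ref{pt:heatgr_contraction}; choosing $\tilde\kappa$ close to $1$ absorbs this into $(r-s)^{\kappa/\theta}$ since $\kappa<\theta(\alpha-\bar\alpha)$. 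So the real content is the stochastic part $\tilde\Psi$.

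For $\tilde\Psi$, the key observation is that $\heatgr{\cdot}_{\bar\alpha,\theta}$ is a supremum over $t\in(0,1)$ of $|\CP_t\tilde\Psi(r)-\CP_t\tilde\Psi(s)|_{\gr{\bar\alpha};<t^\theta}$, and each such line integral $A(\ell)$ for a segment $\ell=(x,v)$ of length $|\ell|<t^\theta$ is a Gaussian random variable. The natural route is to apply a Kolmogorov-type argument \emph{in three variables simultaneously}: the time $t\in(0,1)$ at which we evaluate the heat flow, the base point $x$, and the direction $v$ (or, better, to bound the $\gr{\bar\alpha}$-seminorm of a fixed $\CP_t(\tilde\Psi(r)-\tilde\Psi(s))$ via a Kolmogorov/GRR estimate on line increments as in~\cite[Ex.~4.24]{FrizHairer} or~\cite[Prop.~3.21]{Chevyrev19YM}, and then a further Kolmogorov argument over $t$). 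The engine is a second-moment bound: for $\ell$ a segment of length $|\ell|\le t^\theta$,
\begin{equ}
\E\big|\big(\CP_t(\tilde\Psi(r)-\tilde\Psi(s))\big)(\ell)\big|^2 \lesssim C_\xi\,|r-s|^{2\kappa'}\,|\ell|^{2\bar\alpha}\,t^{-\text{(something)}}\;,
\end{equ}
and I would track the $t$-power carefully to make sure, after integrating $|\ell|\le t^\theta$, the prefactor stays bounded uniformly in $t\in(0,1)$. Concretely, the covariance $C_{r,s}(x)=\E(\tilde\Psi(r,0)\otimes\tilde\Psi(s,x))$ satisfies $|C_{r,s}(x)|\lesssim C_\xi|x|^{-1}$ and $|(C_{r,r}-C_{r,s})(x)|\lesssim C_\xi|r-s|^{1/2}|x|^{-2}$ exactly as in~\eqref{eq:C_bounds}; interpolating gives $|x|^{-1-2\tilde\kappa}|r-s|^{\tilde\kappa}$ for $\tilde\kappa\in(0,1/2]$. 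Smoothing twice by $\CP_t$ via Lemma~\ref{lem:CPf} (with $\gamma=1$ resp.\ $\gamma=1+2\tilde\kappa$ and a small $\alpha$-parameter $\mu\in(0,\bar\alpha+\dots)$) yields pointwise bounds on $\CP_t\hstar C_{r,s}\hstar\CP_t$, and then integrating against $v\cdot(\text{segment})\times v\cdot(\text{segment})$ over two segments of length $\le t^\theta$ produces the $|\ell|^{2\bar\alpha}$-type decay — here the constraint $\bar\alpha<\frac12$ is what makes the double line-integral of $|x-\bar x|^{2\mu-1-2\tilde\kappa}$ converge with the right power, and $\kappa<\theta\frac{1-2\bar\alpha}{4}$ is precisely what is needed to keep the $t$-exponent non-negative (or at worst integrable) after choosing $\mu$ optimally.

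The main obstacle I anticipate is the bookkeeping of the $t$-powers: unlike the $\CN$-estimate in Lemma~\ref{lem:prob-obs}, here the test object is a line integral rather than a bump function at scale $\lambda$, and the supremum over $t\in(0,1)$ of $|\CP_t(\cdot)|_{\gr{\bar\alpha};<t^\theta}$ couples the spatial scale $t^\theta$ to the smoothing scale $t$, so one does not get a clean separation. I would handle this by using Lemma~\ref{lem:local_lower_exponent} (to trade $\bar\alpha$ for a slightly larger exponent at the cost of a positive power of $t^\theta$) and Lemma~\ref{lem:diff_times}/Lemma~\ref{lem:local_to_global_gr} to convert between local and global $\gr{}$-seminorms, exactly paralleling how Lemma~\ref{lem:heatgr_Besov_embed} was proved; the slack in the strict inequalities $\bar\alpha<\alpha<\frac12$ and $\kappa<\theta\min\{\alpha-\bar\alpha,\frac{1-2\bar\alpha}{4}\}$ gives exactly enough room. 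A secondary technical point is that $\Psi$ is $\mfg^3$-valued (not scalar), but since the noise covariance $\Cas^{\oplus3}$ is a fixed finite-dimensional tensor this only changes constants. Once the two-variable (in $t$ and in the line) Kolmogorov estimate is in place, the time increment $|r-s|^\kappa$ comes along automatically from the interpolated covariance bound, and Kolmogorov's theorem over $(t,r,s)$ (with a standard chaining/Garsia argument to pass from dyadic $t$ to all $t$, or simply by first taking the sup over $t$ inside the moment via a union bound over dyadic scales and summing the geometric series, which converges thanks to the strict inequality $\kappa<\theta(\alpha-\bar\alpha)$) finishes the proof.
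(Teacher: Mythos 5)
Your overall architecture (deterministic/stochastic splitting, a second-moment bound upgraded by Gaussian hypercontractivity and Kolmogorov, and a dyadic union bound in $t$ using the slack $\bar\alpha<\alpha$) is the right one, and your treatment of the deterministic part via Lemmas~\ref{lem:CP_contraction} and~\ref{lem:heatgr_heat_flow} is correct. However, there is a genuine gap in the core of the argument: you only ever produce second moments of \emph{individual} line integrals, $\E|A(\ell)|^2\lesssim C|\ell|^{2\bar\alpha}\cdots$, and such bounds alone cannot control $\sup_{\ell}|A(\ell)|/|\ell|^{\bar\alpha}$. The set of segments is six-dimensional, and a pointwise moment bound gives no continuity of $\ell\mapsto A(\ell)$, so no chaining argument closes. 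To run Kolmogorov on $\CX$ one needs $\E|A(\ell)-A(\bar\ell)|^p\lesssim\rho(\ell,\bar\ell)^{p\bar\alpha}$ for an appropriate metric $\rho$, and by additivity the increment $A(\ell)-A(\bar\ell)$ decomposes into two short chords \emph{plus two closed triangle boundaries}. The triangle contributions require a separate estimate $\E|A(\partial P)|^2\lesssim C|P|^{\bar\alpha}\cdots$, which in the paper is obtained via Stokes' theorem, i.e.\ by testing the curl $\partial_1\Psi_2-\partial_2\Psi_1$ against the surface measure $\delta_P$ and bounding $|e^{t\Delta}\delta_P|_{H^\kappa}$ (Lemmas~\ref{lem:delta_P_Sobolev},~\ref{lem:SHE_ell_P_bound}, and the two-parameter Kolmogorov criterion of Lemma~\ref{lem:Kolmogorov_fixed_time}). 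The references you cite (\cite[Ex.~4.24]{FrizHairer}, \cite[Prop.~3.21]{Chevyrev19YM}) are the local-to-global and Besov-embedding statements, not a Kolmogorov criterion, so they do not supply this missing ingredient. This area estimate is the main technical content of the proof and cannot be waved through.

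A secondary, non-fatal difference: you propose to extract the factor $|t-s|^{\kappa}$ by interpolating the real-space covariance $C_{r,r}-C_{r,s}$ as in Lemma~\ref{lem:prob-obs}. The paper instead writes $\Psi(t)-\Psi(s)=(e^{(t-s)\Delta}-1)e^{s\Delta}\Psi(0)+(e^{(t-s)\Delta}-1)\tilde\Psi(s)+\hat\Psi(t)$ with $\hat\Psi$ driven only by the noise on $[s,t]$, so the time increment comes from the smoothing estimate of Lemma~\ref{lem:heatgr_heat_flow} applied to $(e^{(t-s)\Delta}-1)$ together with a one-time moment bound (Lemma~\ref{lem:SHE-heatgr-mom}) whose $s^{p\kappa/2}$ prefactor makes the new-noise term small; the single-time bound itself goes through the It\^o isometry and Sobolev norms of $\delta_\ell$, $\delta_P$ rather than the kernel $C_{r,s}(x)$. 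Your route could in principle be made to work, but it forces you to redo the line \emph{and} triangle moment computations for the difference field with the more singular interpolated kernel $|x|^{-1-2\tilde\kappa}$ (and its second derivatives for the Stokes step), which is strictly harder than the paper's factorisation. Until the increment/triangle estimates are supplied, the proof is incomplete.
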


We give the proof at the end of this subsection
after several preliminary results.
Recall the space of line segments $\mcX$
defined by~\eqref{eq:def_mcX}.
For a line $\ell = (x,v)\in \mcX$,
define the line integral along $\ell$ as the distribution
\begin{equ}
\scal{\delta_\ell,\psi} \eqdef \int_0^1 |v|\psi(x+tv) \,\mrd t\;.
\end{equ}

\begin{lemma}\label{lem:delta_ell_Sobolev}
For all $\kappa\in(\frac12,1)$,
there exists $C>0$ such that for all $t\in(0,1)$ and $\ell\in\CX$ 
\begin{equ}
|e^{t\Delta}\delta_\ell|_{H^{-\kappa}}^2 \leq  C t^{2\kappa-2}|\ell|\;.
\end{equ}
\end{lemma}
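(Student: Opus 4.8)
\textbf{Proof plan for Lemma~\ref{lem:delta_ell_Sobolev}.}

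The plan is to reduce the statement to an explicit Fourier-side computation. Recall that on $\T^3$ the Sobolev norm $|f|_{H^{-\kappa}}^2 \asymp \sum_{k\in\Z^3} (1+|k|^2)^{-\kappa} |\hat f(k)|^2$, and that the heat semigroup acts diagonally, $\widehat{e^{t\Delta}f}(k) = e^{-t|k|^2}\hat f(k)$ (with the usual convention on the Laplacian eigenvalues $-4\pi^2|k|^2$, which only affects constants). So I would first write
\begin{equ}
|e^{t\Delta}\delta_\ell|_{H^{-\kappa}}^2 \asymp \sum_{k\in\Z^3} \frac{e^{-2t|k|^2}}{(1+|k|^2)^{\kappa}} |\hat\delta_\ell(k)|^2\;.
\end{equ}
For $\ell=(x,v)$ with $|v|\le\frac14$, the Fourier coefficient is $\hat\delta_\ell(k) = |v|\int_0^1 e^{-2\pi i k\cdot(x+tv)}\,\mrd t$, so $|\hat\delta_\ell(k)| \le |v| \min\{1, (2\pi |k\cdot v|)^{-1}\cdot c\}$ — i.e.\ it is bounded by $|v|$ always, and by $|\ell|/|k\cdot v|$ up to a constant when $|k\cdot v|\ne 0$. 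The key point is the trivial uniform bound $|\hat\delta_\ell(k)|\le |v| = |\ell|$.

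The main step is then a scaling estimate on the resulting sum. Using $|\hat\delta_\ell(k)|^2 \le |\ell|^2$ I get
\begin{equ}
|e^{t\Delta}\delta_\ell|_{H^{-\kappa}}^2 \lesssim |\ell|^2 \sum_{k\in\Z^3} \frac{e^{-2t|k|^2}}{(1+|k|^2)^{\kappa}}\;.
\end{equ}
Since $\kappa<1$ and we are in dimension $3$, the sum without the exponential cutoff diverges like $\sum |k|^{-2\kappa}$ (which converges only for $\kappa>3/2$), so the cutoff is essential: comparing the sum with the integral $\int_{\R^3} e^{-2t|\zeta|^2}(1+|\zeta|^2)^{-\kappa}\,\mrd\zeta$ and substituting $\zeta = t^{-1/2}\eta$ gives a factor $t^{-3/2}$ from the volume element and, for the region $|\eta|\gtrsim 1$ where the weight behaves like $t^{\kappa}|\eta|^{-2\kappa}$, a convergent integral $\int e^{-2|\eta|^2}|\eta|^{-2\kappa}\,\mrd\eta$ (finite since $2\kappa<2<3$), yielding $t^{\kappa-3/2}$; the low-frequency region $|\eta|\lesssim 1$, i.e.\ $|\zeta|\lesssim t^{-1/2}$, contributes at most $\int_{|\zeta|\le t^{-1/2}} 1 \,\mrd\zeta \asymp t^{-3/2}$, which is the dominant term. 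Hence $\sum_{k} e^{-2t|k|^2}(1+|k|^2)^{-\kappa} \lesssim t^{-3/2}$ for $t\in(0,1)$.

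Combining, $|e^{t\Delta}\delta_\ell|_{H^{-\kappa}}^2 \lesssim |\ell|^2 t^{-3/2}$. This is \emph{not quite} the claimed bound $t^{2\kappa-2}|\ell|$: the claim has a better power of $t$ (since $2\kappa-2 > -3/2$ exactly when $\kappa>1/4$, and the intended application has $\kappa\in(\frac12,1)$) but only a single power of $|\ell|$. So the real work — and the main obstacle — is to interpolate between the crude bound above and a second bound that trades length for time-singularity: namely one should also exploit $|\hat\delta_\ell(k)|\lesssim |\ell|/(|k\cdot v|)$ for the bulk of frequencies, or more cleanly estimate $|e^{t\Delta}\delta_\ell|_{H^{-\kappa}}$ via the heat kernel in physical space, $e^{t\Delta}\delta_\ell(y) = \int_0^1 |v|\, p_t(y - x - sv)\,\mrd s$ with $p_t$ the (periodised) Gaussian, and pair against an $H^{\kappa}$ test function $\psi$: $|\scal{e^{t\Delta}\delta_\ell,\psi}| = |\scal{\delta_\ell, e^{t\Delta}\psi}| \le |\ell|\, \|e^{t\Delta}\psi\|_{L^\infty} \lesssim |\ell|\, t^{-(3/2-\kappa)/?}\|\psi\|_{H^\kappa}$ — but $H^\kappa\hookrightarrow L^\infty$ fails for $\kappa<3/2$, so instead one uses $\|e^{t\Delta}\psi\|_{L^\infty}\lesssim t^{-(3-2\kappa)/4}\|\psi\|_{H^\kappa}$ (valid for $\kappa\in(0,3/2)$ by writing $e^{t\Delta}\psi = (e^{t\Delta}(1-\Delta)^{\kappa/2})\,(1-\Delta)^{-\kappa/2}\psi$ and using $\|e^{t\Delta}(1-\Delta)^{\kappa/2}\|_{L^2\to L^\infty}\lesssim t^{-(3/2)/2 - \kappa/2}$... ) — I would double-check the exact exponent, but the upshot is a bound of the form $|e^{t\Delta}\delta_\ell|_{H^{-\kappa}} \lesssim |\ell|\, t^{-(3-2\kappa)/4}$, i.e.\ $|e^{t\Delta}\delta_\ell|_{H^{-\kappa}}^2 \lesssim |\ell|^2 t^{\kappa - 3/2}$. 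To then get the single power $|\ell|$ with the power $t^{2\kappa-2}$, interpolate this against the bound obtained from testing with the constant part of $\psi$ plus using that the line integral of length $|\ell|$ against a function bounded in $H^\kappa \subset C^{\kappa-3/2}$... Actually the cleanest route to the single power of $|\ell|$: for $|\ell| \le \sqrt t$ use $|e^{t\Delta}\delta_\ell|_{H^{-\kappa}}\lesssim |\ell|\cdot(\text{something})$ and for $|\ell|\ge\sqrt t$ note $|\ell|^2 \le |\ell|\cdot 1$ is the wrong direction — so instead I expect the honest proof interpolates $\|e^{t\Delta}\psi\|_{L^\infty}\lesssim t^{-3/4}\|\psi\|_{L^2}$ with $\|e^{t\Delta}\psi\|_{\mathrm{Lip}}\lesssim t^{-3/4 - 1/2}\|\psi\|_{L^2}$ applied to $\psi = (1-\Delta)^{-\kappa/2}\tilde\psi$, getting $\|e^{t\Delta}\psi\|_{C^{2-2\kappa}}$-type control, and then bounds $|\scal{\delta_\ell, e^{t\Delta}\psi}|$ by $|\ell|\cdot|e^{t\Delta}\psi|_\infty$ where crucially $2-2\kappa$ Hölder regularity lets one also write, for short lines, $|\scal{\delta_\ell,e^{t\Delta}\psi}| \lesssim |\ell|^{1 + (2-2\kappa)\wedge 1} \cdot (\ldots)$; optimizing the two regimes $|\ell|\lessgtr t^\theta$-style gives the stated $t^{2\kappa-2}|\ell|$. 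The one genuinely delicate point I would be careful about is getting the Gaussian-derivative exponents and the $L^2\to L^\infty$/Lipschitz heat-kernel bounds exactly right so that the two regimes glue to precisely $t^{2\kappa-2}|\ell|^1$; everything else is routine.
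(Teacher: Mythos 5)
There is a genuine gap: you assemble all the right ingredients but never combine them, and the final paragraph of your proposal is a sequence of incomplete alternative strategies with admitted uncertainty about the exponents. The paper's proof is precisely the Fourier computation you begin with, but the step you skip is the decisive one. After reducing to $\ell=(0,re_1)$ one has $|\hat\delta_\ell(k)|^2\lesssim r^2\wedge k_1^{-2}$ (the bound you write down as ``$|\ell|/|k\cdot v|$'' but then abandon in favour of the trivial bound $|\hat\delta_\ell(k)|\le|\ell|$). The point is then an \emph{anisotropic factorisation} of the sum: discard the high frequencies $|k|\ge t^{-1}$ (where $e^{-2t|k|^2}\le e^{-2|k|}$ is summable and contributes only $O(r^2)\le O(rt^{2\kappa-2})$), bound the weight by $|k|^{-2\kappa}\le(k_2^2+k_3^2)^{-\kappa}$ on the remaining range, and factor the sum as $\bigl(\sum_{k_1}(r^2\wedge k_1^{-2})\bigr)\cdot\bigl(\sum_{0<|(k_2,k_3)|<t^{-1}}(k_2^2+k_3^2)^{-\kappa}\bigr)$. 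The longitudinal sum is $\asymp r$ (split at $|k_1|=r^{-1}$) — this is where the single power of $|\ell|$ comes from — and the transverse sum is a \emph{two-dimensional} lattice sum $\asymp\int_1^{t^{-1}}\rho^{1-2\kappa}\,\mrd\rho\asymp t^{2\kappa-2}$, which is where $\kappa<1$ enters and where the power $t^{2\kappa-2}$ comes from. No interpolation between two regimes of $|\ell|$ versus $t$ is needed.

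Your crude bound $|\ell|^2t^{-3/2}$ is correct but, as you note, not the statement; and the physical-space detour (heat-kernel $L^2\to L^\infty$ and Lipschitz bounds, H\"older pairing, optimisation over $|\ell|\lessgtr t^\theta$) is not carried to completion and would in any case be a roundabout way of recovering what the one-line factorisation above gives directly. As written, the proposal does not constitute a proof.
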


\begin{proof}
By translation and rotation\footnote{There is a bijection between distributions on $\T^d$ supported in a ball centered
at zero of radius $\frac14$ and
distributions on $\R^d$ (where Sobolev norms are rotation invariant) with the same property,
and the corresponding Sobolev norms on $\T^d$ and $\R^d$ are equivalent.}
invariance, we can assume $\ell=(0,r e_1)$.
Then 
for $k = (k_1,k_2,k_3) \in \Z^3$ with $k\neq 0$
\begin{equ}
\hat\delta_\ell(k)\eqdef \scal{\delta_\ell, e^{2\pi i \scal{k,\cdot}}}= \frac{e^{2\pi irk_1}-1}{2\pi i k_1}\;.
\end{equ}
Therefore
\begin{equs}
|e^{t\Delta}\delta_\ell|_{H^{-\kappa}}^2
&\asymp r^2 + \sum_{k\in\Z^3\setminus\{0\}} e^{-2t|k|^2}|\hat\delta_\ell(k)|^2 |k|^{-2\kappa}
\\
&\lesssim r^2 + \sum_{k\in\Z^3\setminus\{0\}} e^{-2t|k|^2}(r^2 \wedge k_1^{-2}) |k|^{-2\kappa}\;.
\end{equs}
If $|k|\geq t^{-1}$, then $e^{-2t|k|^2}\leq e^{-2|k|}$, which is summable so that
these values of $k$ make a contribution to the sum of order at most $r^2$ which in
turn is bounded by $rt^{2\kappa-2}$ since $r,t \le 1$.
For the rest of the sum we simply bound the exponential by $1$, so that is bounded above by
\begin{equ}
\sum_{\substack{k\in\Z^3\\0<|k|<t^{-1}}} (r^2 \wedge k_1^{-2}) (1\wedge |k_2^2+k_3^2|^{-\kappa})
\lesssim
\sum_{\substack{k\in\Z\\0<k<t^{-1}}} (r^2 \wedge k^{-2}) t^{2\kappa-2} \lesssim rt^{2\kappa-2}\;,
\end{equ}
as required. Here the second inequality holds since
when $r\le t$, the sum of
$r^2 \wedge k^{-2}$ over $0<k<t^{-1}$
is bounded by $r^2 t^{-1}\le r$,
and when $r> t$,
this sum is bounded by 
$\sum_{0<k\le r^{-1}} r^2 $ plus 
$\sum_{r^{-1}<k<t^{-1}} k^{-2}$ which is bounded by $r+(r-t)\lesssim r$.
The first inequality follows by splitting the sum into the regime $|k_1|\ge  |k_2|+|k_3|$ where we first sum
over $k_2,k_3$ and use $|k_1|\asymp |k|$,
and the regime $|k_1|\le |k_2|+|k_3|$ where we first sum over $k_1$ and use $|k_2|+|k_3|\asymp |k|$.
\end{proof}

For a triangle $P=(x,v,w)$, $x\in \T^3$, $v,w\in\R^3$ with $|v|,|w|\leq \frac14$,
define the surface integral along $P$ as the distribution
\begin{equ}
\scal{\delta_P,\psi} \eqdef \int_0^1\mrd s\int_{0}^{1-s}\mrd t |P|\psi(x+sv+tw) \;.
\end{equ}

\begin{lemma}\label{lem:delta_P_Sobolev}
For all $\kappa>0$, $t\in(0,1)$, and triangles $P$
\begin{equ}
|e^{t\Delta}\delta_P|_{H^\kappa}^2
\lesssim
t^{-\kappa}
|e^{\frac t2\Delta}\delta_P|_{L^2}^2 \lesssim |P|t^{-\frac12-\kappa}\;,
\end{equ}
where the first proportionality constant depends on $\kappa$ and the second is universal.
\end{lemma}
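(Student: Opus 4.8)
\textbf{Proof plan for Lemma~\ref{lem:delta_P_Sobolev}.}

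The plan is to prove the second inequality first, namely the $L^2$ bound $|e^{\frac t2\Delta}\delta_P|_{L^2}^2\lesssim |P|t^{-1/2}$, and then deduce the first from a standard smoothing estimate for the heat semigroup. For the $L^2$ bound I would argue as in Lemma~\ref{lem:delta_ell_Sobolev} but one ``dimension up'': by translation/rotation invariance of the relevant Sobolev norms (using the same footnote correspondence between distributions supported in a small ball of $\T^3$ and of $\R^3$) one may assume the triangle $P$ lies in the plane spanned by $e_1,e_2$ with a vertex at the origin. Then $\delta_P$ is (up to the normalisation by $|P|$, the area) the pushforward of Lebesgue measure on the planar triangle, so its Fourier coefficients $\hat\delta_P(k)$ only depend on $(k_1,k_2)$ and are bounded by $\min\{|P|,\;C|(k_1,k_2)|^{-1}\}$ — the first from the trivial bound $|\hat\delta_P(k)|\le \int_P 1 = |P|$, the second from one integration by parts in the planar directions (the boundary terms are themselves oscillatory line integrals, bounded by $|(k_1,k_2)|^{-1}$ times the side lengths, which are $\lesssim 1$). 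Hence
\begin{equ}
|e^{\frac t2\Delta}\delta_P|_{L^2}^2 \asymp \sum_{k\in\Z^3} e^{-t|k|^2}|\hat\delta_P(k)|^2 \lesssim \sum_{k\in\Z^3} e^{-t|k|^2}\big(|P|^2\wedge |(k_1,k_2)|^{-2}\big)\;.
\end{equ}
The sum over $k_3$ contributes a factor $\lesssim t^{-1/2}$ (Gaussian sum), and the remaining sum over $(k_1,k_2)\in\Z^2$ of $|P|^2\wedge|(k_1,k_2)|^{-2}$ is $\lesssim |P|$ exactly as in the one-dimensional computation in Lemma~\ref{lem:delta_ell_Sobolev} (split at $|(k_1,k_2)|\sim |P|^{-1}$: the near part is $\lesssim |P|^2\cdot|P|^{-2}\cdot$(number of lattice points)$\sim$ bounded, wait — more carefully, $\sum_{|(k_1,k_2)|\le |P|^{-1}}|P|^2 \lesssim |P|^2\cdot|P|^{-2}=1$ is too crude; instead one uses $\sum_{R\le|(k_1,k_2)|<2R}(|P|^2\wedge R^{-2})$ and sums the geometric series, obtaining $\lesssim |P|$ provided $|P|\le 1$, which holds since $|v|,|w|\le\frac14$). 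This gives the second inequality.

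For the first inequality I would use that for $\kappa\ge 0$ one has the elementary operator bound $\|e^{\frac t2\Delta}\|_{L^2\to H^\kappa}\lesssim t^{-\kappa/2}$, i.e. $|e^{t\Delta}f|_{H^\kappa} = |e^{\frac t2\Delta}(e^{\frac t2\Delta}f)|_{H^\kappa}\lesssim t^{-\kappa/2}|e^{\frac t2\Delta}f|_{L^2}$; applying this with $f=\delta_P$ and taking squares yields $|e^{t\Delta}\delta_P|_{H^\kappa}^2\lesssim t^{-\kappa}|e^{\frac t2\Delta}\delta_P|_{L^2}^2$, and combining with the $L^2$ bound just proved gives $\lesssim |P|\,t^{-\frac12-\kappa}$. (Strictly this produces $t^{-\kappa}$ rather than the $t^{-\kappa}$ stated — these agree; note the half-power splitting $e^{t\Delta}=e^{\frac t2\Delta}e^{\frac t2\Delta}$ is the point.)

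The main obstacle is the Fourier-coefficient bound $|\hat\delta_P(k)|\lesssim |(k_1,k_2)|^{-1}$: one must check that integrating by parts over a triangle in the plane transverse to $k_3$ really does gain one power of $|(k_1,k_2)|$ uniformly in the shape and orientation of the triangle, the subtlety being degenerate or very thin triangles and the direction of $(k_1,k_2)$ relative to the edges. The clean way is to foliate the triangle by line segments parallel to a fixed direction chosen adapted to $(k_1,k_2)$ (e.g. perpendicular to $(k_1,k_2)$ if possible, otherwise along one edge), reducing to the one-dimensional oscillatory-integral estimate $|\int_0^L e^{i\lambda s}\,\mathrm ds|\le 2/\lambda$ already implicit in Lemma~\ref{lem:delta_ell_Sobolev}, and then integrating the resulting bound over the transverse variable, whose range is $\lesssim 1$. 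Everything else is routine: the reduction to $\R^3$ via the footnote, the Gaussian lattice sums, and the semigroup smoothing bound are all standard.
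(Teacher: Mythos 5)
Your overall architecture (prove the $L^2$ bound for $e^{\frac t2\Delta}\delta_P$ first, then apply the smoothing estimate $\|e^{\frac t2\Delta}\|_{L^2\to H^\kappa}\lesssim t^{-\kappa/2}$) is exactly the paper's, and that part is fine. The gap is in the summation of your Fourier bound. You claim $|\hat\delta_P(k)|\le\min\{|P|,C|(k_1,k_2)|^{-1}\}$ and then assert that $\sum_{(k_1,k_2)\in\Z^2}\bigl(|P|^2\wedge|(k_1,k_2)|^{-2}\bigr)\lesssim|P|$. This is false: the dyadic block $R\le|(k_1,k_2)|<2R$ contains $\asymp R^2$ lattice points, so it contributes $\asymp R^2(|P|^2\wedge R^{-2})$, which is $R^2|P|^2$ for $R\le|P|^{-1}$ (summing to $\asymp 1$) and $\asymp 1$ per block for $R>|P|^{-1}$ (summing to a logarithm before the Gaussian cutoff). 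The total is $\gtrsim 1$, not $\lesssim|P|$, and since $|P|\le\tfrac1{32}$ the factor $|P|$ — which is essential for the Kolmogorov argument in Lemma~\ref{lem:Kolmogorov_fixed_time} — is lost. You flag the issue yourself mid-computation (``is too crude'') but the proposed rescue by summing the geometric series does not work. The underlying reason is that a single integration by parts gives only the isotropic decay $|(k_1,k_2)|^{-1}$, which in two transverse dimensions is not square-summable with the right constant; one genuinely needs the anisotropic decay of $\hat\delta_P$ (roughly $|k|^{-2}$ in generic directions, $|k|^{-1}$ only perpendicular to an edge).

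The paper sidesteps this by first reducing to right-angled triangles $P=(0,re_1,he_2)$ and then dominating $\delta_P\le\delta_R$ by the surface measure of the rectangle with sides $re_1,he_2$ (legitimate since $e^{t\Delta}$ preserves positivity). The rectangle's Fourier coefficients factor exactly, giving the tensor-product bound $|\hat\delta_R(k)|^2\lesssim(r^2\wedge k_1^{-2})(h^2\wedge k_2^{-2})$; each one-dimensional sum is then $\asymp r$ and $\asymp h$ respectively (convergent in one dimension, unlike your two-dimensional sum), yielding $rh\,t^{-1/2}=2|P|\,t^{-1/2}$. To repair your argument you would either need to adopt this factorisation device or carry out a genuinely anisotropic stationary-phase analysis of $\hat\delta_P$; the bound you state is not enough.
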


\begin{remark}
The bound $|e^{t\Delta}\delta_P|_{L^2}^2 \lesssim |P|t^{-\frac12}$
is likely optimal.
On the other hand, the bound $|e^{t\Delta}\delta_P|_{H^\kappa}^2\lesssim |P|t^{-\frac12-\kappa}$
is likely suboptimal but suffices for our purposes.
\end{remark}

\begin{proof}
The first bound follows from
the heat flow estimate $|e^{t\Delta}f|_{H^{\kappa}}^2 \lesssim t^{-\kappa}|f|_{L^2}$,
so it remains to show that
$|e^{t\Delta}\delta_P|_{L^2}^2 \lesssim |P|t^{-1/2}$. 
Moreover, it suffices to consider right-angled triangles, in which case we can assume $P=(0,re_1,he_2)$.
Note that $|e^{t\Delta}\delta_P|_{L^2} \leq |e^{t\Delta}\delta_R|_{L^2}$
where $\scal{\delta_R,\psi}\eqdef \int_0^1\mrd s\int_{0}^{1}\mrd t rh\psi(x+sre_1+the_2)$ is the surface integral along the rectangle with sides $re_1$ and $he_2$.
Then 
for $k \in \Z^3$
\begin{equ}
\hat\delta_R(k)\eqdef \scal{\delta_R, e^{2\pi i \scal{k,\cdot}}}=
\Big(
\frac{e^{2\pi irk_1}-1}{2\pi i k_1}
\Big)
\Big(
\frac{e^{2\pi ihk_2}-1}{2\pi i k_2}
\Big)\;,
\end{equ}
and therefore
\begin{equ}
|e^{t\Delta}\delta_R|_{L^2}^2
= \sum_{k\in\Z^3} e^{-2t|k|^2}|\hat\delta_R(k)|^2
\lesssim \sum_{k\in\Z^2} t^{-1/2}(r^2 \wedge k_1^{-2})(h^2\wedge k_2^{-2})\;.
\end{equ}
The sum over $k_1$ splits into $|k_1|\leq r^{-1}$ and $|k_1|>r^{-1}$, each of which are of order 
at most $r$, and likewise for $k_2$, which shows that $|e^{t\Delta}\delta_R|_{L^2}^2\lesssim rht^{-1/2} =2|P|t^{-1/2}$.
\end{proof}

\begin{lemma}\label{lem:SHE_ell_P_bound}
Suppose $\Psi(0)=0$ and let $T>0$.
Then for all $s\in[0,T]$, $t\in(0,1)$, $\kappa\in(0,\frac12)$, and $\ell\in\mcX$
\begin{equ}\label{eq:SHE_ell_bound}
\E[|e^{t\Delta} \Psi(s)(\ell)|^2]
\lesssim C_\xi s^{\kappa} |\ell| t^{-2\kappa}\;,
\end{equ}
and for all $\kappa\in(0,1)$ and triangles $P$
\begin{equ}\label{eq:SHE_P_bound}
\E[|e^{t\Delta} \Psi(s)(\partial P)|^2]
\lesssim C_\xi s^{\kappa}|P|t^{-\frac12-\kappa}\;,
\end{equ}
where $A(\partial P)$ denotes the line
integral of $A\in\Omega\CC^\infty$ around the boundary of $P$ under an arbitrary orientation (see~\cite[Def.~3.10]{CCHS2d})
and where the proportionality constants depend only on $\kappa$ and $T$.
\end{lemma}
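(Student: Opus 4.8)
The plan is to prove the two bounds \eqref{eq:SHE_ell_bound} and \eqref{eq:SHE_P_bound} by combining the stochastic It\^o isometry with the two deterministic heat-flow Sobolev estimates already established in Lemmas~\ref{lem:delta_ell_Sobolev} and~\ref{lem:delta_P_Sobolev}. The key observation is that, since $\Psi$ solves the additive SHE with zero initial condition, $e^{t\Delta}\Psi(s)$ is a centred Gaussian field obtained by convolving $\xi$ against the (spatial and temporal) heat kernel, so that the line integral $e^{t\Delta}\Psi(s)(\ell) = \langle e^{t\Delta}\Psi(s),\delta_\ell\rangle$ is a Gaussian random variable whose variance can be computed explicitly via the covariance bound~\eqref{eq:covariance_bound}.

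First I would write, using the mild formulation, $e^{t\Delta}\Psi(s) = \int_0^s e^{(t+s-u)\Delta}\,\mrd W(u)$ where $W$ is the cylindrical Wiener process driving $\xi$. Testing against $\delta_\ell$ and applying the It\^o isometry together with~\eqref{eq:covariance_bound} gives
\begin{equ}
\E[|e^{t\Delta}\Psi(s)(\ell)|^2] \lesssim C_\xi \int_0^s |e^{(t+s-u)\Delta}\delta_\ell|_{L^2(\T^3)}^2\,\mrd u = C_\xi\int_0^s |e^{r\Delta}\delta_\ell|_{L^2}^2\Big|_{r=t+s-u}\,\mrd u\;.
\end{equ}
Now the point is that the $L^2$ norm of $e^{r\Delta}\delta_\ell$ is \emph{not} integrable at $r=0$, but we only need to integrate it over $r\in[t,t+s]$, which stays away from $0$ by at least $t$. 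I would bound $|e^{r\Delta}\delta_\ell|_{L^2}\lesssim r^{-1/2}|e^{(r/2)\Delta}\delta_\ell|_{H^{-1/2+}}$ — using the smoothing estimate $|e^{\tau\Delta}f|_{L^2}\lesssim \tau^{-\kappa/2}|f|_{H^{-\kappa}}$ — and then invoke Lemma~\ref{lem:delta_ell_Sobolev} to get $|e^{r\Delta}\delta_\ell|_{L^2}^2 \lesssim r^{-2\kappa}|\ell|$ for any $\kappa\in(0,\tfrac12)$ (after relabelling). Substituting this and using that $r\geq t$ throughout the integration range, so $\int_0^s (t+s-u)^{-2\kappa}\mrd u \lesssim s\, t^{-2\kappa}$, and then crudely bounding $s\lesssim s^\kappa$ (using $s\leq T$, absorbing $T^{1-\kappa}$ into the constant) yields~\eqref{eq:SHE_ell_bound}. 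Alternatively one distributes the powers of the time variable more carefully: writing $(t+s-u)^{-2\kappa}\le t^{-2\kappa}$ and also $\le (s-u)^{-2\kappa}$, interpolating gives $\int_0^s(t+s-u)^{-2\kappa}\mrd u \lesssim t^{-2\kappa} s^{1}$, and we want the factor $s^\kappa$, so a mild waste is acceptable; if a sharper $s^\kappa$ is genuinely needed one splits $2\kappa = \kappa + \kappa$ and keeps one factor as $t^{-\kappa}$, the other estimated against $\int_0^s (t+s-u)^{-\kappa}\mrd u\lesssim s^{1-\kappa}$ only when $\kappa<1$, then bounds $s^{1-\kappa}\le s^{\kappa}T^{1-2\kappa}$ when $\kappa\le\tfrac12$. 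The triangle bound~\eqref{eq:SHE_P_bound} is identical in structure: by Stokes' theorem the loop integral $e^{t\Delta}\Psi(s)(\partial P)$ equals the surface integral of the curvature of $e^{t\Delta}\Psi(s)$ over $P$, i.e.\ $\langle e^{t\Delta}\Psi(s), \mathrm{curl}\,\delta_P\rangle$ up to combinatorial constants, but more directly one uses the distribution $\delta_{\partial P}$ and the It\^o isometry reduces the variance to $C_\xi\int_0^s |e^{r\Delta}\delta_{\partial P}|_{L^2}^2\mrd u$; here one applies Lemma~\ref{lem:delta_P_Sobolev} (via $|e^{r\Delta}\delta_{\partial P}|_{L^2}^2 = |e^{(r/2)\Delta}e^{(r/2)\Delta}\delta_P\text{-type}|$, or rather the bound stated there, $|e^{r\Delta}\delta_P|_{H^1}^2\lesssim |P|r^{-3/2}$, noting the boundary integral picks up one derivative) to get $|e^{r\Delta}\delta_{\partial P}|_{L^2}^2 \lesssim |P| r^{-\frac12-\kappa}$ for $\kappa\in(0,1)$, then integrates over $r\in[t,t+s]$ exactly as before.

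The one genuine subtlety — and I expect this to be the main technical point rather than a deep obstacle — is making the reduction from the loop/boundary integral $e^{t\Delta}\Psi(s)(\partial P)$ to a Sobolev estimate on a single distribution precise: the integrand $e^{t\Delta}\Psi(s)$ is almost surely smooth for $t>0$, so $A(\partial P)$ is a well-defined Riemann integral and equals $\langle A, \delta_{\partial P}\rangle$ for the natural distribution $\delta_{\partial P}$ supported on $\partial P$; one should check that $|e^{\tau\Delta}\delta_{\partial P}|_{L^2}^2\lesssim |e^{\tau\Delta}\delta_P|_{H^1}^2$ up to the geometric constants relating the boundary length to $|P|$ (for a right triangle with legs $r,h$ one has $|\partial P|\asymp r+h$ while $|P|\asymp rh$, so some care is needed — but in fact Stokes' theorem is exactly what converts the line integral of $A$ around $\partial P$ into the surface integral of $\mrd A$, i.e.\ $\langle A,\delta_{\partial P}\rangle = \langle \mrd A, \delta_P\rangle$ pointwise for smooth $A$, and $\mrd$ costs one derivative, which is why Lemma~\ref{lem:delta_P_Sobolev} is phrased with an $H^\kappa$ norm with $\kappa$ up to $1$). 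Once this identification is in place, everything else is the routine It\^o-isometry-plus-deterministic-bound computation sketched above, and the final bounds follow by elementary integration of power functions in the time variable, with the $s^\kappa$ factor coming for free from the constraint $s\le T$ and the integrability of $r^{-2\kappa}$ (resp.\ $r^{-1/2-\kappa}$) away from $0$.
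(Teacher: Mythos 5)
Your overall architecture --- It\^o isometry to reduce the variance to $C_\xi\int_0^s|e^{(t+s-u)\Delta}\delta_\ell|_{L^2}^2\,\mrd u$, followed by the deterministic Sobolev estimates of Lemmas~\ref{lem:delta_ell_Sobolev} and~\ref{lem:delta_P_Sobolev} --- is exactly the paper's. The gap is in the pointwise-in-$r$ bound you feed into the time integral. The claim $|e^{r\Delta}\delta_\ell|_{L^2}^2\lesssim r^{-2\kappa}|\ell|$ for $\kappa\in(0,\tfrac12)$ is false: combining the smoothing estimate $|e^{(r/2)\Delta}f|_{L^2}^2\lesssim r^{-\alpha}|f|_{H^{-\alpha}}^2$ with Lemma~\ref{lem:delta_ell_Sobolev} (which requires $\alpha\in(\tfrac12,1)$) only yields $|e^{r\Delta}\delta_\ell|_{L^2}^2\lesssim r^{\alpha-2}|\ell|$, an exponent in $(-\tfrac32,-1)$; and indeed the true small-$r$ behaviour is $r^{-1}|\ell|$ (the line has codimension $2$), so no exponent strictly greater than $-1$ can hold. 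The same issue recurs for the triangle: after Stokes the relevant distribution is $\partial_i\delta_P$ (not $\delta_{\partial P}$, whose size scales with the boundary length rather than $|P|$ --- keep these apart), and $|e^{r\Delta}\partial_i\delta_P|_{L^2}^2\asymp|P|r^{-3/2}$, not $|P|r^{-\frac12-\kappa}$. Consequently your integration step $\int_0^s(t+s-u)^{-2\kappa}\,\mrd u\lesssim s\,t^{-2\kappa}$ rests on a false premise, and the ``sharper'' variant you sketch inherits the same problem.

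The repair --- and what the paper does --- is to split the semigroup the other way: write $e^{(t+u)\Delta}=e^{u\Delta}e^{t\Delta}$ and use $|e^{u\Delta}f|_{L^2}^2\lesssim u^{\kappa-1}|f|_{H^{\kappa-1}}^2$, so that the non-integrable part of the singularity is evaluated at the \emph{fixed} time $t$ via Lemma~\ref{lem:delta_ell_Sobolev}, giving $|e^{t\Delta}\delta_\ell|_{H^{\kappa-1}}^2\lesssim t^{-2\kappa}|\ell|$, while the integration variable only carries the integrable weight $u^{\kappa-1}$; then $\int_0^s u^{\kappa-1}\,\mrd u\asymp s^{\kappa}$ produces the factor $s^\kappa$ directly, with no need to trade $s$ against $T$. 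The triangle case is identical using $|e^{u\Delta}\partial_i f|_{L^2}^2\lesssim u^{\kappa-1}|f|_{H^{\kappa}}^2$ together with Lemma~\ref{lem:delta_P_Sobolev}. (One could alternatively rescue your route by interpolating between $\int_0^s(t+u)^{\alpha-2}\,\mrd u\le s\,t^{\alpha-2}$ and $\int_0^\infty(t+u)^{\alpha-2}\,\mrd u\lesssim t^{\alpha-1}$ with $\alpha\in(1-\kappa,1)$, but as written your estimates do not close.)
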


\begin{proof}
Observe that, for any $A\in\Omega\CC^\infty$,
$A(\ell)=\sum_{i=1}^3 |v|^{-1}v_i\scal{A_i,\delta_\ell}$.
Furthermore,
\begin{equ}
\scal{e^{t\Delta}\Psi_i(s),\delta_\ell} = \int_{\R\times \T^3}\xi_i(u,y) \bone_{u \in [0,s]} [e^{(t+s-u)\Delta}\delta_\ell](y) \,\mrd u\,\mrd y\;.
\end{equ}
Hence, by~\eqref{eq:covariance_bound},
\begin{equ}
\E\big[\scal{e^{t\Delta}\Psi_i(s),\delta_\ell}^2\big] \leq C_\xi \int_{0}^s |e^{(t+u)\Delta}\delta_\ell|_{L^2}^2 \,\mrd u\;.
\end{equ}
Since
$|e^{(t+u)\Delta}\delta_\ell|_{L^2}^2 \lesssim u^{\kappa-1} |e^{t\Delta}\delta_\ell|_{H^{\kappa-1}}^2 \lesssim u^{\kappa-1}|\ell| t^{-2\kappa}$,
where we used Lemma~\ref{lem:delta_ell_Sobolev}  in the final bound, we obtain
\begin{equ}
\E\big[\scal{e^{t\Delta}\Psi_i(t),\delta_\ell}^2\big]
\lesssim C_\xi \int_0^s u^{\kappa-1}|\ell| t^{-2\kappa} \,\mrd u \lesssim C_\xi s^{\kappa} |\ell| t^{-2\kappa}\;,
\end{equ}
from which~\eqref{eq:SHE_ell_bound} follows.
To show~\eqref{eq:SHE_P_bound},
we can suppose that $P$ is in the $(x_1,x_2)$-plane.
Then,
by Stokes' theorem,
\begin{equ}
|e^{t\Delta}\Psi(s)(\partial P)|
=
|\scal{e^{t\Delta}(\partial_1 \Psi_2(s)-\partial_2 \Psi_1(s)), \delta_P}|\;.
\end{equ}
Furthermore,
\begin{equ}
\scal{e^{t\Delta}\partial_1 \Psi_2(s),\delta_P}
=
-\int_{\R\times \T^3}\xi_2(u,y)\bone_{u \in [0,s]} [e^{(t+s-u)\Delta}\partial_1 \delta_P](y)\, \mrd u\,\mrd y\;,
\end{equ}
and thus by~\eqref{eq:covariance_bound}, Lemma~\ref{lem:delta_P_Sobolev}, and the bound $|e^{u\Delta}\partial_if|_{L^2}^2 \lesssim u^{\kappa-1}|f|_{H^{\kappa}}^2$,
\begin{equs}
\E[|\scal{e^{t\Delta}\partial_1 \Psi_2(s),\delta_P}|^2]
&\leq C_\xi\int_{0}^s |e^{(t+u)\Delta}\partial_1\delta_{P}|_{L^2}^2 \,\mrd u
\\
&\lesssim C_\xi\int_{0}^s u^{\kappa-1}|P|t^{-\frac12-\kappa} \,\mrd u
\lesssim  C_\xi s^{\kappa}|P|t^{-\frac12-\kappa}\;.
\end{equs}
The same applies to $\scal{\partial_2 \Psi_1(s), \delta_P}$, from which the conclusion follows.
\end{proof}

\begin{lemma}\label{lem:Kolmogorov_fixed_time}
Let $A$ be a $\mfg$-valued stochastic process indexed by $\CX$
such that, for all joinable $\ell,\bar\ell\in\CX$, $A(\ell\sqcup\bar\ell) = A(\ell)+A(\bar\ell)$ 
almost surely.
Suppose that there exist $r,s\in(0,1)$, $p,M>0$, and $\alpha \in (0,1]$ such that for all $\ell\in\CX$ with $|\ell|<r$
\begin{equ}
\E[|A(\ell)|^p] \leq M |\ell|^{p\alpha}\;,
\end{equ}
and for all triangles $P$ with $|P| < s$
\begin{equ}
\E[|A(\partial P)|^p ] \leq M |P|^{p\alpha/2}\;.
\end{equ}
Then there exists a modification of $A$ (which we denote by the same letter) which is a.s.\ a continuous function on $\CX$.
Furthermore, for every $\bar\alpha \in (0,\alpha-\frac{24}{p})$, there exists $\lambda > 0$, depending only on $p,\alpha,\bar\alpha$, such that
\begin{equ}
\E[|A|_{\gr{\bar \alpha};<r}^p] \leq \lambda M ((s^{-6}+r^{-12})r^{p(\alpha-\bar\alpha)}+1)\;.
\end{equ}
\end{lemma}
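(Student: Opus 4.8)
The statement is a Kolmogorov-type continuity criterion for the additive $\mfg$-valued process $A$ on the space $\CX$ of line segments, producing quantitative control of the $\gr{\bar\alpha}$-seminorm restricted to lines of length $<r$. The natural approach is to discretise $\CX$ at dyadic scales, use the hypercontractivity-flavoured hypotheses on lines and on triangle boundaries to compare the value of $A$ on a line with its value on a nearby dyadic approximant, and then sum a geometric series as in the classical Kolmogorov--Chentsov argument. This is precisely the strategy of \cite[Thm.~3.23]{CCHS2d}, to which this lemma is a close analogue (with $\T^2$, $\R^2$ replaced by $\T^3$, $\R^3$), so the first move is to reduce to that setting.

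\textbf{Key steps.} First I would set up a dyadic grid on $\T^3$: for each $n\ge 0$ let $\Lambda_n$ be the lattice $2^{-n}\Z^3\cap\T^3$, and for a line $\ell=(x,v)\in\CX$ with $|\ell|<r$ choose a dyadic approximant $\ell_n$ with endpoints in $\Lambda_n$ (suitably truncated so that $\ell_n$ also has length comparable to $|\ell|$ and eventually $\le r$). The endpoints of $\ell$ and $\ell_n$ differ by $O(2^{-n})$, and $\ell_{n}$ and $\ell_{n+1}$ differ by a configuration whose "defect" is a pair of thin triangles of area $O(2^{-n}|\ell|)$; additivity of $A$ together with Stokes/the triangle-boundary hypothesis lets us write $A(\ell_{n+1})-A(\ell_n)$ as a sum of $A(\partial P)$ over $O(1)$ such triangles, plus $A$ on short connecting segments of length $O(2^{-n})$. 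Second, I would apply the moment hypotheses: $\E[|A(\ell_{n+1})-A(\ell_n)|^p]\lesssim M\,(2^{-n}|\ell|)^{p\alpha/2}+M\,2^{-np\alpha}$. Third, a union bound over the (polynomially many in $2^n$) choices of dyadic lines at scale $n$ inside the ball of radius $r$: the number of such lines is $O(2^{6n})$ for the pair of endpoints, so one pays a factor $2^{6n}$ (this is the source of the $s^{-6}+r^{-12}$ and the loss $\tfrac{24}{p}$ in the exponent — one needs $p(\alpha-\bar\alpha)/2 > 6$ roughly, with an extra factor because $|\ell|$ itself ranges dyadically, hence the $24$). Fourth, telescoping $A(\ell)=\lim_n A(\ell_n)$ along a.s.-convergent subsequences (convergence following from summability of the moment bounds once $p$ is large enough, using also the stated a.s.\ additivity to identify the limit), one obtains continuity of a modification and the bound
\begin{equ}
|A(\ell)-A(\ell_0)| \le \sum_{n\ge 0} |A(\ell_{n+1})-A(\ell_n)|\;,
\end{equ}
controlled in $L^p$ by $\sum_n 2^{6n}\big(M(2^{-n}|\ell|)^{p\alpha/2}+M2^{-np\alpha}\big)^{}$ raised to the appropriate power; dividing by $|\ell|^{\bar\alpha}$ and taking the supremum over $|\ell|<r$ gives the claimed estimate, with the combinatorial prefactors $r^{-12}$ (from dyadic scales of $|\ell|$ down to the lattice) and $s^{-6}$ (from the triangle constraint $|P|<s$ forcing one to start the dyadic comparison only once $2^{-n}|\ell|<s$).

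\textbf{Main obstacle.} The genuinely delicate part is the bookkeeping that turns the difference $A(\ell_{n+1})-A(\ell_n)$ of two polygonal (dyadic) approximants of a straight segment into a \emph{bounded} number of genuine triangle-boundary terms $A(\partial P)$ plus short segments, uniformly in the position and direction of $\ell$ — i.e.\ the combinatorial/geometric lemma underlying the scheme — together with getting the exponents in the final prefactor exactly right (the passage from $\alpha-\tfrac{24}{p}$ in the hypothesis to the stated constants, and tracking that $\lambda$ depends only on $p,\alpha,\bar\alpha$). Since all of this is carried out in essentially identical form in \cite[Sec.~3.4]{CCHS2d}, the cleanest exposition is to invoke that argument \emph{mutatis mutandis}, noting only the dimensional change ($\R^2\to\R^3$, which changes the counting exponents from the 2D values to $6$ and $12$ here) and the fact that the moment hypotheses are stated in exactly the form needed there; the remainder of the proof is then a short verification that our hypotheses on lines and triangles feed correctly into that machine.
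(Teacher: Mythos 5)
Your proposal is correct and follows essentially the same route as the paper: a dyadic chaining argument in which the difference of two nearby line segments is decomposed into two triangle boundaries plus two short chords (the paper packages this as a moment bound with respect to a bespoke pseudo-metric $\rho$ before chaining, which is only a cosmetic difference from your direct telescoping over consecutive dyadic approximants), with the counting of dyadic endpoint pairs in $\T^3$ producing the exponents $6$, $12$ and $24$ exactly as you indicate. The paper likewise adapts the 2D argument of \cite[Sec.~3]{CCHS2d} \emph{mutatis mutandis}, so no further commentary is needed.
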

\begin{proof}
For $\ell=(x,v)\in\mcX$, let us denote $\ell_i\eqdef x$ and $\ell_f\eqdef x+v$,
and define the metric $d$ on $\mcX$ by
\begin{equ}
d(\ell,\bar\ell) \eqdef |\ell_i-\bar\ell_i|\vee|\ell_f-\bar\ell_f|\;.
\end{equ}
Recall that $\ell,\bar\ell \in \mcX$ are called \emph{far} if $d(\ell,\bar\ell) > \frac14 (|\ell| \wedge |\bar\ell|)$.
For $(\ell,\bar\ell)\in\mcX^2$, let $T(\ell,\bar\ell)= |P_1|+|P_2|$ where $P_1,P_2$ are the triangles
$P_1 = (\ell_i,\ell_f,\bar\ell_f)$ and $P_2=(\ell_i,\bar\ell_f,\bar\ell_i)$.
Let $\Area(\ell_i,\bar\ell_i) = T(\ell,\bar\ell)\wedge T(\bar\ell,\ell)$.
Similar to~\cite[Def.~3.3]{CCHS2d},
we now define
$\rho \colon \mcX^2 \to [0,\infty)$ by
\begin{equ}
\rho(\ell,\bar\ell) \eqdef
\begin{cases}
|\ell| + |\bar\ell| &\text{if $\ell,\bar\ell$ are far,}
\\
|\ell_i - \bar\ell_i| + |\ell_f-\bar\ell_f|  + \Area(\ell,\bar\ell)^{1/2} &\text{otherwise.}
\end{cases}
\end{equ}
By the same argument as in~\cite[Remark~3.4]{CCHS2d}, note that there exists $C>0$ such that $\rho(a,b)\leq C(\rho(a,c)+\rho(c,b))$.

By definition, for any $\ell,\bar\ell\in\mcX$,
there exist $a,b\in\mcX$ and triangles $P_1,P_2$
such that $A(\ell)-A(\bar\ell)=A(\partial P_1)+A(\partial P_2) + A(a) - A(b)$
and $|P_1|+|P_2| \leq \rho(\ell,\bar\ell)^2$ and $|a|+|b|\leq \rho(\ell,\bar\ell)$
(if $\ell,\bar\ell$ are far, then $a=\ell$, $b=\bar\ell$, and $P_1,P_2$ are empty;
if $\ell,\bar\ell$ are not far, then $a$ is the chord from $\ell_f$ to $\bar\ell_f$ and $b$ is the chord from $\bar\ell_i$ to $\ell_i$).
It follows that for all $\ell,\bar\ell \in \mcX$ with $\rho(\ell,\bar\ell)<\sqrt s \wedge r$
\begin{equ}\label{eq:rho_p_bound}
\E[|A(\ell)-A(\bar\ell)|^p ] \lesssim M \rho(\ell,\bar\ell)^{p\alpha}\;,
\end{equ}
where the proportionality constant depends only on $p,\alpha$.

Let $\CX_{<r}$ denote the set of line segments of length less than $r$.
For $N \geq 1$ let $D_{N;<r}$ denote the set of line segments in $\mcX_{<r}$
whose start and end points have dyadic coordinates of scale $2^{-N}$,
and let $D_{<r}=\cup_{N\geq 1} D_{N;<r}$.
For any $\ell\in D_{<r}$ and $L \geq 1$, we can write $A(\ell) = A(\ell_0) + \sum_{i=1}^m A(\ell_i)-A(\ell_{i-1})$
for some finite $m \geq 1$ and where $\ell_i\in D_{2(L+i);<Kr}$
and $\rho(\ell_i,\ell_{i-1}) \leq K2^{-(L+i)}$
for a constant $K>0$.
Moreover, the first $\ell_i$ which has non-zero length satisfies $|\ell_i|\asymp |\ell|$.
Taking $L$ as the smallest positive integer such that $2^{-L} < \sqrt s \wedge r$,
it follows that, for any $\bar\alpha\in(0,1]$,
\begin{equ}\label{eq:A_dyadic_bound}
\sup_{|\ell|\in D_{<r}} \frac{|A(\ell)|}{|\ell|^{\bar\alpha}}
\lesssim \sup_{\ell\in D_{2L;<r}} \frac{|A(\ell)|}{|\ell|^{\bar\alpha}}
+ \sup_{N > L} \sup_{\substack{a,b\in D_{2N;<r}\\ \rho(a,b) \leq K2^{-N}}} \frac{|A(a)-A(b)|}{2^{-N\bar\alpha}}\;,
\end{equ}
where the proportionality constant depends only on $\bar\alpha$.
Observe that $|D_{2N}| \asymp 2^{12N}$.
Therefore,
raising both sides of~\eqref{eq:A_dyadic_bound} to the power $p$ and replacing the suprema on the right-hand side by sums, we obtain from~\eqref{eq:rho_p_bound}
\begin{equ}
\E
\Big[
\Big(\sup_{|\ell|\in D_{<r}} \frac{|A(\ell)|}{|\ell|^{\bar\alpha}}
\Big)^p
\Big]
\lesssim M(s^{-6} + r^{-12}) r^{p(\alpha-\bar\alpha)} + M\sum_{N \geq 1} 2^{N(24 -p(\alpha-\bar\alpha))}\;,
\end{equ}
where the proportionality constant depends only on $p,\bar\alpha$.
Since $24-p(\alpha-\bar\alpha)<0 \Leftrightarrow \bar\alpha\in(0,\alpha-\frac{24}{p})$,
the conclusion follows as in the classical Kolmogorov criterion, see e.g.\ the proof of~\cite[Thm.~4.23]{Kallenberg21}.
\end{proof}

\begin{lemma}\label{lem:SHE-heatgr-mom}
Suppose $\Psi(0)=0$.
Let $\alpha\in(0,\frac12)$, $\bar\alpha\in(0,\alpha-\frac{24}{p})$,
$\theta\in(0,1]$, and $T>0$.
Then for all $p \geq p_0(\alpha,\bar\alpha,\theta)>0$ is sufficiently large,
\begin{equ}
\sup_{s \in (0,T)} s^{-p\theta(1-2\alpha)/4}\E[\heatgr{\Psi(s)}_{\bar\alpha,\theta}^p]
\lesssim
C_{\xi}^{p/2}\;,
\end{equ}
where the proportionality constant depends only on $\alpha,\bar\alpha,\theta,p,T$.
\end{lemma}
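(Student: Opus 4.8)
The plan is to combine the fixed-time moment bounds on line and triangle integrals of $\Psi(s)$ from Lemma~\ref{lem:SHE_ell_P_bound} with the Kolmogorov-type criterion for additive functions on $\CX$ from Lemma~\ref{lem:Kolmogorov_fixed_time}, and then to decompose $\heatgr{\Psi(s)}_{\bar\alpha,\theta}$ according to the two regimes $t \le s$ and $t > s$ in the supremum defining the heat-graded norm. First I would fix $s \in (0,T)$ and apply Lemma~\ref{lem:Kolmogorov_fixed_time} to the process $\ell \mapsto (e^{t\Delta}\Psi(s))(\ell)$ for each fixed $t \in (0,1)$: by~\eqref{eq:SHE_ell_bound} and~\eqref{eq:SHE_P_bound} (applied with some $\kappa' \in (0,\frac12)$ close to $\frac12$, and after passing to the $p$-th moment using equivalence of Gaussian moments), the hypotheses of Lemma~\ref{lem:Kolmogorov_fixed_time} hold with $M \asymp C_\xi^{p/2} s^{p\kappa'/2} t^{-p\kappa'}$ and exponent $\alpha' = \kappa'$, so that for any $\bar\alpha' \in (0,\kappa' - \tfrac{24}{p})$,
\begin{equ}
\E[|e^{t\Delta}\Psi(s)|_{\gr{\bar\alpha'};<r}^p] \lesssim C_\xi^{p/2} s^{p\kappa'/2} t^{-p\kappa'} \big((s^{-6}+r^{-12})r^{p(\kappa'-\bar\alpha')}+1\big)\;,
\end{equ}
uniformly in $r \in (0,1)$. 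Choosing $\bar\alpha' = \bar\alpha$ (legitimate once $p$ is large enough that $\bar\alpha < \kappa' - \tfrac{24}{p}$ for $\kappa'$ close to $\frac12$) and $r = t^\theta$, this gives a bound on $\E[|e^{t\Delta}\Psi(s)|_{\gr{\bar\alpha};<t^\theta}^p]$.

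Next I would carry out the two-regime split. For $t \in (s,1)$ one keeps $r = t^\theta$ and the resulting bound, after elementary manipulation of the powers of $t$ and $s$ (using $s < t$, $\theta \le 1$, and choosing $\kappa'$ sufficiently close to $\frac12$ and $p$ large), is seen to be $\lesssim C_\xi^{p/2} s^{p\theta(1-2\alpha)/4}$ up to a constant, so that $\sup_{t \in (s,1)} \E[|e^{t\Delta}\Psi(s)|_{\gr{\bar\alpha};<t^\theta}^p]$ is controlled as required; here the freedom $\bar\alpha < \alpha$ (equivalently $1-2\bar\alpha > 1-2\alpha$) gives the room to absorb the various polynomial-in-$t$ losses into the target exponent. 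For $t \in (0,s]$, instead of using the heat smoothing at time $t$ I would use Lemma~\ref{lem:SHE_ell_P_bound} directly with $s$ replaced by... more precisely, one applies Lemma~\ref{lem:SHE_ell_P_bound} and Lemma~\ref{lem:Kolmogorov_fixed_time} to $\Psi(s)$ itself (i.e.\ with $t=0$ smoothing, noting $\E[|\Psi(s)(\ell)|^2]\lesssim C_\xi s^{\kappa'}|\ell|^{?}$... ) and then use the contraction property $|\CP_t A|_{\gr{\bar\alpha};<t^\theta} \le |A|_{\gr{\bar\alpha};<t^\theta}$ together with Lemma~\ref{lem:local_lower_exponent} to trade the short length scale $t^\theta \le s^\theta$ for extra smallness: concretely $|\CP_t \Psi(s)|_{\gr{\bar\alpha};<t^\theta} \le t^{\theta(\alpha-\bar\alpha)} |\CP_t\Psi(s)|_{\gr\alpha;<t^\theta} \le t^{\theta(\alpha-\bar\alpha)}|\Psi(s)|_{\gr\alpha;<s^\theta}$... and then the fixed-time bound on $|\Psi(s)|_{\gr\alpha;<s^\theta}$ (again from Lemmas~\ref{lem:SHE_ell_P_bound} and~\ref{lem:Kolmogorov_fixed_time} with exponent $\alpha$ instead of $\bar\alpha$, which forces $p$ large) supplies a factor $\asymp C_\xi^{p/2} s^{p\kappa'/2}(\dots)$ that more than compensates. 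Taking the supremum over $t \in (0,s]$ of $t^{\theta(\alpha-\bar\alpha)p}$ is bounded by $s^{\theta(\alpha-\bar\alpha)p}$, which is harmless.

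Finally, combining the two regimes, taking the $p$-th root, and noting that all implicit constants depend only on $\alpha,\bar\alpha,\theta,p,T$, yields
\begin{equ}
\sup_{s \in (0,T)} s^{-p\theta(1-2\alpha)/4}\,\E[\heatgr{\Psi(s)}_{\bar\alpha,\theta}^p] \lesssim C_\xi^{p/2}\;,
\end{equ}
for $p \ge p_0(\alpha,\bar\alpha,\theta)$. The main obstacle I anticipate is bookkeeping the exponents so that the various powers of $s$ and $t$ accumulated from (i) the $s^{\kappa'/2}$ gained in Lemma~\ref{lem:SHE_ell_P_bound}, (ii) the $t^{-\kappa'}$ lost there, (iii) the $r^{\kappa'-\bar\alpha}=t^{\theta(\kappa'-\bar\alpha)}$ and the $r^{-12}=t^{-12\theta}$ terms from the Kolmogorov criterion, and (iv) the $t^{\theta(\alpha-\bar\alpha)}$ gained from Lemma~\ref{lem:local_lower_exponent} in the short-time regime, all assemble to be $\le s^{p\theta(1-2\alpha)/4}$ with room to spare once $\kappa' \uparrow \tfrac12$ and $p \uparrow \infty$ — in particular one must check that the exponent condition $\kappa < \theta\min\{\alpha-\bar\alpha,\tfrac{1-2\bar\alpha}{4}\}$ in Proposition~\ref{prop:SHE_heatgr_estimates} (used subsequently) is consistent with the exponent $\theta(1-2\alpha)/4$ appearing here, which it is since $\bar\alpha<\alpha$. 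The requirement that $p$ be large is exactly what lets the $r^{-12}$ factor be absorbed: since $\kappa'-\bar\alpha > 0$ we may take $r=t^\theta$ small and the net power of $t$ stays negative but integrably so, or more simply one keeps $r$ fixed of order $1$ in the regime $t > s$ and only uses the decay in $s$.
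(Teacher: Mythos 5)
Your proposal correctly identifies the two ingredients (Lemma~\ref{lem:SHE_ell_P_bound} and Lemma~\ref{lem:Kolmogorov_fixed_time}) and the dyadic structure of $\heatgr{\cdot}_{\bar\alpha,\theta}$, but the exponent bookkeeping at its core does not close, and the two-regime split you introduce to fix it fails in both regimes. First, the hypotheses of Lemma~\ref{lem:Kolmogorov_fixed_time} do not hold ``with exponent $\alpha'=\kappa'$'': the bounds \eqref{eq:SHE_ell_bound}--\eqref{eq:SHE_P_bound} are linear in $|\ell|$ and $|P|$, so the H\"older exponent they yield for the Kolmogorov criterion is $\tfrac12$ for lines (and $1$ for triangles), while $\kappa'$ only governs the trade-off between the gain $s^{\kappa'}$ and the loss $t^{-2\kappa'}$. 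Taking $\kappa'$ close to $\tfrac12$ then leaves a factor $t^{-p\kappa'}$ in $M$ that nothing in your argument removes: in the regime $t>s$ one can only bound $t^{-p\kappa'}\le s^{-p\kappa'}$, so $M\lesssim C_\xi^{p/2}s^{-p\kappa'/2}$ diverges rather than producing $s^{+p\theta(1-2\alpha)/4}$; and in the regime $t\le s$ your fallback is a bound on $\E[|\Psi(s)(\ell)|^2]$ with \emph{no} heat smoothing, which is actually infinite in $d=3$ (one has $|e^{v\Delta}\delta_\ell|_{L^2}^2\asymp|\ell|\,v^{-1}$, so $\int_0^s|e^{v\Delta}\delta_\ell|_{L^2}^2\,\mrd v$ diverges logarithmically) --- the question mark in your own sketch marks a step that cannot be completed.

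The missing idea, which is the whole point of the paper's proof, is that the length restriction $|\ell|<t^\theta$ built into $|\cdot|_{\gr\alpha;<t^\theta}$ lets you convert the $t$-loss into a loss of H\"older exponent in $|\ell|$, uniformly in $t$. Choosing $\kappa\eqdef\theta(\tfrac12-\alpha)$ one has $|\ell|\,t^{-2\kappa}\le|\ell|^{2\alpha}$ whenever $|\ell|<t^\theta<1$ (and $|P|\,t^{-\frac12-\kappa}\le|P|^{\alpha}$ whenever $|P|<t$), so Lemma~\ref{lem:SHE_ell_P_bound} gives the Kolmogorov input with exponent $\alpha$ and constant $M\asymp C_\xi^{p/2}s^{p\kappa/2}$ containing \emph{no} negative power of $t$; applying Lemma~\ref{lem:Kolmogorov_fixed_time} with $r=t^\theta$, taking $p$ large enough that $(t^{-6}+t^{-12\theta})t^{\theta p(\alpha-\beta)}\le 2$ uniformly in $t$, summing over the dyadic scales $t=2^{-k}$ via \eqref{eq:heatgr_dyadics} and Lemma~\ref{lem:local_lower_exponent}, one gets $\E[\heatgr{\Psi(s)}_{\bar\alpha,\theta}^p]\lesssim C_\xi^{p/2}s^{p\kappa/2}$, and $\kappa/2=\theta(1-2\alpha)/4$ is exactly the claimed power of $s$. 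No case split on $t\lessgtr s$ is needed, and the extra room $\bar\alpha<\alpha$ is spent on summing the dyadic series, not on absorbing powers of $t^{-\kappa'}$.
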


\begin{proof}
Take $\kappa\eqdef\theta(\frac12-\alpha)\in(0,\frac12)$.
Then $|\ell|t^{-2\kappa}\leq|\ell|^{2\alpha}$
whenever $|\ell|<t^\theta< 1$
and
$|P|t^{-\frac12-\kappa} \leq |P|^{\alpha}$
whenever $|P|<t<1$,
in which case Lemma~\ref{lem:SHE_ell_P_bound}
and equivalence of Gaussian moments imply
\begin{equ}
\E[|e^{t\Delta} \Psi(s)(\ell)|^p]
\lesssim C_\xi^{p/2} s^{p\kappa/2} |\ell|^{p\alpha}\;,
\end{equ}
and
\begin{equ}
\E[|e^{t\Delta} \Psi(s)(\partial P)|^p]
\lesssim C_\xi^{p/2} s^{p\kappa/2}|P|^{p\alpha/2}\;.
\end{equ}
Applying Lemma~\ref{lem:Kolmogorov_fixed_time}
(with $r,s$ therein equal to $t^\theta,t$ respectively)
and taking $p$ sufficiently large so that $\sup_{t\in(0,1)}(t^{-6}+t^{-12\theta})t^{\theta p(\alpha-\bar\alpha)}\leq 2$,
we obtain for all $\beta\in(0,\alpha-\frac{24}{p})$
\begin{equ}\label{eq:sup_t_Delta}
\sup_{t\in(0,1)}\E[|e^{t\Delta}\Psi(s)|^p_{\gr{\beta};<t^\theta}]
\lesssim C_\xi^{p/2}s^{p\kappa/2}\;.
\end{equ}
It is easy to see that for all $A\in\Omega\CD'$
\begin{equ}\label{eq:heatgr_dyadics}
\heatgr{A}_{\bar\alpha,\theta} \lesssim \sup_{k\geq 1} |e^{2^{-k}\Delta}A|_{\gr{\bar\alpha};<2^{-\theta k}}\;,
\end{equ}
where the proportionality constant depends only on $\bar\alpha,\theta$.
Combining~\eqref{eq:heatgr_dyadics},~\eqref{eq:sup_t_Delta}, and the fact that $|A|_{\gr{\bar\alpha};<t^\theta} \leq t^{\theta(\beta-\bar\alpha)}|A|_{\gr\beta;<t^\theta}$ for $\beta\in[\bar\alpha,1]$ (Lemma~\ref{lem:local_lower_exponent})
we eventually obtain
\begin{equ}
\E[\heatgr{\Psi(s)}_{\bar\alpha,\theta}^p]
\lesssim
\E
\Big[
\sum_{k=1}^\infty |e^{2^{-k}\Delta}\Psi(s)|_{\gr{\bar\alpha};<2^{-k\theta}}^p
\Big]
\lesssim C_{\xi}^{p/2}s^{p\kappa/2}\;,
\end{equ}
as claimed.
\end{proof}

\begin{proof}[of Proposition~\ref{prop:SHE_heatgr_estimates}]
Let $0 \leq s \leq t \leq T$ and observe that
\begin{equ}
\Psi(t)-\Psi(s) = (e^{(t-s)\Delta}-1)e^{s\Delta}\Psi(0) + (e^{(t-s)\Delta}-1)\tilde\Psi(s) + \hat\Psi(t)\;,
\end{equ}
where $\tilde\Psi \colon [0,s] \to \Omega\CD'$ and $\hat\Psi \colon [s,t] \to \Omega\CD'$ solve the SHE driven by $\xi$ with $\tilde\Psi(0)=0$ and $\hat\Psi(s)=0$.
By Lemmas~\ref{lem:CP_contraction}\ref{pt:heatgr_contraction} and~\ref{lem:heatgr_heat_flow},
for any $\gamma\in(0,1)$,
\begin{equ}
\heatgr{(e^{(t-s)\Delta}-1)e^{s\Delta}\Psi(0)}_{\bar\alpha,\theta} 
\lesssim |t-s|^{\gamma\theta(\alpha-\bar\alpha)}
\heatgr{\Psi(0)}_{\alpha,\theta}\;.
\end{equ}
Likewise, by Lemmas~\ref{lem:heatgr_heat_flow} and~\ref{lem:SHE-heatgr-mom}, for any $\zeta \in (\alpha+\frac{24}{p},\frac12)$ and $\gamma\in(0,1)$,
\begin{equ}
\E[\heatgr{(e^{(t-s)\Delta}-1)\tilde\Psi(s)}_{\bar\alpha,\theta}^p] \lesssim C_\xi^{p/2} |t-s|^{p\gamma\theta(\alpha-\bar\alpha)}s^{p\theta(1-2\zeta)/4}\;.
\end{equ}
Finally, by Lemma~\ref{lem:SHE-heatgr-mom},
for any $\beta \in (\bar\alpha+\frac{24}{p},\frac12)$,
\begin{equ}
\E[\heatgr{\hat\Psi(t)}_{\bar\alpha,\theta}^p] \lesssim C_\xi^{p/2} |t-s|^{p\theta(1-2\beta)/4}\;.
\end{equ}
Taking $\gamma\in(0,1)$, $\beta\in(\bar\alpha+\frac{24}{p},\frac12)$,
and denoting $\bar\kappa\eqdef\theta\min\{\gamma(\alpha-\bar\alpha),\frac{1-2\beta}{4}\}$,
we obtain
\begin{equ}
\E[\heatgr{\Psi(t)-\Psi(s)}_{\bar\alpha,\theta}^p] \lesssim |t-s|^{p\bar\kappa}(\heatgr{\Psi(0)}_{\alpha,\theta}^p+C_\xi^{p/2})\;.
\end{equ}
We can choose $p$ sufficiently large
and $\gamma\in(0,1)$ sufficiently close to $1$
such that $\kappa<\bar\kappa$ for some $\beta\in(\bar\alpha+\frac{24}{p},\frac12)$,
and the conclusion follows by Kolmogorov's continuity theorem.
\end{proof}

\subsection{Convergence of mollifications}

We conclude this section with a corollary on the convergence of mollifications of the SHE in $\CC([0,T],\state)$ (albeit with no quantitative statement).

\begin{lemma}\label{lem:equiv_metrics}
Let $\eta<\bar\eta,\bar\delta<\delta$, $\beta\in\R$,
$0<\alpha<\bar\alpha\leq1$, and $\theta,R>0$.
Denote $\bar\Sigma\eqdef\Sigma_{\bar\eta,\beta,\bar\delta,\bar\alpha,\theta}$
and define the set
$B_R\eqdef \{Y\in\state\,:\, \bar\Sigma(Y)\leq R\}$.
Then for every $c>0$ there exists $\bar c>0$ such that for all $X,Y\in B_R$, $|X-Y|_{\CC^{\eta}}<\bar c \Rightarrow \Sigma(X,Y)<c$.
\end{lemma}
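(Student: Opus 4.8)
\textbf{Proof plan for Lemma~\ref{lem:equiv_metrics}.}
The statement asserts that on a $\bar\Sigma$-bounded set, the weak topology of $\CC^\eta$ controls the strong metric $\Sigma$; equivalently, this is just a restatement of the compact embedding Proposition~\ref{prop:compact} in the form of a uniform (rather than sequential) modulus of continuity. The plan is therefore to first reduce to a sequential statement by contradiction, then invoke Proposition~\ref{prop:compact} directly. Concretely, suppose the claim fails for some $c>0$. Then for each $n\geq 1$ we can pick $X_n,Y_n\in B_R$ with $|X_n-Y_n|_{\CC^\eta}<1/n$ but $\Sigma(X_n,Y_n)\geq c$. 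Since $\bar\Sigma(X_n),\bar\Sigma(Y_n)\leq R$ and in particular $|X_n|_{\CC^{\bar\eta}},|Y_n|_{\CC^{\bar\eta}}\leq R$, the compact embedding $\CC^{\bar\eta}\hookrightarrow\CC^{0,\eta}$ (using $\eta<\bar\eta$) lets us pass to a subsequence along which $X_n\to X$ in $\CC^{\eta}$; then necessarily $Y_n\to X$ as well since $|X_n-Y_n|_{\CC^\eta}\to0$.

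The second step is to apply Proposition~\ref{prop:compact}\ref{pt:Sigma_compact}, or rather its constituent parts~\ref{pt:Theta_compact} and~\ref{pt:heatgr_compact}, with $\CC^\kappa$ there taken to be $\CC^\eta$ here. Since $X_n\to X$ in $\CC^\eta$ and $\sup_n \Theta_{\bar\eta,\beta,\bar\delta}(X_n)<\infty$, part~\ref{pt:Theta_compact} gives $\Theta(X_n,X)\to0$; likewise $\Theta(Y_n,X)\to0$, so by the triangle inequality $\Theta(X_n,Y_n)\to0$. For the holonomy part, write $X_n=(A_n,\Phi_n)$ and $X=(A,\Phi)$; since $A_n\to A$ in $\Omega\CC^\eta$ and $\sup_n\heatgr{A_n}_{\bar\alpha,\theta}<\infty$, part~\ref{pt:heatgr_compact} gives $\heatgr{A_n-A}_{\alpha,\theta}\to0$, and the same for $A_n$ replaced by $B_n$ (the connection component of $Y_n$), so $\heatgr{A_n-B_n}_{\alpha,\theta}\to0$. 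Adding, $\Sigma(X_n,Y_n)=\Theta(X_n,Y_n)+\heatgr{A_n-B_n}_{\alpha,\theta}\to0$, contradicting $\Sigma(X_n,Y_n)\geq c$. This proves the lemma.

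There is essentially no serious obstacle here: the lemma is a soft consequence of results already established, and the only point requiring a moment's care is the passage from the uniform formulation to the sequential one via contradiction, together with checking that the $\bar\Sigma$-bound indeed supplies the hypotheses ($\sup_n|X_n|_{\CC^{\bar\eta}}<\infty$, $\sup_n\Theta_{\bar\eta,\beta,\bar\delta}(X_n)<\infty$, $\sup_n\heatgr{A_n}_{\bar\alpha,\theta}<\infty$) demanded by Proposition~\ref{prop:compact}. All of these follow immediately from $\bar\Sigma(X_n)=\Theta_{\bar\eta,\beta,\bar\delta}(X_n)+\heatgr{A_n}_{\bar\alpha,\theta}\leq R$ together with Lemma~\ref{lem:heatgr_Besov_embed} (to see $|A_n|_{\CC^{\bar\eta}}$, hence $|X_n|_{\CC^{\bar\eta}}$, is controlled) — or more directly from $|X_n|_{\CC^{\bar\eta}}\leq\Theta_{\bar\eta,\beta,\bar\delta}(X_n)\leq R$.
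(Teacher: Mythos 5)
Your proof is correct and rests on the same ingredient as the paper's, namely Proposition~\ref{prop:compact}: the paper phrases the argument via compactness of $B_R$ in both $(\state,\Sigma)$ and $\CC^\eta$ and the resulting equivalence of the two topologies on $B_R$, while you unpack this into the equivalent sequential contradiction argument using parts~(i) and~(ii) of that proposition directly. All the hypotheses you need ($|X_n|_{\CC^{\bar\eta}}$, $\Theta_{\bar\eta,\beta,\bar\delta}(X_n)$ and $\heatgr{A_n}_{\bar\alpha,\theta}$ bounded by $R$) are indeed immediate from $\bar\Sigma(X_n)\leq R$, so the argument goes through.
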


\begin{proof}
By Proposition~\ref{prop:compact}\ref{pt:Sigma_compact},
$B_R$
is compact in $\state$.
The metric $\Sigma$ is stronger than $|\cdot|_{\CC^{\eta}}$ so
the identity map $\id \colon (B_R,\Sigma )\to (B_R,|\cdot|_{\CC^\eta})$ is continuous.
In particular, $B_R$ is compact also in $\CC^{\eta}$.
Recall that if $\tau,\sigma$ are Hausdorff topologies on a set $A$ such that (i) $\tau$ is stronger than $\sigma$ and (ii) $(A,\tau)$ and $(A,\sigma)$ are both compact, then $\tau=\sigma$.
It follows that the identity map $\id \colon (B_R,|\cdot|_{\CC^\eta})\to (B_R,\Sigma )$ is also 
continuous and thus uniformly continuous by the Heine--Cantor theorem.
\end{proof}

\begin{corollary}\label{cor:SHE_conv_state}
Let $\xi$ be as in Section~\ref{subsec:1st-half}.
Let $\moll$ be a mollifier and $\xi^\eps = \moll^\eps * \xi$.
Suppose that $\Psi^\eps$ (resp.\ $\Psi$) solves SHE driven by $\xi^\eps$ (resp.\ $\xi$),
with $\Psi^\eps_0=\Psi_0 \in \state$.
Suppose~\eqref{eq:CI'}
and let $\theta\in(0,1]$, $\alpha\in(0,\frac12)$, $T>0$.
Then
$\Psi^\eps \to \Psi$
in probability in
$\CC([0,T],\state)$
as $\eps\to 0$.
\end{corollary}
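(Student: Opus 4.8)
The plan is to reduce the convergence in $\CC([0,T],\state)$ to the (standard) convergence in the much weaker space $\CC([0,T],\CC^\eta)$ together with uniform-in-$\eps$ moment bounds that force all the processes into a common $\Sigma$-compact set, the bridge being Lemma~\ref{lem:equiv_metrics}. Since $\Psi^\eps(0)=\Psi(0)$ and the metric on $\CC([0,T],\state)$ is $\sup_{t\in[0,T]}\Sigma(\cdot,\cdot)$, it suffices to show that a.s.\ $\Psi^\eps,\Psi\in\CC([0,T],\state)$ and that, for every $c>0$, $\P\big[\sup_{t\in[0,T]}\Sigma(\Psi^\eps(t),\Psi(t))>c\big]\to0$ as $\eps\to0$. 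First, $\xi^\eps$ satisfies~\eqref{eq:covariance_bound} with a constant $\lesssim C_\xi$ uniformly in $\eps$, so Propositions~\ref{prop:SHE_Nt_estimates} and~\ref{prop:SHE_heatgr_estimates} apply to the $\Psi^\eps$ (and to the stochastic convolutions with arbitrary initial data) with constants independent of $\eps$. Fix auxiliary exponents $\eta<\eta'<-\tfrac12$, $0<\delta'<\delta$, $\alpha<\alpha'<\tfrac12$ such that $(\eta',\beta,\delta')$ still satisfies~\eqref{eq:CI'} (possible since~\eqref{eq:CI'} is an open condition and absorbs a small simultaneous strengthening of $\eta$ and $\delta$), and set $\bar\Sigma\eqdef\Sigma_{\eta',\beta,\delta',\alpha',\theta}$; by Proposition~\ref{prop:compact}\ref{pt:Sigma_compact} the $\bar\Sigma$-balls are compact in $(\state,\Sigma)$, and on each of them Lemma~\ref{lem:equiv_metrics} upgrades $\CC^\eta$-closeness to $\Sigma$-closeness.

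Next I would assemble two ingredients. \emph{Away from the initial time:} writing $\Psi^\eps(t)=\CP_t\Psi(0)+\Phi^\eps(t)$ with $\Phi^\eps$ the SHE driven by $\xi^\eps$ from zero initial condition, the heat-flow estimates (Lemmas~\ref{lem:CP_contraction},~\ref{lem:heatgr_Besov_embed},~\ref{lem:heatgr_heat_flow},~\ref{lem:fancynorm_heat_flow}, and $|\CP_tX|_{\CC^{\eta'}}\lesssim t^{-(\eta'-\eta)/2}|X|_{\CC^\eta}$) bound $\sup_{t\in[\tau,T]}\bar\Sigma(\CP_t\Psi(0))$ deterministically for each $\tau>0$, while Propositions~\ref{prop:SHE_Nt_estimates} and~\ref{prop:SHE_heatgr_estimates} applied to $\Phi^\eps$ — whose initial-data terms vanish — give, by equivalence of Gaussian moments, $\sup_\eps\E[\sup_{t\in[0,T]}\bar\Sigma(\Phi^\eps(t))^p]<\infty$ for all $p$; combining these (with Lemma~\ref{lem:perturbation} to handle the bilinear cross terms entering $\fancynorm{\cdot}_{\beta,\delta'}$) yields $\sup_\eps\E[\sup_{t\in[\tau,T]}\bar\Sigma(\Psi^\eps(t))^p]<\infty$, and likewise for $\Psi$. \emph{Convergence in $\CC^\eta$:} that $\Psi^\eps\to\Psi$ in $\CC([0,T],\CC^\eta)$ in probability is a standard estimate for the linear SHE (using that $\widehat{\moll}(\eps\cdot)\to1$ locally uniformly while both $\Psi^\eps$ and $\Psi$ are small at frequencies $\gg\eps^{-1}$).

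With these in hand, fix $c,\varrho>0$ and $\tau>0$. By the first ingredient and Proposition~\ref{prop:compact}\ref{pt:Sigma_compact}, pick $R$ with $\P[\sup_{t\in[\tau,T]}\bar\Sigma(\Psi^\eps(t))\vee\bar\Sigma(\Psi(t))>R]\le\varrho$ uniformly in $\eps$; on the complement both paths take values in the $\Sigma$-compact ball $B_R=\{\bar\Sigma\le R\}$ on $[\tau,T]$, so by Lemma~\ref{lem:equiv_metrics} there is $\bar c>0$ with $\sup_{[\tau,T]}|\Psi^\eps(t)-\Psi(t)|_{\CC^\eta}<\bar c\Rightarrow\sup_{[\tau,T]}\Sigma(\Psi^\eps(t),\Psi(t))<c$, whence the $\CC^\eta$-convergence gives $\limsup_{\eps\to0}\P[\sup_{[\tau,T]}\Sigma(\Psi^\eps(t),\Psi(t))>c]\le2\varrho$. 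On the remaining piece $[0,\tau]$ I would use the triangle inequality through $\Psi(0)=\Psi^\eps(0)$ together with a \emph{uniform-in-$\eps$ modulus of continuity at $t=0$}, namely $\sup_{t\in[0,\tau]}\Sigma(\Psi^\eps(t),\Psi(0))\to0$ in probability as $\tau\downarrow0$, uniformly in $\eps$: this comes from splitting $\CP_t\Psi(0)$ (handled by Proposition~\ref{prop:Theta_cont_zero}) off from $\Phi^\eps(t)$ and running Propositions~\ref{prop:SHE_Nt_estimates},~\ref{prop:SHE_heatgr_estimates} on $\Phi^\eps$ with the exponents nudged so that their conclusions control the \emph{target} quantities $|\cdot|_{\CC^\eta}$, $\fancynorm{\cdot}_{\beta,\delta}$ and the $\heatgr{\cdot}_{\alpha,\theta}$-seminorm of the connection component, which then all tend to $0$ as $t\downarrow0$. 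Choosing first $\tau$ then $\eps$ accordingly yields $\limsup_{\eps\to0}\P[\sup_{[0,T]}\Sigma(\Psi^\eps(t),\Psi(t))>2c]\le2\varrho$; letting $\varrho\downarrow0$ proves the claim, and $\Psi^\eps,\Psi\in\CC([0,T],\state)$ follows from the same bounds and Proposition~\ref{prop:Theta_cont_zero}.

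The step I expect to be the main obstacle is exactly this uniform-in-$\eps$ control of the \emph{nonlinear} component $\fancynorm{\cdot}_{\beta,\delta}$ of the $\state$-metric near $t=0$: unlike on $[\tau,T]$, the initial datum $\Psi(0)\in\state$ carries no control in any stronger metric $\bar\Sigma$ with blow-up exponent strictly below $\delta$, so compactness is unavailable and one must instead track, for small $t$, the precise cancellation in $\CN_r(\CP_t\Psi(0)+\Phi^\eps(t))-\CN_r(\CP_t\Psi(0))$ between the deterministic heat flow and the stochastic convolution, keeping the exponent arithmetic consistent with~\eqref{eq:CI'}.
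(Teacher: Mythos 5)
Your proposal follows the same route as the paper's proof: the reduction is via Lemma~\ref{lem:equiv_metrics} with an auxiliary stronger metric $\bar\Sigma=\Sigma_{\bar\eta,\beta,\bar\delta,\bar\alpha,\theta}$ (with $(\bar\eta,\beta,\bar\delta)$ still satisfying~\eqref{eq:CI'}), uniform-in-$\eps$ tail bounds on $\sup_t\bar\Sigma$ coming from Propositions~\ref{prop:SHE_Nt_estimates} and~\ref{prop:SHE_heatgr_estimates}, and the standard convergence $\Psi^\eps\to\Psi$ in $\CC([0,T],\CC^\eta)$. The difference is one of compression: the paper's proof is five lines and does not split the time interval — it simply asserts that $\P[\sup_{t\in[0,T]}\bar\Sigma(\Psi(t))+\bar\Sigma(\Psi^\eps(t))>R]\to0$ uniformly in $\eps$ as $R\to\infty$, citing the two propositions. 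Your concern about $t=0$ is well-founded: since $\Psi(0)$ is only assumed to lie in $\state$, the quantity $\bar\Sigma(\Psi(0))$ (and hence $\sup_{t\in[0,T]}\bar\Sigma(\Psi(t))$, through the contribution $\fancynorm{\CP_t\Psi(0)}_{\beta,\bar\delta}\lesssim t^{\bar\delta-\delta}\fancynorm{\Psi(0)}_{\beta,\delta}$) need not be finite, so the paper's one-line claim implicitly presupposes exactly the decomposition you spell out — handling $[\tau,T]$ by the smoothing of $\CP_t\Psi(0)$ together with the zero-initial-data bounds, and $[0,\tau]$ by a uniform modulus of continuity at the origin obtained from Proposition~\ref{prop:Theta_cont_zero} for the heat-flow part, the Hölder-in-time estimates for the zero-initial-data stochastic convolution, and Lemma~\ref{lem:perturbation} for the bilinear cross terms (this is the same mechanism as~\eqref{eq:perturbation_X} in the proof of Theorem~\ref{thm:local-exist-sigma}). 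So your extra paragraph is not a detour but the content that makes the paper's terse argument rigorous; the only caveat is that the cross-term condition $\eta+\bar\eta\geq 1-2\delta$ in Lemma~\ref{lem:perturbation} must be checked against the exponents actually in force (it holds for the regime $\delta$ close to $1$, $\eta$ close to $-\tfrac12$ used throughout the paper), which is the "exponent arithmetic" you already flag.
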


\begin{proof}
By Propositions~\ref{prop:SHE_Nt_estimates}
and~\ref{prop:SHE_heatgr_estimates},
$\Psi$ and $\Psi^\eps$ a.s. take values in $\CC([0,T],\state)$.
Let $\bar\eta\in(\eta,-\frac12)$, $\bar\delta\in(0,\delta)$, and $\bar\alpha\in(\alpha,\frac12)$ such that $(\bar\eta,\beta,\bar\delta)$ satisfies~\eqref{eq:CI'}, denote $\bar\Sigma\eqdef\Sigma_{\bar\eta,\beta,\bar\delta,\bar\alpha,\theta}$.
Then, for every $c,R>0$, by Lemma~\ref{lem:equiv_metrics},
there exists $\bar c>0$ such that
\begin{equs}{}
&\P
\Big[\sup_{t\in[0,T]}
\Sigma(\Psi^\eps_t,\Psi_t)> c
\Big]
\leq
\P\Big[
\sup_{t\in[0,r]} \{\Sigma(\Psi^\eps_t,\CP_t\Psi_0) + \Sigma(\Psi_t,\CP_t\Psi_0)\} > c
\Big]
\\
&\qquad+
\P
\Big[\sup_{t\in[r,T]}|\Psi^\eps_t-\Psi_t|_{\CC^{\eta}}>\bar c
\Big]
+ \P
\Big[\sup_{t\in[r,T]}\{\bar\Sigma(\Psi_t)+\bar\Sigma(\Psi^\eps_t)\} > R
\Big]\;.
\end{equs}
By~\eqref{eq:SHE_Theta_zero}, the first probability on the right-hand side can be made arbitrarily small, uniformly in $\eps\in(0,1)$,
by taking $r$ small.
On the other hand, for any fixed $r>0$, the final probability,
by~\eqref{eq:SHE_Theta_Holder} and Proposition~\ref{prop:SHE_heatgr_estimates}, converges to $0$ uniformly in $\eps\in(0,1)$ as $R\to\infty$,
while the second probability converges to $0$ as $\eps\to 0$, see e.g.~\cite[Prop.~9.5]{Hairer14}.
\end{proof}

\begin{remark}\label{rem:Sourav_flow}
The main result of~\cite{Sourav_flow} is that the YM flow $\ymhflow$ started from smooth approximations of the Gaussian free field (GFF)
converges locally in time to a process which one can interpret as the flow started from the GFF.
We recover essentially the same result from Proposition~\ref{prop:YM_flow_minus_heat}
and Corollary~\ref{cor:SHE_conv_state}
once we start the SHE from the GFF, which is its invariant measure.

In fact, after showing that $\ymhflow$ is a locally Lipschitz function on $\init\supset\state$ in Proposition~\ref{prop:YM_flow_minus_heat},
the claim that $\ymhflow$ is well-defined on the GFF reduces to showing that the latter takes values in
$\init$, which is a Gaussian moment computation similar and simpler to that of Section~\ref{subsec:1st-half}.

A minor difference with the results of~\cite{Sourav_flow} is that therein convergence is shown in $L^p$ while we only show convergence in probability; it would not be difficult to modify our arguments to show convergence in $L^p$  but we refrain from doing so since it is not needed in the sequel.
\end{remark}

\section{Symmetry and renormalisation in regularity structures}\label{sec:sym_renorm_reg_struct}

In this section we formulate algebraic arguments which verify that the symmetries of a system of equations are preserved by the BPHZ renormalisation procedure.

For what follows we are in the setting of \cite[Sec.~5]{CCHS2d};
we fix a collection of solution types $\mfL_{+}$, noise types $\mfL_{-}$, a target space assignment $W = (W_{\mft})_{\mft \in \Lab}$ and 
a kernel space assignment $\mathcal{K} = (\mathcal{K}_{\mft})_{\mft \in \Lab_{+}}$, with  $\mfL=\mfL_+\sqcup \mfL_-$.
We assume that all of these space assignments are finite-dimensional. 

We then fix a corresponding space assignment $V = (V_{\mft})_{\mft \in \Lab}$ as in \cite[Eq.~5.25]{CCHS2d}, that is
\begin{equ}\label{eq:space_assignment}
V_{\mft} = 
\begin{cases}
W_{\mft}^{\ast} & \textnormal{ if }\mft \in \Lab_{-}\;,\\
\mathcal{K}_{\mft}^{\ast} & \textnormal{ if }\mft \in \Lab_{+}\;.
\end{cases}
\end{equ}

The symmetries that we study are simultaneous transformations of the target spaces where the solution and the noise take values in and transformations on the underlying (spatial) base space $\Lambda \subset \R^{d}$. 
We assume that $\Lambda$ is invariant under reflections across coordinate axes and permutations of canonical basis vectors \dash to keep our presentation simpler we will only consider transformations of the base space that are compositions of such reflections and permutations.

\begin{definition}\label{def:Tran}
Let $\Tran$ be the collection of quartets $\mathbf{T} = (T,O,\sigma,r)$ where
\begin{itemize}
\item  $T = (T_\mfb)_{\mfb \in \Lab}$ with $T_{\mfb}\in   L(W_{\mfb},W_{\mfb})$ invertible. $T$ determines our transformation of the target spaces for the noise and solution.
\item  $O = (O_{\mft})_{\mft \in \Lab_{+}}$ with $O_\mft \in L(\mathcal{K}_{\mft})$
invertible. The role of $O$ is to specify transformations of our kernels. 
\item $\sigma \in S_{d}$ where $S_{d}$ is the set of permutations of the set $[d]$, and $r \in \{-1,1\}^{d}$. The role of $\sigma$ and $r$ is to determine our transformation of the base space \dash we also overload notation and, for $\sigma$ and $r$ as above, define $\sigma,r \in L(\R^{d},\R^{d})$ by setting $(r x)_{i} = r_{i}x_{i}$ and $(\sigma x)_i = x_{\sigma^{-1}(i)}$ for $i=1\ldots d$.
We have $\sigma,r\colon\T^{d} \rightarrow \T^{d}$ and also view them as maps on our space-time domain $\R_{+} \times \T^{d}$ by acting on the spatial coordinates only. 
\end{itemize}
\end{definition}

One should view an element $\mathbf{T} = (T,O,\sigma,r) \in \Tran$ as acting on solutions\slash noises $A_{\mft}$ and kernels $K_{\mft}$ by mapping
\[
A_{\mft}(\cdot) \mapsto T_{\mft} A_{\mft} (\sigma^{-1} r \cdot) \enskip \text{for} \enskip \mft \in \Lab \enskip \text{and} \enskip 
K_{\mft}(\cdot) \mapsto O_{\mft} K_{\mft}(\sigma^{-1} r \cdot) \enskip \text{for} \enskip  \mft \in \Lab_{+}\;. 
\] 
We endow $\Tran$ with a group structure compatible with the above: given $\mathbf{T} = (T,O,\sigma,r)$
and $\mathbf{T}' = (T',O',\sigma',r')$ we set 
$\mathbf{T} \mathbf{T}' = (T T',OO',\sigma \sigma',\sigma r' \sigma^{-1} r)$.


As above, we have a left group action of $S_{d}$ on multi-indices $\N^{d+1}$ by 
setting  
$\sigma (p_{0},\cdots,p_{d}) = (p_{0},p_{\sigma^{-1}(1)},\dots,p_{\sigma^{-1}(d)})$,
yielding an action on the set of edge types $\CE = \Lab \times \N^{d+1}$. \label{def CE}
It also yields an action of $S_{d}$ on $\N^{\CE}$, viewed as multi-sets of elements of $\CE$, 
given by applying $\sigma$ to each element of any given multiset in  $\N^{\CE}$. 
We then fix a rule $R$ that is $S_{d}$-invariant in the sense that, for any $\sigma \in S_{d}$, $ \mft \in \Lab$, and $\CN \in R(\mft) \subset \N^{\CE}$, we have $\sigma \CN \in R(\mft)$. 

Recall that a rule $R$ assigns a subset of $\N^{\CE}$ to each $\mft \in \Lab$
and determines a set of conforming trees $\mathfrak{T}$ and  forests $\mathfrak{F}$.
Our trees have edge decorations $e \mapsto (\mft(e),\mfn(e)) \in \CE$ and node decorations $v \mapsto \mfn(v) \in \N^{d+1}$, the
latter also being referred to as ``polynomial decorations''.
Loosely speaking, such a decorated tree conforms to the rule $R$ if, given any inner node, if $\mft \in \Lab$ is the 
type of the unique edge leaving $v$, the multiset given by the decorations of the edges entering $v$ belongs to $R(\mft)$.
Combining this with a space-assignment $V$ yields a regularity structure $\mcb{T}$ which admits a decomposition into subspaces $\mcb{T}[\tau]$ indexed by trees $\tau \in \mathfrak{T}$ (and an algebra $\mcb{F}$ decomposing into linear subspaces $\mcb{F}[f]$ indexed by  $f \in \mathfrak{F}$). We refer to \cite[Definition~5.8]{BHZ19} and \cite[Section~5.5]{CCHS2d}
for more details.

In order to formulate our arguments we now define a right action $\Tran \ni \mathbf{T} \mapsto \mathbf{T}^{\ast} \in L(\mcb{F},\mcb{F})$ by specifying three transformations on $\mcb{F}$: one that encodes $T$ and $O$, one that encodes $\sigma$, and one that encodes $r$. 

For encoding the transformation of the target space we use \cite[Remark~5.19]{CCHS2d}.
Given an  ``operator assignment''
\begin{equ}\label{eq:operator_assignment}
L = \bigoplus_{\mfb \in \Lab} L_{\mfb} \in L(V) \eqdef \bigoplus_{\mfb \in \Lab}  L(V_{\mfb},V_{\mfb})\;,
\end{equ}
that remark says that, for any $\Lab$-typed symmetric set $\symset$, we can apply $L$ ``component-wise'' to obtain a linear operator $L[\symset] \in L(V^{\otimes \symset},V^{\otimes \symset})$.
Since $\mcb{F}$ decomposes as a direct sum of such spaces, this defines a linear operator $L \in L(\mcb{F},\mcb{F})$. 
\begin{example}
We present a simple example of the above construction. 
We work with $\Lab_{-} = \{\mfl\}$, $\Lab_{+} = \{\mft\}$.
Suppose we fix some linear transformation acting on the target space of our noise, that is some $U \in L(W_{\mfl},W_{\mfl})$,
and define an operator assignment $L$ by $L_{\mft} = \id \in  L(V_{\mft},V_{\mft})$ and $L_{\mfl} = U^{\ast} \in L(V_{\mfl},V_{\mfl})$. 
Then, to demonstrate the corresponding linear transformation on our regularity structure, 
we look at the $\Lab$-typed symmetric set $\scal{ \<IXi^2> }$ associated to the tree $\<IXi^2>$.
We have $V^{\otimes  \scal{ \<IXi^2> }} = \mcb{T}[ \<IXi^2> ] \simeq V_{\mfl} \otimes_{s} V_{\mfl} \subset V_{\mfl} \otimes V_{\mfl}$
and, given $u,v \in V_{\mfl}$, we have $ u \otimes v + v \otimes u \in \mcb{T}[\<IXi^2>] $ and 
\[
L( u \otimes v + v \otimes u) = (U^*u) \otimes (U^*v) + (U^*v) \otimes (U^*u) \in \mcb{T}[\<IXi^2>] \;.
\] 
\end{example}

Given $\mathbf{T} \in \Tran$, we define the operator assignment
\[
I(\mathbf{T})^{\ast}
=
\Big(I(\mathbf{T})^{\ast}_{\mfb \in \Lab}: \mfb \in \Lab\Big)
\eqdef
\Big( \bigoplus_{\mfb \in \Lab_{-}} T^{\ast}_{\mfb} \Big)
\oplus
\Big( \bigoplus_{\mfb \in \Lab_{+}} O_{\mfb}^{\ast} \Big)
 \in L(V)\;.
 \]
We write $I(\mathbf{T})^{\ast}_{\mfb \in \Lab} \in L(\mcb{F},\mcb{F})$ for the corresponding linear operator given by \cite[Remark~5.19]{CCHS2d} which encodes the transformation of our target space on $\mcb{F}$.  
\begin{remark}
It is natural that the operators $(T_{\mft}: \mft \in \Lab_{+})$ do not play a role in how we transform the regularity structure because the regularity structure is constructed to study the structure of the noise and does not depend on our choice of the spaces $(W_{\mft}: \mft \in \Lab_{+})$.  
The role of the operators $(T_{\mft}: \mft \in \Lab_{+})$ will be to act on the $W_{\mft}$-valued modelled distributions with $\mft \in \Lab_{+}$ that describe the solution.
\end{remark}
Given $r \in \{-1,1\}^{d}$ we define $r^{\ast} \in L(\mcb{F},\mcb{F})$ by setting, for each $f \in \mfF$, 
\[
r^{\ast} [f] \eqdef   r^{\ast} \restriction_{\mcb{F}[f]} =   r^{n(f)} \id_{\mcb{F}[f]}\;,
\]
where for any $q = (q_{i})_{i=0}^{d} \in \N^{d+1}$ we write $r^q =\prod_{i=1}^{d} r_{i}^{q_{i}}$ and  $n(f) \in \N^{d+1}$ is given by 
\begin{equ}[e:def-nf]
n(f)= \sum_{e \in E_{f}} \mfn(e) + \sum_{u \in N_{f}} \mfn(u)\;.
\end{equ}
Here $E_{f}$ is the set of edges of $f$, $N_{f}$ is the set of nodes of $f$, and $\mfn$ denotes the edge and polynomial decorations.

We define a left group action $(\sigma,f) \mapsto \sigma f$ of $S_{d}$ on $\mfF$
 by setting $\sigma f$ to be the forest obtained by performing the replacement $\mfn(a) \mapsto \sigma \mfn(a)$ for any edge or node $a \in E_{f} \sqcup N_{f}$.  
Note that, for any $f \in \mfF$, there is a canonical isomorphism $\sigma^{\ast}[f]\colon \mcb{F}[f] \rightarrow \mcb{F}[\sigma^{-1} f]$ since the edges of $\sigma^{-1} f$ are in natural correspondence with those of $f$.
We write $\sigma^{\ast} = \bigoplus_{f \in \mcb{F}} \sigma^{\ast}[f] \in L(\mcb{F},\mcb{F})$.

We then set $\mathbf{T}^{\ast} = I(\mathbf{T})^{\ast} \sigma^{\ast} r^{\ast} \in L(\mcb{F},\mcb{F})$. 
Note that, as elements of $L(\mcb{F},\mcb{F})$,  $I(\mathbf{T})^{\ast}$ commutes with $r^{\ast}$ and with $\sigma^{\ast}$. 
We abuse notation and write $I(\mathbf{T}), \sigma,r , \mathbf{T}\in L(\mcb{F}^{\ast},\mcb{F}^{\ast})$ for the adjoints of the operators we just introduced. 

We also have a natural left action of $\Tran$ on $\mcb{A}\eqdef \prod_{o\in\CE}W_o$ given by setting, \label{mcb A}
for $\mathbf{T} = (T,O,\sigma,r) \in \Tran$ and $\mathbf{A} = (A_{(\mfb,p)})_{(\mfb,p) \in \CE}$, 
\begin{equ}\label{eq:transform_on_jets}
\mathbf{T}\mathbf{A} = ( \bar{A}_{o} )_{o \in \CE} 
\enskip
\textnormal{with}
\enskip 
\bar{A}_{(\mfb,p)} = r^{p} T_{\mfb}  A_{(\mfb,\sigma^{-1} p)}\;.
\end{equ}%

We now define the various ways in which we impose that a given system 
should respect a given transformation.
\begin{definition}
Fix $\mathbf{T} \in \Tran$, with $\Tran$ defined as in Definition~\ref{def:Tran}.
\begin{itemize}
\item Given a nonlinearity $F(\cdot) = \bigoplus_{\mft \in \Lab} F_{\mft}(\cdot)$ we say \emph{$F$ is $\mathbf{T}$-covariant} if, for every $\mfl \in \Lab_{-}$, $F_{\mfl}(\cdot) = \id_{W_{\mfl}}$ and, for every $\mft \in \Lab_{+}$, 
\[
(O_{\mft}^{\ast} \otimes T_{\mft}^{-1}) F_{\mft}(\mathbf{T} \mathbf{A})
=
F_{\mft}(\mathbf{A}) \;. 
\]
where on the left-hand side we are using the action \eqref{eq:transform_on_jets}.

\item Given a kernel assignment $K = (K_{\mft})_{\mft \in \Lab_{+}}$ we say \emph{$K$ is $\mathbf{T}$-invariant} if for every $\mft \in \Lab_{+}$ and $z \in \R \times \T^{d}$, $O_{\mft} K_{\mft}(\sigma^{-1} r z) = K_{\mft}(z)$.

\item Given a random noise assignment $\zeta = (\zeta_{\mfl})_{\mfl \in \Lab_{-}}$ we say \emph{$\zeta$ is $\mathbf{T}$-invariant} if the tuple of random fields $\big( \zeta_{\mfl}(\cdot) \big)_{\mfl \in \Lab_{-}}$ and $\big( T_{\mfl} \zeta_{\mfl}( \sigma^{-1} r \cdot) \big)_{\mfl \in \Lab_{-}}$ have the same probability law.

\item Given $\ell \in \mcb{F}^{\ast}$ or\footnote{$\mcb{F}_{-} \subset \mcb{F}$ is defined in \cite[Section~5.5]{CCHS2d}, and it is immediate that $\mathbf{T}^{\star}$ leaves $\mcb{F}_{-}$ invariant.} $\ell \in \mcb{F}^{\ast}_{-}$, we say that \emph{$\ell$ is $\mathbf{T}$-invariant} if $\mathbf{T}\ell \eqdef \ell \circ \mathbf{T}^{\ast} = \ell$. 
\end{itemize}
\end{definition}
We then have the following lemma, where the canonical lift is defined as in  \cite[Sec.~6.3]{BHZ19}, and the corresponding
BPHZ character is defined as in \cite[Thm~6.18]{BHZ19} (see also \cite[Sec.~5.7.2]{CCHS2d}).
\begin{lemma}\label{lem:ell-T-inv}
Fix $\mathbf{T} \in \Tran$. Suppose we are given a kernel assignment $K$ and a smooth, random noise assignment $\zeta$ which are both $\mathbf{T}$-invariant.  
Let $\bar{\PPi}_{\can} \in \mcb{F}^{\ast}$ be the corresponding canonical lift and $\ell_{\BPHZ} \in \mcb{F}^*_-$ its BPHZ character.
Then both $\bar{\PPi}_{\can}$ and $\ell_{\BPHZ}$ are $\mathbf{T}$-invariant.
\end{lemma}
\begin{proof}
To show that $\bar{\PPi}_{\can}$ is invariant it suffices to show that, for any $\tau \in \mfT$, one has   
$(\mathbf{T} \bar{\PPi}_{\can})[\tau] = \bar{\PPi}_{\can}[\sigma^{-1} \tau] \circ \mathbf{T}^{\ast}[\tau] = \bar{\PPi}_{\can}[\tau]$ \dash this is because both $\mathbf{T}$ and $\bar{\PPi}_{\can}$ ``factorise'' over forests appropriately.

Using the same notation convention as used in \cite[Eq.~5.31]{CCHS2d}, one has that $\bar{\PPi}_{\can}[\sigma^{-1} \tau] \circ \mathbf{T}[\tau]$ is given by
\begin{equs}\label{eq:bold_faced_pi_transformation}
{}&
r^{n(\tau)}\E
\int_{(\R^{d+1})^{N(\tau)}} 
\mrd x_{N(\tau)}
\delta(x_{\rho})
\Big(
\prod_{v \in N(\tau)} x_{v}^{\sigma^{-1}\mfn(v)}
\Big)\\
{}&
\qquad\qquad
\Big(
\bigotimes_{
e \in K(\tau)}
\big(D^{\sigma^{-1} \mfn(e)} O_{\mft(e)}K_{\mft(e)} \big)(x_{e_{+}} - x_{e_{-}})
\Big)
\\
{}&
\qquad\qquad
\Big(
\bigotimes_{e \in L(\tau)}
\big(D^{\sigma^{-1} \mfn(e)} T_{\mft(e)}\zeta_{\mft(e)}\big)(x_{e_{+}})
\Big)\\
{}&=
r^{n(\tau)}\E
\int_{(\R^{d+1})^{N(\tau)}} 
\mrd x_{N(\tau)}
\delta(x_{\rho})
\Big(
\prod_{v \in N(\tau)} (\sigma^{-1} r x_{v})^{\sigma^{-1}\mfn(v)}
\Big)\\
{}&
\qquad\qquad
\Big(
\bigotimes_{
e \in K(\tau)}
\big(D^{\sigma^{-1} \mfn(e)} O_{\mft(e)} K_{\mft(e)} \big) (\sigma^{-1} r x_{e_{+}} - \sigma^{-1} r x_{e_{-}})
\Big)
\\
{}&
\qquad\qquad
\Big(
\bigotimes_{e \in L(\tau)}
\big(D^{\sigma^{-1} \mfn(e)} T_{\mft(e)}\zeta_{\mft(e)}\big)(\sigma^{-1} r x_{e_{+}})
\Big)\\
{}&=
r^{n(\tau)}\E
\int_{(\R^{d+1})^{N(\tau)}} 
\mrd x_{N(\tau)}
\delta(x_{\rho})
\Big(
\prod_{v \in N(\tau)} r^{n(v)} x_{v}^{\mfn(v)}
\Big)\\
{}&
\qquad\qquad
\Big(
\bigotimes_{
e \in K(\tau)}r^{\mfn(e)}
D^{\mfn(e)} O_{\mft(e)} \big[ K_{\mft(e)} (\sigma^{-1} r \cdot ) \big] \big|_{x_{e_{+}} - x_{e_{-}}}
\Big)
\\
{}&
\qquad\qquad
\Big(
\bigotimes_{e \in L(\tau)}
r^{\mfn(e)}
D^{ \mfn(e)} \big[ T_{\mft(e)}\zeta_{\mft(e)}(\sigma^{-1} r \cdot )\big] \big|_{x_{e_{+}}}
\Big).
\end{equs}
By cancelling powers of $r$, using the $\mathbf{T}$-invariance of $K$ and  $\zeta$ we see that the last line is precisely $\bar{\PPi}_{\can}[\tau]$.

To prove the statement regarding $\ell_{\BPHZ}$ we observe that $\ell_{\BPHZ} = \bar{\PPi}_{\can} \circ \tilde{\mcb{A}}_{-}$ where $ \tilde{\mcb{A}}_{-} $ is the negative twisted antipode defined in \cite[Eq.~6.8]{BHZ19}. 
From the definition of $\Delta^{-}$ as in \cite[Sec.~5.5]{CCHS2d}, it is straightforward 
to verify recursively that $(\mathbf{T}^{\ast} \otimes \mathbf{T}^{\ast}) \Delta^{-} = \Delta^{-} \mathbf{T}^{\ast}$ and, combining this with the inductive definition of $\tilde{\mcb{A}}_{-}$, it follows that $\tilde{\mcb{A}}_{-} \circ  \mathbf{T}^{\ast} = \mathbf{T}^{\ast} \circ \tilde{\mcb{A}}_{-} $, so that 
\[
\mathbf{T} \ell_{\BPHZ} = \bar{\PPi}_{\can} \circ \tilde{\mcb{A}}_{-} \circ  \mathbf{T}^{\ast} =  \bar{\PPi}_{\can} \circ  \mathbf{T}^{\ast} \circ \tilde{\mcb{A}}_{-} =  \bar{\PPi}_{\can} \circ \tilde{\mcb{A}}_{-} = \ell_{\BPHZ}\;,
\]
concluding the proof.
\end{proof}

Note that, for the linear operator $\mcb{I}_{\mft}\colon  \mcb{T}  \otimes V_{\mft} \rightarrow \mcb{T}$ (see \cite[Sec.~5.8.2]{CCHS2d} for its definition) we have, for any $\mathbf{T} \in \Tran$, the identity 
\begin{equ}\label{eq:integration_intertwining}
\mcb{I}_{\mft} \circ ( \mathbf{T}^{\ast}  \otimes I(\mathbf{T})^{\ast}_{\mft}) =  \mathbf{T}^{\ast} \mcb{I}_{\mft}\;.
\end{equ}

Lemma~\ref{lem:upsilon_behaves} below shows how covariance of our nonlinearity propagates through coherence. 
We refer to \cite[Sec.~5.8.2]{CCHS2d} for the notion of coherence,
as well as the definitions of the maps $\bUpsilon$ and $\bar\bUpsilon$ which basically describe the coefficient in front of $\tau$ in the expansion of the solution as a modelled distribution.
(See also \cite[Section~2]{BCCH21} where the notion of coherence and the maps  $\bUpsilon$ and $\bar\bUpsilon$ are discussed in more details.)
For our purposes it suffices to recall that these maps $\bUpsilon$ and $\bar{\bUpsilon}$
satisfy the following inductive property: if
$\tau$ is of the form 
\begin{equ}\label{eq:generic_tau}
\mbX^k \prod_{i=1}^m \mcb{I}_{o_i}(\tau_{i})\;,
\end{equ}
where
$m \geq 0$, $\tau_{i} \in \mfT$, and $o_i \in \CE$, then\footnote{Recall from \cite[Sec.~5.8.2]{CCHS2d} 
that $\bUpsilon_{o}[\tau] =\Upsilon_{o}[\tau]/S(\tau)$ and recursive formulae
for $\Upsilon$ and $S$ are given therein. Here, we instead give a recursive formula for $\bUpsilon$. It is easy to check that 
these recursive formulae are consistent
and one may be more convenient than the other for different purposes.
The formula here
  involves a factor 
$\mathring{S}(\tau)$
and we give a formula for it in \eqref{e:sym_at_root}.}
\begin{equs}[eq:upsilon_induction]
\bar{\bUpsilon}_{\mft} [\tau] 
&\eqdef 
\frac{1}{\mathring{S}(\tau)} 
\mbX^k 
\Big[ 
\partial^k D_{o_1}\cdots D_{o_m}
\bar{\Upsilon}_{\mft}[\bone]
\Big]
\big(\bUpsilon_{o_1}[\tau_1],\ldots,\bUpsilon_{o_m}[\tau_m]\big)
 ,\\
\bUpsilon_{(\mft,p)}[\tau]
&\eqdef
(\mcb{I}_{(\mft,p)} \otimes \id_{W_{\mft}}) (\bar{\bUpsilon}_{\mft} [\tau] )\;,
\end{equs}
where 
\[
\mathring{S}(\tau) = \frac{S(\tau)}{\prod_{i=1}^{m} S(\tau_i)}
\]
and $S(\tau)$ is defined as in \cite[Eq.~5.56]{CCHS2d}. More explicitly, if, for each $i \in [m]$, we define $\beta_{i} \in \N$ to be the multiplicity of $\mcb{I}_{o_{i}}(\tau_{i})$ in $\beta_{i} \in \N$ in \eqref{eq:generic_tau}, then 
\begin{equ}[e:sym_at_root]
\mathring{S}(\tau) = 
k! \prod_{i=1}^{m} (\beta_{i}!)^{\frac{1}{\beta_{i}}}\;.	
\end{equ}

\begin{remark}\label{rem:suppress_ident}
In what follows we will often suppress tensor products with an identity operator from our notation. 
For instance, given  a tensor product of vector spaces $H \otimes H'$  a third vector space $\tilde{H}$, and operators on $H \otimes H'$ of the form  $(L \otimes \id_{H'})$ for $L \in L(H, \tilde{H})$ or $(\id_H \otimes Q)$ for $Q \in L(H',\tilde{H})$,  we will often write, for $v \in H \otimes H'$, $Lv  \in \tilde{H} \otimes H'$ or $Q v \in H \otimes \tilde{H}$ where $L$ and $Q$ are only acting on the appropriate factor of the tensor product. 
\end{remark}

\begin{lemma}\label{lem:upsilon_behaves}
Let $\mathbf{T} \in \Tran$ and suppose that $F$ is a $\mathbf{T}$-covariant nonlinearity.  
Then we have, for any $\mft \in \Lab$, 
\[
(\mathbf{T}^{\ast} \otimes I(\mathbf{T})^{\ast}_{\mft} \otimes \id_{W_{\mft}})  \bar\bUpsilon_{\mft}(\mathbf{A})
=  (\id_{\mcb{T}} \otimes \id_{V_{\mft}} \otimes  T_{\mft}) \bar\bUpsilon_{\mft}(\mathbf{T}^{-1} \mathbf{A})\;.
\]
\end{lemma}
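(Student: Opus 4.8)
\textbf{Proof plan for Lemma~\ref{lem:upsilon_behaves}.}

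The plan is to reduce the statement to the definition of $\bar\bUpsilon_\mft$ as a sum over trees (or rather, over the relevant combinatorial data indexing the coherence map), and to check the transformation rule term by term. Recall from \cite[Sec.~5.8.2]{CCHS2d} that $\bar\bUpsilon_\mft(\mathbf{A})$ is built recursively from the nonlinearity $F_\mft$ and the derivatives $F^{(k)}_\mft(\mathbf{A})$ evaluated on jets, with each occurrence of an edge of type $\mft' \in \Lab_+$ carrying a factor that lives in $\mcb{T} \otimes V_{\mft'}$ and is tied together via $\mcb{I}_{\mft'}$, and each noise edge of type $\mfl \in \Lab_-$ contributing a factor in $W_\mfl$ (recall $F_\mfl = \id_{W_\mfl}$). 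First I would unfold one step of this recursion so that the claimed identity becomes an identity about how $F^{(k)}_\mft$, the maps $\mcb{I}_{\mft'}$, and the action \eqref{eq:transform_on_jets} interact, and then argue by induction on the tree.

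The key steps, in order: (i) State precisely the recursive formula for $\bar\bUpsilon_\mft$ in the conventions of \cite{CCHS2d}, isolating the dependence on $F_\mft$ and on the sub-$\bar\bUpsilon_{\mft'}$'s attached through $\mcb{I}_{\mft'}$. (ii) Verify the base of the induction: the polynomial decorations transform under $r^\ast$ exactly as recorded in $n(f)$, which matches the factor $r^p$ in \eqref{eq:transform_on_jets}, and the permutation part is handled by $\sigma^\ast$ permuting the multi-indices, so that $F_\mft(\mathbf{T}\mathbf{A})$ gets rewritten as $(O_\mft^\ast \otimes T_\mft^{-1})^{-1}$ applied to $F_\mft(\mathbf{A})$ by $\mathbf{T}$-covariance; here one must carefully track that the $O_\mft^\ast$ factor is precisely what $I(\mathbf{T})^\ast_\mft$ contributes on the $V_\mft$ slot, and the $T_\mft^{-1}$ on the $W_\mft$ slot is cancelled against the target factor $T_\mft$ on the right-hand side of the lemma. (iii) Carry out the inductive step: each sub-factor attached via an edge of type $\mft'$ is, by the inductive hypothesis, transformed according to $(\mathbf{T}^\ast \otimes I(\mathbf{T})^\ast_{\mft'} \otimes \id_{W_{\mft'}})\bar\bUpsilon_{\mft'}(\mathbf{A}) = (\id \otimes \id \otimes T_{\mft'})\bar\bUpsilon_{\mft'}(\mathbf{T}^{-1}\mathbf{A})$, and one then uses the intertwining identity \eqref{eq:integration_intertwining}, namely $\mcb{I}_{\mft'}\circ(\mathbf{T}^\ast \otimes I(\mathbf{T})^\ast_{\mft'}) = \mathbf{T}^\ast \mcb{I}_{\mft'}$, to push the operator $\mathbf{T}^\ast$ through the abstract integration. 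The $T_{\mft'}$ appearing on the $W_{\mft'}$ slot of the sub-factor is exactly the argument slot on which $F^{(k)}_\mft$ acts, so that combining with $\mathbf{T}$-covariance of $F$ (which handles all the $F^{(k)}_\mft$, not just $F_\mft$ — one should note this is part of what $\mathbf{T}$-covariance entails, or follows from it by differentiating the relation in $\mathbf{A}$) collapses everything to $\bar\bUpsilon_\mft(\mathbf{T}^{-1}\mathbf{A})$ with a single leftover $T_\mft$ on the outermost $W_\mft$ factor. (iv) Finally, observe that $\mathbf{T}^\ast$ commutes with $\sigma^\ast$ and $r^\ast$ (noted already in the text) so there is no ordering ambiguity, and that the reindexing $\mathbf{A} \leftrightarrow \mathbf{T}^{-1}\mathbf{A}$ is consistent throughout because $\Tran$ acts by a genuine group action on $\mcb{A}$.

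The main obstacle I expect is purely bookkeeping: matching the three separate pieces $I(\mathbf{T})^\ast$, $\sigma^\ast$, $r^\ast$ of $\mathbf{T}^\ast$ against, respectively, the target-space transformation $T,O$ in the definition of $\bar\bUpsilon$, the permutation action on the multi-indices $\mfn$ decorating edges and nodes, and the reflection signs $r^{n(f)}$ — and making sure the duals/inverses land on the correct tensor slots (the asymmetry between $W_\mft$, which carries $T_\mft$, and $V_\mft = \mcK_\mft^\ast$, which carries $O_\mft^\ast$, is the delicate point, and is exactly why $F$ being $\mathbf{T}$-covariant is phrased with $(O_\mft^\ast \otimes T_\mft^{-1})$). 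There is no analytic content; once the recursion is set up with the right conventions, the identity is forced. I would therefore keep the written proof short, citing \eqref{eq:integration_intertwining}, $\mathbf{T}$-covariance of $F$, and \cite[Remark~5.19]{CCHS2d}, and perform the induction on trees without spelling out every index.
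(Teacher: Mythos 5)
Your proposal is correct and follows essentially the same route as the paper's proof: induction over trees via the recursive formula for $\bar\bUpsilon$, with the base case handled by $\mathbf{T}$-covariance of $F$, the integration edges handled by the intertwining identity \eqref{eq:integration_intertwining}, and the derivatives $D_{o_1}\cdots D_{o_m}F_\mft$ handled by differentiating the covariance relation (the paper's \eqref{eq:derivative_change_of_var}), which you correctly flag as a necessary ingredient. The only detail worth making explicit when writing it up is that the tree-wise inductive statement pairs $\tau$ on the left with $\sigma^{-1}\tau$ on the right (and $T_{(\mft,q)}=r^qT_\mft$ for derivative edge types), together with the observation $S(\tau)=S(\sigma^{-1}\tau)$ for the symmetry factors.
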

\begin{proof}
We will prove that, for every $\tau \in \mfT$, $\mft \in \Lab$, and $q \in \N^{d+1}$, we have the identities 
\begin{equs}[eq:upsilon_is_brackets]
(\mathbf{T}^{\ast} \otimes I(\mathbf{T})^{\ast}_{\mft} \otimes \id_{W_{\mft}})  \bar\bUpsilon_{\mft}[\tau](\mathbf{A})
&= (\id_{\mcb{T}} \otimes \id_{V_{\mft}} \otimes T_{\mft})  \bar\bUpsilon_{\mft}[\sigma^{-1} \tau](\mathbf{T}^{-1} \mathbf{A})\;,\\
(\mathbf{T}^{\ast} \otimes  \id_{W_{\mft}}) \bUpsilon_{(\mft,q)}[\tau](\mathbf{A})
&= (\id_{\mcb{T}}  \otimes T_{(\mft,q)})  \bUpsilon_{(\mft,\sigma^{-1} q)}[\sigma^{-1} \tau](\mathbf{T}^{-1} \mathbf{A})
\end{equs}
where we set $T_{(\mft,q)} = r^{q}T_{\mft}$. 
The desired claim then follows after summing over $\tau \in \mfT$. 

We will prove the identities \eqref{eq:upsilon_is_brackets} by induction in 
$
 |E_{\tau}| + \sum_{u \in N_{\tau}} |\mfn(\tau)|
$.
We start by proving the first identity of \eqref{eq:upsilon_is_brackets} when $\tau = \bone$ and $\mft = \mfl \in \Lab_{-}$.
Fix some basis $(e_{i})_{i \in I}$ of $W_{\mfl}$, we then have 
\[
\bbUpsilon_{\mfl}[\one]=
\id_{W_{\mfl}}
=
\sum_{i \in I} e_{i}^{\ast} \otimes e_{i}
\in 
 V_{\mfl} \otimes W_{\mfl} \simeq \mcb{T}[\bone] \otimes V_{\mfl} \otimes W_{\mfl} \;.
\] 
On the other hand, we have 
\begin{equs}
T_{\mfl}  \bar\bUpsilon_{\mfl}[\bone](\mathbf{T}\mathbf{A}) 
&=
T_{\mfl} \bar\bUpsilon_{\mfl}[\bone]
=
\sum_{i \in I} e_{i}^{\ast} \otimes T_{\mfl} e_{i}
=
\sum_{i \in I} T_{\mfl}^{\ast} e_{i}^{\ast} \otimes e_{i}\\
{}&=
(\mathbf{T}^{\ast} \otimes I(\mathbf{T})^{\ast}_{\mft} \otimes \id_{W_{\mft}})  \bar\bUpsilon_{\mft}[\bone](\mathbf{A})\;.
\end{equs}

When $\tau = \bone$ and $\mft \in \Lab_{+}$ we have 
\begin{equs}
T_{\mft}  \bbUpsilon_{\mft}[\bone](\mathbf{T}^{-1} \mathbf{A})
&= 
T_{\mft} F_{\mft}(\mathbf{T}^{-1} \mathbf{A})
=   I(\mathbf{T})^{\ast}_{\mft} F_{\mft}( \mathbf{T} \mathbf{T}^{-1} \mathbf{A})
=   I(\mathbf{T})^{\ast}_{\mft} F_{\mft}(\mathbf{A})\\
&=  I(\mathbf{T})^{\ast}_{\mft} \bbUpsilon_{\mft}[\bone](\mathbf{A})
= ( \mathbf{T}^{\ast} \otimes I(\mathbf{T})^{\ast}_{\mft} \otimes \id_{W_{\mft}}) \bbUpsilon_{\mft}[\bone](\mathbf{A})\;,
\end{equs}
where, in the second equality above, we used the $\mathbf{T}$-covariance of $F$. 
Note that in both computations above we are using the fact that $\mathbf{T}^{\ast}[\bone] = \id_{\R}$.

From \eqref{eq:upsilon_induction},
we immediately see that, for fixed $\mft \in \Lab$ and $\tau \in \mfT$, the second identity of \eqref{eq:upsilon_is_brackets} follows from the first and \eqref{eq:integration_intertwining}.

To finish our proof we just need to prove the inductive step for the first identity of \eqref{eq:upsilon_is_brackets} \dash for this we may assume that $\mft \in \Lab_{+}$. 
We then write, using  \eqref{eq:upsilon_induction}, 
\begin{equs}\label{eq:symmetry_inductive_step}
{}&
\mathring{S}(\tau) (\mathbf{T}^{\ast} \otimes I(\mathbf{T})^{\ast}_{\mft})\bar{\bUpsilon}_{\mft} [\tau] (\mathbf{A})\\
{}&=
(\mathbf{T}^{\ast} \otimes I(\mathbf{T})^{\ast}_{\mft})
\mbX^k 
\Big[ 
\partial^k D_{o_1}\cdots D_{o_m}
I(\mathbf{T})^{\ast}_{\mft} \bar{\bUpsilon}_{\mft}[\bone] 
\Big]_{\mathbf{A}}
\big(\bUpsilon_{o_1}[\tau_1] (\mathbf{A}),\ldots,\bUpsilon_{o_m}[\tau_m] (\mathbf{A})\big)\\
{}&=
r^{k}
\mbX^{\sigma^{-1} k}
\Big[ 
\partial^k D_{o_1}\cdots D_{o_m}
I(\mathbf{T})^{\ast}_{\mft}
\bar{\bUpsilon}_{\mft}[\bone]
\Big]_{\mathbf{A}}
\big(\mathbf{T}^{\ast} \bUpsilon_{o_1}[\tau_1] (\mathbf{A}),\ldots,\mathbf{T}^{\ast} \bUpsilon_{o_m}[\tau_m] (\mathbf{A})\big)\\
{}&=
r^{k}
\mbX^{\sigma^{-1} k}
\Big[ 
\partial^k D_{o_1}\cdots D_{o_m}
I(\mathbf{T})^{\ast}_{\mft}
\bar{\bUpsilon}_{\mft}[\bone]
\Big]_{\mathbf{A}}
\\
& \qquad\qquad\qquad  \big(T_{o_1} \bUpsilon_{\sigma^{-1} o_1}[\sigma^{-1} \tau_1] (\mathbf{T}^{-1} \mathbf{A}),\ldots, T_{o_{m}} \bUpsilon_{ \sigma^{-1} o_m}[\sigma^{-1} \tau_m] (\mathbf{T}^{-1}\mathbf{A})\big)\;.
\end{equs}
Here the subscript $[ \cdot ]_{\mathbf{A}}$ indicates where the derivative is being evaluated.
In the second equality of \eqref{eq:symmetry_inductive_step} we used that the operator $\mathbf{T}^{\ast}$ is appropriately multiplicative and in the third equality we used our inductive hypothesis. 

Using the the $\mathbf{T}$-covariance of $F$ we have $I(\mathbf{T})^{\ast}_{\mft} \bar{\bUpsilon}_{\mft}[\bone](\mathbf{A}) = T_{\mft}\bar{\bUpsilon}_{\mft}[\bone](\mathbf{T}^{-1} \mathbf{A})$. 
From this it follows that
\begin{equs}\label{eq:derivative_change_of_var}
{}&\Big[ 
\partial^k D_{o_1}\cdots D_{o_m}
I(\mathbf{T})^{\ast}_{\mft} \bar{\bUpsilon}_{\mft}[\bone]
\Big]_{\mathbf{A}}
\big( \bullet, \dots, \bullet \big)\\
&=
r^{k}
\Big[ 
\partial^{\sigma^{-1} k} D_{ \sigma^{-1} o_1}\cdots D_{\sigma^{-1} o_m}
T_{\mft} \bar{\bUpsilon}_{\mft}[\bone]
\Big]_{\mathbf{T}^{-1} \mathbf{A}}
\big( T_{o_{1}}^{-1} \bullet, \dots, T_{o_{m}}^{-1} \bullet \big)\;.
\end{equs}
Inserting \eqref{eq:derivative_change_of_var} into the last line of \eqref{eq:symmetry_inductive_step} gives
\begin{equs}
{}&
\mathring{S}(\tau) (\mathbf{T}^{\ast} \otimes I(\mathbf{T})^{\ast}_{\mft}) \bar{\bUpsilon}_{\mft} [\tau] (\mathbf{A})\\
&=
\mbX^{\sigma^{-1} k}
\Big[ 
\partial^{\sigma^{-1} k} D_{\sigma^{-1} o_1}\cdots D_{\sigma^{-1} o_m}
T_{\mft} \bar{\bUpsilon}_{\mft}[\bone]
\Big]_{\mathbf{T}^{-1} \mathbf{A}}\\
{}&\qquad \qquad \qquad \qquad 
\big( \bUpsilon_{ \sigma^{-1} o_1}[\sigma^{-1} \tau_1] (\mathbf{T}^{-1} \mathbf{A}),\ldots, \bUpsilon_{\sigma^{-1} o_m}[\sigma^{-1} \tau_m] (\mathbf{T}^{-1}\mathbf{A})\big)\\
&=
\mathring{S}(\sigma^{-1} \tau) T_{\mft} \bar{\bUpsilon}_{\mft} [\sigma^{-1} \tau] (\mathbf{T}^{-1} \mathbf{A})\;,
\end{equs}
and we are done since $S(\tau) = S(\sigma^{-1} \tau)$. 
\end{proof}

Let $\mathfrak{R} \subset \mcb{F}^{\ast}$ be the collection of algebra homomorphisms $\ell$ from $\mcb{F}$ to $\R$ that satisfy $\ell[\tau] = 0$ for any $\tau \in \mfT \setminus \mfT_{-}$.\footnote{As in \cite{CCHS2d}
$\mfT_- \eqdef  \{\tau \in \mfT\,:\, \deg \tau < 0,\ \mfn(\rho) = 0,\ \tau \textnormal{ unplanted}\}$ where $\rho$ is the root of $\tau$.}
Recall that $\mathfrak{R}$ can be identified with the renormalisation group via an action $\mathfrak{R} \ni \ell \mapsto M_{\ell} \in L(\mcb{T},\mcb{T})$ and that \cite[Prop.~5.68]{CCHS2d} describes an action of the renormalisation group on nonlinearities, which we write $F \mapsto M_{\ell}F$.

We can then state the following proposition. 
\begin{proposition}\label{prop:renormalisation is covariant}
Let $\mathbf{T} \in \Tran$, let $F$ be a $\mathbf{T}$-covariant nonlinearity, and let $\ell \in \mathfrak{R}$ be $\mathbf{T}$-invariant. Then, for every $\mft \in \Lab_{+}$, 
\[
(\ell \otimes O_{\mft}^{\ast} \otimes \id_{W_{\mft}} ) \bar\bUpsilon_{\mft}(\mathbf{A})
= (\ell \otimes \id_{V_{\mft}} \otimes T_{\mft} ) \bar\bUpsilon_{\mft}( \mathbf{T}^{-1} \mathbf{A})\;.
\] 
In particular, $M_{\ell}F$ is $\mathbf{T}$-covariant.
\end{proposition}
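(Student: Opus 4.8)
The proposition should follow by combining Lemma~\ref{lem:upsilon_behaves} (covariance of $\bUpsilon$ propagating through coherence) with Lemma~\ref{lem:ell-T-inv} (or rather the $\mathbf{T}$-invariance of $\ell$ that is now a hypothesis) via a short contraction argument. First I would apply Lemma~\ref{lem:upsilon_behaves}, which gives
\[
(\mathbf{T}^{\ast} \otimes I(\mathbf{T})^{\ast}_{\mft} \otimes \id_{W_{\mft}})  \bar\bUpsilon_{\mft}(\mathbf{A})
=  (\id_{\mcb{T}} \otimes \id_{V_{\mft}} \otimes  T_{\mft}) \bar\bUpsilon_{\mft}(\mathbf{T}^{-1} \mathbf{A})\;.
\]
Now pair both sides on the $\mcb{T}$-factor with $\ell \in \mathfrak{R} \subset \mcb{F}^{\ast}$. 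On the right-hand side this immediately produces $(\ell \otimes \id_{V_{\mft}} \otimes T_{\mft}) \bar\bUpsilon_{\mft}(\mathbf{T}^{-1} \mathbf{A})$, which is the right-hand side of the claimed identity. On the left-hand side, since $I(\mathbf{T})^{\ast}_{\mft}$ is precisely $O_{\mft}^{\ast}$ acting on the $V_{\mft}= \mathcal{K}_{\mft}^{\ast}$ factor (recall the definition of $I(\mathbf{T})^{\ast}$ and that $\mft \in \Lab_{+}$), we get
\[
\big(\ell \circ \mathbf{T}^{\ast} \otimes O_{\mft}^{\ast} \otimes \id_{W_{\mft}}\big)\bar\bUpsilon_{\mft}(\mathbf{A})\;.
\]
Then the $\mathbf{T}$-invariance of $\ell$, i.e.\ $\ell \circ \mathbf{T}^{\ast} = \ell$, collapses this to $(\ell \otimes O_{\mft}^{\ast} \otimes \id_{W_{\mft}})\bar\bUpsilon_{\mft}(\mathbf{A})$, which is the left-hand side of the claim. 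This establishes the displayed identity.

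For the ``in particular'' statement, I would recall from \cite[Prop.~5.68]{CCHS2d} that $M_\ell F$ is characterised (at the level of the maps $\bUpsilon$) by the fact that its coherence data is obtained by composing $\bar\bUpsilon_{\mft}$ with $\ell$ in the appropriate slot; more precisely, one has a formula expressing $\bar\bUpsilon^{M_\ell F}_{\mft}$ in terms of $(\ell \otimes \id \otimes \id)\bar\bUpsilon_{\mft}$. The $\mathbf{T}$-covariance of $M_\ell F$ is then exactly the statement that $(O_{\mft}^{\ast} \otimes T_{\mft}^{-1})(M_\ell F)_{\mft}(\mathbf{T}\mathbf{A}) = (M_\ell F)_{\mft}(\mathbf{A})$ for $\mft \in \Lab_+$ (and the identity on noise components, which is unchanged by renormalisation). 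Unwinding the definition of $(M_\ell F)_{\mft}$ through $\bUpsilon$ and applying the displayed identity — with $\mathbf{A}$ replaced by $\mathbf{T}\mathbf{A}$ where needed and using that $\ell$ is $\mathbf{T}$-invariant so that the pairing with $\ell$ commutes with the $\mathbf{T}$-action — yields the covariance. I would phrase this last step carefully by first proving the identity for the components $\bUpsilon_{(\mft,q)}$ (using \eqref{eq:integration_intertwining} to pass from $\bar\bUpsilon_{\mft}$ to $\bUpsilon_{(\mft,q)}$ as in the proof of Lemma~\ref{lem:upsilon_behaves}) and then invoking the reconstruction of the renormalised nonlinearity from its jet, exactly as in \cite[Prop.~5.68]{CCHS2d}.

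\textbf{Main obstacle.} The genuinely new content — propagation of covariance through the nonlinear coherence map — has already been isolated in Lemma~\ref{lem:upsilon_behaves}, so the proof of the proposition itself is essentially bookkeeping. The one point requiring care is the ``in particular'' clause: one must match the precise normalisation conventions in \cite[Prop.~5.68]{CCHS2d} relating $M_\ell F$ to the contraction of $\bar\bUpsilon$ against $\ell$, and check that the slot into which $\ell$ is inserted is the $\mcb{T}$-factor on which $\mathbf{T}^{\ast}$ acts (as opposed to the $V_{\mft}$ or $W_{\mft}$ factors), so that $\mathbf{T}$-invariance of $\ell$ is exactly what is needed. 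I expect this to be the only place where one has to be attentive rather than mechanical; everything else is a direct substitution using the definitions of $I(\mathbf{T})^{\ast}$, the identification $V_{\mft} = \mathcal{K}_{\mft}^{\ast}$ for $\mft \in \Lab_+$, and the hypothesis $\ell \circ \mathbf{T}^{\ast} = \ell$.
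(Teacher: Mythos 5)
Your proof is correct and follows essentially the same route as the paper: apply Lemma~\ref{lem:upsilon_behaves}, contract the $\mcb{T}$-factor with $\ell$ (using $I(\mathbf{T})^{\ast}_{\mft}=O_{\mft}^{\ast}$ for $\mft\in\Lab_{+}$), and invoke $\ell\circ\mathbf{T}^{\ast}=\ell$; the paper simply writes the same chain of equalities starting from the right-hand side. For the ``in particular'' clause the paper is even terser (``since $F$ is already $\mathbf{T}$-covariant the second statement follows from the first''), but your unwinding via the counterterm formula of \cite[Prop.~5.68]{CCHS2d} is exactly the intended justification.
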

\begin{proof}
Since $F$ is already $\mathbf{T}$-covariant we see that the second statement follows from the first. 
Now observe that
\begin{equs}
 (\ell \otimes \id_{V_{\mft}} \otimes T_{\mft} ) \bar\bUpsilon_{\mft}( \mathbf{T}^{-1} \mathbf{A}) 
&=  (\ell \circ \mathbf{T}^{\ast} \otimes O_{\mft}^{\ast} \otimes \id_{W_{\mft}} ) \bar\bUpsilon_{\mft}(  \mathbf{A})\\ 
&=
 (\mathbf{T} \ell  \otimes O_{\mft}^{\ast} \otimes \id_{W_{\mft}} ) \bar\bUpsilon_{\mft}(  \mathbf{A}) 
 =
  ( \ell  \otimes  O_{\mft}^{\ast} \otimes  \id_{W_{\mft}} ) \bar\bUpsilon_{\mft}(  \mathbf{A}) \;,
 \end{equs}
where in the first equality we used Lemma~\ref{lem:upsilon_behaves} and the last equality we used the $\mathbf{T}$-invariance of $\ell$. 
\end{proof}

\section{The stochastic Yang--Mills--Higgs  equation} \label{sec:renorm-A}

In this section we study the system \eqref{eq:SPDE_for_A} and prove Theorem~\ref{thm:local_exist}.
We will first formulate  \eqref{eq:SPDE_for_A}  in terms of the blackbox theory of \cite{BHZ19,CH16,BCCH21} and vectorial regularity structures of \cite{CCHS2d}, but in order to obtain our desired result we will need to handle the three issues described in Remark~\ref{rem:BPHZ_consts}. 
  
Throughout the rest of the paper, let us fix $\eta\in(-\frac23,-\frac12)$, $\delta\in(\frac34,1)$,  $\beta\in(-\frac12,-2(1-\delta))$, $\theta\in(0,1]$, and $\alpha\in(0,\frac12)$ such that
\begin{equs}
\hat\beta &\eqdef \beta+2(1-\delta)\geq 2\theta(\alpha-1)\;,\label{eq:bar_beta_cond1}
\\
\hat\beta &\geq \hat\eta \eqdef 1-2\delta-\eta\;,\label{eq:bar_beta_cond2}
\\
\hat\beta &< \eta+\f12\;,\label{eq:bar_beta_cond3}
\end{equs}
and such that the conditions of Theorem~\ref{thm:g_bound} hold.
Note that such a choice of parameters is always possible \dash we first choose $\alpha,\theta$, then choose $\eta$ sufficiently close to $-\frac12$ such that
$2\theta(\alpha-1)<\eta+\frac12$,
then choose $\delta$ sufficiently close to $1$ such that $\hat\eta<\eta+\frac12$,
and finally choose $\beta$ such that $\hat \beta$ satisfies~\eqref{eq:bar_beta_cond1}--\eqref{eq:bar_beta_cond3}.
We also remark that the tuple $(\rho,\eta,\beta,\delta,\alpha,\theta)\in\R^6$ satisfies conditions~\eqref{eq:CI}, \eqref{eq:CI'} and~\eqref{eq:CGS} for some $\rho\in(\frac12,1)$.  
We use these parameters to define the metric space $(\state,\Sigma)\equiv (\state_{\eta,\beta,\delta,\alpha,\theta},\Sigma_{\eta,\beta,\delta,\alpha,\theta})$ as in Definition~\ref{def:state}.

We rewrite \eqref{eq:SPDE_for_A} as 
\begin{equs}[e:SPDE-for-X]
\partial_t X &= \Delta X + X \partial X + X^3 
+( (C_{\A}^{\eps})^{\oplus 3} \oplus C_{\Phi}^{\eps} ) X + \sig^\eps \xi^\eps
\\
X_0 & = (a,\phi) \in \state \;.  
\end{equs}
Here we introduced constants $\sig^\eps \in \R$ which will be useful in Section~\ref{sec:gauge}. 

\begin{theorem}\label{thm:local-exist-sigma}
Fix any space-time mollifier $\moll$.   
Then there exist  
\[
(C_{j,\YM}^{\eps}, C_{j,\Higgs}^{\eps})_{ j \in \{1,2\}, \eps \in (0,1]}
\] 
with $C_{j,\YM}^{\eps} \in L_{G}(\mfg,\mfg)$ and  $C_{j,\Higgs}^{\eps} \in L_{G}(\higgsvec,\higgsvec)$ such that the following statements hold.

\begin{enumerate}[label=(\roman*)]
\item\label{pt:A_conv_state}
For any $(\mathring{C}_{\A},\sig) \in L_{G}(\mfg,\mfg)  \times \R$ 
and any sequence $(\mathring{C}_{\A}^{\eps},\sig^{\eps}) \rightarrow (\mathring{C}_{\A},\sig)$
 as $\eps \downarrow 0$, the solutions $X$ to the system \eqref{e:SPDE-for-X}
where
\begin{equs}\label{e:SYM_constants}
C_{\A}^{\eps} 
&= 
	(\sig^{\eps})^{2} C_{1,\YM}^{\eps}
	+(\sig^{\eps})^{4} C_{2,\YM}^{\eps} 
	+\mathring{C}^{\eps}_{\A} \;,
\\
C_{\Phi}^{\eps} 
&=  (\sig^{\eps})^{2} C_{1,\Higgs}^{\eps}
+(\sig^{\eps})^{4} C_{2,\Higgs}^{\eps} -m^2 
\end{equs}
converge in $\state^\sol$ in probability as $\eps \to 0$. 

\item\label{pt:depend_on_lim}
The limit of the solutions depends on the sequence $(\mathring{C}^{\eps}_{\A},\sig^{\eps})$ only through its limit $(\mathring{C}_{\A},\sig)$.
\end{enumerate}
\end{theorem}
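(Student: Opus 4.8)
\textbf{Proof strategy for Theorem~\ref{thm:local-exist-sigma}.}
The plan is to recast \eqref{e:SPDE-for-X} within the vectorial regularity structures framework of \cite{CCHS2d} built on the ``blackbox'' theory of \cite{Hairer14,CH16,BHZ19,BCCH21}, and then to deal in turn with the three obstructions flagged in Remark~\ref{rem:BPHZ_consts}: the non-standard state space $\state$, rough (non-modelled) initial data, and the algebraic form of the counterterm. First I would set up the regularity structure: solution types $\{A_1,A_2,A_3,\Phi\}$, noise types $\{\xi_1,\xi_2,\xi_3,\zeta\}$, target spaces $W_{A_i}=\mfg$, $W_\Phi=\higgsvec$, kernel spaces all equal to the scalar heat kernel truncation, and the rule generated by the nonlinearity $X\partial X + X^3$ read off from \eqref{e:XdX-X3}. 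Subcriticality holds since $\eta_{\mathrm{scal}}=-\tfrac12-$ and each of $\Delta X$, $X\partial X$, $X^3$ has the same scaling. The blackbox theory then produces, for each $\eps$, a BPHZ-renormalised solution; the content is to identify the counterterm.

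The key step is (iii): the renormalisation counterterm, which \emph{a priori} is some element of $\bigoplus_{\mft}\mcb{T}[\,\cdot\,]$ produced by $M_{\ell_{\BPHZ}}$, must be shown to be of the form $((C^\eps_{\YM})^{\oplus 3}\oplus C^\eps_{\Higgs})X$ with $C^\eps_\YM\in L_G(\mfg,\mfg)$, $C^\eps_\Higgs\in L_G(\higgsvec,\higgsvec)$. This is exactly where Section~\ref{sec:sym_renorm_reg_struct} is used. I would exhibit a collection of transformations $\mathbf{T}\in\Tran$ under which the noise assignment and kernel assignment are invariant and the nonlinearity is $\mathbf{T}$-covariant, namely: (a) the spatial symmetries $(\sigma,r)$ of $\T^3$ (permutations and axis reflections), which force the counterterm to respect the $O(3)$-type symmetry among the three copies of $\mfg$ and kill any term of odd polynomial degree or wrong parity; (b) the global gauge rotations $g\in G$ acting via $\Ad_g$ on each $\mfg$-factor and via $\brho(g)$ on $\higgsvec$ — here $\mathbf{T}$-covariance of $X\partial X+X^3$ is precisely the covariance property $\Ad_g\mathbf{B}(u\otimes v)=\mathbf{B}(gu\otimes gv)$ noted after \eqref{e:explicit_covar_noise}, together with $\Ad$-invariance of the bracket, and $\mathbf{T}$-invariance of the noise is $(\Ad_g\otimes\Ad_g)\Cas=\Cas$, $(g\otimes g)\Cov=\Cov$. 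By Lemma~\ref{lem:ell-T-inv} the BPHZ character $\ell_{\BPHZ}$ is $\mathbf{T}$-invariant for all such $\mathbf{T}$, and then Proposition~\ref{prop:renormalisation is covariant} shows $M_{\ell_{\BPHZ}}F$ is $\mathbf{T}$-covariant. A symbol count (degree reasons: only trees of negative degree contribute, and in $d=3$ these produce only terms proportional to $A_i$ or to $\Phi$, never higher-order or derivative counterterms, thanks to the parity under $x_i\mapsto -x_i$ of the mollifier and the vanishing of the relevant integrals) leaves only linear counterterms $M_i A_i$ and $N\Phi$, and $\mathbf{T}$-covariance under (a) forces $M_1=M_2=M_3=:C^\eps_\YM$, while covariance under (b) forces $C^\eps_\YM\in L_G(\mfg,\mfg)$ and $N=:C^\eps_\Higgs\in L_G(\higgsvec,\higgsvec)$. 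Splitting the contributions by homogeneity in the noise strength, i.e.\ grouping the trees with two noise leaves versus four, yields the decomposition $C^\eps_\YM=(\sig^\eps)^2 C^\eps_{1,\YM}+(\sig^\eps)^4 C^\eps_{2,\YM}$ and similarly for $C^\eps_\Higgs$ (odd powers of $\sig^\eps$ are excluded since an odd number of noise leaves gives a tree of the wrong parity, hence zero contribution); the $\eps$-dependence of $C^\eps_{j,\bullet}$ comes only through $\moll$.

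For (i) and (ii), I would proceed as follows. The blackbox theory gives convergence of the renormalised models and hence, via the reconstruction/fixed-point machinery, of $X^\eps$ in $\CC([0,T^\eps],\CC^\nu(\T^3,E))$ for some $\nu<-\tfrac12$ and small random $T^\eps$, from arbitrary initial data in $\CC^\nu$; here I would invoke Appendix~\ref{app:Singular modelled distributions} to accommodate the singular (non-modelled) initial condition $(a,\phi)\in\state\hookrightarrow\CC^\nu$, writing $X=\CP X_0 + (\text{remainder})$ and running the fixed point for the remainder in a space of modelled distributions with the appropriate singular weight at $t=0$ — the admissibility conditions \eqref{eq:CI}, \eqref{eq:CGS} on $(\eta,\beta,\delta,\alpha,\theta)$ fixed at the start of the section are exactly what make this work. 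To upgrade $\CC([0,T],\CC^\nu)$-convergence to convergence in $\state^\sol$, I would combine: the local Lipschitz continuity of the deterministic YMH flow map $\CR\colon\init\to B$ from Proposition~\ref{prop:YM_flow_minus_heat} (which controls $\Theta$), the fact that $X_0\in\state$ so that $\CN(X_0)\in\CB^{\beta,\delta}$ and $\heatgr{A_0}_{\alpha,\theta}<\infty$ propagate along the flow, and the compact-embedding/equivalence-of-metrics lemma (Lemma~\ref{lem:equiv_metrics}, via Proposition~\ref{prop:compact}) exactly as in the proof of Corollary~\ref{cor:SHE_conv_state}: on bounded sets of the stronger norm $\bar\Sigma$, convergence in $\CC^\nu$ upgrades to convergence in $\Sigma$. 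A uniform-in-$\eps$ bound on $\bar\Sigma(X^\eps(t))$ on a common (random) time interval, obtained from the a priori estimates of the fixed point together with the SHE estimates of Propositions~\ref{prop:SHE_Nt_estimates} and~\ref{prop:SHE_heatgr_estimates} applied to the linear part, closes the argument; passing to $\state^\sol$ is then routine using the cemetery-state construction recalled in Section~\ref{sec:notation}. Finally, item~\ref{pt:depend_on_lim} — that the limit depends on $(\mathring C^\eps_\A,\sig^\eps)$ only through $(\mathring C_\A,\sig)$ — follows because the solution map is jointly continuous in the model and in the finite-dimensional parameters $(\mathring C_\A,\sig)\in L_G(\mfg,\mfg)\times\R$ appearing in \eqref{e:SYM_constants} and \eqref{e:SPDE-for-X}, so two sequences with the same limit produce solutions whose difference tends to $0$.

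\textbf{Main obstacle.} The genuinely hard part is step (iii): not the invariance input (that is handed to us by Lemma~\ref{lem:ell-T-inv} and Proposition~\ref{prop:renormalisation is covariant}), but verifying that the \emph{only} counterterms surviving the degree count in $d=3$ are the linear ones $M_i A_i$, $N\Phi$ — in particular ruling out, by the parity symmetries $x_i\mapsto -x_i$ of $\moll$ and by direct degree bookkeeping, any counterterm proportional to $\partial A$, to $\Phi^2$, or to a constant, and correctly organising the remaining trees by their number of noise leaves to get the $(\sig^\eps)^2$ and $(\sig^\eps)^4$ split. Everything else is an adaptation of \cite{CCHS2d} and the singular-initial-data results of the appendices.
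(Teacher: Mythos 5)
Your overall strategy coincides with the paper's: the identification of the counterterm via the invariance machinery of Section~\ref{sec:sym_renorm_reg_struct} (spatial reflections/permutations plus global rotations by $G$), combined with a parity-and-power-counting argument forcing the surviving counterterms to be linear in $X$, and the split into $(\sig^{\eps})^{2}$ and $(\sig^{\eps})^{4}$ contributions according to the number of noise leaves, is exactly the content of Lemmas~\ref{lem:symbols_vanish}, \ref{lem:counting}, \ref{lem:linearX} and~\ref{lem:LX-diag}. (The paper works with a single type $\mfz$ with $W_{\mfz}=E$ rather than one type per component, but this is cosmetic, as the paper itself remarks.)

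There is, however, a genuine gap in your solution theory. The decomposition $X=\CP X_0+(\text{remainder})$ with the remainder in a weighted modelled-distribution space does not close, and the obstruction is not the roughness of the initial data alone: even for smooth $X_0$ one has $\mathbf{1}_{+}\bXi\in\cD^{\infty,\infty}_{-5/2-}$, and after one Picard step the products $\tbPsi\partial\tbPsi$, $\CP X_0\,\partial\tbPsi$ and $\tbPsi\,\partial\CP X_0$ (with $\tbPsi$ the lift of the stochastic convolution) carry time-weight exponents strictly below $-2$. Below that threshold the reconstruction is only canonically defined away from $\{t=0\}$ and the standard integration result \cite[Prop.~6.16]{Hairer14} fails, so no choice of weights rescues your fixed point as stated. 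The missing idea is to construct by hand, via second-moment estimates (Lemmas~\ref{lem:tildePsi-cov} and~\ref{lem:PX0Psi-prob}), global space-time distributions extending these products across $\{t=0\}$, and to feed them into the modified integration operator $\CG^{\omega}$ of Lemma~\ref{lem:Schauder-input}; note in particular that $\tilde\Psi_\eps\partial\tilde\Psi_\eps$ requires a further renormalisation at $t=0$ in its zeroth chaos. Relatedly, your compactness upgrade from $\CC^{\nu}$-convergence to $\CC([0,\tau],\state)$-convergence needs a uniform-in-$\eps$ bound on the remainder $\hat X^{\eps}=X^{\eps}-\Psi^{\eps}$ in $\CC([0,\tau],\CC^{\eta+\frac12-\kappa}(\T^3))$ down to $t=0$; the fixed-point a priori estimates only give control of the reconstruction as a space-time distribution on $(0,\tau)\times\T^3$, and the required time-continuity is obtained in Section~\ref{sec:proof_of_localexist_sym} by a separate moment computation on $G*(\Psi\partial\Psi)$. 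Once these two points are supplied, the remainder of your argument (including the restart after time $\tau$ via the generalised Da Prato--Debussche trick) goes through.
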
 

%

Remark that Theorem~\ref{thm:local_exist} clearly follows from Theorem~\ref{thm:local-exist-sigma}.
The rest of this section is devoted to the proof of Theorem~\ref{thm:local-exist-sigma}, which we give in Section~\ref{sec:proof_of_localexist_sym}.
To set up the regularity structure,
we  represent  the Yang--Mills--Higgs field $X$ 
 by a single type denoted as $\mfz$, and we set $W_{\mfz} =E$.  \label{W additive}
The noise $\xi$
is represented by a type $\bar{\mfl}$ and accordingly we set $W_{\bar{\mfl}} = E$.
Our set of types is then specified as $\mfL=\mfL_+\cup \mfL_-$ with
$\mfL_+ = \{\mfz\}$
and $ \mfL_-  = \{\bar{\mfl}\}$.
We write $\CE=\Lab\times \N^{3+1}$,
and $(e_i)_{i=0}^3$ for the generators of $ \N^{3+1}$.

Let $\kappa \in (0,1/100]$.\footnote{We will impose additional, more stringent, smallness requirements on $\kappa$ for other purposes later.}
The associated degrees are defined as
$\deg( \mfz )
=  2 - \kappa $
and $\deg(\bar{\mfl})  =-5/2 - \kappa $,
and the map $\reg: \Lab \rightarrow \R$ verifying the subcriticality of our system is given by
$\reg(\mfz)
=  -1/2 - 4\kappa $
and $\reg(\bar{\mfl})  =-5/2 - 2\kappa $.

We also set our kernel space assignment by setting $\mathcal{K}_{\mfz} = \R$  
and we define a corresponding space assignment $(V_{\mft})_{\mft \in \mfL}$ according to \eqref{eq:space_assignment} \dash this space assignment is then used for the construction of our concrete regularity structure via the functor  $\Func_{V}$ as in  \cite[Sec.~5]{CCHS2d}.

\begin{remark}
Here we have taken a smaller set of types,
namely, if we followed the setting as in  \cite[Sec.~6]{CCHS2d}
we would have to associate a type
to each component of the connection and the Higgs field,
and each component of the noises,
and the target space  $W$ would be
either $\mfg$ or $\higgsvec$ depending on the specific component.
\end{remark}


The form of \eqref{eq:SYMH} motivates us to consider the rule $\mathring{R}$ given by $\mathring{R}(\bar{\mfl}) \eqdef \{\emptyset\}$ and
\begin{equ}\label{e:ymh_rule}
\mathring{R}( \mfz)
\eqdef
\left\{
\bar{\mfl},\ 
\mfz \p_j \mfz  ,\
\mfz \mfz \mfz 
: 1 \le j \le 3 \right\}
\;.
\end{equ}
Here for any $\mft \in \Lab$ we write $\mft = (\mft,0)$ and $\p_j \mft = (\mft,e_j)$ as shorthand for edge types.
We also write products to represent multisets, for instance $ \mfz \p_j \mfz = \{ (\mfz,0), (\mfz,e_j)\}$. 
It is straightforward to verify that $\mathring{R}$ is subcritical, and
has a smallest normal extension which admits a completion $R$
in the sense of \cite[Sec.~5]{BHZ19}
that is also subcritical and will be used to define our regularity structure. 

We also obtain a corresponding set of trees $\mfT=\mfT(R)$ conforming to $R$. 
As in \cite[Sec.~7]{CCHS2d}, we write 
$\Xi=\mcb{I}_{(\bar\mfl,0)}(\bone)$,
and write 
$\bXi \in \mcb{T}[\Xi] \otimes E$
for the corresponding $E$-valued modelled distribution. 
Recall that $\bXi = \id$, the identity map under the canonical identifications
$\mcb{T}[\Xi] \otimes E \simeq V_{\bar \mfl} \otimes E  \simeq E^* \otimes E \simeq L(E,E)$.
For this section and the rest of the paper,
we will adopt the usual graphical notation (e.g.\ \cite{CCHS2d}) for trees:
circles $\<Xi>$ for noises, 
thin and thick lines for $\mcb{I}_\mfz$ and $\mcb{I}_{(\mfz,e_j)}$ for some $j\in[3]$; see Section~\ref{sec:Mass renormalisation} for examples. We will also sometimes write $\partial_j \mcb{I}_\mfz$ for $\mcb{I}_{(\mfz,e_j)}$.

For the kernel assignment, we set  \label{truncation of heat}
$K_{\mfz} = K$ where we fix $K$ to be a truncation of the Green's function $G(z)$ of the heat operator as in \cite[Sec.~6.2]{CCHS2d}.
In particular we choose $K$ so that it satisfies all the symmetries of the heat kernel 
so that, with the notation in Section~\ref{sec:sym_renorm_reg_struct}, for any 
 $\sigma \in S_{3}$, $r \in \{-1,1\}^{3}$ 
and $z \in \R \times \T^{d}$, one has $K_{\mft}(\sigma^{-1} r z) = K_{\mft}(z)$. 
We also fix a random smooth noise assignment 
 $\xi_{\bar{\mfl}} =\sig^{\eps} \xi^{\eps} =\sig^{\eps} \xi \ast \moll^{\eps}$.

 \begin{remark}
Within the rest of this section, to lighten our notation, we will simply assume that 
 $\mathring{C}^{\eps}_{\A} =\mathring{C}_{\A}$ and 
 $\sig^{\eps} =\sig $ (except for the very end of the proof of Theorem~\ref{thm:local-exist-sigma}).
This will not affect any of the arguments in this section. 
\end{remark}

 Following the notation in  \cite[Sec.~7]{CCHS2d},
 for $\pr{\mathbf{A}} \in \mcb{A} \eqdef \prod_{o\in\CE} W_o$
 we write its components in the following  way:
 \begin{equ}[e:pr-X-dX]
 \pr{X} \eqdef  \pr{\mathbf{A}_{(\mfz,0)}} \;, \quad
 \pr{\p_j X} \eqdef \pr{\mathbf{A}_{(\mfz,e_j)}} \;, \quad
 \pr{\bar{\Xi}} \eqdef \pr{\mathbf{A}_{(\bar{\mfl},0)}} 
 \end{equ}
and $\pr{\p X} = ( \pr{\p_1 X} , \pr{\p_2 X} , \pr{\p_3 X} ) \in E^3$. 
 We then define the nonlinearity as $F_{\bar{\mfl}} (\pr{\mathbf{A}})= \id_E$
  and, in the notation of \eqref{e:simpleNotation},
 \begin{equ}[e:YMH-nonlinearity]
F_{\mfz} (\pr{\mathbf{A}})= 
\pr{X \p X} + \pr{X^3} + \mathring C_{\mfz} \pr{X}
+\pr{\bar{\Xi}}
\end{equ}
 where $\mathring C_{\mfz} 
 \eqdef \mathring{C}_{\A}^{\oplus 3} \oplus (-m^2)$.
 
\begin{remark}
Here and below, we always use purple colour to identify elements $\pr{\mathbf{A}}  \in \mcb{A}$ and
their components.
\end{remark}

\subsection{Mass renormalisation}
\label{sec:Mass renormalisation}

Recall that $\mathfrak{T}_{-}=\mathfrak{T}_{-}(R)$ is the set of all unplanted trees in $\mathfrak{T}$ with vanishing polynomial label at the root and negative degree. 
%
Define the subset
\begin{equ}\label{eq:even-ymh-trees}
\mfT_{-}^{\even} \eqdef \Big\{ \tau \in \mfT_{-}:\; \sum_{i=1}^{3} n_{i}(\tau)   \textnormal{ and } |\{ e \in E_{\tau}: \mft(e) = \bar{\mfl} \}|\; \mbox{are  both even} \Big\}
\end{equ}
where $n(\tau) = (n_{0}(\tau),\dots, n_{3}(\tau)) \in \N^{3+1}$ is defined in \eqref{e:def-nf}, $E_{\tau}$ denotes the set of edges of $\tau$, and we write $\mft:E_{\tau} \rightarrow \Lab$ for the map that labels edges with their types. 

Recall from \cite[Prop.~5.68]{CCHS2d} that
to find the renormalised equation, we aim to compute the counterterms
\begin{equ}[e:counterterms-A]
\sum_{
\tau \in \mfT_{-}}
(\ell_{\BPHZ}^{\eps}[\tau] \otimes \id_{W_{\mft}})
\bar\bUpsilon_{\mft}[\tau](\pr{\mathbf{A}})\;.
\end{equ}
Thanks to Lemma~\ref{lem:symbols_vanish} below,
only trees in  $\mfT_{-}^{\even}$
 will actually contribute to \eqref{e:counterterms-A}.

\begin{lemma}\label{lem:symbols_vanish}
Let $\tau\in \mfT$, then $\ell_{\BPHZ}^{\eps}[\tau] \not = 0 \Rightarrow \tau \in \mfT_{-}^{\even}$.
\end{lemma}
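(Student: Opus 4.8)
\textbf{Proof plan for Lemma~\ref{lem:symbols_vanish}.}

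The plan is to exploit a parity symmetry of the noise under each spatial reflection $x_i \mapsto -x_i$, combined with the symmetry arguments developed in Section~\ref{sec:sym_renorm_reg_struct}, and to conclude that any tree $\tau$ which is "odd" in some variable must have $\ell_{\BPHZ}^\eps[\tau] = 0$. First I would fix $i \in \{1,2,3\}$ and consider the transformation $\mathbf{T}_i = (T, O, \sigma, r) \in \Tran$ where $\sigma = \id$, $r = (r_1,r_2,r_3)$ with $r_j = 1$ for $j \neq i$ and $r_i = -1$, $O_\mfz = \id$ on $\mathcal{K}_\mfz = \R$, and $T_{\bar\mfl} = T_\mfz = \id_E$ on the target spaces. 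The key inputs are then: (a) the kernel $K$ was chosen to satisfy all symmetries of the heat kernel, so $K_\mfz(\sigma^{-1} r z) = K_\mfz(z)$, i.e.\ $K$ is $\mathbf{T}_i$-invariant; and (b) the mollified noise $\sig^\eps \xi^\eps = \sig^\eps \xi * \moll^\eps$ is $\mathbf{T}_i$-invariant in law — this uses that $\moll$ is, by assumption in Section~\ref{sec:notation}, invariant under $x_i \mapsto -x_i$ for each $i$, and that space-time white noise is invariant in law under the isometry $x_i \mapsto -x_i$ together with the fact that the covariances $\Cas^{\oplus 3}$ and $\Cov$ on $E = \mfg^3 \oplus \higgsvec$ are unchanged (here $T_{\bar\mfl} = \id_E$, so there is no target-space twist needed). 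Then Lemma~\ref{lem:ell-T-inv} applies and gives that $\ell_{\BPHZ}^\eps$ is $\mathbf{T}_i$-invariant, i.e.\ $\ell_{\BPHZ}^\eps \circ \mathbf{T}_i^\ast = \ell_{\BPHZ}^\eps$.

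Next I would unwind what $\mathbf{T}_i$-invariance of $\ell_{\BPHZ}^\eps$ says on a single tree $\tau \in \mfT_-$. Since $\sigma = \id$ and $T_{\bar\mfl} = \id_E$, $O_\mfz = \id$, the operator $\mathbf{T}_i^\ast = I(\mathbf{T}_i)^\ast \sigma^\ast r^\ast$ reduces on $\mcb{F}[\tau]$ to multiplication by the scalar $r^{n(\tau)} = \prod_{j=1}^3 r_j^{n_j(\tau)} = (-1)^{n_i(\tau)}$ (the $I(\mathbf{T}_i)^\ast$ and $\sigma^\ast$ factors act as the identity given our choices). Hence $\mathbf{T}_i$-invariance forces $(-1)^{n_i(\tau)} \ell_{\BPHZ}^\eps[\tau] = \ell_{\BPHZ}^\eps[\tau]$, so that $\ell_{\BPHZ}^\eps[\tau] \neq 0$ implies $n_i(\tau)$ is even. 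Running this for $i = 1, 2, 3$ and adding, $\ell_{\BPHZ}^\eps[\tau] \neq 0$ implies $\sum_{i=1}^3 n_i(\tau)$ is even, which is the first condition in the definition~\eqref{eq:even-ymh-trees} of $\mfT_-^{\even}$.

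For the second condition — that the number of noise edges $|\{e \in E_\tau : \mft(e) = \bar\mfl\}|$ is even — I would use a second transformation, this time twisting the target space: take $\mathbf{T}' = (T', O', \sigma', r')$ with $\sigma' = \id$, $r' = (1,1,1)$, $O'_\mfz = \id$, but $T'_{\bar\mfl} = -\id_E$ (and $T'_\mfz = \id_E$). This is a legitimate element of $\Tran$ since $-\id_E$ is invertible, and the noise $\sig^\eps\xi^\eps$ is $\mathbf{T}'$-invariant in law because white noise is symmetric, i.e.\ $-\xi \eqlaw \xi$ (and mollification commutes with this). The kernel is trivially $\mathbf{T}'$-invariant. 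So by Lemma~\ref{lem:ell-T-inv} again, $\ell_{\BPHZ}^\eps$ is $\mathbf{T}'$-invariant. On $\mcb{F}[\tau]$ the operator $\mathbf{T}'^\ast = I(\mathbf{T}')^\ast$ acts — by the "component-wise" construction recalled from \cite[Remark~5.19]{CCHS2d} and the structure of $\mcb{T}[\tau] \subset V^{\otimes \symset}$ — as $(-1)^m$ times the identity, where $m = |\{e \in E_\tau : \mft(e) = \bar\mfl\}|$ is the number of noise legs of $\tau$ (each noise leg contributes one factor of $T'^\ast_{\bar\mfl} = -\id_{V_{\bar\mfl}}$, while kernel edges contribute $O'^\ast_\mfz = \id$). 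Invariance then forces $(-1)^m \ell_{\BPHZ}^\eps[\tau] = \ell_{\BPHZ}^\eps[\tau]$, hence $m$ even whenever $\ell_{\BPHZ}^\eps[\tau] \neq 0$. Combining with the first part, $\ell_{\BPHZ}^\eps[\tau] \neq 0 \Rightarrow \tau \in \mfT_-^{\even}$, as desired.

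The main obstacle I anticipate is purely bookkeeping: verifying precisely that $\mathbf{T}_i^\ast$ and $\mathbf{T}'^\ast$ act as the claimed scalars on each homogeneous subspace $\mcb{F}[\tau]$. For $\mathbf{T}_i^\ast$ this is essentially the definition of $r^\ast$ together with the fact that $n(f)$ in~\eqref{e:def-nf} sums edge and polynomial decorations (one should double-check the polynomial-decoration contributions to $n_i$ are handled consistently — but since $\tau \in \mfT_-$ has no polynomial decoration at the root and we only care about the total parity of $n_i(\tau)$, this is harmless). For $\mathbf{T}'^\ast$ one must trace through the functor $\Func_V$ and the component-wise operator construction to see that only the $\bar\mfl$-typed (noise) edges pick up the sign, which is where the careful reference to \cite[Remark~5.19]{CCHS2d} is needed. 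Neither of these is conceptually hard; the content is genuinely the parity/symmetry argument, and the algebra is routine verification against the conventions already set up in the excerpt and in \cite{CCHS2d}.
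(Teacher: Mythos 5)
Your proof is correct and follows essentially the same symmetry argument as the paper's: the paper applies Lemma~\ref{lem:ell-T-inv} once with the single composite reflection $r=(-1,-1,-1)$ where you use the three individual reflections (an immaterial difference, and yours in fact yields the slightly stronger conclusion that each $n_i(\tau)$ is individually even), and it uses the same target-space sign flip $T_{\bar\mfl}=-\id$ for the noise-parity constraint. The only point to add is the one-line observation that $\ell^{\eps}_{\BPHZ}[\tau]=0$ by convention for $\tau\in\mfT\setminus\mfT_{-}$, which is why it suffices to treat $\tau\in\mfT_{-}$ as you implicitly do.
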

\begin{proof}
We first note that by convention $\ell_{\BPHZ}^{\eps}[\tau]  = 0$ for $\tau \in \mfT \setminus \mfT_{-}$.  

Now, if we define $\mathbf{T} = (T,O,\sigma,r)\in \Tran$ by setting $T$, $O$, and $\sigma$ to be identity operators and $r = (-1,-1,-1)$ then we have that $\xi^{\eps}$ and $K$ are both $\mathbf{T}$-invariant,  
so we  have $\ell_{\BPHZ}^{\eps} \circ \mathbf{T}^{\ast}[\tau] =   \ell_{\BPHZ}^{\eps}[\tau]$ by Lemma~\ref{lem:ell-T-inv}. On the other hand $\ell_{\BPHZ}^{\eps} \circ \mathbf{T}^{\ast}[\tau] = (-1)^{\sum_{i=1}^{3}n_{i}(\tau)} \ell_{\BPHZ}^{\eps}[\tau]$.
Therefore, if $\sum_{i=1}^{3}n_{i}(\tau)$ is not even, we must have $ \ell_{\BPHZ}^{\eps}[\tau] = 0$. 

The second constraint defining $\mfT_{-}^{\even}$ can be argued similarly by setting $T_{\bar{\mfl}} = - \id_{W_{\bar{\mfl}}}$ and then $T_{\mfz}$, $S$, $\sigma$, and $r$ to be identity operators. 
\end{proof}
\begin{remark}\label{rem:bare-m-renorm}
Note that, since $d=3$,  
the term $\mathring{C}_{\mfz} X$ 
 leads to trees of the form $\mcb{I}_\mfz (\bar\mfl) \mcb{I}_{(\mfz,e_j)} (\mcb{I}_\mfz ( \bar\mfl ))$
with degree $-\kappa$.  
However, by Lemma~\ref{lem:symbols_vanish}, the BPHZ character vanishes on each of these trees since it has an odd number of derivatives.
This is important because it is convenient for the constants $C^{\eps}_{\YM}$ and $C^{\eps}_{\Higgs}$ appearing below to be independent of the constant $\mathring{C}_{\mfz}$. 
\end{remark} 
One of our key results for this section is the following proposition. 
 
\begin{proposition}\label{prop:mass_term}
Suppose $\tau \in  \mfT_{-}^{\even}$.
Then, for any $v \in \mcb{T}^{\ast}$, there exists $C_{v,\tau} \in L(W_{\mfz},W_{\mfz})$ such that
\[
 (v \otimes \id_{W_{\mfz}} ) \bar\bUpsilon_{\mfz}[\tau](\pr{\mathbf{A}}) = C_{v,\tau}\pr{X}\;.
\]
Furthermore, there exist $C^{\eps}_{\YM} \in L_{G}(\mfg,\mfg)$ 
and $C^{\eps}_{\Higgs} \in  L_{G}(\higgsvec,\higgsvec)$ such that 
\[
\sum_{\tau\in\mfT_{-}}
(\ell^{\eps}_{\BPHZ} [\tau] \otimes \id_{W_{\mfz}} ) \bar\bUpsilon_{\mfz}[\tau](\pr{\mathbf{A}}) = \Big( \bigoplus_{i=1}^{3} C^{\eps}_{\YM} \pr{A_i} \Big) \oplus C^{\eps}_{\Higgs} \pr{\Phi}\;,
\]
where $\pr{X} =((\pr{A_{i}})_{i=1}^{3} , \pr{\Phi}) \in \mfg^{3} \oplus \higgsvec = W_{\mfz}$,
and the linear maps $C^{\eps}_{\YM}$, $C^{\eps}_{\Higgs} $ have the forms
\begin{equ}[e:C-YM-Higgs-sig]
C^{\eps}_{\YM}= 
	\sig^{2} C_{1,\YM}^{\eps}
	+\sig^{4} C_{2,\YM}^{\eps} 
\quad \mbox{and}\quad
C^{\eps}_{\Higgs} = 
  \sig^{2} C_{1,\Higgs}^{\eps}
+\sig^{4} C_{2,\Higgs}^{\eps}
\end{equ}
where  for $j\in \{1,2\}$, the maps $C^{\eps}_{j,\YM} \in L_{G}(\mfg,\mfg)$ 
and $C^{\eps}_{j,\Higgs} \in  L_{G}(\higgsvec,\higgsvec)$  are independent of 
$\sig$.
\end{proposition}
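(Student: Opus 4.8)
The plan is to prove Proposition~\ref{prop:mass_term} in two stages: first establish the structural claim that each individual counterterm $(v \otimes \id_{W_{\mfz}})\bar\bUpsilon_{\mfz}[\tau](\pr{\mathbf{A}})$ is linear in $\pr{X}$ for $\tau \in \mfT_{-}^{\even}$, and then use the symmetry machinery of Section~\ref{sec:sym_renorm_reg_struct} to pin down the block-diagonal, $G$-equivariant form of the sum, together with its polynomial dependence on $\sig$.

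\textbf{Linearity in $\pr{X}$.} First I would analyse the structure of trees $\tau \in \mfT_{-}^{\even}$. Using subcriticality and the degree assignments $\deg(\mfz) = 2-\kappa$, $\deg(\bar\mfl) = -5/2-\kappa$, one enumerates (or bounds the complexity of) the trees of negative degree conforming to the rule $R$. The point is that $\bar\bUpsilon_{\mfz}[\tau](\pr{\mathbf{A}})$ is, by the recursive formula~\eqref{eq:upsilon_induction}, a multilinear expression in the variables $\pr{X}, \pr{\p X}, \pr{\bar\Xi}$: specifically it involves one factor of $\pr{X}$ or $\pr{\p_j X}$ for each thin/thick edge leaving the root, one factor of $\pr{\bar\Xi}$ for each noise edge, and then recursively the contributions of the subtrees. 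Since the noise assignment is just the identity ($F_{\bar\mfl} = \id_E$, so $\bar\bUpsilon_{\bar\mfl}[\bone]$ contributes $\id$), each $\pr{\bar\Xi}$ factor is contracted away and does not carry an $\pr{\mathbf{A}}$-dependence. The key counting fact is that for $\tau \in \mfT_{-}$, the number of leaves carrying $\pr{X}$-dependence (after accounting for the two powers from the $\mfz\p_j\mfz$ and $\mfz\mfz\mfz$ vertices) is exactly one — this is forced by a degree count: each extra factor of $\pr{X}$ raises the degree by roughly $2\alpha = -1 - 8\kappa$, so with more than one free leaf the tree would have nonnegative degree. A factor of $\pr{\p_j X}$ does not occur as a free leaf in $\mfT_{-}$ for the same reason (it would require a pairing). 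Hence $(v \otimes \id_{W_{\mfz}})\bar\bUpsilon_{\mfz}[\tau](\pr{\mathbf{A}})$ is a linear function of $\pr{X}$; calling its coefficient $C_{v,\tau} \in L(W_{\mfz}, W_{\mfz})$ gives the first claim. (I would also invoke Lemma~\ref{lem:symbols_vanish} to restrict attention to $\mfT_{-}^{\even}$, where the parity constraints ensure that odd numbers of derivatives or noises don't contribute.)

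\textbf{Equivariance and block form.} Next I would apply Proposition~\ref{prop:renormalisation is covariant}. The nonlinearity $F_{\mfz}$ of~\eqref{e:YMH-nonlinearity} is $\mathbf{T}$-covariant for the family of transformations $\mathbf{T} = (T,O,\sigma,r)$ where $T_{\mfz} = T_{\bar\mfl}$ act on $W_{\mfz} = W_{\bar\mfl} = E$ via the representation~\eqref{eq:group_action_E} of $G$ (using gauge covariance of $\mathbf{B}$, i.e.\ $\Ad_g\mathbf{B}(u\otimes v) = \mathbf{B}(gu\otimes gv)$, and the $G$-invariance of $\mathring C_{\mfz}$, together with $O_{\mfz} = \id$, $\sigma = \id$, $r = \id$), and also for the spatial symmetries $(\id, \id, \sigma, r)$. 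The kernel $K$ and the noise $\xi^\eps$ are $\mathbf{T}$-invariant for all these (the kernel by construction, the noise by the invariance properties of $\Cas$ and $\Cov$ recorded in Section~\ref{sec:notation}). Hence $\ell^\eps_{\BPHZ}$ is $\mathbf{T}$-invariant by Lemma~\ref{lem:ell-T-inv}, and Proposition~\ref{prop:renormalisation is covariant} gives that the total counterterm $\bu \eqdef \sum_{\tau}(\ell^\eps_{\BPHZ}[\tau]\otimes\id_{W_{\mfz}})\bar\bUpsilon_{\mfz}[\tau](\pr{\mathbf{A}})$, which by the first part equals $C^\eps \pr{X}$ for some $C^\eps \in L(E,E)$, satisfies $C^\eps(g \pr{X}) = g(C^\eps \pr{X})$ for all $g \in G$, i.e.\ $C^\eps$ commutes with the $G$-action on $E = \mfg^3 \oplus \higgsvec$. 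Using in addition the permutation symmetry $\sigma \in S_3$ acting on the three copies of $\mfg$ (the rule $R$ and the nonlinearity are manifestly $S_3$-symmetric), $C^\eps$ must map each $\mfg_i$ summand to itself by the same operator $C^\eps_{\YM} \in L_G(\mfg,\mfg)$, and the $\higgsvec$ summand to itself by some $C^\eps_{\Higgs} \in L_G(\higgsvec,\higgsvec)$; the off-diagonal $\mfg_i \to \mfg_j$ ($i\neq j$) and $\mfg_i \leftrightarrow \higgsvec$ blocks vanish because no tree in $\mfT_-$ can convert one type of leaf into another type of output leg (this is again a parity/structure argument, or can be seen from a separate sign symmetry acting on a single copy of $\mfg$, or on $\higgsvec$). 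Remark~\ref{rem:bare-m-renorm} ensures $C^\eps_{\YM}, C^\eps_{\Higgs}$ do not depend on $\mathring C_{\mfz}$.

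\textbf{Dependence on $\sig$.} Finally, the scaling in $\sig$: the noise assignment is $\xi_{\bar\mfl} = \sig\,\xi^\eps$, so $\bar\PPi_{\can}$ and hence $\ell^\eps_{\BPHZ}[\tau]$ scale as $\sig^{\#\{\text{noise edges in }\tau\}}$. For $\tau \in \mfT_{-}^{\even}$ the number of noise edges is even, and a degree count (each noise edge contributes $-5/2-\kappa$, each integration $+2$, and the whole tree has degree in $(-\kappa, 0)$ roughly) shows the only trees with nonvanishing BPHZ character contributing to a mass-type (i.e.\ linear-in-$\pr X$) counterterm have either $2$ or $4$ noise edges. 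Grouping the sum over $\tau$ by the number of noise edges and extracting the corresponding powers of $\sig$ yields~\eqref{e:C-YM-Higgs-sig} with $C^\eps_{j,\YM}, C^\eps_{j,\Higgs}$ ($j=1,2$) independent of $\sig$; these are still $G$-equivariant and $S_3$-symmetric by the same argument applied term by term. I expect the main obstacle to be the first part — carefully arguing that no tree in $\mfT_{-}$ produces a counterterm of degree higher than linear in $\pr X$ or with a free $\pr{\p_j X}$ leg — which requires a clean enumeration of the negative-degree trees conforming to the rule~\eqref{e:ymh_rule} and a uniform degree bookkeeping; once that structural fact is in hand, the equivariance and $\sig$-scaling parts follow mechanically from Section~\ref{sec:sym_renorm_reg_struct} and the scaling of the BPHZ character.
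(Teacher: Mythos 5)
Your overall strategy coincides with the paper's: linearity of the counterterm in $\pr{X}$ via power counting combined with parity, then the transformation machinery of Section~\ref{sec:sym_renorm_reg_struct} for the block-diagonal $G$-equivariant form, then counting noise edges for the $\sig$-dependence. The $\sig$ part is fine. However, two of your key steps do not go through as stated.

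First, linearity is \emph{not} ``forced by a degree count''. In the paper's bookkeeping (Lemma~\ref{lem:counting}), each additional factor of $\pr{X}$ in a monomial of $\bar\bUpsilon_{\mfz}[\tau]$ raises $\deg\tau$ by only $\tfrac12+\kappa$ (and each free derivative by $1$), not by $|2\reg(\mfz)|$ as your estimate suggests. Consequently pure power counting leaves alive negative-degree contributions that are quadratic or constant in $\pr{X}$: for instance, at ``order $n_\lambda=2$'' the combination $(p_X,p_\p)=(2,0)$ has degree $\approx-\tfrac12<0$, and $(p_X,p_\p)=(0,0)$ has degree $\approx-\tfrac32$. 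These are eliminated only because they force an odd number of noise edges (resp.\ an odd number of spatial derivatives), so that the BPHZ character vanishes by Lemma~\ref{lem:symbols_vanish}. In other words, the parity constraints, which you relegate to a parenthetical, must carry the main weight, and they must be combined with the identities linking $k_\xi$, $k_\p$ to $p_X$, $p_\p$ (the content of the paper's Lemma~\ref{lem:counting} and the case analysis of Lemma~\ref{lem:linearX}); the same analysis is also what delivers $k_\xi\in\{2,4\}$ for the $\sig$-decomposition.

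Second, your mechanism for block-diagonality is flawed. A sign flip on a single copy of $\mfg_i$ (or on $\higgsvec$) \emph{alone} is not a covariance of $F_{\mfz}$: the term $[A_i,\partial_i A_j]$ appearing in the $j$-th component ($j\neq i$) changes sign under $A_i\mapsto-A_i$ while the $j$-th component is supposed to be fixed. The symmetry that works (and is the one used in the paper's Lemma~\ref{lem:LX-diag}) couples the target-space flip of the $\mfg_i$ summand with the spatial reflection $x_i\mapsto -x_i$, i.e.\ $r_i=-1$, together with $O_{\mft}=\id$. Note also that $G$-equivariance alone cannot kill the $\mfg_i\leftrightarrow\higgsvec$ blocks (take $\higgsvec=\mfg$ with the adjoint representation, where nonzero equivariant maps $\mfg\to\higgsvec$ exist); it is precisely this combined reflection, acting by $-1$ on $\mfg_i$ and by $+1$ on $\higgsvec$, that forces those blocks to vanish.
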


\begin{proof}
The first statement, that the left-hand side is a linear function of $\pr{X}$, is shown in Lemma~\ref{lem:linearX}. 
The second statement is proven in Lemma~\ref{lem:LX-diag}. 
\end{proof}

%
%

Before going into the proof of Proposition~\ref{prop:mass_term}, we
give an explicit calculation of
\eqref{e:counterterms-A} at the ``leading order''.
This will not be used elsewhere but demonstrates how the ``basis-free'' framework developed in \cite[Sec.~5]{CCHS2d} applies to an SPDE where the nonlinearities are given by three intrinsic operations: 
the Lie brackets, the bilinear form $\mathbf{B}$, and the natural action of $\mfg$ on $\higgsvec$. 

Until the start of Section~\ref{subsec:ymh_renorm_linear}, consider $d=2,3$.
 As in 
\cite[Eq.~(6.7)]{CCHS2d}, $ \mathfrak{T}_{\lead}= \{\<I[I'Xi]I'Xi_notriangle> ,\<IXiI'[I'Xi]_notriangle> , \<IXi^2>\}$ consists of
all the ``leading order'' trees, i.e.\ trees of degree $-1-2\kappa$  for $d=3$ (resp.\ $-2\kappa$ for $d=2$) that contribute to \eqref{e:counterterms-A}.
Denoting $(t_i)$ (resp.\ $(v_\mu)$) an orthonormal basis of $\mfg$ (resp.\ $\higgsvec$),
our claim is that,
at the leading order,
the renormalisation is
 given by the following natural $G$-invariant linear operators:
\begin{alignat*}{3}
\ad_{\Cas} \colon\mfg & \to \mfg 
&\quad
{\bf C}_{\Cas}\colon \mfg &\to \mfg 
&\quad
 \brho(\Cas)  \colon \higgsvec & \to \higgsvec
\\
A &\mapsto [t_i,[t_i,A]]
&\quad
A &\mapsto \mathbf{B}(v_\mu \otimes A v_\mu)
&\quad
\Phi  &\mapsto\mathbf{B} (v_\mu \otimes \Phi) v_\mu
\end{alignat*}
with Einstein's convention of summation.

\begin{remark}
The first map is indeed $\ad_{\Cas}$ with $\Cas=t_i\otimes t_i \in \mfg\otimes_s\mfg$ the Casimir element, see \cite[Remark~6.8]{CCHS2d}.
For the last map we note that  by \eqref{e:def-B} and the fact that the representation is orthogonal,
\begin{equs}
\mathbf{B} (v_\mu \otimes \Phi)\, v_\mu
&= \brho(\scal{\mathbf{B}(v_\mu \otimes \Phi),t_i}_\mfg t_i) \, v_\mu
= \scal{v_\mu,\brho(t_i) \Phi}_{\higgsvec}\, \brho(t_i) v_\mu \\
&= \scal{\brho(t_i)^* v_\mu,\brho(t_i) \Phi}_{\higgsvec}\, \brho(t_i)\brho(t_i)^* v_\mu
= \scal{v_\mu,\brho(t_i)^2 \Phi}_{\higgsvec}\, v_\mu \\
&= \brho(t_i)^2 \Phi = \brho(\Cas) \Phi\;.
\end{equs}
We remark that  $\ad_{\Cas}$ is  a multiple of $\id_{\mfg}$
when $\mfg$ is simple, and
$\brho(\Cas)$ is a multiple of $\id_{\higgsvec}$ when the representation is irreducible (see \cite[Prop.~10.6]{Hall15} for a formula of this multiple).
For the  map ${\bf C}_{\Cas}$, one has
\begin{equ}
{\bf C}_{\Cas} A =
\scal{\mathbf{B}(v_\mu \otimes \brho(A) v_\mu) , t_i }_\mfg t_i
\stackrel{\eqref{e:def-B}}{=} \scal{v_\mu , \brho(t_i)\brho( A) v_\mu }_{\higgsvec} t_i
= \tr (\brho(t_i)\brho( A)) t_i \;,
\end{equ}
which is just the contraction of $\Cas$ with $A\in \mfg$
using the Hilbert--Schmidt inner product on $\End(\higgsvec)$.
It is easy to check their $G$-invariance. 
\end{remark}

Following the notation of \cite[Sec.~6]{CCHS2d},
but with dimension $d$ and the Higgs field $\Phi$,
one has\footnote{To avoid any confusion with other renormalisation operators we consider later, we remark that $\bar C^\eps$ and $\hat{C}^\eps$ defined here are only used within Lemma~\ref{lem:deg-1} and Remark~\ref{rem:finite-in-2d}.}
\begin{equ}
\ell_{\BPHZ}^{\eps}[\<I[I'Xi]I'Xi_notriangle>] = 
-\ell_{\BPHZ}^{\eps}[\<IXiI'[I'Xi]_notriangle>] = - \hat{C}^{\eps}\bCov\;,\qquad
\ell_{\BPHZ}^{\eps}[\<IXi^2>] = -\bar{C}^\eps \bCov\;,
\end{equ}
where  $\bCov$ is as in \eqref{e:def-bCov} and 
\begin{equ}
\bar C^\eps \eqdef \int dz\ K^\eps(z)^{2},
\quad
\hat{C}^\eps
\eqdef \int dz\ \partial_{j}K^\eps(z)(\partial_{j}K*K^\eps)(z)\;.
\end{equ}
Here, the index $j \in [d]$ is fixed (no summation) and $\hat{C}^\eps$ is clearly independent of it.

\begin{lemma}\label{lem:deg-1}
We have
$\sum_{\tau\in \mathfrak{T}_{\lead}}
(\ell_{\BPHZ}^{\eps}[\tau] \otimes \id_{E})
\bar\bUpsilon_{\mfz}[\tau](\pr{\mathbf{A}})
= \sig^2 \mcL_{\lead} \pr{X}$
where 
\begin{equs}
\mcL_{\lead} \eqdef 
\Big( ( (1-d)\bar C^{\eps} + (6d-8)\hat{C}^\eps) \ad_{\Cas} 
 &+ ( 4\hat C^\eps - \bar{C}^\eps) {\bf C}_{\Cas} \Big)^{\oplus d}
 \\
&   \oplus \Big( (2 d \hat{C}^\eps -d \bar C^{\eps})\brho(\Cas) - \bar C^{\eps} \id_{\higgsvec}\Big)\;.
\end{equs}
\end{lemma}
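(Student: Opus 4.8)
The statement is a purely algebraic/combinatorial computation: we must (i) enumerate the trees in $\mathfrak{T}_{\lead} = \{\<I[I'Xi]I'Xi_notriangle>, \<IXiI'[I'Xi]_notriangle>, \<IXi^2>\}$, (ii) apply the recursive formulae \eqref{eq:upsilon_induction} for $\bar\bUpsilon_{\mfz}$ to each, (iii) contract with the BPHZ characters $\ell_{\BPHZ}^{\eps}[\tau]$ just recorded, and (iv) recognise the result as a $G$-invariant linear operator on $W_{\mfz} = \mfg^3 \oplus \higgsvec$, expressed in terms of $\ad_{\Cas}$, ${\bf C}_{\Cas}$, and $\brho(\Cas)$. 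The nonlinearity is \eqref{e:YMH-nonlinearity}: $F_{\mfz}(\pr{\mathbf{A}}) = \pr{X\p X} + \pr{X^3} + \mathring C_{\mfz}\pr{X} + \pr{\bar\Xi}$, with the bilinear pieces $\pr{X\p X}$ given componentwise by \eqref{e:XdX-X3}, i.e.\ on $\mfg_i$ it is $[A_j, 2\partial_j \bar A_i - \partial_i \bar A_j] - \mathbf{B}(\partial_i\bar\Phi \otimes \Phi)$ and on $\higgsvec$ it is $2A_j\partial_j\bar\Phi$.

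\textbf{Key steps.} First I would note that all three leading trees have a single inner $\mcb{I}_\mfz$-type or $\mcb{I}_{(\mfz,e_j)}$-type edge attached to the root and two noise leaves, so the relevant derivative in \eqref{eq:upsilon_induction} is a \emph{second} derivative of $F_{\mfz}$ — hence only the bilinear terms $\pr{X\p X}$ (and the cubic $\pr{X^3}$, which for $\<IXi^2>$ contributes via $D^2$ acting on the $A_j^2\Phi$ / $[A_j,[A_j,A_i]]$ / $\mathbf{B}(A_i\Phi\otimes\Phi)$ monomials) survive; the linear term $\mathring C_{\mfz}\pr X$ and the noise $\pr{\bar\Xi}$ drop out at this order. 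For $\<IXi^2> = \mcb{I}_\mfz(\Xi^2)$ I would compute $\partial_j^2 D_{(\mfz,0)} D_{(\mfz,0)}$ (no — rather $D_{(\mfz,0)}^2$) of $F_{\mfz}$ evaluated so that both derivative-slots are filled with the noise jet $\bXi = \id_E$; this produces a sum over which component of $E$ each $\Xi$ fills, and combining with $\ell_{\BPHZ}^{\eps}[\<IXi^2>] = -\bar C^\eps\bCov$ and the identities $\mathbf{B}(v_\mu\otimes\brho(A)v_\mu) = {\bf C}_{\Cas}A$, $\mathbf{B}(v_\mu\otimes\Phi)v_\mu = \brho(\Cas)\Phi$, $[t_i,[t_i,A]] = \ad_{\Cas}A$, together with $\bCov = \Cas^{\oplus 3}\oplus\Cov$, gives the $\<IXi^2>$-contribution to $\mcL_{\lead}$. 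For $\<I[I'Xi]I'Xi_notriangle>$ and $\<IXiI'[I'Xi]_notriangle>$ the computation is the same in spirit but now one derivative-slot of $F_{\mfz}$ is a $\partial_j X$-slot filled by $\mcb{I}_{(\mfz,e_j)}(\bXi)$ and one is an $X$-slot (or vice versa); the sum over $j\in[d]$ produces the factors of $d$, and one must carefully track the $\pr{X\p X}$ terms $[A_j, 2\partial_jA_i - \partial_iA_j]$, $\mathbf{B}(\partial_i\Phi\otimes\Phi)$, $2A_j\partial_j\Phi$ — the coefficients $2$ and $-1$ there, after summing over which index is differentiated and which tensor leg carries the kernel, are precisely what yields the numbers $(1-d)$, $(6d-8)$, $(4\hat C^\eps - \bar C^\eps)$, $(2d\hat C^\eps - d\bar C^\eps)$. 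Finally I would check the overall $\sig^2$ prefactor: each noise leaf carries a factor $\sig$ from $\xi_{\bar\mfl} = \sig^\eps\xi^\eps$, and each tree has exactly two leaves, giving $\sig^2$.

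\textbf{Main obstacle.} The genuinely delicate part is the bookkeeping of tensor contractions and symmetry factors $\mathring S(\tau)$ in \eqref{e:sym_at_root} for the two trees with a derivative edge: one must be scrupulous about which leg of the bilinear form $\mathbf{B}$ (or of the Lie bracket) the heat kernel $\partial_j K * K$ versus $K^2$ lands on, since $\mathbf{B}$ is not symmetric and $\ad$ is antisymmetric, so sign errors and factor-of-two errors are easy; this is exactly the place where the analogous 2D computation in \cite[Sec.~6]{CCHS2d} had to be done with care, and the present case only adds the Higgs channel. A secondary point to verify is that the two trees $\<I[I'Xi]I'Xi_notriangle>$ and $\<IXiI'[I'Xi]_notriangle>$ really do enter with opposite-sign BPHZ constants $\mp\hat C^\eps\bCov$ as stated — this follows from integration by parts in the defining integral for $\hat C^\eps$ together with the antisymmetry of $\partial_j K$, and should be remarked. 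Everything else — the identification of the contracted expressions with $\ad_{\Cas}$, ${\bf C}_{\Cas}$, $\brho(\Cas)$ and the verification that the resulting operator is $G$-invariant — is immediate from the identities displayed just before the lemma and from the adjoint/orthogonal invariance of $\langle\,,\,\rangle_\mfg$, $\langle\,,\,\rangle_\higgsvec$, $\Cas$, $\Cov$.
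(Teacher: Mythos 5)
Your plan coincides with the paper's own (sketched) proof: a direct tree-by-tree evaluation of $\bar\bUpsilon_{\mfz}$ via the recursion \eqref{eq:upsilon_induction}, contraction with the recorded characters $\ell_{\BPHZ}^{\eps}[\tau]$, and identification of the result with the invariant operators $\ad_{\Cas}$, ${\bf C}_{\Cas}$, $\brho(\Cas)$ --- the paper likewise only records the three per-tree identities and dismisses the bookkeeping you flag as ``straightforward calculation''. One small slip: $\<IXi^2>$ is the product $\mcb{I}_{\mfz}(\Xi)\,\mcb{I}_{\mfz}(\Xi)$ of two planted trees meeting at the root, not $\mcb{I}_{\mfz}(\Xi^2)$; your subsequent description (applying $D_{(\mfz,0)}^2$ to $F_{\mfz}$, so that only the cubic terms survive for this tree, with both slots filled by the integrated noise) is nonetheless the correct computation for it.
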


\begin{proof}[(Sketch)]
By straightforward calculation one has\footnote{
For instance, in the first identity, $2 \hat C^\eps {\bf C}_{\Cas}$
 is obtained by ``substituting''
$2  \mcb{I}  (A_j \p_j \Phi)$ into
 $ \mathbf{B}(\partial_{i} \Phi  \otimes \Phi)$ for $\Phi$,
and
$2 d  \hat C^\eps   \brho(\Cas) $
is obtained by 
substituting $\mcb{I} (\mathbf{B} (\p_j \Phi \otimes \Phi))$
into $2A_j \p_j \Phi$ for $A_j$.}
\begin{equs}
(\ell_{\BPHZ}^{\eps}[\<I[I'Xi]I'Xi_notriangle>] \otimes \id)
\bar{\bUpsilon}_{\mfz}[\<I[I'Xi]I'Xi_notriangle>](\pr{\mathbf{A}})
&=
 \hat C^\eps 
 \Big(\big((4d-5)  \ad_{\Cas} 
 + 2 {\bf C}_{\Cas} \big)^{\oplus d} \oplus  2 d  \brho(\Cas) \Big) \pr{X}\;,
\\
(\ell_{\BPHZ}^{\eps}[\<IXiI'[I'Xi]_notriangle>] \otimes \id)
\bar{\bUpsilon}_\mfz [\<IXiI'[I'Xi]_notriangle>](\pr{\mathbf{A}})
&=
\hat C^\eps 
\Big(\big((2d-3) \ad_{\Cas} 
+2 {\bf C}_{\Cas} \big)^{\oplus d}\oplus 0 \Big)\pr{X}\;,
\\
 (\ell_{\BPHZ}^{\eps}[\<IXi^2>] \otimes \id) 
 \bar\bUpsilon_\mfz [\<IXi^2>](\pr{\mathbf A}) 
&=
 \bar{C}^\eps
 \Big( \big((1-d) \ad_{\Cas} 
 -{\bf C}_{\Cas} \big)^{\oplus d} \oplus   (-d \brho(\Cas) -\id_{\higgsvec} )  \Big)\pr{X}.
\end{equs}
Adding all these identities we obtain the claimed map $\mcL_{\lead}$.
\end{proof}

%
%
%

\begin{remark}\label{rem:finite-in-2d}
$\bar C^{\eps} -2d\hat{C}^\eps$ converges to a finite value as $\eps\to 0$, which essentially follows from \cite[Proof of Lemma~6.9]{CCHS2d}.
 So when $d=3$, both the coefficients of $\ad_{\Cas} $ and ${\bf C}_{\Cas}$ are  {\it divergent} (at rate $O(\eps^{-1})$).
Interestingly, if $d=2$,  the coefficients of $\ad_{\Cas} $ and ${\bf C}_{\Cas}$ both
converge to finite limits (which was shown in the case without Higgs field in \cite[Sec.~6]{CCHS2d}).
With this remark and Lemma~\ref{lem:deg-1} one should be able to extend the main results of \cite{CCHS2d} to the
Yang--Mills--Higgs case in 2D by following the arguments therein.
\end{remark}

Note that 
there are  a large number of trees in  $\mfT_{-}^{\even}$ 
besides the ones in Lemma~\ref{lem:deg-1}, for instance
\begin{equ}
\<Psi2I[Psi2]_notriangle>\,,\;
\<YY_notriangle>\,,\;
\<PsiYI[Psi']_notriangle>\,,\;
\<Psi'I[Psi'I[LPsi']]_notriangle>\,,\,
\<Psi'I[Psi'I[L'Psi]]_notriangle>\,,\,
\<Psi'I[PsiI'[LPsi']]_notriangle>\,,\,
\<Psi'I[Psi'I[Y']]_notriangle>\,,\,
\<Psi'I[I'[YPsi']]_notriangle>\,,\;\;
 \mbox{etc.}
\end{equ}
Below we develop more systematic arguments to find their contribution to the renormalised equation.

\subsubsection{Linearity of renormalisation}\label{subsec:ymh_renorm_linear}

We prove the first statement of Proposition~\ref{prop:mass_term},
namely we only have linear renormalisation.
Note that for each $\tau \in \mfT_{-}(R)$,
$\bar\bUpsilon_{\mfz}[\tau](\pr{\mathbf{A}})$
is polynomial in $\pr{\mathbf{A}}$ (namely, in $\pr{X}$ and its derivatives).
Obviously it suffices to show that each term of the polynomial 
is linear in $\pr{X}$.
Fixing such a term,
we write $p_X$ and $p_\p $ for the total powers of $\pr{X}$ and derivatives respectively in this   monomial, for instance 
for $\pr{X \p X} $ we have  $p_X=2$ and $p_\p =1$.

It turns out to be convenient to introduce a formal parameter $\lambda$
and to write the nonlinear terms of our SPDE\footnote{One can eventually take $\lambda=1$, but having these coefficients will be helpful for power counting: pretending that $\lambda$ has ``degree $-1/2$'', these nonlinear terms all have the same degree as white noise.}
as $\lambda X\p X + \lambda^2 X^3$. 
Since $\bar\bUpsilon_{\mfz}[\tau]$ is generated by iterative substitutions
with these nonlinear terms, each term of $\bar\bUpsilon_{\mfz}[\tau]$
comes with a coefficient   $\lambda^{n_\lambda}$ for some positive integer $n_\lambda$. 
For instance  $\bar\bUpsilon_{\mfz}[\<YY_notriangle>]$ is associated with  $n_\lambda=4$. 

For a fixed tree $\tau$ 
we also write 
 $k_\xi  =   |\{ e \in E_{\tau}: \mft(e) = \bar{\mfl} \}|$ for the total number of noises
 and $k_\p = \sum_{i=1}^{3} |n_{i}(\tau)| $ for the total number of derivatives plus the total powers of $\mathbf{X}$.  (Recall \eqref{e:def-nf} for relevant notation.)
 For instance for the tree $\<YY_notriangle>$ 
 we have  $k_\xi=4$ and $k_\p =2$.

\begin{lemma}\label{lem:counting}
Let $\tau \in \mfT_{-}(R)$.
For each term of $\bar\bUpsilon_{\mfz}[\tau]$  
which is associated with numbers $(n_\lambda, k_{\p}, k_\xi,p_X, p_\p)$ as above, we have
\minilab{counting-identities}
\begin{equs}
{} k_\xi &= n_\lambda + 1 -p_X \;,		\label{e:iden1}
\\
n_\lambda + k_\p + p_\p  &\equiv 0 \; \mod 2 \;,		\label{e:iden2}
\\
 \deg(\tau) &= \frac{n_\lambda}{2} - \frac{5}{2} + \frac{p_X}{2} + p_\p  - k_\xi \kappa\;.
		\label{e:iden3}
\end{equs}
\end{lemma}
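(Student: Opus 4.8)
\textbf{Proof strategy for Lemma~\ref{lem:counting}.}
The plan is to prove all three identities simultaneously by structural induction on the tree $\tau$, following the recursive definition of $\bar\bUpsilon_\mfz$ in \eqref{eq:upsilon_induction}. The point is that each monomial appearing in $\bar\bUpsilon_\mfz[\tau]$ is generated from a ``leaf'' contribution (either $\bar\bUpsilon_{\bar\mfl}[\one] = \id$ or a term of $F_\mfz$) by iterated substitution of the subtrees $\bUpsilon_{o_i}[\tau_i]$ into the derivatives $D_{o_1}\cdots D_{o_m}\bar\Upsilon_\mfz[\one]$, so the five-tuple $(n_\lambda,k_\p,k_\xi,p_X,p_\p)$ is additive (up to the explicit contribution of the root vertex and its decorations) over this substitution, and it suffices to check the base cases and that each of the three relations is preserved under the inductive step.

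First I would set up the bookkeeping. For $\tau$ of the form \eqref{eq:generic_tau}, $\tau = \mbX^k \prod_{i=1}^m \mcb I_{o_i}(\tau_i)$ with $o_i = (\mfz, q_i)$, the root of $\tau$ carries a factor of $F_\mfz$ (one of the monomials $\lambda\,X\partial X$, $\lambda^2 X^3$, $\mathring C_\mfz X$, or $\bar\Xi$), into which the modelled distributions $\bUpsilon_{o_i}[\tau_i]$ are plugged; the polynomial decoration $\mbX^k$ at the root contributes $|k|$ to $k_\p$ and nothing else. I would track how each of the four choices of root monomial changes $(n_\lambda,p_X,p_\p,k_\xi)$: the noise term $\bar\Xi$ is the base case with $(n_\lambda,k_\p,k_\xi,p_X,p_\p)=(0,0,1,0,0)$, giving $\deg = -5/2-\kappa$ and verifying \eqref{e:iden1}--\eqref{e:iden3} directly; the terms $\lambda X\partial X$, $\lambda^2 X^3$, $\mathring C_\mfz X$ contribute $n_\lambda = 1,2,0$ respectively, together with the corresponding counts of factors of $X$ (here ``$p_X$'' counts the leftover $X$-slots, i.e. the substitutions actually filled by polynomial decorations rather than by subtrees) and derivatives $p_\p$, and each subtree substitution adds $\deg(\mcb I_{o_i}(\tau_i)) = \deg(\tau_i) + 2 - |q_i|$ to the degree while adding $(n_\lambda^{(i)},k_\p^{(i)},k_\xi^{(i)},p_X^{(i)},p_\p^{(i)})$ to the tuple. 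The identity \eqref{e:iden1}, $k_\xi = n_\lambda + 1 - p_X$, is then the statement that every application of the nonlinearity $\lambda X\partial X$ or $\lambda^2 X^3$ consumes exactly one ``$X$-slot'' by a subtree whose own noise count already accounts for it, i.e. it is a conservation law for the number of $\Xi$-leaves versus the number of substitution steps; \eqref{e:iden2} is a parity statement, saying that the total number of derivative-type decorations plus the number of $X\partial X$-substitutions is even, which is exactly the reflection symmetry already exploited in Lemma~\ref{lem:symbols_vanish}; and \eqref{e:iden3} is the standard degree-counting formula once one assigns $\deg(\Xi) = -5/2-\kappa$, $\deg(\mcb I_\mfz) = 2$, each derivative $\partial_j$ degree $-1$, and bookkeeps the $\lambda$-powers as ``degree $-1/2$'' as suggested in the footnote.

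The induction step is then a matter of adding the root contribution to the sum of the subtree contributions and checking each identity linearly; for \eqref{e:iden3} one must also remember the ``$+2$'' from each integration kernel $\mcb I$ and the ``$-|q_i|$'' from the edge derivatives $D_{o_i}$, and that $k_\p = \sum_i k_\p^{(i)} + |k| + \sum_i |q_i|$ while $p_\p$ counts only the derivative slots of the root monomial that are filled by polynomial (not subtree) factors --- these are precisely the $\mbX^{e_j}$ decorations produced by the Taylor-expansion step of $\bar\bUpsilon$. The main obstacle I anticipate is purely notational: being careful about the distinction between ``a derivative appearing as an edge decoration $q_i$ of the tree'' versus ``a derivative in the nonlinearity $X\partial X$ that gets filled by a subtree-valued argument'' versus ``a derivative filled by a bare polynomial $\mbX$'', since \eqref{e:iden2} and \eqref{e:iden3} weight these differently, and making sure that $p_X, p_\p$ are defined consistently with how \eqref{e:counterterms-A} is later evaluated in Section~\ref{subsec:ymh_renorm_linear}. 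Once the definitions are pinned down the three recursions close immediately and the base case $\tau = \Xi$ (equivalently, reading off $F_\mfz$'s monomials) furnishes the initialization.
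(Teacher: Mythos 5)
Your proposal is correct and follows essentially the same route as the paper: induction along the recursive definition \eqref{eq:upsilon_induction}\slash\eqref{e:def-barUp}, base case $\tau=\Xi$ with tuple $(0,0,1,0,0)$, additivity of the counts under substitution (with the root monomials of $F_\mfz$ contributing $(1,0,0,2,1)$ and $(2,0,0,3,0)$), reduction to vanishing root polynomial decoration, and the case distinction on whether the $\partial X$ slot is filled by a subtree or left as a polynomial factor for \eqref{e:iden2}--\eqref{e:iden3}. The paper's only wrinkle you should be aware of is that the induction cannot be anchored at $\tau=\bone$ (the term $\bone\otimes\pr{\xi}$ violates \eqref{e:iden3}), which your choice of $\tau=\Xi$ as base case already avoids.
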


\begin{proof}
For $\tau =\mbX^q \prod_{i=1}^m \mcb{I}_{o_i}(\tau_{i})$ where  $o_i \in \CE$, recall our recursive definition 
\begin{equ}[e:def-barUp]
 \bar{\bUpsilon}_{\mfz} [\tau] 
\eqdef \mbX^q
\Big[ 
\partial^q D_{o_1}\cdots D_{o_m}
\bar{\bUpsilon}_{\mfz}[\bone]
\Big]
\big(\bUpsilon_{o_1}[\tau_1],\ldots,\bUpsilon_{o_m}[\tau_m]\big)
\end{equ}
where $\bar{\bUpsilon}_{\mfz}[\bone] = \bone \otimes (\lambda\pr{X \p X} +\lambda^2 \pr{X^3} + \pr{\xi})$. 
Since our SPDE has additive noise, we only need to
consider the case $o_i \in \Lab_+ \times \N^{d+1}$ for all $i\in [m]$.
By linearity it suffices to prove the lemma assuming that $\bar{\bUpsilon}_{\mfz} [\tau] $ is monomial.  Also note that it suffices to show the case $q=0$,
since increasing $|q|$ by $1$ amounts to
increasing each of $p_{\p}, k_{\p},\deg(\tau) $ by $1$, which preserves 
\eqref{counting-identities}.

We prove the lemma by induction. 
The base case of the induction is $\tau = \Xi = \CI_{\bar{\mfl}} [\bone]$
which has $\deg(\tau) = -5/2-\kappa$, and its associated five numbers
$(n_\lambda, k_{\p}, k_\xi,p_X, p_\p) = (0,0,1,0,0)$
obviously satisfy the three identities.\footnote{We do not start the induction from $\tau=\bone$ since it does not satisfy the last identity: $\bar{\bUpsilon}_{\mfz}[\bone] $ has a term $\bone \otimes \pr{\xi}$,
for which
$n_\lambda=k_\xi=p_X=p_\p =0$, and $\deg(\bone)=0$, so \eqref{e:iden3} would read $0=-\frac52$.}

For each $1\le i\le m$
denote by $(n_\lambda^{(i)}, k_{\p}^{(i)}, k_\xi^{(i)} ,p_X^{(i)}, p_\p^{(i)})$
the numbers associated to each $\bar{\bUpsilon}_{o_i}[\tau_i]$,
and our induction assumption is that they all satisfy \eqref{counting-identities}.
Denote by $(n_\lambda^{(0)}, k_{\p}^{(0)}, k_\xi^{(0)} ,p_X^{(0)}, p_\p^{(0)})$
the numbers associated to either the term 
$\bone \otimes (\lambda\pr{X \p X})$ or the term
 $\bone \otimes  (\lambda^2 \pr{X^3})$,
 which are $(1,0,0,2,1)$ and $(2,0,0,3,0)$ respectively.
 It is easy to check that these satisfy \eqref{counting-identities}
 since $\deg(\one)=0$.

From the recursion \eqref{e:def-barUp} we  observe that 
\begin{equ}[e:knp-ind]
k_\xi = \sum_{i=0}^m k_\xi^{(i)}\;, \quad
n_\lambda = \sum_{i=0}^m n_\lambda^{(i)}\;,\quad
p_X = \Big(\sum_{i=0}^m p_X^{(i)}\Big)-m\;, 
\end{equ}
and $p_{\p} + k_{\p} = \sum_{i=0}^m (p_{\p}^{(i)} + k_{\p}^{(i)})$.
Using these relations together with our induction assumption, one easily checks that
 \eqref{e:iden1} and \eqref{e:iden2} hold for $\tau$.

Finally, observe that 
if the factor $\pr{\p X}$ in $\pr{ X \p X}$ is not substituted (i.e.\ $o_i=(\mfz,0)$ for all $i\in [m]$), one has
$\deg(\tau) = \sum_{i=1}^m \deg(\tau_i) + 2m$
and $p_{\p} = \sum_{i=0}^m p_{\p}^{(i)}$,
where the term $2m$ arises from the increase of degree by the heat kernel.
On the other hand
if $\pr{\p X}$ is substituted by some $\bUpsilon_{o_i}[\tau_i]$ (i.e.\ $o_i=(\mfz,e_j)$ for some $i\in [m]$ and $j\in \{1,2,3\}$),  one has
$\deg(\tau) = \sum_{i=1}^m \deg(\tau_i) + 2m-1$
and $p_{\p} = (\sum_{i=0}^m p_{\p}^{(i)}) -1$.
Using these relations and \eqref{e:knp-ind} as well as our inductive assumption,
we see that  in both cases the last identity \eqref{e:iden3} is preserved for $\tau$.
\end{proof}

The following lemma shows that we only have linear renormalisation.

\begin{lemma}\label{lem:linearX}
There exists $\mcL\in L (E,E)$ such that 
\begin{equ}
\sum_{\tau \in \mfT_{-}} 
(\ell_{\BPHZ}^{\eps}[\tau] \otimes \id_{E})
\bar\bUpsilon_{\mfz}[\tau](\pr{\mathbf{A}})
=\mcL \pr{X}\;.
\end{equ}
The map $\mcL$ has the form
$\mcL = \sig^{2} \mcL_1
	+\sig^{4} \mcL_2$
where for each $j\in \{1,2\}$, the map $\mcL_j \in L(E,E)$ 
is independent of $\sig$.
\end{lemma}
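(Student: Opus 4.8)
The plan is to reduce the whole statement to a claim about a single tree, and then to combine a scaling/parity bookkeeping with one discrete reflection symmetry. Since $\ell_{\BPHZ}^\eps[\tau]=0$ unless $\tau\in\mfT_-^{\even}$ by Lemma~\ref{lem:symbols_vanish}, and a conforming tree with no noise edge is a pure polynomial (hence of nonnegative degree, so not in $\mfT_-$), it suffices to show: for every $\tau\in\mfT_-^{\even}$, each monomial of $\bar\bUpsilon_\mfz[\tau](\pr{\mathbf{A}})$ involves exactly one factor of $\pr X$ and no factor of any $\pr{\p_j X}$, i.e.\ in the notation of Lemma~\ref{lem:counting} each such monomial has $p_X=1$ and $p_\p=0$. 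Granting this, $\bar\bUpsilon_\mfz[\tau](\pr{\mathbf{A}})=C_\tau\pr X$ for some $C_\tau\in L(E,E)$, and $\mcL\eqdef\sum_{\tau\in\mfT_-}\ell_{\BPHZ}^\eps[\tau]\,C_\tau$ proves the first assertion. (Trees involving the linear term $\mathring C_\mfz\pr X$ of \eqref{e:YMH-nonlinearity}, which is absent from the recursion used to prove Lemma~\ref{lem:counting}, contribute nothing by the argument of Remark~\ref{rem:bare-m-renorm} and can be set aside.)

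First I would extract all the numerics from Lemma~\ref{lem:counting}. Eliminating $n_\lambda$ between \eqref{e:iden1} and \eqref{e:iden3} gives, for each monomial of $\bar\bUpsilon_\mfz[\tau]$,
\[
\deg(\tau) = \tfrac{k_\xi}{2}(1-2\kappa) + p_X + p_\p - 3\;.
\]
Since $\tau\in\mfT_-$ forces $\deg(\tau)<0$, $k_\xi\geq 2$, and $\kappa<\tfrac1{100}$, this yields $p_X+p_\p\leq 2$. Moreover $\tau\in\mfT_-^{\even}$ means both $k_\xi$ and $\sum_{i=1}^3 n_i(\tau)=k_\p$ are even; substituting this into \eqref{e:iden1} and \eqref{e:iden2} gives $p_X\equiv n_\lambda+1$ and $n_\lambda\equiv p_\p\pmod 2$, hence $p_X+p_\p$ is odd. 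So far this leaves the possibilities $(p_X,p_\p)\in\{(1,0),(0,1)\}$, and I still need to exclude the second.

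This is where a reflection comes in. Consider $\mathbf{T}=(T,O,\sigma,r)\in\Tran$ with $\sigma=\id$, $r=(-1,-1,-1)$, $O_\mfz=\id$, and $T_\mfz=T_{\bar\mfl}=-\id_E$. A direct component check shows $F_\mfz$ from \eqref{e:YMH-nonlinearity} is $\mathbf{T}$-covariant: under $\pr X=\mathbf{A}_{(\mfz,0)}\mapsto-\pr X$, $\pr{\p_j X}=\mathbf{A}_{(\mfz,e_j)}\mapsto\pr{\p_j X}$, $\pr{\bar\Xi}=\mathbf{A}_{(\bar\mfl,0)}\mapsto-\pr{\bar\Xi}$, each of $\pr{X\p X}$, $\pr{X^3}$, $\mathring C_\mfz\pr X$, $\pr{\bar\Xi}$ acquires exactly one sign change — the key point being that $A$ and $\xi$ are $1$-forms, so a full spatial reflection together with negation of the target is an equivariance of the Lie-bracket and $\mathbf{B}$ nonlinearities — while $\xi^\eps$ and $K$ are $\mathbf{T}$-invariant by evenness of $\moll$ and $K$. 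Lemma~\ref{lem:upsilon_behaves} then applies; for this $\mathbf{T}$ one has $\mathbf{T}^\ast$ acting on $\mcb{T}[\tau]$ as the scalar $(-1)^{k_\xi}$ (from $T_{\bar\mfl}^\ast=-\id$ on each noise factor) times $(-1)^{k_\p}$ (from $r^\ast[\tau]=r^{n(\tau)}$), and $I(\mathbf{T})^\ast_\mfz=\id$ on $V_\mfz\simeq\R$, so the lemma reads $(-1)^{k_\xi+k_\p}\bar\bUpsilon_\mfz[\tau](\mathbf{A})=-\bar\bUpsilon_\mfz[\tau](\mathbf{T}\mathbf{A})$. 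For $\tau\in\mfT_-^{\even}$ the sign on the left is $+1$; since $\bar\bUpsilon_\mfz[\tau](\mathbf{A})$ is a polynomial only in $\pr X$ and the $\pr{\p_j X}$ (the nonlinearity uses only $X$ and its first derivatives), and a monomial $M$ scales by $(-1)^{p_X(M)}$ under $\mathbf{A}\mapsto\mathbf{T}\mathbf{A}$, every surviving monomial must have $p_X$ odd. Combined with the previous paragraph ($p_X+p_\p$ odd and $\leq 2$) this forces $p_X=1$, $p_\p=0$. The decomposition $\mcL=\sig^2\mcL_1+\sig^4\mcL_2$ is then easy: the noise enters only as $\xi_{\bar\mfl}=\sig^\eps\xi^\eps$, so $\ell_{\BPHZ}^\eps[\tau]$ is homogeneous of degree $k_\xi$ in $\sig^\eps=\sig$ while $\bar\bUpsilon_\mfz$ is $\sig$-free, and plugging $p_X=1$, $p_\p=0$ into the displayed identity with $\deg(\tau)<0$ gives $k_\xi<4/(1-2\kappa)<5$, hence $k_\xi\in\{2,4\}$; grouping the contributing trees by $k_\xi/2=j\in\{1,2\}$ and factoring out $\sig^{2j}$ yields the claim with $\mcL_j$ manifestly independent of $\sig$.

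The main obstacle is the step isolating $p_X=1$ exactly. Neither the scaling bookkeeping alone (which only bounds $p_X+p_\p$) nor the reflection alone suffices; one needs all three inputs at once — the degree identity of Lemma~\ref{lem:counting}, the two parity constraints coming from the two reflection invariances encoded in Lemma~\ref{lem:symbols_vanish}, and the equivariance of $\bar\bUpsilon_\mfz$ under the sign-flipping $\mathbf{T}$ via Lemma~\ref{lem:upsilon_behaves} — and in particular the bound $p_X+p_\p\leq 2$ genuinely uses that $\kappa$ has been fixed below a small threshold.
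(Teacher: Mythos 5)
Your overall strategy is the same as the paper's: combine the bookkeeping identities of Lemma~\ref{lem:counting} with the parity constraints of Lemma~\ref{lem:symbols_vanish} to pin down $(p_X,p_\p)=(1,0)$, then read off $k_\xi\in\{2,4\}$ to get the $\sig^2/\sig^4$ decomposition. Your reorganisation (eliminating $n_\lambda$ to get $\deg(\tau)=\tfrac{k_\xi}{2}(1-2\kappa)+p_X+p_\p-3$, hence $p_X+p_\p\le 2$ and odd) is a clean repackaging of the paper's case analysis over $n_\lambda$, and the treatment of the $\sig$-dependence at the end is correct.

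However, there is a genuine error in the step you yourself flag as the crux, namely the exclusion of $(p_X,p_\p)=(0,1)$. With the paper's convention (stated just before Lemma~\ref{lem:counting}: for $\pr{X\p X}$ one has $p_X=2$, $p_\p=1$), $p_X$ counts \emph{all} jet factors of a monomial, including the differentiated ones. Under your reflection $\mathbf{T}$ one has $\pr{X}\mapsto-\pr{X}$ but $\pr{\p_j X}\mapsto+\pr{\p_j X}$ (as you note), so a monomial scales by $(-1)^{p_X-p_\p}$, \emph{not} by $(-1)^{p_X}$. Consequently Lemma~\ref{lem:upsilon_behaves} applied to this $\mathbf{T}$ only yields that $p_X-p_\p$ is odd, which is equivalent to the parity information you already extracted from \eqref{e:iden1}--\eqref{e:iden2}; it does not force $p_X$ odd and does not rule out $(0,1)$. (If instead you meant $p_X$ to count only undifferentiated factors, then your use of \eqref{e:iden1}--\eqref{e:iden3} earlier is inconsistent with their statement.) The correct way to discard $(0,1)$ is trivial and is exactly what the paper does: a monomial with $p_X=0$ is a constant and hence has $p_\p=0$ (equivalently $p_\p\le p_X$, since every derivative factor is itself a factor). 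With that one-line observation in place of the reflection argument, your proof is complete and coincides with the paper's.
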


\begin{proof}
It suffices to prove that for each tree $\tau  \in \mfT_{-}(R)$, we have either $(p_X, p_\p) = (1,0)$ or $\ell_{\BPHZ}^{\eps}[\tau]=0$.

The proof easily follows from analysing each possible value of $n_\lambda$ (which is the advantage of introducing this parameter). By the ``parity'' Lemma~\ref{lem:symbols_vanish}, both $k_\xi\ge 2$ and $k_\p$  must be even,
or $\ell_{\BPHZ}^{\eps}[\tau]=0$.  Obviously we also have $p_X = 0 \Rightarrow p_\p =0$.

Let $n_\lambda=2$. We look for solutions to\footnote{In this proof we drop the term $-k_{\xi} \kappa$ in \eqref{e:iden3} since it is irrelevant after we have chosen $\kappa>0$ sufficiently small.}
\[
\deg(\tau) =   - \frac{3}{2} + \frac{p_X}{2} + p_\p \le 0\;.
\]
To have even $k_\xi\ge 2$, by  \eqref{e:iden1} of Lemma~\ref{lem:counting}, $p_X$ must be odd.
To have even $k_\p$, by  \eqref{e:iden2}  $p_\p$ must be even.
Thus we only have one solution $(p_X, p_\p) = (1,0)$ to the above inequality.
For this solution, by  \eqref{e:iden1}, we have $k_\xi =2$.

Let $n_\lambda=3$ and we solve
\[
\deg(\tau) =   -1 + \frac{p_X}{2} + p_\p \le 0\;.
\]
As above, to have even $k_\xi\ge 2$, $p_X \in \{2,0\}$.
To have even $k_\p$, the number $p_\p$ must be odd.  
Since whenever $p_X = 0$ we must have  $p_\p =0$ as mentioned above, there is then no solution to the above inequality.

Consider $n_\lambda=4$ and
\[
\deg(\tau) =   -\frac12 + \frac{p_X}{2} + p_\p \le 0\;.
\]
To have even $k_\xi\ge 2$, $p_X \in \{1,3\}$.
To have even $k_\p$, the number $p_\p$ must be even. So the only solution is $(p_X, p_\p) = (1,0)$.
For this solution, by  \eqref{e:iden1}, we have $k_\xi =4$.

Consider $n_\lambda=5$ and
$\deg(\tau) =   \frac{p_X}{2} + p_\p \le 0$.
To have even $k_\p$, the number $p_\p$ must be odd, so there is no solution. 
Finally for $n_\lambda\ge 6$, $\deg(\tau)>0$.
Putting these together proves the first claim.
Moreover, since we have shown above that $k_\xi$ can only be $2$ or $4$, 
and $\sig$ is the coefficient in front of the noise, the map $\mcL$ has the claimed form.
\end{proof}

\subsubsection{Symmetries of the renormalisation}\label{sec:Sym-and-con}

Having proved that the renormalisation
is linear in $X$, we now show that 
$\CL$ is subject to a number of constraints by symmetries, allowing us
to complete the proof of Proposition~\ref{prop:mass_term}.
\begin{lemma}\label{lem:LX-diag}
For any $\eps > 0$, the linear map $\mcL\in L(E,E) $ in Lemma~\ref{lem:linearX} 
 has  the form $\CL = C^\eps_{\YM} \oplus C_{\YM}^\eps \oplus C_{\YM}^\eps \oplus C_{\Higgs}^\eps $ 
 with $C^\eps_{\YM} \in L(\mfg,\mfg)$ and $C_{\Higgs}^\eps \in L(\higgsvec,\higgsvec)$.
 Moreover $\mcL \in L_G(E,E) $, namely $\mcL$
 commutes with the action $\Ad\oplus \brho$ of $G$ on $E$. In particular,  $C_{\YM}^\eps \in L_{G}(\mfg,\mfg)$ and  $C_{\Higgs}^\eps \in L_{G}(\higgsvec,\higgsvec)$ . 
\end{lemma}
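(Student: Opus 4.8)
\textbf{Proof plan for Lemma~\ref{lem:LX-diag}.}

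The strategy is to exploit two families of transformations $\mathbf{T}\in\Tran$ together with Proposition~\ref{prop:renormalisation is covariant} and the $\mathbf{T}$-invariance of $\ell_{\BPHZ}^\eps$ coming from Lemma~\ref{lem:ell-T-inv}. First I would establish the block-diagonal structure of $\mcL$, i.e.\ the absence of off-diagonal terms mixing the three copies of $\mfg$ and the copy of $\higgsvec$. For the three $\mfg$-copies, I take $\mathbf{T} = (T,O,\sigma,r)$ with $T$, $O$, $r$ all trivial and $\sigma\in S_3$ an arbitrary permutation of the spatial coordinates. Since $K$ and $\xi^\eps$ satisfy all the heat-kernel symmetries (as arranged when the kernel was fixed), they are $\mathbf{T}$-invariant; the nonlinearity $F_\mfz$ in \eqref{e:YMH-nonlinearity} is $\mathbf{T}$-covariant because the YMH structure transforms correctly under spatial permutations (the Lie bracket, $\mathbf{B}$, and the $\mfg$-action on $\higgsvec$ carry no spatial indices, while the $\partial_j$'s get permuted consistently); and $\ell_{\BPHZ}^\eps$ is $\mathbf{T}$-invariant by Lemma~\ref{lem:ell-T-inv}. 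Proposition~\ref{prop:renormalisation is covariant} then gives that the counterterm, i.e.\ $\mcL\pr{X}$, is unchanged when the roles of the spatial indices $1,2,3$ are permuted; since the counterterm is linear in $\pr X$ by Lemma~\ref{lem:linearX}, this forces the three diagonal $\mfg$-blocks to coincide (call the common value $C_{\YM}^\eps$) and kills any $\mfg_i\to\mfg_j$ or $\mfg_i\leftrightarrow\higgsvec$ mixing, because such mixing would have to be $S_3$-equivariant between inequivalent index patterns. (To rule out $\mfg_i\to\higgsvec$ and $\higgsvec\to\mfg_i$ cleanly, I would additionally use a reflection $\mathbf{T}$ with $T_{\mfz}$ acting as $-1$ on $\higgsvec$ and $+1$ on each $\mfg_i$, or equivalently invoke Lemma~\ref{lem:symbols_vanish}: any tree producing such a term has an odd number of $\bar\mfl$-edges ending on the Higgs leg relative to the $\mfg$-legs and hence vanishes under $\ell_{\BPHZ}^\eps$.) This yields $\mcL = C_{\YM}^\eps{}^{\oplus 3}\oplus C_{\Higgs}^\eps$.

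Next I would prove the $G$-equivariance. Here I use $\mathbf{T} = (T,O,\sigma,r)$ with $O$, $\sigma$, $r$ trivial and, for a fixed $g\in G$, $T_{\bar\mfl} = T_{\mfz}$ equal to the action $\Ad_g^{\oplus3}\oplus\brho(g)$ of $g$ on $E$. The kernel $K$ is $G$-invariant trivially (it is scalar-valued and $G$ does not act on $\mcK_\mfz=\R$), and $\xi^\eps$ is $\mathbf{T}$-invariant because the white noise covariance $\bCov = \Cas^{\oplus3}\oplus\Cov$ satisfies $(\Ad_g\otimes\Ad_g)\Cas = \Cas$ and $(g\otimes g)\Cov = \Cov$, so $g\xi^\eps \eqlaw \xi^\eps$ (and mollification commutes with the linear $G$-action). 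The nonlinearity $F_\mfz$ is $\mathbf{T}$-covariant: this is exactly the covariance property $\Ad_g\mathbf{B}(u\otimes v) = \mathbf{B}(gu\otimes gv)$ recorded after \eqref{e:explicit_covar_noise}, together with $\Ad_g[h,k] = [\Ad_g h,\Ad_g k]$, $\Ad_g(hv) = (\Ad_g h)(gv)$, and the fact that $\mathring C_{\mfz}\in L_G$ by hypothesis. Then $\ell_{\BPHZ}^\eps$ is $\mathbf{T}$-invariant by Lemma~\ref{lem:ell-T-inv}, and Proposition~\ref{prop:renormalisation is covariant} (with all $O$'s trivial) gives $T_{\mfz}\mcL\pr X = \mcL T_{\mfz}\pr X$ for every $g\in G$, i.e.\ $\mcL$ commutes with the $G$-action $\Ad\oplus\brho$ on $E$. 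Restricting to each block gives $C_{\YM}^\eps\in L_G(\mfg,\mfg)$ and $C_{\Higgs}^\eps\in L_G(\higgsvec,\higgsvec)$, completing the proof.

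The main obstacle I anticipate is purely bookkeeping: checking carefully that the YMH nonlinearity $F_\mfz$ really is $\mathbf{T}$-covariant in the precise sense of the definition before Lemma~\ref{lem:ell-T-inv}, i.e.\ verifying the intertwining identity $(O_\mfz^\ast\otimes T_\mfz^{-1})F_\mfz(\mathbf{T}\mathbf{A}) = F_\mfz(\mathbf{A})$ for the two choices of $\mathbf{T}$ above, keeping track of which jet-components $\pr X$, $\pr{\partial_j X}$, $\pr{\bar\Xi}$ transform with which factor of $r^p$, $T_\mfb$ under \eqref{eq:transform_on_jets}. For the spatial-permutation $\mathbf{T}$ the subtlety is that $F_\mfz$ depends on the $\partial_j X$ only through the combinations appearing in $X\partial X$ and $X^3$ in \eqref{e:XdX-X3}, which are manifestly $S_3$-equivariant once one notes that the free index $i$ labels the output $\mfg$-component and the summed index $j$ is contracted; for the $G$-action $\mathbf{T}$ there is no spatial issue and it reduces to the algebraic identities above. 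Everything else then follows mechanically from the already-established Proposition~\ref{prop:renormalisation is covariant} and Lemma~\ref{lem:linearX}.
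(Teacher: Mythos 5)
There is a genuine gap in your block-diagonality argument, in two places. First, the transformation you propose for the three $\mfg$-copies --- $\sigma\in S_3$ with $T$, $O$, $r$ all trivial --- does \emph{not} make $F_\mfz$ $\mathbf{T}$-covariant. The three summands $\mfg_1\oplus\mfg_2\oplus\mfg_3$ of $W_\mfz$ are the components of a $1$-form, so a permutation of the spatial coordinates must be accompanied by the corresponding permutation of the $\mfg_i$-components in $T_\mfz$ (and of $T_{\bar\mfl}$): with $T=\id$ the contraction $2A_j\partial_j\Phi$, say, transforms into $2A_j\partial_{\sigma^{-1}(j)}\Phi$ under \eqref{eq:transform_on_jets}, which is not $T_\mfz^{-1}$ applied to the original term. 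Second, and more seriously, even the correctly coupled permutation symmetry cannot kill the off-diagonal entries $\mfg_i\to\mfg_j$, $i\neq j$: the space of $S_3$-equivariant linear maps of $\R^3$ with the permutation action is two-dimensional (spanned by the identity and the all-ones matrix), so equivariance only reduces $\mcL\restriction_{\mfg^3}$ to the form $\alpha\,\id+\beta J$ with $J$ having all off-diagonal blocks equal; it does not force $\beta=0$. The same issue affects your claim that permutation equivariance kills $\higgsvec\to\mfg_i$ mixing (an equivariant map into the diagonal of $\mfg^3$ is allowed), although the extra reflection $\Phi\mapsto-\Phi$, $\zeta\mapsto-\zeta$ that you mention does dispose of the $\mfg\leftrightarrow\higgsvec$ blocks.

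The missing ingredient is the family of \emph{single-axis} reflections used in the paper: for each fixed $i$, take $r$ flipping only the $i$-th spatial coordinate, $\sigma=\id$, $O=\id$, and $T_\mfz$ flipping the sign of the $\mfg_i$-component only. One checks (counting occurrences of the index $i$ in each monomial of \eqref{e:XdX-X3}) that $F$ is covariant and $K$, $\xi^\eps$ are invariant for this $\mathbf{T}$; Proposition~\ref{prop:renormalisation is covariant} then forces the $\mfg_i$-row and $\mfg_i$-column of $\mcL$ to vanish off the diagonal, for every $i$, which gives block-diagonality. Only after that does one use the permutation $\mathbf{T}$ (with $\sigma$ a transposition of $i,j$ \emph{and} $T$ swapping $\mfg_i$ with $\mfg_j$) to identify the three diagonal blocks. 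Your treatment of the $G$-equivariance, by contrast, matches the paper's and is correct.
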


\begin{proof}
Our proof will repeatedly reference Proposition~\ref{prop:renormalisation is covariant}, with various choices of $\mathbf{T}  = (T,O,\sigma,r) \in \mathrm{Tran}$ to prove the various properties of $\mathcal{L}$. 
We will always choose $T$ of the form $T = T_{\mfz} \oplus T_{\bar{\mfl}} = \tilde{T} \oplus \tilde{T}$ for some $\tilde{T} \in L(E,E)$ \dash recall that $W_{\mfz}\simeq W_{\bar{\mfl}} \simeq E = \mfg_1 \oplus \mfg_2 \oplus \mfg_3\oplus \higgsvec$ where each $\mfg_i$ is a copy of $\mfg$. 

We first show that $\mathcal{L}$ is appropriately block diagonal. 
Fix $i\in \{1,2,3\}$ and choose $\mathbf{T} = (T,O,\sigma,r) \in \mathrm{Tran}$ as follows.
The map $\tilde{T}$ acts on $E$ 
by flipping the sign of the $i$-th component, namely
for every $u\in E$, let $(\tilde{T} u) |_{\mfg_i} = - u |_{\mfg_i} $ and
 $(\tilde{T} u) |_{\mfg_i^\perp} =  u |_{\mfg_i^\perp} $. 
We then also flip the sign of the $i$-th spatial coordinate,
namely $\sigma=\id $, $O=\id$ and  $r_j =1_{j \neq i} - 1_{j=i} $ for every $j\in \{1,2,3\}$.

Observe that our nonlinearity 
 $F(\pr{\mathbf{A}})$ 
   is then $\mathbf{T}$-covariant, namely $ F_\mfz(\mathbf{T}\pr{ \mathbf{A}}) =  T_\mfz F_\mfz(\pr{ \mathbf{A}})$; this is because, in any term for the $\mfg_{i}$ component in \eqref{e:XdX-X3}, the spatial index $j$ (appearing either as a subscript of $\pr{A_{\bullet}}$ or as a partial derivative $\pr{\partial_{\bullet}}$) appears an even number of times if $j \not = i$ and an odd number of times if $j=i$.
 For instance, one of the terms in the $\mfg_i$ component
$[ \pr{A_j}, \pr{\p_j A_i}]$ flips sign when $\pr{\mathbf{A}}$ is replaced by $\mathbf{T} \pr{\mathbf{A}}$,
because according to the definition
of $\mathbf{T} \pr{\mathbf{A}}$ given by \eqref{eq:transform_on_jets},
 if $j\neq i$, $\pr{A_j}$ is fixed and $\pr{\p_j A_i}$ flips sign,
and  if $j= i$, $\pr{A_j}$ flips sign and $\pr{\p_j A_i}$ is fixed.

Also, $K$ and $\xi_\eps$ are both  $\mathbf{T}$-invariant,
so by Lemma~\ref{lem:ell-T-inv}, 
both $\bar{\PPi}_{\can}$ and $\ell_{\BPHZ}$ are $\mathbf{T}$-invariant. 
Invoking Proposition~\ref{prop:renormalisation is covariant}
and  Lemma~\ref{lem:linearX}  we conclude that $\mcL X$ is $\mathbf{T}$-covariant.
Since this holds for every $i \in \{1,2,3\}$, one has
$\CL = C_\eps^{(1)} \oplus C_\eps^{(2)} \oplus C_\eps^{(3)} \oplus C_{\Higgs}^\eps$ 
 with $C_\eps^{(1)},C_\eps^{(2)},C_\eps^{(3)} \in L(\mfg,\mfg)$ and $C_{\Higgs}^\eps \in L(\higgsvec,\higgsvec)$.

We now show that the first three blocks are identical. 
Fixing $i\neq j \in \{1,2,3\}$,  
we choose another $\mathbf{T} = (T, O,\sigma,r) \in \mathrm{Tran}$ 
where $\sigma \in S_3$ is defined by swapping $i$ and $j$,
 $r=1$, $O=\id$, and $T$ is given 
by swapping the $\mfg_i$ and $\mfg_j$ components,
namely
\[
(\tilde{T} u) |_{\mfg_i} = u |_{\mfg_j} \;,
\quad
(\tilde{T} u) |_{\mfg_j} = u |_{\mfg_i} \;,
\quad
 (\tilde{T} u) |_{(\mfg_i\oplus \mfg_j)^\perp} =  u |_{(\mfg_i\oplus \mfg_j)^\perp} \;.
 \] 
It is easy to check that $F(\pr{\mathbf{A}})$ 
   is again  $\mathbf{T}$-covariant,
   and the kernel $K$ and noise $\xi$ are both  $\mathbf{T}$-invariant.
   Invoking Proposition~\ref{prop:renormalisation is covariant} again
 it follows that $\mcL X$ is $\mathbf{T}$-covariant
 which implies that $C_\eps^{(1)}=C_\eps^{(2)}=C_\eps^{(3)} \eqdef C^\eps_{\YM}$.

To show $\mathcal{L} \in L_{G}(E,E)$
we choose $\mathbf{T} = (T, O,\sigma,r) \in \mathrm{Tran}$ by 
taking  $r=1$, $O=\id$ and $\sigma=\id$,
and, for any fixed $g \in G$, define $\tilde{T}$
to be the action by $g\in G$ on $E$. 
Note that  $F(\pr{\mathbf{A}})$  is   $\mathbf{T}$-covariant
(since each term in \eqref{eq:SYMH} is covariant),
and the noise $\xi_\eps$ is $\mathbf{T}$-invariant.
Since this holds for any $g \in G$, we thus have $\mcL \in L_G(E,E) $.
\end{proof} 

\begin{remark}
It is possible to show that $\mcL$
in Lemma~\ref{lem:linearX} -- \ref{lem:LX-diag} is symmetric 
with respect to the inner product given on $E$.
Indeed, recalling that $\higgsvec \cong \higgsvec^*$ is canonically
given by the scalar product $\langle \;,\;\rangle_\higgsvec$,
 the action \eqref{eq:YM_energy}
 is invariant under
$\Phi\mapsto \Phi^*$ (with connection replaced by its dual connection, which is locally still represented by $A$), 
and  \eqref{eq:SYMH} is covariant i.e.
the transformation $(\Phi,\zeta) \mapsto (\Phi^*,\zeta^*)$
just amounts to applying the dual operation to the second equation.
A similar argument as above can show that $\mcL^* \Phi^* = (\mcL\Phi)^*$
which means that $\mcL$ is symmetric. 
If  $\mfg$
is simple and $\brho$ is  irreducible, and assuming
$\brho$ is surjective, then $ C_{\YM}^\eps , C_{\Higgs}^\eps$
commute with all the orthogonal 
transformations and therefore must be multiples of the identity on $\mfg $ and $\higgsvec$ respectively.
\end{remark}

\begin{remark}
One may wonder if  our model ``decouples'' 
as $\mfg$ splits into simple and abelian components and $\higgsvec$ decomposes into irreducible subspaces.
The ``pure YM'' part (i.e.\ the first term in \eqref{eq:YM_energy}) decouples
under the decomposition of $\mfg$, as observed in \cite[Remarks~2.8,~2.10]{CCHS2d}.
The term $|\mrd_A \Phi (x)|^2$ (and the corresponding terms in our SPDE) decouples into orthogonal irreducible components,
but the $|\Phi|^4$ term does not.
%
%
On the other hand, assuming that $\higgsvec$ is  irreducible but 
$\mfg=\mfg_1\oplus \mfg_2$, 
by \cite[Theorem~3.9]{SepanskiLieBook},
$\higgsvec = \higgsvec_1\otimes \higgsvec_2$ for  irreducible representations $\higgsvec_i$ of $\mfg_i$.
So for $A_j\in \mfg_j$
and  $\Phi_j \in \higgsvec_j$ we have
$\mrd_{A_1+A_2} (\Phi_1\otimes \Phi_2) = (\mrd_{A_1} \Phi_1 )\otimes \Phi_2 + \Phi_1 \otimes (\mrd_{A_2}  \Phi_2)$;  the two terms are generally not orthogonal, and one does not have any decoupling.
\end{remark}

\subsection{Solution theory}
\label{sec:solution theory additive}

We now turn to posing the analytic fixed point problem in an appropriate space of modelled distributions
for \eqref{e:YMH-nonlinearity}.
A naive formulation would be
 \begin{equs}[e:abs-fix-pt-X0]
 \mcX
&=
\CG_\mfz \mathbf{1}_{+}\big(
\mcX \partial \mcX + \mcX^3
 + \mathring{C}_{\mfz} \mcX+  \boldsymbol{\Xi} \big)
  + \CP X_0\;.
 \end{equs}
 Here, $\mathbf{1}_{+}$ is the restriction of modelled distributions to non-negative times,  
$\CG_\mfz \eqdef \mcb{K}_{\mfz} + R \mathcal{R}$
where
$ \mcb{K}_{\mfz}  $ is the abstract integration operator,
and $R$  the operator realising convolution with $G - K$ 
as a map from appropriate H{\"o}lder--Besov functions into modelled distributions as in \cite[Eq.~7.7]{Hairer14},
and   \label{harmonic extension}
$\CP X_0$ is the ``harmonic extension'' of $X_0$ as in \cite[Eq.~7.13]{Hairer14}. 
However, 
  \eqref{e:abs-fix-pt-X0} can not be closed in any $\cD^{\gamma,\eta}_{\alpha}$ space,
even for smooth initial data.
This is because 
$\mathbf{1}_{+} \boldsymbol{\Xi} \in \cD^{\infty,\infty}_{-5/2-}$, so $\mcX \in \cD^{\gamma,\eta}_{\alpha}$ would require $\eta, \alpha < -1/2$,
but then $\mcX \partial \mcX \in \cD^{\gamma-\frac32-, -2-}_{-2-}$ at best. Unfortunately, exponents below $-2$ represent a 
non-integrable singularity in the time variable so that we cannot apply the standard integration result \cite[Prop.~6.16]{Hairer14}
(which requires $\eta\wedge\alpha>-2$) for the modelled distributions $\mathbf{1}_{+} \boldsymbol{\Xi}$ and $\mcX \partial \mcX$.\footnote{
Another way to bypass this problem is to let $\Xi$ represent the noise {\it restricted to positive times}, that is we could put the indicator function for positive times inside the model instead of the fixed point problem for modelled distributions.
However, working with a non-stationary noise would create serious technical difficulties 
since we would not be able to use \cite{CH16} to control the given models 
\dash in general the trees in $\mfT_{-}$ would need time-dependent renormalisation counter-terms.} 

Recall the local reconstruction operator $\tilde{\CR}$ and the global reconstruction operator $\CR$  described in
Appendix~\ref{app:Singular modelled distributions}. The reason that the proof of \cite[Prop.~6.16]{Hairer14} fails in this case is that the modelled distributions $\mathbf{1}_{+} \boldsymbol{\Xi}$  and $\mcX \partial \mcX$ canonically only admit  local (but not global) reconstructions $\tilde{\CR} \mathbf{1}_{+} \boldsymbol{\Xi}$ and $\tilde{\CR} (\mcX \partial \mcX)$ which are defined as space-time distributions only away from the $t=0$ hyperplane. 
However, Lemma~\ref{lem:Schauder-input} allows us to bypass this difficulty if we also specify space-time distributions that extend  $\tilde{\CR} \mathbf{1}_{+} \boldsymbol{\Xi}$ and $\tilde{\CR} (\mcX \partial \mcX)$ to $t=0$.  

More precisely, for fixed $\eps > 0$, we  can easily define such an extension for  $\tilde{\CR} \mathbf{1}_{+} \boldsymbol{\Xi}$, and by linearising around the SHE we can similarly handle the product $\mcX \partial \mcX$. 
Let 
\begin{equ}[e:def-tilde-Psi]
\tilde{\Psi}_{\eps} = \sig K \ast (1_{t > 0} \xi^{\eps})
\end{equ}
 and consider 
 \begin{equs}[e:abs-fix-pt-X0']
 \mcX &= \tilde{\mcX} + \tilde{\bPsi} \;, \quad 
 \tilde{\bPsi}  =  \mcb{K}_{\mfz}^{\sig 1_{t > 0} \xi^{\eps}}(\one_{+} \bXi)\;,\\
 \tilde{\mcX}
&=
\CG_\mfz \mathbf{1}_{+}\big(\mcX^3
 + \mathring{C}_{\mfz} \mcX \big)
 +
 \CG^{\tilde{\Psi}_{\eps} \partial \tilde{\Psi}_{\eps}}_\mfz 
  \big( \tilde{\bPsi}   \partial  \tilde{\bPsi}  \big)\\
   {}&\quad
  +
 \CG_\mfz \mathbf{1}_{+} 
 \big( \tilde{\mcX} \partial \tilde{\bPsi}  +  \tilde{\bPsi} \partial  \tilde{\mcX} + \tilde{\mcX} \partial \tilde{\mcX} \big) 
  +R  ( \sig 1_{t > 0} \xi^{\eps}) +\CP X_0\;,
 \end{equs}
where, for a  space-time distribution $\omega$, the notation
 $\CG_{\mfz}^{\omega} f = \mcb{K}_{\mfz}^{\omega} f  + R \omega$
 is defined 
 as in Appendix~\ref{app:Singular modelled distributions}.  The space-time distributions in the superscripts here
play the role of ``inputs by hand''  to the integration operators which replace the standard reconstructions that aren't defined a priori.
 Pretending for now that the initial condition $X_0$ is sufficiently regular,
the fixed point problem \eqref{e:abs-fix-pt-X0'} can be solved for $\tilde{\mcX}\in\cD^{3/2+,0-}_{0-}$ and we can apply Lemma~\ref{lem:Schauder-input} once we check its condition, 
that is, 
$1_{t > 0} \xi^{\eps}$ and $\tilde{\CR}(\one_{+} \bXi)$ agree away from $t=0$ (which is obvious),
and the same holds for
$\tilde{\Psi}_{\eps} \partial \tilde{\Psi}_{\eps}$ and  $\tilde{\CR}\big( \tilde{\bPsi}   \partial  \tilde{\bPsi}  \big)$.
The fact that $\CR \mcX$ solves \eqref{e:SPDE-for-X} then follows by combining \cite{BCCH21} and \cite[Sec.~5.8]{CCHS2d} along with  Proposition \ref{prop:mass_term}. 

Note that we are slightly outside of the setting of \cite{BCCH21} because we have replaced the standard  integration operators $\CG_{\mfz}$ and $\mcb{K}_{\mfz}$ with non-standard ones with ``inputs''. However the results of \cite{BCCH21} still hold because $\CX$ is still coherent\footnote{See \cite[Def.~5.64]{CCHS2d}.} with respect to the nonlinearity \eqref{e:YMH-nonlinearity}. Coherence is a completely local algebraic property and for each $(t,x)$ with $t > 0$, $\CX(t,x)$ solves an {\it algebraic} fixed point problem of the form 
\begin{equ}\label{eq:coherence_identity}
 \mcX(t,x)
=
\mcb{I}_\mfz \Big(
\mcX(t,x) \partial \mcX(t,x) + \mcX(t,x)^3
 + \mathring{C}_{\mfz} \mcX(t,x)+ \boldsymbol{\Xi} \Big) + (\cdots)
 \end{equ}
where $(\cdots)$ takes values in the polynomial sector of the regularity structure. The relation above is all that is needed to deduce that $\CX$ is coherent at $(t,x)$. 

\begin{remark}
Note that there is a degree of freedom in the ``$t=0$ renormalisation'' of \eqref{e:abs-fix-pt-X0'} that could be exploited: one could add to the $ \tilde{\Psi}_{\eps} \partial \tilde{\Psi}_{\eps}$ appearing in the superscript any fixed distribution supported on $t=0$. This does not affect at all the coherence of $\CX$ with the nonlinearity, but in fact changes our initial condition. The fact that we do not add such a distribution in the superscript means that $\CR \CX$ really does solve \eqref{e:SPDE-for-X} with the prescribed initial data. 
\end{remark}

Combining this with the probabilistic convergence of the BPHZ models and the distributions $1_{t > 0} \xi^{\eps}$ and $\tilde{\Psi}_{\eps} \partial \tilde{\Psi}_{\eps}$ in the appropriate spaces as $\eps \downarrow 0$, one also gets stability of the solution in this limit. 
However, the analysis above depends on having fairly regular initial data which is {\it not} sufficient for our purposes. In order to use this dynamic to construct our Markov process we will need to start the dynamic from an arbitrary  $X_0 \in \state$.
 
As in our analysis of the deterministic equation \eqref{eq:ymh_flow_deturck} with rough initial data, the $t=0$ behaviour of the term $X \partial X$ requires us to linearise about $\CP X_0$ and take advantage of the control over $\CN(X_0)$ given by the metric on $\state$. 
We thus introduce the decomposition
\begin{equ}[eq:sym_solution_expansion]
\mcX=   \CP X_0 +  \tilde{\bPsi} + \hat{\mcX}\;,
\end{equ}
where $ \tilde{\bPsi}$ is as in 
\eqref{e:abs-fix-pt-X0'}, and consider
 the fixed point problem 
\begin{equs}[e:fix-pt-X1]
\hat \CX
&=
 \CG_{\mfz}
\Big( 
\CP X_0 \partial \CP X_0 
+
\CP X_0\, \partial \hat\mcX+  \hat\mcX \, \partial \CP X_0 +\hat\mcX \partial \hat\mcX
\Big) 
+ 
R  (1_{t > 0} \xi^{\eps}) \\
{}&
\quad
+
\CG_{\mfz} \Big(
 \tbPsi \partial \hat\mcX +  \hat\mcX \partial \tbPsi + \CX^3
 + \mathring{C}_{\mfz} \CX \Big)\\
{}&
\quad + \CG_{\mfz}^{\tilde\Psi_\eps \partial \tilde\Psi_\eps} 
(\tbPsi\partial \tbPsi)
+ \CG_{\mfz}^{\CP X_0 \partial \tilde\Psi_\eps} (\CP X_0 \partial \tbPsi)
+ \CG_{\mfz}^{ \tilde\Psi_\eps \partial \CP X_0} ( \tbPsi \partial \CP X_0 )\;.
\end{equs}
Instead of using the spaces $\cD^{\gamma,\eta}_{\alpha}$  of \cite{Hairer14}, it will be convenient
to use a slightly smaller class of ``$\hat{\cD}$ spaces'' with $\hat{\cD}^{\gamma,\eta}_{\alpha} \subsetneq \cD^{\gamma,\eta}_{\alpha}$
which impose a vanishing condition near $t=0$.
These spaces were introduced in~\cite{MateBoundary} and used in~\cite{CCHS2d} for the SYM in dimension $d=2$.
We collect their important properties in Appendix~\ref{app:Singular modelled distributions}.

Imposing $X_{0} \in \state $ will give us control over the first term on the right-hand side of \eqref{e:fix-pt-X1}. 
We will see that
 the other products in the first and second lines  of \eqref{e:fix-pt-X1} will belong to $\hat{\cD}$ spaces
  with good enough exponents
for Theorem~\ref{thm:integration} (for non-anticipative kernels) to apply,
 thanks to Lemma~\ref{lem:multiply-hatD} which gives
more refined power-counting for multiplication in $\hat{\cD}$ spaces
than that  of for instance \cite[Prop.~6.12]{Hairer14} for the usual $\cD$ spaces.
While we choose to use $\hat{\cD}$ spaces here as a matter of convenience, it will serve as a warm-up for Section~\ref{subsec:analytic_fp} where it is crucial. 

%
%

Finally the products of modelled distributions in the last line of \eqref{e:fix-pt-X1} give us  non-integrable singularities, similarly as in our discussion for \eqref{e:abs-fix-pt-X0'},  but we can again appeal to the integration result Lemma~\ref{lem:Schauder-input}
instead of the standard result \cite[Prop.~6.16]{Hairer14}.
Note that the distributions $\CP X_{0}$ and $\tilde \Psi_{\eps}$ are explicit objects,
 so we can again show  by hand that $\CP X_0 \partial \tilde \Psi_{\eps}$ and  $\tilde\Psi_\eps \partial \CP X_0$ converge probabilistically as $\eps \downarrow 0$ to some well-defined distributions over the entire space-time. 
We can argue exactly as for \eqref{e:abs-fix-pt-X0'} that $\CR \CX$ solves \eqref{e:SPDE-for-X} for every fixed $\eps > 0$ whenever $\CX$ solves \eqref{eq:sym_solution_expansion}--\eqref{e:fix-pt-X1}.


Below we first prove the necessary probabilistic convergences mentioned above,
and then
close the analytic fixed point problem \eqref{eq:sym_solution_expansion}--\eqref{e:fix-pt-X1}.
This will complete the proof of Theorem~\ref{thm:local-exist-sigma}. 

%

\subsubsection{Probabilistic estimates for solution theory}

We start with the probabilistic convergence of models. 
Let $Z^{\eps}_{\BPHZ}$ be the BPHZ models
determined by the kernel and noise assignments as in the beginning of this section.

\begin{lemma}\label{lem:conv_of_models}
The random models $Z^{\eps}_{\BPHZ}$ converge in probability to a limiting random model $Z_{\BPHZ}$  as $\eps \downarrow 0$.
\end{lemma}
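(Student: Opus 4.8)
\textbf{Proof strategy for Lemma~\ref{lem:conv_of_models}.} The plan is to invoke the general convergence result for BPHZ models from \cite{CH16} (in the vectorial formulation of \cite{CCHS2d}). The main thing to check is that our regularity structure and noise assignment fall within the scope of that black-box theorem. First I would recall that the noise $\xi$ is a Gaussian space-time white noise on $\R\times\T^3$ (combining the $\mfg^3$- and $\higgsvec$-valued components as in Section~\ref{sec:notation}), so in particular it satisfies the spectral/moment bounds required in \cite{CH16}; the mollified noises $\xi^\eps=\xi*\moll^\eps$ form an approximating family of the type considered there, and $\sig^\eps\xi^\eps$ differs only by a bounded scalar factor which is harmless. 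Second, I would note that subcriticality of the rule $R$ has already been verified (via the degree assignment $\deg(\mfz)=2-\kappa$, $\deg(\bar\mfl)=-5/2-\kappa$ together with $\reg$, for $\kappa<\frac{1}{100}$), so the fixed point/regularity-structure machinery applies and the list $\mfT_-$ is finite.

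The key hypothesis of \cite{CH16} that must be checked is the \emph{power-counting condition}: for every tree $\tau\in\mfT$ appearing in the regularity structure (equivalently, for every $\tau\in\mfT_-$ and its relevant subtrees), the degree assigned by $\reg$ is strictly greater than the ``Wick-contraction'' threshold, i.e.\ no subdivergence can produce a logarithmically-or-worse divergent integral that is not already accounted for. Concretely one checks that $\reg(\mfz)=-\tfrac12-4\kappa>-1$ and $\reg(\bar\mfl)=-\tfrac52-2\kappa$ satisfy the inequalities of \cite[Thm.~2.31]{CH16} (the ``$\reg$ is a valid degree assignment'' and the condition ruling out resonances), which is where the specific numerology $d=3$, $\deg(\bar\mfl)=-5/2-\kappa$ enters: the worst-case trees have degree $-1-2\kappa$ (as in $\mfT_{\lead}$) or, with enough derivatives/noises, down to $0^-$, all strictly negative but bounded below, so the renormalised models $\Pi^\eps_{\BPHZ}$ have moments bounded uniformly in $\eps$ and form a Cauchy sequence in the relevant model metric. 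I would also observe that we use the truncated heat kernel $K$ which is smooth away from the origin, compactly supported, and annihilates polynomials up to the required degree, exactly as required by that theorem.

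Given these verifications, \cite[Thm.~2.31]{CH16} (together with \cite[Sec.~6]{BHZ19} for the algebraic setup and \cite[Sec.~5]{CCHS2d} for the vector-valued adaptation) directly yields that $Z^\eps_{\BPHZ}$ converges in probability, as $\eps\downarrow0$, to a limiting random model $Z_{\BPHZ}$ in the space of admissible models for our regularity structure. The only genuinely paper-specific point \dash and the one I would spell out rather than cite \dash is that the estimates are uniform over the bounded range of the scalar parameters $\sig^\eps$ (needed later in Section~\ref{sec:gauge}), which follows since $\sig^\eps$ enters multilinearly with bounded coefficients. I do not expect a serious obstacle here: this is an application of an established black box, and the main (minor) bookkeeping is matching our conventions \dash the single solution type $\mfz$ with target $W_\mfz=E$ and the single noise type $\bar\mfl$ \dash to the hypotheses of \cite{CH16,CCHS2d}, which is routine.
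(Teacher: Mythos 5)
Your overall strategy is exactly the paper's: this is a black-box application of the BPHZ convergence theorem of \cite{CH16} (the paper cites Theorem~2.15 there), and the entire content of the proof is the verification of its hypotheses for this particular rule, degree assignment and noise. On that point your write-up matches the paper in spirit but goes astray in substance: the conditions you propose to check are not the ones the theorem actually requires, and the worst-case trees you identify are not the binding ones.

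Concretely, the paper verifies three things. The third criterion of \cite[Thm.~2.15]{CH16} requires
\[
\min \{ \deg(\tau): \tau \in \mfT(R),\ |N(\tau)| > 1\} > -|\s|/2 = -5/2\;,
\]
where $N(\tau)$ excludes the noise vertices; the minimum here is $-2-2\kappa$, attained by $\<IXiI'Xi_notriangle>$, not by the trees of $\mfT_{\lead}$ of degree $-1-2\kappa$ that you point to. Your stated criterion "all degrees strictly negative but bounded below" is not the hypothesis: the bound must be by the explicit threshold $-|\s|/2$, and it is precisely the gap between $-2$ and $-5/2$ (i.e.\ the fact that $d=3$ rather than $d=4$) that makes the check pass. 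The second criterion then follows from $\deg(\mfl)=-5/2-\kappa$ for $\mfl\in\Lab_-$. The first criterion is a super-regularity condition of the form $\deg(\tau)+\deg(\mfl)+|\s|>0$ over trees $\tau$ and noise edges $a$ with $\mft(a)=\mfl$; its worst case is $\tau=\<IXi^3_notriangle>$, for which one gets $1-O(\kappa)>0$. Your inequality $\reg(\mfz)>-1$ is only the subcriticality of the rule, which is needed to build the structure but is not among the hypotheses of the convergence theorem. Also, your remark "or, with enough derivatives/noises, down to $0^-$" reads backwards: adding noises and derivatives pushes degrees \emph{down}, which is exactly why one must exhibit the minimal-degree trees explicitly. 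Finally, for this lemma (Section~\ref{sec:renorm-A}) the noise assignment is $\sig^\eps\xi^\eps$ with a single type, so there is no issue with $\sig^\eps$; uniformity in $\sig$ only becomes a point of care in Section~\ref{sec:gauge}. So: right theorem, right plan, but the numerology you would hand in does not establish the hypotheses; redo the check against the actual three criteria with the trees $\<IXiI'Xi_notriangle>$ and $\<IXi^3_notriangle>$.
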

\begin{proof}
We take a choice  of scalar noise decomposition
and  check the criteria of \cite[Theorem~2.15]{CH16}
which are
 insensitive to this choice. 
 First, it is clear that
 for any scalar noise decomposition,  the random smooth noise assignments here are a uniformly compatible family of Gaussian noises that converge to the Gaussian white noise. 
We then note that 
\[
\min \{ \deg(\tau): \tau \in \mfT(R), |N(\tau)| > 1\}
=
-2 - 2\kappa > -5/2 = -|\s|/2
\]
and the minimum is achieved for $\tau$ of the form $\<IXiI'Xi_notriangle>$.
Here $N(\tau)$ is the set of vertices of $\tau$
 such that $v \neq  e_{-} $ 
 for any $e$ with $\mft(e)\in \Lab_-$,
so the third criterion is satisfied.
Combining this with the fact that $\deg(\mfl) = -5/2 - \kappa$ for every $\mfl \in \Lab_{-}$ guarantees that the second criterion is satisfied. 
Finally, the worst case scenario for the first condition is for $\tau$ of the form $\<IXi^3_notriangle>$ and $A = \{a\}$ with   $\mft(a) = \mfl$ for  $\mfl \in \Lab_{-}$ for which we have  
$\deg(\tau) + \deg(\mfl) + |\s| = 1 - 4\kappa > 0$
as required.
\end{proof}

We now give the promised statement about the probabilistic definition of products involving the initial data and the solution to SHE.
We skip proving the convergence of $1_{t > 0} \xi^{\eps}$ since this follows by a straightforward application of Kolmogorov's argument combined with second moment computations. 
In the next lemmas, we recall that $G$ is the Green's function of the heat operator
and we let $(\Omega^{\noise},\P)$ denote the probability space on which $\xi$ is defined.

\begin{lemma}\label{lem:tildePsi-cov}
For each $\kappa>0$ there exists $\bar\kappa>0$ such that, for all $T>0$ and $p\in[1,\infty)$, $G*(\tilde\Psi_\eps \partial \tilde\Psi_\eps) $
converges in $L^p(\Omega^\noise;\CC^{\bar\kappa}([0,T],\CC^{-\kappa}(\T^3)))$ to a limit denoted by 
$G*(\tilde\Psi \partial \tilde\Psi)$.
In particular,  $\tilde\Psi_\eps \partial \tilde\Psi_\eps$
converges in $L^p(\Omega^\noise;\CC^{-2-\kappa}(\KK))$ to a limit denoted by 
$\tilde\Psi \partial \tilde\Psi$ for any compact $\KK\subset\R\times\T^3$.
\end{lemma} 

\begin{proof}
By equivalence of Gaussian moments, it suffices to consider $p=2$.
Dropping reference to $\eps$ and denoting ${\mathcal Y} = G*(\tilde\Psi \partial \tilde\Psi) $, one has
\begin{equ}\label{eq:CY_bound}
 {\mathcal{Y}}_t -  {\mathcal{Y}}_s 
 = ( \CP_{t-s}-1) {\mathcal{Y}}_s 
 + \int_s^t \CP_{t-r} ( \tilde\Psi_r\partial \tilde\Psi_r) \,\mrd r \;.
\end{equ}
Using \cite[Lem.~10.14]{Hairer14}, followed by Lemma~\ref{lem:CPf} (with $\gamma=2$ and $\alpha=2-2\kappa$ therein), one has
\begin{equs}
\E & \Big| \Big\langle  \int_s^t \CP_{t-r} ( \tilde\Psi_r\partial \tilde\Psi_r) \,\mrd r  ,\phi^\lambda \Big\rangle \Big|^2
\\
&\lesssim 
 \int_{r,\bar r\in [s,t]} \!\!\!\!\!\!\!\!\!\! G_{t-r} (x-y) G_{t-\bar r} (\bar x-\bar y)
 |(r,y)-(\bar r,\bar y)|^{-4}
| \phi^\lambda (x) \phi^\lambda (\bar x)| \,\mrd r \mrd \bar r  \mrd y \mrd \bar y\mrd x\mrd \bar x 
 \\
&\lesssim
\int_{r\in [s,t]} \!\!\!\!\!\!\!
 G_{t-r}(x-y) |y-\bar x|^{-2} | \phi^\lambda (x) \phi^\lambda (\bar x) | \,  \mrd r \mrd y\mrd x\mrd \bar x
\\
&\lesssim 
\int_{r\in [s,t]} \!\!\!\!\!\!\! (t-r)^{-\frac{2-2\kappa}{2}} |x-\bar x|^{-2\kappa} |\phi^\lambda (x) \phi^\lambda (\bar x)| \,  \mrd r \mrd x\mrd \bar x
\lesssim |t-s|^{\kappa} \lambda^{-2\kappa}
\end{equs}
 for $\kappa>0$ arbitrarily small,
where the first bound follows from Wick's theorem, the fact that $\E (\tilde{\Psi}\partial \tilde{\Psi})=0$ thanks to the derivative,
and the uniform in $\eps>0$ bound
\begin{equ}[e:tildePsi-cov]
|\E (\partial^k\tilde{\Psi}(z) \, \partial^{\bar k}\tilde{\Psi}(\bar z))| \lesssim
|z-\bar z|^{-1-|k|-|\bar k|} \qquad  (|k|,|\bar k|\in \{0,1\})\;.
\end{equ}
Here  $\phi^\lambda$ is the rescaled spatial test function as in Lemma~\ref{lem:prob-obs}. 
Setting $s=0$ 
one then has
$\E  | \langle {\mathcal{Y}}_t ,\phi^\lambda \rangle|^2
\lesssim t^\kappa \lambda^{-2\kappa}  $.
Therefore $\E |( \CP_{t-s}-1) {\mathcal{Y}}_s |^2_{\CC^{-\kappa}} \lesssim s^{\kappa/4}|t-s|^{\kappa/4}$, which handles the first term on the right-hand side of \eqref{eq:CY_bound}.
Since $\CY_r=0$ for $r\leq 0$,
it follows from a Kolmogorov argument that
$\E |{\mathcal{Y}}|_{\CC^{\bar\kappa}([0,T], \CC^{-\kappa}(\T^3))}^2$ is bounded uniformly in $\eps>0$ for any $T>0$ and $\kappa>0$.

To show that $\mcY$ converges in $\CC^{\bar\kappa}([0,T], \CC^{-\kappa}(\T^3))$ as $\eps\downarrow0$, it suffices to extract a small power of $\bar\eps$ in the corresponding bounds on $\CP_{t-r} (\tilde\Psi_\eps\partial \tilde\Psi_\eps) -\CP_{t-r} ( \tilde\Psi_{\bar\eps}\partial \tilde\Psi_{\bar\eps})$ for $0<\eps<\bar\eps$.

The final claim follows by applying to $\mcY$ the heat operator, which is a bounded operator from $\CC([-T,T],\CC^{-\kappa}(\T^3))$ to $\CC^{-2-\kappa}((-T,T)\times\T^3)$
and remarking that $\mcY$ is continuous over $[-T,T]$ once we extend it by $\mcY_t=0$ for $t<0$.
\end{proof}

\begin{lemma}\label{lem:PX0Psi-prob}
For every $\kappa>0$ and $\eta\in (-1,-\frac12)$,
there exists $\bar\kappa>0$ such that, for all $T>0$, $p\in[1,\infty)$, uniformly in $0<\eps<\bar\eps<1$,
\begin{equs}
\E|G*(\CP X_0  \partial \tilde\Psi_\eps - \CP X_0  \partial \tilde\Psi_{\bar\eps})|_{\CC^{\bar\kappa}([0,T],\CC^{\eta+\frac12-\kappa})}^p
&\lesssim
\bar\eps^{\bar\kappa}|X_0|_{\CC^\eta}^p
\\
\E|G*( \tilde\Psi_\eps \partial \CP X_0 -  \tilde\Psi_{\bar\eps} \partial \CP X_0)|_{\CC^{\bar\kappa}([0,T],\CC^{\eta+\frac12-\kappa})}^p
&\lesssim
\bar\eps^{\bar\kappa}|X_0|_{\CC^\eta}^p\;.
\end{equs}
In particular,
$G*(\CP X_0  \partial \tilde\Psi_\eps)$
and $G*(\tilde\Psi_\eps \partial \CP X_0)$
converge in $\CC^{\bar\kappa}([0,T],\CC^{\eta+\frac12-\kappa})$ in $L^p(\Omega^{\noise},\P)$ to limits denoted respectively by 
$G*(\CP X_0  \partial \tilde\Psi)$ 
and $G*(\tilde\Psi \partial \CP X_0)$,
and
$\CP X_0  \partial \tilde\Psi_\eps$
and $\tilde\Psi_\eps \partial \CP X_0$
converge in $\CC^{\eta-\frac32-\kappa}(\KK)$ in $L^p(\Omega^{\noise},\P)$ to limits denoted respectively by 
$\CP X_0  \partial \tilde\Psi$ 
and $\tilde\Psi \partial \CP X_0$
for any compact $\KK\subset\R\times\T^3$.
The maps sending $X_0$ to any of these limits is a bounded linear map from
$\CC^\eta$ to the corresponding $L^p(\Omega^\noise,\P)$ space.
\end{lemma}

\begin{proof}
We apply the same trick as \eqref{eq:CY_bound}.
Dropping again reference to $\eps$,
one has
\begin{equs}[e:PX0dPsi-21]
\E  \Big| \Big\langle  \int_s^t \CP_{t-r} ( \CP_r X_0 & \partial \tilde\Psi_r) \,  \mrd r  ,\phi^\lambda \Big\rangle \Big|^2
\lesssim 
 |X_0|_\eta^2 \int_{r,\bar r\in [s,t]} \!\!\!\!\!\!\!\! G_{t-r} (x-y) G_{t-\bar r} (\bar x-\bar y)
 \\
 &\times r^{\eta/2} {\bar r}^{\eta/2}  |(r,y)-(\bar r,\bar y)|^{-3}
 |\phi^\lambda (x) \phi^\lambda (\bar x)| \,\mrd r \mrd y \mrd \bar r \mrd \bar y\mrd x\mrd \bar x \;.
\end{equs}
Since $r,\bar r$ are symmetric, we just consider the regime $r\le \bar r$,
so that ${\bar r}^{\eta/2} \le  r^{\eta/2}$.
By \cite[Lem.~10.14]{Hairer14} followed by Lemma~\ref{lem:CPf} (with $\gamma=1$ and $\alpha=2\eta + 2 -2 \kappa\in(0,1)$ therein for $\kappa$ sufficiently small), the above quantity is bounded by a multiple of
\begin{equs}
{}& |X_0|_\eta^2  \int_{r\in [s,t]} \!\!\!\!\!\!\!\!
G_{t-r} (x-y) r^{\eta}  
|\bar x-y|^{-1}
 |\phi^\lambda (x) \phi^\lambda (\bar x) |\,\mrd r \mrd y \mrd x\mrd \bar x 
\\ 
&\lesssim
|X_0|_\eta^2
\int_{r\in [s,t]} \!\!\!\!\!\!\!\! (t-r)^{-\frac{\alpha}{2}}  r^\eta |x-\bar x|^{\alpha-1}
 |\phi^\lambda (x) \phi^\lambda (\bar x)| \,\mrd r \mrd x\mrd \bar x 
 \lesssim 
 |t-s|^{\kappa}   \lambda^{2\eta+1-2\kappa}.
\end{equs}
 Setting $s=0$ and denoting ${\mathcal Y} = G*(\CP X_0 \partial \tilde\Psi) $,
one then has
$\E  | \langle {\mathcal{Y}}_t ,\phi^\lambda \rangle|^2
\lesssim  t^\kappa\lambda^{2\eta+1-2\kappa}  $.
The claim for $G*(\CP X_0 \partial \tilde\Psi)$ and $\CP X_0 \partial \tilde\Psi$ now follows in the same way as in the proof of Lemma~\ref{lem:tildePsi-cov}.

The argument for $\tilde\Psi \partial \CP X_0$ is similar except we have
\begin{equs}[e:PX0dPsi-22]
\E  \Big| \Big\langle  \int_s^t \CP_{t-r} ( &\tilde\Psi_r \partial \CP_r X_0 ) \,  \mrd r  ,\phi^\lambda \Big\rangle \Big|^2
\lesssim 
 |X_0|_\eta^2 \int_{r,\bar r\in [s,t]} \!\!\!\!\!\!\!\! G_{t-r} (x-y) G_{t-\bar r} (\bar x-\bar y)
 \\
 &\times r^{\frac\eta2-\frac12} {\bar r}^{\frac\eta2-\frac12}   |(r,y)-(\bar r,\bar y)|^{-1}
 |\phi^\lambda (x) \phi^\lambda (\bar x)| \,\mrd r \mrd y \mrd \bar r \mrd \bar y\mrd x\mrd \bar x
\end{equs}
and instead of \cite[Lem.~10.14]{Hairer14}  we use  Lemma~\ref{lem:CPf} twice (first with $\gamma=1$ and $\alpha=1+\eta \in (0,\frac12)$ and then with $\gamma=-\eta$ and $\alpha=\eta+1-2\kappa\in (0,-\eta)$ therein for $\kappa$ sufficiently small)
to bound the above quantity by a multiple of
\begin{equs}
{}& |X_0|_\eta^2 \int_{r,\bar r\in [s,t]} \!\!\!\!\!\!\!\!\!\!
G_{t-r} (x-y)  r^{\frac\eta2-\frac12}  \bar r^{\frac\eta2-\frac12}  (t-\bar r)^{-\frac12-\frac\eta2} |\bar x-y|^{\eta}
 |\phi^\lambda (x) \phi^\lambda (\bar x)| \,\mrd r \mrd \bar r \mrd y \mrd x\mrd \bar x
\\ 
&\lesssim |X_0|_\eta^2  \int_{r\in [s,t]} \!\!\!\!\!\!\!\!
G_{t-r} (x-y)  r^{\frac\eta2-\frac12}
|\bar x-y|^{\eta}
| \phi^\lambda (x) \phi^\lambda (\bar x) |\,\mrd r \mrd y \mrd x\mrd \bar x 
\\ 
&\lesssim
|X_0|_\eta^2
\int_{r\in [s,t]} \!\!\!\!\!\!\!\! (t-r)^{-\frac{\eta}{2}-\frac12+\kappa}  r^{\frac\eta2-\frac12}|x-\bar x|^{2\eta+1-2\kappa}
 |\phi^\lambda (x) \phi^\lambda (\bar x)| \,\mrd r \mrd x\mrd \bar x 
\\
&\lesssim 
 |t-s|^{\kappa}   \lambda^{2\eta+1-2\kappa}\;.
\end{equs}
The rest of the proof is again the same as that of Lemma~\ref{lem:tildePsi-cov}.
\end{proof}

\subsection{Proof of Theorem~\ref{thm:local-exist-sigma}}\label{sec:proof_of_localexist_sym}


\begin{proof}[of Theorem~\ref{thm:local-exist-sigma}]
We first show that the fixed point problem \eqref{e:fix-pt-X1} is well-posed, and then argue that the reconstructed solutions converge in $\state^\sol$.  

Fix $\gamma\in(\frac32+2\kappa,2)$. 
Writing $\CX$ as \eref{eq:sym_solution_expansion}, we will solve the fixed point problem \eqref{eq:sym_solution_expansion} and \eqref{e:fix-pt-X1}  for $\hat \CX$ in $\hat{\cD}^{\gamma,\hat\beta}_{-5\kappa}$ 
with $\hat\beta > -1/2$ as in  \eqref{eq:CI}.  
Note that
\begin{equ}[e:PX0D]
\CP X_0 \in  \hat{\cD}^{\infty,\eta}_{0} \;,
\quad
\partial \CP X_0 \in  \hat{\cD}^{\infty,\eta-1}_{0}\;.
\end{equ}
Moreover, since  $X_0\in\init$,
we use the bound \eqref{e:PN-bound}
 with $|\cdot|_{\CC^{\hat\beta}}$ replaced by  $|\cdot|_{\CC^k}$ (for $|k|<\gamma$)  
 to conclude\footnote{Note that the bound  \eqref{e:PN-bound} references classical H\"{o}lder-Besov spaces but the space of modelled distributions taking values in the polynomial sector coincides with these classical spaces.} that
$\CG_{\mfz} \big( \CP X_0 \partial \CP X_0 \big)$
is well-defined as an element of $\hat\cD^{\gamma,\hat\beta}_0$.

For the other terms in the first line on the right-hand side of \eqref{e:fix-pt-X1}, 
since $\partial \hat \CX \in
 \hat{\cD}^{\gamma-1,\hat\beta-1}_{-1-5\kappa}$,
by \eqref{e:PX0D} and  
 Lemma~\ref{lem:multiply-hatD} one has
\begin{equ}[e:PX0-hatX-add]
\CP X_0\, \partial \hat\mcX
 \in
 \hat{\cD}^{\gamma-1,\hat\beta+\eta-1}_{-1 -5\kappa}\;,
\quad
 \hat\mcX \, \partial \CP X_0  
 \in
 \hat{\cD}^{\gamma,\hat\beta+\eta-1}_{ -5\kappa}\;,
\quad
  \hat\mcX \partial \hat\mcX 
 \in
 \hat{\cD}^{\gamma-1-4\kappa,2\hat\beta-1}_{-1 -10\kappa}\;,
\end{equ}
where we took $\kappa>0$ sufficiently small such that $-5\kappa>\hat\beta$.
Note that $\hat\beta>-1/2$ due to \eqref{eq:CI}, and thus
$\min(\hat\beta+\eta-1,2\hat\beta-1)
> -2$.
Therefore, by Theorem~\ref{thm:integration}, 
\[
\CG_{\mfz} \Big( \CP X_0\, \partial \hat\mcX+  \hat\mcX \, \partial \CP X_0 +\hat\mcX \partial \hat\mcX \Big) \in \hat{\cD}^{\gamma,\hat\beta}_{-2\kappa}\;,\]
provided that $\kappa>0$ is sufficiently small.

For the cubic and linear term on the second line of \eqref{e:fix-pt-X1}, 
by the definition of  $\tbPsi$  in \eqref{e:abs-fix-pt-X0'} and
Lemma~\ref{lem:Schauder-input}, we have $\tbPsi \in \cD^{\gamma,-\f12-2\kappa}_{-\f12-2\kappa}$ and thus $\tbPsi \in \hat\cD^{\gamma,-\f12-2\kappa}_{-\f12-2\kappa}$.
Then by the fact that $\CP X_0  , \hat \CX$ belong to the $\hat\cD$-spaces with exponents stated above,
and taking $\kappa>0$ sufficiently small such that
$\eta<-\frac12-2\kappa$,
one has
\begin{equ}[e:tildeX3-add]
\CX \in \hat\cD^{\gamma,\eta}_{-\frac12-2\kappa}\;,
\qquad
\mbox{and thus}
\qquad
\CX^3  \in \hat\cD^{\gamma-1-2\kappa,3\eta}_{-\frac32-6\kappa}\;.
\end{equ}
By \eqref{eq:CI},
$3\eta>-2$ so that
Theorem~\ref{thm:integration} applies,
and we have 
$\CG_{\mfz} \big(  \CX^3 + \mathring{C}_{\mfz} \CX \big)
\in \hat{\cD}^{\gamma,\hat\beta}_{-2\kappa}$.  
Moreover, one has
\begin{equ}[e:F-hatX-add]
\tbPsi \partial \hat\mcX , \hat\mcX \partial \tbPsi
\in
\hat{\cD}^{\gamma -\frac32-2\kappa,\hat\beta-\frac32-2\kappa}_{-\frac32-7\kappa}\;.
\end{equ}
The condition  \eqref{eq:CI}
again guarantees that 
Theorem~\ref{thm:integration} applies to these terms provided that $\kappa>0$
 is small enough.
This concludes our analysis for the second line of \eqref{e:fix-pt-X1}.

The terms in the third line of  \eqref{e:fix-pt-X1} require extra care. Indeed, applying Lemma~\ref{lem:multiply-hatD} for multiplication as before
we have 
\begin{equ}[e:FpartialF]
\tbPsi\partial \tbPsi \in 
\hat \cD^{\gamma-\frac32-2\kappa, -2-4\kappa}_{-2-4\kappa}\;,
\end{equ}
and
\begin{equ}[e:PX0dF]
\CP X_0 \partial \tbPsi
 \in \hat{\cD}^{\gamma-1, \eta-\frac32-2\kappa}_{-\frac32-2\kappa}\;,
\qquad
 \tbPsi \partial \CP X_0 
  \in \hat{\cD}^{\gamma, \eta-\frac32-2\kappa}_{-\frac12-2\kappa}\;,
\end{equ}
but then the reconstruction
Theorem~\ref{thm:reconstructDomain} and thus Theorem~\ref{thm:integration}
do not  apply because $ -2-4\kappa < -2$
and $ \eta-\frac32-\kappa < -2$.
To control these terms in \eqref{e:fix-pt-X1} we use instead Lemma~\ref{lem:Schauder-input}, 
for which we need to check that $\omega$ is compatible with $f$ for every term of the form $\CG^\omega(f)$.

For the last two terms  this is immediate since for $|k| \in \{0,1\}$ and $t \not = 0$,  $(\tilde{\CR} \partial^{k}\tbPsi)(t,x) = \partial^{k} \tilde{\Psi}_{\eps}(t,x)$, and since renormalisation commutes with multiplication by polynomials, for any modeled distribution $H$ taking values in the span of the polynomial sector, one has $\tilde{\CR} (H \partial^{k}\tbPsi)(t,x) = (\tilde{\CR}H)(t,x) (\tilde{\CR}\partial^{k}\tbPsi)(t,x)$. 

For the term $\CG^{\tilde\Psi_\eps \partial \tilde\Psi_\eps} 
(\tbPsi\partial \tbPsi)$, since the action of the model on $ \<IXiI'Xi_notriangle>$ is unaffected by BPHZ renormalisation (see Lemma~\ref{lem:symbols_vanish}), one has
\[
\tilde{\CR}(\tbPsi\partial \tbPsi)(t,x) = \tilde{\CR}(\tbPsi)(t,x)\tilde{\CR}(\partial \tbPsi)(t,x) = \tilde{\Psi}_{\eps} \partial \tilde{\Psi}_{\eps}
\]
for $t \neq 0$.
This verifies the conditions of Lemma~\ref{lem:Schauder-input} which
together with \eqref{e:FpartialF}--\eqref{e:PX0dF} shows
 that the fixed point problem \eqref{e:fix-pt-X1} is well-posed for $\eps > 0$. 

The stability of the fixed point problem (in modelled distribution space)
as $\eps \downarrow 0$
for a short (random) time interval $[0,\tau]$
 then follows from the convergence of models (Lemma~\ref{lem:conv_of_models}), Lemmas~\ref{lem:tildePsi-cov}--\ref{lem:PX0Psi-prob} and \eqref{e:conti-in-zeta}.
Here $\tau>0$ depends only on the size of model in the time interval $[-1,2]$ and the size of the initial condition in $\state$.

We write $X^{\eps} = \CR \CX$ in the rest of the argument, where $\CX$ is given by \eqref{eq:sym_solution_expansion} and $\hat{\CX}$ is the solution to the fixed point problem \eqref{e:fix-pt-X1} for the model $Z^{\eps}_{\BPHZ}$
over the interval $[0,\tau]$.

We now show that $X^{\eps}$ converges as $\eps \downarrow 0$ in $\CC([0,\tau],\state)$.
To this end,
let $\Psi^\eps$ (resp.\ $\Psi$) solve
the stochastic heat equation driven by $\xi^\eps$ (resp.\ $\xi$)
 with initial condition $(a,\phi) \in\state$, and
 let us decompose $X^\eps = \Psi^\eps + \hat{X}^\eps$.
 By the above construction, $\hat{X}^\eps = \CR \hat{\CX}$
 where $\CR$ is the reconstruction map for  $Z^{\eps}_{\BPHZ}$.
 By convergence of models given by Lemma~\ref{lem:conv_of_models}, 
and  continuity of the reconstruction map $\mcR$,
$\hat{X}^\eps$ converges in probability to a limit denoted by $\hat{X}$ in $\CC^{-\kappa}((0,\tau)\times\T^3)$.

We claim further that $\hat{X}^\eps$ converges to $\hat X$ 
in $\CC([0,\tau],\CC^{\eta+\frac12-\kappa}(\T^3))$.
Indeed, one has $\hat{X} = \mathcal{Y} + \hat{X}_R$ where
$ \mathcal{Y}  \eqdef G*(\Psi\partial\Psi)$ 
with $G$ the heat kernel and $\Psi$ as above, and $\hat{X}_R \in \CC([0,\tau],\CC^{\f12 - \kappa})$.
Then writing $\Psi=\CP X_0+\tilde\Psi$ with $X_0=(a,\phi)$
and $\tilde\Psi$ the solution to SHE with $0$ initial condition,
we can split $\mathcal Y$ into four terms. The term quadratic in $\CP X_0$
can be bounded as in the proof of Proposition~\ref{prop:YM_flow_minus_heat},
while the term quadratic in $\tilde\Psi$ and the cross terms between $\CP X_0$ and $\tilde\Psi$
converge in $\CC([0,T], \CC^{\eta+\frac12-\kappa})$ due to Lemmas~\ref{lem:tildePsi-cov} and \ref{lem:PX0Psi-prob} respectively.
In conclusion, $\mcY^\eps \to \mcY$ in probability (even in $L^p$ for any $p\in[0,\infty)$) in $\CC([0,T], \CC^{\eta+\frac12-\kappa}(\T^3))$,
and therefore $\hat X^\eps \to \hat X$ in probability in $\CC([0,\tau],\CC^{\eta+\frac12-\kappa})$
for any $\kappa>0$ as claimed.

To show that $X^{\eps}$ converges as $\eps \downarrow 0$ in $\CC([0,\tau],\state)$,
note that,
by assumption~\eqref{eq:bar_beta_cond2} (and the condition for $\beta$ therein),
we can choose $\kappa>0$ small enough 
such that $\eta +(\eta+\frac12-\kappa )  > 1-2\delta$  (which follows from \eqref{eq:bar_beta_cond2}-\eqref{eq:bar_beta_cond3}),
 $\eta+\frac12-\kappa>\frac12-\delta$ (see above \eqref{eq:bar_beta_cond1}), 
 and $\eta+\frac12-\kappa>2\theta(\alpha-1)$ (see below \eqref{eq:bar_beta_cond3}).
Then, by items~\ref{pt:perturbation} and~\ref{pt:fancynorm_0} of Lemma~\ref{lem:perturbation} respectively, pointwise in $[0,\tau]$,
\begin{equs}\label{eq:perturbation_X}
{} &
\fancynorm{X^\eps; X}_{\beta,\delta}
\lesssim
\fancynorm{\Psi^\eps;\Psi}_{\beta,\delta} 
+ \fancynorm{\hat{X}^\eps; \hat{X}}_{\beta,\delta}
\\
&\;
+ |\Psi^\eps-\Psi|_{\CC^\eta}
\Big(|\hat{X}^\eps|_{\CC^{\eta+\frac12-\kappa}}\!\! +|\hat{X}|_{\CC^{\eta+\frac12-\kappa}}\Big)
+|\hat{X}^\eps- \hat{X}|_{\CC^{\eta+\frac12-\kappa}}
\Big(| \Psi^\eps|_{\CC^\eta}+| \Psi |_{\CC^\eta}\Big)
\end{equs}
and
\begin{equ}
 \fancynorm{\hat{X}^\eps; \hat{X}}_{\beta,\delta}
  \lesssim |\hat{X}^\eps -\hat{X}|_{\CC^{\eta+\frac12-\kappa}}
  \Big(|\hat{X}^\eps|_{\CC^{\eta+\frac12-\kappa}}+|\hat{X}|_{\CC^{\eta+\frac12-\kappa}}\Big)\;,
\end{equ}
and by
Lemma~\ref{lem:heatgr_Besov_embed},
\begin{equ}
 \heatgr{\hat{X}^\eps- \hat{X}}_{\alpha,\theta}
  \lesssim |\hat{X}^\eps -\hat{X}|_{\CC^{\eta+\frac12-\kappa}}\;.
\end{equ}
Furthermore, by Corollary~\ref{cor:SHE_conv_state}, for all $T>0$
\begin{equ}
\lim_{\eps\to0}\sup_{t\in[0,T]} \big(\heatgr{\Psi^\eps_t-\Psi_t}_{\alpha,\theta} + \fancynorm{\Psi^\eps_t;\Psi_t}_{\beta,\delta} +|\Psi^\eps_t-\Psi_t|_{\CC^\eta}\big)=0
\end{equ} 
in probability. It follows that there exists a random variable $C_\eps>0$ (with moments of all orders bounded uniformly in $\eps>0$) such that, for all $R>1$ and $t\in[0,\tau]$,
on the event $|\hat X_t|_{\CC^{-\kappa}}+|\hat X^\eps_t|_{\CC^{-\kappa}} < R$, it holds that
\begin{itemize}
\item $\Sigma(X_t)+\Sigma(X^\eps_t) < C_\eps R^2$, and
\item $|\hat X_t-\hat X^\eps_t|_{\CC^{\eta+\frac12-\kappa}} < \eps \Rightarrow \Sigma(X_t,X^\eps_t) < c_\eps R + C_\eps\eps$,
\end{itemize}
where $c_\eps$ is another random variable (with finite moments of all orders) 
such that $c_\eps \to 0$ in probability as $\eps\to 0$.
Using that $\hat X^\eps\to \hat X$ in probability in $\CC([0,\tau],\CC^{\eta+\frac12-\kappa})$,
and using Proposition~\ref{prop:Theta_cont_zero}\ref{pt:heat_Sigma} to handle continuity at time $t=0$ for the stochastic heat equation,
it follows that $X^\eps\to X$ in probability in $\CC([0,\tau],\state)$ as claimed.

Finally, we note that $X^{\eps}$ indeed solves equation \eqref{e:SPDE-for-X} on the time interval $[0,\tau]$, since for smooth initial data the equation \eqref{e:fix-pt-X1} reconstructs to the same equation that \eqref{e:abs-fix-pt-X0'} does. 

After the initial time interval $[0,\tau]$,
we can restart the equation ``close to stationarity'' by solving for the remainder.
More specifically, let $V$ denote the solution
in the space of modelled distributions 
for the remainder equation arising from the ``generalised Da Prato--Debussche trick'' in~\cite{BCCH21}
and associated to the model $Z^\eps_{\BPHZ}$ (we allow $\eps=0$).
Recall that this equation removes the stationary ``distributional'' objects, which in our case are $\<IXi>_\eps$
and $\<I[IXiI'Xi]_notriangle>_\eps$,
and solves for the remainder in the space of modelled distributions $\mathscr{U}^{\gamma,\eta}_{+}$ specified in~\cite[Sec.~5.5, Eq.~5.16]{BCCH21}. 

We start the equation from time $\tau$ and with initial condition $v^\eps(\tau) \eqdef \hat X_R^\eps(\tau) + f^\eps(\tau)$
where $\hat X^\eps_R\in\CC([0,\tau],\CC^{\frac12-\kappa})$ is defined as above
and $f^\eps$ is defined by
\begin{equ}\label{eq:f_DPD_trick}
f^\eps(t) = \tilde\Psi_\eps(t) - \<IXi>_\eps(t) + K*(\tilde\Psi_\eps\partial\tilde\Psi_\eps)(t) - \<I[IXiI'Xi]_notriangle>_\eps(t)\;,
\end{equ}
where the symbols represent the stationary objects
\begin{equ}
\<IXi>_\eps \eqdef K*\xi^\eps\;,\qquad \<I[IXiI'Xi]_notriangle>_\eps \eqdef K*(\<IXi>_\eps \partial \<IXi>_\eps)\;.
\end{equ}
Note that $\tilde\Psi_\eps - \<IXi>_\eps$ converges in $\CC^\infty((0,\infty)\times\T^3)$
and
therefore $K*(\tilde\Psi_\eps\partial\tilde\Psi_\eps) - \<I[IXiI'Xi]_notriangle>_\eps$ converges in $\CC^{\frac12-\kappa}((0,\infty)\times\T^3)$ in probability as $\eps\downarrow0$.
Hence $f^\eps$ converges in probability in $\CC^{\frac12-\kappa}((0,\infty)\times\T^3) $
as $\eps\downarrow 0$.

Since the initial condition of $V$ is H\"older continuous with exponent $\f12-\kappa$,
we obtain a maximal solution for $V$
for which $\CR^\eps V$ converges in $\CC^{\frac12-\kappa}(\T^3)^\sol$ to $\CR^0 V$
(we use here that $\CR^\eps V$ is continuous with respect to the initial data $v^\eps(\tau)$,
and that $v^\eps(\tau)\to v^0(\tau)$).
Furthermore, $f$ is chosen in such a way that
\begin{equ}
\CR^\eps V(\tau) + \<IXi>_\eps(\tau) + \<I[IXiI'Xi]_notriangle>_\eps(\tau) = X^\eps(\tau)\;.
\end{equ}

Finally, for $\eps>0$, it follows from coherence (specifically from~\cite[Thm.~5.7]{BCCH21}) that
$\CR^\eps V + \<IXi>_\eps+\<I[IXiI'Xi]_notriangle>_\eps$ solves the equation~\eqref{e:SPDE-for-X} with initial condition $X^\eps(\tau)$ on $[\tau,T^*_\eps)$, where $T^*_\eps$ is the maximal existence time of $\CR^\eps V$.
By the same argument as above concerning $\hat X^\eps$ and $\Psi^\eps$
we further see that $\CR^\eps V + \<IXi>_\eps+\<I[IXiI'Xi]_notriangle>_\eps$ converges as $\eps\downarrow0$ in probability in $\state^\sol$ to $\CR^0 V + \<IXi>_0+\<I[IXiI'Xi]_notriangle>_0$.
This completes the proof of part~\ref{pt:A_conv_state}.

Part~\ref{pt:depend_on_lim} simply follows from the stability of 
the fixed point problem in space of modelled distributions with respect to coefficients in the fixed point problem.
\end{proof}

\section{The gauge transformed system}\label{sec:gauge}

We now formulate the first part of Theorem~\ref{theo:meta} precisely.
Throughout the section, let $\moll$ be a space-time mollifier.
Define the mapping 
\begin{equ}\label{eq:hatmfG^rho_def}
\mfG^{\rho} \ni g \mapsto (U,h) \in  \tilde{\mfG}^{\rho} \eqdef \CC^\rho \big(\T^3,L_{G}(\mfg,\mfg) \oplus L_{G}(\higgsvec,\higgsvec) \big) \times \CC^{\rho - 1} \big(\T^3,\mfg^3)\;,
\end{equ}  \label{hatmfG^rho page ref}
by setting    
\begin{equ}\label{eq:h_and_U_def}
h \eqdef (\mrd g )g^{-1} 
\qquad\textnormal{and}\qquad
U=(U_{\mfg},U_{\higgsvec}) 
\eqdef
(\Ad_g,\brho(g))\;.
\end{equ}
Observe that~\eqref{eq:hatmfG^rho_def} maps $\mfG^{0,\rho}$ into $\tilde{\mfG}^{0,\rho}$, the closure of smooth functions in $\tilde{\mfG}^\rho$.\label{hat_mfG^0rho page ref}

By Lemma~\ref{lem:flow_for_rough_g},
for any $(x,g_0) \in \state \times \mfG^{0,\rho}$, if we write $(X^{\eps},g^{\eps})$ for the solution to \eqref{e:SPDE-for-X_with_g}+\eqref{eq:SPDE_for_g_coupled}
 starting with initial data $(x,g_0)$ and with $c^{\eps} = 0$ and $\mathring{C}_{\A}^{\eps}$, $\mathring{C}_{\Phi}^{\eps}$ as in the statement of the lemma, then we have that $(X^{\eps},g^{\eps})$ converges in probability, as $\eps \downarrow 0$, in $(\state \times \mfG^{0,\rho})^{\sol}$ to a limit $(X,g)$. We write $
  \mathcal{A}^{\BPHZ}[\mathring{C}_{\A},x,g_0] \eqdef (X,g)$. 
We analogously write $\mathcal{\tilde{A}}^{\BPHZ}[\mathring{C}_{\A},x,g_0]$ for the analogous construction, obtained via Lemma~\ref{lem:flow_for_rough_g2} to take the limit of the solutions to  \eqref{e:SPDE-for-X_with_g2}+\eqref{eq:SPDE_for_g_coupled2}
instead of  \eqref{e:SPDE-for-X_with_g}+\eqref{eq:SPDE_for_g_coupled}. 

\begin{theorem}\label{thm:gauge_covar}
Suppose $\moll$ is 
non-anticipative and let $C_{\YM}^{\eps},C_{\Higgs}^{\eps}$ be as in~\eqref{e:C-YM-Higgs-sig} with $\sig=1$.
Fix $\mathring{C}_{\A}\in L_{G}(\mfg,\mfg)$,
$ (a,\phi) \in \state$,
and $g(0)\in\mfG^{0,\rho}$.
Let $C^{\eps}_{\A} = C^{\eps}_{\YM} + \mathring{C}_{\A}$
and
$C_{\Phi}^{\eps} =  C_{\Higgs}^{\eps} -m^2 $,
and let $(B,\Psi,g)$ be the solution to~\eqref{eq:SPDE_for_B}.
Furthermore, for $\check{C} \in L_{G}(\mfg,\mfg)$,
let $(\bar{A},\bar{\Phi},\bar{g})$ be the solution to   
\begin{equs}[eq:SPDE_for_bar_A]
\partial_t \bar{A}_i &= 
\Delta \bar{A}_i  +[\bar{A}_j,2\partial_j \bar{A}_i - \partial_i \bar{A}_j + [\bar{A}_j,\bar{A}_i]] -\mathbf{B}((\partial_{i} \bar{\Phi} + \bar{A}_{i}\bar{\Phi}) \otimes \bar{\Phi})
\\
& \qquad\quad + C^{\eps}_{\A} \bar{A}_i
+ (\mathring{C}_{\A} - \check{C})(\partial_i \bar{g} )\bar{g}^{-1}
+ \moll^\eps * (\bar{g}\xi_i \bar{g}^{-1}) \;,
\\
\partial_t \bar{\Phi} &= 
\Delta \bar{\Phi} 
+ 2 \bar{A}_{j} \partial_{j}\bar{\Phi} 
+ \bar{A}_{j}^{2}\bar{\Phi} 
 - |\bar{\Phi}|^2 \bar{\Phi} + C_{\Phi}^{\eps} \bar{\Phi}
+\moll^\eps * (\bar{g}\zeta)\;,
\\
(\partial_t \bar{g}) & \bar{g}^{-1} 
= \partial_j((\partial_j \bar{g})\bar{g}^{-1})+ [\bar{A}_j, (\partial_j \bar{g})\bar{g}^{-1}]\;,
\\
& \qquad (\bar{A}(0),\bar{\Phi}(0))  = g(0)\act (a, \phi) \;,
\quad \bar{g}(0) = g(0)\;,
\end{equs}
where we set $\bar g\equiv 1$ on $(-\infty,0)$.

\begin{enumerate}[label=(\roman*)]
\item\label{pt:delta_to_zero} For every $\eps>0$, there exists a smooth maximal solution to~\eqref{eq:SPDE_for_bar_A} in $(\state\times \mfG^{0,\rho})$
obtained by replacing $\xi = \big( (\xi_{i})_{i=1}^{3}, \zeta \big)$ by $\tilde{\xi}^{\delta} \eqdef \xi \bone_{t<0}+\bone_{t\geq0}\xi^\delta$ and taking the $\delta\downarrow0$ limit.

\item \label{pt:gauge_covar} 
There exists a unique operator $\check{C}\in L_{G}(\mfg,\mfg)$
such that for all choices of $(B(0),\Psi(0), g(0)) = (\bar{A}(0),\bar{\Phi}(0), \bar{g}(0)) \in \state \times \mfG^{0,\rho}$, $(B,\Psi,U,h)$
and $(\bar A,\bar\Phi,\bar U,\bar h)$
converge in probability to the same limit in $(\state\times\tilde{\mfG}^{0,\rho})^\sol$ as $\eps\downarrow 0$.
Here $\bar U,\bar h$ and $U,h$ are determined by $\bar g$ and $g$ respectively via~\eqref{eq:h_and_U_def}.
 
\item \label{pt:indep_moll} The operator $\check{C}$ above is independent of $\mathring C_\A$ and our choice of non-anticipative mollifier $\chi$. 

\item \label{pt:final_gauge_covar} $\check{C}$ is the unique value of $\mathring{C}_{\A}$ for which the following property holds: for any $(x,g_{0}) \in \state \times \mfG^{0,\rho}$, if one writes $(\tilde{X},\tilde{g}) =   \tilde{\mathcal{A}}^{\BPHZ}[\mathring{C}_{\A},g_0 \act x,g_0]$ and $(X, g) =  \mathcal{A}^{\BPHZ}[\mathring{C}_{\A}, x, g_0]$, then $(g \act X, U,h) \eqlaw (\tilde{X},\tilde{U},\tilde{h}) $
in $(\state\times \tilde{\mfG}^{0,\rho})^{\sol}$, where $\tilde U,\tilde h$ and $U,h$ are determined by $\tilde g$ and $g$ respectively via~\eqref{eq:h_and_U_def}. 
\end{enumerate} 
\end{theorem}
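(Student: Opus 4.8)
\textbf{Proof strategy for Theorem~\ref{thm:gauge_covar}.}

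The plan is to follow the ``small-noise symmetry argument'' inspired by~\cite{StringManifold}, exploiting the fact that for any fixed $\eps>0$ the two dynamics $(B,\Psi,g)$ and $(\bar A,\bar\Phi,\bar g)$ are \emph{genuinely} related by a gauge transformation. The key point is: for fixed $\eps>0$, $(B,\Psi) = g\act(A,\Phi)$ where $(A,\Phi)$ solves~\eqref{eq:SPDE_for_A} with noise $\xi^\eps$, while $(\bar A,\bar\Phi)$ solves the same equation with noise $\bar g\xi^\eps$ and an \emph{extra} counterterm $(\mathring C_\A-\check C)(\partial_i\bar g)\bar g^{-1}$. So the difference between the two limiting dynamics, when it exists, can only come from the discrepancy between the renormalisation counterterms, which is a purely local algebraic object; matching them fixes $\check C$ uniquely.

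First I would establish~\ref{pt:delta_to_zero}: the existence of maximal smooth solutions to~\eqref{eq:SPDE_for_bar_A} (and to~\eqref{eq:SPDE_for_B}) in $\state\times\mfG^{0,\rho}$. This follows from the coupled well-posedness results of Appendix~\ref{app:evolving_rough_g} (Lemmas~\ref{lem:flow_for_rough_g},~\ref{lem:flow_for_rough_g2}) applied to the $(A,\Phi,g)$-system, since~\eqref{eq:SPDE_for_bar_A} is obtained from such a system by the substitution $(\bar A,\bar\Phi)\mapsto (\bar A,\bar\Phi)$ with $\bar g\xi$ in place of $\xi$; the mollification by $\moll^\delta$ restricted to $t\geq0$ and the convergence as $\delta\downarrow 0$ are handled exactly as in Theorem~\ref{thm:local-exist-sigma} and its coupled analogues. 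The non-anticipative assumption on $\moll$ is what makes $\moll^\delta*(\bar g\xi)$ and $\moll^\eps*(\bar g\xi)$ meaningful for an adapted $\bar g$.

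Next, for~\ref{pt:gauge_covar}, I would run the following argument. Fix $\eps>0$. Writing $(A^\eps,\Phi^\eps)$ for the solution of~\eqref{eq:SPDE_for_A} with noise $\xi^\eps$ and counterterm $C^\eps_\A=C^\eps_{\YM}+\mathring C_\A$, one checks (as in the discussion after~\eqref{eq:SPDE_for_B}) that $(B^\eps,\Psi^\eps)=g^\eps\act(A^\eps,\Phi^\eps)$ with $g^\eps$ solving~\eqref{eq:SPDE_for_g_wrt_A}; this is the content of the identity at fixed $\eps$. On the other hand, $(\bar A^\eps,\bar\Phi^\eps)$ solves~\eqref{eq:SPDE_for_A} with the noise rotated by $\bar g^\eps$ and with the modified counterterm $C^\eps_\A\,\cdot + (\mathring C_\A-\check C)(\partial\bar g^\eps)(\bar g^\eps)^{-1}$. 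Since $\bar g^\eps\xi^\eps\eqlaw\xi^\eps$ (as $\bar g^\eps$ is adapted and $\xi^\eps$ is white-in-time with the invariant covariance $\bCov$), the law of $(\bar A^\eps,\bar\Phi^\eps,\bar g^\eps)$ equals the law of the solution of the \emph{un-rotated} system with that modified counterterm. Now I use the key structural fact, proved via regularity structures in Theorem~\ref{thm:local-exist-sigma} and the injectivity-in-law statement of Appendix~\ref{app:injectivity}: the limiting law of the solution map depends on the renormalisation constants in an injective way, and the BPHZ counterterm for the $\bar g$-rotated equation differs from that of the unrotated equation by precisely a term of the form $\mathfrak{c}^\eps(\partial\bar g)\bar g^{-1}$ for some $\mathfrak{c}^\eps\in L_G(\mfg,\mfg)$ coming from the trees $\mcb{I}_\mfz(\bar\mfl)\mcb{I}_{(\mfz,e_j)}(\mcb{I}_\mfz(\bar\mfl))$ (cf.\ Remark~\ref{rem:bare-m-renorm}). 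One shows $\mathfrak{c}^\eps$ converges to a finite limit $\mathfrak{c}$ as $\eps\downarrow0$ (this uses that these specific trees have an odd number of derivatives so their BPHZ characters vanish by Lemma~\ref{lem:symbols_vanish}, forcing the divergences to cancel and leaving a convergent geometric/DeTurck correction). Setting $\check C\eqdef\mathfrak c$ makes the extra term $(\mathring C_\A-\check C)(\partial\bar g)\bar g^{-1}$ in~\eqref{eq:SPDE_for_bar_A} exactly compensate the mismatch, so that $(\bar A^\eps,\bar\Phi^\eps,\bar g^\eps)$ and $(B^\eps,\Psi^\eps,g^\eps)$ have the same limiting law in $(\state\times\mfG^{0,\rho})^\sol$; passing to $(B,\Psi,U,h)$ vs.\ $(\bar A,\bar\Phi,\bar U,\bar h)$ via the continuous map~\eqref{eq:h_and_U_def} gives~\ref{pt:gauge_covar}. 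Uniqueness of $\check C$ is immediate from the injectivity-in-law result: two distinct values would produce two distinct laws.

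For~\ref{pt:indep_moll}, I would observe that $\check C=\mathfrak c$ is extracted from the $\eps\downarrow0$ limit of a ratio of explicit kernel integrals attached to the two fixed trees above, with the $\mathring C_\A$-dependence dropping out because the relevant trees' BPHZ characters are independent of $\mathring C_{\mfz}$ (Remark~\ref{rem:bare-m-renorm}); by the general convergence theory the limit does not depend on the choice of non-anticipative $\moll$. Finally~\ref{pt:final_gauge_covar}: with $\mathring C_\A=\check C$ the extra term in~\eqref{eq:SPDE_for_bar_A} vanishes, so $(\bar A,\bar\Phi,\bar g)$ \emph{is} the solution of the system defining $\tilde{\mathcal A}^{\BPHZ}[\check C,g_0\act x,g_0]$, while $(B,\Psi,g)=g\act(A,\Phi)$ with $(A,\Phi)$ defining $\mathcal A^{\BPHZ}[\check C,x,g_0]$; combining with~\ref{pt:gauge_covar} (which says these two have the same law) yields $(g\act X,U,h)\eqlaw(\tilde X,\tilde U,\tilde h)$, and $\check C$ is the unique value with this property by the same injectivity argument.

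\textbf{Main obstacle.} The hard part will be the precise identification of the counterterm discrepancy between the $\bar g$-rotated and unrotated equations as exactly $\mathfrak c^\eps(\partial\bar g)\bar g^{-1}$ with \emph{convergent} $\mathfrak c^\eps$, together with the injectivity-in-law of the solution map as a function of the renormalisation constants (Appendix~\ref{app:injectivity}). This requires running the coupled $(A,\Phi,g)$ regularity-structures analysis of Appendix~\ref{app:evolving_rough_g} carefully enough to track how the adapted, rough gauge transformation $g$ enters the renormalisation, and in particular verifying that no new divergent counterterms beyond the mass-type ones (and the single DeTurck-type correction) appear — this is where the symmetry lemmas of Section~\ref{sec:sym_renorm_reg_struct} (Lemma~\ref{lem:ell-T-inv}, Proposition~\ref{prop:renormalisation is covariant}) and the parity vanishing (Lemma~\ref{lem:symbols_vanish}) do the essential work of constraining the form of the counterterms to lie in $L_G(\mfg,\mfg)$ acting linearly.
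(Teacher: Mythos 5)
Your overall strategy---relate the two systems by a gauge rotation at fixed $\eps$, use adaptedness of $\bar g$ and $\bCov$-invariance to trade $\bar g\xi$ for $\xi$ in law, and pin down $\check C$ by small-noise limits plus the injectivity-in-law of Appendix~\ref{app:injectivity}---is the right philosophy and matches the paper's. But the central mechanism is wrong in a way that leaves a real gap. You attribute the $h$-proportional counterterm discrepancy to the trees $\mcb{I}_\mfz(\bar\mfl)\mcb{I}_{(\mfz,e_j)}(\mcb{I}_\mfz(\bar\mfl))$ of Remark~\ref{rem:bare-m-renorm}; but those are generated by the $\mathring C_\mfz X$ term and their BPHZ characters \emph{vanish} by parity (that is precisely why $\check C$ does not depend on $\mathring C_\A$). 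The counterterm $C^\eps_{\Gauge}\,h$ actually arises from expanding the gauge rotation $U$ inside the multiplicative noise $U\xi$: this is the grafting analysis of Section~\ref{sec:renorm_gauge_transformed} (Lemmas~\ref{lemma:ymh_trees_generate} and~\ref{lem:renorm_linear_in_Y_and_h}, culminating in Proposition~\ref{prop:renorm_g_eqn}), which produces contributions from dozens of genuinely divergent trees. Consequently your claim that $\mathfrak c^\eps$ converges because "the divergences cancel by parity, leaving a convergent correction" is vacuous as stated; the paper explicitly notes that explicit computation is infeasible in $d=3$, and instead proves convergence of $C^\eps_{\YM}-C^\eps_{\Gauge}$ and of $C^{0,\eps}_{\Gauge}$ non-constructively in two steps: boundedness by a small-noise contradiction argument (Lemmas~\ref{lem:consts-soft1_bounded},~\ref{lem:consts-soft2_bounded}), then uniqueness of subsequential limits via Lemma~\ref{lemma:injectivity_in_law} (Propositions~\ref{prop:consts-soft},~\ref{prop:consts-soft2}). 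You invoke the injectivity result, but only for uniqueness of $\check C$ at the end, not where it is actually indispensable.

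A second gap: part~\ref{pt:gauge_covar} asserts that $(B,\Psi,U,h)$ and $(\bar A,\bar\Phi,\bar U,\bar h)$ converge in probability to the \emph{same} limit, i.e.\ pathwise coincidence of the limits, not merely equality of limiting laws. Your argument, which passes through $\bar g\xi\eqlaw\xi$, can only deliver equality in law. The paper obtains the stronger statement by posing both abstract fixed point problems on a single regularity structure with a common model and proving $|\cS-\bar\cS|_{\vec\gamma,\vec\eta,\eps;\tau}\lesssim\eps^{\varsigma}$ (Lemma~\ref{lem:fixedptpblmclose}, upgraded to $\state$-valued convergence in Propositions~\ref{prop:improve_local} and~\ref{prop:SPDEs_conv_zero_state}); the law-rotation trick is reserved for the boundedness lemmas and for part~\ref{pt:final_gauge_covar}. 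Relatedly, your reduction of part~\ref{pt:delta_to_zero} to Appendix~\ref{app:evolving_rough_g} does not quite work either, since those lemmas treat an additive-noise system coupled to $g$, whereas \eqref{eq:SPDE_for_bar_A} has the multiplicative noise $\moll^\eps*(\bar g\xi)$ and requires the full setup of Sections~\ref{subsec:reg_struct_guage_transform}--\ref{sec:sol-multi}.
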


The proof of Theorem~\ref{thm:gauge_covar} will be given at the end of this section on page~\pageref{proof:gauge_covar}.
If $\moll$ is non-anticipative,
then $\lim_{\delta \downarrow 0} \bar U \tilde{\xi}^{\delta} $ is
 equal in law to $\xi$
by It{\^o} isometry since $\bar{U}$ is adapted and is orthogonal on $E$. 
Therefore, when we choose
 $\mathring{C}_{\A}=\check{C}$, 
the law of $(\bar{A},\bar{\Phi})$ \dash which does not depend on $\bar g$ anymore \dash is equal  to the 
solution to \eqref{eq:SPDE_for_A}
with initial condition $g(0)\act (a,\phi)$, and
Theorem~\ref{thm:gauge_covar}
implies the desired property of gauge covariance described in Theorem~\ref{theo:meta}\ref{pt:gauge_covar} \dash this property is illustrated in Figure~\ref{fig:gauge_covar}.
The desired Markov process on gauge orbits in Section~\ref{sec:Markov} will therefore be constructed from the $\eps\downarrow0$
limit $X=(A,\Phi)$ of the solutions to~\eqref{eq:SPDE_for_A} where the constants are defined by~\eqref{e:SYM_constants} with  $\mathring C_\A=\check C$ and $\sig^\eps=1$.
Remark that, by Theorems~\ref{thm:local-exist-sigma}\ref{pt:depend_on_lim}
and~\ref{thm:gauge_covar}\ref{pt:indep_moll},
this $X$ is canonical in that it does not depend on $\moll$.
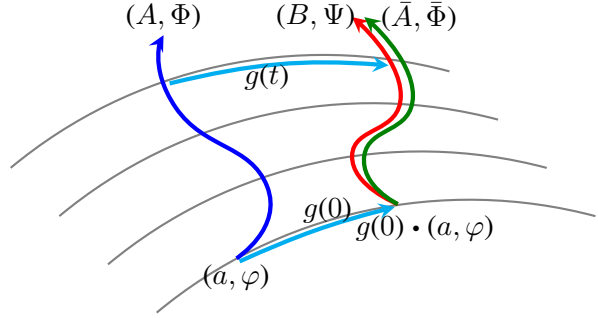
\begin{figure}[h]
\centering
  \begin{tikzpicture}[scale=0.8]
  
    \draw[thick,gray] (0,2.4) arc (75:130:8cm);
    \draw[thick,gray] (.8,1.6) arc (75:130:8cm);
    \draw[thick,gray] (1.6,0.8) arc (75:130:8cm);
    \draw[thick,gray] (2.4,0) arc (75:130:8cm);
    
 \draw [blue,ultra thick,->] plot [smooth, tension=1] coordinates 
 { (-3.5,-0.7) (-3, .5) (-4.5 , 1.8) (-4.7,  3)};   
\node at (-3.5,-1) {$(a,\phi)$}; 
\node at (-4.7,  3.3) {$(A,\Phi)$}; 
     
 \draw [red,ultra thick,->] plot [smooth, tension=1] coordinates 
 { (-0.85,0.2) (-1.6,1) (-0.8,2) (-1.6 , 3.3)};
\node at (-0.4, -.2) {$g(0)\act (a,\phi)$}; 
\node at (-2.2,  3.3) {$(B,\Psi)$}; 
 
 \draw [darkgreen,ultra thick,->] plot [smooth, tension=1] coordinates 
 { (-0.85, 0.2) (-1.4,1) (-0.6,2) (-1.4 , 3.3)};
\node at (-0.5,  3.3) {$(\bar A,\bar\Phi)$}; 

\draw [cyan, ultra thick,-> ,bend left =5]  (-3.45,-0.75)  to (-0.9,0.15);
 \node at (-2,0.1) {$g(0)$}; 
 
\draw [cyan, ultra thick,-> ,bend left =9]  (-4.6,  2.2)  to (-1, 2.5);
 \node at (-3,2.3) {$g(t)$};  

\end{tikzpicture}

\caption{An illustration of the $(A,\Phi)$ system (blue) \eqref{eq:SPDE_for_A},
the $(B,\Psi)$ system (red) \eqref{eq:SPDE_for_B}, $g$ (cyan) given by~\eqref{eq:SPDE_for_B} (or equivalently~\eqref{eq:SPDE_for_g_wrt_A}),
and the $(\bar A,\bar\Phi)$ system (green)~\eqref{eq:SPDE_for_bar_A}.
The black curves stand for gauge orbits.
With the choice $\mathring{C}_{\A}=\check{C}$, $(A,\Phi)$ and $(\bar A,\bar \Phi)$ have the same law modulo initial condition.
As $\eps\downarrow 0$, the red and green curves on the right converge to the same limit.}\label{fig:gauge_covar}
\end{figure}

\begin{remark}\label{rem:non-anticipative-assump}
The non-anticipative assumption on the mollifier $\moll$ is used in various places in the proof 
of Theorem~\ref{thm:gauge_covar}\ref{pt:gauge_covar}.
We use it in Proposition~\ref{prop:SPDEs_conv_zero_state}
to relate~\eqref{eq:SPDE_for_bar_A} in law 
to an SPDE with additive noise, for which the short-time analysis is simpler.
Furthermore, under this assumption, $\check{C}$ defined by the limit in  \eqref{e:def-C-bar} 
is an $\eps$-independent finite constant; this relies on 
Proposition~\ref{prop:consts-soft2} which requires a non-anticipative
mollifier. 
\end{remark}

Before continuing we describe some of the important differences between the proof of Theorem~\ref{thm:gauge_covar} and the analogous theorem in $d=2$, namely \cite[Thm.~2.9]{CCHS2d}. 
For this comparison, the fact that \cite{CCHS2d} does not involve a Higgs field is an immaterial difference. 

Firstly, the short-time analysis of the systems~\eqref{eq:SPDE_for_B} and~\eqref{eq:SPDE_for_bar_A}
is significantly more involved for $d=3$ than $d=2$.
We therefore require control on more stochastic objects (in addition to the models) which arise
from ill-defined products between terms involving the noise and the initial condition;
some of these objects require non-trivial renormalisation not present in $d=2$ 
(see~\eqref{e:omega123} and~\eqref{e:bar_omega0123}).

We furthermore require a more involved decomposition of the abstract fixed point problem due to these singularities than for $d=2$.
This is particularly the case in the analysis of the maximal solution to the system~\eqref{eq:SPDE_for_bar_A} performed in Proposition~\ref{prop:bar_X_max_sols}, where
we require a new fixed point problem which solves for a suitable `remainder'.
While a similar but simpler change in the fixed point problem was also required in~\cite[Sec.~7.2.4]{CCHS2d},
the main difference here is that we need to leverage
the fact that the fixed point for the `remainder' has an improved initial condition.
(We already met similar considerations in the proof of Theorem~\ref{thm:local-exist-sigma},
where we used two different strategies for the time intervals $[0,\tau]$ and $[\tau,\infty)$
and were able to fall back on the results of~\cite{BCCH21} for the latter due to the additive nature of the noise --
in the proof of Proposition~\ref{prop:bar_X_max_sols}, 
the results of~\cite{BCCH21} are not available because the noise is multiplicative.)

Next, note that the parameters $\bar{C}$ appearing in \cite[Thm~2.9]{CCHS2d} and  $\check{C}$ in Theorem~\ref{thm:gauge_covar} are related as follows:  in $d=2$, one has the convergence $\lim_{\eps \downarrow 0} C_{\YM}^{\eps} = C_{\YM}^{0}$ where $C_{\YM}^{\eps}$ is the BPHZ constant appearing in Theorem~\ref{thm:local_exist}.\footnote{This is not true for $C_{\Higgs}^{\eps}$ in $d=2$ but  again, this makes no difference as the Higgs doesn't play an important role in this argument.} 
The parameter $\bar{C}$ appearing in \cite{CCHS2d} would then correspond to $\check{C} + C_{\YM}^{0}$.

The key difference in our arguments is the use of explicit formulae for renormalisation constants in $d=2$ vs. small noise limits and injectivity arguments in $d=3$ \dash without either of these the best one would be able to do in both $d=2,3$ would be to obtain Theorem~\ref{thm:gauge_covar}\ref{pt:gauge_covar} with the modification that  $\check{C}$ would have to be replaced by, say, $\check{C}_{\eps}$ which in principle would be allowed to diverge as $\eps \downarrow 0$.

In $d=2$, $\bar{C}_{\eps}$ can be computed explicitly in terms of a small number of trees, and it can be shown that one has the convergence $\lim_{\eps \downarrow 0} \bar{C}_{\eps} = \bar{C}$. 
This gives \cite[Thm~2.9(i)]{CCHS2d}. 
Moreover  one can directly verify that, for a non-anticipative mollifier, $\bar{C} - C_{\YM}^{0}$ is independent of the mollifier. 
This would prove Theorem~\ref{thm:gauge_covar}\ref{pt:indep_moll}  in $d=2$ since in the limit, BPHZ renormalisation removes all the dependence on the mollifier. 

In $d=3$, $\check{C}_{\eps}$ has contributions from dozens of trees and explicit computation is not feasible. 
As mentioned earlier, by using small noise limits we can argue that $\check{C}_{\eps}$ remains bounded as $\eps \downarrow 0$ (Lemmas~\ref{lem:consts-soft1_bounded} and~\ref{lem:consts-soft2_bounded}). 
Injectivity of our solution theory also provides some rigidity and prevents $\check{C}_{\eps}$ from having distinct subsequential limits (Propositions~\ref{prop:consts-soft} and~\ref{prop:consts-soft2}). 
This gives the convergence $\lim_{\eps \downarrow 0}  \check{C}_{\eps} = \check{C}$ which establishes  Theorem~\ref{thm:gauge_covar}\ref{pt:gauge_covar}. 
These same propositions also show that the limiting $\check{C}$ does not depend on our particular choice of non-anticipative mollifier which gives part of Theorem~\ref{thm:gauge_covar}\ref{pt:indep_moll}\footnote{That $\check{C}$ doesn't depend on $\mathring{C}_{\A}$ can be argued with power-counting and parity, see Remark~\ref{rem:bare-m-renorm_2}.}. 

Note that while $\check{C}$ is independent of the mollifier in both $d=2,3$ we do not claim that it is a universal\footnote{Such as the $1/24$ that appears in KPZ in $d=1$.} constant \dash our constant $\check{C}$, as well as the constants $C_{\YM}^{\eps}$, $C^{\eps}_{\Higgs}$, is fixed only once 
one has fixed a BPHZ lift of space-time white noise, but the latter is not canonical since we must prescribe a 
large-scale truncation of the heat kernel to perform this lift.

\subsection{Setting up regularity structures}
\label{subsec:reg_struct_guage_transform}

As in \cite[Sec.~7]{CCHS2d} we will replace the evolutions of gauge transformations in \eqref{eq:SPDE_for_B} and \eqref{eq:SPDE_for_bar_A} with evolutions in linear spaces \dash to that end, recall the mapping~\eqref{eq:hatmfG^rho_def}.
We next state a lemma about writing the dynamics \eqref{eq:SPDE_for_B} and  \eqref{eq:SPDE_for_bar_A} in terms of the variables \eqref{eq:h_and_U_def}.  

Below it will be convenient to overload notation and sometimes write $\brho$
 for the direct sum of the adjoint representation of $G$ on $\mfg$ and the representation of $G$ on $\higgsvec$. Note that we also write $\brho$ for its derivative representation (except in the proof of Lemma~\ref{lemma:linear_g_system} where 
 we will use $\bbrho$ instead since the distinction is conceptually more important there), but the meaning of $\brho$ will be clear from the context. 
  
%
\begin{lemma}\label{lemma:linear_g_system}
Consider any smooth $B\colon(0,T]\to\Omega\mcC^\infty$ and suppose $g$ solves the third equation in~\eqref{eq:SPDE_for_B}.
Then $h$ and $U$ defined by \eqref{eq:h_and_U_def} satisfy  
\begin{equs}[e:final_hU]
(\p_t - \Delta) h_{i} &= 
 - [h_j,\p_j h_i] + [[B_j, h_j],h_i] + \p_i [B_j, h_j]\;,
\\
(\p_{t} - \Delta)U &=
 - \brho(h_j)^{2} U+ \brho([B_j, h_j]) U \;.
\end{equs}
\end{lemma}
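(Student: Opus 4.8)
This is a direct computation: given that $g$ solves the last equation of \eqref{eq:SPDE_for_B}, i.e.\ $(\partial_t g) g^{-1} = \partial_j((\partial_j g)g^{-1}) + [B_j,(\partial_j g)g^{-1}]$, we want to derive the stated parabolic equations for $h_i = (\partial_i g)g^{-1}$ and $U = \brho(g)$. The plan is to differentiate the defining relations and substitute the evolution equation for $g$, using only the product rule, the identity $\partial_i(g^{-1}) = -g^{-1}(\partial_i g) g^{-1}$, and the fact that $\brho$ is a Lie algebra homomorphism (so $\brho([X,Y]) = [\brho(X),\brho(Y)]$ and $\partial_t \brho(g) = \brho((\partial_t g) g^{-1})\,\brho(g)$ etc.). All manipulations are pointwise in space, so no functional-analytic subtlety arises; smoothness of $B$ and $g$ on $(0,T]$ makes every step legitimate.

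\textbf{First I would} record the abbreviation $H \eqdef (\partial_t g) g^{-1}$, so that the hypothesis reads $H = \partial_j h_j + [B_j, h_j]$. Then I compute $\partial_t h_i = \partial_t((\partial_i g) g^{-1})$. Writing $\partial_i g = h_i g$ we get $\partial_t h_i = \partial_i(\partial_t g)\, g^{-1} - (\partial_i g) g^{-1} (\partial_t g) g^{-1} = \partial_i(H g) g^{-1} - h_i H = (\partial_i H) + [H, h_i]$, using $\partial_i(Hg)g^{-1} = (\partial_i H) + H (\partial_i g) g^{-1} = \partial_i H + H h_i$. Next I compute $\Delta h_i = \partial_j \partial_j h_i$; since $h$ is the ``logarithmic derivative'' of $g$ one has the flatness/Maurer--Cartan type identity $\partial_i h_j - \partial_j h_i = [h_j, h_i]$ (equivalently $\partial_j h_i = \partial_i h_j + [h_i, h_j]$), which follows by differentiating $\partial_i g = h_i g$ in the $j$ direction and antisymmetrising. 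The key observation is that substituting $H = \partial_j h_j + [B_j, h_j]$ into $\partial_i H + [H, h_i]$ and combining with the Maurer--Cartan identity to rewrite $\partial_i \partial_j h_j$ in terms of $\Delta h_i$ produces exactly the claimed right-hand side
\begin{equ}
(\partial_t - \Delta) h_i = -[h_j, \partial_j h_i] + [[B_j, h_j], h_i] + \partial_i [B_j, h_j]\;.
\end{equ}
Carrying this out is a short bracket computation; the terms $\partial_i \partial_j h_j - \Delta h_i$ are reorganised via $\partial_j h_i = \partial_i h_j + [h_i,h_j]$ into $-\partial_j[h_j,h_i] = -[h_j,\partial_j h_i] - [\partial_j h_j, h_i]$, and the stray $[\partial_j h_j, h_i]$ cancels against the corresponding piece coming from $[H, h_i]$.

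\textbf{For the $U$ equation}, I use $\partial_t U = \brho(H) U$ and $\partial_i U = \brho(h_i) U$, hence $\Delta U = \partial_j(\brho(h_j) U) = \brho(\partial_j h_j) U + \brho(h_j)^2 U$. Therefore $(\partial_t - \Delta) U = \brho(H - \partial_j h_j) U - \brho(h_j)^2 U = \brho([B_j, h_j]) U - \brho(h_j)^2 U$, which is the stated identity.

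\textbf{The main (minor) obstacle} is simply keeping track of signs and the placement of Lie brackets in the Maurer--Cartan manipulation for $h$; there is no conceptual difficulty. I would present the $h$-computation in full and then remark that the $U$-computation is the analogous (and simpler) one, noting that the same calculation was essentially carried out in \cite[Sec.~7]{CCHS2d} for the case without a Higgs field, with $\brho$ here denoting the direct sum of the adjoint representation and the representation on $\higgsvec$.
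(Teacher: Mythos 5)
Your proof is correct and follows essentially the same route as the paper: the paper handles the $h_i$-equation by observing the computation is identical to \cite[Lem.~7.2]{CCHS2d} (it is exactly your Maurer--Cartan manipulation), and for $U$ it records precisely your three identities $\p_i U = \brho(h_i)U$, $\Delta U = \brho(\p_j h_j)U + \brho(h_j)^2U$, $\p_t U = \brho(\p_j h_j + [B_j,h_j])U$ and subtracts. One sign to fix in your write-up: for $h_i = (\p_i g)g^{-1}$ the correct identity is $\p_i h_j - \p_j h_i = [h_i,h_j]$ (not $[h_j,h_i]$); with this sign your subsequent step $\p_i\p_j h_j - \Delta h_i = -\p_j[h_j,h_i]$ is the correct one, so your two slips cancel and the final cancellation against $[\p_j h_j,h_i]$ goes through as you describe.
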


\begin{proof}
The derivation 
for the equation of $h$ 
is exactly the same as in  \cite[Lem.~7.2]{CCHS2d}  since it does not rely on any specific representation, and
in particular the third equation in~\eqref{eq:SPDE_for_B} for $g$ can be rewritten as
\begin{equ}[e:ptgg-1]
(\p_t g)g^{-1} = \p_j h_j + [B_j , h_j]\;.
 \end{equ}
Given \textit{any} Lie group representation $\brho : G \to GL(V)$ for a vector space $V$ and its derivative representation $\bbrho : \mfg \to L(V,V)$, \eqref{e:ptgg-1} implies that the function
$U \colon \T^{3} \rightarrow GL(V)$
defined by $U=\brho(g)$ satisfies \eqref{e:final_hU}.

Indeed 
if $\bar{h}_i =(\p_i g) g^{-1} $ for $i\in \{0\}\cup [d]$,
then 
$\bbrho (\bar{h}_i)  = \p_i (\brho(g))  \brho(g)^{-1} $ and so in particular
$\p_i U = \bbrho(\bar{h}_i) U$. This identity has two consequences:
\begin{equs}
\Delta U &= \bbrho(\p_i h_i) U +\bbrho (h_i)^2  U \;,
\\
\p_t U &= \bbrho(\p_t g \,g^{-1} ) U 
\stackrel{\eqref{e:ptgg-1}}{=} \bbrho( \p_j h_j + [B_j , h_j])  U\;.
\end{equs}
These two identities together  then yield the desired equation.
\end{proof}

Define $U$ and $h$ as in \eqref{eq:h_and_U_def} using $g$ from \eqref{eq:SPDE_for_B}, 
and $\bar{U}$ and $\bar{h}$ as in \eqref{eq:h_and_U_def}  using $\bar{g}$ from \eqref{eq:SPDE_for_bar_A}. 
We can then rewrite the  ``unrenormalised'' equations of~\eqref{eq:SPDE_for_B} and~\eqref{eq:SPDE_for_bar_A} respectively as   
\begin{equs}\label{eq: B system}
(\p_t  - \Delta) B_i &=   [B_j, 2\p_j B_i - \p_i B_j + [B_j,B_i]]
\\
& \qquad - \mathbf{B}((\partial_{i} \Psi + B_{i}\Psi)\otimes \Psi) 
+ \mathring{C}_{\A} B_i + \mathring{C}_{\h} h_{i}+ U_{\mfg} (\moll^{\eps} *\xi_i)\;,
\\ 
(\partial_t - \Delta) \Psi  &=  
2 B_{j} \partial_{j}\Psi + B_{j}^{2}\Psi -  m^2 \Psi - |\Psi|^2 \Psi + U_{\higgsvec} (\moll^{\eps} *\zeta) \;,
\end{equs} 
combined with \eqref{e:final_hU} for the evolution of $(U,h)$ and
\begin{equs}\label{eq: bar a system}
(\p_t - \Delta) \bar{A}_i &=   [\bar{A}_j, 2\p_j \bar{A}_i - \p_i \bar{A}_j + [\bar{A}_j,\bar{A}_i]]
\\
& \qquad - \mathbf{B}((\partial_{i} \bar{\Phi} + \bar{A}_{i}\bar{\Phi})\otimes \bar{\Phi}) 
+ \mathring{C}_{\A} \bar{A}_i + \mathring{C}_{\h} \bar{h}_{i}+ \moll^{\eps} *(\bar{U}_{\mfg} \xi_i),\\
(\partial_t - \Delta) \bar{\Phi}  &=  
 2 \bar{A}_{j} \partial_{j}\bar{\Phi} + \bar{A}_{j}^{2}\bar{\Phi} -  m^2 \bar{\Phi} - |\bar{\Phi}|^2 \bar{\Phi} + \moll^{\eps}  *( \bar{U}_{\higgsvec} \zeta)\;,
\end{equs}
again combined with \eqref{e:final_hU}. Note that 
in the nonlinear terms such as $B_{j}^{2} \Psi$ we are implicitly referencing the derivative representation of $\mfg$ on $\higgsvec$.

To set up the regularity structure, as in the previous section, 
we  combine the components of the connection and the Higgs field into a single variable:   $\bar{X} = ((\bar{A}_i)_{i=1}^{3},\bar{\Phi})$
and $Y = ((B_i)_{i=1}^{3},\Psi)$. Recall the notation $\xi \eqdef ((\xi_i)_{i=1}^{3},\zeta)$.
Our approach is  to work with {\it one single} regularity structure to simultaneously study the systems  \eqref{e:final_hU}+\eqref{eq: B system} for $(Y,U,h)$ 
and \eqref{e:final_hU}+\eqref{eq: bar a system}  for $(\bar{X}, \bar U, \bar h)$, allowing us to compare their solutions at the abstract level of modelled distributions. In fact, we will set it 
up in such a way that in the $\eps \downarrow 0$ limit, the two
solution maps associated to each of these systems converge to the same limit as modelled distributions.

Let $\kappa>0$.
We introduce the label sets 
$\mfL_+ \eqdef \{\mfz, \mfm, \mfh, \mfh', \mfu\}  $ and
$\mfL_- \eqdef \{\mfl, \bar{\mfl} \}$ 
and set their degrees to
\[
\deg(\mft) \eqdef 
\begin{cases}
2 - \kappa
& \mft \in \{\mfz, \mfm\}\;,\\
2
& \mft \in \{\mfh, \mfu\}\;,\\
1
& \mft =\mfh'\;,\\
-5/2 - \kappa
&\mft \in \{\mfl, \bar{\mfl}\}\;.
\end{cases}
\] 
%
We briefly explain the roles of the types introduced above. 
Regarding noises, $\bar{\mfl}$ represents the noise 
$\moll^{\eps} \ast \xi$   
in the $(B,\Psi)$ equation \eqref{eq: B system}, while  $\mfl$ represents the noise $\xi$  
in the $(\bar{A},\bar{\Phi})$ equation \eqref{eq: bar a system}.\footnote{The ``bar'' in the notation $\bar{\mfl}$ indicates the ``mollification at scale $\eps$'', and has nothing to do with the ``bars'' in $(\bar{A},\bar{\Phi})$.} 

To understand the roles of the types $\mfz$ and $\mfm$, observe that when writing \eqref{eq: bar a system} as an integral equation we are in an analogous situation to the one described in \cite[Sec.~7.2]{CCHS2d}, namely the term 
$\bar{U} \xi = ( (\bar{U}_{\mfg} \xi_i)_{i=1}^3, \bar{U}_{\higgsvec} \zeta)$ 
appearing on the right-hand side is convolved with a mollified heat kernel $G \ast \moll^{\eps}$ while the remaining terms are convolved with an un-mollified heat kernel. 
This requires us to use two types to track the RHS of  \eqref{eq: bar a system}: we use $\mfm$ for the term 
$\bar{U} \xi$   
and $\mfz$ for everything else.
As in \cite[Sec.~7.2]{CCHS2d}, we also use $\mfz$ to keep track of the entire RHS of \eqref{eq: B system}. 

One difference in our use of types versus what was done in \cite[Sec.~7]{CCHS2d} is that here the RHS of the equation for $h$ in \eqref{e:final_hU} is also split into two pieces using the types $\mfh$ and $\mfh'$. 
Since we are working in dimension $d=3$, power counting \emph{after} an application of Leibniz 
rule would suggest that the terms $\partial_{i} [B_{j},h_{j}]$ could generate 
a number of counterterms, but by keeping them written as total derivatives 
we can easily verify that no counterterms are generated. 
To implement this we use $\mfh'$ to keep track of the terms $[B_{j},h_{j}]$ with the understanding that in the integral equation for $h$ coming from \eqref{e:final_hU}, the terms associated to $\mfh'$ should be integrated with the gradient of the heat kernel, while  $\mfh$ tracks the
 rest of the RHS of the $h$ equation in \eqref{e:final_hU} while $\mfu$ tracks the $U$ equation.

We will specify the corresponding rule $R$ later, but it will be subcritical with respect to the map $\reg: \Lab \rightarrow \R$ given by
\[
\reg(\mft)
\eqdef
\begin{cases}
-1/2 -  4\kappa
& \mft \in \{\mfz, \mfm\}\;,\\
1/2-5\kappa
& \mft \in  \{\mfh, \mfh'\}\;,\\
3/2-5\kappa
& \mft = \mfu \;,\\
-5/2  -2\kappa
&\mft \in \{\mfl, \bar{\mfl}\}\;,
\end{cases}
\]
provided that $\kappa < \frac{1}{100}$ (more stringent requirements on $\kappa$ will be imposed for other purposes later).
Our target space assignment  $(W_{\mft})_{\mft \in \mfL}$ is given by
\begin{equ}[e:system-target space assignment]
W_{\mft} \eqdef 
\begin{cases}
E
	& \mft \in \{ \mfz, \mfm, \mfl, \bar{\mfl} \}\;,
\\
\mfg^3 
	& \mft \in \{\mfh, \mfh'\}\;,
\\
L(\mfg,\mfg) \oplus L(\higgsvec,\higgsvec)
	&\mft = \mfu\;,
\end{cases} 
\end{equ} 
and our kernel space assignment $(\CK_{\mft})_{\mft \in \mfL_{+}}$ is given by 
\begin{equ}[e:system-kernel space assignment]
\CK_{\mft}
\eqdef
\begin{cases}
\R^{d}& \mft = \mfh' \;,\\
\R & \text{otherwise.}
\end{cases}
\end{equ}
The assignment $(V_{\mft})_{\mft \in \mfL}$ used to build our regularity structure is then 
given by \eqref{eq:space_assignment}. 

We now specify our rule\footnote{As in \cite{CCHS2d}, the choice to include $\{\mfu \mfl\} \in \mathring{R}(\mfz)$ isn't directly motivated by the systems associated to~\eqref{eq: B system} and~\eqref{eq: bar a system} but is needed to compare these two systems, see e.g.\ \eqref{e:GUhatXi-compare}.}
\begin{equs}[e:rule-gauged]
\mathring{R}(\mfl) &= \mathring{R}(\bar{\mfl}) = \{\bone\}\;,\quad 
\mathring{R}(\mfm) = \big\{\mfu \mfl \big\}\;,
\\
\mathring{R}(\mfu) &= \{\mfu\mfr^2,\; \mfu\mfq \mfr\,:\, \mfq \in \{\mfz,\mfm\},\, \mfr \in \{\mfh,\mfh'\} \}\;,
\\
\mathring{R}(\mfh) &= \{\mfr \p_j \mfr,\; \mfq \mfr^2 \,:\, \mfq \in \{\mfz,\mfm\},\, \mfr \in \{\mfh,\mfh'\},\, j \in [d]\}\;,
\\
\mathring{R}(\mfh') &= \{\mfq \mfr\,:\, \mfq \in \{\mfz,\mfm\},\,  \mfr \in \{\mfh,\mfh'\} \}\;,
\\
\mathring{R}(\mfz) &= 
\{\mfq,\; \mfr,\; \mfq \hat{\mfq} \tilde{\mfq},\; \mfq \p_j \hat{\mfq},\;\mfu \mfl,\; \mfu \bar{\mfl}:\,  \mfq,\hat{\mfq},\tilde{\mfq} \in \{\mfz,\mfm\},\,  \mfr \in \{\mfh,\mfh'\},\, j \in [d]\}\;.
\end{equs}
It is straightforward to verify that $\mathring{R}$ is subcritical with respect to $\deg$ (for $\reg$ defined as above), and
has a smallest normal extension which admits a completion $R$ and 
that is also subcritical. We use $R$ to define our regularity structure
and write $\mfT$ for the corresponding set of trees conforming to $R$. 
As in Section~\ref{sec:renorm-A} we write 
$\Xi=\mcb{I}_{(\mfl,0)}(\bone)$ and
$\bar\Xi=\mcb{I}_{(\bar\mfl,0)}(\bone)$,
and later we will also use the notations
$\bXi \in \mcb{T}[\Xi] \otimes E$
and $\bar\bXi \in \mcb{T}[\bar\Xi] \otimes E$
for the corresponding $E$-valued modelled distributions.

The kernel assignment  $K^{(\eps)} = (K_{\mft}^{(\eps)}: \mft \in \Lab_{+})$
is given by  \label{Keps-assign}
\begin{equ}[e:K-Keps-assign]
K_\mft^{(\eps)} = 
\left\{\begin{array}{cl}
	K & \quad \mft \in \{ \mfz, \mfh, \mfu\}\;, \\
	K^\eps  &  \quad \mft =\mfm\;, \\
	\nabla K &  \quad \mft =\mfh'\;,
\end{array}\right.
\end{equ}
where $K^\eps = K \ast \moll^{\eps}$. 

Our noise assignment $\zeta^{\delta,\eps} = (\zeta_{\mfl}: \mfl \in \mfL_{-})$ is given by
\begin{equ}[eq:noise_assign_gauge]
\zeta_{\mfl} = \sig^{\eps} \moll^{\delta} \ast \xi = \sig^{\eps} \xi^{\delta} \;,
\qquad
\zeta_{\bar{\mfl}} = \sig^{\eps} \moll^{\eps} \ast \xi =\sig^{\eps} \xi^{\eps} \;.
\end{equ}
Namely, $\bar{\mfl}$ indexes the noise $\xi^\eps= \sig^{\eps} \moll^\eps * \xi$ in 
\eqref{eq: B system},
while $\mfl$ indexes the white noise  $\sig^{\eps} \xi$ in \eqref{eq: bar a system} \dash we have replaced $\xi$ with $\xi^{\delta}$ above since it is convenient to work with smooth models, we will later take, for $\eps >0$, the benign limit $\delta \downarrow 0$ to obtain \eqref{eq: bar a system} . 
The label $\mfm$ is used to track the term $\bar U \xi$ in \eqref{eq: bar a system}
  \dash these terms are treated separately because they are hit by a mollified heat kernel, which is why we defined $K_\mfm^{(\eps)} = K^\eps$ above.

We now fix two nonlinearities $F = \bigoplus_{\mft \in \Lab} F_{\mft},\ \bar{F} = \bigoplus_{\mft \in \Lab} \bar{F}_{\mft} $ which encode the systems $(Y,U,h)$
and $(\bar X,\bar U, \bar h)$ separately.
For $\mft\in \Lab_-$ we set $F_\mft = \bar F_\mft = \id_E$ (recalling that $\mcb{T}[\Xi] \otimes E\simeq L(E,E)$), and 
set, similarly as in \eqref{e:pr-X-dX},
 \begin{equs}[2][e:purple-jets]
 \pr{Y} &\eqdef  \pr{\mathbf{A}_{(\mfz,0)}}, \;& \quad
 \pr{\partial_j Y} &\eqdef  \pr{\mathbf{A}_{(\mfz,e_j)}}, \;
 \\
 \pr{h}  &\eqdef  \pr{\mathbf{A}_{(\mfh,0)}} +\pr{\mathbf{A}_{(\mfh',0)}} ,\;& \quad
 \pr{\p_jh}  &\eqdef \pr{\mathbf{A}_{(\mfh,e_j)}} +\pr{\mathbf{A}_{(\mfh',e_j)}},\;\\
\pr{U}  &\eqdef\pr{\mathbf{A}_{(\mfu,0)}}, \;& \quad
\pr{ \p_j U}  &\eqdef\pr{\mathbf{A}_{(\mfu,e_j)}}\;, 
\qquad \qquad \pr{\bar{\Xi}} \eqdef \pr{\mathbf{A}_{(\bar{\mfl},0)}} \;.
\end{equs}
We also write $\pr{U} = \pr{U_{\mfg}} \oplus \pr{U_{\higgsvec}} \in L(\mfg,\mfg) \oplus L(\higgsvec,\higgsvec)$ and similarly decompose $\pr{ \p_j U} = \pr{\p_j U_{\mfg}} \oplus \pr{\p_j U_{\higgsvec}}$. 

Recalling the convention \eqref{e:XdX-X3}, 
overloading  notation $\pr{h}=(\pr{h},0)$ and writing
\begin{equ}[e:ring-C-zh]
\mathring{C}_{\mfz} = 
\mathring{C}_{\A}^{\oplus 3} 
\oplus (-m^2) \;,\quad
\mathring{C}_{\mfh} = 
\mathring{C}_{\h}^{\oplus 3}
\oplus 0\;,
\end{equ}
we define
 \begin{equ}[e:F_mfz]
F_{\mfz} (\pr{\mathbf{A}})\eqdef 
\pr{Y \p Y} + \pr{Y^3}
 + \mathring{C}_{\mfz} \pr{Y} + \mathring{C}_{\mfh} \pr{h} 
+\pr{U}  \pr{\bar{\Xi}}\;.  
\end{equ}
 Moreover, writing 
 $\pr{h_i} \eqdef \pr{h}|_{\mfg_i}$,
$\pr{\p_jh_i} \eqdef \pr{\p_jh}|_{\mfg_i}$,
and $\pr{B_i} \eqdef \pr{Y}|_{\mfg_i}$, 
we define
 \begin{equs}[e:F_mfh]
F_{\mfh} (\pr{\mathbf{A}}) |_{\mfg_i} 
&\eqdef 
- [\pr{h_j}, \pr{\p_jh_i}] + [[\pr{B_j}, \pr{h_j}],\pr{h_i}] \;,
\\
F_{\mfh'} (\pr{\mathbf{A}}) |_{{\textnormal{\tiny span}} \{e_k\} \otimes \mfg_i}  &\eqdef  
1_{i=k}[\pr{B_j}, \pr{h_j}]\;,
\end{equs}
 where $(e_k)$ is the canonical basis of $V_{\mfh'}=\R^3$
 and the index $j$ is summed over $\{1,2,3\}$;
 and finally
 \begin{equ}[e:F_mfu]
F_{\mfu} (\pr{\mathbf{A}})\eqdef
- \brho(\pr{h_j})^2 \pr{U} +\brho( [\pr{B_j}, \pr{h_j}]) \pr{U} \;,
\qquad
F_{\mfm} (\pr{\mathbf{A}})= 0 \;.
\end{equ}

\begin{remark}
By \cite[Sec.~5.8]{CCHS2d} the nonlinearity
$F_{\mfh'} $ takes values in 
$V_{\mfh'} \otimes W_{\mfh'} \cong \R^3 \otimes \mfg^3 \cong \mfg^{3\times 3}$,
although it has a special diagonal\footnote{The framework of Section~\ref{sec:sym_renorm_reg_struct} allows for cases where the nonlinearity would have a non-diagonal form and the ``components mix'', for instance the Navier--Stokes equations with $\mbox{div}(u\otimes u)$
where $u\otimes u$ takes values in $\R^{3\times 3}$.} form with identical entries.
This is because $F_{\mfh'} $ here describes a gradient of a $\mfg$-valued function.
\end{remark}

Regarding $\bar F$, we set 
\[
\pr{\bar{X}} \eqdef  \pr{\mathbf{A}_{(\mfz,0)}}+ \pr{\mathbf{A}_{(\mfm,0)}}\;,
\qquad
  \pr{\partial_i \bar{X}}  \eqdef \pr{\mathbf{A}_{(\mfz,e_i)}}+ \pr{\mathbf{A}_{(\mfm,e_i)}} \;,
\qquad
 \pr{\Xi} \eqdef \pr{\mathbf{A}_{(\mfl,0)}} \;.
 \] 
and $\pr{\bar h}$, $\pr{\p_j \bar h}$,  $\pr{\bar U}$ in the same 
way as \eqref{e:purple-jets} above,
 \[
\bar{F}_{\mfz} (\pr{\mathbf{A}})\eqdef 
\pr{\bar{X} \p \bar{X}} + \pr{\bar{X}^3}
 + \mathring{C}_{\mfz} \pr{\bar{X}} + \mathring{C}_{\mfh} \pr{\bar{h}}  \;,
 \qquad
 \bar F_{\mfm}(\pr{\mathbf{A}}) \eqdef \pr{\bar{U}}    \pr{\Xi} \;,  
\]
 and define $\bar{F}_{\mfh} $, $\bar{F}_{\mfh'}$, $\bar{F}_{\mfu} $
in the same way as $F_{\mfh}$,  $F_{\mfh'}$, $F_{\mfu}$
except that $\pr{h}$, $\pr{ U}$, $\pr{B}$ are replaced by
$\pr{\bar h}$, $\pr{\bar U}$, $\pr{\bar A}$.

We write $\mathscr{M}$ for the space of all models and, for $\eps \in [0,1]$, we write $\mathscr{M}_{\eps} \subset \mathscr{M}$ 
 for the family of $K^{(\eps)}$-admissible models.\label{model page ref}
We also define $\ell_{\BPHZ}^{\delta,\eps} \in \mcb{R} \subset \mcb{F}^{\ast}$ to be the BPHZ character associated to the kernel assignment $K^{(\eps)}$ and the noise assignment $\zeta^{\delta,\eps}$.

\subsection{Renormalisation of the gauge transformed system}\label{sec:renorm_gauge_transformed}

The following proposition is one of the main results of this section and describes the form of the BPHZ counterterms that appear in \eqref{eq: B system}
and \eqref{eq: bar a system} {\it after} renormalisation.  
In order to write these counterterms in a clean form, we will want to impose a nonlinear constraint on $\pr{U}$ and $\pr{h}$ that is consistent with \eqref{eq:h_and_U_def}. 
Recalling that $\mcb{A} = \prod_{e \in \CE} W_{e}$, we define
\begin{definition}\label{def:bar-mcb-A}
We define $\bar{\mcb{A}} \subset \mcb{A}$ to be the collection of $\pr{\mathbf{A}} \in \mcb{A}$ such that $\pr{U} \in \Im(\Ad \oplus \brho)$ and $\brho(\pr{h_{i}}) = (\pr{\partial_{i}U_{\mfg}})\pr{U}_{\pr{\mfg}}^{-1}$ for each $i \in\{1,2,3\}$.%
\footnote{Here, when we write $\brho(\pr{h_{i}})$ the notation $\brho$ means the adjoint representation of $\pr{h_{i}}$ on $\mfg$.}
\end{definition}
\begin{proposition}\label{prop:renorm_g_eqn}
For $\mft \in \{\mfu,\mfh,\mfh'\}$ and any $\pr{\mathbf{A}} \in \mcb{A}$ one has
\begin{equ}\label{eq:no_u_h_renorm}
(\ell^{\delta,\eps}_{\BPHZ} \otimes \id_{W_{\mft}}) 
\bar{\bUpsilon}^{F}_{\mft}(\pr{\mathbf{A}})  =
(\ell^{\delta,\eps}_{\BPHZ} \otimes \id_{W_{\mft}}) 
\bar{\bUpsilon}^{\bar{F}}_{\mft}(\pr{\mathbf{A}}) = 0\;.
\end{equ}
Moreover, there exist
 operators $C^{\delta,\eps}_{\YM},C^{\eps}_{\Gauge},C^{\delta,\eps}_{\Gauge}  \in L_{G}(\mfg,\mfg)$  and  $C^{\delta,\eps}_{\Higgs} \in L_{G}(\higgsvec,\higgsvec)$
such that, for any $\pr{\mathbf{A}} \in \bar{\mcb{A}}$,   
\begin{equs}[eq:renorm_of_g_system]
(\ell^{\delta,\eps}_{\BPHZ} \otimes \id_{W_{\mfz}}) 
\bar{\bUpsilon}^{F}_{\mfz}(\pr{\mathbf{A}}) 
&= 
\Big( \bigoplus_{i=1}^{3} C^{\eps}_{\YM} \pr{B_i}+C^{\eps}_{\Gauge} \pr{h_i}
 \Big) \oplus C^{\eps}_{\Higgs} \pr{\Psi}\;,
\\
\sum_{\mft\in \{\mfz,\mfm\}}(\ell^{\delta,\eps}_{\BPHZ} \otimes \id_{W_{\mft}}) 
\bar{\bUpsilon}^{\bar{F}}_{\mft}(\pr{\mathbf{A}})
&= 
\Big( \bigoplus_{i=1}^{3}  C^{\delta,\eps}_{\YM}\pr{\bar{A_i}} 
+ C^{\delta,\eps}_{\Gauge} \pr{\bar {h}_{i}}
\Big) \oplus C^{\delta,\eps}_{\Higgs} \pr{\bar{\Phi}}  \;,
\end{equs}
where $C^{\eps}_{\YM}$ and $C^{\eps}_{\Higgs}$ are the same maps as in Proposition~\ref{prop:mass_term}.
One also has the convergence
\begin{equ}\label{eq:delta_to_0}
\lim_{\delta \downarrow 0} C^{\delta,\eps}_{\YM} = C^{\eps}_{\YM}\;,\quad 
\lim_{\delta \downarrow 0} C^{\delta,\eps}_{\Higgs} = C^{\eps}_{\Higgs}\;,\quad
\text{ and } 
\quad
C^{0,\eps}_{\Gauge} \eqdef \lim_{\delta \downarrow 0} C^{\delta,\eps}_{\Gauge}\;.
\end{equ}
Finally there are $C^{\delta,\eps}_{j,\YM},C^{\eps}_{j,\Gauge},C^{\delta,\eps}_{j,\Gauge}  \in L_{G}(\mfg,\mfg)$  and  $C^{\delta,\eps}_{j,\Higgs} \in L_{G}(\higgsvec,\higgsvec)$ for $j\in \{1,2\}$
which are all independent of $\sig$, such that
\begin{equ}[e:Cs-sig]
C^{\eps}_{\Gauge}= 
	\sig^{2} C_{1,\Gauge}^{\eps}
	+\sig^{4} C_{2,\Gauge}^{\eps} 
\quad \mbox{and}\quad
C^{\delta,\eps}_{\bullet} = 
  \sig^{2} C_{1,\bullet}^{\delta,\eps}
+\sig^{4} C_{2,\bullet}^{\delta,\eps}
\end{equ}
where $\bullet \in \{  \textnormal{\footnotesize {YM, Gauge, Higgs}}   \}$.
\end{proposition}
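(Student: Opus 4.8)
\textbf{Proof strategy for Proposition~\ref{prop:renorm_g_eqn}.}

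The plan is to follow the same two-step template as in Section~\ref{sec:Mass renormalisation}: first establish that the renormalisation is \emph{linear} in the relevant jet variables and that only ``even'' trees contribute, then use the symmetries encoded by elements of $\Tran$ together with Proposition~\ref{prop:renormalisation is covariant} to pin down the block-diagonal, $G$-equivariant form of the resulting operators. For the equations \eqref{eq:no_u_h_renorm} I would argue that $\mfT_-$ contains no trees rooted at an $\mfh$-, $\mfh'$- or $\mfu$-edge with negative degree that survive BPHZ renormalisation: by the rule $\mathring R$ in \eqref{e:rule-gauged} every subtree produced at these types that could appear in a negative-degree divergence has the wrong parity (an odd number of spatial derivatives coming from the $\partial_j$ in $F_\mfh$, or from the total derivative structure of $F_{\mfh'}$), so Lemma~\ref{lem:symbols_vanish}'s analogue — obtained by choosing $\mathbf{T}=(T,O,\sigma,r)$ with $r=(-1,-1,-1)$, all else trivial, so that both $K^{(\eps)}$ and $\zeta^{\delta,\eps}$ are $\mathbf{T}$-invariant — forces $\ell^{\delta,\eps}_{\BPHZ}$ to vanish on them. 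For the fact that $\mfu$ and $\mfh'$ generate no counterterms at all I would also invoke the fact that $\reg(\mfu)=3/2-5\kappa$ and $\reg(\mfh),\reg(\mfh')=1/2-5\kappa$ are positive, so that divergent subdivergences can only be attached through $\mfz$- or $\mfm$-edges, and the power counting (exactly as in Lemma~\ref{lem:counting}, with $\lambda$ now also weighting the bilinear terms in $F_\mfh$, $F_\mfu$) leaves no room.

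The linearity statement \eqref{eq:renorm_of_g_system} is the direct analogue of Lemma~\ref{lem:linearX}: I would introduce the formal parameter $\lambda$ as there, prove the three counting identities of Lemma~\ref{lem:counting} for the nonlinearities $F$ and $\bar F$ (the new feature being the presence of the $h$- and $U$-insertions, which one tracks by enlarging the bookkeeping to include the ``degree'' of $\pr h$ and $\pr U$ — pretending $\pr h$ has the same degree budget as $\partial\pr Y$ and $\pr U$ is degree $0$), and then run the same case analysis over $n_\lambda$. The crucial point, exploiting the constraint $\pr{\mathbf{A}}\in\bar{\mcb A}$ of Definition~\ref{def:bar-mcb-A}, is that each surviving monomial is linear in one of $\pr{B_i}$, $\pr{h_i}$, $\pr\Psi$ (respectively $\pr{\bar A_i}$, $\pr{\bar h_i}$, $\pr{\bar\Phi}$), since the relation $\brho(\pr{h_i})=(\pr{\partial_i U_\mfg})\pr U_{\mfg}^{-1}$ lets us trade factors of $\pr U$ for identity and absorb derivatives of $\pr U$ into $\pr h$. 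Once linearity is known, the identification $C^\eps_{\YM}$ and $C^\eps_{\Higgs}$ with the operators of Proposition~\ref{prop:mass_term} is immediate: the trees in $\mfT_-$ that produce those blocks involve only $\mfz$-edges, $\bar\mfl$-edges and the $\pr U\pr{\bar\Xi}$ vertex of $F_\mfz$, and on $\bar{\mcb A}$ the operator $\pr U$ is orthogonal so it drops out of the BPHZ character computation exactly as $U$ does in the $d=2$ analysis of \cite[Sec.~7]{CCHS2d}; the $\delta$-independence of $C^\eps_{\YM},C^\eps_{\Higgs}$ follows because those trees have no $\mfl$-leaves.

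For the symmetry constraints giving $C^{\delta,\eps}_\bullet\in L_G$ I would repeat verbatim the three choices of $\mathbf{T}$ from the proof of Lemma~\ref{lem:LX-diag}: sign-flip of the $i$-th component together with the $i$-th spatial reflection (giving block-diagonality), the transposition of components $i\leftrightarrow j$ together with the corresponding permutation in $S_3$ (giving that the three $\mfg$-blocks agree, hence the single operators $C^{\eps}_{\Gauge}$, $C^{\delta,\eps}_{\Gauge}$), and finally the global $G$-action $\tilde T=\Ad_g\oplus\brho(g)$ with $\sigma=r=\id$ (giving $G$-equivariance). One checks $\mathbf{T}$-covariance of $F$ and $\bar F$ componentwise — the $\mfh$, $\mfh'$, $\mfu$ components are covariant for the same parity reasons as the $\mfz$ component, and $\mathbf{T}$-invariance of $K^{(\eps)}$, $\zeta^{\delta,\eps}$ is clear — and then Lemma~\ref{lem:ell-T-inv} plus Proposition~\ref{prop:renormalisation is covariant} do the rest. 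The convergences \eqref{eq:delta_to_0} follow from continuity of $\ell^{\delta,\eps}_{\BPHZ}$ in $\delta$ (the noise assignment $\zeta^{\delta,\eps}_\mfl=\sig^\eps\xi^\delta$ converges to $\sig^\eps\xi$ and \cite[Thm.~2.15]{CH16}-type estimates are uniform), with the $\YM$ and $\Higgs$ limits reducing to the $\delta$-free objects because the relevant trees have no $\mfl$-leaves. The final display \eqref{e:Cs-sig} is the observation that the surviving trees carry either $2$ or $4$ noise leaves — which follows from \eqref{e:iden1} and the case analysis, exactly as at the end of the proof of Lemma~\ref{lem:linearX} — and $\sig^\eps$ is the coefficient in front of each noise in \eqref{eq:noise_assign_gauge}.

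The main obstacle I anticipate is the bookkeeping for the linearity argument in the presence of the $U$- and $h$-insertions: unlike in Section~\ref{sec:Mass renormalisation}, the nonlinearities $F_\mfh$, $F_\mfu$ are themselves bilinear/cubic in $\pr h$ and couple to $\pr U$, so one must verify carefully that the counting identities of Lemma~\ref{lem:counting} extend (with the right assignment of a ``$\lambda$-weight'' and a ``degree-budget'' to $\pr h$ and $\pr U$) and that no new negative-degree trees with more than linear dependence slip through — and, separately, that the nonlinear constraint defining $\bar{\mcb A}$ is exactly what is needed to rewrite $\bar\bUpsilon^F_\mfz$ in the clean form \eqref{eq:renorm_of_g_system} rather than as a genuinely nonlinear expression in $\pr U$. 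Everything else is a faithful repetition of the $d=2$ arguments of \cite[Sec.~7]{CCHS2d} combined with the machinery of Section~\ref{sec:sym_renorm_reg_struct}.
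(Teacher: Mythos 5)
Your overall architecture matches the paper's: parity (the analogue of Lemma~\ref{lem:symbols_vanish}, here Lemma~\ref{lem:parity-bphz-gsym}) for \eqref{eq:no_u_h_renorm} and for restricting to $\mfT_-^{\even}$, then the three choices of $\mathbf{T}\in\Tran$ from Lemma~\ref{lem:LX-diag} combined with Proposition~\ref{prop:renormalisation is covariant} for the block-diagonal $L_G$ structure, the observation that trees contributing to $C^\eps_{\YM},C^\eps_{\Higgs}$ carry no $\mfl$-leaves for the identification with Proposition~\ref{prop:mass_term} and the $\delta$-independence, and the $2$-or-$4$-noise count for \eqref{e:Cs-sig}. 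Two small corrections on \eqref{eq:no_u_h_renorm}: negative-degree trees rooted at $\mfh$ do exist (e.g.\ those built from $\mfr\,\partial_j\mfr$, which has degree $0-$), so positivity of $\reg(\mfh),\reg(\mfh'),\reg(\mfu)$ does not "leave no room"; what the paper actually proves (Lemma~\ref{lem:hU-no-renor}) is that every contribution to $\bar\bUpsilon^F_\mft[\tau]$ for $\tau\in\mfT_-^{\even}$ and $\mft\in\{\mfh,\mfh',\mfu\}$ vanishes, by tracking each substitution and checking it lands in $\mfT^{\mathrm{od,sp}}$ or $\mfT^{\mathrm{od,noi}}$.

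The genuine gap is in your route to the linearity statement \eqref{eq:renorm_of_g_system}. You propose to extend the counting identities of Lemma~\ref{lem:counting} by assigning a "$\lambda$-weight" and "degree budget" to $\pr h$ and $\pr U$ and rerunning the case analysis over $n_\lambda$. The paper explicitly flags that this does not work: because $\pr U$ and $\pr h$ have \emph{positive} regularity, negative-degree trees can carry monomials that are of high order in $\pr U$, $\pr{\partial U}$ and $\pr h$ (see the substitutions generating $U_{3/2},U_2,U_{5/2}$ in \eqref{e:only-ren-U}--\eqref{e:U52}), so power counting alone cannot rule out counterterms that are nonlinear in $\pr h$ or that retain a genuine $\pr U$-dependence. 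What the paper substitutes for this is the grafting machinery of Section~\ref{sec:grafting}: Lemma~\ref{lemma:ymh_trees_generate} classifies every $\tau\in\mfT_-^{\even}$ with $\bar\bUpsilon^F_\mfz[\tau]\neq0$ as exactly \emph{one} grafting of an element of $\mfT_g$ onto a tree of $\mfT^{\YMH}$; Lemmas~\ref{lem:gauge_transformed_ymhtrees}, \ref{lem:Ups-graft} and \ref{lem:grafting_operators} show that the resulting $\bar\bUpsilon^F_\mfz$ factors as $\pr U^*(\mathcal{L}_g\tilde{\mcb X}(\pr h))\bUpsilon^{\YMH}_\mfz$, so that the $\pr U$-dependence is a pure conjugation which the $G$-invariance of $\ell^{\delta,\eps}_{\BPHZ}$ annihilates (this is where $\pr U\in\Im(\Ad\oplus\brho)$ from $\bar{\mcb A}$ enters, not merely "orthogonality"); Lemma~\ref{lem:const_upsilon} shows the underlying YMH coefficients are constant; and Lemma~\ref{lem:hats-linear-h} shows $\tilde{\mcb X}$ is linear in $\pr h$, with the constraint $\brho(\pr{h_i})=(\pr{\partial_iU_\mfg})\pr U_\mfg^{-1}$ needed precisely for the grafted symbols $\mbX_i$. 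You correctly name the two ingredients ($\pr U$ dropping out, the $\bar{\mcb A}$ constraint trading $\pr{\partial U}$ for $\pr h$) but supply no mechanism by which a counting argument would deliver them; as written, the proposal would stall at exactly the step the paper identifies as the hard one.
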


The proof of Proposition~\ref{prop:renorm_g_eqn} requires many intermediate computations and is delayed to the end of Section~\ref{sec:Computation of counterterms} on page~\pageref{proof:renorm_g_eqn}.
We note that the situation here is more complicated than  Proposition~\ref{prop:mass_term}, in 
particular the number of trees of negative degree that appear is much larger and 
power-counting arguments as in Lemma~\ref{lem:counting} do not get us quite as far because 
of the presence of components $\pr{U}$ and $\pr{h}$ of positive regularity. 
We will have to combine power-counting and parity arguments in a more sophisticated way. 

As before, we define $\mathfrak{T}_{-}$  the set of all unplanted trees in $\mathfrak{T}$ with vanishing polynomial label at the root and negative degree.
To impose parity constraints, we define the following sets of trees where we recall the definition of $n(\tau)$ in \eqref{e:def-nf} and write $n(\tau) = (n_{0}(\tau),\dots,n_{d}(\tau)) \in \N^{d+1}$:
\begin{equs}\label{eq:g-trees-parity}
\mfT^{\mathrm{ev,sp}} &= 
\Big\{ \tau \in \mfT:\; \sum_{i=1}^{d}n_{i}(\tau)  + |\{e \in E_{\tau}: \mft(e) = \mfh'\}| \textnormal{ is even }\Big\}\;,
\\
\mfT^{\mathrm{ev,noi}} &= \big\{ \tau \in \mfT:  |\{ e \in E_{\tau}: \mft(e) = \bar{\mfl} \}|\ \textnormal{ is even} \big\}\;.
\end{equs}
The first set above enforces that a tree has ``even parity in space'' while the second enforces that a tree has ``even parity in the noise''.
We also define $\mfT^{\mathrm{od,sp}}$ and $\mfT^{\mathrm{od,noi}}$ analogously with ``even'' replaced by ``odd''. 
We set $\mfT^{\even}_{-} = \mfT_{-} \cap \mfT^{\mathrm{ev,sp}}  \cap \mfT^{\mathrm{ev,noi}}$. 
Note that some of our notation in this section, like $\mfT$ and $\mfT^{\even}_{-}$, collide with notation we used in Section~\ref{sec:renorm-A} but which definition we are referring to should be clear from context \dash namely in this section we are, unless we explicitly say otherwise, always referring to the definitions made in this section.\footnote{Later, in Section~\ref{sec:grafting}, we will refer to sets of tree introduced in  Section~\ref{sec:renorm-A}  but use a different notation to distinguish them.}

We have the following version of Lemma~\ref{lem:symbols_vanish} which is proven in exactly the same way.  
\begin{lemma}\label{lem:parity-bphz-gsym}
Let $\tau\in \mfT$, then $\ell_{\BPHZ}^{\delta,\eps}[\tau] \not = 0 \Rightarrow \tau \in \mfT_{-}^{\even}$.\qed
\end{lemma}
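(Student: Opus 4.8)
\textbf{Proof plan for Lemma~\ref{lem:parity-bphz-gsym}.}

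The plan is to mimic exactly the proof of Lemma~\ref{lem:symbols_vanish}, but now using two separate parity symmetries corresponding to the two constraints defining $\mfT^{\even}_{-}$: one reflection symmetry in space, and one sign-flip symmetry on the noise of type $\bar{\mfl}$. As in Lemma~\ref{lem:symbols_vanish}, we first recall the convention that $\ell^{\delta,\eps}_{\BPHZ}[\tau] = 0$ for all $\tau \in \mfT \setminus \mfT_{-}$, so it remains only to verify the two parity conditions for $\tau \in \mfT_{-}$.

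First I would handle the space-parity constraint. Define $\mathbf{T} = (T,O,\sigma,r) \in \Tran$ by taking $T$, $O$, $\sigma$ to be identity operators and $r = (-1,-1,-1)$. We must check that the kernel assignment $K^{(\eps)}$ and the noise assignment $\zeta^{\delta,\eps}$ are both $\mathbf{T}$-invariant. For the noises, $\zeta_{\mfl} = \sig^\eps \xi^\delta$ and $\zeta_{\bar\mfl} = \sig^\eps \xi^\eps$ are mollifications of white noise by $\moll^\delta$, $\moll^\eps$ respectively, and since $\moll$ is invariant under each $x_i \mapsto -x_i$ (by our standing assumption on mollifiers), so are $\moll^\delta$, $\moll^\eps$; hence both noise fields are invariant in law under $r$. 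For the kernels, $K$ is chosen to satisfy all symmetries of the heat kernel, so $K(\sigma^{-1}rz) = K(z)$; the same holds for $\nabla K$ up to the sign picked up componentwise, which is exactly accounted for by the $O$-action being trivial here (more carefully: the relevant statement is $\mathbf{T}$-invariance of the \emph{assignment}, and the sign is absorbed into the $r^{\mathfrak{n}(e)}$ factors; this is the same bookkeeping as in Lemma~\ref{lem:symbols_vanish}), and $K^\eps = K * \moll^\eps$ inherits the invariance since both factors are invariant. Thus by Lemma~\ref{lem:ell-T-inv}, $\ell^{\delta,\eps}_{\BPHZ}$ is $\mathbf{T}$-invariant, i.e.\ $\ell^{\delta,\eps}_{\BPHZ} \circ \mathbf{T}^* = \ell^{\delta,\eps}_{\BPHZ}$. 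On the other hand, from the definition of $r^*$ one has $\mathbf{T}^*[\tau] = r^{n(\tau)} \id_{\mcb{T}[\tau]}$, and since $r = (-1,-1,-1)$ this is $(-1)^{\sum_{i=1}^3 n_i(\tau)} \id$. Comparing, $\ell^{\delta,\eps}_{\BPHZ}[\tau] \neq 0$ forces $\sum_{i=1}^3 n_i(\tau)$ to be even. The subtlety compared to Lemma~\ref{lem:symbols_vanish} is the presence of the type $\mfh'$, whose kernel space is $\R^d$ rather than $\R$: an edge of type $\mfh'$ contributes a factor of $\nabla K$, which under $r$ picks up one extra sign per such edge (the $i$-th component of $\nabla K$ flips sign under $x_i \mapsto -x_i$). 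This is precisely why the space-parity set $\mfT^{\mathrm{ev,sp}}$ counts $\sum_i n_i(\tau) + |\{e \in E_\tau : \mft(e) = \mfh'\}|$ rather than just $\sum_i n_i(\tau)$. So the correct accounting of all sign factors — polynomial decorations, edge decorations, and the intrinsic gradient in $K^{(\eps)}_{\mfh'}$ — yields the even-parity-in-space condition. I would need to spell out this sign bookkeeping carefully, tracking how the transformation rule \eqref{eq:transform_on_jets} interacts with the definition of $K^{(\eps)}_{\mfh'} = \nabla K$; this is the one place where the argument genuinely differs from Lemma~\ref{lem:symbols_vanish}.

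Next I would handle the noise-parity constraint for $\bar{\mfl}$. Choose $\mathbf{T} = (T,O,\sigma,r)$ with $T_{\bar\mfl} = -\id_{W_{\bar\mfl}}$ and all other components of $T$, as well as $O$, $\sigma$, $r$, equal to identity. Then $K^{(\eps)}$ is trivially $\mathbf{T}$-invariant (no kernel sign change), and $\zeta^{\delta,\eps}$ is $\mathbf{T}$-invariant since $-\zeta_{\bar\mfl} \overset{\mathrm{law}}{=} \zeta_{\bar\mfl}$ (Gaussian with symmetric law); the other noise $\zeta_\mfl$ is untouched. By Lemma~\ref{lem:ell-T-inv}, $\ell^{\delta,\eps}_{\BPHZ}$ is $\mathbf{T}$-invariant, while $\mathbf{T}^*[\tau] = (-1)^{|\{e \in E_\tau : \mft(e) = \bar\mfl\}|} \id$ from the $I(\mathbf{T})^*$ action. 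Hence $\ell^{\delta,\eps}_{\BPHZ}[\tau] \neq 0$ forces the number of $\bar\mfl$-edges to be even, which is exactly membership in $\mfT^{\mathrm{ev,noi}}$. Combining the two constraints together with $\tau \in \mfT_-$ gives $\tau \in \mfT^{\even}_- = \mfT_- \cap \mfT^{\mathrm{ev,sp}} \cap \mfT^{\mathrm{ev,noi}}$, completing the proof. The main obstacle, as noted, is being precise about the extra sign contributed by edges of type $\mfh'$ under the spatial reflection — but once that is sorted, everything else is a direct transcription of the argument for Lemma~\ref{lem:symbols_vanish}, which is why the paper states it is proven ``in exactly the same way''.
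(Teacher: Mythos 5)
Your overall strategy is exactly the paper's: the paper gives no details and simply states that the lemma "is proven in exactly the same way" as Lemma~\ref{lem:symbols_vanish}, using one spatial reflection and one sign flip of the noise $\bar{\mfl}$. Your treatment of the noise-parity constraint is correct as written.

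There is, however, a concrete error in your space-parity argument, precisely at the one point where this lemma differs from Lemma~\ref{lem:symbols_vanish}. With $r=(-1,-1,-1)$ and $O=\id$, the kernel assignment is \emph{not} $\mathbf{T}$-invariant: since $K$ is even, one has $(\nabla K)(rz) = -(\nabla K)(z)$, so the condition $O_{\mfh'}K_{\mfh'}(\sigma^{-1}rz)=K_{\mfh'}(z)$ fails for $K^{(\eps)}_{\mfh'}=\nabla K$, and Lemma~\ref{lem:ell-T-inv} does not apply. Your suggested fix --- that the sign is "absorbed into the $r^{\mfn(e)}$ factors" --- is wrong: those factors account only for the derivative decorations $\mfn(e)$, not for the intrinsic gradient built into the kernel assigned to $\mfh'$. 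The correct mechanism is to choose $O_{\mfh'}$ to be the reflection $v\mapsto rv$ on $\mathcal{K}_{\mfh'}\simeq\R^3$ (and $O_{\mft}=\id$ otherwise), exactly as the paper does in the proof of Proposition~\ref{prop:renorm_g_eqn}. With this choice the kernel assignment is $\mathbf{T}$-invariant, and $\mathbf{T}^*[\tau] = I(\mathbf{T})^*[\tau]\,r^*[\tau] = (-1)^{|\{e\in E_\tau:\,\mft(e)=\mfh'\}|}(-1)^{\sum_{i=1}^3 n_i(\tau)}\id$, since $I(\mathbf{T})^*$ acts by $O_{\mfh'}^*=-\id$ on each factor $V_{\mfh'}=\mathcal{K}_{\mfh'}^*$. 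The $\mathbf{T}$-invariance of $\ell^{\delta,\eps}_{\BPHZ}$ then forces $\sum_i n_i(\tau) + |\{e:\mft(e)=\mfh'\}|$ to be even whenever $\ell^{\delta,\eps}_{\BPHZ}[\tau]\neq 0$, which is precisely membership in $\mfT^{\mathrm{ev,sp}}$. Once this is corrected, your proof is complete.
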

%
\begin{remark}\label{rem:bare-m-renorm_2}
A generalisation of Remark~\ref{rem:bare-m-renorm} also holds in the present setting. 
In particular,  recalling $\mathring{C}_{\mfz}$ and $\mathring{C}_{\mfh}$ from  \eqref{e:ring-C-zh},
 $\mathring{C}_{\mfh}$ cannot generate any new renormalisation by power counting and the counterterms generated by $\mathring{C}_{\mfz}$ are of the same type as in Remark~\ref{rem:bare-m-renorm}, with the BPHZ characters vanishing on them by parity. 
This implies again that the constants on the right-hand side of \eqref{eq:renorm_of_g_system} do not depend on $\mathring{C}_{\mfz}$ or $\mathring{C}_{\mfh}$. 
\end{remark}
%
%
%
%
In what follows, we will first show that the $(Y,U,h)$ system only requires
renormalisation terms that are linear in $(Y,h)$ in the $Y $ equation.
We then point out how to get the same result for the $(\bar X, \bar U, \bar h)$ system.
%
%
\begin{remark}\label{rem:bracket_overload}
We overload our Lie bracket notation in various ways.  
For $(a_{i})_{i=1}^{d}$, $(b_{i})_{i=1}^{d} \in \mfg^{d}$,
 we write $[a,b] \eqdef \sum_{i=1}^{d} [a_{i},b_{i}] \in \mfg$. 
For $u = v \oplus w \in \mfg^{d} \oplus \higgsvec = W_{\mfz}$ and $b \in \mfg^{d}$ we set $[u,b] \eqdef [v,b] \in \mfg$. 
For $h\in \mfg$ and $u\in \mfg^d$,
we define $[h,u] \in \mfg^d$ componentwise. 
\end{remark}

The equations for $h_i$ and $U$ 
contain several products 
that are not classically defined, namely, the term
$ [h_j,\p_j h_i] $, and  $[B_j, h_j]$ which appears three times.
However, the following lemma shows that BPHZ renormalisation doesn't generate any renormalisation\footnote{This also holds in 2D, see \cite[Lem.~7.28]{CCHS2d}.}  in the equations for $h_i$ and $U$.
\begin{lemma}\label{lem:hU-no-renor}
For every $\tau\in\mfT_-^{\even}$ one has 
$\bar{\bUpsilon}^{F}_{\mfh} [\tau] =\bar{\bUpsilon}^{F}_{\mfh'} [\tau]= \bar{\bUpsilon}^{F}_\mfu [\tau]=0$.
\end{lemma}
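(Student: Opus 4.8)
\textbf{Proof strategy for Lemma~\ref{lem:hU-no-renor}.}

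The plan is to show that for every $\tau \in \mfT_-$, the coherence maps $\bar\bUpsilon^F_{\mft}[\tau]$ for $\mft \in \{\mfh,\mfh',\mfu\}$ vanish, by a combination of degree-counting and parity, exploiting the special structure of the nonlinearities $F_\mfh, F_{\mfh'}, F_\mfu$ given in~\eqref{e:F_mfh}--\eqref{e:F_mfu}. The key structural observation is that every nonlinear term defining these three components carries \emph{at least one} factor of $\pr{h}$ (either $\pr{h_j}$ or $\pr{\p_j h_j}$) in addition to whatever else, and moreover the $\mfh'$-nonlinearity is a pure total derivative $[\pr{B_j},\pr{h_j}]$ sitting in the $\mfh'$-slot. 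First I would set up the analogue of Lemma~\ref{lem:counting} adapted to the present regularity structure: for each monomial appearing in $\bar\bUpsilon^F_{\mft}[\tau]$ I track the numbers $(n_\lambda, k_\partial, k_\xi, p_Y, p_h, p_U, p_\partial)$ counting, respectively, the power of the formal parameter $\lambda$, the total derivative/polynomial count $\sum_i |n_i(\tau)|$, the number of noise edges, and the powers of $\pr{Y}$, $\pr{h}$, $\pr{U}$, and the derivative count in the monomial itself. Since the $\mfh,\mfh',\mfu$ nonlinearities contribute no noise, $k_\xi$ is generated entirely by the $\mfz$-nonlinearity via substitutions of $\pr{U}\pr{\bar\Xi}$; one then reads off recursion identities exactly as in Lemma~\ref{lem:counting}, with $\deg(\mfh) = \deg(\mfu) = 2$ and $\deg(\mfh') = 1$ entering the base cases.

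The main step is then a case analysis by the value of $n_\lambda$, as in the proof of Lemma~\ref{lem:linearX}, showing that whenever $\deg(\tau) < 0$ and $\tau \in \mfT^{\even}_-$ (so that $\ell_{\BPHZ}^{\delta,\eps}[\tau]$ could be nonzero), no monomial survives in $\bar\bUpsilon^F_\mft[\tau]$ for $\mft \in \{\mfh,\mfh',\mfu\}$. The parity constraint $\tau \in \mfT^{\mathrm{ev,sp}} \cap \mfT^{\mathrm{ev,noi}}$ (Lemma~\ref{lem:parity-bphz-gsym}) forces $k_\xi$ even and $\sum_i n_i(\tau) + |\{e : \mft(e) = \mfh'\}|$ even, which combined with the degree inequality $\deg(\tau) \le 0$ eliminates all candidates: the point is that $\mfh,\mfh',\mfu$ have positive degree, each substitution into them via an $\mcb{I}_\mfh$ or $\mcb{I}_\mfu$ edge \emph{raises} degree, and the $\mfz$-nonlinearity contributes noise only in pairs, so building a negative-degree tree rooted at one of these three types requires enough $\mcb{I}_\mfz$-substitutions that the accumulated degree becomes nonnegative. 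I would organize this by noting that the only way to lower degree below that of the root symbol is through the $\pr{Y\partial Y}$ and $\pr{Y^3}$ terms of $F_\mfz$, each of which costs degree (they sit behind an $\mcb{I}_\mfz$ edge of degree $2-\kappa$ but bring three or two fresh factors), and tallying the bookkeeping shows the degree never dips below zero before the tree is closed. A cleaner route, which I'd try first, is a direct inductive argument: show that $\bar\bUpsilon^F_\mft[\tau]$ with $\mft \in \{\mfh,\mfh',\mfu\}$ is always a sum of monomials each of which has $p_h \ge 1$ and total degree $\ge 0$ — the former is immediate from the recursive formula~\eqref{eq:upsilon_induction} since every term of $F_\mfh, F_{\mfh'}, F_\mfu$ contains a factor $\pr{h}$ or $\pr{\partial h}$ which is only ever substituted by $\mcb{I}_\mfh$ or $\mcb{I}_{\mfh'}$ outputs (both of positive degree), and the latter follows by the degree bookkeeping.

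The hard part will be handling the total-derivative structure of $F_{\mfh'}$ correctly: because $\partial_i[B_j,h_j]$ in~\eqref{e:final_hU} is kept as a gradient tracked by $\mfh'$ and integrated against $\nabla K$ (the kernel $K_{\mfh'}^{(\eps)} = \nabla K$), one must be careful that the derivative ``budget'' $k_\partial$ is accounted for consistently when a $\mcb{I}_{\mfh'}$ edge is present — this is exactly the mechanism (as flagged in the text before~\eqref{e:rule-gauged}) that prevents $\partial_i[B_j,h_j]$ from generating counterterms in $d=3$. I would phrase this by including $|\{e : \mft(e) = \mfh'\}|$ in the parity count (it already appears in the definition of $\mfT^{\mathrm{ev,sp}}$) and checking that the parity identity, together with $\deg(\mfh') = 1$ being odd relative to $\deg(\mfh) = \deg(\mfu) = 2$, rules out the remaining borderline cases. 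Finally, once the linearity/vanishing is established for the $(Y,U,h)$ system, the identical statement for $(\bar X, \bar U, \bar h)$ follows verbatim since $\bar F_\mfh, \bar F_{\mfh'}, \bar F_\mfu$ are defined by the same formulae with $\pr{h},\pr{U},\pr{B}$ replaced by $\pr{\bar h},\pr{\bar U},\pr{\bar A}$ and the degree/parity data is unchanged; I would simply remark on this rather than repeat the argument.
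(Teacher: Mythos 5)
Your overall toolkit (power counting plus parity) is the right one, but the step you lean on to close the argument is false, and it is precisely the step where the actual work happens. You claim that building a negative-degree tree rooted at one of the types $\mfh,\mfh',\mfu$ is impossible --- that ``the accumulated degree becomes nonnegative'' before such a tree is closed --- and your ``cleaner route'' likewise asserts that every monomial in $\bar{\bUpsilon}^{F}_{\mft}[\tau]$ for $\mft\in\{\mfh,\mfh',\mfu\}$ has total degree $\ge 0$. This is not true. For example, substituting $\pr{U}\,\mcb{I}_{\mfz}(\bar\bXi)$ for $\pr{B}$ in the term $[[\pr{B_j},\pr{h_j}],\pr{h_i}]$ of $F_{\mfh}$ produces a nonzero contribution to $\bar{\bUpsilon}^{F}_{\mfh}[\tau]$ with $\tau=\mcb{I}_{\mfz}(\bar\mfl)$ of degree $-\tfrac12-2\kappa<0$; similarly, replacing both factors of $[\pr{h_j},\pr{\p_j h_i}]$ by the degree-$(\tfrac12-)$ component $h_{1/2}=[\pr{U}\mcb{I}_{\mfh'}(\mcb{I}_{\mfz}\bar\bXi),\pr{h}]$ of $\mcH$ yields a tree of degree $0-$, again negative. (Your assertion that ``the $\mfz$-nonlinearity contributes noise only in pairs'' is also wrong: $F_{\mfz}$ contains the term $\pr{U}\pr{\bar\Xi}$, which inserts noises one at a time.) So there genuinely are negative-degree trees on which $\bar{\bUpsilon}^{F}_{\mfh}$, $\bar{\bUpsilon}^{F}_{\mfh'}$, $\bar{\bUpsilon}^{F}_{\mfu}$ do not vanish; the content of the lemma is only that none of them lie in $\mfT_{-}^{\even}$.

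What is therefore needed --- and what the paper's proof does --- is to enumerate, by power counting, the finitely many substitutions that can produce a negative-degree tree at the root types $\mfh,\mfh',\mfu$, and then check for each one that the resulting tree has odd parity either in the number of noise edges or in $\sum_{i} n_{i}(\tau)+|\{e:\mft(e)=\mfh'\}|$, i.e.\ lies in $\mfT^{\mathrm{od,noi}}\cup\mfT^{\mathrm{od,sp}}$ and hence outside $\mfT_{-}^{\even}$. In both examples above this is what saves the day: $\mcb{I}_{\mfz}(\bar\mfl)$ carries a single noise (odd), and the degree-$(0-)$ tree carries two $\mfh'$-edges plus one spatial derivative (odd space parity, precisely because $h_{1/2}$ is itself odd in space). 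Your proposal mentions the parity constraint but never carries out this verification; you attribute the elimination of candidates to the (false) degree obstruction instead. Your secondary observation that every monomial carries a factor of $\pr{h}$ is correct but does not help, since a nonzero multiple of $\pr{h}$ is still nonzero. The counting framework you set up in the spirit of Lemma~\ref{lem:counting} could in principle be adapted to encode the parity bookkeeping, but as written the proposal does not close the argument.
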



Recall \cite[Lemma~5.65, Remark~5.66]{CCHS2d}
that the map $\bUpsilon^{F}$ can be computed by an (algebraic) Picard iteration as follows.  
An element  $\mcA \in \expan$, where $\expan$ is the space of expansions as in \cite[Sec.~5.8]{CCHS2d}, has the form
$\mcA_{\mft} = \mcA^{R}_{\mft} +
\sum_{k} \frac{1}{k!}
\mathbf{X}^{k}
\otimes \mathbf{A}_{(\mft,k)}
\in \expan_\mft$ where $\mcA^{R}_{\mft} \in \mcb{B}_\mft\otimes W_\mft$,\footnote{See \cite[Sec.~5.8.2]{CCHS2d} for the definition of $\mcb{B}_\mft$.}
and if  $\mcA \in \expan$ is coherent then (up to truncation at some order),
\begin{equ}[e:explain-substitute]
\mcA^{R}_\mft = \bUpsilon^{F}_\mft(\mathbf{A}^{\mcA})
=
(\mcb{I}_{\mft} \otimes \id_{W_\mft}) 
 \bbUpsilon^{F}_\mft(\mathbf{A}^{\mcA})
=
(\mcb{I}_{\mft} \otimes \id_{W_\mft}) 
F_\mft(\mcA)\;,
\end{equ}
where $\mathbf{A}^{\mcA} \eqdef (\mathbf{A}_{(\mft,k)}) \in \mcb{A}$.
Given $\pr{\mathbf{A}} \in \mcb{A}\subset\expan$, 
we substitute it into the right-hand side of \eqref{e:explain-substitute}, 
and obtain an element $\mcA^{R}$, which together with 
$\pr{\mathbf{A}}$ yields a new element $\mcA\in \expan$ which is then substituted  again into the right-hand side of \eqref{e:explain-substitute}.
By subcriticality, this iteration stabilises after a finite number of steps
to an element which gives $\bUpsilon^{F}(\pr{\mathbf{A}})$.  

We will use a ``substitution'' notation to keep track of how terms are produced by expanding the polynomial $F$ on the right-hand side of \eqref{e:explain-substitute}. As an example, we write 
\begin{equ}\label{eq:substitution-example}
\pr{U} \mcb{I}_{\mfz} (\bar{\bXi})
\leadsto  \brho \big( [ \uwave{\pr{B}\,},\pr{h}] \big) \pr{U}\;.
\end{equ} 
The notation  $ \uwave{\pr{B}\,}$ indicates that we replace $\pr{B}$ with the term on the left-hand side. 
Thus \eqref{eq:substitution-example} corresponds to 
\[
\brho([ \pr{U} \mcb{I}_{\mfz} (\bar{\bXi}),\pr{h}])\pr{U}\;
\]
which is a contribution to $\bbUpsilon_{\mfu}^{F}[\mcb{I}_{\mfz}(\bar{\mfl})]$ (since $\bUpsilon_{\mfz}^{F}[\bar{\mfl}] = \pr{U}\bar{\bXi}$ and $\brho \big( [\pr{B},\pr{h}] \big) \pr{U}$ appears on the RHS for the equation for $U$.)

Note that above we are using the convention for Lie brackets given in Remark~\ref{rem:bracket_overload} and
we set $\bar{\bXi} = \id_E \in \mcb{T}[\bar\mfl]\otimes E$.
Below, when multiple factors are being substituted 
as in \eqref{e:only-ren-U} the substitutions should be read from left to right on both sides. 


\begin{proof}[of Lemma~\ref{lem:hU-no-renor}]
Denote $\mcY \eqdef \mcA_{\mfz}$ and $\mcH \eqdef \mcA_\mfh +\mcA_{\mfh '}$.
We start with the coherent expansion
$q_{\le 0}\mcY = \pr{U} \mcb{I}_{\mfz} (\bar{\bXi})  + Y_0 + \bone\otimes \pr{Y}$,
where $Y_0 $ is obtained by substituting both $\pr{Y}$ in $\pr{Y\p Y}$
by $\pr{U} \mcb{I}_{\mfz} (\bar{\bXi}) $. Here $q_{\le L}$ denotes the projection
onto degrees $\le L$.
A substitution 
$\pr{U} \mcb{I}_{\mfz} (\bar{\bXi})
\leadsto  [ \uwave{\pr{B}\,},\pr{h}]$ in the $h$ equation
yields 
$q_{\le \frac12}\mcH
 =\bone \otimes \pr{h} + h_{1/2}$    
where 
\begin{equ}[e:def-h12]
h_{1/2} \eqdef [ \pr{U} \mcb{I}_{\mfh'}( \mcb{I}_{\mfz} \bar{\bXi}), \pr{h} ]  
 \in \mcb{T}[\tau]\otimes \mfg^3
 \quad  \mbox{with } \tau 
 \in \mfT^{\mathrm{od,sp}}\cap \mfT^{\mathrm{od,noi}}\;.
\end{equ}

Considering the term $\brho([\pr{B_j}, \pr{h_j}]) \pr{U} $ in the $U$ equation:
by a simple power counting the only substitutions 
relevant to $\bar{\bUpsilon}^F_\mfu [\tau]$
with
$\tau\in\mfT_-$ are
\minilab{e:only-ren-U}
\begin{equs}
 \pr{U} \mcb{I}_{\mfz} (\bar{\bXi})
& \leadsto
 \brho([ \uwave{\pr{B}\,}, \pr{h}]) \pr{U} \;,	\label{e:only-ren-U1}
 \\
 Y_0 \leadsto \brho([\uwave{\pr{B}\,}, \pr{h}]) \pr{U} ,
 \qquad&
(  \pr{U} \mcb{I}_{\mfz}( \bar{\bXi}), h_{1/2})
 \leadsto
  \brho([ \uwave{\pr{B}\,}, \uwave{\pr{h}\,}]) \pr{U} \;,
  	\label{e:only-ren-U2}
 \end{equs}
 which only contribute to  $\bar{\bUpsilon}^F_\mfu [\tau]$
 with $\tau \in \mfT^{\mathrm{od,noi}}$
 and $\tau \in \mfT^{\mathrm{od,sp}}$ respectively.
Thus the claim for $\bar{\bUpsilon}^F_\mfu$ holds.

We turn to the equation for $h_i$.
Considering the term $[[\pr{B_j}, \pr{h_j}],\pr{h_i}]$,
by power counting the relevant substitutions are 
$\pr{U} \mcb{I}_{\mfz} (\bar{\bXi}) \leadsto
[[\uwave{\pr{B}\,}, \pr{h}],\pr{h}] $ and
\begin{equ}
(\pr{U} \mcb{I}_{\mfz}( \bar{\bXi}),h_{1/2})
\leadsto
[[\uwave{\pr{B}\,}, \uwave{\pr{h}}],\pr{h}] ,
\qquad
(\pr{U} \mcb{I}_{\mfz}( \bar{\bXi}),h_{1/2})
\leadsto
[[\uwave{\pr{B}\,}, \pr{h}],\uwave{\pr{h}\,}] ,
\end{equ}
 which only contribute to  $\bar{\bUpsilon}^F_\mfh [\tau]$
 with $\tau \in \mfT^{\mathrm{od,noi}}$
 or $\tau \in \mfT^{\mathrm{od,sp}}$.
Concerning the term  $\pr{\p_i} [\pr{B_j},\pr{h_j}]$,
the  substitution 
\[
(\pr{U} \mcb{I}_{\mfz}( \bar{\bXi}),  h_{1/2})
\leadsto
 [\uwave{\pr{B}\,},\uwave{\pr{h}\,}]
\]
gives an updated  expansion
$q_{\le 1}\mathcal{H} 
 =
\bone \otimes \pr{h} + h_{1/2} + h_{1}
+ \mathbf{X}\otimes \pr{\partial h}$
where 
\begin{equ}[e:def-h1]
h_{1} \eqdef  
\mcb{I}_{\mfh'} [ \pr{U}  \mcb{I}_{\mfz}( \bar{\bXi}),
 h_{1/2} ] \quad
 \in \mcb{T}[\tau]\otimes \mfg^3
 \quad  \mbox{with } \tau 
 \in \mfT^{\mathrm{ev,sp}}\cap \mfT^{\mathrm{ev,noi}}\;.
\end{equ}
%
%
Now for the term 
$ [\pr{h_j},\pr{\p_j h_i}]$
 in the equation for $h_i$, the relevant substitutions
\[
h_{1/2} \leadsto  [\pr{h},\pr{\p }\uwave{\pr{h}\,}],
\quad
(h_{1/2},h_{1/2}) \leadsto  [\uwave{\pr{h}\,},\pr{\p} \uwave{\pr{h}\,}],
\quad
h_{1} \leadsto  [\pr{h},\pr{\p} \uwave{\pr{h}\,}]
\]
again lead to trees in
$\mfT^{\mathrm{od,noi}}$
 or $ \mfT^{\mathrm{od,sp}}$, 
 which proves 
 the claim for $\bar{\bUpsilon}^F_\mfh$ and $\bar{\bUpsilon}^F_{\mfh'}$.
%
%
\end{proof}



We now prepare for our analysis of the renormalisation in the equation for $\pr{Y}$ equation by keeping track of the key substitutions involved. 

The main difference between renormalising the $\pr{Y}$ equation versus renormalising the YMH equation of Section~\ref{sec:renorm-A} comes from substituting for $\pr{U}$. 
The coherent expansion (up to order $5/2$) for $\pr{U}$ is written as 
\begin{equ}[e:U-expansion]
q_{\le \frac52} \mcA_{\mfu}
 =
\bone \otimes \pr{U}   
+ \mathbf{X}_{j} \otimes \pr{\p_j U}
+ U_{3/2}+ U_{2}  
+ (\tfrac12 \mathbf{X}_{i} \mathbf{X}_{j} \otimes \pr{\p_i \p_j U}
+ \mathbf{X}_{0}  \otimes \pr{\p_0 U})
+  U_{5/2}
\end{equ}
where the subscripts indicate the degrees (minus some multiple of $\kappa > 0$ which is taken to be arbitrarily small).
Here the term $U_{3/2}$ is obtained by
\eqref{e:only-ren-U1} and 
$U_{2}$ is obtained by
\eqref{e:only-ren-U2}. 
%
The term $U_{5/2}$ is obtained by summing
\minilab{e:U52}
\begin{equs}[2]
(\pr{U} \mcb{I}_{\mfz}( \bar{\bXi}), \mathbf{X}\otimes \pr{\p U} ) 
&\leadsto 
\brho([\uwave{\pr{B}\,}, \pr{h}]) \uwave{\,\pr{U}\, }
\qquad  \qquad  &
\mfT^{\mathrm{od,sp}} &\cap \mfT^{\mathrm{od,noi}}
\\
(\pr{U} \mcb{I}_{\mfz}( \bar{\bXi}), h_1  )
 &\leadsto 
 \brho([\uwave{\pr{B}\,}, \uwave{\pr{h}\,}]) \pr{U} 
\qquad  \qquad   &
\mfT^{\mathrm{ev,sp}} &\cap \mfT^{\mathrm{od,noi}}
			\label{e:U521}
\\
(\pr{U} \mcb{I}_{\mfz}( \bar{\bXi}), \mathbf{X}\otimes \pr{\p h} ) 
&\leadsto
 \brho([\uwave{\pr{B}\,}, \uwave{\pr{h}\,}]) \pr{U} 
\qquad  \qquad  &
\mfT^{\mathrm{od,sp}} &\cap \mfT^{\mathrm{od,noi}}
\\
h_{1/2} 
&\leadsto 
\brho([\pr{B}, \uwave{\pr{h}}]) \pr{U} 
\qquad  \qquad  &
\mfT^{\mathrm{od,sp}} &\cap \mfT^{\mathrm{od,noi}}
\\
( Y_0, h_{1/2} )
&\leadsto 
\brho([\uwave{\pr{B}\,}, \uwave{\pr{h}}]) \pr{U} 
\qquad  \qquad  &
\mfT^{\mathrm{ev,sp}} &\cap \mfT^{\mathrm{od,noi}}
			\label{e:U523}
\\
Y_{1/2}
 &\leadsto 
 \brho([\uwave{\pr{B}\,}, \pr{h}]) \pr{U} 
\qquad  \qquad  &
&\quad \mfT^{\mathrm{od,noi}}
			\label{e:U522}
\\
h_{1/2}
  \leadsto
  \brho(\uwave{\pr{h}})\brho(\pr{h}) & \pr{U}
 \,\mbox{ or } \,
 \brho(\pr{h}) \brho(\uwave{\pr{h}}) \pr{U}
\qquad  \qquad     &
\mfT^{\mathrm{od,sp}} &\cap \mfT^{\mathrm{od,noi}}\;.
\end{equs}
Above we have listed parity information for the substitutions as well \dash recall that the parities of $h_{1/2}$ and $h_{1}$ are given in \eqref{e:def-h12} and \eqref{e:def-h1}.
Here $Y_{1/2}$ is the degree $\frac12 -$
term in the coherent expansion of
$\mcY \eqdef \mcA_{\mfz}$, which arises from
\begin{equs}
(\pr{U} \mcb{I}_{\mfz}( \bar{\bXi}) ,\pr{U} \mcb{I}_{\mfz}( \bar{\bXi}) ,\pr{U} \mcb{I}_{\mfz}( \bar{\bXi}) )
\leadsto 
\uwave{\,\pr{Y}}^3
\qquad 
\mfT^{\mathrm{ev,sp}} &\cap \mfT^{\mathrm{od,noi}}
\\
(Y_{0} ,\pr{U} \mcb{I}_{\mfz}( \bar{\bXi}) )
\; \mbox{or} \;
(\pr{U} \mcb{I}_{\mfz}( \bar{\bXi}) , Y_{0})
\leadsto 
\uwave{\,\pr{Y}} \pr{\p}  \uwave{\,\pr{Y}}
\qquad 
\mfT^{\mathrm{ev,sp}} &\cap \mfT^{\mathrm{od,noi}}
\\
\pr{U} \mcb{I}_{\mfz}( \bar{\bXi}) 
\leadsto 
\pr{Y}\pr{\p} \uwave{\,\pr{Y}},
\qquad
(\mathbf{X} \pr{\p U}	 ,\bar{\bXi})
\leadsto 
\uwave{\pr{\,U}} \uwave{\pr{\bar{\Xi}}}
\qquad \mfT^{\mathrm{od,sp}} 
&\cap \mfT^{\mathrm{od,noi}}\;.
\end{equs}

To find all the relevant trees contributing to the renormalisation of the $(Y,U,h)$ system, we can simply take the  
trees for the YMH system discussed in Section~\ref{sec:renorm-A}
such as  $\<IXiI'Xi_notriangle>$
and replace some of their noises by  the trees that appear in  $q_{\le \frac52} \mcA_{\mfu} \pr{\bar{\Xi}}$.
Intuitively one could think of the $Y$ equation
as driven by ``additive noises'' which are $\xi$ multiplied
by the terms in \eqref{e:U-expansion}.

\begin{remark}
Note that $U_{2}$ depends on $Y_0$, and 
$U_{5/2}$  depends on  $Y_0$ and $Y_{1/2}$.
Fortunately, $Y_0$ and $Y_{1/2}$ only depend on the first two  terms on the right-hand side of \eqref{e:U-expansion} (since
$Y_r$ depends on $U_s$
if $s-\frac52+2 \le r$). This makes some of the explicit calculations below not too difficult.  
\end{remark}




\subsubsection{Grafting}\label{sec:grafting}

In this section we make precise the statement above that we compute the renormalisation of the $Y$ equation by using the trees for the simpler YMH equation of Section~\ref{sec:renorm-A}.
The purpose of this subsection is to facilitate the proof of Lemma~\ref{lem:renorm_linear_in_Y_and_h}.
Instead of trying to compute the renormalisation counterterms by hand, we instead use arguments that only leverage space and noise parity arguments and the relation of the $Y$ equation to the YMH equation of  Section~\ref{sec:renorm-A}. 

Let $\tilde{\mfT}$ be the set of trees that was referred to as $\mfT$ in Section~\ref{sec:renorm-A}.
Since our label set and rule is just an extension of what was introduced in Section~\ref{sec:renorm-A}, we have that $\tilde{\mfT} \subset \mfT$ where $\mfT$ refers to the set of trees we introduced in this section. 
Since our target and kernel space assignments in this section are also just an extension 
of those introduced in Section~\ref{sec:renorm-A}, we also have canonical 
inclusions $\mcb{T}[\tilde{\mfT}] \subset \mcb{T}[\mfT]$. 
We also write $\bUpsilon^{\YMH}$ and $\bar{\bUpsilon}^{\YMH}$ for the corresponding  maps for the YMH system which are defined on $\mcb{T}[\tilde{\mfT}]$. 

We write $\mathfrak{T}^{\YMH}$ for the set of all trees $\tau \in \tilde{\mfT} \cap \mathfrak{T}_{-}$ with $\bUpsilon^{\YMH}[\tau] \neq 0$. 
\begin{remark}
We make some observations about the structure of trees $\tau \in \mfT^{\YMH}$. 
First, any $\tau \in \mfT^{\YMH}$ cannot contain any instances of $\mcb{I}_{(\mfz,p)}(\mbX^{k})$ for any $p,k \in \N^{d+1}$. 
If this was the case then one would be able to replace an instance of $\mcb{I}_{(\mfz,p)}(\mbX^{k})$ in $\tau$ with $\mcb{I}_{(\mfz,p)}(\mcb{I}_{\bar{\mfl}}(\bone))$  to obtain a new tree $\bar{\tau} \in \mfT^{\YMH}$ for which  $\deg(\bar{\tau}) \le \deg(\tau) + \deg(\bar{\mfl})$. But since $\deg(\tau) < 0$, this would violate subcriticality. 

Second, any $\tau \in \mfT^{\YMH}$ cannot have instances of $\mbX^{k}\mcb{I}_{\bar{\mfl}}(\bone)$ for $k \not = 0$ \dash since $\bUpsilon^{\YMH}[\mbX^{k}\mcb{I}_{\bar{\mfl}}(\bone)] = 0$, this would force $\bUpsilon^{\YMH}[\tau]  = 0$.
 
Putting these two observations together, $\tau \in \mfT^{\YMH}$ imposes that the leaves of $\tau$ are given by noises, 
and there are no products of a polynomial with noises. This will be useful to keep in mind for some of the lemmas (and inductive proofs) to follow. 
\end{remark}


We define $\mfT_{g}$ ($g$ stands for ``grafting'') to be the collection of all the trees
used to describe the RHS of \eqref{e:U-expansion}, namely,
\begin{equ}\label{eq:def_of_Tg}
\mfT_{g} \eqdef
 \{ \mbX^{p}: |p|_{\s} \le 5/2\}
\sqcup \{ \mcb{I}_{\mfu}(\tilde{\tau}) \,:\, \bUpsilon_{\mfu}[\tilde{\tau}] \neq 0\,,\; \deg(\tilde{\tau}) \le 1/2
\}\;.
\end{equ}
In particular $\mfT_{g}$ has as elements the abstract  monomials,
$\hat\tau_{3/2} \eqdef \mcb{I}_{\mfu} (\<IXi>)$,
 and
\begin{equ}[e:hat-tau-2]
\hat\tau_{2}^{(1)}
\eqdef
\mcb{I}_{\mfu} \big(\<IXi> \,
	\mcb{I}_{\mfh'} (\<IXi> )\big) \;,
\qquad
\hat\tau_{2}^{(2,i)}
\eqdef
\mcb{I}_{\mfu} \big(\,\<I[IXiI'Xi]_notriangle>\,\big)\;,
\end{equ}
which can be seen by the discussion below \eqref{e:U-expansion} (or \eqref{e:only-ren-U}), and
\begin{equs}[e:hat-tau-52]
\hat\tau_{5/2}^{(1)}
\eqdef 
\mcb{I}_{\mfu} \big( \<IXi> \, \mcb{I}_{\mfh'} 
\big( \<IXi> \mcb{I}_{\mfh'} (\<IXi>) \big) \big)\;,
\quad
\hat\tau_{5/2}^{(2)}
\eqdef
\mcb{I}_\mfu \big(
\<I[IXiI'Xi]_notriangle>  \mcb{I}_{\mfh'} (\<IXi> )
\big)\;,
\\
\hat\tau_{5/2}^{(3)}
\eqdef
\mcb{I}_\mfu \big(\<I[IXi^3]>\big) \;,
\quad
\hat\tau_{5/2}^{(4,i,j)}
\eqdef
\mcb{I}_\mfu \big(\<I[I[IXiI'Xi]I'Xi]>\big)\;,
\quad
\hat\tau_{5/2}^{(5,i,j)}
\eqdef
\mcb{I}_\mfu \big(
\<I[IXiI'[IXiI'Xi]]>
\big)\;,
\end{equs}
which originate from
\eqref{e:U521}, \eqref{e:U523}, \eqref{e:U522}.
Here, $i,j \in \{1,2,3\}$ represent space indices which specify the directions of the
derivatives appearing on the given symbol. 
Again, the subscripts indicate their degrees (modulo a multiple of $\kappa$).  
Our list of the  $\frac52$ trees is not exhaustive;
we listed only those in $\mfT^{\mathrm{ev,sp}} \cap \mfT^{\mathrm{od,noi}}$
since the rest of them will turn out to be irrelevant in renormalisation.

As described earlier, we will modify the trees  $\bar\tau \in \mfT^{\YMH}$ by replacing 
an instance of $\bar{\mfl}$ in $\bar\tau $ 
by $\hat{\tau} \bar{\mfl}$  where $\hat{\tau} \in \mfT_{g}$.
We will also say
that $\hat{\tau}$ is ``grafted'' onto $\bar{\mfl}$ in the tree
$\bar{\tau}$.
Note that grafting $\bone$ onto some $\bar\tau \in \mfT^{\YMH}$
yields $\bar\tau$ itself.
For instance, for $j \in \{1,2,3\}$, we have 
$\partial_{j}\mcb{I}_{\mfz}(\bar{\mfl})\mcb{I}_{\mfz}(\bar{\mfl})  \in \mfT^{\YMH}$
 and two modifications of the tree given by grafting 
 $\hat{\tau} = \hat{\tau}_{3/2}= \mcb{I}_{\mfu} (\mcb{I}_{\mfz}(\bar{\mfl}))$ are 
\[
\partial_{j}\mcb{I}_{\mfz}\left(\,\mcb{I}_{\mfu}(\mcb{I}_{\mfz}(\bar{\mfl}))\,\bar{\mfl}\,\right)\mcb{I}_{\mfz}(\bar{\mfl})
\quad
\textnormal{and}
\quad
\partial_{j}\mcb{I}_{\mfz}(\bar{\mfl})\mcb{I}_{\mfz}\left(\mcb{I}_{\mfu}(\mcb{I}_{\mfz}(\bar{\mfl}))\, \bar{\mfl}\,\right)\;.
\]

%

For any $\bar{\tau} \in \mfT^{\YMH}$ and $\hat{\tau} \in \mfT_{g}$, 
we define $\mfT [\hat{\tau};\bar{\tau}] \subset \mfT$ to be the collection of all the trees $\tau \in \mfT$ which are obtained by precisely one grafting of $\hat{\tau}$  onto an instance of $\bar{\mfl}$ in $\bar{\tau}$. 
More precisely, $\mfT [\hat{\tau};\bar{\tau}]$ is defined inductively in the number of instances of $\bar{\mfl}$ in $\bar{\tau}$. 
For the base case we have $\bar{\tau} = \bar{\mfl}$ and set $\mfT[ \hat{\tau}; \bar{\mfl}] = \{ \hat{\tau}\bar{\mfl} \}$. 
For the inductive step, we note that any $\bar{\tau} \in \mfT^{\YMH}$ with $\bar{\tau} \not = \bar{\mfl}$ can be written
\begin{equ}\label{eq:induction_for_ymh}
\bar{\tau} =  \mbX^{k} \prod_{j=1}^{m} \mcb{I}_{o_{j}}(\bar{\tau}_{j})\;,
\enskip m \ge 1\;, \enskip o_{j} = (\mfz, p_{j}) \in \CE\;, \enskip \bar{\tau}_{j} \in \mfT^{\YMH}\;,
\end{equ}
and we then set  
\begin{equ}\label{eq:inductive_graft_set}
\mfT[\hat{\tau} ; \bar{\tau}] =
\left\{ \tau \in \mfT: 
\begin{array}{c}
\tau 
= \mbX^{k}
  \mcb{I}_{o_{1}}(\bar{\tau}_{1}) \cdots 
 \mcb{I}_{o_{j}}(\mathring{\tau}) 
\cdots   \mcb{I}_{o_{m}}(\bar{\tau}_{m})\\
\textnormal{ for some } 1 \le j \le m \textnormal{ and } \mathring{\tau} \in \mfT[\hat{\tau};\bar{\tau}_{j}]\;
\end{array}
\right\}\;.
\end{equ} 
\begin{remark}\label{rem:grafting-adds}
Note that the parity in space / parity in the number of noises / degree of every tree in $\mfT[\hat{\tau};\bar{\tau}]$ is the sum of the corresponding quantities for $\hat{\tau}$ and $\bar{\tau}$. 
We also note that, for $(\hat{\tau}_{1},\bar{\tau}_{1}),\ (\hat{\tau}_{2},\bar{\tau}_{2} ) \in \mfT_{g} \times \mfT^{\YMH}$, the condition $(\hat{\tau}_{1},\bar{\tau}_{1}) \not =  (\hat{\tau}_{2},\bar{\tau}_{2} )$ forces $\mfT[ \hat{\tau}_{1} ; \bar{\tau}_{1}]$ and $\mfT[ \hat{\tau}_{2} ; \bar{\tau}_{2}]$ to be disjoint.
\end{remark}


%
%

We write  $\mfT_{g}[\bar{\tau}] 
= \bigsqcup_{\hat{\tau} \in \mfT_{g}}
	 \mfT[ \hat{\tau} ;\bar{\tau}]$,
and define the following subspaces of $\mcb{T}$
\begin{equs}
\mcb{T}_{g} &= \mcb{T}[\mfT_{g}]\;,\quad
\mcb{T}^{\YMH} = \mcb{T}[\mfT^{\YMH}]\;,\quad
\textnormal{ and }\quad
\mcb{T}[\hat{\tau} ; \bar{\tau}] = \mcb{T} \big[\mfT[ \hat{\tau} ; \bar{\tau}] \big]\;.
\end{equs} 
We then have the following lemma which classifies the trees in $ \mathfrak{T}^{\even}_{-} $.
\begin{lemma}\label{lemma:ymh_trees_generate}
Let $\tau \in \mfT^{\even}_{-}$ with $\bar{\bUpsilon}^{F}_{\mfz}[\tau] \not = 0$, then $\tau \in \mfT_{g}[\bar{\tau}] $ for  some $\bar{\tau} \in \mfT^{\YMH}$.

Moreover, precisely one of the following statements hold:
\begin{enumerate}
\item $\tau \in  \mathfrak{T}^{\YMH}$.
\item 
$\tau \in \mathfrak{T}[\mathbf{X}_j;\bar{\tau}]$ for some $j \in \{1,2,3\}$ and
\begin{equ}\label{eq:ymhtrees-minus1}
\bar{\tau} \in \mfT^{\mathrm{od, sp}} \cap \mfT^{\mathrm{ev, noi}} \cap \mfT^{\YMH}
\textnormal{ with }
\deg(\bar{\tau}) \le -1\;.
\end{equ}
\item  
$\tau \in \mathfrak{T}[\mcb{I}_{\mfu} (\<IXi>), \bar{\tau}]$ for some 
\begin{equ}\label{eq:ymhtrees-minus3/2}
\bar{\tau} \in \mfT^{\mathrm{ev,sp}}\cap \mfT^{\mathrm{od, noi}} \cap \mfT^{\YMH} \textnormal{ with } \deg(\bar{\tau}) \le -3/2\;.
\end{equ} 
\item 
$\tau \in \mathfrak{T}[\hat{\tau};\<IXiI'Xi_notriangle>]$ for some $\hat{\tau}$
given in \eqref{e:hat-tau-2}.
\item 
$\tau \in  \mathfrak{T}[\hat{\tau};\<Xi>]$ for some $\hat{\tau}$
given in \eqref{e:hat-tau-52}.
\end{enumerate}
%
\end{lemma}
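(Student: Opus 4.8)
\textbf{Proof strategy for Lemma~\ref{lemma:ymh_trees_generate}.}
The plan is to argue by induction on the number of $\mfu$-edges in $\tau$ and on the total size of $\tau$, reducing every tree of negative degree with non-vanishing $\bar\bUpsilon^F_\mfz$ to a ``grafted YMH tree'' and then reading off the degree/parity constraints from the parity information already collected in Section~\ref{sec:renorm_gauge_transformed}. First I would fix $\tau \in \mfT^{\even}_-$ with $\bar\bUpsilon^F_\mfz[\tau]\neq 0$ and recall from the discussion below \eqref{e:U-expansion} and from \eqref{eq:coherence_identity} that $\bar\bUpsilon^F_\mfz[\tau]$ is computed by iterated substitutions into $F_\mfz$ in \eqref{e:F_mfz}. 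The only edge types appearing are $\mfz$ (tracking everything except the noise-carrying factor $\pr U\pr{\bar\Xi}$) and edges coming from the $U$- and $h$-equations, which by Lemma~\ref{lem:hU-no-renor} never get renormalised; hence every subtree hanging below an $\mfu$-edge of $\tau$ must be one of the trees recorded in $\mfT_g$ (see \eqref{eq:def_of_Tg}, \eqref{e:hat-tau-2}, \eqref{e:hat-tau-52}). Removing each maximal $\mcb{I}_\mfu(\hat\tau)$ subtree together with the $\bar\mfl$ it multiplies, and replacing it by a bare $\bar\mfl$, produces a tree $\bar\tau$ built only from $\mfz$-edges and noises $\bar\mfl$; this $\bar\tau$ is exactly a tree in $\tilde\mfT$ with non-vanishing $\bUpsilon^{\YMH}$ (the leaves are noises and there are no polynomial--noise products, by the remark following the definition of $\mfT^{\YMH}$), so $\bar\tau \in \mfT^{\YMH}$ and $\tau \in \mfT_g[\bar\tau]$. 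The inductive step uses the recursive structure \eqref{eq:induction_for_ymh}--\eqref{eq:inductive_graft_set} and Remark~\ref{rem:grafting-adds}, which guarantees that degrees, spatial parities, and noise parities all add under grafting.

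Next I would enumerate the possibilities. Since grafting $\hat\tau\in\mfT_g$ onto $\bar\tau$ adds $\deg(\hat\tau)$ to the degree, and $\deg(\tau)<0$, while $\deg(\bar\tau)\geq \reg(\mfz)\cdot(\text{something})$ is bounded below by subcriticality, only finitely many pairs $(\hat\tau,\bar\tau)$ can occur; and the parity constraint $\tau\in\mfT^{\mathrm{ev,sp}}\cap\mfT^{\mathrm{ev,noi}}$ (which is forced by $\ell^{\delta,\eps}_{\BPHZ}[\tau]\neq 0$ via Lemma~\ref{lem:parity-bphz-gsym}, but here we only need $\bar\bUpsilon^F_\mfz[\tau]\neq 0$, so one instead tracks the parities directly as in the substitution tables \eqref{e:U52}) cuts the list down further. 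Concretely: if $\hat\tau=\bone$ we are in case (1). If $\deg(\hat\tau)=\deg(\mbX_j)=1$ then to stay at negative degree one needs $\deg(\bar\tau)\le -1$, and since $\mbX_j$ has odd spatial parity and even noise parity, $\bar\tau$ must have odd spatial and even noise parity to match $\tau$'s even/even, giving case (2). If $\hat\tau=\mcb{I}_\mfu(\<IXi>)=\hat\tau_{3/2}$, which has $\deg=3/2$, odd spatial parity (one $\mfz$-edge, no $\mfh'$) — wait, I should double-check: $\hat\tau_{3/2}$ contributes even spatial parity and odd noise parity, so one needs $\bar\tau$ with even spatial and odd noise parity and $\deg(\bar\tau)\le -3/2$, giving case (3). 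If $\deg(\hat\tau)=2$, i.e.\ $\hat\tau\in\{\hat\tau_2^{(1)},\hat\tau_2^{(2,i)}\}$ from \eqref{e:hat-tau-2}, then $\deg(\bar\tau)<0$ with $\bar\tau$ of small degree forces $\bar\tau=\<IXiI'Xi_notriangle>$ (the unique YMH tree of degree $-2-$ with non-vanishing $\bUpsilon^{\YMH}$ and the right parity), giving case (4). If $\deg(\hat\tau)=5/2$, i.e.\ $\hat\tau\in\mfT_g$ is one of the degree-$\tfrac52$ trees, then $\deg(\bar\tau)<0$ forces $\bar\tau=\<Xi>=\bar\mfl$ itself, giving case (5). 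Finally I would check mutual exclusivity: this is immediate from Remark~\ref{rem:grafting-adds} (distinct $(\hat\tau,\bar\tau)$ give disjoint $\mfT[\hat\tau;\bar\tau]$) together with the fact that the degree of $\hat\tau$ is determined by $\deg(\tau)-\deg(\bar\tau)$ and the five cases correspond to disjoint ranges $\{0\},\{1\},\{3/2\},\{2\},\{5/2\}$ for $\deg(\hat\tau)$.

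The main obstacle I anticipate is the bookkeeping in the reduction step: one must verify that \emph{every} way the substitution iteration can produce an $\mfu$-edge (or a chain of $\mfh'$-edges below it) is accounted for by the list $\mfT_g$, i.e.\ that no tree with $\bar\bUpsilon^F_\mfz[\tau]\neq 0$ and $\deg(\tau)<0$ hides a $\mcb{I}_\mfu(\tilde\tau)$ with $\deg(\tilde\tau)>1/2$ or with $\bUpsilon_\mfu[\tilde\tau]=0$. This is where the degree cutoff in \eqref{eq:def_of_Tg} and the positive regularities $\reg(\mfu)=3/2-$, $\reg(\mfh),\reg(\mfh')=1/2-$ are used: an $\mfu$-subtree of degree $>1/2$ grafted onto $\bar\mfl$ inside a YMH tree would push the total degree above $0$ unless the YMH skeleton itself had degree below $-5/2$, which is impossible by subcriticality (the relevant trees $\<IXiI'Xi_notriangle>$ and $\<Xi>$ are the extreme cases). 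I would carry this out by the same kind of power-counting bookkeeping used in Lemma~\ref{lem:counting}, combined with the observation that every substitution in the tables \eqref{e:U52} (and those producing $Y_{1/2}$, $h_{1/2}$, $h_1$) either lands in $\mfT^{\mathrm{ev,sp}}\cap\mfT^{\mathrm{od,noi}}$ or has odd spatial/odd noise parity and therefore does not survive in $\mfT^{\even}_-$; this last point is exactly why only the degree-$\tfrac52$ trees listed in \eqref{e:hat-tau-52} need to be retained, justifying the non-exhaustive list remarked on after \eqref{e:hat-tau-52}.
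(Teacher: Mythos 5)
Your overall strategy --- decompose $\tau$ into a YMH skeleton $\bar\tau$ with elements of $\mfT_{g}$ grafted onto its noises, then enumerate the admissible pairs $(\hat\tau,\bar\tau)$ by degree and parity, with disjointness coming from Remark~\ref{rem:grafting-adds} --- is the same as the paper's, and your case analysis for the second statement mostly reproduces the correct constraints. But there is a genuine gap in your proof of the first statement. By definition, $\mfT[\hat\tau;\bar\tau]$ consists of trees obtained by \emph{precisely one} grafting of $\hat\tau$ onto an instance of $\bar\mfl$ in $\bar\tau$, so the conclusion $\tau\in\mfT_{g}[\bar\tau]$ requires showing that at most one non-trivial grafting occurs. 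Your reduction step (``removing each maximal $\mcb{I}_{\mfu}(\hat\tau)$ subtree \dots produces $\bar\tau$ \dots so $\tau\in\mfT_{g}[\bar\tau]$'') silently assumes this: a priori $\tau$ could arise from $\bar\tau$ by grafting non-trivial $\hat\tau_1,\dots,\hat\tau_m$ onto $m\geq 2$ distinct noises, and such a $\tau$ lies in no $\mfT[\hat\tau;\bar\tau]$. The paper closes this by noting that every non-trivial $\hat\tau\in\mfT_{g}$ has degree $\geq 1$ (for $\kappa$ small), so $\deg(\tau)<0$ together with $m\geq 2$ forces $\deg(\bar\tau)\leq -2$, which leaves only $\bar\tau=\<IXiI'Xi_notriangle>$ as a candidate for a double grafting (the other tree of such low degree, $\<Xi>$, carries a single noise); there $m=2$ forces both $\hat\tau_j=\mbX_{i_j}$, and the resulting $\tau$ then has odd spatial parity, so it is excluded. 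Some version of this argument is indispensable.

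A second, smaller omission: your enumeration skips $\hat\tau=\mbX^p$ with $|p|_{\s}=2$ (i.e.\ $\mbX_0$ or a quadratic spatial monomial), which does belong to $\mfT_{g}$. This case must be ruled out explicitly: it forces $\deg(\bar\tau)\leq -2$, hence $\bar\tau\in\{\<Xi>,\<IXiI'Xi_notriangle>\}$, and in either case the resulting $\tau$ fails the parity constraints defining $\mfT^{\even}_{-}$ (odd noise parity in the first case, odd spatial parity in the second). With these two points repaired, the rest of your argument goes through as in the paper.
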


\begin{remark}
Note that the assumption $\bar{\bUpsilon}^{F}_{\mfz}[\tau] \not = 0$ is needed because the set of trees $\mfT$ includes many trees that don't even appear in the coherent expansion of the $\pr{Y}$ system (such as $\mfl$) and the lemma would fail in these cases. 
\end{remark}

\begin{remark}\label{rem:bar-trees-cases23}
The trees satisfying \eqref{eq:ymhtrees-minus3/2}
are
 \<IXi^3_notriangle>,
\<IXiI'[IXiI'Xi]_notriangle>,
\<I[IXiI'Xi]I'Xi_notriangle>.
Some examples of trees satisfying \eqref{eq:ymhtrees-minus1}
are 
\begin{equ}
\<IXiI'Xi_notriangle>, \quad
\<Psi'I[YPsi']_notriangle>,\quad
\<Y'Y_notriangle>,\quad
\<PsiPsiY_notriangle>,\quad
\<PsiI'[Psi3]_notriangle>,\quad
\<Psi'I[Psi3]_notriangle>
\end{equ}
In fact, \eqref{eq:ymhtrees-minus1} implies that $\bar\tau$ can only have $2$ or $4$ noises.
Indeed, since 
$\bar{\tau} \in \mfT^{\mathrm{ev, noi}} \cap \mfT^{\YMH}$,
in the notation of Lemma~\ref{lem:counting},
$k_\xi$ is even, so by \eqref{e:iden1} $n_\lambda-p_X$ must be odd;
then using $\deg(\bar{\tau}) \le -1$ and  \eqref{e:iden1}, we see that the only solutions to \eqref{counting-identities}
are $k_\xi=2$ and $k_\xi=4$.
\end{remark}

\begin{proof} 
In this proof we keep in mind that parities and degrees 
are additive under grafting (Remark~\ref{rem:grafting-adds}).

By the discussion in the beginning of this subsection,
for every $\tau \in \mfT$ with $\bar{\bUpsilon}^{F}_{\mfz}[\tau] \not = 0$, there is a unique way of obtaining $\tau$ by choosing some $\bar{\tau} \in \tilde{\mfT}$ with $\bUpsilon^{\YMH}[\bar{\tau}] \not = 0$, $m \ge 0$, and $\hat{\tau}_{1},\dots,\hat{\tau}_{m}$ with $\hat{\tau}_{j} \not = \bone$ and $\bUpsilon^{F}_{\mfu}[\hat{\tau}_{j}] \not = 0$, and then choosing $m$ instances of $\bar{\mfl}$ in $\bar{\tau}$ and replace them by $\hat{\tau}_{1}\bar{\mfl},\dots,\hat{\tau}_{m}\bar{\mfl}$.  
Recalling that each tree
in $\mfT [\hat{\tau};\bar{\tau}]$ is obtained by only one grafting by definition,
to prove the first statement we must show $m=1$.

Now, since we impose $\tau \in \mfT_{-}$, then, for $\kappa>0$ sufficiently small, one necessarily has $\deg(\hat{\tau}_j) \ge 1$ for all $j\in \{1,\cdots,m\}$ and therefore one must have $m \le 1$ unless $\bar{\tau} = \<IXiI'Xi_notriangle>$. 
This is because every $\bar\tau$ satisfying the above condition with at least two noises has degree strictly larger than $-2$, except for  $\bar{\tau} = \<IXiI'Xi_notriangle>$.

Suppose $\bar{\tau} = \<IXiI'Xi_notriangle>$, for which $\deg(\bar{\tau})=-2-2\kappa$.
Then since $\tau \in \mfT_{-}$, one must have $m\le 2$.
If $m = 2$ then one must have $\hat{\tau}_{j} = \mbX_{i_j}$ for both $j=1,2$ (since, for $\hat{\tau}_{j}$ not of this form, one has $\deg(\hat{\tau}_{j}) \ge 3/2-$). 
However, in this case $\tau \not \in \mfT^{\mathrm{ev},\mathrm{sp}}$, therefore we must have $m=1$. 
Since  $\tau \in \mfT_{-}$ and $\deg(\hat{\tau}) \ge 0$ furthermore imply $\deg(\bar{\tau}) < 0$,
one has $\bar{\tau} \in \mfT^{\YMH}$, thus proving the first statement. 

We now 
prove the second statement.
Note that the claim that we fall into {\it precisely one} of these cases follows 
from the disjointness statement in Remark~\ref{rem:grafting-adds}. 
The key point to prove is that the sets in the statement of the lemma cover $\mathfrak{T}^{\even}_{-}$.
Write $\tau \in \mfT[\hat{\tau}; \bar{\tau}]$ with $\hat{\tau} \in \mfT_{g}$ and $\bar{\tau} \in \mfT^{\YMH}$. 

\begin{itemize}
\item
If $\hat{\tau} = \bone$, then $\tau = \bar{\tau}$ so $\tau \in \mfT^{\YMH}$. 
\item
If $\hat{\tau} = \mbX^{p}$ with $|p|_{\s} =2$ then we must have $\deg(\bar{\tau}) \le -2$, which forces $\bar{\tau} = \<Xi>$ or $\bar{\tau} = \<IXiI'Xi_notriangle>$.  But in both cases $\tau\notin \mathfrak{T}^{\even}_{-}$.  
\item
If $\hat{\tau} = \mbX^{p}$ for $|p|_{\s} = 1$ then we must have $\bar{\tau}$ as in \eqref{eq:ymhtrees-minus1}, since parity and degree are additive under grafting.
\end{itemize}

Thus all that remains are the cases where $\hat{\tau}$ belongs to the second set on the right-hand side of \eqref{eq:def_of_Tg} so we assume this from now on. 

\begin{itemize}
\item
There is a single such $\hat{\tau}$ of minimal degree given by 
$\hat{\tau} = \mcb{I}_{\mfu} (\<IXi>)$ with $\deg(\hat{\tau} )=3/2-$. 
This choice of $\hat{\tau}$ forces $\bar{\tau}$ to be as in \eqref{eq:ymhtrees-minus3/2}. 
\item
If $\deg(\hat{\tau}) = 2-$, it has to be one of the trees in \eqref{e:hat-tau-2},
 so we have $\hat\tau \in \mfT^{\mathrm{ev, noi}} \cap \mfT^{\mathrm{od, sp}}$, 
 which forces $\bar{\tau}$ to be in $\mfT^{\mathrm{ev, noi}} \cap \mfT^{\mathrm{od, sp}}$ as well and $\deg(\bar{\tau})\le -2$,
 so $\bar{\tau} = \<IXiI'Xi_notriangle>$.
\item
If $\deg(\hat{\tau}) = 5/2-$ then we must have $\bar{\tau} = \<Xi>$ in order for $\deg(\tau) < 0$.
This in turn forces $\hat{\tau} \in  \mfT^{\mathrm{od, noi}} \cap  \mfT^{\mathrm{ev, sp}}$, and all such $\hat{\tau} $ are listed in \eqref{e:hat-tau-52}.
\end{itemize}
Thus we  proved that $\tau$ is in precisely one of the five cases claimed by the lemma.
\end{proof}

Below we study the effect of the $\bUpsilon$ map acting on the trees 
that appeared in the previous proof.

The following lemma states that the contribution to the renormalised
equation from trees in $\mathfrak{T}^{\YMH}$ is the same as what was observed in Section~\ref{sec:renorm-A}.
In what follows, we overload notation and write $\pr{U}^{\ast} \in L(\mcb{F},\mcb{F})$ for the operator  constructed via \cite[Remark~5.19]{CCHS2d}  using the operator assignment $L = (L_{\mft})_{\mft \in \Lab}$ given by setting $L_{\mft} = \pr{U}^{\ast}$ if $\mft \in \{ \bar{\mfl},\mfl\}$  and $L_{\mft} = \id_{V_{\mft}}$ otherwise; namely, $\pr{U}^{\ast}$ acts on $\mcb{F}$ by acting on each factor $V_{\mfl}$ and $V_{\bar{\mfl}}$.

We also note that, while $\bUpsilon^{\YMH}_{\mfz}[\tau](\cdot)$ should take as an argument an element of $\prod_{ o = (\mfz,p), (\bar{\mfl},p)} W_{o}$, there is a natural projection to this space from $\mcb{A} = \prod_{o \in \CE} W_{o}$ by dropping all other components, so below we write $\bUpsilon^{\YMH}_{\mfz}[\tau](\cdot)$ with arguments $\pr{\mathbf{A}} \in \mcb{A}$ with this projection implicitly understood.

\begin{lemma}\label{lem:gauge_transformed_ymhtrees}
For any $\tau \in \mathfrak{T}^{\YMH}$ 
one has
\begin{equs}
\bar\bUpsilon^{F}_{\mfz}[\tau](\pr{\mathbf{A}}) 
&= \pr{U}^{\ast}  \bar\bUpsilon^{\YMH}_{\mfz}[\tau](\pr{\mathbf{A}})\;.
\end{equs}
\end{lemma}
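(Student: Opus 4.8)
\textbf{Proof plan for Lemma~\ref{lem:gauge_transformed_ymhtrees}.}
The plan is to proceed by induction on the size of $\tau$, using the recursive formula~\eqref{eq:upsilon_induction} for $\bar\bUpsilon$ and the structural remarks made just before the lemma, namely that for $\tau\in\mfT^{\YMH}$ the leaves of $\tau$ are all noises (no instances of $\mbX^k\mcb{I}_{\bar\mfl}(\bone)$ with $k\neq0$, and no $\mcb{I}_{(\mfz,p)}(\mbX^k)$). The key observation driving the whole argument is that the only place where the nonlinearity $F_\mfz$ of the $(Y,U,h)$-system differs from the pure YMH nonlinearity $F_\mfz^{\YMH}$, \emph{when restricted to trees in $\mfT^{\YMH}$}, is in the term $\pr{U}\,\pr{\bar\Xi}$ replacing $\pr{\Xi}$ (cf.~\eqref{e:F_mfz} vs.~\eqref{e:YMH-nonlinearity}); since trees in $\mfT^{\YMH}$ by definition do not graft anything onto their noises (that is precisely what distinguishes them from the other four cases of Lemma~\ref{lemma:ymh_trees_generate}), every leaf $\mcb{I}_{\bar\mfl}(\bone)$ contributes a factor $\pr{U}\,\pr{\bar\Xi}$ rather than $\pr{\bar\Xi}$ alone. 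The operator $\pr{U}^\ast$ is exactly the bookkeeping device that collects one copy of $\pr{U}$ for each such leaf.

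First I would set up the base case $\tau=\Xi$ (or rather $\bar\Xi$, since in the $(Y,U,h)$-system the relevant noise type is $\bar{\mfl}$): here $\bar\bUpsilon^F_\mfz[\bar\Xi](\pr{\mathbf A}) = \pr{U}\,\pr{\bar\Xi}$ by~\eqref{e:F_mfz}, while $\bar\bUpsilon^{\YMH}_\mfz[\Xi](\pr{\mathbf A}) = \pr{\bar\Xi}$, and $\pr{U}^\ast$ acting on $\mcb{T}[\Xi]\otimes E\simeq V_{\bar\mfl}\otimes E$ is $\pr{U}^{\ast}\otimes\id_E$ whose action on $\id_E\in L(E,E)\simeq E^*\otimes E$ is precisely left-multiplication by $\pr{U}$ in the appropriate sense; one checks this matches. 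For the inductive step, write $\tau = \mbX^k\prod_{i=1}^m\mcb{I}_{o_i}(\tau_i)$ with $o_i=(\mfz,p_i)$ (no noise-typed edges directly under the root, by the structural remarks, and $k$ is forced to be $0$ on such trees but this does not matter). Apply~\eqref{eq:upsilon_induction}: $\bar\bUpsilon^F_\mfz[\tau]$ is $\frac{1}{\mathring S(\tau)}$ times a derivative of $\bar\bUpsilon^F_\mfz[\bone] = \bone\otimes F_\mfz(\cdot)$ evaluated at $\bUpsilon^F_{o_1}[\tau_1],\ldots$. Since each $\tau_i$ is again in $\mfT^{\YMH}$ (a subtree of a $\mfT^{\YMH}$-tree with the same property that its leaves are noises), the inductive hypothesis gives $\bUpsilon^F_{o_i}[\tau_i] = \mcb{I}_{o_i}\circ\pr{U}^\ast\,\bar\bUpsilon^{\YMH}_{\mfz}[\tau_i]$; here one uses that $\pr{U}^\ast$ commutes with the abstract integration $\mcb{I}_{o_i}$ since $\pr{U}^\ast$ only touches the $V_\mfl,V_{\bar\mfl}$ factors while $\mcb{I}_{o_i}$ introduces a $V_\mfz$ factor (this is a variant of~\eqref{eq:integration_intertwining} with $I(\mathbf T)^\ast_\mfz=\id$). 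Then the crucial point: the part of $F_\mfz$ that differs from $F^{\YMH}_\mfz$ is only the noise term, and on trees in $\mfT^{\YMH}$ that term only appears through the leaves, which are handled by the base case; the remaining pieces of $F_\mfz$ ($\pr{Y\p Y}$, $\pr{Y^3}$, $\mathring C_\mfz\pr{Y}$, and the $\mathring C_\mfh\pr{h}$ term which contributes $0$ on $\mfT^{\YMH}$-trees since they contain no $\mfh,\mfh'$ edges) agree verbatim with $F^{\YMH}_\mfz$. Since $\pr{U}^\ast$ is an algebra homomorphism on $\mcb{F}$ acting factor-wise, pulling all the $\pr{U}^\ast$'s from the $\bUpsilon^F_{o_i}[\tau_i]$ out past the multilinear differential operator $\partial^k D_{o_1}\cdots D_{o_m}\bar\bUpsilon^{\YMH}_\mfz[\bone]$ (which does not involve $V_\mfl,V_{\bar\mfl}$ except through where the noises sit) reassembles exactly $\pr{U}^\ast\bar\bUpsilon^{\YMH}_\mfz[\tau]$. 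Finally $\mathring S(\tau) = S(\tau)$ is the same combinatorial factor in both structures since the underlying tree shape is identical, completing the induction.

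The main obstacle I anticipate is bookkeeping the tensor-leg structure carefully: $\pr{U}^\ast$ is defined as the adjoint of an operator built via \cite[Remark~5.19]{CCHS2d} from an operator assignment that is the identity on all types except $\mfl,\bar\mfl$, so one must verify that (i) it genuinely commutes with $\mcb{I}_{o_i}$ for $o_i$ of type $\mfz$ — which is immediate from the factor-wise definition and Remark~\ref{rem:suppress_ident} about suppressed identities — and (ii) that in the differential $\partial^k D_{o_1}\cdots D_{o_m}\bar\bUpsilon_\mfz^{\YMH}[\bone]$, substituting $\pr{U}^\ast v_i$ for $v_i$ in slot $o_i$ is the same as applying $\pr{U}^\ast$ (on the appropriate tensor factors) to the whole expression; this is because the differential operator acts only on the $\mfz$-type and $W_\mfz$-type legs and is ``blind'' to the $V_\mfl,V_{\bar\mfl}$ legs that $\pr{U}^\ast$ rearranges, much as in the proof of Lemma~\ref{lem:upsilon_behaves}. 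Once this linear-algebraic transparency is established, the induction is essentially formal. I would present it by first stating the commutation lemma for $\pr{U}^\ast$ with $\mcb{I}_{o}$ and with the $F_\mfz$-derivatives, then running the one-line induction.
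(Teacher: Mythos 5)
Your proposal is correct and follows essentially the same route as the paper: an induction on the tree structure via the recursion \eqref{eq:upsilon_induction}, with base case $\bar\bUpsilon^F_{\mfz}[\bar{\mfl}]=\pr{U}\bar\bXi=\pr{U}^\ast\bar\bUpsilon^{\YMH}_{\mfz}[\bar{\mfl}]$, the observation that $\partial^k D_{o_1}\cdots D_{o_m}\bar\bUpsilon^F_{\mfz}[\bone]$ coincides with its YMH counterpart since all $o_j$ are of type $\mfz$, and the factor-wise definition of $\pr{U}^\ast$ letting you pull it out of the multilinear expression. The commutation with $\mcb{I}_{o}$ and the matching of symmetry factors that you flag as potential obstacles are indeed immediate, exactly as you anticipate.
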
  
\begin{proof}
We can  proceed by an induction in the number of instances of $\bar{\mfl}$ in the tree $\tau$, appealing to the inductive definition \eqref{eq:upsilon_induction}. 
The base case of the induction is given by $\tau= \mcb{I}_{\bar{\mfl}}(\bone) = \bar{\mfl}$ in which case\footnote{Note that if $p \not = 0 $ then the desired statement does not hold for $\tau = \mbX^{p}\mcb{I}_{\bar{\mfl}}[\bone]$ but $\mbX^{p}\mcb{I}_{\bar{\mfl}}[\bone] \not \in \mfT^{\YMH}$.}
\[
\bar\bUpsilon^{F}_{\mfz}[\bar{\mfl}]=  \pr{U} \bar{\bXi} = 
\pr{U}^{\ast} \bar{\bXi} = \pr{U}^{\ast} \bar\bUpsilon^{\YMH}_{\mfz}[\bar{\mfl}]\;,
\]
where $\bar{\bXi} = \id_{W_{\mfz}} \in V_{\bar{\mfl}} \otimes W_{\mfz}$. 
Now for the inductive step we write $\bar{\tau} \in \mfT^{\YMH} \setminus \{\bar{\mfl}\}$ as in \eqref{eq:induction_for_ymh} and we have, since $m \ge 1$ and and $o_{j}= (\mfz,p_{j})$, 
\[
\partial^{k} D_{o_1}\cdots D_{o_m}
\bar{\bUpsilon}^{F}_{\mfz}[\bone] = \partial^{k} D_{o_1}\cdots D_{o_m}
\bar{\bUpsilon}^{\YMH}_{\mfz}[\bone]\;.
\] 
Then by \eqref{eq:upsilon_induction} one has 
\begin{equs}
\bar{\bUpsilon}^{F}_{\mfz} [\bar{\tau}] 
&= 
\frac{1}{\mathring{S}(\bar{\tau})}
\mbX^{k}
\Big[ 
\partial^{k} D_{o_1}\cdots D_{o_m}
\bar{\bUpsilon}^{\YMH}_{\mfz}[\bone]
\Big]
\big(\bUpsilon_{o_1}[\tau_1],\ldots,\bUpsilon_{o_m}[\tau_m]\big)\\
{}&
=
\frac{1}{\mathring{S}(\bar{\tau})}
\mbX^{k}
\Big[ 
\partial_{k} 
D_{o_1}\cdots D_{o_m}
\bar{\bUpsilon}^{\YMH}_{\mfz}[\bone]
\Big]
\big(\pr{U}^{\ast}\bUpsilon^{\YMH}_{o_1}[\tau_1],\ldots, \pr{U}^{\ast}\bUpsilon^{\YMH}_{o_m}[\tau_m]\big)\\
{}&=
\frac{1}{\mathring{S}(\bar{\tau})}
\pr{U}^{\ast} 
\mbX^{k}\Big[ \partial_{k}
D_{o_1}\cdots D_{o_m}
\bar{\bUpsilon}^{\YMH}_{\mfz}[\bone]
\Big]
\big(\bUpsilon^{\YMH}_{o_1}[\tau_1],\ldots, \bUpsilon^{\YMH}_{o_m}[\tau_m]\big)\\
{}&=
\pr{U}^{\ast}
\bar\bUpsilon^{\YMH}_{\mfz}[\bar{\tau}]\;,
\end{equs}
where in the second equality we used the induction hypothesis, 
and in the third equality we used the definition of $\pr{U}^{\ast} $
given before this lemma.
\end{proof}
The above lemma shows that if we don't expand $U$ in the multiplicative noise $U\xi$, the corresponding trees are the same as those seen for the YMH system in the last section, and the renormalisation they produce will be  the same 
as that stated in 
Proposition~\ref{prop:mass_term}.
In the remainder of this subsection we will actually expand $U$
(thus generalising Lemma~\ref{lem:gauge_transformed_ymhtrees}),
and performing a single such expansion can be formulated using the grafting procedure we described earlier. 
For each tree $\tau$ that comes from grafting, i.e.\ 
$\tau \in \mfT[\hat{\tau};\bar{\tau}]$,
we aim to describe $\bar{\bUpsilon}^{F}_{\mfz}[\tau]$.

Before we proceed, a simple example may be helpful to 
motivate the upcoming lemmas. 
Consider 
$\tau \in \mathfrak{T}[\hat\tau, \bar\tau]$
with $\hat\tau=\mcb{I}_{\mfu} (\<IXi>)$
and $\bar\tau=\<IXi^3_notriangle>$, as in \eqref{eq:ymhtrees-minus3/2}.
Pretending that we only have 
a nonlinearity $[B_j,[B_j,B_i]]$ in the equation for $Y$
and ignoring the other terms, 
$\bar{\bUpsilon}^{F}_{\mfz}[\tau]$ is given by
\begin{equs}[e:example-Ups-graft]
{} [\pr{U} \mcb{I}_{\mfz}\bar\bXi  ,  [\pr{U} \mcb{I}_{\mfz}\bar\bXi ,
\mcb{X}\mcb{I}_{\mfz}\bar\bXi ]]
&+
[\pr{U} \mcb{I}_{\mfz}\bar\bXi  ,  [ \mcb{X} \mcb{I}_{\mfz}\bar\bXi ,
\pr{U}\mcb{I}_{\mfz}\bar\bXi ]]
\\
&+
[ \mcb{X}\mcb{I}_{\mfz}\bar\bXi  ,  [\pr{U} \mcb{I}_{\mfz}\bar\bXi ,
\pr{U}\mcb{I}_{\mfz}\bar\bXi ]]\;,
\end{equs}
where $\mcb{X} = \brho ([\pr{U}\mcb{I}_{\mfu}  \mcb{I}_{\mfz}\bar\bXi ,\pr{h}]) \pr{U} $  (recall \eqref{e:only-ren-U1}).
We can think of this as being obtained by the usual 
inductive construction of $\bar{\bUpsilon}^{F}$
except for a tweak that one starts as the base case 
with not only $\bar\bXi$ but also $\mcb{X}\bar\bXi$; this will be made precise in Lemma~\ref{lem:Ups-graft}.
In order to factor out $\pr{U}$ (to eventually show that the renormalisation
{\it will not} depend on $\pr{U}$), 
note that \eqref{e:example-Ups-graft}
can be thought of as being obtained by taking
$\bar{\bUpsilon}^{\YMH}_{\mfz}[\bar\tau]=
[ \mcb{I}_{\mfz}\bar\bXi  ,  [ \mcb{I}_{\mfz}\bar\bXi ,
 \mcb{I}_{\mfz}\bar\bXi ]]$
and inserting $ \brho ([\mcb{I}_{\mfu}  \mcb{I}_{\mfz}\bar\bXi ,\pr{h}]) $
into each appropriate location,
and finally applying $\pr{U}^{\ast}$ in the way explained above Lemma~\ref{lem:gauge_transformed_ymhtrees}
\dash this will be formulated as a linear map 
which brings $\bar{\bUpsilon}^{\YMH}_{\mfz}[\bar\tau]$ to 
\eqref{e:example-Ups-graft}; see Lemma~\ref{lem:grafting_operators} for the precise formulation.

Given $\mcb{X} \in \mcb{T}_{g} \otimes W_{\mfu}$ and $\bar{\tau} \in \mfT^{\YMH}$ we will define objects $\bar{\bUpsilon}^{\bar{\tau},\mcb{X}}_{\mfz}(\pr{\mathbf{A}})  \in \mcb{T} \big[\mfT_{g}[\bar{\tau}] \big]\otimes W_{\mfz}$. 
Our definition of $\bar{\bUpsilon}^{\bar{\tau},\mcb{X}}_{\mfz}$ is inductive in the number of $\bar{\mfl}$ edges in $\bar{\tau}$. 
For the base case, we set 
\begin{equ}[e:mcbX-Xi]
\bar{\bUpsilon}_{\mfz}^{\bar{\mfl},\mcb{X}}(\pr{\mathbf{A}}) \eqdef \mcb{X} \boldsymbol{\bar{\Xi}}\;.
\end{equ}
For the inductive step, for $\bar{\tau}$ as in \eqref{eq:induction_for_ymh}, we set
\begin{equs}\label{e:def-Ups-mcbX}
{}&\bar{\bUpsilon}_{\mfz}^{\bar{\tau},\mcb{X}}(\pr{\mathbf{A}})
\\
&= 
\frac{1}{\mathring{S}(\bar{\tau})}
\sum_{j=1}^{m} 
\mbX^{k}
\Big[ 
\partial^{k}
D_{o_1}\cdots D_{o_m}
\bar{\bUpsilon}^{F}_{\mfz}[\bone]
\Big]
\big(\bUpsilon_{o_1}[\bar{\tau}_1],
\ldots, \bUpsilon_{o_{j}}^{\bar{\tau}_{j},\mcb{X}}, \ldots,\bUpsilon_{o_m}[\bar{\tau}_m]\big)\;,
\end{equs}
where $\bUpsilon_{(\mfz,p)}^{\bar{\tau}_{j},\mcb{X}}
= \mcb{I}_{(\mfz,p)}(\bar{\bUpsilon}^{\bar{\tau}_{j},\mcb{X}}_{\mfz})$.
From this inductive definition it is clear that the mapping 
\[
\mcb{T}_{g} \otimes W_{\mfu} \ni \mcb{X} \mapsto  \bar{\bUpsilon}^{\bar{\tau},\mcb{X}} \in \mcb{T} \otimes W_{\mfz}
\]
 is linear, and takes $\mcb{T}[\hat{\tau}] \otimes W_{\mfu}$ into $\mcb{T}[\hat{\tau};\bar{\tau}] \otimes W_{\mfz}$ for every  $\hat{\tau} \in \mfT_{g}$.  

In the next lemma we specify 
$\mcb{X}$ to be the terms appearing in the expansion for $U$
in \eqref{e:U-expansion}.
Recall the shorthand notation $\pr{\p^p U}$
for the components $\pr{A_{(\mfu,p)}}$ of $\pr{\mathbf{A}}$.

\begin{lemma}\label{lem:Ups-graft}
For any $\bar{\tau} \in \mfT^{\YMH}$ and $\hat{\tau} \in \mfT_{g}$, we have the consistency relation
\[
\bar{\bUpsilon}_{\mfz}^{\bar{\tau},\ \mathcal{U}[\hat{\tau}](\pr{\mathbf{A}})}(\pr{\mathbf{A}})
=
\sum_{\tau \in \mfT[\hat{\tau};\bar{\tau}]}
\bar{\bUpsilon}^{F}_{\mfz}[\tau](\pr{\mathbf{A}})\;,
\]
where
\begin{equ}[e:CU-tau-hat]
\mathcal{U}[\hat{\tau}](\pr{\mathbf{A}}) 
= 
\begin{cases}
\frac{1}{p!}\pr{\p^p U} \mbX^{p} & 
\textnormal{ if }\hat{\tau} = \mbX^{p}\;,
\\
\bUpsilon_{\mfu}[\tilde{\tau}](\pr{\mathbf{A}}) & 
\textnormal{ if } \hat{\tau} = \mcb{I}_{\mfu}(\tilde{\tau})\;.
\end{cases}
\end{equ}
\end{lemma}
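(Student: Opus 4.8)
The plan is to prove the consistency relation by induction on the number of $\bar{\mfl}$-edges in $\bar\tau$, mirroring the inductive definitions of $\bar\bUpsilon^{\bar\tau,\mcb X}_\mfz$ in \eqref{e:mcbX-Xi}--\eqref{e:def-Ups-mcbX} and of $\bar\bUpsilon^F_\mfz$ via \eqref{eq:upsilon_induction}. The point of the statement is that a single expansion of one factor of $\pr U$ (inside the multiplicative noise $\pr U\pr{\bar\Xi}$) in $\bar\bUpsilon^F_\mfz[\tau]$ is exactly captured by replacing the base case $\pr U\,\pr{\bar\Xi}$ of the $\bar\bUpsilon$-recursion by $\mcb X\,\pr{\bar\Xi}$ with $\mcb X = \mathcal U[\hat\tau](\pr{\mathbf A})$, and summing over all trees $\tau$ in $\mfT[\hat\tau;\bar\tau]$ accounts for the choice of which instance of $\bar{\mfl}$ receives the grafting. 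The definition \eqref{e:CU-tau-hat} of $\mathcal U[\hat\tau]$ is precisely the $\hat\tau$-indexed term in the coherent expansion \eqref{e:U-expansion}: when $\hat\tau=\mbX^p$, this is the Taylor monomial $\frac1{p!}\pr{\p^p U}\mbX^p$, and when $\hat\tau=\mcb I_\mfu(\tilde\tau)$, it is $\bUpsilon_\mfu[\tilde\tau](\pr{\mathbf A})$ by the standard relation $\bUpsilon_{(\mfu,p)}[\tilde\tau]=(\mcb I_{(\mfu,p)}\otimes\id)\bar\bUpsilon_\mfu[\tilde\tau]$.

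\emph{Base case.} Take $\bar\tau=\bar{\mfl}=\mcb I_{(\bar{\mfl},0)}(\bone)$. Then $\mfT[\hat\tau;\bar{\mfl}]=\{\hat\tau\,\bar{\mfl}\}$ by definition, and the coherent expansion of $\pr U$ substituted into the (single) noise edge produces exactly $\mathcal U[\hat\tau](\pr{\mathbf A})\,\pr{\bar\Xi}$ as the contribution of $\hat\tau\bar{\mfl}$ to $\bar\bUpsilon^F_\mfz$; on the other hand \eqref{e:mcbX-Xi} gives $\bar\bUpsilon^{\bar{\mfl},\mcb X}_\mfz=\mcb X\,\pr{\bar\Xi}$. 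So both sides equal $\mathcal U[\hat\tau](\pr{\mathbf A})\,\pr{\bar\Xi}$, using $\bUpsilon^F_\mfz[\bar{\mfl}]=\pr U\,\pr{\bar\Xi}$ together with the algebraic Picard-iteration description \eqref{e:explain-substitute} of how the $\pr U$-expansion feeds into $F_\mfz$.

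\emph{Inductive step.} Write $\bar\tau=\mbX^k\prod_{j=1}^m\mcb I_{o_j}(\bar\tau_j)$ as in \eqref{eq:induction_for_ymh}, with all $o_j=(\mfz,p_j)$ and $\bar\tau_j\in\mfT^{\YMH}$. Every tree in $\mfT[\hat\tau;\bar\tau]$ arises from \eqref{eq:inductive_graft_set} by choosing one branch $j$ and a tree $\mathring\tau\in\mfT[\hat\tau;\bar\tau_j]$; since the sets $\mfT[\hat\tau;\bar\tau_{j}]$ for distinct $j$ are disjoint (Remark~\ref{rem:grafting-adds}), summing $\bar\bUpsilon^F_\mfz[\tau]$ over $\tau\in\mfT[\hat\tau;\bar\tau]$ decomposes as a sum over $j\in\{1,\dots,m\}$ of $\sum_{\mathring\tau\in\mfT[\hat\tau;\bar\tau_j]}\bar\bUpsilon^F_\mfz[\tau]$. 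Applying the $\bar\bUpsilon$-recursion \eqref{eq:upsilon_induction} to each such $\tau$ (noting that the outer differential operator $\partial^k D_{o_1}\cdots D_{o_m}\bar\bUpsilon^F_\mfz[\bone]$ and the symmetry factor $\mathring S(\tau)=\mathring S(\bar\tau)$ are unchanged by grafting inside a single subtree, since grafting $\hat\tau\in\mfT_g$ onto $\bar\tau_j$ does not affect $\mathring S$ — its branch $\mcb I_{o_j}(\bar\tau_j)$ retains the same multiplicity $\beta_j$), the $j$-th group of terms becomes
\[
\frac1{\mathring S(\bar\tau)}\mbX^k\big[\partial^kD_{o_1}\cdots D_{o_m}\bar\bUpsilon^F_\mfz[\bone]\big]\big(\bUpsilon_{o_1}[\bar\tau_1],\dots,\textstyle\sum_{\mathring\tau}\mcb I_{o_j}(\bar\bUpsilon^F_\mfz[\mathring\tau]),\dots,\bUpsilon_{o_m}[\bar\tau_m]\big).
\]
By the induction hypothesis, $\sum_{\mathring\tau\in\mfT[\hat\tau;\bar\tau_j]}\bar\bUpsilon^F_\mfz[\mathring\tau]=\bar\bUpsilon^{\bar\tau_j,\,\mathcal U[\hat\tau](\pr{\mathbf A})}_\mfz(\pr{\mathbf A})$, so this $j$-th group equals the $j$-th summand in \eqref{e:def-Ups-mcbX} with $\mcb X=\mathcal U[\hat\tau](\pr{\mathbf A})$. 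Summing over $j$ gives exactly $\bar\bUpsilon^{\bar\tau,\,\mathcal U[\hat\tau](\pr{\mathbf A})}_\mfz(\pr{\mathbf A})$, completing the induction.

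The main technical point to get right — the step I expect to need the most care — is the bookkeeping that grafting $\hat\tau$ onto an $\bar{\mfl}$-leaf inside the $j$-th subtree $\bar\tau_j$ leaves the outer structure of \eqref{eq:upsilon_induction} (the operators $D_{o_i}$, the monomial $\mbX^k$, and crucially the symmetry factor $\mathring S$) invariant, so that the two recursions genuinely line up term by term. This is where one must use that $\mfT[\hat\tau;\bar\tau]$ is built by \emph{exactly one} grafting (so no combinatorial overcounting arises), that grafting does not change the edge-type $o_j=(\mfz,p_j)$ of the affected branch, and that $\bar\bUpsilon^F_\mfz[\bone]$ as a function of the jet $\pr{\mathbf A}$ is the same whether computed along $F$ or along $\bar\bUpsilon^{\bar\tau,\mcb X}$ — both being governed by the nonlinearity $F_\mfz$ through \eqref{e:explain-substitute}. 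Once these are checked, the identity follows mechanically; no estimates are needed since this is a purely algebraic statement in $\mcb T\otimes W_\mfz$.
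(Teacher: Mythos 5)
Your overall strategy is the same as the paper's: induction on the number of $\bar{\mfl}$-edges, the same base case computation, and the same decomposition of the inductive step via \eqref{eq:upsilon_induction} and \eqref{eq:inductive_graft_set}. However, there is a genuine gap at precisely the point you flag as needing the most care: the handling of the symmetry factors when a branch of $\bar\tau$ occurs with multiplicity.

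Concretely, write $\bar\beta_j$ for the multiplicity of $\mcb{I}_{o_j}(\bar\tau_j)$ in $\bar\tau$ and suppose $\bar\beta_j\ge 2$ (e.g.\ $\bar\tau=\<IXi^2>$, where $\mcb{I}_{\mfz}(\bar{\mfl})$ occurs twice). Two of your assertions then fail. First, the sets $\mfT[\hat\tau;\bar\tau_{j_1}]$ and $\mfT[\hat\tau;\bar\tau_{j_2}]$ are \emph{not} disjoint for $j_1\neq j_2$ with $\bar\tau_{j_1}=\bar\tau_{j_2}$ — they are literally the same set, and grafting into either copy produces the same tree $\tau$, so your double sum over $(j,\mathring\tau)$ overcounts each $\tau\in\mfT[\hat\tau;\bar\tau]$ exactly $\bar\beta_j$ times. (Remark~\ref{rem:grafting-adds} only gives disjointness for distinct \emph{pairs} $(\hat\tau,\bar\tau)$, not for repeated identical subtrees.) Second, $\mathring{S}(\tau)\neq\mathring{S}(\bar\tau)$: after grafting, one copy of $\mcb{I}_{o_j}(\bar\tau_j)$ is replaced by $\mcb{I}_{o_j}(\mathring\tau)$ with $\mathring\tau\neq\bar\tau_j$, so by \eqref{e:sym_at_root} the multiplicity drops from $\bar\beta_j$ to $\bar\beta_j-1$ and one finds $\mathring{S}(\bar\tau)/\mathring{S}(\tau)=\bar\beta_j$, not $1$. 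These two errors cancel exactly — the overcounting factor $\bar\beta_j$ is compensated by the ratio $\mathring{S}(\bar\tau)/\mathring{S}(\tau)=\bar\beta_j$ — which is why your final displayed formula is nevertheless correct, but your proof as written does not establish it: it relies on two false intermediate claims whose compensation is never observed. The paper's proof makes this cancellation explicit by inserting the factor $\frac{1}{\bar\beta_j}\cdot\frac{\mathring{S}(\bar\tau)}{\mathring{S}(\tau)}=1$ into the sum. To repair your argument, replace the claim ``$\mathring{S}(\tau)=\mathring{S}(\bar\tau)$'' and the disjointness claim by this explicit bookkeeping.
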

\begin{proof}
The proof follows by induction in the number of $\bar{\mfl}$ edges in $\bar{\tau}$. 
The base case is given by $\bar{\tau} = \bar{\mfl}$, then if $\tau \in \mfT_{g}[\bar{\mfl}]$ one has $\tau =  \hat{\tau} \bar{\mfl}$ for some $\hat{\tau} \in \mfT_{g}$ and the result follows by a straightforward computation:
indeed recalling the definition of $\mfT_{g}$ in \eqref{eq:def_of_Tg} and by \eqref{eq:upsilon_induction}  one has
\begin{equs}
\bar{\bUpsilon}^{F}_{\mfz}[ \mbX^p \mcb{I}_{\bar{\mfl}}(\bone)]
&= \frac{1}{p!}\mbX^p \big[\p^p D_{\bar{\mfl}} \bar{\bUpsilon}^{F}_{\mfz}[\bone]\big]
	(\bUpsilon^F_{\bar{\mfl}} [\bone])
= \frac{1}{p!}\pr{\p^p U} \mbX^p \bar{\bXi}\;,
\\
\bar{\bUpsilon}^{F}_{\mfz}[ \mcb{I}_{\mfu} (\tilde\tau) \mcb{I}_{\bar{\mfl}}[\bone]]
&=  \big[D_{\mfu} D_{\bar{\mfl}} 
\bar{\bUpsilon}^{F}_{\mfz}[\bone]\big]
(\bUpsilon^F_{\mfu}[\tilde\tau] ,\bUpsilon^F_{\bar{\mfl}} [\bone])
= \bUpsilon^F_{\mfu}[\tilde\tau]  \bar{\bXi}.
\end{equs}
which are consistent with \eqref{e:mcbX-Xi}. 
For the inductive step, we can assume that $\bar{\tau}$ is of the form \eqref{eq:induction_for_ymh}, and then the claim follows by combining 
 the inductive definition \eqref{eq:upsilon_induction}, \eqref{eq:inductive_graft_set} and \eqref{e:def-Ups-mcbX} along with the induction hypothesis.
 Indeed, by \eqref{eq:upsilon_induction} and \eqref{eq:inductive_graft_set}
\begin{equs}
{}& \sum_{\tau \in \mfT[\hat{\tau};\bar{\tau}]}
\bar{\bUpsilon}^{F}_{\mfz}[\tau](\pr{\mathbf{A}})
= 
\frac{1}{\mathring{S}(\bar{\tau})} \sum_{j=1}^m  \frac{1}{\bar\beta_j}
\sum_{\mathring{\tau} \in \mfT[\hat{\tau};\bar{\tau}_{j}]}
\frac{\mathring{S}(\bar{\tau})}{\mathring{S}(\tau)}
\\
& \qquad \times
\mbX^{k}
\Big[ 
\partial^{k}
 D_{o_1}\cdots D_{o_m}
\bar{\Upsilon}_{\mfz}[\bone]
\Big]
\big(\bUpsilon_{o_1}[\bar\tau_1],\ldots, 
\bUpsilon_{o_j} [\mathring\tau] ,
  \ldots,\bUpsilon_{o_m}[\bar\tau_m]\big)\;,
\end{equs}
where $\bar\beta_j$ denotes the multiplicity of $\mcb{I}_{o_j}[\bar\tau_j]$
in $\bar\tau$ \dash we need to divide by this number because our sum above is overcounting the elements of \eqref{eq:inductive_graft_set} when any $\mcb{I}_{o_j}[\bar\tau_j]$ appears with multiplicity. 
Here on the right-hand side, we are writing $\tau = \mbX^{k} \mcb{I}_{o_{1}}(\bar{\tau}_{1})\cdots \mcb{I}_{o_{j}}(\mathring{\tau}) \cdots \mcb{I}_{o_{m}}(\bar{\tau}_{m})$
for every fixed $j$ and $\mathring{\tau}$. 
We  then observe that 
$\frac{\mathring{S}(\bar{\tau})}{\mathring{S}(\tau)} = \bar{\beta}_{j}$ 
since $\mathring{S}$ introduced in  \eqref{eq:upsilon_induction}+\eqref{e:sym_at_root} is precisely
a product of factorials of multiplicities.
Now using the inductive hypothesis which states that
\[
\sum_{\mathring{\tau} \in \mfT[\hat{\tau};\bar{\tau}_{j}]} \bUpsilon_{o_j} [\mathring\tau] 
=
\bar{\bUpsilon}_{\mfz}^{\bar{\tau}_{j},\ \mathcal{U}[\hat{\tau}](\pr{\mathbf{A}})}(\pr{\mathbf{A}})\;,
\] 
we obtain the claimed identity by \eqref{e:def-Ups-mcbX}.
\end{proof}
%


We now introduce the operator on our regularity structure that are related to the grafting procedure on trees we described earlier. 
\begin{lemma}\label{lem:grafting_operators}
There exists $\mathcal{L}_{g} \in L \big( \mcb{T}_{g} \otimes W_{\mfu}, L( \mcb{T}^{\YMH}, \mcb{T}) \big)$ with the following properties
\begin{enumerate}
\item  For any $\hat{\tau} \in \mfT_{g}$, $\bar{\tau} \in \mfT^{\YMH}$, and $\mcb{X} \in \mcb{T}[\hat{\tau}] \otimes W_{\mfu}$, $\mathcal{L}_{g} \mcb{X}$ maps $\mcb{T}[\bar{\tau}]$ into $\mcb{T}[\hat{\tau}; \bar{\tau}] $. 
\item 
For any $\bar{\tau} \in \mfT^{\YMH}$, 
\begin{equ}[e:LgX-YMH]
\bUpsilon_{\mfz}^{\bar{\tau}, (\pr{U}^{\ast} \otimes \bullet \pr{U}) \mcb{X}}(\pr{\mathbf{A}})
=
\pr{U}^{\ast} \big( \mathcal{L}_{g} \mcb{X} \big) \bUpsilon_{\mfz}^{\YMH}[\bar{\tau}](\pr{\mathbf{A}})\;,
\end{equ}
where $\pr{U}$ is given by the relevant component of $\pr{\mathbf{A}}$.
Here we are applying the convention of Remark~\ref{rem:suppress_ident} so $\big( \mathcal{L}_{g} \mcb{X} \big)$ and $\pr{U}^{\ast}$ act only on the left factor of  $\mcb{T} \otimes W_{\mfz}$; and the notation  $\bullet \pr{U}$ denotes right multiplication \slash composition by $\pr{U}$, that is the mapping $W_{\mfu} \ni M \mapsto  M \pr{U} \in W_{\mfu}$. 
\end{enumerate}
\end{lemma}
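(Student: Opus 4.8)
The plan is to construct $\mathcal{L}_g$ explicitly by mimicking the inductive definition of $\bar{\bUpsilon}^{\bar\tau,\mcb{X}}_{\mfz}$ given in \eqref{e:mcbX-Xi}--\eqref{e:def-Ups-mcbX}, but with the arguments $\bUpsilon_{o_i}[\bar\tau_i]$ replaced by abstract basis vectors of $\mcb{T}[\bar\tau_i]$ so that the result is a \emph{linear operator} on $\mcb{T}^{\YMH}$ rather than a specific element of $\mcb{T}\otimes W_{\mfz}$. Concretely, for fixed $\mcb{X}\in\mcb{T}[\hat\tau]\otimes W_{\mfu}$, I would define $\mathcal{L}_g\mcb{X}$ on each subspace $\mcb{T}[\bar\tau]$ ($\bar\tau\in\mfT^{\YMH}$) by induction on the number of $\bar{\mfl}$-edges in $\bar\tau$: in the base case $\bar\tau=\bar{\mfl}$, using the canonical identification $\mcb{T}[\bar{\mfl}]\otimes W_{\mfz}\simeq V_{\bar{\mfl}}\otimes W_{\mfz}\simeq L(W_{\mfz},W_{\mfz})$, we have $\bUpsilon^{\YMH}_{\mfz}[\bar{\mfl}]=\boldsymbol{\bar\Xi}=\id_{W_{\mfz}}$, and we set $(\mathcal{L}_g\mcb{X})(\id_{W_{\mfz}})\eqdef \mcb{X}\boldsymbol{\bar\Xi}$, consistently with \eqref{e:mcbX-Xi}; for the inductive step we write $\bar\tau$ in the form \eqref{eq:induction_for_ymh} and define $\mathcal{L}_g\mcb{X}$ on $\mcb{T}[\bar\tau]$ by the same formula \eqref{e:def-Ups-mcbX}, but now treating the $\bUpsilon_{o_i}[\bar\tau_i]$ as formal placeholders and applying $\mathcal{L}_g\mcb{X}$ recursively to the $j$-th slot (after stripping the integration map $\mcb{I}_{(\mfz,p_j)}$, which is inserted back afterwards). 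The multilinearity of the symmetrised derivatives $\partial^k D_{o_1}\cdots D_{o_m}\bar{\bUpsilon}^F_{\mfz}[\bone]$ guarantees that this is well-defined and linear in each placeholder, hence gives a well-defined linear map on $\mcb{T}[\bar\tau]$ (and one checks it is independent of the representative chosen in \eqref{eq:induction_for_ymh} exactly as the original $\bar{\bUpsilon}$ construction is). Finally $\mcb{X}\mapsto\mathcal{L}_g\mcb{X}$ is linear in $\mcb{X}$ by inspection of the base case and the recursion.

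Property (1) is then essentially built into the construction: in the base case $\mcb{X}\boldsymbol{\bar\Xi}\in\mcb{T}[\hat\tau\bar{\mfl}]\otimes W_{\mfz}=\mcb{T}[\hat\tau;\bar{\mfl}]\otimes W_{\mfz}$ by definition of $\mfT[\hat\tau;\bar{\mfl}]=\{\hat\tau\bar{\mfl}\}$, and in the inductive step, grafting $\hat\tau$ into the $j$-th factor $\mcb{T}[\bar\tau_j]$ produces, via \eqref{eq:inductive_graft_set}, exactly the trees in $\mfT[\hat\tau;\bar\tau]$; one needs only the bookkeeping observation that applying $\mcb{I}_{(\mfz,p_j)}$ to an element of $\mcb{T}[\hat\tau;\bar\tau_j]\otimes W$ lands in $\mcb{T}[\mcb{I}_{(\mfz,p_j)}(\mathring\tau)]\otimes W$ for the corresponding $\mathring\tau$. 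For property (2), the identity \eqref{e:LgX-YMH} is proved by a parallel induction on the number of $\bar{\mfl}$-edges in $\bar\tau$, comparing the recursion \eqref{e:def-Ups-mcbX} for $\bar{\bUpsilon}^{\bar\tau,(\pr{U}^\ast\otimes\bullet\pr{U})\mcb{X}}_{\mfz}$ with the recursion defining $\mathcal{L}_g\mcb{X}$ applied to $\bUpsilon^{\YMH}_{\mfz}[\bar\tau]$. The base case amounts to checking that $\bar{\bUpsilon}^{\bar{\mfl},(\pr{U}^\ast\otimes\bullet\pr{U})\mcb{X}}_{\mfz}=((\pr{U}^\ast\otimes\bullet\pr{U})\mcb{X})\boldsymbol{\bar\Xi}$ equals $\pr{U}^\ast(\mathcal{L}_g\mcb{X})(\id_{W_{\mfz}})$; unwinding the identifications, the left-hand $\pr{U}$-action on the $V_{\bar{\mfl}}$-factor is precisely $\pr{U}^\ast$ as defined before Lemma~\ref{lem:gauge_transformed_ymhtrees}, and the right multiplication $\bullet\pr{U}$ on $W_{\mfu}$ is absorbed into $\mcb{X}$, so both sides agree. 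The inductive step uses that $\pr{U}^\ast$ commutes with $\mcb{I}_{(\mfz,p)}$ (by \eqref{eq:integration_intertwining}-type intertwining, since $\pr{U}^\ast$ only touches the noise-factors $V_{\mfl},V_{\bar{\mfl}}$) and is multiplicative over the tree-product structure, so it can be pulled out through each $\partial^k D_{o_1}\cdots D_{o_m}\bar{\bUpsilon}^{\YMH}_{\mfz}[\bone](\cdots)$ exactly as in the proof of Lemma~\ref{lem:gauge_transformed_ymhtrees}, at which point the induction hypothesis closes the argument. Here one also uses Lemma~\ref{lem:Ups-graft} to identify $\bar{\bUpsilon}^{\bar\tau,\mathcal{U}[\hat\tau](\pr{\mathbf A})}_{\mfz}$ with the actual sum of $\bar{\bUpsilon}^F_{\mfz}[\tau]$ over $\tau\in\mfT[\hat\tau;\bar\tau]$, ensuring the operator we constructed really does compute the grafted contributions.

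The main obstacle I anticipate is purely notational/combinatorial: making precise the claim that $\mathcal{L}_g\mcb{X}$ is well-defined as a map on each symmetric-set space $\mcb{T}[\bar\tau]$ independently of the decomposition \eqref{eq:induction_for_ymh}, and that the symmetry factors $\mathring{S}(\bar\tau)$ are handled correctly when an edge $\mcb{I}_{o_j}(\bar\tau_j)$ occurs with multiplicity (this is the same $\bar\beta_j$ issue already navigated in the proof of Lemma~\ref{lem:Ups-graft}). The cleanest way to sidestep this is to define $\mathcal{L}_g\mcb{X}$ not on trees directly but on the subspaces $\mcb{T}[\bar\tau]$ of the functor-built regularity structure, invoking \cite[Remark~5.19]{CCHS2d} and the multilinearity of the substitution maps so that the construction is manifestly independent of choices — exactly as $\bUpsilon^{\YMH}$ and $\bar{\bUpsilon}$ themselves are defined. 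Once that is in place, everything else is a routine double induction closely mirroring Lemmas~\ref{lem:upsilon_behaves}, \ref{lem:gauge_transformed_ymhtrees}, and \ref{lem:Ups-graft}, and the proof of property (2) is really just the identity \eqref{e:LgX-YMH} unwound through these inductions.
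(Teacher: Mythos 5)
Your proposal is correct and follows essentially the same route as the paper: define $\mathcal{L}_g\mcb{X}$ by induction on the number of $\bar{\mfl}$-edges (base case on $\mcb{T}[\bar{\mfl}]$ via the adjoint action of $W_{\mfu}$ on $V_{\bar{\mfl}}\simeq W_{\mfz}^{\ast}$, inductive step by grafting into one factor of $\mbX^{k}\prod_j\mcb{I}_{o_j}(v_j)$ and summing over $j$ with the $\mathring{S}$ normalisation), then prove \eqref{e:LgX-YMH} by a parallel induction using multiplicativity of $\pr{U}^{\ast}$ and Lemma~\ref{lem:gauge_transformed_ymhtrees}. The only point to tighten is the base case: you should define $\mathcal{L}_g(a\otimes b)$ on an arbitrary $v\in\mcb{T}[\bar{\mfl}]$ as $a\cdot(b^{\ast}v)$ rather than only on the element $\boldsymbol{\bar\Xi}\in\mcb{T}[\bar{\mfl}]\otimes W_{\mfz}$ (though since $\boldsymbol{\bar\Xi}=\id_{W_{\mfz}}$ carries a full basis of $V_{\bar{\mfl}}$ in its first factor, your prescription does determine the operator).
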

\begin{proof} 
We first define, for fixed $\mcb{X} \in \mcb{T}_{g} \otimes W_{\mfu}$, $\mathcal{L}_g\mcb{X} \in  L(\mcb{T}^{\YMH} , \mcb{T})$  by defining  $\big (\mathcal{L}_g\mcb{X} \big)\restriction_{\mcb{T}[\bar{\tau}]}$ for $\bar{\tau} \in \mfT^{\YMH}$  inductively in the number of $\bar{\mfl}$ edges in $\bar{\tau}$. 
For the base case $\bar{\tau} = \bar{\mfl}$, we have, for any $v \in \mcb{T}[\bar{\mfl}] $, $\hat{\tau} \in \mfT_{g}$, and $\mcb{X} = a \otimes b \in \mcb{T}[\hat{\tau}] \otimes W_{\mfu}$, 
\begin{equ}\label{eq:base-case-linmap}
 \big(\mathcal{L}_{g} a \otimes b \big) v 
 = a \cdot (b^{\ast}  v) \in \mcb{T}[ \hat{\tau} ; \bar{\tau}] \;.
\end{equ}
Here, to interpret $b^{\ast} v$,  we note that $W_{\mfu}$ has an adjoint action on $V_{\bar{\mfl}} \simeq W_{\mfz}^{\ast}$ and the product outside of the parentheses above is just the product in the regularity structure. 
This extends to arbitrary $\mcb{X} \in \mcb{T}[\hat{\tau}] \otimes W_{\mfu}$ by linearity. 

For the inductive step, given $\bar{\tau}$ as in \eqref{eq:induction_for_ymh} and $v_{j} \in \mcb{T}[\bar{\tau}_{j}] $ we define 
\begin{equ}[e:inductive_def_of_Ls]
\big(\mathcal{L}_{g} \mcb{X} \big)  
\mbX^{k} \prod_{j=1}^{m} \mcb{I}_{o_{j}}(v_j)
=
\frac{\mbX^{k}}{\mathring{S}(\bar{\tau})}
\sum_{j=1}^m \big( \mcb{I}_{o_{1}}(v_1)\big) 
\cdots 
\mcb{I}_{o_{j}}
\big(  (\mathcal{L}_{g} \mcb{X}) v_{j} \big) 
\cdots 
\big(\mcb{I}_{o_{m}}(v_m)\big)\;.
\end{equ}
By the definition of the space $\mcb{T}[\bar{\tau}]$ and the symmetry of the above expressions this suffices to define $\big(\mathcal{L}_{g} \mcb{X} \big)\restriction_{\mcb{T}[\bar{\tau}]}$ and the first statement of the lemma is easy to verify.

The second statement is a straightforward to  verify by induction.
For the base case $\bar\tau =\bar{\mfl}$, we note that for $\mcb{X} = a \otimes b \in \mcb{T}_{g} \otimes W_{\mfu}$, 
\begin{equs}
\pr{U}^{\ast} \big(\mathcal{L}_{g} \mcb{X} \big) \bUpsilon_{\mfz}^{\YMH}[\bar{\mfl}](\pr{\mathbf{A}})
&=
\pr{U}^{\ast} \big(\mathcal{L}_{g} \mcb{X} \big) \boldsymbol{\bar{\Xi}}
= \pr{U}^{\ast} ( a \cdot (b^{\ast} e_{i}^{\ast})) \otimes e_{i} 
\\
& =(\pr{U}^{\ast} a \cdot \pr{U}^{\ast}b^{\ast} e_{i}^{\ast} )\otimes e_{i}
 = (\pr{U}^{\ast} a \otimes  b\pr{U})( e_{i}^{\ast} \otimes e_{i})
 \\
& = \big[ (\pr{U}^{\ast} \otimes \bullet \pr{U})\mcb{X} \big] \boldsymbol{\bar{\Xi}} = \bar{\bUpsilon}^{\bar{\mfl},(\pr{U}^{\ast} \otimes \bullet \pr{U}) \mcb{X}}_{\mfz}(\pr{\mathbf{A}})
\end{equs}
by \eqref{e:mcbX-Xi}, where the $\{e_{i}\}_{i=1}^{\dim(E)}$ are a basis for $E$ and  $i$ is summed implicitly. 
For the inductive step, by definition of $\bar{\bUpsilon}^{\YMH}_{\mfz}$,
the right-hand side of \eqref{e:LgX-YMH} equals 
 \begin{equs}
{}& \pr{U}^{\ast}  \sum_{j=1}^m
\frac{\mbX^k }{\mathring{S}(\bar\tau)} 
\Big[ 
\partial^k D_{o_1}\cdots D_{o_m}
\bar{\Upsilon}_{\mft}[\bone]
\Big]
\big(\bUpsilon^{\YMH}_{o_1}[\bar\tau_1],\ldots,
 \big( \mathcal{L}_{g} \mcb{X} \big) \bUpsilon^{\YMH}_{o_j}[\bar\tau_j] ,\ldots,
\bUpsilon^{\YMH}_{o_m}[\bar\tau_m]\big)
\\
&=
 \sum_{j=1}^m
\frac{\mbX^k }{\mathring{S}(\bar\tau)} 
\Big[ 
\partial^k D_{o_1}\cdots D_{o_m}
\bar{\Upsilon}_{\mft}[\bone]
\Big]
\big(\bUpsilon^{F}_{o_1}[\bar\tau_1],\ldots,
\bUpsilon_{o_j}^{\bar{\tau}_j, (\pr{U}^{\ast} \otimes \bullet \pr{U}) \mcb{X}},\ldots,
\bUpsilon^{F}_{o_m}[\bar\tau_m]\big)\;,
\end{equs}
where in the last step we applied the inductive hypothesis
and Lemma~\ref{lem:gauge_transformed_ymhtrees}. 
This is precisely  the left-hand side of \eqref{e:LgX-YMH} 
by definition \eqref{e:def-Ups-mcbX}.
\end{proof}

\subsection{Computation of counterterms}
\label{sec:Computation of counterterms}

\begin{lemma}\label{lem:const_upsilon}
Suppose that $\bar{\tau}$ is of the form $\<IXiI'Xi_notriangle>$ or $\<Xi>$ or satisfies \eqref{eq:ymhtrees-minus1} or \eqref{eq:ymhtrees-minus3/2}. 
Then, $\bUpsilon_{\mfz}^{\YMH}[\bar{\tau}](\pr{\mathbf{A}})$ has no dependence on $\pr{\mathbf{A}}$. 
\end{lemma}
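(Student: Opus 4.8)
\textbf{Proof plan for Lemma~\ref{lem:const_upsilon}.}
The claim is that for the trees $\bar\tau$ in question, the value $\bUpsilon^{\YMH}_{\mfz}[\bar\tau](\pr{\mathbf{A}})$ is a constant (an element of $\mcb{T}[\bar\tau]\otimes W_{\mfz}$ independent of $\pr{\mathbf{A}}$). First I would recall that, as remarked in the discussion around $\mfT^{\YMH}$, any $\bar\tau\in\mfT^{\YMH}$ has all its leaves given by noises $\bar{\mfl}$ (no products of polynomials with noises, and no $\mcb{I}_{(\mfz,p)}(\mbX^k)$ subtrees), since otherwise subcriticality would be violated or $\bUpsilon^{\YMH}$ would vanish. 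The key structural input is then the power-counting Lemma~\ref{lem:counting}: for any monomial term of $\bar\bUpsilon^{\YMH}_{\mfz}[\bar\tau]$ with associated data $(n_\lambda,k_\p,k_\xi,p_X,p_\p)$, one has identities \eqref{e:iden1}--\eqref{e:iden3}. The quantities $p_X$ and $p_\p$ count exactly the total powers of $\pr{X}$ and of derivatives $\pr{\partial X}$ appearing in that monomial, so $\bUpsilon^{\YMH}_{\mfz}[\bar\tau](\pr{\mathbf{A}})$ is independent of $\pr{\mathbf{A}}$ precisely when every surviving monomial has $p_X=p_\p=0$; note $p_X=0\Rightarrow p_\p=0$ automatically, so it suffices to rule out $p_X\ge 1$.

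The plan is then to go through the four cases and in each use the degree/parity constraints to force $p_X=0$ via Lemma~\ref{lem:counting}. For $\bar\tau=\<Xi>=\Xi$: this is a single noise, $\bar\bUpsilon^{\YMH}_{\mfz}[\Xi]=\bone\otimes\pr{\bar\Xi}$ with $\pr{\bar\Xi}=\id_E$ fixed, so the claim is immediate ($p_X=0$). For $\bar\tau=\<IXiI'Xi_notriangle>=\mcb{I}_{(\mfz,e_j)}(\Xi)\,\mcb{I}_{\mfz}(\Xi)$: since both subtrees are planted with leaf $\Xi$ and the only substitution producing this tree from the nonlinearity is the $\pr{X\partial X}$-term with \emph{no} further substitution into the $\pr{X}$ and $\pr{\partial X}$ slots (any substitution would increase the number of noises and the degree), one directly reads off $p_X=0$; alternatively, $n_\lambda=1$ and $k_\xi=2$ force, via \eqref{e:iden1}, $p_X=n_\lambda+1-k_\xi=0$. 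For the two remaining cases, $\bar\tau$ satisfying \eqref{eq:ymhtrees-minus1} or \eqref{eq:ymhtrees-minus3/2}, I would invoke the computation already carried out in Remark~\ref{rem:bar-trees-cases23}: for \eqref{eq:ymhtrees-minus1} one has $\deg(\bar\tau)\le-1$, $k_\xi$ even (as $\bar\tau\in\mfT^{\mathrm{ev,noi}}$), hence $n_\lambda-p_X$ odd by \eqref{e:iden1}, and the only solutions of the system \eqref{counting-identities} with $k_\xi\in\{2,4\}$ have $p_X=1$ but then $p_\p=0$ and in fact $(p_X,p_\p)=(1,0)$ forces — via \eqref{e:iden3} and $\deg(\bar\tau)\le -1$ together with the parity of $k_\p$ — that the $\pr{X}$ slot is occupied by the identity $\pr{\bar\Xi}$ coming from a leaf rather than by a genuine $\pr{X}$-dependence. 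Here I must be a little careful: the thing I actually want is not $p_X=0$ in the naive bookkeeping but that the surviving monomials depend only on the \emph{fixed} data $\pr{\bar\Xi}=\id$ and not on the free variables $\pr{X},\pr{\partial X}$; since for trees in $\mfT^{\YMH}$ every leaf is a noise $\Xi$ (not a bare $\pr{X}$-slot), an occurrence of $\pr{X}$ in $\bar\bUpsilon^{\YMH}$ can only come from a \emph{non}-substituted factor of the nonlinearity, and one checks that for the degree-constrained trees of \eqref{eq:ymhtrees-minus1}--\eqref{eq:ymhtrees-minus3/2} every factor of the nonlinearity \emph{is} substituted (this is forced by the requirement that all leaves are noises), so no free $\pr{X}$ survives.

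The cleanest way to phrase the last point uniformly, which is how I would actually write it, is: for $\bar\tau\in\mfT^{\YMH}$ all of whose leaves are $\bar{\mfl}$-edges, the recursive formula \eqref{eq:upsilon_induction} for $\bar\bUpsilon^{\YMH}_{\mfz}[\bar\tau]$ evaluated at $\bar\tau=\mbX^k\prod_i\mcb{I}_{o_i}(\bar\tau_i)$ has $k=0$ and each $\bar\tau_i$ again in $\mfT^{\YMH}$ with all leaves noises; descending the recursion, the base case is $\bar\bUpsilon^{\YMH}_{\mfz}[\bar{\mfl}]=\bone\otimes\id_E$, which is constant, and each inductive step applies $\partial^0 D_{o_1}\cdots D_{o_m}\bar\Upsilon_{\mfz}[\bone]$ — a \emph{constant-coefficient} multilinear map since $\bar\Upsilon_{\mfz}[\bone]$ is a fixed polynomial and we differentiate it $m\ge \textnormal{(number of subtrees)}$ times with no leftover slots, because the number of factors of $\pr{X}$ in the relevant monomial of $F_{\mfz}$ equals the number of children of the root — to the (inductively constant) values $\bUpsilon^{\YMH}_{o_i}[\bar\tau_i]$. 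Hence the whole expression is constant. The main obstacle, and the only place needing genuine care, is verifying that "number of children of the root $=$ number of $\pr{X}$-factors consumed" holds for every tree appearing in the four listed families — equivalently that none of these trees has a root-incident edge of the form $\mcb{I}_{(\mfz,e_j)}$ feeding a slot that was \emph{not} a derivative slot, and that $\mathring{C}_{\mfz}\pr{X}$ (the linear-in-$\pr{X}$ term of $F_{\mfz}$) never contributes; the latter is handled exactly as in Remark~\ref{rem:bare-m-renorm}, since the corresponding trees carry an odd number of derivatives and thus, even if they did appear, they would not affect $\bUpsilon^{\YMH}$'s dependence structure here — but more simply, by the degree bounds $\deg(\bar\tau)\le -1$ (resp.\ $\le-3/2$, or the explicit trees of Remark~\ref{rem:bar-trees-cases23} and \<IXiI'Xi_notriangle>, \<Xi>) there is simply no room for an $\mathring{C}_{\mfz}\pr{X}$-insertion. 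I would close by listing the finitely many trees in each family (as in Remark~\ref{rem:bar-trees-cases23}) and checking the root-degree condition case by case, which is routine.
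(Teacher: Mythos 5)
Your overall strategy is the right one and matches the paper's: reduce the claim to showing that every monomial of $\bar\bUpsilon^{\YMH}_{\mfz}[\bar\tau]$ has $p_X=p_\p=0$ and extract this from the identities of Lemma~\ref{lem:counting}. The cases $\<Xi>$ and $\<IXiI'Xi_notriangle>$ are fine. However, there is a genuine error in the case \eqref{eq:ymhtrees-minus1}: you conclude that the admissible solutions have $p_X=1$, which, if true, would make $\bUpsilon^{\YMH}_{\mfz}[\bar\tau]$ linear in $\pr{X}$ and the lemma \emph{false}. The mistake is that you only used the noise parity ($k_\xi$ even, hence $n_\lambda-p_X$ odd via \eqref{e:iden1}) and never used the \emph{spatial} parity: trees satisfying \eqref{eq:ymhtrees-minus1} lie in $\mfT^{\mathrm{od,sp}}$, so $k_\p$ is odd and \eqref{e:iden2} forces $n_\lambda+p_\p$ to be odd. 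Combined with $n_\lambda+p_X+2p_\p\le 3$ (from $\deg(\bar\tau)\le-1$ and \eqref{e:iden3}) and $k_\xi=n_\lambda+1-p_X\ge 2$, the only solutions are $(n_\lambda,k_\xi,p_X,p_\p)=(1,2,0,0)$ and $(3,4,0,0)$. Dropping the spatial constraint admits the spurious solution $(n_\lambda,k_\xi,p_X,p_\p)=(2,2,1,0)$, which is presumably what you found; this is exactly the solution that \emph{does} occur for the trees in $\mfT_{-}^{\even}$ of Lemma~\ref{lem:linearX}, where $k_\p$ is even — the opposite spatial parity is precisely what distinguishes the present case and flips the conclusion from ``linear in $\pr{X}$'' to ``constant''. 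Your attempted patch (``the $\pr{X}$ slot is occupied by the identity $\pr{\bar{\Xi}}$'') has no meaning in this bookkeeping: a noise leaf contributes to $k_\xi$, not to $p_X$, and a monomial with $p_X=1$ genuinely depends on $\pr{X}$.

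The fallback ``uniform'' recursive argument is also unsound. It is not true that a tree in $\mfT^{\YMH}$ all of whose leaves are noises has every factor of the nonlinearity substituted: $\<IXi^2>$ has both leaves equal to $\Xi$, yet it arises from $\pr{X^3}$ with one factor left unsubstituted, so $\bUpsilon^{\YMH}_{\mfz}[\<IXi^2>](\pr{\mathbf{A}})$ is linear in $\pr{X}$ (cf.\ Lemma~\ref{lem:deg-1}). That tree lies in $\mfT^{\mathrm{ev,sp}}\cap\mfT^{\mathrm{ev,noi}}$ and is therefore excluded from \eqref{eq:ymhtrees-minus1}, which shows that the structural observation cannot replace the parity analysis — the check you defer to ``routine case-by-case'' verification is exactly the power counting you got wrong. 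Finally, for \eqref{eq:ymhtrees-minus3/2} you defer to Remark~\ref{rem:bar-trees-cases23}, which only records the possible noise counts for \eqref{eq:ymhtrees-minus1}; this case needs its own computation: $k_\xi$ odd, $k_\p$ even and $\deg(\bar\tau)\le-3/2$ give $n_\lambda+p_X+2p_\p\le 2$ with $n_\lambda+p_X$ and $n_\lambda+p_\p$ both even, whose only admissible solutions are $(n_\lambda,k_\xi)=(0,1)$ and $(2,3)$, again with $p_X=p_\p=0$.
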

\begin{proof}
The cases of $\<IXiI'Xi_notriangle>$ or $\<Xi>$ follow trivially from a short, straightforward computation.

To prove the lemma in the cases of \eqref{eq:ymhtrees-minus1} or \eqref{eq:ymhtrees-minus3/2} we invoke  Lemma~\ref{lem:counting}
and show that $p_X = p_\p =0$.

Suppose that $\bar{\tau}$ satisfies \eqref{eq:ymhtrees-minus1}.
Since $\deg(\bar{\tau}) \le -1$, $k_\xi $ is even and $k_\p$ is odd,
the only solutions to the equations in Lemma~\ref{lem:counting}
are $(n_\lambda,k_\xi,k_\p) = (1,2,1)$, 
$(n_\lambda,k_\xi,k_\p) = (3,4,3)$,
$(n_\lambda,k_\xi,k_\p) = (3,4,1)$, and $(p_X,p_\p)=(0,0)$ for all these three solutions.

Now suppose that $\bar{\tau}$ satisfies  \eqref{eq:ymhtrees-minus3/2}. 
 Since $\deg(\bar{\tau}) \le -3/2$, $k_\xi $ is odd and $k_\p$ is even,
the only solutions to the equations in Lemma~\ref{lem:counting}
are are $(n_\lambda,k_\xi,k_\p) = (0,1,0)$, 
$(n_\lambda,k_\xi,k_\p) = (2,3,0)$,
$(n_\lambda,k_\xi,k_\p) = (2,3,2)$, and again $(p_X,p_\p)=(0,0)$ for all these three solutions.
\end{proof}
In the next lemma we apply Lemma~\ref{lem:grafting_operators} to study the $\mathcal{U}[\hat{\tau}](\pr{\mathbf{A}})$ introduced in \eqref{e:CU-tau-hat}.
\begin{lemma}\label{lem:hats-linear-h}
For every $\hat\tau\in\{ \hat\tau_{3/2}, \hat\tau_{2}^{\bullet},
\hat\tau_{5/2}^{\bullet} \}$ (see
\eqref{e:hat-tau-2}, \eqref{e:hat-tau-52}),
writing  $\hat{\tau} = \mcb{I}_{\mfu}(\tilde{\tau})$, we have 
\begin{equ}\label{eq:factorization}
\mathcal{U}[\hat{\tau}](\pr{\mathbf{A}}) 
= (\pr{U}^{\ast} \otimes \bullet \pr{U})\tilde{\mathcal{U}}[\hat{\tau}](\pr{h})
\end{equ}
for some $\tilde{\mathcal{U}}[\hat{\tau}](\pr{h})$ which depends on $\pr{\mathbf{A}}$ only through a linear dependence on $\pr{h}$.

Moreover, if $\pr{\mathbf{A}} \in \mcb{A}$ satisfies $\brho(\pr{h_{i}}) = \pr{(\partial_{i}U)U^{-1}}$ then \eqref{eq:factorization} also holds for $\hat{\tau} = \mbX_{i}$. 
\end{lemma}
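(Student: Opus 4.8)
The plan is to prove both statements by a direct computation using the explicit form of the $U$-expansion in \eqref{e:U-expansion} together with the substitution rules \eqref{e:only-ren-U1}, \eqref{e:only-ren-U2}, \eqref{e:U521}--\eqref{e:U522}. Recall from \eqref{e:CU-tau-hat} that $\mathcal{U}[\hat{\tau}](\pr{\mathbf{A}}) = \bUpsilon_{\mfu}[\tilde{\tau}](\pr{\mathbf{A}})$ when $\hat{\tau} = \mcb{I}_{\mfu}(\tilde{\tau})$, so what must be analysed are precisely the terms $U_{3/2}, U_{2}, U_{5/2}$ in the coherent expansion of $\mcA_{\mfu}$, decomposed according to the tree $\tilde{\tau}$ giving rise to them. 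The key structural fact to exploit is that every nonlinearity in the $U$-equation \eqref{e:F_mfu}, namely $-\brho(\pr{h_j})^2 \pr{U} + \brho([\pr{B_j},\pr{h_j}])\pr{U}$, carries $\pr{U}$ as a \emph{right} factor, and that each factor $\pr{B}$ that gets substituted from the $Y$-expansion is of the form $\pr{U}\mcb{I}_{\mfz}(\bar{\bXi})$ (or a higher-order term $Y_0, Y_{1/2}$ which itself carries $\pr{U}$ appropriately via Lemma~\ref{lem:gauge_transformed_ymhtrees}). Hence every $\bar{\mfl}$-leaf inside $\tilde{\tau}$ contributes a $\pr{U}^{\ast}$ acting on the corresponding $V_{\bar{\mfl}}$-slot, and the overall right-multiplication by $\pr{U}$ persists. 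This is exactly what the notation $(\pr{U}^{\ast}\otimes \bullet\,\pr{U})$ on the right-hand side of \eqref{eq:factorization} encodes.

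Concretely, the steps are: (i) For $\hat{\tau}_{3/2} = \mcb{I}_{\mfu}(\<IXi>)$, we have $\bUpsilon_{\mfu}[\<IXi>](\pr{\mathbf{A}}) = \brho([\pr{U}\mcb{I}_{\mfz}(\bar{\bXi}),\pr{h}])\pr{U}$ directly from \eqref{e:only-ren-U1}; pulling the $\pr{U}^{\ast}$ through $\mcb{I}_{\mfz}(\bar{\bXi})$ and factoring $\bullet\,\pr{U}$ on the right gives \eqref{eq:factorization} with $\tilde{\mathcal{U}}[\hat{\tau}_{3/2}](\pr{h}) = \brho([\mcb{I}_{\mfz}(\bar{\bXi}),\pr{h}])\otimes(\text{id})$, manifestly linear in $\pr{h}$. (ii) For $\hat{\tau}_{2}^{(1)}, \hat{\tau}_{2}^{(2,i)}$ of \eqref{e:hat-tau-2}, we expand using \eqref{e:only-ren-U2}: the term coming from $Y_0$ uses Lemma~\ref{lem:gauge_transformed_ymhtrees} to factor $\pr{U}^{\ast}$ out of the grafted YMH-subtree, and the term $(\pr{U}\mcb{I}_{\mfz}(\bar{\bXi}), h_{1/2})$ contributes one extra $\pr{h}$-dependence through $h_{1/2}$ (which is itself linear in $\pr{h}$ by \eqref{e:def-h12}) together with the explicit $\pr{h}$ from the bracket — but here one must check the degree/parity bookkeeping shows no \emph{second} power of $\pr{h}$ survives at this order; this is where Lemma~\ref{lem:counting}-style counting (adapted) is invoked. (iii) For the five trees $\hat{\tau}_{5/2}^{(k)}$ of \eqref{e:hat-tau-52}, repeat the same analysis using \eqref{e:U521}--\eqref{e:U522}: each such substitution contributes exactly one extra wavy factor beyond $\hat{\tau}_{3/2}$, and inspection of the list shows the wavy factor is always either an $\pr{h}$ (from $h_{1/2}$, $h_1$, or $\mbX\otimes\pr{\p h}$) contributing linearly, or a $\pr{B}$/$\pr{U}$ which by Lemma~\ref{lem:gauge_transformed_ymhtrees} factors as $\pr{U}^{\ast}$ acting on a $\pr{h}$-independent YMH-tree. (iv) For the last statement, when $\hat{\tau} = \mbX_{i}$ we have $\mathcal{U}[\mbX_i](\pr{\mathbf{A}}) = \pr{\partial_i U}\,\mbX_i$ by \eqref{e:CU-tau-hat}; under the constraint $\brho(\pr{h_i}) = \pr{(\partial_i U)U^{-1}}$ we rewrite $\pr{\partial_i U} = \brho(\pr{h_i})\pr{U}$, and note $\brho(\pr{h_i})\pr{U} = \pr{U}(\pr{U}^{-1}\brho(\pr{h_i})\pr{U}) = \pr{U}\,\brho(\Ad_{\pr{U}^{-1}}\pr{h_i})$; using that $\pr{U}\in\Im(\Ad\oplus\brho)$ this is of the required form $(\pr{U}^{\ast}\otimes\bullet\,\pr{U})$ applied to something linear in $\pr{h}$, once the left-factor $\pr{U}^{\ast}$ acting trivially on the polynomial slot $\mbX_i$ is noted.

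The main obstacle I anticipate is step (iii): verifying, for each of the five degree-$5/2$ trees, that the expansion really does produce only a \emph{single} power of $\pr{h}$ linearly (and no quadratic-in-$\pr{h}$ term sneaks in at this order, nor a term where $\pr{h}$ fails to factor cleanly). This requires carefully tracking which substitution each $\hat{\tau}_{5/2}^{(k)}$ originates from — some come from $h_1$ (already linear), some from $(\pr{U}\mcb{I}_{\mfz}(\bar{\bXi}), \mbX\otimes\pr{\p U})$ where the $\pr{h}$ lives in the explicit bracket — and checking the parity constraints recorded in \eqref{e:U52} are consistent with the claimed form. The bookkeeping is routine but error-prone; the conceptual content is entirely captured by the "$\pr{U}$ always appears as a right factor, each $\bar{\mfl}$-leaf gives a $\pr{U}^{\ast}$" principle, so once that is stated cleanly the verification is mechanical.
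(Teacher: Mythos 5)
Your overall strategy is the one the paper uses: treat each $\hat\tau$ case by case, pull a $\pr{U}^{\ast}$ out of every noise leaf via Lemma~\ref{lem:gauge_transformed_ymhtrees}, and peel off $\bullet\,\pr{U}$ using the fact that every term of $F_{\mfu}$ in \eqref{e:F_mfu} carries $\pr{U}$ as a right factor. One point you leave to ``bookkeeping'' actually needs no counting argument at all: each tree listed in \eqref{e:hat-tau-2} and \eqref{e:hat-tau-52} arises from exactly one of the substitutions \eqref{e:only-ren-U1}--\eqref{e:only-ren-U2}, \eqref{e:U521}, \eqref{e:U523}, \eqref{e:U522}, in which precisely one slot is filled by an $\pr{h}$-linear object ($\pr{h}$ itself, $h_{1/2}$ or $h_{1}$) and every other slot by a term that, after factoring out $\pr{U}^{\ast}$, is a \emph{constant} (this constancy, i.e.\ the Lemma~\ref{lem:const_upsilon}-type statement that $\bUpsilon^{\YMH}_{\mfz}[\tilde\tau]$ has no $\pr{\mathbf{A}}$-dependence, is what you need -- ``$\pr{h}$-independent'' alone would not rule out residual dependence on $\pr{B}$). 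The substitutions that would produce two powers of $\pr{h}$, such as inserting $h_{1/2}$ into $\brho(\pr{h_j})^2\pr{U}$, yield trees of odd spatial or noise parity, which is precisely why they are omitted from \eqref{e:hat-tau-52}; so no quadratic term can appear for the trees in the lemma's scope, and the paper simply computes each case directly.

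Your step (iv) is incorrect as written. The operation $\bullet\,\pr{U}$ in \eqref{eq:factorization} is \emph{right} multiplication $M\mapsto M\pr{U}$, and $\pr{U}^{\ast}\restriction_{\mcb{T}[\mbX_i]}=\id$, so the identity to verify is $\pr{\partial_i U}\,\mbX_i=(\bullet\,\pr{U})\,\tilde{\mathcal{U}}[\mbX_i](\pr{h})$, which forces $\tilde{\mathcal{U}}[\mbX_i](\pr{h})=\pr{\partial_i U}\,\pr{U}^{-1}\mbX_i=\brho(\pr{h_i})\mbX_i$; this is linear in $\pr{h}$ and the proof is a one-liner. Your conjugation $\brho(\pr{h_i})\pr{U}=\pr{U}\,\brho(\Ad_{\pr{U}^{-1}}\pr{h_i})$ instead extracts a \emph{left} factor of $\pr{U}$, which is not the operation appearing in \eqref{eq:factorization}, and the resulting candidate $\brho(\Ad_{\pr{U}^{-1}}\pr{h_i})$ still depends on $\pr{U}$, so it fails the requirement that $\tilde{\mathcal{U}}$ depend on $\pr{\mathbf{A}}$ only through $\pr{h}$.
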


\begin{proof}
We start with the first statement of the lemma in which case $\hat{\tau}$ is of the form $\hat{\tau} = \mcb{I}_{\mfu}(\tilde{\tau})$.
If 
$\tilde\tau\in \{\<IXi>,\<I[IXiI'Xi]_notriangle>,\<I[IXi^3]>,\<I[I[IXiI'Xi]I'Xi]>,\<I[IXiI'[IXiI'Xi]]> \}$,  since $\tilde\tau \in \mfT^{\YMH}$,
by 
Lemma~\ref{lem:gauge_transformed_ymhtrees}
one has
$\bar\bUpsilon_{\mfz}[\tilde\tau](\pr{\mathbf{A}}) 
= \pr{U}^{\ast}  \bar\bUpsilon^{\YMH}_{\mfz}[\tilde\tau](\pr{\mathbf{A}})$
where, following as in Lemma~\ref{lem:const_upsilon},
 $\bUpsilon_{\mfz}^{\YMH}[\tilde\tau](\pr{\mathbf{A}})$ has no dependence on $\pr{\mathbf{A}}$. Therefore for each $\tilde\tau$ in this set,
 \begin{equs}
\bUpsilon_{\mfu}[\tilde{\tau}](\pr{\mathbf{A}})  
& =
\mcb{I}_{\mfu}
\big[D_{\mfz} (\brho ([\pr{B},\pr{h}])\pr{U})\big] (\bUpsilon_{\mfz} [\tilde\tau](\pr{\mathbf{A}}) )
=
 \brho ([\pr{U}^{\ast} \mcb{I}_{\mfu}\bUpsilon_{\mfz}^{\YMH}[\tilde\tau],\pr{h}])\pr{U}
 \\
 &=
 (\pr{U}^{\ast} \otimes \bullet \pr{U})
(  \brho ([ \mcb{I}_{\mfu}\bUpsilon_{\mfz}^{\YMH}[\tilde\tau],\pr{h}]))
 \end{equs}
which is in the desired form. Moreover,
\[
\bUpsilon_{\mfh'}[\<IXi>](\pr{\mathbf{A}})  =
\mcb{I}_{\mfh'} \big[ D_{\mfz} ([\pr{B},\pr{h}])\big]
(\bUpsilon_{\mfz} [\<Xi>](\pr{\mathbf{A}}) )
=
 [\pr{U}^{\ast} \mcb{I}_{\mfh'} \bUpsilon_{\mfz}^{\YMH} [\<Xi>],
 \pr{h}]
\]
where $ \bUpsilon_{\mfz}^{\YMH} [\<Xi>]$ 
has no dependence on $\pr{\mathbf{A}}$.
Therefore for 
$\tilde\tau\in \{\<IXi>, \<I[IXiI'Xi]_notriangle>\}$,    
\begin{equs}
\bUpsilon_{\mfu}[\tilde{\tau}\mcb{I}_{\mfh'}(\<IXi>)   ](\pr{\mathbf{A}})  
& =
\mcb{I}_{\mfu}
\big[D_{\mfz} D_{\mfh'}(\brho ([\pr{B},\pr{h}])\pr{U})\big] 
(\bUpsilon_{\mfz} [\tilde\tau](\pr{\mathbf{A}}) , \bUpsilon_{\mfh'} [\<IXi>](\pr{\mathbf{A}}) )
\\
&=
\mcb{I}_{\mfu}
\Big( \brho \big([\pr{U}^{\ast} \bUpsilon_{\mfz}^{\YMH}[\tilde\tau],
[\pr{U}^{\ast} \mcb{I}_{\mfh'} \bUpsilon_{\mfz}^{\YMH}[\<Xi>],\pr{h} ]  ]\big)\pr{U}\Big)
 \\
 &=
 (\pr{U}^{\ast} \otimes \bullet \pr{U})
\mcb{I}_{\mfu}
\Big( \brho \big([\bUpsilon_{\mfz}^{\YMH}[\tilde\tau],
[\mcb{I}_{\mfh'} \bUpsilon_{\mfz}^{\YMH}[\<Xi>],\pr{h} ]  ]\big)\Big)
 \end{equs}
 which is also in the desired form.
 Finally the claim for
$\mcb{I}_{\mfu} \big( \<IXi> \, \mcb{I}_{\mfh'} 
\big( \<IXi> \mcb{I}_{\mfh'} (\<IXi>) \big) \big)$ can be checked
in an analogous way.

To prove the second statement of the lemma, we note that $\pr{U}^{\ast} \restriction_{\mcb{T}[\mbX_{i}]} = \id_{\R}$ so 
we have $\mathcal{U}[\mbX_i](\pr{\mathbf{A}}) = (\pr{U}^{\ast} \otimes \bullet \pr{U})  (\pr{\partial_{i} U})\pr{U}^{-1}\mbX_{i} =  (\pr{U}^{\ast} \otimes \bullet \pr{U})   \brho(\pr{h_{i}}) \mbX_{i}$ which is of the desired form. 
\end{proof}

To state the next lemma, for every $g \in G$, one can define a natural action of $\brho(g)$
on $\mcb{T}^{\ast}$ as in \cite[Remark~5.19]{CCHS2d}
or as above Lemma~\ref{lem:gauge_transformed_ymhtrees},
with the operator assignment $L = (L_{\mft})_{\mft \in \Lab}$ with $L_{\mft} = \brho(g)$ for $\mft \in \Lab_{-}$ and $L_{\mft} = \id_{V^{\ast}_{\mft}}$ otherwise.
The following lemma states that under certain conditions of $\ell \in \mcb{T}^{\ast}$
(which in particular will be satisfied by $\ell^\eps_{\BPHZ}$),
the equation for $Y$ only requires  renormalisation that is linear in $Y$ and $h$:

\begin{lemma}\label{lem:renorm_linear_in_Y_and_h}
Fix $\ell \in \mcb{T}^{\ast}$. Suppose that $\brho(g) \ell = \ell$ for every $g \in G$,
and that $\ell[\tau] = 0$ for every 
$ \tau \notin \mfT^{\even}_{-}$.

Let $C_{\ell,\tau} \in L(E,E)$ be the same map\footnote{Here we also use the symbol $\ell$ to refer to its restriction to $\mcb{T}[\tilde{\mfT}]^{\ast}$.} that appears in Proposition~\ref{prop:mass_term}.  

Then there exists $C_{\ell} \in L(\mfg^{3},E)$ such that, for any $\pr{\mathbf{A}} \in \bar{\mcb{A}}$, 
\begin{equ}\label{eq:renorm_linear_in_Y_and_h}
(\ell \otimes \id_{W_{\mfz}}) 
\bar{\bUpsilon}^{F}_{\mfz}(\pr{\mathbf{A}})
=
\sum_{\tau \in \mfT^{\even}_{-} \cap \mfT^{\YMH}} \!\!\!\! C_{\ell,\tau} \pr{Y} 
\; + \;
C_{\ell} \pr{h}\;.
\end{equ}
\end{lemma}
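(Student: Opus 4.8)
The strategy is to decompose the sum over $\tau \in \mfT^{\even}_{-}$ with $\bar\bUpsilon^F_\mfz[\tau]\ne 0$ according to the classification in Lemma~\ref{lemma:ymh_trees_generate}, and to show that each of the five cases produces either a term linear in $\pr{Y}$ (coming from the cases that reduce to $\mfT^{\YMH}$) or a term linear in $\pr{h}$ (coming from cases that involve a single grafting and exactly one appearance of $\pr{h}$ from the $U$-expansion), with all other cases vanishing by parity. First I would observe that it suffices, by linearity of $(\ell\otimes\id)\bar\bUpsilon^F_\mfz(\cdot)$ in the sum over trees, to treat a single tree $\tau$ at a time, and to use that $\ell[\tau]=0$ unless $\tau\in\mfT^{\even}_-$.

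For case (1), $\tau\in\mfT^{\YMH}$, I would apply Lemma~\ref{lem:gauge_transformed_ymhtrees} to get $\bar\bUpsilon^F_\mfz[\tau](\pr{\mathbf{A}})=\pr{U}^{\ast}\bar\bUpsilon^{\YMH}_\mfz[\tau](\pr{\mathbf{A}})$, then use the hypothesis $\brho(g)\ell=\ell$ for all $g\in G$ — which on $\bar{\mcb{A}}$ (where $\pr{U}\in\Im(\Ad\oplus\brho)$) gives $\ell\circ\pr{U}^{\ast}=\ell$ — to conclude $(\ell\otimes\id_{W_\mfz})\bar\bUpsilon^F_\mfz[\tau](\pr{\mathbf{A}})=(\ell\otimes\id_{W_\mfz})\bar\bUpsilon^{\YMH}_\mfz[\tau](\pr{\mathbf{A}})=C_{\ell,\tau}\pr{Y}$ by Proposition~\ref{prop:mass_term} (applied with $v=\ell$, noting $\pr{Y}$ here plays the role of $\pr{X}$ there). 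For cases (2)--(5), I would write $\tau\in\mfT[\hat\tau;\bar\tau]$ and sum over all such $\tau$ with the same $(\hat\tau,\bar\tau)$ using Lemma~\ref{lem:Ups-graft}: $\sum_{\tau\in\mfT[\hat\tau;\bar\tau]}\bar\bUpsilon^F_\mfz[\tau](\pr{\mathbf{A}})=\bar\bUpsilon^{\bar\tau,\mathcal{U}[\hat\tau](\pr{\mathbf{A}})}_\mfz(\pr{\mathbf{A}})$. Then Lemma~\ref{lem:hats-linear-h} gives $\mathcal{U}[\hat\tau](\pr{\mathbf{A}})=(\pr{U}^{\ast}\otimes\bullet\pr{U})\tilde{\mathcal{U}}[\hat\tau](\pr{h})$ with $\tilde{\mathcal{U}}[\hat\tau]$ depending linearly (and only) on $\pr{h}$ (here I use that $\pr{\mathbf{A}}\in\bar{\mcb{A}}$ to handle the $\hat\tau=\mbX_i$ case via the constraint $\brho(\pr{h_i})=\pr{(\partial_i U)U^{-1}}$). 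Plugging this into Lemma~\ref{lem:grafting_operators}(2), $\bar\bUpsilon^{\bar\tau,(\pr{U}^{\ast}\otimes\bullet\pr{U})\tilde{\mathcal{U}}[\hat\tau](\pr{h})}_\mfz(\pr{\mathbf{A}})=\pr{U}^{\ast}(\mathcal{L}_g\tilde{\mathcal{U}}[\hat\tau](\pr{h}))\bUpsilon^{\YMH}_\mfz[\bar\tau](\pr{\mathbf{A}})$; and by Lemma~\ref{lem:const_upsilon} the factor $\bUpsilon^{\YMH}_\mfz[\bar\tau](\pr{\mathbf{A}})$ is a constant independent of $\pr{\mathbf{A}}$ (this uses precisely that $\bar\tau$ is one of $\<IXiI'Xi_notriangle>$, $\<Xi>$, or satisfies \eqref{eq:ymhtrees-minus1} or \eqref{eq:ymhtrees-minus3/2}). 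Hence, after applying $\ell$ (again using $\ell\circ\pr{U}^{\ast}=\ell$ on $\bar{\mcb{A}}$), each such block contributes $(\ell\otimes\id_{W_\mfz})(\mathcal{L}_g\tilde{\mathcal{U}}[\hat\tau](\pr{h}))\,c_{\bar\tau}$, which is linear in $\pr{h}$; summing over the finitely many relevant $(\hat\tau,\bar\tau)$ defines $C_\ell\in L(\mfg^3,E)$.

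The main obstacle I expect is the bookkeeping in case (2), i.e.\ $\tau\in\mfT[\mbX_j;\bar\tau]$: here the grafted "tree" is just a polynomial $\mbX_j$ and the corresponding $\mathcal{U}[\mbX_j](\pr{\mathbf{A}})=\pr{\partial_j U}\mbX_j$ is only linear in $\pr{h}$ \emph{after} imposing the $\bar{\mcb{A}}$ constraint $\brho(\pr{h_i})=\pr{(\partial_i U)U^{-1}}$, so one must be careful that the reduction to $\pr{h}$-linearity is genuinely using membership in $\bar{\mcb{A}}$ and not merely in $\mcb{A}$; this is exactly the point flagged in the second statement of Lemma~\ref{lem:hats-linear-h}. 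A secondary (but routine) point is verifying that the parity constraints in \eqref{eq:ymhtrees-minus1}--\eqref{eq:ymhtrees-minus3/2} are compatible with $\tau\in\mfT^{\even}_-$, so that the grafted trees in cases (2)--(5) genuinely lie in $\mfT^{\even}_-$ and are not killed by $\ell$; this follows from the additivity of parities and degrees under grafting (Remark~\ref{rem:grafting-adds}) and has essentially already been recorded in Lemma~\ref{lemma:ymh_trees_generate}. Finally one should note the $C_{\ell,\tau}$ in the statement are only needed for $\tau\in\mfT^{\even}_-\cap\mfT^{\YMH}$, consistent with the convention $\ell[\tau]=0$ otherwise, so no contribution is missed and no spurious term appears.
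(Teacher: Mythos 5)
Your proposal is correct and follows essentially the same route as the paper: the same split into $\mfT^{\even}_-\cap\mfT^{\YMH}$ versus grafted trees, the same chain of Lemmas~\ref{lem:gauge_transformed_ymhtrees}, \ref{lem:Ups-graft}, \ref{lem:hats-linear-h}, \ref{lem:grafting_operators} and \ref{lem:const_upsilon}, and the same use of the invariance $\pr{U}\ell=\ell$ on $\bar{\mcb{A}}$. The points you flag as potential obstacles (the $\hat\tau=\mbX_j$ case needing the $\bar{\mcb{A}}$ constraint, and parity bookkeeping) are exactly the ones the paper delegates to Lemma~\ref{lem:hats-linear-h} and Lemma~\ref{lemma:ymh_trees_generate}, so nothing is missing.
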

\begin{proof}
We first note that we have
\begin{equs}[e:ell-Ups-z]
(\ell \otimes \id_{W_{\mfz}}) 
\bar{\bUpsilon}^{F}_{\mfz}(\pr{\mathbf{A}})
&= 
\sum_{\tau \in \mfT^{\YMH} \cap \mfT_{-}^{\even}}
(\ell \otimes \id_{W_{\mfz}}) 
\bar{\bUpsilon}^{F}_{\mfz}[\tau](\pr{\mathbf{A}})\\
{}&\enskip + 
\sum_{\bar{\tau} \in \mfT^{\YMH,h}}
\sum_{\hat{\tau} \in \mfT_{g,h}}
\sum_{\tau \in \mfT[\hat{\tau} ; \bar{\tau}]}
(\ell \otimes \id_{W_{\mfz}}) \bar\bUpsilon^{F}_{\mfz}[\tau](\pr{\mathbf{A}})\;.	
\end{equs}
Here we define $\mfT^{\YMH,h} \subset \mfT^{\YMH}$ to consist of all of those $\bar{\tau}$ as in \eqref{eq:ymhtrees-minus1} or \eqref{eq:ymhtrees-minus3/2}, or of the form $\<IXiI'Xi_notriangle>$ or $\<Xi>$. We also define $\mfT_{g,h} \subset \mfT_{g}$ to be given by all the symbols that we graft in Lemma~\ref{lemma:ymh_trees_generate}, namely it consists of $\mbX_{j}$ for $j \in \{1,2,3\}$ and $\hat{\tau}_{3/2}= \mcb{I}_{\mfu} (\<IXi>)$, along with all symbols appearing in \eqref{e:hat-tau-2} and \eqref{e:hat-tau-52}. 

By Proposition~\ref{prop:mass_term} and Lemma~\ref{lem:gauge_transformed_ymhtrees},
one has for each $\tau \in \mfT^{\YMH} \cap \mfT_{-}^{\even}$,
\[
(\ell \otimes \id_{W_{\mfz}}) 
\bar{\bUpsilon}^{F}_{\mfz}[\tau](\pr{\mathbf{A}})
=
C_{\pr{U}\ell,\tau}\pr{Y}
=
C_{\ell,\tau}\pr{Y}\;.
\]
For the second contribution on the right-hand side of \eqref{e:ell-Ups-z}, by Lemma~\ref{lem:Ups-graft} and linearity,  
\[
\sum_{\bar{\tau} \in \mfT^{\YMH,h}}
\sum_{\hat{\tau} \in \mfT_{g,h}}
\sum_{\tau \in \mfT[\hat{\tau} ; \bar{\tau}]}
\bar\bUpsilon^{F}_{\mfz}[\tau](\pr{\mathbf{A}}) 
=
\sum_{\bar{\tau} \in \mfT^{\YMH,h}}
\bar{\bUpsilon}_{\mfz}^{\bar{\tau},\ \mcb{X}(\pr{\mathbf{A}})}(\pr{\mathbf{A}})
\]
where $\mcb{X}(\pr{\mathbf{A}}) 
=
\sum_{\hat{\tau} \in \mfT_{g,h}}
\mathcal{U}[\hat{\tau}](\pr{\mathbf{A}})$. 
Now, by Lemma~\ref{lem:hats-linear-h} we can write $\mcb{X}(\pr{\mathbf{A}}) = (\pr{U}^{\ast} \otimes \bullet \pr{U}) \tilde{\mcb{X}}(\pr{h})$
for some $\tilde{\mcb{X}}$ which depends on $\pr{\mathbf{A}}$ only through a linear dependence on $\pr{h}$. 

By Lemma~\ref{lem:grafting_operators},
the second term on the right-hand side of \eqref{e:ell-Ups-z} then equals to
\begin{equ}
(\ell \otimes \id_{W_{\mfz}})  \pr{U}^{\ast}  \big( \mathcal{L}_{g}  \tilde{\mcb{X}}(\pr{h}) \big) \sum_{\bar{\tau} \in \mfT^{\YMH,h}} \!\!\!\! \bUpsilon_{\mfz}^{\YMH}[\bar{\tau}]
=
( \pr{U}\ell \otimes \id_{W_{\mfz}})  \big(\mathcal{L}_{g}   \tilde{\mcb{X}}(\pr{h}) \big) \sum_{\bar{\tau} \in \mfT^{\YMH,h}} \!\!\!\! \bUpsilon_{\mfz}^{\YMH}[\bar{\tau}]\;.
\end{equ}
Using the fact that $\pr{U}$ is in the image of $\brho(\cdot)$,
one has $ \pr{U}\ell  = \ell$.
Recall that $\bUpsilon_{\mfz}^{\YMH}[\bar{\tau}]$ has no dependence on $\pr{\mathbf{A}}$ thanks to Lemma~\ref{lem:const_upsilon}.
Therefore the last line is of the form $C_{\ell} \pr{h}$ as desired. 
\end{proof}

We now turn to the renormalisation of the $(\bar{X}, \bar{U}, \bar{h})$ system. 
There are slight differences between our book-keeping for $(Y,U,h)$ versus $(\bar{X}, \bar{U}, \bar{h})$: (i) in the latter system we use the label $\mfl$ instead of the label $\bar{\mfl}$ to represent the noise; and (ii) the RHS of the equation for $\bar{X}$ is broken as a sum of two terms \dash the first term being $\bar{U} \xi$ which is labelled by $\mfm$ and  the second term which contains everything else being labelled then by $\mfz$.  

We write $\mfT^{F}$ for the collection of all $\tau \in \mfT$ that contain no instance of $\mfl$ or $\mfm$. We write $\mfT^{\bar{F}}$ for the collection of all $\tau \in \mfT$ which  (i) contain no instance of $\bar{\mfl}$ and (ii) satisfy the constraint that in any expression of the form $\mcb{I}_{o}(\tau \mcb{I}_{\mfl}(\bone))$ we must have $o \in \{\mfm\} \times \N^{d+1}$. 
Trees generated by the $F$ system belong to $\mfT^{F}$ and trees generated by the $\bar{F}$ system belong to $\mfT^{\bar{F}}$. In particular, $\tau  \in  \mfT \setminus \mfT^{F} \Rightarrow \bar\bUpsilon^{F}[\tau] = 0$ and $\tau \in \mfT \setminus \mfT^{\bar{F}} \Rightarrow \bUpsilon^{\bar{F}}[\tau] = 0$.

Moreover, there is a natural bijection  $\theta\colon \mfT^{\bar{F}} \rightarrow \mfT^{F}$ obtained by replacing any edge of type $(\mfm,p)$ with $(\mfz,p)$ and noise edge of type $\mfl$ with one of type $\bar{\mfl}$.
Moreover, there is an induced isomorphism $\Theta: \mcb{T}[\mfT^{\bar{F}}] \rightarrow \mcb{T}[\mfT^{F}]$ which maps $\mcb{T}[\tau]$ into $\mcb{T}[\theta(\tau)]$. \label{pageref:Theta}
We then have the following lemma.
\begin{lemma}\label{lemma:Y_to_barX}
Given any $\tau \in \mfT^{\bar{F}}$, one has, for any $\mft \in \{\mfu,\mfh,\mfh'\}$, 
\begin{equs}
\bUpsilon^{F}_{\mft}[\theta(\tau)](\pr{\mathbf{A}})
&=
(\Theta \otimes \id_{W_{\mft}})
\bUpsilon^{\bar{F}}_{\mft}[\tau](\pr{\mathbf{A}})
\;,\\
\bUpsilon^{F}_{\mfz}[\theta(\tau)](\pr{\mathbf{A}})
&=
(\Theta \otimes \id_{W_{\mfz}})
\big(
\bUpsilon^{\bar{F}}_{\mfz}[\tau](\pr{\mathbf{A}})
+
\bUpsilon^{\bar{F}}_{\mfm}[\tau](\pr{\mathbf{A}})
\big)
\;.
\end{equs}

\end{lemma}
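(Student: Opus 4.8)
\textbf{Proof plan for Lemma~\ref{lemma:Y_to_barX}.} The statement is essentially a bookkeeping identity: the two nonlinearities $F$ and $\bar F$ are built from the same intrinsic operations, the only difference being that $\bar F$ splits off the multiplicative noise term into the auxiliary type $\mfm$ (integrated against the mollified kernel $K^\eps$), while the rest of the right-hand side is routed through $\mfz$ as usual; and $\bar F$ uses $\mfl$ in place of $\bar{\mfl}$ for the underlying white noise. The plan is to prove the identity by induction on the size of $\tau \in \mfT^{\bar F}$ (say in the number of edges, or in $|E_\tau| + \sum_{u\in N_\tau}|\mfn(u)|$ as in Lemma~\ref{lem:counting}), using the inductive formula \eqref{eq:upsilon_induction} for $\bar\bUpsilon$ and $\bUpsilon$. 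The key algebraic observation, which I would isolate as a preliminary step, is that $\theta$ and $\Theta$ intertwine the abstract integration operators: for $\tau\in\mfT^{\bar F}$ of type compatible with $\mfz$ or $\mfm$ one has $\Theta\,\mcb{I}_{(\mfm,p)}(\cdot) = \mcb{I}_{(\mfz,p)}(\Theta\,\cdot)$ and $\Theta\,\mcb{I}_{(\mfz,p)}(\cdot) = \mcb{I}_{(\mfz,p)}(\Theta\,\cdot)$, and similarly $\Theta$ maps $\bar\bXi\in\mcb{T}[\bar\mfl]\otimes E$ (there named relative to $\mfl$) to $\bar\bXi$ relative to $\bar{\mfl}$ under the canonical identification $\mcb{T}[\mfl]\otimes E\simeq L(E,E)\simeq \mcb{T}[\bar\mfl]\otimes E$.

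First I would set up the base case. For $\tau$ a single noise leaf $\mcb{I}_{\mfl}(\bone)$, the only subtlety is the appearance of $\pr{\Xi}$ versus $\pr{\bar\Xi}$ in the definitions of $\bar F$ and $F$; but by \eqref{e:F_mfz} the noise term in $F_{\mfz}$ is $\pr{U}\pr{\bar\Xi}$ routed through $\mfz$, whereas in $\bar F$ the term $\pr{\bar U}\pr{\Xi}$ sits in $\bar F_\mfm$ (and $\bar F_\mfz$ has no noise term). This is exactly why the $\mfz$-identity of the lemma has $\bUpsilon^{\bar F}_\mfz + \bUpsilon^{\bar F}_\mfm$ on the right: the $\theta$-image of a ``noise grafted onto $\mfm$'' matches a ``noise grafted onto $\mfz$'' in the $F$-picture. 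For the $\{\mfu,\mfh,\mfh'\}$ components the base case is trivial since those nonlinearities never reference the noise directly. Then, for the inductive step, write $\tau = \mbX^k\prod_j \mcb{I}_{o_j}(\tau_j)$ with $o_j$ of type in $\{\mfz,\mfm,\mfh,\mfh',\mfu\}$ (no $\mfl$-edge can be the outer edge of a planted subtree here, by the constraint defining $\mfT^{\bar F}$), apply \eqref{eq:upsilon_induction}, invoke the induction hypothesis on each $\tau_j$, pass $\Theta$ through the derivative-of-$\bar\bUpsilon[\bone]$ factor using the intertwining of $\mcb{I}_{(\mfm,p)}$ with $\mcb{I}_{(\mfz,p)}$, and use that $\bar F_\mfz$ and $\bar F_\mfm$ together, after applying $\theta$, reassemble into $F_\mfz$ — this is precisely the definition of the decomposition of the right-hand side of \eqref{eq: bar a system} into the $\mfz$- and $\mfm$-labelled pieces. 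The symmetry factors $\mathring S(\tau) = \mathring S(\theta(\tau))$ are manifestly preserved since $\theta$ is a tree isomorphism respecting multiplicities.

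The only genuine point requiring care — and what I expect to be the main obstacle — is the correct matching of the $\mfz/\mfm$ split in the $\mfz$-component identity, i.e.\ making sure that when a subtree $\tau_j$ is planted via an edge of type $(\mfm,p)$ in $\tau$ but via $(\mfz,p)$ in $\theta(\tau)$, the inductive hypothesis (which already sums $\bUpsilon^{\bar F}_\mfz[\tau_j]+\bUpsilon^{\bar F}_\mfm[\tau_j]$ on the right for the $\mfz$-slot) feeds the right object into the derivative factor. Concretely one must check that $\bUpsilon^{\bar F}_{(\mfm,p)}[\tau_j] := \mcb{I}_{(\mfm,p)}(\bar\bUpsilon^{\bar F}_\mfm[\tau_j])$ and $\bUpsilon^{\bar F}_{(\mfz,p)}[\tau_j] := \mcb{I}_{(\mfz,p)}(\bar\bUpsilon^{\bar F}_\mfz[\tau_j])$ together map under $\Theta$ to $\mcb{I}_{(\mfz,p)}(\bar\bUpsilon^F_\mfz[\theta(\tau_j)])$, which is exactly the $\mfz$-case of the induction hypothesis after applying the intertwining relation for $\mcb{I}$. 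I would write this step out explicitly and treat the $\{\mfu,\mfh,\mfh'\}$-case in a single line at the end (the argument is identical but without the $\mfm$-term, since those nonlinearities only ever reference $\pr{Y}$/$\pr{\bar X}$, $\pr{h}$, $\pr{U}$, all of which transform diagonally under $\Theta$). With these observations in place the lemma follows by a routine, if slightly tedious, induction, and I would keep the written proof short by emphasising the intertwining relation and the $\mfz/\mfm$ reassembly and leaving the remaining algebra to the reader.
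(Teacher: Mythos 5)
Your proposal is correct and follows essentially the same route as the paper, whose proof consists of the single remark that the identity "is straightforward to prove by appealing to the inductive formulae for $\bUpsilon^{\bar{F}}$ and $\bUpsilon^{F}$" — i.e.\ exactly the induction via \eqref{eq:upsilon_induction} that you carry out. Your explicit treatment of the base case, the intertwining of $\Theta$ with $\mcb{I}_{(\mfm,p)}$ and $\mcb{I}_{(\mfz,p)}$, and the reassembly of $\bar F_{\mfz}+\bar F_{\mfm}$ into $F_{\mfz}$ is precisely the bookkeeping the paper leaves to the reader.
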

\begin{proof}
This is straightforward to prove by appealing to the inductive formulae for $\bUpsilon^{\bar{F}}$ and $\bUpsilon^{F}$. 
\end{proof}

We can now give the proof of the main result of this subsection. 

\begin{proof}[of Proposition~\ref{prop:renorm_g_eqn}]\label{proof:renorm_g_eqn}
Note that $\ell = \ell^{\delta,\eps}_{\BPHZ}$ satisfies the conditions of Lemma~\ref{lem:renorm_linear_in_Y_and_h} (using Lemma~\ref{lem:parity-bphz-gsym} for the parity constraint and arguing similarly for the invariance of $\ell$ under $\brho(\cdot)$). 
In particular, we see that \eqref{eq:no_u_h_renorm}  follows from Lemma~\ref{lem:hU-no-renor};
and we also have \eqref{eq:renorm_linear_in_Y_and_h} 
by Lemma~\ref{lem:renorm_linear_in_Y_and_h}.

We now turn to proving the first line of \eqref{eq:renorm_of_g_system}. 
We will now argue in a similar way as Lemma~\ref{lem:LX-diag} to show that the right-hand side of \eqref{eq:renorm_linear_in_Y_and_h} has a block structure of the form
\begin{equ}\label{eq:block_structure}
\Big( \bigoplus_{i=1}^{3} \check{C}^{\delta,\eps}_{\YM} \pr{B_i}+ \check{C}^{\delta,\eps}_{\Gauge} \pr{h_i}
 \Big) \oplus \check{C}^{\delta,\eps}_{\Higgs} \pr{\Psi}\;
\end{equ}
with $\check{C}^{\delta,\eps}_{\YM}, \check{C}^{\delta,\eps}_{\Gauge} \in L_{G}(\mfg,\mfg)$, and $\check{C}^{\delta,\eps}_{\Higgs} \in L_{G}(\higgsvec,\higgsvec)$. 

Consider $\mathbf{T} =(T,\sigma,r,O)\in \mathrm{Tran}$ chosen by fixing $i \in \{1,2,3\}$, 
and then choosing $T$ to act on $W_{\mfz}\simeq  W_{\mfm} \simeq W_{\bar{\mfl}} = \mfg_1 \oplus \mfg_2 \oplus \mfg_3\oplus \higgsvec$, on $W_{\mfh} \simeq W_{\mfh'} = \mfg_1 \oplus \mfg_2 \oplus \mfg_3$
by flipping the sign of the $\mfg_i$ component, 
and act as the identity on $W_{\mfu}$. 
We set $r$ which flips the $i$-th spatial coordinate,
and set $\sigma=\id$. 
For $O = (O_{\mft})_{\mft \in \Lab_{+}}$, we have $O_{\mft} = \id$ except for $\mft = \mfh'$ where $\mathcal{K}_{\mfh'} \simeq \R^{3}$ and we set $O_{\mfh'}$ to flip the sign of the $i$-th spatial component.  

One can check that that our nonlinearity 
 $F(\pr{\mathbf{A}})$ 
   is then $\mathbf{T}$-covariant, and the kernels and noises are $\mathbf{T}$-invariant.
Considering such transformations for every $i \in \{1,2,3\}$ and applying Proposition~\ref{prop:renormalisation is covariant} then implies that 
 the linear renormalisation must be appropriately block diagonal. 
  
 We then show that the blocks associated to different spatial indices are the same. Again as in Lemma~\ref{lem:LX-diag}, we consider $\mathbf{T} =(T,\sigma,r,O)\in \mathrm{Tran}$ chosen by fixing $1 \le i < j \le 3$ 
and then choosing $T$ to act on $W_{\mfz} \simeq W_{\mfm} \simeq W_{\bar{\mfl}} = \mfg_1 \oplus \mfg_2 \oplus \mfg_3\oplus \higgsvec$ and $W_{\mfh} \simeq W_{\mfh'} = \mfg_1 \oplus \mfg_2 \oplus \mfg_3$ by swapping 
 the $\mfg_{i}$ and  $\mfg_j$ components, and act as an identity on $W_{\mfu}$.
We also set $r$ given by the identity, and  $\sigma$ exchanges $i$ and $j$. 
For $O = (O_{\mft})_{\mft \in \Lab_{+}}$, we have $O_{\mft} = \id_{\mathcal{K}_{\mft}}$ except for $\mft = \mfh'$ where $\mathcal{K}_{\mfh'} \simeq \R^{3}$ and we set $O_{\mfh'}$ to swap the $i$ and $j$ spatial components.
One can check that  our nonlinearity 
 $F(\pr{\mathbf{A}})$ 
   is then $\mathbf{T}$-covariant, and kernels and noises are $\mathbf{T}$-invariant.
Considering such transformations for every $1 \le i < j \le 3$ and applying Proposition~\ref{prop:renormalisation is covariant}, we conclude that the blocks indexed by different spatial indices must be identical. 

Finally, to show that the block operators all commute with the appropriate actions of $G$, we again argue using Proposition~\ref{prop:renormalisation is covariant}, fixing $g \in G$ and looking at $\mathbf{T} \in \mathrm{Tran}$
where $T$ acts on $W_{\mfz}$ and $W_{\mfm}$ as $\brho(g)$, on $W_{\mfu}$ by right composition\slash multiplication with $\brho(g)$, 
 and on $W_{\mfh}$ and $W_{\mfh'}$ by $\Ad_g$. We set $r$, $\sigma$, and $O$ to be the  appropriate identity operators. 
 
To finish justifying the first line of \eqref{eq:renorm_of_g_system}, we note that our choice of kernel and noise assignments guarantee that, for any $\tau \in \mfT^{\YMH}$ one has $\ell_{\BPHZ}^{\delta,\eps}[\tau] = \ell_{\BPHZ}^{\eps}[\tau]$ where on the right-hand side we are referring to the character in \eqref{e:counterterms-A}  \dash this justifies replacing $\check{C}^{\delta,\eps}_{\YM}$ and $\check{C}^{\delta,\eps}_{\Higgs}$ with $C^{\eps}_{\YM}$ and $C^{\eps}_{\Higgs}$.
(Note that 
$C^{\eps}_{\YM}$ and $C^{\eps}_{\Higgs}$ are the same maps as that in Proposition~\ref{prop:mass_term}, which follows from Lemma~\ref{lem:renorm_linear_in_Y_and_h}.)
More generally, for any $\tau \in \mfT$ with $\bUpsilon_{\mfz}^{F}[\tau] \not = 0$, $\ell_{\BPHZ}^{\delta,\eps}[\tau]$ does not depend on $\delta > 0$,
therefore $\check{C}^{\delta,\eps}_{\Gauge}$ does not depend on $\delta > 0$ either,
and we can just denote this map by $C^{\eps}_{\Gauge}$
as in  the first line of \eqref{eq:renorm_of_g_system}.

Repeating the argument above combined with Lemma~\ref{lemma:Y_to_barX} gives us the second line of \eqref{eq:renorm_of_g_system}, although in this case the operators should be allowed to depend on $\delta$.
It then only remains to prove \eqref{eq:delta_to_0}. 

For the first two statements we note that, if $\PPi_{\delta,\eps}$ is the canonical lift of the kernel assignment
 $K^{(\eps)}$
and noise assignment $\zeta^{\delta,\eps}$ then for any $\tau \in \mfT^{\bar{F}}$ with $\theta(\tau) \in \mfT^{\YMH}$, one has    
\begin{equ}\label{eq:work_delta_to_0}
\lim_{\delta \downarrow 0}
\bar{\PPi}_{\delta,\eps}[\tau] 
=
\bar{\PPi}_{\delta,\eps}[\theta(\tau)] \circ \Theta[\tau]\;,
\end{equ}
 where $\bar{\PPi}_{\delta,\eps} = \E[\PPi_{\delta,\eps}(0)]$ and the equality above is between elements of $\mcb{T}[\tau]^{\ast}$. Here $\theta$ and $\Theta$ are as in the paragraph before \eqref{lemma:Y_to_barX}. 
 One can verify \eqref{eq:work_delta_to_0} with a straightforward computation. The key observation is that the condition  $\theta(\tau) \in \mfT^{\YMH}$ means that the only way any instance of $\mfl$ appears in $\tau$ is as $\mcb{I}_{(\mfm,p)}(\mfl)$ for some $p \in \N^{d+1}$, then to argue \eqref{eq:work_delta_to_0} one just observes that
 \[
 \lim_{\delta \downarrow 0}
 D^{p}K^{\eps} \ast \xi^{\delta} = D^{p}K \ast \xi^{\eps}\;.
 \]
 Heuristically, as $\delta \downarrow 0$, ``$\mcb{I}_{(\mfm,p)}(\mfl)$ is the same as $\mcb{I}_{(\mfz,p)}(\bar{\mfl})$''.
 Since the map $\Theta$ interacts well with coproducts, it is easy to see that \eqref{eq:work_delta_to_0} holds if we replace $\bar{\PPi}_{\delta,\eps}[\cdot]$ with $\ell^{\delta,\eps}_{\BPHZ}[\cdot]$. 
 
To prove the last statement of \eqref{eq:delta_to_0}, it suffices to show that for any $\eps > 0$ and $\tau \in \mfT^{\bar{F}}$ the limit 
 \begin{equ}\label{eq:bphz_char_delta_0}
 \ell^{0,\eps}_{\BPHZ}[\tau]
 =
 \lim_{\delta \downarrow 0}\ell^{\delta,\eps}_{\BPHZ}[\tau]
\end{equ}
 exists. 
 Note that  $\tau \in \mfT^{\bar{F}}$ implies that every noise in $\tau$ not incident to the root is incident to an $\eps$-regularised kernel. 
This then means that
$\E[ \PPi_{\delta,\eps}[\tau](\cdot)]$ remains smooth in the limit $\delta \downarrow 0$. 
Indeed, if $\tau$ doesn't contain an instance of the noise $\mfl$ at the root then $\PPi_{\delta,\eps}[\tau](\cdot)$ itself remains smooth in the $\delta \downarrow 0$ limit.
On the other hand, if $\tau$ does contain an instance of $\mfl$ at the root, then one can write $\E[\PPi_{\delta,\eps}[\tau](\cdot)]$ as a sum over Wick contractions. 
Each term in the sum can be written a convolution of kernels where each integration vertex is incident to at most one $\delta$-regularized kernel (the covariance of $\xi^{\delta}$ while the rest of the kernels are of the form $D^{p} K^{\eps}$ and smooth.
It is then straightforward to argue that, as $\delta \downarrow 0$, this convolution converges to one written completely in terms of the smooth kernels of the form $D^{p} K^{\eps}$. 
We then obtain \eqref{eq:bphz_char_delta_0} by expanding its RHS in terms of the negative twisted antipode and applying the above observations. 

Finally, to show \eqref{e:Cs-sig}, it is enough to recall \eqref{e:C-YM-Higgs-sig},
and observe that the trees $\tau$ in Lemma~\ref{lemma:ymh_trees_generate}
all have $2$ or $4$ noises (in particular see Remark~\ref{rem:bar-trees-cases23}).
Moreover, since the block structure of the operators in \eqref{eq:renorm_of_g_system} holds for \emph{every} choice of $\sig$,  all the blocks appearing must decompose
into orders $\sig^2$ and $\sig^4$ in the same way.
\end{proof}

\subsection{Solution theory}
\label{sec:sol-multi}

We start by trying to pose the analytic fixed point problems
associated to  \eqref{eq: B system}
 for the $E \oplus \mfg^3 \oplus (L(\mfg,\mfg)\oplus L(\higgsvec,\higgsvec))$-valued modelled distributions 
 $(\mathcal{Y}, \mathcal{H},\mathcal{U})$.

As before, for each $\mft \in \Lab_{+}$ we write 
$\mcb{K}_{\mft}$ for the corresponding 
 abstract integration operator on modelled distributions realising convolution with $K_{\mft}^{(\eps)}$ in \eqref{e:K-Keps-assign}.
 We also sometimes write $\mcb{K}$ and $\bar{\mcb{K}}$ for
 $\mcb{K}_\mfz$ and $\mcb{K}_\mfm$ respectively.  \label{bar-mcbK}
Moreover, we write 
$\CG_\mft \eqdef \mcb{K}_{\mft} + R \mathcal{R}$
 for $\mft \in \{\mfz,\mfh,\mfu\}$ 
 and
 $\CG_{\mfh'} \eqdef \mcb{K}_{\mfh'} + \nabla R \mathcal{R}$, where
 $R$ represents convolution with $G-K$ as in Section~\ref{sec:solution theory additive}. 
Recall the convention \eqref{e:XdX-X3}.

In addition to the difficulties described in the beginning of Section~\ref{sec:solution theory additive}, we now have a multiplicative noise term $U\xi$ which requires additional treatments.
First, note that the worst term in $\CU  \boldsymbol{\bar\Xi}$ is of degree  $-\frac52-\kappa$ which is below $-2$, so the general integration theorem \cite[Prop.~6.16]{Hairer14}\footnote{Recall the conditions $\eta\wedge \alpha>-2$ and $\bar\eta= \eta\wedge\alpha+\beta$ in \cite[Prop.~6.16]{Hairer14}.} 
would not  apply. 
The second problem is that, 
 even if we could apply the integration operator on it, we would have $\mcb{K}_{\mfz} \big( \CU \boldsymbol{\bar{\Xi}}\big) \in \cD^{\gamma,\eta}$ for  $\eta=-\frac12-2\kappa$,
but as in Section~\ref{sec:solution theory additive} having a closed fixed point problem requires us to work in
$\hat{\cD}^{\gamma,\hat\beta}$ 
with $\hat\beta > -1/2$ due to the term $Y\partial Y$.


To handle these difficulties
we start by decomposing
$\CU = \CP U_{0} + \hat{\CU}$ and solve for $\hat\CU\in \hat\cD^{\gamma,\rho}$ for suitable exponents $\gamma,\rho$. 
The component $\hat\CU$ has improved behaviour near $t=0$ 
since we have subtracted the initial condition $U_0$,
so that we are in the scope of 
Theorem~\ref{thm:integration} for the term $\hat{\CU}\boldsymbol{\bar{\Xi}}$.
On the other hand,  $\CP U_{0} \bar{\bXi}$ can be handled with Lemma~\ref{lem:Schauder-input},
since, thanks to Lemma~\ref{lem:omegas_converge} below,
\begin{equ}\label{eq:omega_0_def}
\omega_0\eqdef  \sig\CP U_{0}\xi^{\eps}
\end{equ}
probabilistically converges as $\eps \downarrow 0$ in $\CC^{-\frac52-\kappa}$
on the entire space-time (not just $t>0$).
For $\rho \in (1/2,1)$, 
we have   
$\CP U_0 \in  \cD^{\infty,\rho}_{0}$,
and since $\bar{\boldsymbol{\Xi}}\in \cD^{\infty,\infty}_{-\frac52-\kappa}$
one has $\CP U_0 \bar{\boldsymbol{\Xi}}\in \cD^{\infty,\rho-\frac52-\kappa}_{-\frac52-\kappa}$. 
So by Lemma~\ref{lem:Schauder-input}
\begin{equ}[e:bPsi_U0]
 \bPsi^{U_0} \eqdef  \mcb{K}_{\mfz}^{\omega_0} (\CP U_0 \bar{\boldsymbol{\Xi}})
 \in \cD^{\infty,-\frac12-2\kappa}_{-\frac12-2\kappa}
\end{equ}
(the compatibility condition of Lemma~\ref{lem:Schauder-input} will be checked in Lemma~\ref{lem:compatible} below).
We write
\begin{equ}
\Psi^{U_{0}}_{\eps} =  K \ast \omega_0 = \CR  \bPsi^{U_0} \;.
\end{equ}
We write $\omega_0$ above instead of $\sig \CP U_{0}\xi^{\eps}$ 
not only as shorthand, but also because later in Lemma~\ref{lem:fixedptpblmclose} we will
consider $\omega_0$ as part of a given deterministic input to a fixed point problem
(which is necessary in the $\eps\downarrow0$ limit).

Furthermore, 
analogously to Section~\ref{sec:solution theory additive}, we will linearise $\CY$ around  $\CP Y_{0}$ and $\bPsi^{U_0}$,
 and then apply Lemma~\ref{lem:Schauder-input} with appropriate ``input'' distributions, in a similar way as we did for the last line of \eqref{e:fix-pt-X1}. 

Putting things together, for fixed initial data  
\begin{equ}\label{eq:initial_data_for_gauge_sys}
(Y_0,U_0,h_0) 
\in
\Omega^\initial\eqdef
\state
\times 
\tilde{\mfG}^{\rho}\;, 
\end{equ}
where we recall that $\state \subset \CC^{\eta}$
for $\eta<-1/2$ and $\rho \in (1/2,1)$, we decompose 
\begin{equ}[e:def-tildeY]
\CY = \CP Y_0 
+ \bPsi^{U_0}
+ \hat \CY\;,
\qquad
\CU = \CP U_0 + \hat \CU\;,
\end{equ}
where we want
\begin{equ}[e:hatCYhatCU]
\hat \CY \in \hat \cD^{\gamma_\mfz, \hat\beta}_{-\frac12-2\kappa}\;,\quad
\hat \CU \in \hat\cD^{\gamma_\mfu, \rho}_0
\qquad
\mbox{for }\gamma_\mfz>\frac32+\kappa\;, \; \gamma_\mfu>\frac52+\kappa\;,
\end{equ}
along with $\CH \in \cD^{\gamma_{\mfh},\eta_{\mfh}}_{\alpha_{\mfh}}$ for some appropriate exponents $\gamma_{\mfh},\eta_{\mfh},\alpha_{\mfh}$ (these exponents will be specified in \eqref{eq:fixed_pt_space}; note that solving $\CH$ only requires a ``standard'' modelled distribution space as in \cite{Hairer14}), and solve the fixed point problem  
\begin{equs}
 \hat \CY
&=
\CG_{\mfz}^{\omega_1}  
(\CP Y_0 \partial\bPsi^{U_0})
+ \CG_{\mfz}^{\omega_2}  
(  \bPsi^{U_0} \partial \CP Y_0 )
+
\CG_{\mfz}^{ \omega_3}   
( \bPsi^{U_0}\partial \bPsi^{U_0})
\label{e:fix-pt-multi-Y}
\\
&\quad
+\CG_{\mfz} \mathbf{1}_{+}\Big( R_Q
  + \CY^3+ \mathring{C}_{\mfz} \CY +\mathring C_\mfh\CH
   + \hat{\CU} \bar{\boldsymbol{\Xi}} +\CP Y_0 \partial \CP Y_0\Big)
  +
  R  ( \omega_0) \;,	
\end{equs}
and
\begin{equs}[e:fix-pt-HU]
\mathcal{H}
&=
\CG_{\mfh}
\mathbf{1}_{+}
\Big(
 [\mathcal{H}_j,\p_j \mathcal{H}] + [[\mathcal{B}_j, \mathcal{H}_j],\mathcal{H}] 
 \Big) 
  +\CG_{\mfh'}
\mathbf{1}_{+}  [\mathcal{B}_j, \mathcal{H}_j]+ \CP h_0\;,
  \\
 \hat{\CU} &=
\CG_{\mfu}
\mathbf{1}_{+} \Big( - [\mathcal{H}_j,[\mathcal{H}_j, \cdot]] \circ \mathcal{U}
+ [[\mathcal{B}_j, \mathcal{H}_j],\cdot] \circ \mathcal{U} \Big) \;,
\qquad \mathcal{B}=\CY\restriction_{\mfg^3}\;.
 \end{equs}
Here we have defined
\begin{equ}[e:def-RQ]
R_Q
\eqdef 
\CP Y_0 \partial \hat{\CY}
+
 \hat{\CY}  \partial \CP Y_0 
+
 \hat{\CY} \partial \hat{\CY}
 +
 \bPsi^{U_0} \partial \hat{\CY} 
 +
 \hat{\CY} \partial \bPsi^{U_0}\;.
\end{equ}
The 
$E$-valued space-time distributions $\omega_1,\omega_2,\omega_3$ are compatible with the respective modelled distributions and are part of the input to the fixed point problem.
We will later take
\begin{equs}[e:omega123]
\omega_1
&\eqdef \CP Y_0  \partial \Psi^{U_{0}}_{\eps}\;,
\\
\omega_2
&\eqdef \Psi^{U_{0}}_{\eps}   \partial \CP Y_0\;,
\\
\omega_3
&\eqdef
\Psi^{U_{0}}_{\eps} 
\partial \Psi^{U_{0}}_{\eps}  - \sig^2(c_1^\eps +c_2^\eps) \CP U_0\partial \CP U_0\;,
\end{equs}
where, writing $\moll_\eps^{(2)} \eqdef \moll^\eps*\moll^\eps$,
\begin{equs}[e:c1c2]
c_1^\eps & \eqdef \int_{\R^2 \times \T^6}  
y_i \,K_{t-s}(-y)\partial_i  K_{t-\bar s}(-\bar y)  \,
\moll_\eps^{(2)} (s-\bar s,y-\bar y) \,\mrd s\mrd \bar s\mrd y \mrd \bar y\;,
\\
c_2^\eps &
 \eqdef \int_{\R^2 \times \T^6} 
\bar y_i \, K_{t-s}(-y)\partial_i  K_{t-\bar s}(-\bar y)  \,
\moll_\eps^{(2)} (s-\bar s,y-\bar y) \,\mrd s\mrd \bar s\mrd y \mrd \bar y\;.
\end{equs}
Here the values of $c_1^\eps, c_2^\eps$ clearly do not depend on $i\in \{1,2,3\}$.
Note that in \eqref{e:omega123} we use the convention 
\eqref{e:XdX-X3} except for the term $\CP U_0\partial \CP U_0$
for which that convention does not apply since $ \CP U_0$ is not $E$-valued. 
To define $\CP U_0\partial \CP U_0$, for any 
\begin{equs}[2]
u &= u_\mfg \oplus u_{\higgsvec}  & \quad &\in L(\mfg,\mfg)\oplus L(\higgsvec,\higgsvec) \;,
\\
\partial v &= (\partial_i v_\mfg)_{i=1}^3 \oplus (\partial_i u_{\higgsvec})_{i=1}^3 
& \quad &\in L(\mfg,\mfg)^3\oplus L(\higgsvec,\higgsvec)^3\;,
\end{equs}
we set $u\partial v \in E=\mfg^3\oplus \higgsvec$ as being given by
\begin{equs}
u\partial v\restriction_{\mfg_i} &\eqdef -[u_\mfg e_\alpha,\partial_i v_\mfg e_\alpha]
- \mathbf{B} (\partial_i v_{\higgsvec} e_a\otimes u_{\higgsvec} e_a) \quad \mbox{for } i\in \{1,2,3\} \;,
\\
u\partial v\restriction_{\higgsvec} &\eqdef 0 \;,  	\label{e:udv}
\end{equs}
where $(e_\alpha)_\alpha$ is an orthonormal basis of $\mfg$ and $(e_a)_a$ 
is an orthonormal basis of $\higgsvec$. Here $\alpha$ and $a$ are being summed as usual.

The reason to define the above space-time distributions
will become clear in Lemmas~\ref{lem:omegas_converge} and~\ref{lem:compatible}.
We will write $\omega^\eps_\ell$ for $\ell\in\{0,1,2,3\}$
when we wish to make the dependence of $\omega_\ell$ on $\eps$ explicit.

The abstract fixed point problem for the $(\bar X, \bar U,\bar h)$ system 
\eqref{eq: bar a system}
is given by~\eqref{e:fix-pt-HU} with $\CU=\CP U_0 + \hat\CU$ as before but with
$\CY$ in~\eqref{e:def-tildeY} replaced by
\begin{equ}[eq:CY_bar_equation]
\CY = \CM
+ \bar\bPsi^{U_0}
+ \hat \CY\;,
\end{equ}
where
\begin{equ}\label{eq:CM_def}
\CM = \CP Y_0 + \CW\;,
\end{equ}
and $\CW\in\cD^{\frac74,-\frac12-\kappa}$ is the canonical lift of a smooth function on $(0,\infty)$,
and~\eqref{e:fix-pt-multi-Y} replaced by
\begin{equs}
 \hat \CY
 &=
\CG_{\mfz}^{\bar\omega_1}  
(\CM \partial\bar\bPsi_{U_0})
+ \CG_{\mfz}^{\bar\omega_2}  
(  \bar\bPsi^{U_0} \partial \CM )
+\CG_{\mfz}^{\bar\omega_3}   
(\bar\bPsi^{U_0} \partial \bar\bPsi_{U_0}) \label{e:fix-pt-multi-Xbar}
\\
&\quad
+ \CG_\mfz^{\bar\omega_4}(\CP Y_0 \partial\CW + \CW \partial \CP Y_0 + \CW\partial \CW)
\\
&\quad
+\CG_{\mfz} \mathbf{1}_{+}\Big( \bar R_Q
  + \CY^3+ \mathring{C}_{\mfz} \CY +\mathring C_\mfh\CH +\CP Y_0 \partial \CP Y_0 \Big)
+\CG_{\mfm} \mathbf{1}_{+}
  \big( \hat{\CU} \boldsymbol{\Xi} \big) +  R  ( \bar\omega_0) \;,
\end{equs}
where $\CG_{\mfm}\eqdef \bar{\mcb{K}} + \bar{R} \mathcal{R}$ with
$\bar{R}$  defined just like $R$ but with $G - K$ replaced by $G^{\eps} - K^{\eps}$.
Here  we define (recall from \eqref{e:K-Keps-assign}
that the kernel assigned to $\mfm$ is $K^\eps$)
\begin{equs}[e:bbPsi_U0]
\bar\bPsi^{U_0}  &\eqdef \mcb{K}_{\mfm}^{\bar\omega_0} (\CP U_0 \boldsymbol{\Xi})
 \in \cD^{\infty,-\frac12-2\kappa}_{-\frac12-2\kappa} \;,
\end{equs}
$\bar R_Q$ is defined in the same way as $R_Q$ in~\eqref{e:def-RQ}
with $\bPsi^{U_0} $ replaced by $\bar\bPsi^{U_0} $
and $\CP Y_0$ replaced by $\CM$,
and $\bar\omega_i$ are distributions compatible with the respective modelled distributions.
The modelled distribution $\CW$ is part of the input for the fixed point problem
and we will later take
\begin{equ}\label{eq:CW_def}
\CW = G*(\bone_{t\geq0} \moll^\eps*(\sig \xi\bone_{t<0}))\;,
\end{equ}
understood as $\CW=0$ if $\eps=0$.
The role of $\CW$ is to encode the behaviour of $\xi$ on negative times so as to obtain the correct reconstruction.
We will also later take
\begin{equs}[e:bar_omega0123]
\bar\omega_0 &\eqdef \sig\CP U_0 \xi^\delta\;,
\\
\bar\omega_1
&\eqdef \CM  \partial \bar\Psi^{U_{0}}_{\delta}\;,
\\
\bar\omega_2
&\eqdef \bar\Psi^{U_{0}}_{\delta}   \partial \CM\;,
\\
\bar\omega_3
&\eqdef
\bar \Psi^{U_{0}}_{\delta} 
\partial \bar \Psi^{U_{0}}_{\delta}
 - \sig^2(c_1^{\eps,\delta} +c_2^{\eps,\delta}) \CP U_0\partial \CP U_0\;,
\\
\bar\omega_4 &\eqdef
\CP Y_0 \partial\CW + \CW \partial \CP Y_0 + \CW\partial \CW\;,
\end{equs}
where, similar to before, we write
\begin{equ}[e:bar-Psi-U0-delta]
\bar\Psi_{\delta}^{U_0} \eqdef K^\eps \ast \bar\omega_0
= \CR \bar\bPsi^{U_0}  \;.
\end{equ}
Here $c_1^{\eps,\delta} ,c_2^{\eps,\delta}$ are defined as  in \eqref{e:c1c2}
except that $K$ is replaced by $K^\eps$ and $\moll_\eps^{(2)}$
is replaced by $\moll_\delta^{(2)}$.
We will write $\bar\omega_0^\delta$ and $\bar\omega^{\eps,\delta}_\ell$ for $\ell\in\{1,2,3\}$ and $\bar\omega^\eps_4$ when we wish to make the dependence of $\bar\omega$ on $\eps,\delta$ explicit
(remark that $\bar\omega^\delta_0$ and $\bar\omega^\eps_4$ are independent of $\eps$ and $\delta$ respectively).

\subsubsection{Probabilistic input}
\label{subsubsec:prob}
We write $Z^{\delta,\eps}_{\BPHZ}  \in \mathscr{M}_{\eps}$ to be the BPHZ models associated to the kernel assignment $K^{(\eps)}$ and random noise assignment $\zeta^{\delta,\eps}$.
We will first take $\delta \downarrow 0$ followed by $\eps \downarrow 0$.
Recall that here $\eps$ is the mollification parameter in Theorem~\ref{thm:gauge_covar}, and $\delta$
is an additional mollification for the white noise
in \eqref{eq:SPDE_for_bar_A}
which is introduced just for technical reasons, so that the models we explicitly construct are smooth models.

Recall from \cite[Sec.~7.2.2]{CCHS2d}    
that one can encode the smallness of $K-K^\eps$ and $\xi-\xi^\eps$ by  
introducing $\eps$-dependent norms on our regularity structure.
We then have corresponding 
 $\eps$-dependent seminorms  $\$\act\$_{\eps}$ \label{norm_eps_pageref}
 and pseudo-metrics  $d_{\eps}(\act,\act) = \$\act;\act\$_\eps$ \label{d_eps_pageref} on models as in Appendix~\ref{app:Singular modelled distributions}.
We will write $\varsigma\in(0,\kappa]$ for the small parameter `$\theta$' appearing in \cite[Sec.~7.2.2]{CCHS2d} to avoid confusion with $\theta$ as defined in Section~\ref{sec:renorm-A}.
For the reader unfamiliar with  \cite[Sec.~7]{CCHS2d}, we remark that all we need with these  $\eps$-dependent (semi)norms
is that one can extract a factor $\eps^\varsigma$
in an abstract Schauder estimate for $K-K^\eps$
(see Lemma~\ref{lem:K-barK-hat} and Lemma~\ref{lem:Schauder-input-KK})
 and, similarly, an estimate of the type $\$\boldsymbol\Xi - \bar {\boldsymbol\Xi}\$_\eps \lesssim \eps^\varsigma$ encoding the corresponding
 bound on $\xi-\xi^\eps$ at the level of models \slash modelled distributions (see \eqref{eq:diff_of_noises} below).

We then also have
$\eps$-dependent seminorms  on  $\cD^{\gamma,\eta} \ltimes \mathscr{M}_\eps$
denoted by $|\cdot |_{\cD^{\gamma,\eta,\eps}}$,
and
on $\hat\cD^{\gamma,\eta} \ltimes \mathscr{M}_\eps$
denoted by $|\cdot |_{\hat\cD^{\gamma,\eta,\eps}}$.
These seminorms are indexed by compact subsets of $\R\times \T^3$, which, as in Appendix~\ref{app:Singular modelled distributions}, we will always take of the form $O_\tau=[-1,\tau]\times\T^3$ for $\tau\in(0,1)$ and will sometimes keep implicit.

\begin{lemma}\label{lem:conv_of_models2}
One has, for any $p \ge 1$, 
\begin{equ}\label{eq:eps_control_of_model}
\sup_{\eps \in (0,1]}
\sup_{\delta \in (0,\eps)}
\E[ \|Z^{\delta ,\eps}_{\BPHZ}\|_{\eps}^{p} ] 
< \infty\;.
\end{equ}
Moreover, there exist models $Z^{0,\eps}_{\BPHZ} \in \mathscr{M}_{\eps}$ for $\eps \in (0,1]$
and a model $Z^{0,0}_{\BPHZ} \in \mathscr{M}_{0}$
 such that one has the following convergence in probability:
\begin{equs}
\lim_{\delta \downarrow 0} 
d_{\eps} (Z^{\delta,\eps}_{\BPHZ},
Z^{0,\eps}_{\BPHZ})
&=0 \quad (\forall \eps \in (0,1])\;,\label{e:conv-Z-delta}
\\
\lim_{\eps \downarrow 0}
d_{1}
(Z^{0,\eps}_{\BPHZ},
 Z^{0,0}_{\BPHZ})
&=0\;.\label{e:conv-Z-eps}
\end{equs}
\end{lemma}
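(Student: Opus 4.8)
\textbf{Proof proposal for Lemma~\ref{lem:conv_of_models2}.}
The plan is to apply the general convergence theorem for BPHZ models of \cite{CH16} in its quantitative form that tracks the $\eps$-dependent norms, exactly as was done in \cite[Sec.~7.3]{CCHS2d} for the two-dimensional case, and then to deal separately with the limit $\delta\downarrow0$. First I would verify the bound \eqref{eq:eps_control_of_model}: this is the analogue of \cite[Prop.~7.35 or similar]{CCHS2d} and reduces, via \cite[Thm.~2.15]{CH16} and the $\eps$-dependent bookkeeping of \cite[Sec.~7.2.2]{CCHS2d}, to checking three power-counting conditions on the rule $R$ defined in~\eqref{e:rule-gauged}. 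The key inputs are that $\deg(\mfl)=\deg(\bar\mfl)=-5/2-\kappa>-|\s|/2=-5/2$, that the trees $\tau$ with $|N(\tau)|>1$ all have $\deg(\tau)>-5/2$ (the worst case being of the form $\<IXiI'Xi_notriangle>$ type, i.e.\ two noises joined by a kernel and its derivative, with degree $-2-2\kappa$), and that for the most singular subtree-pairs one has $\deg(\tau)+\deg(\mfl)+|\s|>0$ (the worst case being $\<IXi^3_notriangle>$ type with $\deg = -1/2-\cdots$, giving $1-O(\kappa)>0$). These are the same computations as in Lemma~\ref{lem:conv_of_models} but now with the enlarged label set; one also needs to note that the positive-degree edges $\mfh,\mfh',\mfu$ only appear with their kernels $K,\nabla K, K$ and so do not worsen any bound, and that the mollified kernel $K^\eps$ assigned to $\mfm$ satisfies the same bounds as $K$ uniformly in $\eps$. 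The additional content of \eqref{eq:eps_control_of_model} over a plain convergence statement is the uniformity in $\eps$, which is precisely what the $\eps$-dependent norms of \cite[Sec.~7.2.2]{CCHS2d} are designed to deliver: one uses that $|K-K^\eps|$ and the corresponding noise differences are small in a scaling sense, with parameter $\varsigma\in(0,\kappa]$.

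Next I would establish \eqref{e:conv-Z-eps}, the convergence $Z^{0,\eps}_{\BPHZ}\to Z^{0,0}_{\BPHZ}$ as $\eps\downarrow0$ in $d_1$. Given \eqref{eq:eps_control_of_model} and the fact that the noise assignment $\zeta^{0,\eps}$ has $\zeta_\mfl = \sig^\eps\xi$ (genuine white noise) and $\zeta_{\bar\mfl}=\sig^\eps\xi^\eps$, this is a Cauchy-in-probability argument: one applies \cite[Thm.~2.15]{CH16} to the difference of models, using that both the kernel assignments $K^{(\eps)},K^{(\bar\eps)}$ and the noise assignments converge to the common limit $K^{(0)}$, $\zeta^{0,0}$ at a polynomial rate in $\eps\vee\bar\eps$ measured in the relevant scaling seminorm. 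This is the exact analogue of \cite[Thm.~7.36 / Cor.]{CCHS2d}. One subtlety worth flagging: the type $\mfm$ carries the kernel $K^\eps$, which does \emph{not} converge to $K$ in $\mcC^\infty$ but only in the scaling sense, so the uniform-in-$\eps$ norms $\$\cdot\$_\eps$ (and the fact that $\mfm$ only ever appears hit by white noise at a leaf, as enforced by $\mathring R(\mfm)=\{\mfu\mfl\}$) are essential here rather than a mere convenience.

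The step I expect to be the main obstacle — though it is ultimately soft — is \eqref{e:conv-Z-delta}, the limit $\delta\downarrow0$ at fixed $\eps>0$. Here the point is that $Z^{\delta,\eps}_{\BPHZ}$ is a genuinely \emph{smooth} model (both noises $\xi^\delta,\xi^\eps$ are smooth), and as $\delta\downarrow0$ the noise $\zeta_\mfl=\sig^\eps\xi^\delta$ converges to white noise $\sig^\eps\xi$ while $\zeta_{\bar\mfl}=\sig^\eps\xi^\eps$ is unchanged. The clean way is again to invoke \cite[Thm.~2.15]{CH16} for the pair $(Z^{\delta,\eps}_{\BPHZ},Z^{0,\eps}_{\BPHZ})$, where $Z^{0,\eps}_{\BPHZ}$ is \emph{defined} as the BPHZ model for the assignment $(K^{(\eps)},\zeta^{0,\eps})$; the uniformly-compatible-family hypotheses are met with common limit $\zeta^{0,\eps}$, and the bound is against a fixed power of $\delta$. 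One must additionally check that the BPHZ characters behave well, which is essentially the content already recorded in the proof of Proposition~\ref{prop:renorm_g_eqn}: for any tree $\tau\in\mfT^{\bar F}$ one has $\lim_{\delta\downarrow0}\bar\PPi_{\delta,\eps}[\tau]=\bar\PPi_{0,\eps}[\tau]$ and likewise for $\ell^{\delta,\eps}_{\BPHZ}$, because every noise of $\tau$ not incident to the root is hit by the $\eps$-regularised kernel $K^\eps$, so only the covariance $\mathbf E[\xi^\delta(z)\xi^\delta(\bar z)]$ needs to converge, which it does in the relevant weak sense. The seminorm $d_\eps$ (rather than $d_1$) is used in~\eqref{e:conv-Z-delta} because $\eps$ is held fixed there, so no uniformity in $\eps$ is needed for this half. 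Assembling the three pieces gives the statement; by a diagonal argument one could upgrade to joint convergence along $\delta=\delta(\eps)\downarrow0$ fast enough, but that is not needed for the lemma as stated.
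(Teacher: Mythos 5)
Your proposal is correct and follows essentially the same route as the paper: the paper's proof simply invokes \cite[Thm.~2.15]{CH16} and \cite[Thm.~2.31]{CH16} together with the $\eps$-dependent norm framework of \cite[Sec.~7.2.2]{CCHS2d}, checking the same three power-counting criteria as in Lemma~\ref{lem:conv_of_models} (with the sole tweak that the noise covariance is now measured in the $\|\cdot\|_{-5-2\kappa,k}$ kernel norm since $d=3$). Your more detailed discussion of the $\delta\downarrow 0$ limit and of the role of the type $\mfm$ is consistent with, and a reasonable expansion of, what the paper leaves implicit.
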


\begin{proof}
The proof follows in  exactly the same way as \cite[Lem.~7.24]{CCHS2d},
by invoking \cite[Thm.~2.15]{CH16} and \cite[Thm.~2.31]{CH16}
and checking the criteria of these theorems as in Lemma~\ref{lem:conv_of_models}.
The only tweak for the proof  of \cite[Lem.~7.24]{CCHS2d}
is that the covariance of the noise $\xi^\eps$ 
is  measured in the $\|\act\|_{-5-2\kappa,k}$ kernel norm here since $d=3$.
\end{proof}
The next lemma shows that $\CW$ in~\eqref{eq:CM_def} vanishes as $\eps\downarrow0$.
\begin{lemma}
Let $\CW$ be defined by~\eqref{eq:CW_def}
and denote by the same symbol its lift to the polynomial sector.
Then, for all $\kappa\in (0,1)$ and $\tau>0$, there exists a random variable $M$ with moments bounded of all orders such that
\begin{equ}
|\CW|_{\frac74,-\frac12-2\kappa; \tau} \leq M\eps^{\kappa}\;.
\end{equ}
\end{lemma}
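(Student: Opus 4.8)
The statement to prove is the bound $|\CW|_{\frac74,-\frac12-2\kappa;\tau}\leq M\eps^\kappa$ for $\CW = G*(\bone_{t\geq0}\moll^\eps*(\sig\xi\bone_{t<0}))$, interpreted both as a function and as its lift to the polynomial sector. The first observation is that since $\CW$ is (the lift of) a genuine function, the modelled-distribution norm $|\cdot|_{\frac74,-\frac12-2\kappa;\tau}$ on the polynomial sector reduces to the classical weighted $\CC^{\frac74}$-type norm with the blow-up exponent $-\frac12-2\kappa$ at $t=0$, i.e.\ we need to control weighted sup-norms of $\CW$ and its spatial derivatives up to order $1$ (and one time derivative at lower order) with weights $t^{(\gamma-\ell)/2}$ where appropriate near $t=0$, together with the right H\"older behaviour. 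So the plan is to reduce everything to pointwise estimates on $\CW(t,x)$ and $\nabla\CW(t,x)$ for $t\in(0,\tau]$.

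Next I would exploit the key structural fact: the noise appearing inside is $\sig\xi\bone_{t<0}$, which is supported in negative times, while $G$ is the (forward) heat kernel. Writing $\eta \eqdef \moll^\eps*(\sig\xi\bone_{t<0})$, the mollification by $\moll^\eps$ (which is supported in a ball of radius $\eps$ in space-time) smears this to a field supported in $\{t< \eps\}$, but crucially, after further restricting by $\bone_{t\geq0}$, the source $\bone_{t\geq0}\eta$ is supported in the thin time-slab $[0,\eps)\times\T^3$. Then $\CW(t,x) = \int_0^{t\wedge\eps}\!\!\int_{\T^3} G_{t-s}(x-y)\,\eta(s,y)\,\mrd s\,\mrd y$. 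For $t$ bounded away from $\eps$ the heat kernel $G_{t-s}$ with $s\in[0,\eps)$ is essentially $G_t$, smooth with all derivatives of size $O_\kappa(1)$ uniformly; the smallness will come entirely from the fact that the temporal integration range has length $\leq\eps$ together with moment bounds on $\eta$.

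Concretely, I would estimate, for a fixed space-time test scale and base point, the Gaussian random variable $\langle \CW, \psi^\lambda_{z}\rangle$ (or directly $\CW(t,x)$, $\nabla\CW(t,x)$ after noting $\CW$ is smooth for $t>0$) by computing its second moment. Using the covariance of $\xi$ (white noise, bound \eqref{eq:covariance_bound}) one gets
\begin{equ}
\E[|\CW(t,x)|^2] \lesssim \sig^2\int_0^{t\wedge\eps}\!\!\int_0^{t\wedge\eps} \big| (G_{t-s}\hstar G_{t-\bar s}\hstar \moll^\eps_{s}\hstar\moll^\eps_{\bar s})(x) \big|\,\mrd s\,\mrd\bar s\;,
\end{equ}
and since the double time integral ranges over a set of measure $\leq(t\wedge\eps)^2\leq\eps^2$ while the kernel product is bounded (for $t$ in a fixed compact away from $0$, or with the appropriate $t$-power for small $t$, which is exactly what the weight $t^{-\frac12-2\kappa}$ in the $\hat\cD$-norm absorbs), one obtains $\E[|\CW(t,x)|^2]\lesssim \eps^{2}\, t^{-1-4\kappa}$ after inserting the weight, hence a bound of order $\eps^{1-\kappa}$ (which is $\leq\eps^\kappa$); analogous computations with $\partial_i G$ in place of one $G$ handle $\nabla\CW$ and with a $G$-difference handle the spatial H\"older seminorm at exponent $\frac74$. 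I would then raise to the $p$-th moment by equivalence of Gaussian moments, and conclude via Kolmogorov's continuity theorem (in the form that controls weighted H\"older-in-space, continuous-in-time norms, as used repeatedly in Section~\ref{sec:SHE}) that $\E[|\CW|_{\frac74,-\frac12-2\kappa;\tau}^p]\lesssim \eps^{p\kappa}$ for $p$ large enough, so $M\eqdef \eps^{-\kappa}|\CW|_{\frac74,-\frac12-2\kappa;\tau}$ has all moments bounded uniformly in $\eps$.

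\textbf{Main obstacle.} The routine part is the moment computation; the only genuine care needed is the bookkeeping near $t=0$ and the verification that the $\eps$-gain $\eps^\kappa$ survives after one pays the weight $t^{-(\frac12+2\kappa)}$ — one must check that the length-$\eps$ time window and the weight combine so that what is left is a nonnegative power of $\eps$, i.e.\ that choosing $\kappa$ small (and $p$ large for Kolmogorov) leaves $\eps^{1-\kappa}\leq\eps^\kappa$ with room to spare; this is the place where a slightly too greedy estimate would fail, and where I would be most careful to track the exponents. A secondary technical point is making precise the claim that the $|\cdot|_{\frac74,-\frac12-2\kappa;\tau}$ modelled-distribution norm of a polynomial-sector lift coincides (up to constants) with the classical weighted norm, but this is standard and already used implicitly (e.g.\ in the footnote around \eqref{e:PN-bound}).
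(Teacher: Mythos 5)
Your overall strategy (treat $\CW$ as a classical weighted function, exploit the thinness of the time support of $\bone_{t\ge0}\moll^\eps*(\xi\bone_{t<0})$, then Gaussian moments plus Kolmogorov) is viable, but the execution has a genuine gap exactly at the point you flag as the main obstacle. First, a scaling error: since $\moll^\eps(t,x)=\eps^{-5}\moll(\eps^{-2}t,\eps^{-1}x)$, the source is supported in the time slab $[0,O(\eps^2))$, not $[0,\eps)$, so your "measure of the double time integral" bookkeeping is off from the start. Second, and more seriously, the asserted intermediate bound $\E[|\CW(t,x)|^2]\lesssim \eps^2 t^{-1-4\kappa}$ is false in the regime $t\sim\eps^2$: there the variance of $\CW(t,x)$ is genuinely of order $\eps^{-1}$ (e.g.\ crudely, $|\CW(\eps^2)|\lesssim \eps^2\cdot|\eta|_{L^\infty}\sim\eps^2\cdot\eps^{-5/2}$ with no further gain), whereas your bound would give $\eps^{-8\kappa}$. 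The lemma is still true because the weight $t^{-1/4-\kappa-\gamma/2}$ evaluated at $t=\eps^2$ leaves exactly a margin of $\eps^{2\kappa}$, but that is a much tighter margin than your "$\eps^{1-\kappa}\le\eps^\kappa$ with room to spare", and it is not obtained by the argument you sketch ("kernel product bounded by the $t$-power the weight absorbs, times the measure of the window"). Finally, the regime $t\lesssim\eps^2$, where the convolution is over the full interval $[0,t]$ and the heat kernel offers no decay, is not addressed beyond a parenthetical.

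For comparison, the paper avoids the second-moment computation entirely: it first records two pathwise a priori bounds on the slab $[0,\eps^2]$, namely $|\moll^\eps*(\xi\bone_-)(s)|_{L^\infty}\le M\eps^{-5/2-\kappa}$ and $|\moll^\eps*(\xi\bone_-)(s)|_{\CC^{-1/2-2\kappa}}\le M\eps^{-2+\kappa}$ (with $M$ having all moments), and then argues deterministically in two regimes: for $t\le 2\eps^2$ the $L^\infty$ bound combined with $\int_0^t(t-s)^{-\gamma/2}\mrd s\lesssim t^{1-\gamma/2}$ and $t\lesssim\eps^2$ gives the weight times $\eps^\kappa$; for $t>2\eps^2$ the $\CC^{-1/2-2\kappa}$ bound combined with the semigroup smoothing $|G_{t-s}f|_{\CC^\gamma}\lesssim(t-s)^{-\gamma/2-1/4-\kappa}|f|_{\CC^{-1/2-2\kappa}}$ and the length-$\eps^2$ integration window gives the same. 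If you want to salvage your route, you would need to replace your uniform "bounded kernel" claim by the analogous two-regime analysis at the level of covariances, tracking the heat-kernel singularity against the parabolic scale $\eps^2$ of $\moll^{(2)}_\eps=\moll^\eps*\moll^\eps$.
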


\begin{proof}
Recall that $\sup_{s\in[0,\eps^2]}|\moll^\eps*(\xi \bone_{-})(s)|_{L^\infty} \leq M \eps^{-\frac52-\kappa}$.
Therefore, for all $\gamma \in [0,2)\setminus\{1\}$
and $t\in [0,2\eps^2]$
\begin{equs}
|\CW(t)|_{\CC^\gamma} &\leq
\int_0^t | G_{t-s}(\moll^\eps*(\xi \bone_{-})(s))|_{\CC^\gamma}\mrd s
\lesssim \int_0^t (t-s)^{-\frac\gamma2} M \eps^{-\frac52-\kappa}\mrd s
\\
&\lesssim
Mt^{1-\frac\gamma2} \eps^{-\frac52-\kappa} 
\lesssim Mt^{-\frac\gamma2-\frac14-\kappa}\eps^\kappa\;.
\end{equs}
Recall further that $\sup_{s\in[0,\eps^2]}|\moll^\eps*(\xi \bone_{-})(s)|_{\CC^{-\frac12-2\kappa}} \leq M \eps^{-2+\kappa}$.
Therefore, for all $\gamma$ as above
and $t\in (2\eps^2,\tau]\;$,
\begin{equs}
|\CW(t)|_{\CC^\gamma}
\lesssim \int_0^{\eps^2} (t-s)^{-\frac\gamma2-\frac14-\kappa} M \eps^{-2+\kappa}\mrd s
&\leq
\eps^2(t-\eps^2)^{-\frac\gamma2-\frac14-\kappa}
M\eps^{-2+\kappa}
\\
&\lesssim Mt^{-\frac\gamma2-\frac14-\kappa}\eps^\kappa\;,
\end{equs}
where it suffices to integrate from $0$ to $\eps^2$ since
$\moll^\eps*( \xi\bone_-)$ is supported on the time interval $(-\infty,\eps^2]$.
\end{proof}

The next lemma gives the probabilistic convergence of the products that are ill-defined due to $t=0$ singularities but are needed in the formulation of our fixed point problem.
Recall $\omega^\eps_\ell$ defined in~\eqref{eq:omega_0_def} and~\eqref{e:omega123} and $\bar\omega_\ell^{\eps,\delta}$ defined in~\eqref{e:bar_omega0123}.
In the definition of $\bar\omega_\ell^{\eps,\delta}$ we take $\CM$ as in~\eqref{eq:CM_def}
where $\CW$ is defined by~\eqref{eq:CW_def}. 

\begin{lemma}\label{lem:omegas_converge}
For all $0 < \delta \le \eps < 1$, there exists a random variable $M>0$,
of which every moment is bounded uniformly in $\eps,\delta$, and such that
\begin{equs}[eq:PU_0xi_bounds]
|\omega^\eps_0|_{\CC^{-\frac52-\kappa}}
&\leq  M \sig |U_0|_{\infty}\;,
\\
|\omega^\eps_0 - \omega^\delta_0|_{\CC^{-\frac52-\kappa}}
&\leq  M \sig |U_0|_{\CC^{\kappa/2}} \eps^{\kappa/2}\;,
\end{equs}
and
\begin{equ}[e:zeta-hat-diff]
| \omega_\ell^\eps - \bar \omega_\ell^{\eps,\delta}  |_{\CC^{\alpha-2\kappa}}
 \leq M \sig^n |U_0|_{\CC^{\rho}}^{n} (|Y_0|_{\CC^\eta} +\eps^{\kappa/2})^m 
\eps^\kappa\;,
\end{equ}
where $\alpha=\eta-\frac32,m=1,n=1$ for $\ell \in \{1,2\}$ and $\alpha=-2,m=0,n=2$ for $\ell=3$,
and
\begin{equ}
|\bar\omega^\eps_4|_{\CC^{\eta-\frac32-2\kappa}} \leq M(\sig |Y_0|_{\CC^\eta} + \sig^2)\eps^\kappa\;.
\end{equ}
In particular, $\omega^0_0\eqdef \lim_{\eps\downarrow 0}\omega^\eps_0$ exists in probability in $\CC^{-\frac52-\kappa}$, and $\lim_{\eps\downarrow0}\bar\omega^\eps_4 = 0$ in probability in $\CC^{\eta-\frac32-2\kappa}$.

Moreover $\omega_1^\eps$ and $\omega_2^\eps$
converge in probability as $\eps\to 0$
in $\CC^{\eta-\f32-\kappa}$,
and $\omega_3^\eps$
converges in probability  as $\eps\to 0$ in $\CC^{-2-\kappa}$.
We denote their limits by $\omega_1^0,\omega_2^0,\omega_3^0$. 
Finally, for $\ell\in \{1,2,3\}$,
the limit $\bar\omega^{\eps,0}_\ell\eqdef \lim_{\delta\downarrow0}\bar\omega^{\eps,\delta}_\ell$
exists and
$\omega_\ell^0 = \lim_{\eps\downarrow 0} \bar\omega_\ell^{\eps,0}$  under the same topologies as above.
\end{lemma}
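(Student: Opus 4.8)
\textbf{Plan of proof for Lemma~\ref{lem:omegas_converge}.}
The strategy is to reduce every claim to a second-moment computation followed by equivalence of Gaussian moments and Kolmogorov's continuity criterion, exactly in the style of Lemmas~\ref{lem:tildePsi-cov} and~\ref{lem:PX0Psi-prob}. First I would set up notation: write $\Psi^{U_0}_\eps = K*\omega^\eps_0$ with $\omega^\eps_0 = \sig \CP U_0 \xi^\eps$, and note that since $\CP U_0$ is a smooth deterministic function, the estimates on $\omega^\eps_0$ in $\CC^{-\frac52-\kappa}$ are really estimates on $\xi^\eps$ itself, weighted by $|U_0|_\infty$ (resp.\ $|U_0|_{\CC^{\kappa/2}}$ for the difference bound). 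The two bounds in~\eqref{eq:PU_0xi_bounds} then follow from the standard fact that $\xi^\eps$ is uniformly bounded in $\CC^{-\frac52-\kappa}$ and that $\xi^\eps-\xi^\delta$ is bounded by $(\eps\vee\delta)^{\kappa/2}$ in $\CC^{-\frac52-\kappa-\kappa/2}$, combined with the multiplication estimate $\CC^{\gamma}\times\CC^{-\frac52-\kappa}\to\CC^{-\frac52-\kappa}$ (which holds since $\gamma=\infty$ here, $U_0$ being smooth after applying the heat flow, though one must keep the $t$-dependence via $|\CP_t U_0|_{\CC^\gamma}\lesssim |U_0|_\infty$ — this is where the $\CC^{\kappa/2}$ regularity of $U_0$ is consumed in the difference bound).

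Next I would handle the comparison bounds~\eqref{e:zeta-hat-diff} between $\omega^\eps_\ell$ and $\bar\omega^{\eps,\delta}_\ell$. The key observation is that $\omega^\eps_\ell$ involves $\Psi^{U_0}_\eps = K*(\sig\CP U_0\xi^\eps)$ whereas $\bar\omega^{\eps,\delta}_\ell$ involves $\bar\Psi^{U_0}_\delta = K^\eps*(\sig\CP U_0\xi^\delta)$ and $\CM = \CP Y_0 + \CW$ in place of $\CP Y_0$. For $\ell\in\{1,2\}$ the difference splits into (i) the difference between $\Psi^{U_0}_\eps$ and $\bar\Psi^{U_0}_\delta$, controlled using $|\partial^k(K*\xi^\eps - K^\eps*\xi^\delta)| \lesssim \eps^{\kappa/2}(\cdots)$ type bounds together with the fact that $\CP U_0$ appears as a smooth factor, and (ii) the contribution of $\CW$, controlled by the previous lemma which gives $|\CW|\lesssim \eps^\kappa$. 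For $\ell=3$ the subtraction of the counterterms $c_1^\eps+c_2^\eps$ versus $c_1^{\eps,\delta}+c_2^{\eps,\delta}$ must be tracked: the renormalised products $\Psi^{U_0}_\eps\partial\Psi^{U_0}_\eps - \sig^2(c_1^\eps+c_2^\eps)\CP U_0\partial\CP U_0$ are mean-zero by construction of $c_1^\eps,c_2^\eps$ (this is the analogue of the computation showing $\E(\tilde\Psi_\eps\partial\tilde\Psi_\eps) = 0$ after the derivative, except now with the extra divergent piece isolated into $c_i^\eps$), so the second-moment bound proceeds via Wick's theorem exactly as in Lemma~\ref{lem:tildePsi-cov}, picking up the weight $|U_0|_{\CC^\rho}^2$ from the smooth factors and a factor $\eps^\kappa$ from the mollifier differences. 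The bound on $\bar\omega^\eps_4 = \CP Y_0\partial\CW + \CW\partial\CP Y_0 + \CW\partial\CW$ follows directly from $|\CW|_{\frac74,-\frac12-2\kappa;\tau}\lesssim\eps^\kappa$ and the product estimates in $\CC^\eta$.

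Finally, the convergence statements: $\omega^\eps_1,\omega^\eps_2$ converge in $\CC^{\eta-\frac32-\kappa}$ and $\omega^\eps_3$ in $\CC^{-2-\kappa}$ by a Cauchy argument — one shows $\E|\langle\omega^\eps_\ell - \omega^{\bar\eps}_\ell, \psi^\lambda_z\rangle|^2 \lesssim (\eps\vee\bar\eps)^{\kappa/2}\lambda^{2(\alpha-\kappa)}$ by telescoping, which is essentially the same computation as in Lemma~\ref{lem:PX0Psi-prob} (for $\omega_1,\omega_2$, which pair $\CP Y_0$ with $\partial\Psi^{U_0}_\eps$, the relevant covariance bound is $|\E(\partial^k\Psi^{U_0}_\eps(z)\partial^k\Psi^{U_0}_{\bar\eps}(\bar z))|\lesssim |z-\bar z|^{-1-2|k|}$, giving scaling exponent $2\eta+1$ which matches $2(\eta-\frac32)+4$; for $\omega_3$ one uses the renormalised-product argument as in Lemma~\ref{lem:tildePsi-cov}). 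Then $\lim_{\eps\to 0}\omega^\eps_\ell = \omega^0_\ell$ follows, and $\lim_{\delta\to 0}\bar\omega^{\eps,\delta}_\ell = \bar\omega^{\eps,0}_\ell$ follows from the $\delta\downarrow0$ convergence $K^\eps*\xi^\delta\to K^\eps*\xi^\eps$ in $\CC^\infty$ (for fixed $\eps$, since $K^\eps$ is smooth) — wait, more precisely $\bar\Psi^{U_0}_\delta = K^\eps*(\sig\CP U_0\xi^\delta)$ converges to $K^\eps*(\sig\CP U_0\xi^\eps)$ smoothly as $\delta\downarrow0$, and the counterterm $c_i^{\eps,\delta}\to c_i^\eps$; the identity $\omega^0_\ell = \lim_{\eps\to 0}\bar\omega^{\eps,0}_\ell$ then follows by comparing $\bar\omega^{\eps,0}_\ell$ (which has $\CM = \CP Y_0 + \CW$ and $K^\eps$) with $\omega^\eps_\ell$ (which has $\CP Y_0$ and $K$) using~\eqref{e:zeta-hat-diff} with $\delta = \eps$ and letting $\eps\to 0$. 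The main obstacle I anticipate is bookkeeping: correctly isolating the divergent subtractions $c_1^\eps,c_2^\eps$ so that the relevant products are genuinely mean-zero (so that the Wick expansion has no ``diagonal'' term surviving), and carefully tracking which of $\eps$ versus $\delta$ controls each error term through the various telescopings — the analysis itself is routine once the correct decompositions are written down, but the combination of two mollification scales, the $\CW$ correction, and the renormalisation constants makes the accounting delicate.
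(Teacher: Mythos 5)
There is a genuine gap in your treatment of $\omega_3^\eps$. You assert that the renormalised product $\Psi^{U_0}_\eps\partial\Psi^{U_0}_\eps-\sig^2(c_1^\eps+c_2^\eps)\CP U_0\partial\CP U_0$ is ``mean-zero by construction of $c_1^\eps,c_2^\eps$'', in analogy with $\E(\tilde\Psi_\eps\partial\tilde\Psi_\eps)=0$. This is false, and the zeroth Wiener chaos is in fact the heart of the matter. The vanishing of $\E(\tilde\Psi_\eps\partial\tilde\Psi_\eps)$ in Lemma~\ref{lem:tildePsi-cov} is a parity effect (the integrand is odd in the spatial variable thanks to the derivative), and the non-constant factor $\CP U_0$ destroys exactly this parity. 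The constants $c_1^\eps,c_2^\eps$ are designed only to cancel the \emph{divergent} part of the zeroth chaos, namely the contribution of the first-order Taylor term of $\CP U_0(w)$ around the evaluation point $z$. What remains after subtraction is nonzero for two separate reasons: first, the expectation $\E\big((K*(\CP U_0\xi^\eps))(\partial K*(\CP U_0\xi^\eps))\big)(z)$ only integrates over positive times (since $\CP U_0$ lives on $t>0$) whereas the integrals defining $c_1^\eps,c_2^\eps$ in~\eqref{e:c1c2} range over $s,\bar s\in\R$, leaving a boundary correction of order $t^{-1/2}$; second, the Taylor remainder $r_u(w,z)$, bounded by $s^{(\rho-5/2)/2}|w-z|^{5/2}|U_0|_{\CC^\rho}$, produces cross terms of order $t^{(\rho-2)/2}$. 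Both leftovers are nonzero but land in $\CC^{-2-\kappa}$ (using $\rho>1/2$), which is why the statement only claims convergence in that space. Your argument, which declares the zeroth chaos absent and reduces everything to a Wick/second-chaos computation, skips the entire Taylor-expansion-and-parity analysis that the actual proof devotes most of its length to, and without it you cannot establish the convergence of $\omega_3^\eps$ or the bound~\eqref{e:zeta-hat-diff} for $\ell=3$.

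Two smaller points. For~\eqref{eq:PU_0xi_bounds} you propose a deterministic multiplication estimate $\CC^\gamma\times\CC^{-\frac52-\kappa}\to\CC^{-\frac52-\kappa}$ together with ``$|\CP_tU_0|_{\CC^\gamma}\lesssim|U_0|_\infty$''; the latter is wrong (one only has $|\CP_tU_0|_{\CC^\gamma}\lesssim t^{-\gamma/2}|U_0|_\infty$), so this route does not give a bound uniform down to $t=0$ with only $|U_0|_\infty$ on the right. The correct mechanism is the direct Gaussian computation $\E[\scal{\CP U_0\xi^\eps,\psi^\lambda_z}^2]=|\moll^\eps*(\CP U_0\psi^\lambda_z)|_{L^2}^2\lesssim|U_0|_\infty^2\lambda^{-5}$. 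Similarly, for the terms $\CW\partial\bar\Psi^{U_0}_\delta$ and $\bar\Psi^{U_0}_\delta\partial\CW$ entering $\bar\omega_{1,2}^{\eps,\delta}$, a purely deterministic bound on $\CW$ is not enough near $t=0$: both factors are built from the same noise, so the product has an a priori nontrivial zeroth chaos, which the paper kills by a support argument exploiting non-anticipativity of $\moll$ (the time supports of $\moll^\delta$ and the constraint $w_2<0<y_2$ make the integration domain empty as $\delta\downarrow0$). The remaining structure of your plan (reduction to second moments, Kolmogorov, telescoping in $\eps$ versus $\delta$, and the identification $\omega_\ell^0=\lim_\eps\bar\omega_\ell^{\eps,0}$) matches the paper.
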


\begin{proof}
Uniformly in $\psi\in\mcB^3$, $\lambda,\eps\in(0,1)$, and $\delta\in(0,\eps)$,
\begin{equ}
\E\big[
\scal{\CP U_0
\xi^\eps,\psi^\lambda_z}^2
\big]
= |\moll^\eps *(\CP U_0\psi^\lambda_z)|^2_{L^2}\lesssim |\CP U_0|_{L^\infty}^2 \lambda^{-5}\;,
\end{equ}
and
\begin{equ}
\E\big[\scal{\CP U_0(\xi^\eps - \xi^{\delta}),\psi^\lambda_z}^2
\big]
=
|(\moll^\eps-\moll^\delta) *(\CP U_0\psi_z^\lambda)|^2_{L^2}
\lesssim
\eps^{\kappa}\lambda^{-5-\kappa}|U_0|^2_{\CC^{\kappa/2}}\;.
\end{equ}
We thus obtain~\eqref{eq:PU_0xi_bounds} by equivalence of Gaussian moments and Kolmogorov's theorem.

The convergence of $\omega_1^\eps, \omega_2^\eps$ follows in exactly the same way as 
Lemma~\ref{lem:PX0Psi-prob} (for the special case $U_0=1$)
except that in 
\eqref{e:PX0dPsi-21}-\eqref{e:PX0dPsi-22}
one also uses $|\CP U_0|_{\infty} \lesssim 1$.

The convergence of 
the second Wiener chaos of $\omega_3^\eps$ also follows 
in  the same way  as Lemma~\ref{lem:tildePsi-cov} 
except that in \eqref{e:tildePsi-cov}  
one simply bounds $|\CP U_0|_{\infty} \lesssim 1$.
Regarding the zeroth chaos of $\omega_3^\eps$, we note that
\begin{equs}[e:zeta3-0chaos]
\E \Big( & \big(K*(\CP U_0 \xi^\eps) \big)
\big(\partial K*(\CP U_0 \xi^\eps)\big)(z)\Big)
\\
=
&\int \Big(K(z-w)  \CP U_0(w)\Big)
\partial \Big( K(z-\bar w)  \CP U_0(\bar w) \Big)\moll_\eps^{(2)} (w-\bar w)\mrd w\mrd \bar w
\end{equs}
where the first line should be understood with the notation \eqref{e:XdX-X3}
while the second line should be understood with the notation 
\eqref{e:udv}. In particular both sides are $E$-valued functions of $z$.
Indeed, \eqref{e:zeta3-0chaos}
 can be checked using  \eqref{e:XdX-X3}+\eqref{e:udv}
and the facts that 
the components of the white noise $\xi$
are independent and the basis in \eqref{e:udv} are orthogonal.

\eqref{e:zeta3-0chaos} would be zero if $U_0=1$ (since the integrand would be odd in the spatial variable thanks to  the derivative), but generally not.
Write
\begin{equ}[e:PU0-expand]
\CP U_0 (w) = \sum_{|k|\le 2} \frac{(w-z)^k}{k!} \partial^{k}\CP U_0 (z) + r_u(w,z)
\end{equ}
and the same for $\CP U_0 (\bar w)$ for which we call 
 the multiindex $\bar k$ instead of $k$. 
Fixing $i\in [3]$, we consider the $i$-th component of  \eqref{e:zeta3-0chaos}.
Below
we write $s,\bar s$ for the time variables of $w, \bar w$ and $z=(t,x)$.

 We substitute  \eqref{e:PU0-expand} into \eqref{e:zeta3-0chaos}.
The contributions from terms with $k_i+\bar k_i\in \{0,2\}$
all vanish since $K$, $\moll_\eps^{(2)}$ are even and $\partial_i K$ is odd in the $i$-th spatial variables.

The contributions from terms with $k_i+\bar k_i =1$
combined with 
the renormalisation terms introduced in \eqref{e:omega123}-\eqref{e:c1c2}
converge in $\CC^{-2-\kappa}$.
Indeed, consider the contribution of the case 
 $(k_i,\bar k_i)=(0,1)$ to \eqref{e:zeta3-0chaos}
  minus the renormalisation term $c_2^\eps \CP U_0\partial \CP U_0$
  (the argument for the other case is identical).
Note that here $s,\bar s$ need to be positive for the integrand in  \eqref{e:zeta3-0chaos} to be non-zero,
whereas in \eqref{e:c1c2} $s,\bar s \in \R$. Thus
 the contribution we consider here is bounded by 
$|\CP U_0(z)\partial \CP U_0(z)|$ times
\begin{equs}
\Big|\int_{\R_- \times \T^3} K_{t-s}(y) \partial_i K_{t-s}(y) \,y_i  \,\mrd y \mrd s\Big|
&\lesssim
\int_{\R_- \times \T^3} |y| (\sqrt{t-s} + |y|)^{-7} \,\mrd y \mrd s
\\
&\lesssim 
\int_{\R_-} (t-s)^{-3/2} \,\mrd s 
\lesssim t^{-\frac12}\;.
\end{equs}
This implies that this part of contribution is in $\CC^{-1}\subset \CC^{-2-\kappa}$.

All the other terms involve $r_u$ and they converge in $\CC^{-2-\kappa}$.  
Indeed, 
one has that for $s, \bar s \le t$,
$|r_u(w,z)|\lesssim s^{\frac{\rho-5/2}{2}} |w-z|^{\frac{5}{2}}  |U_0|_{\CC^\rho}$.
Then the ``worst'' terms are the two cross terms between $\CP U_0 (z)$ (i.e.\ $k=0$) and $r_u$,
one of which (the other one is almost identical) is bounded by  $  |U_0|_{\CC^\rho}^2$ times
\begin{equs}
\int & K(z-w) 
\partial_i K(z-\bar w) s^{\frac{\rho-5/2}{2}} |w-z|^{\frac52} \moll_\eps^{(2)} (w-\bar w)\mrd w\mrd \bar w
\\
&\lesssim
\int  s^{\frac{\rho-5/2}{2}} |w-z|^{-9/2}\mrd w
\lesssim 
\int  s^{\frac{\rho-5/2}{2}} (\sqrt{t-s} + |y-x|)^{-9/2}\mrd s\mrd y
\\
&\lesssim
\int  s^{\frac{\rho-5/2}{2}} (t-s)^{-3/4}\mrd s
\lesssim
t^{\frac{\rho-2}{2}}
\end{equs}
since $\rho>1/2$, where the integration variables $s \in [0, t]$.
Regarding the  cross terms between $(w-z)^k \CP^{(k)}U_0 (z)$ with $|k|\in \{1,2\}$ and $r_u$,
one can bound $|\CP^{(k)}U_0 (z)| \lesssim t^{\frac{\rho-|k|}{2}}|U_0|_{\CC^\rho}$,
and then proceeding as above one obtains a better bound of the form
\[
t^{\frac{\rho-|k|}{2}}  \int_0^t  s^{\frac{\rho-5/2}{2}} (t-s)^{-\frac34+\frac{|k|}{2}}\mrd s
\lesssim
t^{\rho-1}\;.
\]
The cross terms between $r_u(w,z)$ and $r_u(\bar w,z)$ are then even easier to bound.
Finally, note that
 as a function of $z$ \eqref{e:zeta3-0chaos} converges in $\CC^{\rho-2}\subset \CC^{-2-\kappa}$ as desired,
 using the fact that $\$\delta-\moll^\eps\$_{-5-\kappa;m} \lesssim \eps^\kappa$ in the notation of~\cite[Sec.~10.3]{Hairer14}.

By the same arguments one can show the desired convergences of
$\CP Y_0  \partial \bar\Psi^{U_{0}}_{\delta}$,
$\bar\Psi^{U_{0}}_{\delta}   \partial \CP Y_0$,
and $\bar\omega_{3}^{\eps,\delta}$.
To prove the claimed convergences of $\bar\omega_{1,2}^{\eps,\delta}$
it suffices to
 show that 
$\CW  \partial \bar\Psi^{U_{0}}_{\delta}$ and
$\bar\Psi^{U_{0}}_{\delta}   \partial \CW$ vanish in the limit.
Below we write $w<z$ (resp. $w\le z$) for space-time variables $w,z$ 
if the time coordinate of $w$ is less than (resp. less or equal to)
the time coordinate of $z$. 

Using the definitions of $\CW$ in \eqref{eq:CW_def}
and  $\bar\Psi^{U_{0}}_{\delta}$ in \eqref{e:bar-Psi-U0-delta},
and non-anticipativeness of $K$ and $\chi$,
the zeroth chaos of $\CW  \partial \bar\Psi^{U_{0}}_{\delta}$
can be bounded by
\begin{equ}[e:WPsi0chaos]
\int_D |K(z-w_1)\nabla K(z-y_1) \chi^\eps(w_1-w_2)\chi^\eps(y_1-y_2)
\chi^\delta(y_2-w_2) |\,\mrd w_1\mrd w_2 \mrd y_1 \mrd y_2
\end{equ}
where $D\eqdef \{(w_1,w_2,y_1,y_2): w_2<0<w_1\le z,
0<y_2<y_1\le z\}$.
For any $\eps>0$, as $\delta\downarrow 0$, the integrand
converges to a smooth function but $D$ shrinks to an empty set,
since $\chi^\delta$ has support size $\delta$
and $w_2<0<y_2$, so the above integral vanishes in the limit. 
The zeroth chaos of 
$\bar\Psi^{U_{0}}_{\delta}   \partial \CW$ vanishes by the same argument,
with the derivative in \eqref{e:WPsi0chaos} taken on the first $K$. 

The second chaos of $\CW  \partial \bar\Psi^{U_{0}}_{\delta}$
and $\bar\Psi^{U_{0}}_{\delta}   \partial \CW$,
as well as the term $\CW\partial \CW$ in
$\bar\omega_4$, 
are bounded similarly as Lemma~\ref{lem:tildePsi-cov},
except that instead of \eqref{e:tildePsi-cov} one uses the bound
\begin{equs}
|\E (\partial^k \CW (z) \,\partial^k \CW(\bar z))| 
&\lesssim
\int |\partial^k K(z-w) \partial^k  K(\bar z-\bar w) (\chi^\eps*\chi^\eps)(w-\bar w)|
\mrd w\mrd \bar w
\\
&\lesssim
\eps^\kappa |z-\bar z|^{-1-2|k|-\kappa}	\label{e:EWW}
\end{equs}
for $|k|\in \{0,1\}$,
where the time variables of $w$ and $\bar w$
are restricted to $[0,\eps^2]$,
due to the cutoff functions 
$\bone_{t\geq0}$, $\bone_{t<0}$ 
and the compact support of $\moll^\eps$ in the definition of $\CW$.
(Note that 
$\CW$ defined in~\eqref{eq:CW_def} and $\tilde\Psi_\eps$ defined in \eqref{e:def-tilde-Psi} only differ by a cutoff function $\bone_{t<0}$.)
Finally, regarding the terms 
$\CP Y_0 \partial\CW$ and $\CW \partial \CP Y_0$
in the definition of $\bar\omega_4$,
they vanish
by the same proof of Lemma~\ref{lem:PX0Psi-prob},
except that
in \eqref{e:PX0dPsi-21}-\eqref{e:PX0dPsi-22},
one obtains an extra factor $\eps^\kappa$ thanks to \eqref{e:EWW}.

The difference estimates~\eqref{e:zeta-hat-diff}
between  $\omega_\ell^\eps $ and $\bar\omega_\ell^{\eps,\delta}$
follow  the same arguments using $\$K-K^\eps\$_{-3-\kappa;m} \lesssim \eps^\kappa$ and $\$\moll^{\delta}-\moll^\eps\$_{-5-\kappa;m} \lesssim (\eps\vee\delta)^\kappa$.
\end{proof}

The next lemma ensures that the input distributions in our fixed point problem are compatible with the corresponding modelled distributions.

\begin{lemma}\label{lem:compatible}
Consider $\eps\in[0,1]$ and $\delta\in [0,\eps]$.
Then, for the model $Z^{\delta,\eps}_{\BPHZ}$, $\omega^\eps_0$ and $\bar\omega^{\delta}_0$ is compatible with $\CP U_0\bar\bXi$ and $\CP U_0\bXi$ respectively.
Furthermore
\begin{equ}
\omega^\eps_1,\omega^\eps_2,\omega^\eps_3
\;\text{ and }\; \bar\omega^{\eps,\delta}_1,\bar\omega^{\eps,\delta}_2,\bar\omega^{\eps,\delta}_3
\end{equ}
is compatible with
\begin{equ}
\CP Y_0 \partial \bPsi^{U_0}, \bPsi^{U_0}\partial \CP Y_0, \bPsi^{U_0}\partial \bPsi^{U_0}
\;\text{ and }\;
\CM \partial \bar \bPsi^{U_0}, \bar \bPsi^{U_0}\partial \CM, \bar \bPsi^{U_0}\partial \bar \bPsi^{U_0}, 
\end{equ}
respectively.
Finally, $\bar\omega^{\eps,\delta}_4$ is compatible with $\CP Y_0 \partial\CW + \CW \partial \CP Y_0 + \CW\partial \CW$.
\end{lemma}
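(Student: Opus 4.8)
The plan to prove Lemma~\ref{lem:compatible} is to unwind the definition of compatibility (for a fixed admissible model, $\omega$ is compatible with $f$ precisely when the local reconstruction $\tilde\CR f$ coincides with $\omega$ as a distribution away from $\{t=0\}$) and then exploit two structural facts. First, BPHZ renormalisation commutes with multiplication by modelled distributions valued in the polynomial sector, so the local reconstruction of a product in which one factor is polynomial factorises into the product of the reconstructions of the two factors (as already used in the proof of Theorem~\ref{thm:local-exist-sigma}). Second, every tree in the only genuinely noisy products appearing in the fixed point problems, namely $\bPsi^{U_0}\partial\bPsi^{U_0}$ and $\bar\bPsi^{U_0}\partial\bar\bPsi^{U_0}$, contains exactly two noise edges, so their renormalisation is governed by a handful of explicit low-degree trees. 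I would first carry the argument out for $\eps,\delta\in(0,1]$ with the model $Z^{\delta,\eps}_{\BPHZ}$, and then recover the boundary cases $\delta=0$ and $\eps=0$ from the model convergences \eqref{e:conv-Z-delta}--\eqref{e:conv-Z-eps}, Lemma~\ref{lem:omegas_converge}, and continuity of the local reconstruction in the model.

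For $\omega_0^\eps$ and $\bar\omega_0^\delta$: since $\CP U_0$ lies in the polynomial sector and the BPHZ character vanishes on the bare noise symbol $\bar\bXi$ (resp.\ $\bXi$) and on every polynomial-decorated noise appearing in $\CP U_0\bar\bXi$, renormalisation acts trivially there, so $\tilde\CR(\CP U_0\bar\bXi)=\CP U_0\cdot\tilde\CR(\bar\bXi)$, which is $\omega_0^\eps$ away from $\{t=0\}$, and likewise for $\bar\omega_0^\delta$ and $\CP U_0\bXi$. Feeding this into the defining property of the operators $\mcb{K}^\omega$ (Lemma~\ref{lem:Schauder-input} and Appendix~\ref{app:Singular modelled distributions}) shows that $\bPsi^{U_0}=\mcb{K}_\mfz^{\omega_0}(\CP U_0\bar\bXi)$ reconstructs away from $\{t=0\}$ to $K*\omega_0=\Psi^{U_0}_\eps$, and $\bar\bPsi^{U_0}$ to $\bar\Psi^{U_0}_\delta$; since the modelled-distribution derivative intertwines with $\tilde\CR$, the same holds for their derivatives, while $\CP Y_0$, $\CM=\CP Y_0+\CW$ and $\CW$ reconstruct to the corresponding functions. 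The distributions $\omega_1^\eps,\omega_2^\eps,\bar\omega_1^{\eps,\delta},\bar\omega_2^{\eps,\delta}$ and $\bar\omega_4^\eps$ are then handled by the polynomial-factorisation principle, since each associated product has a polynomial-sector factor ($\CP Y_0$, $\CM$, or $\CW$).

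The substantive case is $\omega_3^\eps$ (and $\bar\omega_3^{\eps,\delta}$). I would expand $\bPsi^{U_0}=\sum_k\tfrac{1}{k!}\,\mcb{I}_\mfz(\mbX^k\bar\Xi)\otimes\partial^k\CP U_0+(\text{polynomial part})$ and note that every tree in $\bPsi^{U_0}\partial\bPsi^{U_0}$ carries exactly two noise edges, so its components of negative degree are spanned by the trees $\mcb{I}_\mfz(\mbX^k\bar\Xi)\,\partial_j\mcb{I}_\mfz(\mbX^l\bar\Xi)$ with $|k|+|l|\le 2$, together with trees carrying a nonzero root decoration, on which $\ell^{\delta,\eps}_{\BPHZ}$ vanishes. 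The character vanishes on $\<IXiI'Xi_notriangle>$ by Lemma~\ref{lem:parity-bphz-gsym} (one derivative); and by the coordinate-wise refinement of that parity argument (taking the reflection $r$ to flip a single spatial coordinate, which is admissible since $K$ and $\moll$ are reflection-symmetric in each coordinate separately), it vanishes on the remaining trees unless $|k|+|l|=1$ with polynomial decoration $\mbX_j$, leaving only $\mcb{I}_\mfz(\mbX_i\bar\Xi)\,\partial_i\mcb{I}_\mfz(\bar\Xi)$ and $\mcb{I}_\mfz(\bar\Xi)\,\partial_i\mcb{I}_\mfz(\mbX_i\bar\Xi)$ for $i\in\{1,2,3\}$. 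These carry no subdivergence, so $\ell^{\delta,\eps}_{\BPHZ}$ evaluates on them as $-\E$ of the canonical value, which after contracting the two noise slots against $(\sig^\eps)^2\moll_\eps^{(2)}\bCov$ and integrating the kernels $K$ and $\partial_iK$ reproduces exactly the scalars $c_1^\eps$ and $c_2^\eps$ of \eqref{e:c1c2} (and $c_1^{\eps,\delta},c_2^{\eps,\delta}$ in the case of $\bar\omega_3$, where $\mcb{K}_\mfm$ carries the kernel $K^\eps$ and the noise covariance is $\moll_\delta^{(2)}$). Tracking the surviving coefficients $\partial_i\CP U_0$ and $\CP U_0$ through the $E$-valued multiplication \eqref{e:XdX-X3} should identify the accompanying prefactor with $\CP U_0\partial\CP U_0$ in the sense of \eqref{e:udv}, giving $\tilde\CR^{Z^{\delta,\eps}_{\BPHZ}}(\bPsi^{U_0}\partial\bPsi^{U_0})=\Psi^{U_0}_\eps\partial\Psi^{U_0}_\eps-\sig^2(c_1^\eps+c_2^\eps)\CP U_0\partial\CP U_0=\omega_3^\eps$ away from $\{t=0\}$, and analogously for $\bar\omega_3$.

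I expect this last computation to be the main obstacle: isolating the finitely many negative-degree trees with nonvanishing BPHZ character through the parity and single-coordinate reflection arguments, and then verifying both that the resulting scalar integrals coincide with $c_1^\eps,c_2^\eps$ (resp.\ $c_1^{\eps,\delta},c_2^{\eps,\delta}$) and that the accompanying $E$-valued prefactor assembles, with the correct sign, into $\CP U_0\partial\CP U_0$ as defined in \eqref{e:udv}. The remaining steps are routine, including the passage to the boundary cases: for $\delta\downarrow0$ only $\bar\bPsi^{U_0}$ and $\bar\omega_\ell^{\eps,\delta}$ are affected, and compatibility persists by \eqref{e:conv-Z-delta} and Lemma~\ref{lem:omegas_converge}, while $\eps\downarrow0$ is identical using \eqref{e:conv-Z-eps}.
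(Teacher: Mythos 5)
Your proposal is correct and follows essentially the same route as the paper: reconstruction of products with a polynomial-sector factor factorises (handling $\omega_0,\bar\omega_0,\omega_1,\omega_2,\bar\omega_1,\bar\omega_2,\bar\omega_4$), while for $\omega_3$ and $\bar\omega_3$ one expands $\bPsi^{U_0}$ via the polynomial lift of $\CP U_0$, isolates the two trees $\<IXXiI'Xi_notriangle>$ and $\<IXiI'XXi_notriangle>$ by parity, and identifies their BPHZ constants with $c_1^\eps$ and $c_2^\eps$ of \eqref{e:c1c2}; the boundary cases $\delta=0$ and $\eps=0$ are recovered from the model convergences and Lemma~\ref{lem:omegas_converge} exactly as you indicate. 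Your coordinate-wise reflection argument for eliminating the mismatched-index trees is slightly more explicit than what the paper records, but it is the same mechanism.
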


\begin{proof}
We first consider $\eps,\delta>0$.
For $\CP U_0 \bar{\bXi}$,
it is clear that for $t>0$
\begin{equ}[e:PU0Xi-local]
\tilde\CR(\CP U_0 \bar{\boldsymbol{\Xi}})(t,x)
= \sig \CP U_0 \xi_\eps(t,x) = \omega^\eps_0
\end{equ}
because 
for any modelled distribution $f$ in the span of the polynomials,
 one has $\tilde{\CR} (f \bar{\boldsymbol{\Xi}})(t,x) 
 = (\tilde{\CR}f)(t,x) (\tilde{\CR} \bar{\boldsymbol{\Xi}})(t,x)$.
This also proves the claim for $\bar\omega^{\delta}_0$ since $\bar\omega^{\delta}_0=\omega_0^\delta$.
(Note that we required $\eps>0$ above since we cannot in general multiply arbitrary $\xi\in\CC^{-\frac52-\kappa}$ and $\CP U_0$ for $U_0\in\CC^{\rho}$ to obtain a distribution on all of $\R\times \T^3$.)

The case for $\eps=\delta=0$ follows from the
fact that the first equality in~\eqref{e:PU0Xi-local} still holds on $(0,\infty)\times\T^3$ (in a distributional sense)
and from the existence of $\omega^0_0 \eqdef \lim_{\eps\to0} \CP U_0 \xi_\eps$
in probability in $\CC^{-\frac52-\kappa}$ obtained in Lemma~\ref{lem:omegas_converge}.

Consider again $\eps,\delta>0$.
For the same reason, we have for $t>0$
\[
\tilde\CR (\CP Y_0  \partial \bPsi^{U_0})(t,x)
 = \CP Y_0 (t,x)
\tilde\CR (\partial \bPsi^{U_0}) (t,x)
=\omega_1^\eps(t,x)
\]
and similarly 
$\tilde\CR ( \bPsi^{U_0} \partial \CP Y_0 )(t,x)=\omega_2^\eps(t,x)$.
Moreover, we claim that for $t>0$
\begin{equ}[e:to-check-zeta3]
\tilde\CR ( \bPsi^{U_0} \partial \bPsi^{U_0}) = 
\omega_3^\eps \;.
\end{equ}
In fact,  recalling the definition of $ \bPsi^{U_0}$ in \eqref{e:bPsi_U0}  and writing the polynomial lift of
 $\CP U_0 $ as  $u \bone + \langle \nabla u, \bf{X} \rangle$ plus higher order polynomials, one has 
\[
\bPsi^{U_0}\partial \bPsi^{U_0} = 
 u \partial u \<IXXiI'Xi_notriangle>
+ u \partial u \<IXiI'XXi_notriangle>  + (\cdots)
\]
where $(\cdots)$ stands for trees which do not require renormalisation.
Here a crossed circle $\<XXi>$
denotes
$\mbX_j\Xi$ with the index $j \in \{1,2,3\}$ being equal to the derivative index for the derivative (thick line) of the tree.
The BPHZ renormalisation of the two trees $\<IXXiI'Xi_notriangle>$ and $\<IXiI'XXi_notriangle> $ 
yields renormalisation constants that are precisely given by \eqref{e:c1c2}, and therefore one has \eqref{e:to-check-zeta3}.

The corresponding claims for $\bar\omega^{\eps,\delta}_\ell$ with $\ell=1,2,3$ follow in an identical way,
while the claim for $\bar\omega_4$ is obvious.
Finally, the case $\eps>0$ and $\delta=0$ follows from the convergence of models~\eqref{e:conv-Z-delta} and the corresponding convergence $\bar\omega^{\eps,0}_\ell=\lim_{\delta\downarrow0}\bar\omega^{\eps,\delta}_\ell$ in Lemma~\ref{lem:omegas_converge},
and the case $\eps=\delta=0$ follows from~\eqref{e:conv-Z-eps}
and the convergences
\begin{equ}
\omega^{0}_\ell=\lim_{\eps\downarrow0}\omega_\ell^\eps=\lim_{\eps\downarrow0}\bar\omega^{\eps,0}_\ell
\end{equ}
in Lemma~\ref{lem:omegas_converge}.
\end{proof}

\begin{remark}\label{rmk:multi-coherent}

As in the discussion in Section~\ref{sec:solution theory additive},
 for each $z=(t,x)$ with $t > 0$, $\CY(z)$ solves an algebraic fixed point problem of the form 
\[
 \mcY(z)
=
\mcb{I}_\mfz \Big(
\mcY(z) \partial \mcY(z) 
+ \mcY(z)^3
 + \mathring{C}_{\mfz} \mcY(z)+\mathring C_\mfh\CH
 + \mcU(z)\bar{\boldsymbol{\Xi}}(z) \Big) + (\cdots)\;,
 \]
where $(\cdots)$ takes values in the polynomial sector, and similarly for $\CU(z)$ and $\CH(z)$. 
From this we see that the solution $\CY(z)$ must be coherent with respect to 
\eqref{e:F_mfz}--\eqref{e:F_mfu}.
Therefore, the renormalised equations are still given by Proposition~\ref{prop:renorm_g_eqn}.
The appearance of the renormalisations in \eqref{e:omega123}-\eqref{e:c1c2} are an artifact of 
our decomposition \eqref{e:def-tildeY}.
Namely, they correspond to the BPHZ renormalisations of $\<IXXiI'Xi_notriangle>$ and $\<IXiI'XXi_notriangle>$
(as pointed out in the proof of Lemma~\ref{lem:compatible})
 which contribute to the term $C^{\eps}_{\Gauge}h_i$ in Proposition~\ref{prop:renorm_g_eqn};
even if one were to renormalise the decomposed equations, 
 terms such as $(c_1^\eps +c_2^\eps) \hat\CU\partial\hat\CU$  would arise in addition to the ones in \eqref{e:omega123},
which would still contribute to $C^{\eps}_{\Gauge}h_i$.
%
Also we note that the inputs $\omega$'s
match the global reconstruction with the BPHZ model in the smooth setting, therefore the reconstructed solution satisfies the correct initial condition.
\end{remark}

%
%
\subsubsection{Closing the analytic fixed point problem}\label{subsec:analytic_fp}

In the rest of this subsection, we collect the  deterministic analytic results needed to solve and compare~\eqref{e:fix-pt-multi-Y}+\eqref{e:fix-pt-HU}
and \eqref{e:fix-pt-multi-Xbar}+\eqref{e:fix-pt-HU}.
The input necessary to solve the two systems is a model $Z$,
initial condition~\eqref{eq:initial_data_for_gauge_sys}, distributions $\omega_\ell$ and $\bar\omega_\ell$ compatible with the corresponding $f$ in every term of the form $\CG^\omega(f)$,
and the modelled distribution $\CW$ in~\eqref{eq:CM_def} which takes values in the polynomial sector.

We say that an \emph{$\eps$-input of size $r>0$ over time $\tau>0$}
is a collection of such objects which furthermore satisfy
\begin{equs}
\$Z&\$_{\eps;\tau}
+ \Sigma(Y_0) + |U_0|_{\CC^\rho} + |h_0|_{\CC^{\rho-1}} + \eps^{-\varsigma}|\CW|_{\frac74,-\frac12-\kappa;\tau}
\\
&+ \sum_{\ell=0}^4
\big(
|\omega_\ell|_{\CC^{\alpha_\ell}_\tau} + |\bar\omega_\ell|_{\CC^{\alpha_\ell}_\tau} + \eps^{-\varsigma}|\omega_\ell-\bar\omega_\ell|_{\CC^{\alpha_\ell}_\tau}
\big) \label{eq:input_def}
\\
&+
\sup_{t\in [0,\tau]}
\big\{\Sigma(\PPi^Z \<IXi>(t)) + \Sigma(\PPi^Z \<IXiS>(t))
+ |\PPi^Z \<I[IXiI'Xi]_notriangle>(t)|_{\CC^{-\kappa}}
+ |\PPi^Z\<I[IXiSI'XiS]_notriangle>(t)|_{\CC^{-\kappa}}
\big\}
\leq r
\;,
\end{equs}
where $\omega_4\eqdef 0$,
$\varsigma>0$ is the fixed parameter from Section~\ref{subsubsec:prob} (which we later take sufficiently small), $\PPi^Z=\Pi_0$ where we denote as usual $Z=(\Pi,\Gamma)$,
and
\begin{multline*}
(\alpha_0,\alpha_1,\alpha_2,\alpha_3,\alpha_4)
\\
=
\Big(-\frac52-\kappa, \eta-\frac32-2\kappa,\eta-\frac32-2\kappa,-2-4\kappa,
\eta-\frac32-\kappa
\Big)\;.
\end{multline*}
Above and until the end of the subsection we write
\begin{equs}
\<IXi>
&\eqdef \mcb{I}_\mfz[\bar \bXi]
\;,\quad \<I[IXiI'Xi]_notriangle> \eqdef \mcb{I}_\mfz[\mcb{I}_{\mfz}[\bar \bXi]\partial_j\mcb{I}_{\mfz}[\bar \bXi]]
\\
\<IXiS>
&\eqdef \mcb{I}_\mfm[ \bXi]
\;,\quad 
\<I[IXiSI'XiS]_notriangle>\eqdef \mcb{I}_\mfz[\mcb{I}_{\mfm}[\bXi]\partial_j\mcb{I}_{\mfm}[\bXi]]\;.
\end{equs}
Note that the thick line in \<I[IXiI'Xi]_notriangle> and \<I[IXiSI'XiS]_notriangle> represents a spatial derivative,
so each of these really represents a collection of three basis vectors, one for each direction, indexed by $j$. In
\eqref{eq:input_def}, the corresponding norm should be interpreted as including a supremum over $j$.

\begin{remark}
The choice `$0$' in the definition $\PPi^Z=\Pi_0$ is completely arbitrary since the structure group acts trivially on the set of symbols $\mcJ\eqdef \{\<IXi>,\<IXiS>,\<I[IXiI'Xi]_notriangle>,\<I[IXiSI'XiS]_notriangle>\}$ and therefore $\Pi_0 = \Pi_x$ for all $x\in \R\times \T^3$ when restricted to $\mcJ$.
\end{remark}
\begin{remark}\label{rem:PPi_not_needed}
The term on the left-hand side of~\eqref{eq:input_def} involving $\sup$ plays no role in Lemma~\ref{lem:fixedptpblmclose},
but will be important in Proposition~\ref{prop:improve_local} below.
\end{remark}

We assume throughout this subsection that $\moll$ is non-anticipative, which implies that $K^\eps$ is also non-anticipative.

\begin{lemma}\label{lem:fixedptpblmclose}
Consider $r>0$, $\eps\in[0,1]$, and the bundle of modelled distributions
 \begin{equ}[e:space]
 \Big(
\hat\cD^{\gamma_{\mfz},\eta_{\mfz}}_{\alpha_{\mfz}} 
\oplus
\hat\cD^{\gamma_{\mfu},\eta_{\mfu}}_{\alpha_{\mfu}} 
\oplus
\cD^{\gamma_{\mfh},\eta_{\mfh}}_{\alpha_{\mfh}} 
\Big)
 \ltimes \mathscr{M}_\eps
 \end{equ}
 where
\begin{equ}\label{eq:fixed_pt_space}
(\gamma_{\mft},\alpha_{\mft}, \eta_{\mft}) = 
\begin{cases}
(\frac32+3\kappa, -\frac12-2\kappa, \hat\beta) 
			& \textnormal{ if }\mft = \mfz \;,\\
(\frac52 + 2 \kappa, 0, \rho) 
			& \textnormal{ if }\mft = \mfu \;,\\
(1 + 5\kappa, 0, \rho-1) 
			& \textnormal{ if }\mft = \mfh \;.
\end{cases}
\end{equ}
Then there exists $\tau>0$, depending only on $r$,
such for all $\eps$-inputs of size $r$ over time $\tau$
there exist solutions $\cS$ and $\bar\cS$ in~\eqref{e:space} on the interval $(0,\tau)$ to~\eqref{e:fix-pt-multi-Y}+\eqref{e:fix-pt-HU} 
and \eqref{e:fix-pt-multi-Xbar}+\eqref{e:fix-pt-HU} respectively.
Furthermore
\begin{equ}\label{e:SS-RSRS}
|\cS - \bar{\cS}|_{\vec{\gamma},\vec{\eta},\eps;\tau} 
\lesssim 
\eps^{\varsigma}
\end{equ}
uniformly in $\eps$ and $\eps$-inputs of size $r$, 
where $| \act |_{\vec{\gamma},\vec{\eta},\eps;\tau} $ is the corresponding multi-component modelled distribution (semi)norm for \eqref{eq:fixed_pt_space} on the interval $(0,\tau)$.
Finally, when the space of models is equipped with the metric $d_{1;\tau}$,
both $\cS$ and $\bar\cS$ are locally uniformly continuous with respect to the input.
\end{lemma}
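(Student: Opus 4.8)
\textbf{Proof plan for Lemma~\ref{lem:fixedptpblmclose}.}

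The plan is to solve the two coupled fixed point problems by a standard Banach fixed point argument in the bundle~\eqref{e:space} equipped with the $\eps$-dependent norms, treating the model and all the auxiliary data ($Y_0,U_0,h_0$, the distributions $\omega_\ell,\bar\omega_\ell$, and $\CW$) as frozen parameters, and then to extract the comparison estimate~\eqref{e:SS-RSRS} by running the contraction argument simultaneously on the difference $\cS-\bar\cS$. First I would treat the system for $(\hat\CU,\CH)$ in~\eqref{e:fix-pt-HU}: since the initial data $U_0\in\CC^\rho$, $h_0\in\CC^{\rho-1}$ with $\rho\in(\frac12,1)$ is subcritical for these equations and the nonlinearities are polynomial (products of $\CH$ and $\CB=\CY\restriction_{\mfg^3}$, with no noise appearing directly), the required estimates follow from the multiplication bounds of~\cite[Prop.~6.12]{Hairer14} together with Theorem~\ref{thm:integration} for the non-anticipative kernels $K_\mfh=K$, $K_{\mfh'}=\nabla K$, $K_\mfu=K$, using the exponents in~\eqref{eq:fixed_pt_space}; crucially $\hat\CU$ is solved in the $\hat\cD$-space with the subtracted initial condition $\CP U_0$, which gives the improved behaviour at $t=0$ needed to feed $\hat\CU\bar\bXi$ back into~\eqref{e:fix-pt-multi-Y} via Theorem~\ref{thm:integration}. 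Then I would close the equation for $\hat\CY$: the terms in $R_Q$ of~\eqref{e:def-RQ} and the cubic/linear terms are handled by Lemma~\ref{lem:multiply-hatD} and Theorem~\ref{thm:integration} exactly as in the proof of Theorem~\ref{thm:local-exist-sigma}, the term $\CP Y_0\partial\CP Y_0$ by the bound~\eqref{e:PN-bound} since $Y_0\in\init$, the term $\hat\CU\bar\bXi$ by the preceding step, and the three singular products in the first line of~\eqref{e:fix-pt-multi-Y} by Lemma~\ref{lem:Schauder-input} once we verify that $\omega_1,\omega_2,\omega_3$ are compatible with the respective modelled distributions — this is precisely the content of Lemma~\ref{lem:compatible}. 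The $\bar\CS$ system~\eqref{e:fix-pt-multi-Xbar}+\eqref{e:fix-pt-HU} is treated identically, with the extra term $\CG_\mfm\bone_+(\hat\CU\bXi)$ controlled by Theorem~\ref{thm:integration} for the kernel $K^\eps$ (using the $\eps$-norms of~\cite[Sec.~7.2.2]{CCHS2d}), the $\CW$-terms controlled using $|\CW|_{\frac74,-\frac12-\kappa;\tau}\le r$, and the singular products $\bar\omega_1,\bar\omega_2,\bar\omega_3$ again compatible by Lemma~\ref{lem:compatible}. Smallness of $\tau$ depending only on $r$ makes all these maps contractions on a ball of radius $\asymp r$.

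For the continuity-in-the-input and the comparison estimate~\eqref{e:SS-RSRS}, I would run the fixed point map for the pair $(\cS,\bar\cS)$ on the product bundle and estimate the difference of the two solution maps. The key observation is that $\cS$ and $\bar\cS$ solve the \emph{same} abstract fixed point problem up to (i) the replacement of $K$ by $K^\eps$ in the $\mfm$-component, (ii) the presence of the extra input $\CW$, and (iii) the replacement of $\omega_\ell$ by $\bar\omega_\ell$; each of these differences is quantitatively of size $\eps^\varsigma$ by the definition~\eqref{eq:input_def} of an $\eps$-input (for (i) one uses $\$K-K^\eps\$_{-3-\kappa;m}\lesssim\eps^\kappa$ and $\varsigma\le\kappa$, for (ii) the bound $|\CW|_{\frac74,-\frac12-\kappa;\tau}\le r\eps^\varsigma$ is built into~\eqref{eq:input_def}, and for (iii) the term $\eps^{-\varsigma}|\omega_\ell-\bar\omega_\ell|_{\CC^{\alpha_\ell}_\tau}\le r$). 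Combining this with the local Lipschitz dependence of each term of the fixed point map on its arguments (with respect to the $\eps$-dependent modelled-distribution norms, as in~\cite[Sec.~7.2.2]{CCHS2d} and~\cite[Sec.~7.3]{CCHS2d}) and a Gronwall-type absorption using that the contraction constant is $<1$ uniformly for inputs of size $r$, one obtains~\eqref{e:SS-RSRS}. Local uniform continuity of $\cS$ and $\bar\cS$ separately in the input, with the space of models carrying $d_{1;\tau}$, follows from the same Lipschitz estimates together with the equivalence of $d_{1;\tau}$ and $d_{\eps;\tau}$ on bounded sets of models (again~\cite[Sec.~7.2.2]{CCHS2d}).

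The main obstacle I anticipate is bookkeeping the singular products through Lemma~\ref{lem:Schauder-input} rather than Theorem~\ref{thm:integration}: the modelled distributions $\CP Y_0\partial\bPsi^{U_0}$, $\bPsi^{U_0}\partial\CP Y_0$, $\bPsi^{U_0}\partial\bPsi^{U_0}$ (and their barred analogues, plus the $\CW$-products) live in $\hat\cD$-spaces with regularity exponents below $-2$, so the standard reconstruction/integration result is unavailable and one must carry the externally-supplied distributions $\omega_\ell$ as part of the data, verify the compatibility hypothesis at every step of the Picard iteration, and check that all remaining products have exponents in the admissible range for Theorem~\ref{thm:integration} after taking $\kappa>0$ small — this is exactly where the precise exponents in~\eqref{eq:fixed_pt_space} and the inequalities~\eqref{eq:bar_beta_cond1}--\eqref{eq:bar_beta_cond3} are used. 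A secondary technical point is keeping the $\mfm$-component (convolved with $K^\eps$) and the $\mfz$-component (convolved with $K$) separate throughout so that the $\eps\downarrow0$ comparison does not lose the factor $\eps^\varsigma$; this is handled by the $\eps$-dependent norm machinery of~\cite[Sec.~7.2.2]{CCHS2d} which was designed precisely for this purpose.
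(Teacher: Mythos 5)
Your plan follows essentially the same route as the paper's proof: close the fixed point via Lemma~\ref{lem:multiply-hatD} and Theorem~\ref{thm:integration} for the regular products, invoke Lemma~\ref{lem:Schauder-input} with the externally supplied $\omega_\ell$, $\bar\omega_\ell$ for the products whose exponents drop below $-2$, and extract \eqref{e:SS-RSRS} from the $\eps$-dependent norms, $\$K-K^\eps\$\lesssim\eps^\kappa$, and the $\eps^\varsigma$-smallness of $\CW$ and $\omega_\ell-\bar\omega_\ell$ built into \eqref{eq:input_def}. The only organisational quibble is that compatibility of the $\omega_\ell$ is part of the \emph{hypothesis} (the definition of an $\eps$-input), so Lemma~\ref{lem:compatible} is not needed inside this deterministic statement but only when it is applied to the probabilistic inputs.
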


 \begin{proof} 
We first prove well-posedness of the fixed point problem 
\eqref{e:fix-pt-multi-Y}+\eqref{e:fix-pt-HU}.
For simplicity we write $\gamma $ for $\gamma_\mfz$.
As in the additive case \eqref{e:PX0D},
one has $\CP Y_0 \in \hat{\cD}^{\infty,\eta}_{0} $, $\partial \CP Y_0 \in \hat{\cD}^{\infty,\eta-1}_{0} $,
and, since $Y_0\in\init$,
$ \CG_{\mfz} \big( \CP Y_0 \partial \CP Y_0 \big)$ is well-defined as an element of $\hat\cD^{\gamma,\hat\beta}_0$ due to~\eqref{e:PN-bound}.
Also, for $\rho \in (1/2,1)$,
we recall that
$\CP U_0 \in  \cD^{\infty,\rho}_{0}$
and $\bar{\boldsymbol{\Xi}}\in \cD^{\infty,\infty}_{-\frac52-\kappa}$.
Thus $\CP U_0 \bar{\boldsymbol{\Xi}}\in \cD^{\infty,\rho-\frac52-\kappa}_{-\frac52-\kappa}$
and, by Lemma~\ref{lem:Schauder-input},
we have~\eqref{e:bPsi_U0}, i.e., $\bPsi^{U_0}\in\cD^{\infty,-1/2-2\kappa}_{-1/2-2\kappa}$.

Considering the terms in $R_Q$ as defined in \eqref{e:def-RQ},
Lemma~\ref{lem:multiply-hatD} implies
\begin{equs}[eq:CPY_0_hat_Y]
\CP Y_0 \partial \hat{\CY} \in \hat{\cD}^{\gamma-1,\eta+\hat\beta-1}_{-\frac32-2\kappa}\;,
\quad
 \hat{\CY}  \partial \CP Y_0 
& \in \hat{\cD}^{\gamma,\eta+\hat\beta-1}_{-\frac12-2\kappa}\;,
 \quad
 \hat{\CY} \partial \hat{\CY} \in \hat{\cD}^{\gamma-\frac32-2\kappa,2\hat{\beta}-1}_{-2-4\kappa}
\\
\mbox{and}\qquad
 \bPsi^{U_0}\partial \hat{\CY} \;,\; &
 \hat{\CY} \partial \bPsi^{U_0}
 \in \hat{\cD}^{\gamma-\frac32-2\kappa , \hat\beta-\frac32-2\kappa}_{-2-4\kappa}\;.
\end{equs}
Since $(\eta+\hat\beta-1)\wedge (2\hat{\beta}-1) \wedge (\hat\beta-\frac32-2\kappa)> -2$ by \eqref{eq:CI},
using Theorem~\ref{thm:integration}, one has
$\CG_{\mfz} \mathbf{1}_{+} R_Q \in 
\hat\cD^{\gamma,\hat\beta}_{-\frac12-2\kappa}$.

On the other hand, by \eqref{e:bPsi_U0}, one has
\begin{equs}[eq:CP_Y_0_Psi]
\CP Y_0 \partial \bPsi^{U_0} &\in \cD^{\infty,\eta-\frac32-2\kappa}_{-\frac32-2\kappa},
\quad
 \bPsi^{U_0}\partial \CP Y_0 \in \cD^{\infty,\eta-\frac32-2\kappa}_{-\frac12-2\kappa}, \quad \text{and}
\\
\bPsi^{U_0}\partial \bPsi^{U_0}
&\in \cD^{\infty,-2-4\kappa}_{-2-4\kappa}
\end{equs}
where $\eta-\frac32-2\kappa < -2$.
Taking $\kappa>0$ sufficiently small such that $\eta+\frac12<-2\kappa$,
Lemma~\ref{lem:Schauder-input}
therefore implies
  that  the first line on the right-hand side of \eqref{e:fix-pt-multi-Y}
  belongs to 
  $\cD^{\infty, \eta+\frac12-3\kappa}_{-5\kappa}$ 
  and thus to
$\hat \cD^{\infty, \eta+\frac12-3\kappa}_{-5\kappa}$ by Remark~\ref{rem:DDhat-same}.

Moreover,
using $\eta<-\frac12-2\kappa$,
it follows from~\eqref{e:bPsi_U0},~\eqref{e:def-tildeY},~\eqref{e:hatCYhatCU},
and Remark~\ref{rem:DDhat-same} that for $\rho \in (1/2,1)$
\begin{equ}
\CY \in \hat\cD^{\gamma,\eta}_{-\frac12-2\kappa}
\;
\Rightarrow
\;
\CY^3  \in \hat\cD^{\gamma-1-4\kappa,3\eta}_{-\frac32-6\kappa}\;,
\qquad
\mbox{and}
\qquad
\CH \in \hat\cD^{1+5\kappa,\rho-1}_{0}\;,
\end{equ}
so Theorem~\ref{thm:integration} applies 
and one has
$\CG_{\mfz}  \mathbf{1}_{+}  \big(  \CY^3 + \mathring{C}_{\mfz} \CY+\mathring C_\mfh\CH \big)
\in \hat{\cD}^{\gamma,\hat\beta}_{-\frac12-2\kappa}$.

Finally, recalling that $\gamma_\mfu = \frac52+2\kappa$,
we have 
\[
 \mathbf{1}_+ \bar{\boldsymbol{\Xi}}  \in \hat{\cD}^{\infty,-\frac52-\kappa}_{-\frac52-\kappa}
 \quad\mbox{and thus}\quad
\hat\CU  \bar{\boldsymbol{\Xi}} \in \hat{\cD}^{\kappa, \rho-\frac52-\kappa}_{-\frac52-\kappa}\;.
\]
Using our assumption $\rho>1/2$ one has $\rho-\frac52-\kappa  > -2$ for $\kappa>0$ sufficiently small, so
Theorem~\ref{thm:integration} again applies
and one has
$\CG_{\mfz}  \big( \hat\CU  \bar{\boldsymbol{\Xi}} \big)
\in \hat{\cD}^{\gamma,\hat\beta}_{-\frac12-2\kappa}$.

Turning to the equation for $\CH$, using our assumptions \eqref{eq:fixed_pt_space}, we have
\begin{equs}
{} [\mathcal{B}_j, \mathcal{H}_j]  & 
\in \cD^{\gamma_\mfh-\frac12-2\kappa, \eta+\eta_{\mfh}}_{-\frac12-2\kappa},
\qquad
 [\mathcal{H}_j,\p_j \mathcal{H}] 
  \in \cD^{\gamma_\mfh-1, 2\eta_\mfh-1}_{-\frac12-2\kappa},
\\
{} [[\mathcal{B}_j, \mathcal{H}_j],\mathcal{H}] & 
\in \cD^{\gamma_\mfh-\frac12-2\kappa, \eta+2\eta_\mfh}_{-\frac12-2\kappa}\;.
\end{equs}
For the term $[\mathcal{H}_j,\p_j \mathcal{H}]$ above, we used that the lowest degree term appearing in the expansion for $\CH$ is $\mcb{I}_{\mfh'}[\mcb{I}_{\mfz}[\bar\bXi]]$.
Recalling that $\rho \in (\frac12,1)$ and $\eta_\mfh=\rho-1$, 
as well as $\eta+\rho>0$ (since \eqref{eq:CGI} implies \eqref{e:paramG} by Remark~\ref{rem:GI-equiv}),
one has 
$\eta+\eta_{\mfh}>-1$ and
 $(2\eta_\mfh-1) \wedge  (\eta+2\eta_\mfh)>-2$.
Thus the right-hand side of  the equation for $\CH$
belongs to $\cD^{\gamma_{\mfh},\eta_{\mfh}}_{\alpha_{\mfh}} $.

Regarding the equations for $\CU$ and $\hat\CU$, our assumption \eqref{eq:fixed_pt_space}
on $\hat\CU$ and the fact that $\CP U_0 \in \cD^{\infty,\rho}_0$
implies $\CU \in \cD^{\gamma_\mfu,\eta_\mfu}_0$, so
\begin{equs}
{} [\mathcal{H}_j,[\mathcal{H}_j, \cdot]] \circ \mathcal{U}
&\in \cD^{\gamma_\mfh,2\eta_\mfh}_0 \cong 
\hat\cD^{\gamma_\mfh,2\eta_\mfh}_0 ,
\\
{} [[\mathcal{B}_j, \mathcal{H}_j],\cdot] \circ \mathcal{U}
& \in \cD^{\gamma_\mfh-\frac12-2\kappa, \eta+ \eta_\mfh}_{-\frac12-2\kappa}
 \cong 
 \hat\cD^{\gamma_\mfh-\frac12-2\kappa,
\eta+ \eta_\mfh}_{-\frac12-2\kappa}
\end{equs}
where $2\eta_\mfh \in (-1,0)$, and
$\eta+\eta_{\mfh}>-1$ as mentioned above.
Noting that $(\gamma_\mfh-\frac12-2\kappa)+2-\kappa= \gamma_\mfu$,
Theorem~\ref{thm:integration} then implies that 
 the right-hand side of the equation for $\hat\CU$
belongs to $\hat\cD^{\gamma_{\mfu},\eta_{\mfu}}_{\alpha_{\mfu}} $.
The fact that these maps stabilise and are contractions on a ball follows from the short-time convolution estimates in Theorem~\ref{thm:integration} as well as the refinement of~\cite[Thm.~7.1]{Hairer14} given by~\cite[Prop.~A.4]{CCHS2d}.
Local uniform continuity of the unique fixed point with respect to the inputs follows in the same way as the proof of~\cite[Thm.~7.8]{Hairer14}.

The well-posedness for the other system
\eqref{e:fix-pt-multi-Xbar}+\eqref{e:fix-pt-HU} also follows in the same way, with the following differences.
\begin{itemize}
\item  $\CM \in \hat\cD^{7/4,\eta}_0$ now plays the role of $\CP Y_0$ in $\bar R_Q$.
For the two corresponding terms in~\eqref{eq:CPY_0_hat_Y}, by Lemma~\ref{lem:multiply-hatD}, 
\begin{equ}
\CM \partial \hat{\CY}\in \hat\cD^{\frac14-2\kappa,\eta+\hat\beta-1}_{-\frac32-2\kappa}\;,\quad \text{and} \quad
\hat{\CY}  \partial \CM \in \hat\cD^{\frac14-2\kappa,\eta+\hat\beta-1}_{-\frac12-2\kappa}\;.
\end{equ}
Therefore $\CG_{\mfz} \mathbf{1}_{+} \bar R_Q \in 
\hat\cD^{\gamma,\hat\beta}_{-\frac12-2\kappa}$
as before.

\item $\CM$ likewise plays the role of $\CP Y_0$ in the first two terms in~\eqref{eq:CP_Y_0_Psi}, for which, using~\eqref{e:bbPsi_U0},
\begin{equ}
\CM \partial \bar\bPsi^{U_0} \in \cD^{\frac14-2\kappa,\eta-\frac32-2\kappa}_{-\frac32-2\kappa}\;,
\quad \text{and} \quad
 \bar\bPsi^{U_0}\partial \CM \in \cD^{\frac14-2\kappa,\eta-\frac32-2\kappa}_{-\frac12-2\kappa}\;.
\end{equ}
By Lemma~\ref{lem:Schauder-input}, the first line of the right-hand side of~\eqref{e:fix-pt-multi-Xbar}
is therefore in
  $\cD^{\gamma, \eta+\frac12-3\kappa}_{-5\kappa}$ 
  and thus in
$\hat \cD^{\gamma, \eta+\frac12-3\kappa}_{-5\kappa}$ by Remark~\ref{rem:DDhat-same}.

\item Since $\CW\in\hat\cD_0^{7/4,-\frac12-\kappa}$ and $\CP Y_0\in\hat\cD^{\infty,\eta}_0$, we have $\CP Y_0 \partial\CW \in \hat\cD_0^{3/4,\eta-\frac32-\kappa}$,
$ \CW \partial \CP Y_0 \in \hat\cD_0^{7/4,\eta-\frac32-\kappa}$,
and $\CW\partial \CW\in \hat\cD_0^{3/4,-2-2\kappa}$.
Therefore, for $\kappa>0$ sufficiently small, the second line of \eqref{e:fix-pt-multi-Xbar}
is in $\hat\cD^{\gamma,\hat\beta}_0$
by Lemma~\ref{lem:Schauder-input}.

\item For the term
$\CG_{\mfm} \mathbf{1}_{+}\big( \hat{\CU} \boldsymbol{\Xi} \big)$,
we recall
that $\mcb{K}_\mfm=\bar{\mcb{K}}$ is the abstract integration
realising $K_{\mfm}^{(\eps)} = K^\eps = K*\moll^\eps$,
and  since 
$K^\eps$ is non-anticipative,
we can use the statement in Theorem~\ref{thm:integration} for non-anticipative kernels in the same way as for equation \eqref{e:fix-pt-multi-Y}.

\end{itemize}

It remains to compare the two systems and prove \eqref{e:SS-RSRS}.
First, 
by \eqref{e:bPsi_U0}, \eqref{e:bbPsi_U0},
 \eqref{e:conti-in-zeta} and Lemma~\ref{lem:Schauder-input-KK},
\begin{equs}
| & \bPsi^{U_0} -\bar{\bPsi}^{U_0}|_{\cD^{\infty,-\frac12-2\kappa,\eps}}
\\
&\lesssim
|\mcb{K}_{\mfz}^{\omega_0} (\CP U_0 \bar{\boldsymbol{\Xi}})-
\mcb{K}_{\mfz}^{\bar\omega_0} (\CP U_0 \boldsymbol{\Xi})|_{\cD^{\infty,-\frac12-2\kappa,\eps}}
\\
&\qquad\qquad\qquad\qquad +
| (\mcb{K}_{\mfz}^{\bar\omega_0} 
- \mcb{K}_{\mfm}^{\bar\omega_0}) (\CP U_0 \boldsymbol{\Xi}) |_{\cD^{\infty,-\frac12-2\kappa,\eps}}
\\
&\lesssim 
|\omega_0 - \bar\omega_0|_{\CC^{-\frac52-\kappa}}
+ |\CP U_0 (\boldsymbol{\Xi}- \bar{\boldsymbol{\Xi}})|_{\cD^{\infty,-\frac52-\kappa,\eps}}
+ \eps^\varsigma\;.
\end{equs}
Also, as in \cite[Eq.~(7.19)-(7.20)]{CCHS2d}  one has
\begin{equ}\label{eq:diff_of_noises}
|\boldsymbol{\Xi} - \boldsymbol{\bar{\Xi}}|_{\cD^{\infty,\infty,\eps}}
+
|\mathbf{1}_+ \boldsymbol{\Xi} - \mathbf{1}_+ \boldsymbol{\bar{\Xi}}|_{\hat\cD^{\infty,-\frac52-\kappa,\eps}}
 \lesssim \eps^{\varsigma}\;.
\end{equ}
By \eqref{eq:diff_of_noises} and Remark~\ref{rem:DDhat-same} one then  has
\begin{equ}[e:PsiU-barPsiU]
| \bPsi^{U_0}-\bar{\bPsi}^{U_0}|_{\hat\cD^{\infty,-\frac12-2\kappa,\eps}}
\lesssim \eps^\varsigma\;.
\end{equ}
The same bounds hold for $\partial(\bPsi^{U_0}-\bar{\bPsi}^{U_0})$
with $-\frac12-2\kappa$ replaced by $-\frac32-2\kappa$.
Moreover,  since
\begin{equ}[e:GUhatXi-compare]
\CG (\hat\CU  \bar{\boldsymbol{\Xi}})
 - \bar\CG  (\hat\CU \boldsymbol{\Xi})
 =
 \CG (\hat\CU ( \bar{\boldsymbol{\Xi}} - \boldsymbol{\Xi}) )
 + (\CG -\bar\CG) (\hat\CU \boldsymbol{\Xi})\;,
\end{equ}
its $\hat\cD^{\gamma_{\mfz},\eta_{\mfz}}_{\alpha_{\mfz}} $ norm
is of order $\eps^\varsigma$
using Theorem~\ref{thm:integration}, \eqref{eq:diff_of_noises},
and Lemma~\ref{lem:K-barK-hat}.

Using \eqref{e:PsiU-barPsiU}, and again by  Theorem~\ref{thm:integration}, the 
$\hat\cD^{\gamma_{\mfz},\eta_{\mfz}}_{\alpha_{\mfz}} $ norm of 
\[
\CG (R_Q ) - \CG(\bar R_Q)  = 
\CG (\hat\CY (\partial  \bPsi^{U_0}-  \partial \bar{\bPsi}^{U_0}))
+\CG ( (  \bPsi^{U_0}-   \bar{\bPsi}^{U_0}) \partial \hat\CY)
\]
is also of order $\eps^\varsigma$.

Turning to the first lines of \eqref{e:fix-pt-multi-Y} and \eqref{e:fix-pt-multi-Xbar},
by \eqref{e:conti-in-zeta} and Lemma~\ref{lem:Schauder-input-KK}
we have
\begin{equs}[e:G-che-bar-che]
|\CG_{\mfz}^{ \omega_3}   &
( \bPsi^{U_0}\partial \bPsi^{U_0})
 -
\CG_{\mfz}^{\bar\omega_3}   
(\bar\bPsi^{U_0} \partial \bar\bPsi^{U_0})|_{\cD^{\infty, -5\kappa,\eps}_{-5\kappa}}
\\
&\lesssim 
| \omega_3- \bar\omega_3|_{\CC^{-2-4\kappa}}
+ 
| \bPsi^{U_0}\partial \bPsi^{U_0}
-  \bar\bPsi^{U_0} \partial \bar\bPsi^{U_0}|_{\cD^{\infty,-2-4\kappa,\eps}_{-2-4\kappa}}\;.
\end{equs}
By \eqref{e:bPsi_U0} and \eqref{e:PsiU-barPsiU}, one has
\[
| \bPsi^{U_0}\partial \bPsi^{U_0}
-  \bar\bPsi^{U_0} \partial \bar\bPsi^{U_0}|_{\cD^{\infty,-2-4\kappa,\eps}_{-2-4\kappa}}
\lesssim
\eps^\varsigma\;.
\]
Therefore
\eqref{e:G-che-bar-che} is of order $\eps^\varsigma$.
The differences for the other two terms in
 the first lines of \eqref{e:fix-pt-multi-Y} and \eqref{e:fix-pt-multi-Xbar}
 are bounded in the same way.
We readily obtain~\eqref{e:SS-RSRS}
from the above bounds and the short-time convolution estimates.
\end{proof}

With further assumptions on the models, we can obtain the following improved convergence.
Recall the definition of $\mcY$ from~\eqref{e:def-tildeY}. 

\begin{proposition}\label{prop:improve_local}
Suppose we are in the setting of Lemma~\ref{lem:fixedptpblmclose}.
For every $\tau\in(0,1)$,
equip the space of models $\mathscr{M}_1$ with the pseudo-metric\footnote{More precisely, we redefine $\mathscr{M}_1$ to be the closure of smooth models under this pseudo-metric with $\tau=1$.}
\begin{equ}
d_{1;\tau}(Z,\bar Z) + \sup_{t\in [0,\tau]}\Sigma(\PPi^Z \<IXi>(t), \PPi^{\bar Z}\<IXi>(t))
+ |\PPi^Z \<I[IXiI'Xi]_notriangle> - \PPi^{\bar Z}\<I[IXiI'Xi]_notriangle>|_{\CC([0,\tau],\CC^{-\kappa})}\;.
\end{equ}
Let $\lambda\in(0,1)$ and $r>0$.
Then there exists $\tau\in(0,1)$, depending only on $r$, such that $I\mapsto \CR\mcY \in \CC([\lambda\tau,\tau],\state)$
is a uniformly continuous function on the set
\begin{multline*}
\Big\{
I = (Z,\omega,(Y_0,U_0,h_0))
\in \mathscr{M}_1\times \bigoplus_{\ell=0}^4 \CC^{\alpha_\ell}\times \Omega^\initial
\\
:\, I \textnormal{ is a $1$-input of size $r$ over time $\tau$}
\Big\}\;.
\end{multline*}
The same statement holds with $\mcY$ replaced by $\bar\mcY \eqdef \CP Y_0 + \bar\bPsi^{U_0} + \hat\mcY$
i.e.\ by the $\mcY$-component of $\bar\cS$,
$\omega$ replaced by $\bar\omega$, and $\<IXi>$ and $\<I[IXiI'Xi]_notriangle>$ replaced by
$\<IXiS>$ and $\<I[IXiSI'XiS]_notriangle>$.

Furthermore, as $\eps\downarrow 0$,
\begin{equ}\label{eq:conv_in_state_eps}
\sup_{t\in[\lambda\tau,\tau]}\Sigma(\CR\CY(t),\CR\bar\CY(t)) \to 0
\end{equ}
uniformly over all $\eps$-inputs of size $r$ over time $\tau$
which satisfy 
\begin{equ}\label{eq:consistency_models}
\PPi^Z \<IXi> = \PPi^Z \<IXiS>\;,\quad
\PPi^Z \<I[IXiI'Xi]_notriangle> = \PPi^Z \<I[IXiSI'XiS]_notriangle>\;.
\end{equ}
\end{proposition}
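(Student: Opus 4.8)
The plan is to establish Proposition~\ref{prop:improve_local} in two stages, first the uniform continuity statement for the maps $I\mapsto \CR\CY\in\CC([\lambda\tau,\tau],\state)$ and $I\mapsto\CR\bar\CY\in\CC([\lambda\tau,\tau],\state)$, and then the convergence~\eqref{eq:conv_in_state_eps}. The key point is that Lemma~\ref{lem:fixedptpblmclose} already gives us everything we need \emph{at the level of modelled distributions}: the solution maps $\cS$ and $\bar\cS$ are locally uniformly continuous in the input when models are equipped with $d_{1;\tau}$, and~\eqref{e:SS-RSRS} gives the $\eps^\varsigma$ closeness. So what remains is a purely ``analytic'' post-processing: passing from convergence of $\hat\CY$ (and hence of $\CR\CY$) in the $\CC^{\eta_\mfz+\text{(something)}}$-type topology coming from reconstruction of $\hat\cD^{\gamma_\mfz,\hat\beta}_{\alpha_\mfz}$, to convergence in the genuinely nonlinear metric $\Sigma$ on $\state$. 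This is exactly the same mechanism already used at the end of the proof of Theorem~\ref{thm:local-exist-sigma}, where the decomposition $X=\Psi+\hat X$ was combined with Lemma~\ref{lem:perturbation} and Lemma~\ref{lem:heatgr_Besov_embed} to upgrade $\CC^\kappa$-type convergence of the ``remainder'' $\hat X$ to $\state$-convergence.

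More concretely, first I would fix $\tau=\tau(r)$ as provided by Lemma~\ref{lem:fixedptpblmclose} (possibly shrinking it) and recall the decomposition $\CY = \CP Y_0 + \bPsi^{U_0} + \hat\CY$ from~\eqref{e:def-tildeY}, so that $\CR\CY = \CP Y_0 + \Psi^{U_0}_{\eps} + \CR\hat\CY$ where $\Psi^{U_0}_\eps = K\ast\omega_0 = \CR\bPsi^{U_0}$. The three pieces are handled separately: $\CP Y_0$ depends continuously on $Y_0\in\state$ directly (using Proposition~\ref{prop:Theta_cont_zero} and continuity of the heat flow on $\state$), and its $\fancynorm{\cdot}_{\beta,\delta}$ and $\heatgr{\cdot}_{\alpha,\theta}$ norms are controlled by $\Sigma(Y_0)$ via~\eqref{e:PN-bound}; the term $\Psi^{U_0}_\eps$ has exactly the structure of a (rescaled, modulated) stochastic heat integral and one controls $\Sigma(\Psi^{U_0}_\eps)$ and its continuity in the input using the moment bounds of Lemma~\ref{lem:omegas_converge} together with the estimates of Sections~\ref{subsec:1st-half} and~\ref{subsec:2nd-half} adapted to $\omega_0$ rather than white noise (the relevant input is the bound $|\omega_0|_{\CC^{-5/2-\kappa}}\leq M\sig|U_0|_\infty$, which plays the role of the $C_\xi$-type control); and the remainder $\CR\hat\CY$ lives in $\CC([\lambda\tau,\tau],\CC^{\eta+1/2-\kappa})$ for a H\"older exponent $\eta+\tfrac12-\kappa$ which, by the inequalities~\eqref{eq:bar_beta_cond1}--\eqref{eq:bar_beta_cond3}, is strictly above $\tfrac12-\delta$, above $2\theta(\alpha-1)$, and such that $\eta+(\eta+\tfrac12-\kappa)>1-2\delta$ --- precisely the regime in which Lemma~\ref{lem:perturbation}\ref{pt:perturbation}, Lemma~\ref{lem:perturbation}\ref{pt:fancynorm_0} and Lemma~\ref{lem:heatgr_Besov_embed} let us bound the $\Sigma$-distance of two such remainders by their $\CC^{\eta+1/2-\kappa}$-distance times a polynomial factor in the other norms. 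Assembling the three pieces via the bilinear perturbation estimates (the same computation as~\eqref{eq:perturbation_X}) gives uniform continuity of $I\mapsto\CR\CY$ into $\CC([\lambda\tau,\tau],\state)$ on the set of $1$-inputs of size $r$. The reason we need $\CR\hat\CY$ to land in $\CC^{\eta+1/2-\kappa}$ rather than just $\CC^{-\kappa}$ is that the raw reconstruction of $\hat\cD^{\gamma_\mfz,\hat\beta}_{\alpha_\mfz}$ only gives $\CC^{\hat\beta-}$-regularity near $t=0$ and $\CC^{-\kappa}$ away from it; the extra regularity comes from the explicit structure of the worst term $\CG_\mfz\mathbf 1_+(\CP Y_0\partial\CP Y_0)$ and the term $\CG_\mfz^{\omega_i}(\cdots)$, exactly as in the ``$\hat X = \mcY + \hat X_R$'' analysis already carried out; I would invoke that analysis verbatim, noting that the input norms~\eqref{eq:input_def} are precisely what is needed to bound the relevant stochastic objects $\PPi^Z\<IXi>$, $\PPi^Z\<I[IXiI'Xi]_notriangle>$ and hence $\mcY$-type pieces in $\state$, which is why those $\sup_{t}$ terms were included in~\eqref{eq:input_def} (cf.\ Remark~\ref{rem:PPi_not_needed}).

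For the convergence statement~\eqref{eq:conv_in_state_eps}: under the additional consistency conditions~\eqref{eq:consistency_models}, the models $Z$ appearing as $\eps$-inputs for the two systems agree on the relevant symbols, and an $\eps$-input of size $r$ is in particular a $1$-input of size $\lesssim r$ by the $\eps$-dependent norm comparison (the $\eps$-norms dominate the $1$-norms up to constants). By~\eqref{e:SS-RSRS} the $\cS$ and $\bar\cS$ solutions differ by $O(\eps^\varsigma)$ in the modelled-distribution norm, hence $\CR\hat\CY-\CR\hat{\bar\CY}\to 0$ in $\CC([\lambda\tau,\tau],\CC^{\eta+1/2-\kappa})$ (reconstruction is continuous, and with consistent models the reconstruction operators coincide); moreover $\Psi^{U_0}_\eps - \bar\Psi^{U_0}_\delta\to 0$ in the appropriate $\CC$-topologies by Lemma~\ref{lem:omegas_converge}, $\CW\to 0$, and $\bar\omega_\ell^{\eps,\delta}-\omega_\ell^\eps\to 0$. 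Feeding these into the bilinear perturbation estimates from the first stage, with the polynomial prefactors uniformly bounded on inputs of size $r$, yields $\sup_{t\in[\lambda\tau,\tau]}\Sigma(\CR\CY(t),\CR\bar\CY(t))\to 0$ uniformly. The main obstacle I anticipate is bookkeeping rather than conceptual: one must verify that the ``improved regularity'' of the remainders $\CR\hat\CY$ and $\CR\bar\CY$ into $\CC^{\eta+1/2-\kappa}$ really holds \emph{uniformly over $1$-inputs of size $r$} and with a modulus of continuity in $d_{1;\tau}$ plus the extra $\Sigma(\PPi^Z\<IXi>)$-type distances --- i.e.\ that the probabilistic arguments of the proof of Theorem~\ref{thm:local-exist-sigma} (which produce $\mcY=G\ast(\Psi\partial\Psi)\in\CC^{\eta+1/2-\kappa}$ etc.) can be ``deterministicised'' into statements purely about the input data in~\eqref{eq:input_def}, so that the quantitative dependence is on $\Sigma(\PPi^Z\<IXi>)$ and $|\PPi^Z\<I[IXiI'Xi]_notriangle>|_{\CC^{-\kappa}}$ rather than on Gaussian moments. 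This is the point where the $\sup_t$ terms in the definition of an $\eps$-input earn their keep, and checking that no further stochastic objects are needed (in particular that the multiplicative-noise terms $\hat\CU\bar\bXi$ contribute only to $\CC^{\hat\beta-}$, not to the $\mcY$-part that needs $\state$-control) is the delicate step.
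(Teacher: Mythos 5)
Your overall strategy—reduce everything to the perturbation estimates of Lemma~\ref{lem:perturbation} and Lemma~\ref{lem:heatgr_Besov_embed} after isolating the singular part of $\CR\CY$—is the right one, and your final step (the analogue of~\eqref{eq:perturbation_X}) matches the paper. But the decomposition you choose, $\CR\CY=\CP Y_0+\Psi^{U_0}_\eps+\CR\hat\CY$ from~\eqref{e:def-tildeY}, does not isolate the singular part correctly, and this is a genuine gap rather than bookkeeping. First, $\CR\hat\CY$ is \emph{not} in $\CC^{\eta+\frac12-\kappa}$ on $[\lambda\tau,\tau]$: the fixed point~\eqref{e:fix-pt-multi-Y} contains the term $\CG_\mfz\mathbf 1_+(\hat\CU\bar\bXi)$, whose local expansion at $z$ contains $\hat u(z)\,\<IXi>$ with $\hat u=\CR\hat\CU$, so its reconstruction is a multiple of $\PPi^Z\<IXi>$ and hence only of class $\CC^{-\frac12-\kappa}$. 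The analogy with the additive-noise remainder $\hat X$ of Theorem~\ref{thm:local-exist-sigma} breaks precisely because the noise here is multiplicative; the fact that $\hat\CU$ vanishes at $t=0$ does not help on $[\lambda\tau,\tau]$. Second, $\Sigma(\Psi^{U_0}_\eps(s))$ cannot be controlled from the input data as you propose: the deterministic bound $|\omega_0|_{\CC^{-5/2-\kappa}}\le r$ says nothing about the nonlinear quantity $\CN_t(K*\omega_0)$ (recall that there exist distributions of the correct H\"older regularity for which $t\mapsto\CP_tX\otimes\nabla\CP_tX$ is not even locally integrable), and redoing the Gaussian computations of Sections~\ref{subsec:1st-half}--\ref{subsec:2nd-half} for $\omega_0$ would turn the statement into a probabilistic one, whereas the proposition asserts \emph{deterministic} uniform continuity over the set of inputs of size $r$. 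You correctly flag both of these as "the delicate step", but flagging them is not resolving them.

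The resolution is a different decomposition: truncate the modelled distribution $\CY$ at level $\gamma=\frac12-6\kappa$, writing $\CY=u\,\<IXi>+v\,\<I[IXiI'Xi]_notriangle>+b\,\mathbf 1$ with $u=\CR\CU$ and $v=u\otimes u$. Since the structure group acts trivially on these symbols, $\CR\CY=u\,\PPi^Z\<IXi>+v\,\PPi^Z\<I[IXiI'Xi]_notriangle>+b$ exactly, with $b\in\CC([\lambda\tau,\tau],\CC^{\frac12-6\kappa})$ depending uniformly continuously on the input by Lemma~\ref{lem:fixedptpblmclose}. This collects \emph{all} coefficients of $\<IXi>$ (both $\CP U_0$ and $\hat u$) into the single function $u\in\CC^{\frac32-3\kappa}$, and the two problems above disappear simultaneously: the $\Sigma$-control of $\PPi^Z\<IXi>$ is supplied directly by the $\sup_t$ terms in the definition~\eqref{eq:input_def} of an input (this is exactly what Remark~\ref{rem:PPi_not_needed} is pointing at) and by the choice of pseudo-metric on $\mathscr{M}_1$, while $u\,\PPi^Z\<IXi>\in\state$ follows from the uniform continuity of $(g,Y)\mapsto gY$ on balls of $\mfG^\rho\times\state$ (Lemmas~\ref{lem:gX_gY_bound},~\ref{lem:X_gX_bound} and~\ref{lem:heatgr_A_Ad_g_A}), with no stochastic input whatsoever. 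The remaining terms $v\,\PPi^Z\<I[IXiI'Xi]_notriangle>+b$ land in $\CC^{-\kappa}$, which is good enough for the perturbation argument you already have; the $\eps\downarrow0$ statement then follows from~\eqref{e:SS-RSRS} and~\eqref{eq:consistency_models} as you describe.
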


\begin{proof}
We prove first the statement regarding uniform continuity of $I\mapsto \CR\CY$.
It follows from Lemma~\ref{lem:fixedptpblmclose} and continuity of the reconstruction map
that, for some $\tau\in(0,1)$, $\CR \CU$ is a uniformly continuous function into $\CC([\lambda\tau,\tau],\CC^{\frac32-3\kappa})$ of the input.
Truncating $\CY$ at level $\gamma=\frac12-6\kappa$, we can write
\begin{equ}
\mcY (t,x) = u(t,x) \<IXi> + v(t,x)\<I[IXiI'Xi]_notriangle> + b(t,x) \mathbf 1\;,
\end{equ}
where $u = \CR\CU$ and $v=u\otimes u$.
Remark that the structure group acts trivially on $\<IXi>$, $\<I[IXiI'Xi]_notriangle>$, and $\mathbf 1$,
and in particular $b$ is a uniformly continuous function into $\CC([\lambda\tau,\tau],\CC^{\frac12-6\kappa})$ of the input.

The reconstruction of $\CY$ on $[\lambda\tau,\tau]\times \T^3$ is then
\begin{equ}
\CR\CY = u \PPi^Z \<IXi> + v\PPi^Z\<I[IXiI'Xi]_notriangle>+ b\;.
\end{equ}
Observe that $\PPi^Z \<IXi>$ and $\PPi^Z\<I[IXiI'Xi]_notriangle>$
are uniformly continuous functions into $\CC([\lambda\tau,\tau],\state)$ and $\CC([\lambda\tau,\tau],\CC^{-\kappa})$ respectively of the input due to our choice of metric on $\mathscr{M}_1$.
Hence $v\PPi^Z\<I[IXiI'Xi]_notriangle>+b$ is a uniformly continuous function into $\CC([\lambda\tau,\tau],\CC^{-\kappa})$ of the input.
Furthermore, it follows from
Lemmas~\ref{lem:gX_gY_bound},~\ref{lem:X_gX_bound}, and~\ref{lem:heatgr_A_Ad_g_A}
that $(g,Y) \mapsto gY$ is uniformly continuous from every ball in $\mfG^\rho \times \state$  into $\state$.
Therefore
$u \PPi^Z \<IXi>$ is a uniformly continuous functions into $\CC([\lambda\tau,\tau],\state)$
of the input.
It follows from
the same perturbation argument as in~\eqref{eq:perturbation_X}
that $\CR\CY$ is a uniformly continuous function into $\CC([\lambda\tau,\tau],\state)$ of the input.
The proof of the analogous statement for $I\mapsto \CR\bar\mcY$ is identical.

It remains to prove~\eqref{eq:conv_in_state_eps}.
Due to~\eqref{e:SS-RSRS}, $|b-\bar b|_{\CC([\lambda\tau,\tau],\CC^{1/2-6\kappa})}\to 0$,
and, combining with continuity of the reconstruction map, $|\CR\CU-\CR\bar\CU|_{\CC([\lambda\tau,\tau],\CC^{3/2-3\kappa})} \to 0$ as $\eps\downarrow0$.
Both convergences are uniform over all inputs of a given size.
Then~\eqref{eq:conv_in_state_eps} follows from assumption~\eqref{eq:consistency_models}
and a similar argument as above.
\end{proof}

\begin{remark}\label{rem:lambda_zero}
Similar to~\cite[Rem.~7.16]{CCHS2d}, one should be able to set $\lambda=0$ in Proposition~\ref{prop:improve_local} once the initial conditions are taken appropriately (namely $h_0= (\mrd g)g^{-1}$ for some $g\in \mfG^{0,\rho}$)
and extra assumptions on $\omega_\ell,\bar\omega_\ell$ are made of the type $G*\omega_0\in\CC([0,1],\state)$ and $G*\omega_\ell\in\CC([0,1],\CC^{-\kappa})$ for $\ell=1,2,3,4$.
However, the weaker statement with $\lambda>0$ suffices for our purposes.
\end{remark}

\subsection{Renormalised local solutions}\label{subsec:renorm_sol} 

The main result of this subsection is Proposition~\ref{prop:SPDEs_conv_zero_state}
which shows that the SPDEs \eqref{eq:renorm_bar_a} and \eqref{eq:renorm_b} below, which we will later take to coincide with~\eqref{eq:SPDE_for_bar_A} (up to $o(1)$)
and~\eqref{eq:SPDE_for_B} respectively, converge to the same limit locally in time in $\CC([0,\tau],\state\times\tilde{\mfG}^{0,\rho})$.

We start by introducing some decomposition for our renormalisation constants.
Recalling that we have fixed a space-time mollifier $\moll$,  Proposition~\ref{prop:renorm_g_eqn} defines the renormalisation operators $C^\eps_{\Higgs} \in L_G(\higgsvec,\higgsvec)$ and 
$C^\eps_{\YM}, C^\eps_{\Gauge}, C^{0,\eps}_{\Gauge} \in L_{G}(\mfg,\mfg)$,
which depend on $\sig^\eps \in \R$. 
Recall from Propositions \ref{prop:mass_term} and \ref{prop:renorm_g_eqn} that
for $j \in \{1,2\}$ there exist
$C^\eps_{j,\Higgs} \in L_G(\higgsvec,\higgsvec)$ and
$C^\eps_{j,\YM}, C^\eps_{j,\Gauge}, C^{0,\eps}_{j,\Gauge} \in L_{G}(\mfg,\mfg)$
which are all independent of $\sig^{\eps}$, such that
for $C^{\bullet}_{\bullet} \in \{ C^\eps_{\YM},\ C^\eps_{\Higgs},\ C^\eps_{\Gauge},\ C^{0,\eps}_{\Gauge}\}$,
\begin{equ}
C^{\bullet}_{\bullet} =  (\sig^\eps)^2  C^{\bullet}_{1,\bullet} +  (\sig^\eps)^4  C^{\bullet}_{2,\bullet} \;.
\end{equ}


We first show in Proposition~\ref{prop:SPDEs_conv_zero_space-time}
that the SPDEs~\eqref{eq:renorm_bar_a} and~\eqref{eq:renorm_b} converge (to the same limit)
locally in time as \textit{space-time distributions} in $\CC^{-\frac12-\kappa}((0,\tau)\times\T^3)$.
This allows us to use a small noise limit argument in Lemmas~\ref{lem:consts-soft1_bounded} and~\ref{lem:consts-soft2_bounded} to show that $C^\eps_{j,\YM}-C^\eps_{j,\Gauge}$  and $C^{0,\eps}_{j,\Gauge}$ are bounded in $\eps$.
This in turn allows us to improve the mode of convergence in Proposition~\ref{prop:SPDEs_conv_zero_state}
to $\CC([0,\tau],\state\times\tilde{\mfG}^{0,\rho})$ by comparing~\eqref{eq:renorm_bar_a} and~\eqref{eq:renorm_b} to an SPDE with additive noise.

We define operators in $L_G (E,E)$  given by 
\begin{equs}
C_{\YMH}^\eps \eqdef  
(C^\eps_{\YM} )^{\oplus 3} \oplus C^\eps_{\Higgs}\;, \quad 
C_{\GaugeH}^\eps & \eqdef (C^\eps_{\Gauge})^{\oplus 3} \oplus 0\;,\\
 \text{and} 
\qquad
C_{\GaugeH}^{0,\eps} & \eqdef 
(C^{0,\eps}_{\Gauge})^{\oplus 3} \oplus 0\;.
\end{equs}

To state Proposition~\ref{prop:SPDEs_conv_zero_space-time},
we fix $\sig \in \R$, and 
some choice of constants $(\sig^{\eps} \in \R: \eps \in (0,1])$ 
with $\sig^{\eps} \rightarrow \sig$.
We also fix
$\mathring{C}_{\A}, \mathring{C}_{\h} \in L_{G}(\mfg,\mfg)$, and
some choice of  $\mathring{C}^{\eps}_{\A}, \mathring{C}^{\eps}_{\h} \in L_{G}(\mfg,\mfg)$ 
with $\mathring{C}^{\eps}_{\A} \rightarrow \mathring{C}_{\A}$, and $\mathring{C}^{\eps}_{\h} \rightarrow \mathring{C}_{\h}$ as $\eps \downarrow 0$,
and then define as in
\eqref{e:ring-C-zh} 
\begin{equ}[e:defConstbarX]
\mathring{C}_{\mfz}^\eps = 
(\mathring{C}_{\A}^\eps)^{\oplus 3}
\oplus (-m^2) \;,\quad
\mathring{C}_{\mfh}^\eps = 
(\mathring{C}_{\h}^\eps)^{\oplus 3}
\oplus 0\;.
\end{equ}
Finally we will simply write $\mrd g g^{-1}$ for $\mrd g g^{-1}\oplus 0 \in E$ and recall the shorthand notation~\eqref{e:simpleNotation}
and that $\mbF$\label{pageref:mbF} is the filtration generated by the white noise.

\begin{proposition}\label{prop:SPDEs_conv_zero_space-time}
Suppose $\moll$ is non-anticipative
and fix any  initial data
$\bar{x} = (\bar a,\bar\phi) \in \state$
and $g(0) \in {\mfG}^{0,\rho}$.
Consider the system of equations for $\bar X=(\bar A,\bar \Phi)$
\begin{equ}[eq:renorm_bar_a]
\partial_t \bar X
= \Delta \bar X
+ \bar X \partial \bar X
+ \bar X^3
+ C_{\mfz}^\eps \bar X
+ \bar{C}_{\mfh}^\eps \mrd \bar g \bar g^{-1}
+ \sig^\eps \moll^\eps * (\bar g \xi)
\end{equ}
where $\bar g $ solves the last equation in \eqref{eq:SPDE_for_bar_A} with $\bar A$ given by 
the first component of the $\bar X$ from \eqref{eq:renorm_bar_a} and we define $\bar g \xi$ 
as in~\eqref{eq:group_action_E} with $\bar g\equiv 1$ on $(-\infty,0)$.
Consider further
the system of equations  for $Y=(B,\Psi)$
\begin{equ}[eq:renorm_b]
\partial_t Y
= \Delta Y
+ Y \partial Y
+ Y^3
+ C_{\mfz}^\eps Y
+ C_{\mfh}^\eps \mrd g g^{-1}
+ \sig^\eps  (g \xi^\eps)
\end{equ}
where $g $ solves the last equation in  \eqref{eq:SPDE_for_B} with $B$ given by the first component of $Y$,
and $g \xi^\eps$ is again defined by~\eqref{eq:group_action_E}.
Both systems are taken
with initial conditions $\bar X(0)=Y(0)=\bar{x}$ and $\bar{g}(0) = g(0)$,
and with renormalisation operators
\begin{equ}[e:translated-consts]
C_{\mfz}^\eps = \mathring{C}_{\mfz}^\eps
 +C_{\YMH}^\eps \;,
\qquad
C_{\mfh}^\eps = \mathring{C}_{\mfh}^\eps
 +C_{\GaugeH}^{\eps} \;,
 \qquad
 \bar{C}_{\mfh}^\eps = \mathring{C}_{\mfh}^\eps
 +C_{\GaugeH}^{0,\eps} \;.
\end{equ}
Then, for $\eps>0$, the system~\eqref{eq:renorm_bar_a} is well-posed in the same sense as Theorem~\ref{thm:gauge_covar}\ref{pt:delta_to_zero}.

Furthermore, let $(U,h)$ (resp.\ $(\bar U,\bar h)$) be defined by $g$ (resp.\ $\bar g$) as in \eqref{eq:h_and_U_def}.
Then
there exists $\tau>0$, depending only on $\sup_{\eps\in(0,1)}(|\mathring{C}_{\A}^{\eps}|+
|\mathring{C}_{\h}^{\eps}| +|\sig^{\eps}|)$,
the realisation of the noise on $[-1,2]\times\T^3$,
and the size of $(\bar x,g(0))$ in $\state\times\mfG^\rho$,
such that $\tau$ is an $\mbF$-stopping time and such that
$(\bar X,\bar U,\bar h)$ and 
$(Y,U,h)$ converge in probability in $\CC^{-\frac12-\kappa}((0,\tau)\times \T^3)$ to the same limit as $\eps \downarrow 0$.
More precisely, there exists a $\CC^{-\frac12-\kappa}((0,\infty)\times\T^3)$-valued random variable $(X,V,f)$ such that
\begin{equ}
|(X,V,f)-(\bar X,\bar U,\bar h)|_{\CC^{-\frac12-\kappa}((0,\tau)\times\T^3)}+|(X,V,f)-(Y,U,h)|_{\CC^{-\frac12-\kappa}((0,\tau)\times\T^3)} \to 0 
\end{equ}
in probability as $\eps\downarrow 0$.
Moreover, $(X,V,f)$ depends on the sequences
 $\mathring{C}_{\A}^{\eps}$, $\mathring{C}_{\h}^{\eps}$ and $\sig^{\eps}$ 
 only through their limits $\mathring{C}_{\A}$, $\mathring{C}_{\h}$, and $\sig$. 
\end{proposition}
%
\begin{proof}
We first claim that \eqref{eq:renorm_bar_a} is well-posed and
is moreover the renormalised equation satisfied by 
 the reconstruction  of $\CY$ in \eqref{e:def-tildeY}
 with respect to $Z^{0,\eps}_{\BPHZ}$, where $\hat\CY$ solves the fixed point problem
  \eqref{e:fix-pt-multi-Xbar}.
The other inputs $\bar\omega_\ell$ and $\CW$ for~\eqref{e:fix-pt-multi-Xbar} are defined by~\eqref{eq:CW_def} and~\eqref{e:bar_omega0123}.
Since $Z^{0,\eps}_{\BPHZ}$ is not a smooth model,  
  the proof of the claim for \eqref{eq:renorm_bar_a}  
  goes via obtaining the corresponding renormalised equation for the model $Z^{\delta,\eps}_{\BPHZ}$ and then taking the limit $\delta \downarrow 0$, which is justified by the convergence \eqref{e:conv-Z-delta}
and Lemmas~\ref{lem:omegas_converge}-\ref{lem:compatible}.

By \cite[Thm.~5.7]{BCCH21} and \cite[Prop.~5.68]{CCHS2d},
together with the discussion in Section~\ref{sec:sol-multi} (see in particular Remark~\ref{rmk:multi-coherent}),
 the  reconstruction  with respect to $Z^{0,\eps}_{\BPHZ}$
 of $(\CY,\CU,\CH)$  yields 
  $(\bar X, \bar U, \bar h)$ where we write $\bar X=(\bar A,\bar\Phi)$,
 and by
\eqref{eq:no_u_h_renorm} from 
Proposition~\ref{prop:renorm_g_eqn},
$(\bar U,\bar h)$ satisfies the ``barred'' version\footnote{That is with $\bar{U}$ and $\bar{h}$ instead of $U$ and $h$ and also with $B$ replaced by $\bar{A}$.} of  \eqref{e:final_hU} which is not renormalised.

We first claim that $\bar U$ satisfies the conditions of Definition~\ref{def:bar-mcb-A}. 
Indeed, if $\bar g$  solves~\eqref{eq:SPDE_for_bar_A} for the given $\bar A$,
then Lemma~\ref{lemma:linear_g_system} with this  given $\bar A$
implies that $(\mrd \bar g) \bar g^{-1}$ and $\brho (\bar g)$  solve
 the ``barred'' version of \eqref{e:final_hU}
 with the same initial condition 
 as that of $(\bar U,\bar h)$,
thus $(\bar U,\bar h)=(\brho (\bar g) ,(\mrd \bar g) \bar g^{-1})$.

We  apply Proposition~\ref{prop:renorm_g_eqn}, and take the limit $\delta \downarrow 0$ of the renormalisation constants
to conclude that
$\bar X$ solves~\eqref{eq:renorm_bar_a} but with 
$\bar g \xi_i $ replaced by $\bar U\xi_i$ 
and $(\mrd \bar g) \bar g^{-1}$ replaced by $\bar h$.
It remains to observe that if $(\bar X', \bar g)$ is the solution to~\eqref{eq:renorm_bar_a} and the last equation in~\eqref{eq:SPDE_for_bar_A},
where $\bar g $ is such that $\brho(\bar g) = \bar U$ and $\mrd \bar g \bar g^{-1}=\bar h$,
then similarly  as above using Lemma~\ref{lemma:linear_g_system},
we have  $\bar X=\bar X'$, which proves the claim that 
$\bar X$ solves \eqref{eq:renorm_bar_a}.

A similar argument shows that 
\eqref{eq:renorm_b}
is  the renormalised equation satisfied by 
 the reconstruction  of $\CY$ in \eqref{e:def-tildeY}
 with respect to $Z^{0,\eps}_{\BPHZ}$, where $\hat\CY$ solves the fixed point problem
\eqref{e:fix-pt-multi-Y}.

The probabilistic input from Lemmas~\ref{lem:conv_of_models2},~\ref{lem:omegas_converge}, and~\ref{lem:compatible},
together with the deterministic Lemma~\ref{lem:fixedptpblmclose}, Remark~\ref{rem:PPi_not_needed}, and continuity of the reconstruction map $\CR\colon \cD^{\gamma_\mft,\eta_\mft}_{\alpha_\mft} \to \CC^{\alpha_\mft\wedge\eta_\mft}((0,\tau)\times\T^3)$,
prove the statement concerning convergence in probability as $\eps\downarrow0$.
The fact that $\tau$ is a stopping time follows from the fact that, since $\moll$ and $K$ are non-anticipative, one only needs to know the noise up to time $t>0$ to determine if the corresponding $\eps$-input is of size $r>0$ over time $t$.

Finally, the fact that $(X,V,f)$ in the proposition statement can be chosen to depend only on the limits $\mathring{C}_{\A}$, $\mathring{C}_{\h}$, and $\sig$
follows from the continuity of the fixed point problems~\eqref{e:fix-pt-multi-Y}+\eqref{e:fix-pt-HU}
and \eqref{e:fix-pt-multi-Xbar}+\eqref{e:fix-pt-HU}
with respect to the coefficients of the non-linearity, and with respect to the input model $Z^\eps_{\BPHZ}$ and distributions $\omega^\eps_\ell,\bar\omega^{\eps,0}_\ell$ which are multilinear functions of $\sig^\eps$.
\end{proof}

Our next goal is to show boundedness of  $C_\mfz^\eps - C^\eps_\mfh$ and $\bar{C}^\eps_\mfh$ appearing in Proposition~\ref{prop:SPDEs_conv_zero_space-time}.
This will allow us to relate $\bar X$ and $Y$ to a variant of equation~\eqref{eq:SPDE_for_A}, which we study in Appendix~\ref{app:evolving_rough_g},
and for which continuity into $\state$ at time $t=0$ is simpler to show since the noise appears additively.

We now take advantage of the parameter $\sig$. For clarity we include the following remark. 

\begin{remark}\label{rem:role_of_sigma}
In  Sections~\ref{sec:renorm-A} and~\ref{sec:gauge}, the parameter $\sig^{\eps}$ which multiplies the noise terms in \eqref{e:SPDE-for-X} and \eqref{eq:renorm_bar_a}--\eqref{eq:renorm_b} is incorporated into our noise assignment,\footnote{For instance, the noise term in the abstract fixed point problem in  Section~\ref{sec:renorm-A}  is just $\bar{\bXi}$, not $\sig^{\eps}\bar{\bXi}$} that is we look at the BPHZ lifts of the noise $\sig^{\eps} \xi^{\eps}$ in Section~\ref{sec:renorm-A}  (and $\sig^{\eps} \xi^{\eps}$, $\sig^{\eps}\xi$ in Section~\ref{sec:gauge}). 

In the results below we perform small noise limits $\lim_{\eps \downarrow 0 } \sig^{\eps} = 0$. 
An important fact is that in this limit the corresponding BPHZ lifts converge to the BPHZ lift of the ``noise'' $0$ (which is just the canonical lift of $0$, that is the unique admissible model that vanishes on any tree containing a label in $\Lab_{-}$). 
In fact, the full ``$\eps$-inputs''  converge to $0$ (see the $\sig$ dependence in the bounds of Lemma~\ref{lem:omegas_converge}), which allows us to argue that the corresponding solutions converge (in the appropriate sense), as $\eps \downarrow 0$ to the solution we obtain from the zero noise. 
\end{remark}
\begin{lemma}\label{lem:consts-soft1_bounded}
For $j \in \{1,2\}$, let $D_{j}^{\eps}\eqdef
  C^{\eps}_{j,\YM} - C^{\eps}_{j,\Gauge}\in L_G(\mfg,\mfg)$.
Then
\begin{equ}
\sup_{\eps\in(0,1]} \big(|D_1^\eps| + |D_2^\eps|\big) < \infty\;.
\end{equ}
\end{lemma}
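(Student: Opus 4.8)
The plan is to exploit the small noise limit $\sig^\eps \downarrow 0$, using the decomposition of the renormalisation constants into orders $(\sig^\eps)^2$ and $(\sig^\eps)^4$ from Propositions~\ref{prop:mass_term} and~\ref{prop:renorm_g_eqn}, together with the injectivity of our solution maps as functions of the renormalisation constants (Appendix~\ref{app:injectivity}). The key point is that $D_j^\eps$ quantifies precisely the failure of the constants appearing in the $Y$-equation~\eqref{eq:renorm_b} to be gauge-compatible: the term $C_\mfz^\eps Y + C_\mfh^\eps (\mrd g)g^{-1}$ can be rewritten using~\eqref{e:translated-consts} as $\mathring{C}_\mfz^\eps Y + \mathring{C}_\mfh^\eps (\mrd g)g^{-1} + C_{\YMH}^\eps Y + C_{\GaugeH}^\eps (\mrd g)g^{-1}$, and the combination $C_{\YMH}^\eps(Y + (\mrd g)g^{-1}) + (C_{\GaugeH}^\eps - (C_{\YMH}^\eps)^{\oplus 3}\oplus 0)(\mrd g)g^{-1}$ has a ``gauge covariant piece'' $C_{\YMH}^\eps \cdot$ acting on $Y + (\mrd g)g^{-1}$ (note $B^g = \Ad_g B - (\mrd g)g^{-1}$ so that this is the natural object), and a ``defect piece'' governed exactly by the operators $(D_j^\eps)^{\oplus 3}\oplus 0$ (after extracting powers of $\sig^\eps$). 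If $D_j^\eps$ were to diverge along a subsequence, the defect would blow up while everything else (the models, the $\omega_\ell$'s, the initial data) stays bounded — indeed tends to zero — as $\sig^\eps\downarrow 0$, contradicting convergence of the solution.

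\textbf{Key steps.} First I would fix a sequence $\sig^\eps \downarrow 0$ and consider the $Y$-system~\eqref{eq:renorm_b} with, say, $\mathring C_\A^\eps = \mathring C_\h^\eps = 0$ and a fixed smooth initial condition (e.g.\ $\bar x$ smooth, $g(0)=\id$). By Proposition~\ref{prop:SPDEs_conv_zero_space-time} (or rather its analogue for $Y$ alone, which is simpler), the reconstructed solution $(Y,U,h)$ converges in $\CC^{-\f12-\kappa}$ locally in time. Second, I would use the $\eps$-input bounds of Lemma~\ref{lem:omegas_converge} — in particular the explicit $\sig^n$ dependence — to observe that as $\sig^\eps\downarrow 0$ the entire $\eps$-input (model, $\omega_\ell$, $\CW$) converges to the zero input, so by Lemma~\ref{lem:fixedptpblmclose} and continuity of the reconstruction map the solution converges to the solution of the \emph{deterministic} problem with $\sig = 0$. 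Third — and this is where injectivity enters — I would isolate, in the renormalised equation for $Y$, the term at order $(\sig^\eps)^2$ (the only order that survives the relevant scaling): writing $Y$ as a modelled distribution and tracking where $D_j^\eps$ appears, the solution at this order solves a linear-in-$(\mrd g)g^{-1}$ perturbation of the $\sig=0$ equation, with the perturbation proportional to $(\sig^\eps)^2 (D_1^\eps)^{\oplus3}\oplus 0$ applied to the (nontrivial, since $g(0)$ can be chosen so that $(\mrd g)g^{-1}\not\equiv 0$ for $t>0$) gauge field $h = (\mrd g)g^{-1}$. Taking a suitable difference quotient / rescaling in $\sig^\eps$ and using that the solution map is injective in law as a function of the constant multiplying this term (Appendix~\ref{app:injectivity}, applied to the $h$-perturbed equation), I would conclude that $(\sig^\eps)^2 D_1^\eps$ — hence, since $\sig^\eps$ can be chosen at any fixed scale, $D_1^\eps$ — must remain bounded; repeating at order $(\sig^\eps)^4$ handles $D_2^\eps$.

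\textbf{Main obstacle.} The delicate part is making the ``extract the $(\sig^\eps)^2$-coefficient and invoke injectivity'' step rigorous. One cannot literally differentiate the solution map in $\sig$; instead one works with several fixed values of $\sig$ simultaneously (or a fixed nonzero $\sig$ and varying $D_j^\eps$ directly), using that the modelled-distribution fixed point depends multilinearly on $\sig$ (Remark~\ref{rem:role_of_sigma}) and real-analytically on the renormalisation constants, so that boundedness of the solution forces boundedness of each coefficient in the $\sig$-expansion. The cleanest route is probably: suppose $|D_{j_0}^\eps|\to\infty$ along a subsequence for the smallest $j_0$; rescale so that the coefficient of the offending term becomes $O(1)$ by choosing $\sig^\eps$ appropriately (e.g.\ $(\sig^\eps)^{2j_0}|D_{j_0}^\eps| = 1$, which still gives $\sig^\eps\downarrow 0$ if $j_0\le 2$ and $|D_{j_0}^\eps|\to\infty$); then the $\eps$-input converges to zero \emph{except} for the contribution of this one term, so the limiting equation is a nontrivial deterministic perturbation of the $\sig=0$ flow, and its identification via injectivity pins down the limit of $(\sig^\eps)^{2j_0}D_{j_0}^\eps$ to a definite finite value, contradicting $=1$ being the rescaled size unless $|D_{j_0}^\eps|$ was bounded all along. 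I expect the bookkeeping of which trees contribute $D_j^\eps$ versus the gauge-covariant combination $C_{\YMH}^\eps$ — and verifying that the residual perturbation is genuinely nonzero for a suitable choice of $g(0)$ — to be the most technical piece; the abstract injectivity input itself is a black box from Appendix~\ref{app:injectivity}.
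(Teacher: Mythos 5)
Your high-level strategy — drive $\sig^\eps\downarrow 0$ at a rate tied to the putative divergence of $D_j^\eps$ so that the rescaled defect stays of order one, and read off a contradiction from the resulting deterministic limit — is exactly the paper's, and your identification of $D_j^\eps$ as the gauge-compatibility defect in the $Y$-equation is correct. But the contradiction mechanism as you describe it has a genuine gap. You appeal to the injectivity results of Appendix~\ref{app:injectivity} to ``pin down the limit of $(\sig^\eps)^{2j_0}D_{j_0}^\eps$ to a definite finite value, contradicting $=1$'', yet injectivity alone tells you only that distinct constants produce distinct laws; it does not tell you \emph{which} value the limit takes, and a ``definite finite value'' of norm one is perfectly consistent with your normalisation. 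What actually forces that limit to vanish — and is absent from your proposal — is the exact pathwise identity $Y=g\act X$, where $X$ solves the ungauged equation \eqref{e:X-contra} (which contains no $\mathring C_\h$ and hence no divergent coefficient) and $g$ solves \eqref{eq:SPDE_for_g_wrt_A}. The paper chooses $\mathring C^\eps_\h=\hat C^\eps\eqdef(\sig^\eps)^2D_1^\eps+(\sig^\eps)^4D_2^\eps$ precisely so that the $Y$-equation \emph{is} the gauge transform of the $X$-equation; then as $\sig^\eps\to0$ the limit of $Y=g\act X$ solves the deterministic flow with \emph{no} extra term, while the $Y$-equation itself converges to the deterministic flow \emph{with} the extra term $(\hat C^{\oplus3}\oplus 0)\,\mrd g\,g^{-1}$, $\hat C\neq0$, and these two descriptions of the same limit disagree for generic $g(0)$. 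Without establishing $Y=g\act X$ you have only one description of the limit and no contradiction; your phrase ``contradicting convergence of the solution'' begs the question, since for a divergent coefficient there is no a priori convergence to contradict. (Injectivity is the right tool for the \emph{subsequent} step, Proposition~\ref{prop:consts-soft}, where one must rule out distinct subsequential limits.)

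Two further repairs are needed even once the gauge identity is in place. First, the inference ``$(\sig^\eps)^2D_1^\eps$ bounded, hence, since $\sig^\eps$ can be chosen at any fixed scale, $D_1^\eps$ bounded'' is invalid: boundedness of the rescaled quantity along one sequence $\sig^\eps\downarrow0$ says nothing about $D_1^\eps$ itself. The correct logic runs the other way: assume $|D_1^\eps|+|D_2^\eps|\to\infty$, normalise so the rescaled pair converges to some $(\hat C_1,\hat C_2)\neq(0,0)$, and show this is impossible. Second, treating $j_0=1$ and $j_0=2$ separately is delicate because any single choice of $\sig^\eps$ mixes both orders; the paper avoids this by setting $\bar\sig^\eps=(r_1^\eps+r_2^\eps)^{-1}$ with $r_j^\eps=|D_j^\eps|^{1/(2j)}$ and then choosing a scalar $\theta$ with $\theta^2\hat C_1+\theta^4\hat C_2\neq0$, which handles both coefficients simultaneously. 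With these three corrections your argument becomes the paper's proof.
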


\begin{proof}
We proceed by contradiction, supposing that we have a subsequence of $\eps \downarrow 0$ along which  $\lim_{\eps \downarrow 0} r_{1}^{\eps} + r_{2}^{\eps} = \infty$ where  $r_{j}^{\eps} \eqdef |D_{j}^{\eps}|^{\frac{1}{2j}}$. 
We then set $\bar{\sig}^{\eps} = (r_{1}^{\eps} + r_{2}^{\eps})^{-1}$. 
By passing to another subsequence  we can assume that 
\begin{equ}\label{eq:subseq_conv_constants1}
\lim_{\eps \downarrow 0} 
\left(
(\bar{\sig}^{\eps})^{2} D_{1}^{\eps}, 
(\bar{\sig}^{\eps})^{4} D_{2}^{\eps}
\right)
=
(\hat{C}_{1},\hat{C}_{2})
\not = (0,0)\;.
\end{equ}
It follows that we can find a constant $z \in \R$ such that if we set 
$\sig^{\eps} \eqdef z \bar{\sig}^{\eps}$ 
and $\hat{C}^{\eps} \eqdef (\sig^{\eps})^{2} D_{1}^{\eps}+
(\sig^{\eps})^{4} D_{2}^{\eps}$ then $\hat{C}^{\eps} \rightarrow \hat{C} = z^{2} \hat{C}_{1} + z^{4} \hat{C}_{2} \not = 0$ along the subsequence for which we have \eqref{eq:subseq_conv_constants1}.

Now we set 
\begin{equ}[e:pf-soft1]
\mathring{C}^{\eps}_{\A} = 0\;,
\qquad 
\mathring{C}^{\eps}_{\h} = \hat{C}^{\eps}\;,
\end{equ}
as the constants in \eqref{e:defConstbarX} and Proposition~\ref{prop:SPDEs_conv_zero_space-time}.
We then note that  the solution $Y$ to \eqref{eq:renorm_b} for initial data $(\bar x,g(0)) \in \state \times \mfG^{0,\rho}$, with renormalisation constants $C^{\eps}_{\mfz},C^{\eps}_{\mfh}$ given by \eqref{e:translated-consts},
namely the solution to
\begin{equs}[e:Y-contra]
\partial_t Y
= \Delta Y
 &+ Y \partial Y
+ Y^3 
+ ((C_{\YM}^\eps)^{\oplus 3} \oplus C_{\Phi}^\eps ) Y 
\\
&+ ((\hat{C}^{\eps} + C_{\Gauge}^\eps)^{\oplus 3} \oplus 0) \,\mrd g g^{-1}
+ \sig^\eps  g \xi^\eps\;,
\end{equs}
 is equal to $g\act X$ where $X$ solves \eqref{e:SPDE-for-X} with initial data $x\eqdef \bar g(0)^{-1}\act \bar x$,
with constants \eqref{e:SYM_constants} and $\mathring{C}^{\eps}_{\A} = 0 $ and $\sig^{\eps}$ as above, namely $X$ solves
\begin{equ}[e:X-contra]
\partial_t X = \Delta X + X \partial X + X^3 
+( (C_{\YM}^{\eps})^{\oplus 3} \oplus C_{\Phi}^{\eps} ) X + \sig^\eps \xi^\eps\;.
\end{equ}
The key fact is that with our choices of constants one has $C^{\eps}_{\YM}=\hat{C}^{\eps} + C_{\Gauge}^\eps$.
 
As  $\eps \downarrow 0 $ we have that $\sig^{\eps} \rightarrow 0$ and so, by Remark~\ref{rem:role_of_sigma}, the solutions to \eqref{e:X-contra}  converge (in the sense given by Proposition~\ref{prop:SPDEs_conv_zero_space-time}) to the solution of the deterministic PDE 
\begin{equ} 
\partial_t X = 
\Delta X+ X\partial X + X^3  - (0^{\oplus 3} \oplus m^2) X  \;.
\end{equ}
(Recall that $C_{\Phi}^{\eps} 
=C_{\Higgs}^{\eps} -m^2 $.)
On the other hand, as $\eps \downarrow 0$ along our subsequence, the solutions to  \eqref{e:Y-contra} 
converge to the solution of the deterministic PDE
\begin{equ}
\partial_t Y = 
\Delta Y + Y\partial Y + Y^3 
-(0^{\oplus 3} \oplus m^2) Y
+  ( \hat{C}^{\oplus 3} \oplus 0)\, \mrd g g^{-1} \;,
\end{equ}
where $\mrd g g^{-1}= h$ solves the deterministic PDE \eqref{e:final_hU}, which is non-zero for generic initial conditions.
We remind the reader that convergence as $\eps \downarrow 0$ of the solutions to \eqref{e:final_hU} is a
straightforward consequence of Lemma~\ref{lem:fixedptpblmclose} and continuity of the reconstruction operator.

Since on the one hand the relation $g\act X = Y$ is preserved under the limit $\eps \downarrow 0$
but on the other hand $g\act X$ solves the same equation as $X$ (i.e.\ without the last term appearing in the equation for $Y$), this gives us the desired contradiction.\footnote{Note that while Proposition~\ref{prop:SPDEs_conv_zero_space-time} gives us convergence in a space where we don't have continuity at $t=0$, the limiting deterministic equations certainly have continuity at $0$ for smooth initial data and therefore must be different as space-time distributions on $(0,T)\times\T^3$ for sufficiently small $T>0$.}
\end{proof}
\begin{lemma}\label{lem:consts-soft2_bounded}
Suppose $\moll$ is non-anticipative. Then
\begin{equ}
\sup_{\e\in(0,1]} \big(|C_{1,\Gauge}^{0,\eps}|+|C_{2,\Gauge}^{0,\eps}|\big) < \infty\;.
\end{equ}
\end{lemma}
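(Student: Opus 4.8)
The statement is the exact analogue of Lemma~\ref{lem:consts-soft1_bounded} for the ``gauge'' constants $C^{0,\eps}_{j,\Gauge}$ appearing in the \emph{barred} system~\eqref{eq:renorm_bar_a}, and the plan is to mimic that argument via a small noise limit and a contradiction, now exploiting the gauge covariance structure of~\eqref{eq:renorm_bar_a} rather than the literal identity $g\act X = Y$. First I would suppose, for contradiction, that along a subsequence $\eps\downarrow 0$ one has $r_1^\eps + r_2^\eps\to\infty$ where $r_j^\eps \eqdef |C^{0,\eps}_{j,\Gauge}|^{1/(2j)}$, set $\bar\sig^\eps \eqdef (r_1^\eps + r_2^\eps)^{-1}$, and pass to a further subsequence so that $((\bar\sig^\eps)^2 C^{0,\eps}_{1,\Gauge},(\bar\sig^\eps)^4 C^{0,\eps}_{2,\Gauge})\to(\hat C_1,\hat C_2)\neq(0,0)$. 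As in Lemma~\ref{lem:consts-soft1_bounded}, choose $\theta\in\R$ so that with $\sig^\eps\eqdef\theta\bar\sig^\eps$ the combination $\hat C^\eps \eqdef (\sig^\eps)^2 C^{0,\eps}_{1,\Gauge}+(\sig^\eps)^4 C^{0,\eps}_{2,\Gauge}$ converges to a nonzero limit $\hat C$.

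The core of the argument is then to choose the bare constants so that the renormalised barred equation~\eqref{eq:renorm_bar_a} collapses, in the $\eps\downarrow 0$ limit, to a deterministic YMH-type flow which is manifestly gauge covariant, while simultaneously the extra term $\bar C^\eps_\mfh\,\mrd\bar g\bar g^{-1}$ survives in the limit with coefficient $\hat C\neq 0$ and destroys gauge covariance. Concretely I would take $\mathring C^\eps_\A = -C^\eps_{\YM}$ (so that $C^\eps_\mfz = (0^{\oplus 3}\oplus(-m^2)) + $ a term vanishing as $\sig^\eps\to 0$) and $\mathring C^\eps_\h = -C^{\eps}_{\GaugeH}+\hat C^\eps$ in~\eqref{e:translated-consts}, so that $\bar C^\eps_\mfh = (0^{\oplus3}\oplus 0) + \hat C^{\eps,\oplus 3}\oplus 0 + o(1)$ uses $C^{0,\eps}_{\GaugeH} = (C^{0,\eps}_{\Gauge})^{\oplus 3}\oplus 0$. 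With these choices, Proposition~\ref{prop:SPDEs_conv_zero_space-time} gives convergence of $\bar X$ in $\CC^{-\frac12-\kappa}((0,\tau)\times\T^3)$ as $\eps\downarrow 0$; since $\sig^\eps\to 0$ (so by Remark~\ref{rem:role_of_sigma} the full $\eps$-inputs tend to those of the zero noise) the limit solves the deterministic equation
\begin{equ}
\partial_t X = \Delta X + X\partial X + X^3 - (0^{\oplus 3}\oplus m^2) X + (\hat C^{\oplus 3}\oplus 0)\,\mrd g g^{-1}\;,
\end{equ}
where $h = \mrd g g^{-1}$ solves the deterministic PDE~\eqref{e:final_hU} (convergence of the $h$-component again following from Lemma~\ref{lem:fixedptpblmclose} and continuity of reconstruction). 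On the other hand, by the non-anticipativeness of $\moll$, for each $\eps>0$ the solution of~\eqref{eq:renorm_bar_a} is \emph{equal in law} (or, more robustly for the contradiction, can be pathwise compared via the $(B,\Psi,g)$ coupling of Section~\ref{sec:gauge}) to the solution of~\eqref{eq:renorm_b} with the same constants; and the latter, by the computation in Lemma~\ref{lem:consts-soft1_bounded} together with the already-established boundedness of $C^\eps_{j,\YM}-C^\eps_{j,\Gauge}$ from that lemma, converges to the \emph{gauge covariant} deterministic limit $\partial_t X = \Delta X + X\partial X + X^3 - (0^{\oplus3}\oplus m^2)X$ (the extra $\hat C$-term is absent because, with $\mathring C^\eps_\A$ and $\mathring C^\eps_\h$ chosen as above, the total coefficient of $\mrd g g^{-1}$ in~\eqref{eq:renorm_b} is $C^\eps_{\GaugeH} + \mathring C^\eps_\h = \hat C^\eps$ --- so I must instead choose the constants so that \eqref{eq:renorm_b} keeps gauge covariance while \eqref{eq:renorm_bar_a} does not). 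This is the delicate point: I need a choice of $\mathring C^\eps_\A,\mathring C^\eps_\h$ which, using $C^{0,\eps}_{\Gauge}\neq C^\eps_{\Gauge}$ in general, makes the $Y$-equation reduce to a gauge-equivalent-to-$X$ dynamic (so its limit is gauge covariant) but makes the $\bar X$-equation retain the surviving $\hat C$-term.

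The main obstacle, and the step I would spend the most care on, is exactly this bookkeeping of which renormalisation constant enters which equation: in~\eqref{eq:renorm_b} the coefficient of $\mrd g g^{-1}$ is $C^\eps_{\GaugeH} = (C^\eps_{\Gauge})^{\oplus3}\oplus 0$ (the ``$\delta>0$'' version), whereas in~\eqref{eq:renorm_bar_a} it is $\bar C^\eps_{\mfh} = (C^{0,\eps}_{\Gauge})^{\oplus3}\oplus 0$ (the ``$\delta\to 0$'' version), and Proposition~\ref{prop:renorm_g_eqn} only asserts $C^{0,\eps}_{\Gauge} = \lim_{\delta\downarrow0}C^{\delta,\eps}_{\Gauge}$ with no claim that it equals $C^\eps_{\Gauge}$. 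So I should instead take bare constants $\mathring C^\eps_\A = -C^\eps_{\YM}$ and $\mathring C^\eps_\h = -C^{\eps}_{\Gauge} + \hat C^\eps$ chosen so that: (i) in~\eqref{eq:renorm_b} the coefficient of $\mrd g g^{-1}$ becomes $\mathring C^\eps_\h + C^\eps_{\Gauge} = \hat C^\eps$ --- wait, that again breaks $Y$. The resolution is that $Y = g\act X$ forces, for \emph{any} bare constants, the $Y$-equation to have a $\mrd g g^{-1}$ coefficient equal to the YM renormalisation constant of the $X$-equation; so one should track the difference $\bar C^\eps_\mfh - C^\eps_\mfh = C^{0,\eps}_{\GaugeH} - C^{\eps}_{\GaugeH}$ and show it is bounded using Lemma~\ref{lem:consts-soft1_bounded} plus a separate small-noise argument isolating precisely the discrepancy $C^{0,\eps}_{\Gauge} - C^\eps_{\Gauge}$, which by~\eqref{eq:delta_to_0} and~\eqref{eq:work_delta_to_0} comes only from trees containing $\mfl$ at the root and is governed by the $\delta\downarrow 0$ limit of explicit convolutions of $\eps$-regularised kernels, hence $\eps$-uniformly bounded on its own. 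Combining: $|C^{0,\eps}_{j,\Gauge}| \le |C^{0,\eps}_{j,\Gauge}-C^\eps_{j,\Gauge}| + |C^\eps_{j,\Gauge} - C^\eps_{j,\YM}| + |C^\eps_{j,\YM}|$, the first bounded by the convolution estimate, the second bounded by Lemma~\ref{lem:consts-soft1_bounded}, and the third bounded by Proposition~\ref{prop:mass_term} (the BPHZ constants $C^\eps_{\YM}$ are finite for each $\eps$, but one still needs $\eps$-uniformity --- which for $C^\eps_{j,\YM}$ actually requires its own small-noise/contradiction argument identical in form to Lemma~\ref{lem:consts-soft1_bounded}). I would therefore present the proof as: reduce $C^{0,\eps}_{j,\Gauge}$ to $C^\eps_{j,\YM}$ plus $\eps$-uniformly bounded corrections via~\eqref{eq:delta_to_0}, then run the small-noise contradiction argument of Lemma~\ref{lem:consts-soft1_bounded} \emph{verbatim} on the barred system~\eqref{eq:renorm_bar_a} (using that for non-anticipative $\moll$, $\bar U\tilde\xi^\delta$ is adapted and orthogonal, so $\bar X$ inherits gauge covariance in the limit), which yields the bound.
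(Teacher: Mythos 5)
Your opening contradiction setup (normalising by $r_j^\eps$, extracting $\sig^\eps\to 0$ with a nonzero limit $\hat C$) matches the paper's, but the argument you ultimately commit to has a fatal gap. The decomposition $|C^{0,\eps}_{j,\Gauge}| \le |C^{0,\eps}_{j,\Gauge}-C^\eps_{j,\Gauge}| + |C^\eps_{j,\Gauge}-C^\eps_{j,\YM}| + |C^\eps_{j,\YM}|$ cannot work: the last term is the BPHZ mass counterterm of the $3$D YMH equation and \emph{diverges} as $\eps\downarrow 0$ (at rate $\eps^{-1}$ at leading order, cf.\ Remark~\ref{rem:finite-in-2d}), so no small-noise argument ``identical in form to Lemma~\ref{lem:consts-soft1_bounded}'' can bound it uniformly — that lemma controls only the \emph{difference} $C^\eps_{j,\YM}-C^\eps_{j,\Gauge}$, in which the divergences cancel. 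Likewise your claim that $C^{0,\eps}_{j,\Gauge}-C^\eps_{j,\Gauge}$ is $\eps$-uniformly bounded because it arises from ``convolutions of $\eps$-regularised kernels'' is unsubstantiated (such convolutions routinely diverge as $\eps\downarrow0$, e.g.\ $\int K^\eps(z)^2\,\mrd z\sim\eps^{-1}$), and \eqref{eq:work_delta_to_0} only identifies the $\delta\downarrow0$ limit with the $(Y,U,h)$-constants for trees in $\mfT^{\YMH}$, which excludes the grafted trees that produce the $\Gauge$ constants.

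The paper's proof never compares the barred system with the $Y$ system (your ``delicate point'' about $C^\eps_{\Gauge}$ versus $C^{0,\eps}_{\Gauge}$ is a red herring for this lemma); it is a self-contained contradiction on \eqref{eq:renorm_bar_a} alone. One chooses $\mathring C^\eps_{\A}=0$ and, crucially, $\mathring C^\eps_{\h}=-C^{0,\eps}_{\Gauge}$ as in \eqref{e:choiceConstants}, so that $\bar C^\eps_{\mfh}=0$ in \eqref{e:translated-consts} and the finite-$\eps$ renormalised barred equation contains \emph{no} $\mrd\bar g\,\bar g^{-1}$ term at all. Since $\moll$ is non-anticipative, $\bar g$ is adapted, so rotating the noise by $\bar g$ on $[0,\tau]$ yields $T^\eps(\xi)\eqlaw\xi$ and $\bar X$ coincides \emph{pathwise} with the solution $\tilde X$ of the additive-noise equation \eqref{e:SPDE-for-X} driven by $\moll^\eps * T^\eps(\xi)$; by Theorem~\ref{thm:local-exist-sigma} and $\sig^\eps\to0$, $\tilde X$ converges in law to the deterministic flow \eqref{eq:deterministic2} with no $h$-term. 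On the other hand, Proposition~\ref{prop:SPDEs_conv_zero_space-time} (the limit depends on the constants only through $\lim\mathring C^\eps_{\h}=-C^{0,0}_{\Gauge}$ and $\lim\sig^\eps=0$) forces $\bar X$ to converge to \eqref{eq:deterministic1}, which retains the term $-\big((C^{0,0}_{\Gauge})^{\oplus3}\oplus0\big)\mrd\bar g\,\bar g^{-1}$. Choosing $\bar g_0$ with $\mrd\bar g_0\,\bar g_0^{-1}\neq0$ makes the two deterministic limits differ, which is the contradiction. Your final parenthesis does gesture at the adaptedness and orthogonality of $\bar U$, but without the exact cancellation $\bar C^\eps_{\mfh}=0$ and the comparison of the two limits the argument does not close.
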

\begin{proof}
We proceed similarly to the proof of Lemma~\ref{lem:consts-soft1_bounded},
 except that now
instead of comparing  $Y$ with a gauge transformation of $X$,  we 
compare $\bar X$ with $X$.

Arguing by contradiction,  we suppose as in the proof of Lemma~\ref{lem:consts-soft1_bounded} that we have a subsequence $\eps \downarrow 0$  and  $\sig^{\eps} \rightarrow 0$ with 
\[
(\sig^{\eps})^{2} C_{1,\Gauge}^{0,\eps}
+(\sig^{\eps})^{4} C_{2,\Gauge}^{0,\eps}
= C_{\Gauge}^{0,\eps} \rightarrow C_{\Gauge}^{0,0} \not = 0 \;.
\]
This time, we consider $(\bar X,\bar g)$ as in Proposition~\ref{prop:SPDEs_conv_zero_space-time} \dash namely, with $\bar{X}= (\bar{A},\bar{\Phi})$ solving \eqref{eq:renorm_bar_a} and $\bar{g}$ as in  \eqref{eq:SPDE_for_bar_A} \dash and choose
\begin{equ}[e:choiceConstants]
\mathring{C}^{\eps}_{\A} \eqdef 0
\quad \text{and} \quad
\mathring{C}^{\eps}_{\h} \eqdef   - C_{\Gauge}^{0,\eps}     
\end{equ}
as the constants in \eqref{e:defConstbarX}.
For any $\eps > 0$, define the
transformation of the white noise $T^\eps\colon \xi\mapsto T^\eps(\xi)$ by $T^\eps(\xi)=\bar g \xi$
on $[0,\tau]$ and $T^\eps(\xi)=\xi$ on $\R\setminus[0,\tau]$.
Recall here that $\tau>0$ is the $\mbF$-stopping time as in Proposition~\ref{prop:SPDEs_conv_zero_space-time}.
In particular, since $\moll$ is non-anticipative, $\bar g$ is adapted to $\mbF$, it follows 
that this operation is well defined and that moreover $T^\eps(\xi)\eqlaw \xi$.\footnote{This equality in law follows from applying It\^o's isometry to $T^\eps(\xi)$, using that $\bar{g}$ acts by an orthogonal representation, and then appealing to L\'{e}vy's characterisation of Brownian motion.}

Then, by definition, $\bar X= \tilde X\eqdef (\tilde A,\tilde \Phi)$ on $[0,\tau]$ where 
$\tilde X$ and $\tilde g$ solve
\begin{equs}[eq:Xg-gaugecovar]
\partial_t \tilde X &= \Delta \tilde X + \tilde X \partial \tilde X + \tilde X^3 
+ C^\eps_\mfz \tilde X +
\sig^\eps \moll^\eps* T^\eps(\xi)\;,
\\
(\partial_t \tilde {g})  \tilde{g}^{-1} 
&= \partial_j((\partial_j \tilde{g})\tilde{g}^{-1})+ [\tilde{A}_j, (\partial_j \tilde{g})\tilde{g}^{-1}]\;,
\\
\tilde X(0) & = \bar x \in \state \;, \qquad \tilde g(0) = \bar g(0)\in\mfG^{0,\rho}\;. 
\end{equs}
(The reason why the term $\bar C^\eps_{\mfh} \mrd\tilde g \tilde g^{-1}$ is absent is that 
$\bar C^\eps_{\mfh} = 0$ thanks to our choice \eqref{e:choiceConstants}.)
Remark that, for every $\eps>0$, 
the above equations are well-posed and $(\tilde X,\tilde g)$ are smooth for $t>0$ since $T^\eps(\xi)\eqlaw \xi$,
and thus $(\tilde X,\tilde g)$ can be extended to maximal solutions in $(\state\times\mfG^{0,\rho})^\sol$.

On the one hand, by Remark~\ref{rem:role_of_sigma},
$\bar X$ converges in probability on $[0,\tau]$ (in the sense given by Proposition~\ref{prop:SPDEs_conv_zero_space-time}) to the solution of the deterministic equation
\begin{equ}\label{eq:deterministic1}
\partial_t \bar{X} = 
\Delta \bar{X} + \bar{X} \partial \bar{X} + \bar{X}^3 - (0^{\oplus 3}\oplus m^2) \bar{X}
  - ( (C_{\Gauge}^{0,0})^{\oplus 3}\oplus 0) \,\mrd \bar g \bar g^{-1} \;,
\end{equ}
where $\bar{g}$ solves the relevant equation in \eqref{eq:SPDE_for_bar_A}.
%
On the other hand, since $T^\eps(\xi)\eqlaw \xi$, it follows from Theorem~\ref{thm:local-exist-sigma}
that $\tilde X$ converges in law (and thus in probability) in $\state^\sol$
to the solution of the deterministic equation 
\begin{equ}\label{eq:deterministic2}
\partial_t \tilde X = 
\Delta \tilde X+ \tilde X \partial \tilde X + \tilde X^3 - (0^{\oplus 3}\oplus m^2)\tilde X
 \;.
\end{equ}
Since we can choose $\bar{g}_0$ so that $\mrd \bar{g}_{0} g_{0}^{-1} \not = 0$, 
we can find initial data for which the (deterministic) solution of \eqref{eq:deterministic1} differs 
from that of \eqref{eq:deterministic2} for all sufficiently small times, which gives the desired contradiction.  
\end{proof}

We are now ready to improve the mode of convergence in Proposition~\ref{prop:SPDEs_conv_zero_space-time}.

\begin{proposition}\label{prop:SPDEs_conv_zero_state}
The conclusion of Proposition~\ref{prop:SPDEs_conv_zero_space-time} holds with the improvement that $(\bar X,\bar U,\bar h)$ and $(Y,U,h)$ converge in probability in $\CC([0,\tau],\state\times\tilde{\mfG}^{0,\rho})$ as $\eps\downarrow0$.
More precisely, the
random variable $(X,V,f)$ in Proposition~\ref{prop:SPDEs_conv_zero_space-time}
takes values in $\CC([0,\tau],\state\times\tilde{\mfG}^{0,\rho})$ and
\begin{equ}
|(X,V,f)-(\bar X,\bar U,\bar h)|_{\CC([0,\tau],\state\times\tilde{\mfG}^{0,\rho})}+|(X,V,f)-(Y,U,h)|_{\CC([0,\tau],\state\times\tilde{\mfG}^{0,\rho})} \to 0 
\end{equ}
in probability as $\eps\downarrow 0$.
\end{proposition}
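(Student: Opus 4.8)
The plan is to leverage the space-time convergence from Proposition~\ref{prop:SPDEs_conv_zero_space-time} together with the boundedness of the renormalisation constants just established, reducing the problem to an application of the coupled $(A,\Phi,g)$-theory of Appendix~\ref{app:evolving_rough_g} where continuity into $\state$ at $t=0$ is available because there the noise enters additively. Concretely, the first step is to observe that the pair $(\bar X,\bar g)$ solving \eqref{eq:renorm_bar_a} (with the last equation of \eqref{eq:SPDE_for_bar_A}) can be re-expressed, on the stopping-time interval $[0,\tau]$, via the gauge transformation $\tilde\xi^\eps \eqdef \bar g\xi$ (extended by $\xi$ outside $[0,\tau]$, which is well-defined and equal in law to $\xi$ since $\moll$ is non-anticipative and $\bar g$ is $\mbF$-adapted) as the solution of an SPDE of the form studied in Appendix~\ref{app:evolving_rough_g}: precisely, because $\bar C^\eps_\mfh = \mathring C^\eps_\mfh + C^{0,\eps}_\GaugeH$ and $C^\eps_\mfh = \mathring C^\eps_\mfh + C^\eps_\GaugeH$ differ from $C^\eps_\mfz$ only by $C^\eps_\GaugeH - C^\eps_\YMH$ (resp.\ $C^{0,\eps}_\GaugeH - C^\eps_\YMH$), Lemmas~\ref{lem:consts-soft1_bounded} and~\ref{lem:consts-soft2_bounded} give that the \emph{difference} of the drift coefficient acting on $X$ and the coefficient acting on $\mrd g g^{-1}$ stays bounded as $\eps\downarrow 0$. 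This is exactly the structure for which the coupled $(A,\Phi,g)$ well-posedness result (Lemma~\ref{lem:flow_for_rough_g} / Lemma~\ref{lem:flow_for_rough_g2}) yields convergence in $\CC([0,\tau],\state\times\hat\mfG^{0,\rho})$.

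The second step is to identify the two limiting processes. From Proposition~\ref{prop:SPDEs_conv_zero_space-time} we already know $(\bar X,\bar U,\bar h)$ and $(Y,U,h)$ converge in probability in $\CC^{-\frac12-\kappa}((0,\tau)\times\T^3)$ to a common limit $(X,V,f)$. What must be upgraded is the topology. For this I would invoke Proposition~\ref{prop:improve_local}: the $\eps$-inputs built from $Z^{0,\eps}_{\BPHZ}$, the distributions $\omega^\eps_\ell$, $\bar\omega^{\eps,0}_\ell$, $\CW$ and the initial data $(Y_0,U_0,h_0)$ are of uniformly bounded size over time $\tau$ (by Lemmas~\ref{lem:conv_of_models2},~\ref{lem:omegas_converge}), and they converge as $\eps\downarrow 0$ in the metric of Proposition~\ref{prop:improve_local} — using Lemma~\ref{lem:omegas_converge} for the $\omega$'s, Lemma~\ref{lem:conv_of_models2} for the models, and the probabilistic estimate on $\PPi^Z\<IXi>$, $\PPi^Z\<I[IXiI'Xi]_notriangle>$ in $\state$ resp.\ $\CC^{-\kappa}$ which follows exactly as in Section~\ref{subsec:1st-half}/Section~\ref{subsec:2nd-half}. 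Moreover the consistency relations \eqref{eq:consistency_models} hold for these inputs. Proposition~\ref{prop:improve_local} then gives $\sup_{t\in[\lambda\tau,\tau]}\Sigma(\CR\CY(t),\CR\bar\CY(t))\to 0$ and uniform continuity of $I\mapsto \CR\CY\in\CC([\lambda\tau,\tau],\state)$, hence convergence of $\bar X$ and $Y$ in $\CC([\lambda\tau,\tau],\state)$; the components $U,\bar U,h,\bar h$ converge in $\CC([\lambda\tau,\tau],\hat\mfG^{0,\rho})$ directly from Lemma~\ref{lem:fixedptpblmclose} and continuity of the reconstruction map on $\cD^{\gamma_\mfu,\eta_\mfu}_{\alpha_\mfu}\oplus\cD^{\gamma_\mfh,\eta_\mfh}_{\alpha_\mfh}$ into $\CC^\rho\oplus\CC^{\rho-1}$.

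The third step handles the behaviour at $t=0$, i.e.\ extending the $\CC([\lambda\tau,\tau],\cdot)$ convergence to $\CC([0,\tau],\cdot)$. Here I would restart the equation at a small deterministic time $\lambda\tau$ from the improved (H\"older-regular, in $\mfG^{0,\rho}$ with $\rho>\frac12$) initial data $(\bar X(\lambda\tau),\bar g(\lambda\tau))$, $(Y(\lambda\tau),g(\lambda\tau))$: by the previous step these converge in $\state\times\hat\mfG^{0,\rho}$, so by the coupled $(A,\Phi,g)$-result of Appendix~\ref{app:evolving_rough_g} applied after the gauge change $\tilde\xi^\eps$ described in step one, the solutions converge in $\CC([\lambda\tau,\tau'],\state\times\hat\mfG^{0,\rho})$ for $\tau'$ possibly slightly smaller, and one sees that the limit agrees with $(X,V,f)$. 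Near $t=0$, continuity of the limit into $\state$ is obtained from Proposition~\ref{prop:Theta_cont_zero}\ref{pt:heat_Sigma} together with the decomposition $\CY = \CP Y_0 + \bPsi^{U_0}+\hat\CY$ (resp.\ $\CP Y_0 + \bar\bPsi^{U_0}+\hat\CY$): the leading term $\CP Y_0$ contributes $\lim_{t\to 0}\Sigma(\CP_t Y_0,Y_0)=0$, the modelled-distribution remainders $\bPsi^{U_0},\bar\bPsi^{U_0},\hat\CY$ live in spaces with strictly better $t=0$ behaviour (exponents $>-\frac12$ for $\hat\CY$, and $-\frac12-2\kappa$ for the $\bPsi$'s which can be handled by the perturbation Lemma~\ref{lem:perturbation} exactly as in \eqref{eq:perturbation_X}), and $\omega^\eps_0,\bar\omega^\delta_0$ converge in $\CC^{-\frac52-\kappa}$ uniformly so the corresponding continuity is uniform in $\eps$.

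The main obstacle I expect is precisely step three: obtaining \emph{uniform-in-$\eps$} continuity of $t\mapsto (\bar X(t),\bar U(t),\bar h(t))$ into $\state\times\hat\mfG^{0,\rho}$ as $t\to 0$, so that the $\CC([\lambda\tau,\tau],\cdot)$ convergence promotes to $\CC([0,\tau],\cdot)$. Unlike the additive-noise case of Theorem~\ref{thm:local-exist-sigma}, the term $\bar U\xi^\eps$ (resp.\ $\bar g\xi$) is multiplicative, and the decomposition isolating $\bPsi^{U_0}$ near $t=0$ must be combined with the bound $|\omega^\eps_0|_{\CC^{-\frac52-\kappa}}\lesssim \sig|U_0|_\infty$ from Lemma~\ref{lem:omegas_converge} and the stability of the fixed point problems under the gauge change $\tilde\xi^\eps$; the delicate point is that the $\mfG^{0,\rho}$-valued component $g$ (equivalently $h_0=(\mrd g)g^{-1}$) only lies in $\CC^{\rho-1}$ with $\rho-1<0$, so one genuinely needs the coupled analysis of Appendix~\ref{app:evolving_rough_g} (and its continuity statement, the analogue of Proposition~\ref{prop:Theta_cont_zero} for the coupled system) rather than treating $g$ as a fixed rough input — this is the technical heart, and also where Remark~\ref{rem:lambda_zero} is implicitly being invoked to justify that the $\lambda>0$ restriction can be removed once the initial conditions are of the form $h_0=(\mrd g)g^{-1}$.
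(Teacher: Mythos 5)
Your proposal is correct and follows essentially the same route as the paper: Proposition~\ref{prop:improve_local} (fed by Lemmas~\ref{lem:conv_of_models2} and~\ref{lem:omegas_converge} and Corollary~\ref{cor:SHE_conv_state}) gives convergence on $[\lambda\tau,\tau]$, and the behaviour at $t=0$ is handled exactly as you describe — writing $Y=g\act X$ with the coupled additive-noise system of Appendix~\ref{app:evolving_rough_g} and bounded $c^\eps$ (Lemma~\ref{lem:consts-soft1_bounded}), and writing $\bar X$ on $[0,\tau]$ as the solution driven by the adapted rotated noise $\bar g\xi$ with bounded $\bar C^\eps_\mfh$ (Lemma~\ref{lem:consts-soft2_bounded}), so that the equicontinuity at $t=0$ from the additive theory transfers via the uniform continuity of the group action. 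The only cosmetic difference is that the paper phrases the $t=0$ step as a direct $\lim_{\lambda\downarrow0}\limsup_{\eps\downarrow0}$ equicontinuity estimate rather than a restart at $\lambda\tau$, which is what you do.
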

\begin{proof}
Continuing from the proof of Proposition~\ref{prop:SPDEs_conv_zero_space-time} \dash 
Corollary~\ref{cor:SHE_conv_state} and Proposition~\ref{prop:improve_local} provide the additional probabilistic and deterministic input respectively
to conclude the desired statement with $\CC([0,\tau],\state\times\tilde{\mfG}^{0,\rho})$ replaced by $\CC([\lambda\tau,\tau],\state\times\tilde{\mfG}^{0,\rho})$ for any fixed $\lambda\in(0,1)$.

To lighten notation, we will only consider the $\bar X$ and $Y$ components henceforth; how to add back $\bar U,\bar h$ and $U,h$ will be clear.
We assume henceforth that $\tau\in(0,1)$.

For $\eps>0$, we extend $Y$ to a function in $\CC([0,1],\state)$ by stopping it at $\tau$.
It follows from the above that there exists a $\CC((0,1],\state)$-valued random variable $Z$ such that $\sup_{t\in (\lambda,1]}\Sigma(Z(t),Y(t))\to 0$ in probability as $\eps\downarrow0$ for every $\lambda>0$.
We now claim that 
\begin{equ}\label{eq:Y_conv_at_zero}
\lim_{\lambda\downarrow0}\limsup_{\eps\downarrow0} \P
\Big[
\sup_{t\in[0,\lambda]} \Sigma(Y(t),Y(0)) > \delta
\Big]
= 0\qquad \forall \delta>0\;,
\end{equ}
and likewise for $\bar X$.
To prove~\eqref{eq:Y_conv_at_zero},
a similar calculation as in~\cite[Sec.~2.2]{CCHS2d} implies that $Y=g\act X$ where $X,g$ solve~\eqref{e:SPDE-for-X_with_g}-\eqref{eq:SPDE_for_g_coupled} with operator
\begin{equ}\label{eq:c^eps_def}
c^\eps\eqdef  C^\eps_\mfh - C^{\eps}_\mfz\restr_{\mfg^3} = (\mathring{C}_{\h}^\eps)^{\oplus 3} + (C^\eps_{\Gauge})^{\oplus3}- (\mathring C^\eps_{\A})^{\oplus3}-(C^\eps_{\YM})^{\oplus3}
\end{equ}
and initial condition $x= g(0)^{-1} \act\bar x$.

Observe that $\sup_{\eps\in(0,1]} |c^\eps|<\infty$ due to Lemma~\ref{lem:consts-soft1_bounded}.
It follows from a similar proof to that of Theorem~\ref{thm:local-exist-sigma} with minor changes as indicated in the proof of Lemma~\ref{lem:flow_for_rough_g},
that~\eqref{eq:Y_conv_at_zero} holds with $\Sigma(Y(t),Y(0))$ replaced by $\Sigma(X(t),X(0)) + |g(t)-g(0)|_{\mfG^{0,\rho}}$.
Recalling from Proposition~\ref{prop:group_action} that $(g,X)\mapsto g \act X$ is a uniformly continuous function from every ball in $\mfG^{0,\rho}\times \state$ into $\state$,
we obtain~\eqref{eq:Y_conv_at_zero} from the identity $Y=g \act X$.

Observe that~\eqref{eq:Y_conv_at_zero} implies that $Z$ can be extended to a $\CC([0,1],\state)$-valued random variable by defining $Z(0)=Y(0)$
and $\sup_{t\in[0,1]}\Sigma(Y(t),Z(t)) \to 0$ in probability as $\eps\downarrow0$.

We now claim that~\eqref{eq:Y_conv_at_zero} holds with $Y$ replaced by $\bar X$, where we also extend $\bar X$ to a function in $\CC([0,1],\state)$ by stopping it at $\tau$.
Recall that $\tau>0$ is the $\mbF$-stopping time, which depends only on $\xi\restr_{[-1,2]\times\T^3}$ (through the size of its BPHZ model),
for which $\bar X$ is well-defined and converges in probability in $\CC([\lambda\tau,\tau],\state)$ as $\eps\downarrow0$ to the same limit $Z$ as $Y$ for all $\lambda\in(0,1)$.

To prove the claim, for $\eps>0$, define as in the proof of Lemma~\ref{lem:consts-soft2_bounded}
the transformation of the white noise $T\colon \xi\mapsto T(\xi)$,
where $T(\xi)=\bar g \xi$ on $[0,\tau]$ and $T(\xi)=\xi$ on $\R\setminus[0,\tau]$.
Then $\bar X=\tilde X\eqdef(\tilde A,\tilde \Phi)$ on $[0,\tau]$
where $(\tilde X,\tilde g)\in (\state\times\mfG^{0,\rho})^\sol$ denotes the maximal solution to
\begin{equs}
\partial_t \tilde X &= \Delta \tilde X + \tilde X \partial \tilde X + \tilde X^3 
+ C^\eps_\mfz \tilde X + \bar{C}^\eps_\mfh\mrd \tilde g \tilde g^{-1}  +
\sig^\eps \moll^\eps* T(\xi)\;,
\\
(\partial_t {\tilde g})  {\tilde g}^{-1} 
&= \partial_j((\partial_j {\tilde g}){\tilde g}^{-1})+ [{\tilde A}_j, (\partial_j {\tilde g}){\tilde g}^{-1}]\;,
\\
\tilde X(0) & = (\bar a,\bar \phi) \in \state \;, \qquad \tilde g(0) = \bar g(0)\in\mfG^{0,\rho}\;. 
\end{equs}
Since $\moll$ is non-anticipative,
Lemma~\ref{lem:consts-soft2_bounded} yields
$\sup_{\eps\in(0,1]}|\bar{C}^\eps_\mfh|<\infty$.
Since $T(\xi)\eqlaw \xi$,
it follows again from the same proof as Theorem~\ref{thm:local-exist-sigma} with small 
changes as indicated in the proof of Lemma~\ref{lem:flow_for_rough_g}\slash\ref{lem:flow_for_rough_g2} 
that~\eqref{eq:Y_conv_at_zero} holds with $Y$ replaced by $\tilde X$.
(In fact, if we knew that $ \bar{C}^\eps_\mfh $ converges to a finite limit as $\eps\downarrow0$,
then we would know that $(\tilde X,\tilde g)$ converges in $(\state\times \mfG^{0,\rho})^\sol$.)
Since $\bar X= \tilde X$ on $[0,\tau]$, it also follows that~\eqref{eq:Y_conv_at_zero} holds with $Y$ replaced by $\bar X$ as claimed.

Finally, recalling that $\sup_{t\in(\lambda,1]}\Sigma(\bar X(t),Y(t)) \to 0$ in probability as $\eps\downarrow0$ for all $\lambda\in(0,1)$,
it follows that $\sup_{t\in[0,1]}\Sigma(\bar X(t),Y(t))\to 0$ in probability as $\eps\downarrow0$ as required.
\end{proof}

\subsection{Convergence of maximal solutions}
The main result of this subsection is the following.
\begin{proposition}\label{prop:bar_X_max_sols}
Suppose we are in the setting of Proposition~\ref{prop:SPDEs_conv_zero_space-time}.
Then $(\bar X,\bar U, \bar h)$, obtained as in Theorem~\ref{thm:gauge_covar}\ref{pt:delta_to_zero},
converge in probability as maximal solutions in $(\state\times\tilde{\mfG}^{0,\rho})^\sol$
as $\eps\downarrow0$.
\end{proposition}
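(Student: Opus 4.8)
The strategy is to upgrade the \emph{local-in-time} convergence of Proposition~\ref{prop:SPDEs_conv_zero_state}
to a \emph{maximal-solution} statement in $(\state\times\hat\mfG^{0,\rho})^\sol$,
exactly as was done for $(A,\Phi)$ in the proof of Theorem~\ref{thm:local-exist-sigma} after the initial interval $[0,\tau]$.
The key difficulty is that, unlike in Section~\ref{sec:renorm-A}, the noise in \eqref{eq:renorm_bar_a} is multiplicative (it is $\sig^\eps\moll^\eps*(\bar g\xi)$),
so we cannot directly invoke the ``generalised Da Prato--Debussche'' results of \cite{BCCH21} to restart the equation close to stationarity.
Instead, the plan is to restart the coupled $(\bar X,\bar g)$ system from a time $\tau>0$ with initial condition
$(\bar X(\tau),\bar g(\tau))\in \state\times\mfG^{0,\rho}$, exploiting that by Proposition~\ref{prop:SPDEs_conv_zero_state} this initial condition is H\"older-regular enough
(recall $\bar X(\tau)\in \state$ and, from the analysis in Section~\ref{sec:sol-multi}, the `remainder' $\hat X_R$ has improved regularity $\CC^{1/2-\kappa}$) that one can set up a \emph{new} fixed point problem
of the form \eqref{e:fix-pt-multi-Xbar}+\eqref{e:fix-pt-HU}, but now with the initial condition $\bar g(\tau)$ genuinely in $\mfG^{0,\rho}$
rather than merely being the group element attached to a distributional connection.

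First I would observe that the analysis of Section~\ref{sec:sol-multi} applies verbatim to reconstruct $(\bar X,\bar U,\bar h)$
from the fixed point problem \eqref{e:fix-pt-multi-Xbar}+\eqref{e:fix-pt-HU} over any short interval,
with the inputs $\bar\omega_\ell$, $\CW$ as in \eqref{e:bar_omega0123}, \eqref{eq:CW_def}, for the models $Z^{0,\eps}_{\BPHZ}$;
by Proposition~\ref{prop:SPDEs_conv_zero_space-time} the reconstruction solves \eqref{eq:renorm_bar_a}.
Then, via Lemma~\ref{lem:fixedptpblmclose}, Proposition~\ref{prop:improve_local}, and the convergence of models in Lemma~\ref{lem:conv_of_models2},
the solution depends continuously on the input; crucially, the stopping time $\tau$ in Proposition~\ref{prop:SPDEs_conv_zero_space-time}
is an $\mbF$-stopping time depending only on the size of the data and the model over $[-1,2]$.
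Next I would iterate: having solved up to time $\tau_1$, I restart at $\tau_1$ with initial data $(\bar X(\tau_1),\bar U(\tau_1),\bar h(\tau_1))\in \state\times\hat\mfG^{0,\rho}$,
obtain a new solution up to $\tau_2$ with $\tau_2-\tau_1$ depending only on the size of $(\bar X(\tau_1),\bar g(\tau_1))$ in $\state\times\mfG^\rho$,
and continue. The maximal time $T^*$ is then $\lim_n \tau_n$, with the standard dichotomy: either $T^*=\infty$ or the $\state\times\mfG^\rho$-norm of $(\bar X,\bar g)$ blows up as $t\uparrow T^*$.
Convergence in $(\state\times\hat\mfG^{0,\rho})^\sol$ then follows from the continuity of each restarted fixed point problem with respect to the input model and the data,
together with \eqref{e:conv-Z-delta}--\eqref{e:conv-Z-eps}, precisely as in the corresponding step of the proof of Theorem~\ref{thm:local-exist-sigma}
(where one must be careful to track that the solution map $\CR^\eps V$ is continuous with respect to both the restarting data and the model, so that the metric on $(\state\times\hat\mfG^{0,\rho})^\sol$ records the blow-up correctly).

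A technical point that must be handled with care is the $\delta\downarrow0$ limit: for each $\eps>0$, \eqref{eq:renorm_bar_a} is defined as the $\delta\downarrow0$ limit of the $\tilde\xi^\delta$-regularised equation
(Theorem~\ref{thm:gauge_covar}\ref{pt:delta_to_zero}), and this limit commutes with restarting because of Lemmas~\ref{lem:omegas_converge}, \ref{lem:compatible}
and the convergence \eqref{e:conv-Z-delta} of the models $Z^{\delta,\eps}_{\BPHZ}\to Z^{0,\eps}_{\BPHZ}$ in $d_\eps$.
A second point is to make sure that the restarted $\bar g$ is still governed by the last equation in \eqref{eq:SPDE_for_bar_A} with $\bar A$ the restarted connection component,
which follows from Lemma~\ref{lemma:linear_g_system} and the fact, already used in the proof of Proposition~\ref{prop:SPDEs_conv_zero_space-time},
that $(\bar U,\bar h)=(\brho(\bar g),(\mrd\bar g)\bar g^{-1})$ is preserved by reconstruction (so that $\bar X$ restarted indeed satisfies \eqref{eq:renorm_bar_a} again).
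The main obstacle, and where most of the work lies, is verifying that the restarted fixed point problem genuinely closes with initial data in $\state\times\mfG^{0,\rho}$ rather than just in the smaller regularity class used for the global restart in Theorem~\ref{thm:local-exist-sigma}: one must check that the `remainder' component $\hat X_R$ at the restart time lies in $\CC^{1/2-\kappa}$ (as it does by the structure of \eqref{e:fix-pt-multi-Xbar}, since all genuinely singular pieces are carried by the explicit objects $\bar\bPsi^{U_0}$, $\<IXiS>$, etc.), so that the decomposition \eqref{e:def-tildeY}, \eqref{eq:CY_bar_equation} can be reinitialised consistently. Once this is in place, the iteration and the passage to the limit are routine repetitions of arguments already carried out in the excerpt.
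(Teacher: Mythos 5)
Your overall architecture (iterate local solutions, exploit the improved regularity of the remainder at restart times, conclude by continuity in the model) matches the paper's, but there is one genuine gap that your restart procedure does not survive: you restart from the \emph{instantaneous} data $(\bar X(\tau),\bar g(\tau))$, whereas the equation \eqref{eq:renorm_bar_a} at times $t\in[\tau,\tau+C\eps^2]$ involves $\moll^\eps*(\bar g\xi)(t)$, which depends on $\bar g$ on $[t-C\eps^2,t]\not\subset[\tau,\infty)$. The mollified multiplicative noise has memory, so instantaneous data at $\tau$ do not determine the solution after $\tau$; if you replace the pre-$\tau$ history of $\bar g$ by (say) the harmonic extension of $\bar g(\tau)$, the reconstructed process solves a different equation on $[\tau,\tau+C\eps^2]$. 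The paper's restart data therefore consist of $\tilde X_0\in\CC^{-\kappa}$, $h_0\in\CC^{1/2-3\kappa}$ \emph{and} the modelled distribution $\tilde\CU=\CU\restr_{[\sigma_k-\tau_k,\sigma_k]}$ on an interval \emph{preceding} the restart time; correctness of the patched solution then requires $\eps^2<\tau_i$ at every step, a constraint entirely absent from your argument.

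Relatedly, your description of the restarted problem is internally inconsistent: you say it is ``of the form \eqref{e:fix-pt-multi-Xbar}+\eqref{e:fix-pt-HU}'' while simultaneously invoking the $\CC^{1/2-\kappa}$ regularity of the remainder. The problem \eqref{e:fix-pt-multi-Xbar} is built around initial data in $\CC^\eta$ with $\eta<-1/2$ and requires the probabilistic inputs $\bar\omega_\ell$ (products of $\CP Y_0$ with $\bar\Psi^{U_0}_\delta$, etc.), whose construction in Lemma~\ref{lem:omegas_converge} would have to be redone at every restart with random, $\mcF_{\sigma_k}$-measurable data. The paper instead sets up a genuinely different fixed point problem, \eqref{eq:tilde_X_FPP}, for the remainder $\tilde\CY=\CY-\CZ_1-\CZ_2$ with $\CZ_1=\CG_\mfm(\CU\bXi)$ and $\CZ_2=\CG_\mfz(\CZ_1\partial\CZ_1)$, where $-\CZ_1\partial\CZ_1$ is subtracted on the right-hand side to cancel the worst product; because $\tilde X_0\in\CC^{-\kappa}$ this problem closes with no $\bar\omega$-inputs at all. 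Without specifying this (or an equivalent) reformulation, and without carrying the history of $\CU$ across restarts, the iteration you sketch does not produce solutions of \eqref{eq:renorm_bar_a} beyond the first interval.
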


\begin{remark}
The proof of Proposition~\ref{prop:bar_X_max_sols}
applies with several changes to
show convergence of $(Y,U,h)$ as maximal solutions.
However, we will derive this result below more simply in Corollary~\ref{cor:Y_system_conv} using pathwise gauge equivalence.
\end{remark}

\begin{proof}
As in~\cite[Sec.~7.2.4]{CCHS2d}, it will be convenient to consider a 
different abstract fixed point problem from~\eqref{e:fix-pt-multi-Xbar}, which solves instead for the `remainder' of $\bar X$.
Like in~\cite{CCHS2d}, this new fixed point will encode information of $\bar U$ on an earlier time interval
which is necessary to make sense of the term $\moll^\eps * (\bar g \xi)$.
In addition to this, and unlike in~\cite{CCHS2d},
it is important that this new fixed point problem will have an improved regularity in the initial condition for $\bar X$.

To motivate the new fixed point problem,
consider the system of SPDEs given by \eqref{eq:renorm_bar_a}+\eqref{e:final_hU}.
Note that if we start with an arbitrary initial condition at some earlier time $t_0 < 0$, then the solution
at time $0$ is necessarily of the form
\begin{equ}
\bar X_0 = \big(K^\eps * (\bar U\xi)\big)(0,\act)
+ K*\big( K^\eps *(\bar U\xi)\partial  K^\eps *(\bar U\xi)\big)(0,\act)
+ \tilde X_0\;,
\end{equ}
where $\tilde X_0 \in \CC^{-\kappa}$ (even in $\CC^{\frac12-}$, but we don't need to use this).
Looking now at positive times, it is straightforward to see that $X = \tilde X + Z_1+Z_2$ where $Z_1 = K^\eps * (\bar U\xi)$, $Z_2 = K*(Z_1\partial Z_1)$,
and $\bar U$ solves~\eqref{eq:h_and_U_def} for positive times and is equal to its previous value for negative times,
and $\tilde X$ solves the fixed point problem
\begin{equ}[e:FPclassical]
\tilde X = \CP \tilde X_0
+ G * \one_+ (\tilde X^3 + \tilde X\p \tilde X + \mathring C_\mfz \tilde X + \mathring C_\mfh h - Z_1\partial Z_1)\;.
\end{equ}

We now lift this to a fixed point problem in a suitable space of modelled distributions. 
For this, assume that we are given $\tilde X_0 \in \CC^{-\kappa}$, $h_0 \in \CC^{\frac12-3\kappa}$, and 
a modelled distribution $\tilde \CU \in \cD^{\f52+2\kappa}$
defined on $[-T,0]\times \T^3$ for some $T\in(0,1)$
and taking values in the corresponding sector of our regularity structure.
One should think of $\tilde\CU$ as the `previous' $\bar U$ restricted to a sufficiently short interval $[-T,0]$.

As earlier, we decompose $\CU = \CP \bar U_0 + \hat \CU$.
We therefore extend
$\tilde \CU$ to positive times by the Taylor lift of the harmonic extension of its
reconstruction $\bar U_0$ at time $0$. Note that $\bar U_0$ is simply given by the component 
of $\tilde \CU$ multiplying $\bone$ since the sector corresponding to $\CU$ is 
of function type. We also have $\bar U_0 \in \CC^{\f32-3\kappa}$ which implies that,
as a modelled distribution on $[-T,\infty)\times \T^3$, $\tilde \CU \in \bar \cD^{\f52+2\kappa,\f32-3\kappa}_0$.

Let us write now
\begin{equ}
\CZ_1 = \CG_\mfm(\CU\bXi)\quad \text{and} \quad
\CZ_2 = \CG_\mfz (\CZ_1 \partial \CZ_1)\;,
\end{equ}
where $\CU = \tilde\CU + \hat\CU$.
Here $\hat\CU$ and $\CH$ solve~\eqref{e:fix-pt-HU},
and $\CY \eqdef \CZ_1+\CZ_2+\tilde \CY$ where $\tilde \CY$ solves
\begin{equ}[eq:tilde_X_FPP]
\tilde \CY = \CP \tilde X_0 + \CG_\mfz \bone_+
(\CY^3 + \CY\partial\CY + \mathring C_\mfz \CY + \mathring C_\mfh \CH - \CZ_1\partial \CZ_1)\;.
\end{equ}
We claim that we can solve for
\begin{equ}
(\tilde \CY,\hat\CU,\CH) \in \cD^{\frac32+3\kappa,-\kappa}_0 \times \hat\cD^{\frac52+2\kappa, \frac32-3\kappa}_0\times \cD^{1+5\kappa,\frac12-3\kappa}_0\;.
\end{equ} 
Indeed, by Lemma~\ref{lem:multiply-hatD} and Theorem~\ref{thm:integration}, $\CU\bXi \in \bar \cD^{\kappa,-1-4\kappa}_{-\frac52-\kappa}$ and $\CZ_1 \in \bar\cD^{2-\kappa,1-5\kappa}_{-\frac12-2\kappa}$.
Hence
\begin{equ}
\CZ_1\partial \CZ_1 \in \bar\cD^{\frac12 - 3\kappa, -\frac12 -7\kappa}_{-2-4\kappa}
\Rightarrow
\CZ_2 \in \cD^{\frac52 - 4\kappa, \frac32 - 8\kappa}_{-5\kappa}\;.
\end{equ}
Furthermore $\tilde X_0 \in \CC^{-\kappa}$
and the `right-hand side' is
\begin{equ}
\CY^3 + \CY\partial\CY + \mathring C_\mfz \CY + \mathring C_\mfh \CH - \CZ_1\partial \CZ_1
\in \bar\cD^{\kappa, -\frac32-3\kappa}_{-\frac32-6\kappa}
\subset \cD^{\kappa,-\frac32-6\kappa}_{-\frac32-6\kappa}\;. 
\end{equ}
Note how $-\CZ_1\partial \CZ_1$ cancels the worst term in $\CY\partial\CY$, so that the worst terms are now $\CZ_1^3 \in \bar\cD^{1-5\kappa,-9\kappa}_{-\frac32-6\kappa}$
and
$\CZ_1 \partial \tilde \CY\in \bar\cD^{\kappa, -\frac32-3\kappa}_{-\frac32-2\kappa}$.
Since $\cD^{\kappa,-\frac32-6\kappa}_{-\frac32-6\kappa}$ is stable under multiplication by $\bone_+$,
the fixed point problem~\eqref{eq:tilde_X_FPP} is well-posed.

Furthermore, under the same pseudo-metric on models as in Proposition~\ref{prop:improve_local},
the reconstruction $(\bar X,\tilde X,Z_1,Z_2,\bar U,\bar h)$ of the solution $(\CY,\tilde \CY,
\CZ_1,\CZ_2,\CU,\CH)$
is a locally uniformly continuous function into $\CC([\lambda\tau,\tau],\state\times\CC^{-\kappa}\times \state\times\CC^{-5\kappa}\times
\tilde{\mfG}^{0,\rho})$ (for any fixed $\lambda>0$)
of the tuple
\begin{equ}
(Z,\tilde\CU,\tilde X_0, h_0) \in \mathscr{M}_1\times \cD^{\frac52+2\kappa}\times \CC^{-\kappa}\times \CC^{\frac12-3\kappa}
\end{equ}
where $\tau$ is locally uniform in the tuple.
(Note how we do \textit{not} require the input distributions $\bar\omega$ any more because the initial condition is in $\CC^{-\kappa}$ instead of $\CC^{\eta}$ for $\eta<-\frac12$.)


For $\eps\in[0,1]$, the construction of the maximal solutions (modelled distributions) $(\CY,\CU,\CH)$
is then similar to~\cite[Def.~7.20]{CCHS2d}.
Namely, we begin by solving for $(\CY,\CU,\CH)$ using the original equation~\eqref{e:fix-pt-multi-Xbar}+\eqref{eq:CY_bar_equation}+\eqref{e:fix-pt-HU}
on an interval $[0,\sigma_1]\eqdef [0,2\tau_1]$.
The underlying model is $Z^{0,\eps}_{\BPHZ}\in\mathscr{M}_1$ as in Lemma~\ref{lem:conv_of_models2},
$\CW$ is taken as in~\eqref{eq:CW_def},
and $\bar\omega^0_0$, $\bar\omega^{\eps,0}_\ell$ for $\ell\in\{1,2,3\}$, and $\bar\omega^\eps_4$ are defined through~\eqref{e:bar_omega0123} and Lemma~\ref{lem:omegas_converge}. 

We then take $T=\tau_1$ with time centred around $\sigma_1$ and $\CU \restr_{[\sigma_1-T,\sigma_1]}$
playing the role of $\tilde \CU$
above,
and
\begin{equ}
\tilde X_0 \eqdef \CR(\CY-\CZ_1-\CZ_2)(\sigma_1)\quad
\text{and} \quad
h_0\eqdef (\CR \CH)(\sigma_1)\;.
\end{equ}
Solving now~\eqref{eq:tilde_X_FPP}+\eqref{e:fix-pt-HU} with this data (and the same model $Z^{0,\eps}_{\BPHZ}$),
we extend the solution to the interval $[0,\sigma_2]\eqdef [0,2\tau_1+2\tau_2]$.

Then we set $T=\tau_2$, centre time around $\sigma_2$, and solve again~\eqref{eq:tilde_X_FPP}+\eqref{e:fix-pt-HU}
with $\tilde \CU = \CU\restr_{[\sigma_2-T,\sigma_2]}$,
$\tilde X_0 = (\CR\tilde \CY)(\sigma_2)$ and $h_0=(\CR \CH)(\sigma_2)$.
We thus extend the solution $(\CY,\CU,\CH)$ to the interval $[0,\tau^*)$ where $\tau^\star = \sum_{k=1}^\infty\sigma_k$.

The proof of the proposition is then similar to that of~\cite[Prop.~7.23]{CCHS2d}.
In particular, similar to~\cite[Remarks~7.21,~7.22]{CCHS2d},
the reconstruction $(\bar X, \bar U,\bar h)$ solves the renormalised PDEs~\eqref{eq:renorm_bar_a}+\eqref{e:final_hU}
on the interval $[0,\sigma_n]$ provided that $\eps^2 < \tau_i$ for all $i=1,\ldots, n$.
Moreover, by the convergence of models from Lemma~\ref{lem:conv_of_models2} and Corollary~\ref{cor:SHE_conv_state},
for every $n\geq 1$, there exists $\eps_n$ such that we can choose the same $\tau_1,\ldots,\tau_n$ for all $\eps\in[0,\eps_n]$.

\label{page:reconstructions_match}The final important remark is that for $\eps=0$, the reconstructions of the above maximal solutions on each interval $(\sigma_k,\sigma_{k+1}]$
coincide with the reconstructions of the solutions to the original
fixed point problem~\eqref{e:fix-pt-multi-Xbar} with $\CW=0$
and $\bar\omega_\ell$ defined again through~\eqref{e:bar_omega0123} and Lemma~\ref{lem:omegas_converge}
\dash note that, since we take $\delta\downarrow0$ first and then $\eps\downarrow0$,
the initial condition at each time step $\sigma_k$ is independent of $\xi \restr_{[\sigma_k,\infty)}$ 
(modulo centring time around $\sigma_k$),
therefore the estimates and convergence results from Lemma~\ref{lem:omegas_converge}
still hold since $\sigma_k$ can be taken as a stopping time.
In particular, the maximal solution for $\eps=0$ is in $(\state\times\tilde{\mfG}^{0,\rho})^\sol$,
which, combined with Proposition~\ref{prop:SPDEs_conv_zero_state}, implies that $(\bar X,\bar U,\bar h)$ converges as $\eps\downarrow0$ in $(\state\times\tilde{\mfG}^{0,\rho})^\sol$ (in probability).
\end{proof}

\subsection{Gauge covariance}

In this subsection we prove Theorem~\ref{thm:gauge_covar}.
We keep using the notation of Section~\ref{subsec:renorm_sol}.
Following Proposition~\ref{prop:SPDEs_conv_zero_state}, we will use the main result of Appendix~\ref{app:injectivity} to show that $C^\eps_{j,\YM}-C^\eps_{j,\Gauge}$  and $C^{0,\eps}_{j,\Gauge}$ both converge to finite limits as $\eps\downarrow0$.
This in turn will allow us to conclude the proof of Theorem~\ref{thm:gauge_covar}.

%
\begin{proposition}\label{prop:consts-soft} 
Suppose $\moll$ is non-anticipative.
With notation as in Lemma~\ref{lem:consts-soft1_bounded}, 
there exists $(D_{1},D_{2}) \in L_{G}(\mfg,\mfg)^2$ such that
\begin{equ}
\lim_{\eps \downarrow 0}\;
(D_{1}^{\eps},D_{2}^{\eps}) = (D_{1},D_{2})\;.
\end{equ}
Moreover, $(D_{1},D_{2})$ does not depend on the choice of mollifier. 
\end{proposition}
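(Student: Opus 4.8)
The plan is to combine the boundedness established in Lemmas~\ref{lem:consts-soft1_bounded} and~\ref{lem:consts-soft2_bounded} with the injectivity of the solution map (Appendix~\ref{app:injectivity}) to rule out distinct subsequential limits, and then to use the BPHZ machinery to show mollifier independence. First I would fix an arbitrary initial datum $(\bar x, g(0)) \in \state \times \mfG^{0,\rho}$ with $\mrd g(0)\, g(0)^{-1}$ not identically zero. Since $\sup_{\eps}(|D_1^\eps| + |D_2^\eps|) < \infty$ by Lemma~\ref{lem:consts-soft1_bounded}, any sequence $\eps_n \downarrow 0$ admits a further subsequence along which $(D_1^{\eps_n}, D_2^{\eps_n}) \to (D_1, D_2)$ for some pair in $L_G(\mfg,\mfg)^2$; suppose for contradiction that two subsequences give distinct limits $(D_1, D_2)$ and $(D_1', D_2')$. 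Choosing $\sig^\eps \equiv \sig$ constant, the operator $c^\eps$ from~\eqref{eq:c^eps_def} (with $\mathring C^\eps_\A = 0$, $\mathring C^\eps_\h = 0$) converges along each subsequence to $(\sig^2 D_1 + \sig^4 D_2)^{\oplus 3}$ respectively $(\sig^2 D_1' + \sig^4 D_2')^{\oplus 3}$.

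Next I would invoke Proposition~\ref{prop:SPDEs_conv_zero_state}: the solution $Y = g\act X$ to~\eqref{eq:renorm_b} (equivalently the coupled $(X,g)$ system~\eqref{e:SPDE-for-X_with_g}--\eqref{eq:SPDE_for_g_coupled} with operator $c^\eps$) converges in probability in $\CC([0,\tau],\state\times\hat\mfG^{0,\rho})$ as $\eps\downarrow 0$ along the full sequence, since the left-hand side quantities $(\bar X, \bar U, \bar h)$ and $(Y,U,h)$ share a common limit. Hence the limiting law of $(X,g)$ does not depend on the subsequence, yet it is the law of the maximal solution to the $(X,g)$-system whose $X$-component satisfies a renormalised SPDE of the form studied in Appendix~\ref{app:evolving_rough_g} with renormalisation constant depending on $(D_1,D_2)$ through $\sig^2 D_1 + \sig^4 D_2$. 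Since this must hold for every $\sig \in \R$, and since by Appendix~\ref{app:injectivity} the solution map is injective in law as a function of the renormalisation constants, we conclude $\sig^2 D_1 + \sig^4 D_2 = \sig^2 D_1' + \sig^4 D_2'$ for all $\sig$, whence $D_1 = D_1'$ and $D_2 = D_2'$ by comparing coefficients in $\sig$. This gives convergence of $(D_1^\eps, D_2^\eps)$ along the full sequence.

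For mollifier independence, I would note that $C^\eps_{j,\YM}$ and $C^\eps_{j,\Gauge}$ are both built from BPHZ characters evaluated on finitely many trees (those classified in Lemma~\ref{lemma:ymh_trees_generate}, all with $2$ or $4$ noises), and that the $\eps\downarrow0$ limit of the reconstructed solutions is the BPHZ solution, which by the general theory of~\cite{BHZ19,CH16,BCCH21} is mollifier-independent once one fixes the large-scale truncation $K$ of the heat kernel. Concretely, two non-anticipative mollifiers $\moll, \tilde\moll$ yield BPHZ models $Z^{0,0}_{\BPHZ}$ that agree (Lemma~\ref{lem:conv_of_models2} plus the uniqueness in~\cite{CH16}), and the input distributions $\omega_\ell^0$ from Lemma~\ref{lem:omegas_converge} also have mollifier-independent limits; running the argument of the previous paragraph with the two mollifiers simultaneously, injectivity forces the limiting renormalisation constants — and hence $\lim_\eps D_j^\eps$ — to coincide. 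The main obstacle I anticipate is making precise the link between ``$(X,g)$ converges in law to a subsequence-independent limit'' and ``the renormalisation constant is subsequence-independent'': this requires the injectivity-in-law statement of Appendix~\ref{app:injectivity} to apply to the coupled $(X,g)$-system (not just to~\eqref{eq:SPDE_for_A} alone) and to hold for a full interval of values of $\sig$, so that one can separate the $\sig^2$ and $\sig^4$ contributions. Once that is in place the polynomial-in-$\sig$ comparison and the passage to the full sequence are routine.
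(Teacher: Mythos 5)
Your overall strategy --- boundedness from Lemma~\ref{lem:consts-soft1_bounded}, extraction of subsequential limits, an injectivity-in-law argument to rule out two distinct limits, and BPHZ convergence for mollifier-independence --- is the same as the paper's. But the implementation breaks down at the decisive step, and it is exactly the point you flag as your ``main obstacle''. By setting $\mathring C^\eps_\A=\mathring C^\eps_\h=0$ in the $Y$-equation, you push all of the $D^\eps$-dependence into the coupled system \eqref{e:SPDE-for-X_with_g}--\eqref{eq:SPDE_for_g_coupled} through $c^\eps=-(\sig^2D_1^\eps+\sig^4D_2^\eps)^{\oplus 3}$ (note the sign, which you drop). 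To conclude you then need injectivity in law for \emph{that} system as a function of $c$, i.e.\ an observable recovering the coefficient of $g^{-1}\mrd g$ in~\eqref{e:SPDE-for-X_with_g}. Appendix~\ref{app:injectivity} only constructs such observables for the gauge-transformed systems \eqref{eq:renorm_bar_a}/\eqref{eq:renorm_b}, where the recoverable data are $\mathring C_\A$ and $\mathring C_\h$; nothing in the paper provides the analogous statement for the $(X,g)$-system of Appendix~\ref{app:evolving_rough_g}, and proving it would require redoing the construction of Lemma~\ref{lemma:observable} on the enlarged regularity structure of Lemma~\ref{lem:flow_for_rough_g}. As written, your argument therefore rests on an unproved extension. (Your observation that the full-sequence convergence of $(Y,U,h)$ from Proposition~\ref{prop:SPDEs_conv_zero_state} forces the limit law of $(X,g)$ to be subsequence-independent is fine, since $g$ is recovered from $(U,h,g_0)$ and $X=g^{-1}\act Y$; the problem is only extracting $D$ from that law.)

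The paper avoids this by the opposite allocation of constants: take $\mathring C^\eps_\A=0$ and $\mathring C^\eps_\h=\sig^2D_1^\eps+\sig^4D_2^\eps$, so that $c^\eps=0$ in \eqref{eq:c^eps_def} and the coupled system $(X^\eps,g^\eps)$ is \emph{manifestly} independent of the $D^\eps$'s; its limit is the same along both subsequences for trivial reasons. The $D$-dependence then sits entirely in the $\mathring C_\h$-slot of the $Y$-equation, which is precisely where Lemma~\ref{lemma:injectivity_in_law}/Remark~\ref{rem:injectivity_local} applies: since $Y^\eps=g^\eps\act X^\eps$, both subsequential limits of $(Y,U,h)$ coincide with the common limit $(X,U,h)$, and the observable of \eqref{eq:observable_injectivity_local} forces $D^{\oplus 3}h\,1_{(0,\tau)}=\tilde D^{\oplus 3}h\,1_{(0,\tau)}$, contradicting $\lim_{t\downarrow 0}h(t)=\mrd g_0\,g_0^{-1}\neq 0$. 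Note also that a single well-chosen $\sig$ suffices (if the pairs of limits differ, some $\sig$ already separates $\sig^2D_1+\sig^4D_2$ from its tilde version), so the ``for all $\sig$ and compare coefficients'' step is unnecessary, though harmless. Your mollifier-independence paragraph matches the paper's argument. If you rework the contradiction step along these lines the proof closes; otherwise you must first establish injectivity for the $(X,g)$-system, which is not available in the paper.
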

\begin{proof}
We start by proving the first statement regarding $\eps \downarrow 0$ convergence. 
Recalling that $
\sup_{\eps \in (0,1]}\big(
|D_{1}^{\eps}| + |D_{2}^{\eps}| \big)< \infty$
by Lemma~\ref{lem:consts-soft1_bounded}, we argue by contradiction and suppose that  there exist sequences $\eps_{n}, \tilde{\eps}_{n} \downarrow 0$ along which we have  {\it distinct}  limits 
\begin{equ}\label{eq:subsequential}
(D_{1}^{\eps_n}, D_{2}^{\eps_n}) \rightarrow (D_{1},D_{2}) 
\quad
\textnormal{and} 
\quad
(D_{1}^{\tilde{\eps}_n}, D_{2}^{\tilde{\eps}_n}) \rightarrow (\tilde{D}_{1},\tilde{D}_{2})\;,
\end{equ} 
as $n \rightarrow \infty$. We will show that this gives rise to a contradiction with the results of Appendix~\ref{app:injectivity}.
We first note that we can find $\sig$ such that, setting 
$D = \sig^{2} D_{1} + \sig^{4} D_{2}$ and $\tilde{D} =  \sig^{2}\tilde{D}_{1} + \sig^{4} \tilde{D}_{2}$, we have $D \not = \tilde{D}$.
%
We fix $g_{0} \in \mfG^{0,\rho}$ such that $D^{\oplus 3} \mrd g_{0} g_{0}^{-1} \not = \tilde{D}^{\oplus 3} \mrd g_{0} g_{0}^{-1}$ and some arbitrary $Y_{0} \in \state$. 

Now for $\eps > 0$ let $(X^{\eps},g^{\eps})$ be the maximal solution to~\eqref{e:SPDE-for-X} (with $\sig^\eps=\sig$ and $\mathring{C}^{\eps}_{\A} = \mathring{C}_{\A} = 0$) and \eqref{eq:SPDE_for_g_wrt_A}, started from the  initial data $(X_0,g_0)$ where $Y_0 = g_0 \act X_0$.
Observe that, by Lemma~\ref{lem:flow_for_rough_g},
$(X^{\eps},g^{\eps})$ converges in probability in $(\state\times \mfG^{0,\rho})^\sol$ as $\eps\downarrow0$ to a limit we denote $(X,g)$.
On the other hand, $Y^{\eps}\eqdef g^{\eps} \act X^{\eps}$ solves
\[
\partial_t Y^{\eps}
= \Delta Y^{\eps}
+ Y^{\eps}\partial Y^{\eps}
+ (Y^{\eps})^3
+ C_{\mfz} Y^{\eps}
+ C_{\mfh}^\eps \mrd g g^{-1}
+ \sig^\eps  (g \xi^\eps)\;, \quad \tilde Y^{\eps}(0)=Y_0\;,
\]
where in the definitions of $C_\mfz\eqdef C_{\mfz}^{\eps}$ and  $C_{\mfh}^\eps$ one takes  $\mathring{C}^{\eps}_{\A} = \mathring{C}_{\A} = 0$ and $\mathring{C}^{\eps}_{\h} = \sig^{2} D^{\eps}_{1} + \sig^{4} D^{\eps}_{2}$.
Then, by \eqref{eq:subsequential}, we have $\mathring{C}^{\eps_{n}}_{\h} \rightarrow D$ and $\mathring{C}^{\tilde{\eps}_n}_{\h} \rightarrow \tilde{D}$.

For the rest of the argument we recall the space of local solutions $(\state \times \tilde{\mfG}^{0,\rho})^{\lsol}$ and local solution map $\mathcal{A}_{\sig}^{\BPHZ, \scriptscriptstyle{\lsol}}$ introduced in Remark~\ref{rem:injectivity_local}.

Then there exists some strictly positive $\mathbf{F}$-stopping time $\tau$
such that, in probability, we have
\begin{equs}
\lim_{n \rightarrow \infty}
(\tau, Y^{\eps_n}, U^{\eps_n}, h^{\eps_n})  &\rightarrow \mathcal{A}_{\sig}^{\BPHZ, \scriptscriptstyle{\lsol}}[0, D,X_0, g_0]\\
\text{and}
\quad  
\lim_{n \rightarrow \infty}
(\tau, Y^{\tilde\eps_n}, U^{\tilde\eps_n}, h^{\tilde\eps_n})  & \rightarrow \mathcal{A}_{\sig}^{\BPHZ, \scriptscriptstyle{\lsol}}[0, \tilde{D},X_0, g_0]\;.
 \end{equs}
Here $U,h$ (resp.\ $U^{\eps_n}, h^{\eps_n}$ \slash $U^{\tilde{\eps}_n}, h^{\tilde{\eps}_n}$) are obtained from $g$ (resp.\ $g^{\eps_n}$ \slash $g^{\tilde{\eps}_n}$) via \eqref{eq:h_and_U_def}. 

On the other hand, since $(X^{\eps},g^{\eps})$ does not depend on $\mathring C^\eps_\h$ and by the continuity of the group action given in Proposition~\ref{prop:group_action}, it follows that 
\begin{equ}\label{eq:same_limit}
\lim_{n \rightarrow \infty}
(\tau, Y^{\eps_n}, U^{\eps_n}, h^{\eps_n}) =  \lim_{n \rightarrow \infty}
(\tau, Y^{\tilde{\eps}_n}, U^{\tilde{\eps}_n}, h^{\tilde{\eps}_n})
= 
(\tau, X,U,h)\;,
\end{equ}
in probability, as random elements of $(\state \times \tilde{\mfG}^{0,\rho})^{\lsol}$ \dash here $(\tau, X,U,h)$ is defined by stopping the maximal solution $(X,U,h) \in (\state \times \tilde{\mfG}^{0,\rho})^{\sol}$ at $\tau$. 

By \eqref{eq:observable_injectivity_local} and cancelling stochastic integrals and terms $\big( 0^{\oplus 3} \oplus (- m^2) \big)X$ on either side, we have the almost sure equality 
\[
D^{\oplus 3} h 1_{(0,\tau)}= \tilde{D}^{\oplus 3} h 1_{(0,\tau)}\;.
\]
However, this is a contradiction since $\lim_{t \downarrow 0} h(t) = \mrd g_{0} g_{0}^{-1}$ \dash this finishes the proof of the first statement of the proposition. 

The statement about independence with respect to our choice of mollifier $\chi$ can be proven with the same argument \dash the key point above was that the limit $(\tau, X,U,h)$ in \eqref{eq:same_limit} is independent of which $\eps \downarrow 0$ subsequence we choose, thanks to BPHZ convergence.\footnote{This is clearly true for the maximal solution $(X,U,h)$. The stopping time $\tau$ can also be chosen small enough in a way that only depends on the limiting BPHZ model and initial data $(X_{0},g_{0})$ and an a-priori bound on the constants $\mathring{C}^{\eps}_{\A}$, $\mathring{C}^{\eps}_{\h}$}
However, the BPHZ model that defines $(\tau,X,U,h)$ is also insensitive to the choice of mollifier one uses when taking $\eps \downarrow 0$. 
Therefore the argument above to prove uniqueness of the limiting constants obtained with two different $\eps \downarrow 0$ subsequences and a single mollifier can be repeated when taking  $\eps \downarrow 0$ with two different mollifiers.
\end{proof}

As a corollary of Proposition~\ref{prop:consts-soft} and the proof of Proposition~\ref{prop:SPDEs_conv_zero_state}, we obtain convergence of $(Y,U,h)$ as \textit{maximal} solutions.

\begin{corollary}\label{cor:Y_system_conv}
In the setting of Proposition~\ref{prop:SPDEs_conv_zero_space-time},
$(Y,U,h)$ converges in probability in $(\state\times\tilde{\mfG}^{0,\rho})^\sol$ as $\eps\downarrow0$.
\end{corollary}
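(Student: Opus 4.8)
\textbf{Proof plan for Corollary~\ref{cor:Y_system_conv}.}
The plan is to upgrade the convergence of $(Y,U,h)$ from $\CC^{-\frac12-\kappa}((0,\tau)\times\T^3)$ (established locally in time in Proposition~\ref{prop:SPDEs_conv_zero_space-time}, then improved to $\CC([0,\tau],\state\times\hat\mfG^{0,\rho})$ in Proposition~\ref{prop:SPDEs_conv_zero_state}) to convergence as maximal solutions in $(\state\times\hat\mfG^{0,\rho})^\sol$, exactly as was done for $(\bar X,\bar U,\bar h)$ in Proposition~\ref{prop:bar_X_max_sols}, but now exploiting the pathwise identity $Y=g\act X$ to avoid redoing the whole argument. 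First I would recall from the proof of Proposition~\ref{prop:SPDEs_conv_zero_state} that $Y^\eps = g^\eps\act X^\eps$, where $(X^\eps,g^\eps)$ solves the coupled system~\eqref{e:SPDE-for-X_with_g}--\eqref{eq:SPDE_for_g_coupled} with operator $c^\eps$ given by~\eqref{eq:c^eps_def}, namely $c^\eps = (\mathring C^\eps_\h)^{\oplus3} + (C^\eps_{\Gauge})^{\oplus3} - (\mathring C^\eps_\A)^{\oplus 3} - (C^\eps_{\YM})^{\oplus3} = (\mathring C^\eps_\h - \mathring C^\eps_\A)^{\oplus 3} - (D^\eps)^{\oplus3}$ with $D^\eps = C^\eps_{\YM}-C^\eps_{\Gauge}$, and initial condition $X^\eps(0) = g(0)^{-1}\act\bar x$.

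The key input is Proposition~\ref{prop:consts-soft}, which gives $\lim_{\eps\downarrow0} D^\eps = D$ for a finite operator $D = \sig^2 D_1 + \sig^4 D_2 \in L_G(\mfg,\mfg)$ (the hypotheses of Proposition~\ref{prop:SPDEs_conv_zero_space-time} fix $\sig^\eps\to\sig$, $\mathring C^\eps_\A\to\mathring C_\A$, $\mathring C^\eps_\h\to\mathring C_\h$, and $\moll$ non-anticipative). Hence $c^\eps\to c \eqdef (\mathring C_\h - \mathring C_\A)^{\oplus3} - D^{\oplus3}$, a finite limit. Therefore Lemma~\ref{lem:flow_for_rough_g} (equivalently, the results of Appendix~\ref{app:evolving_rough_g} that extend the local-existence theory of Theorem~\ref{thm:local-exist-sigma} to the coupled $(A,\Phi,g)$ system with an arbitrary bounded coefficient $c^\eps$ in front of $(\mrd g)g^{-1}$) applies and gives that $(X^\eps,g^\eps)$ converges in probability as \emph{maximal} solutions in $(\state\times\mfG^{0,\rho})^\sol$ as $\eps\downarrow0$, to a limit $(X,g)$ depending on $\mathring C_\A,\mathring C_\h,\sig$ only through those limits. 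Then I would apply the continuous left group action $\mfG^{0,\rho}\times\state\to\state$ of Proposition~\ref{prop:group_action} (together with the fact that $(g,Y)\mapsto gY$ is uniformly continuous on bounded sets, used in the proof of Proposition~\ref{prop:SPDEs_conv_zero_state}), extended from finite-time intervals to the $(\cdot)^\sol$ spaces in the obvious way (the action commutes with stopping and with the cemetery state): since $Y^\eps = g^\eps\act X^\eps$ and $(X^\eps,g^\eps)\to(X,g)$ in $(\state\times\mfG^{0,\rho})^\sol$, we get $Y^\eps\to g\act X$ in $\state^\sol$; and $U^\eps,h^\eps$, being continuous functions of $g^\eps$ via~\eqref{eq:h_and_U_def}, converge correspondingly, so that $(Y,U,h)\to(g\act X, U, h)$ in $(\state\times\hat\mfG^{0,\rho})^\sol$.

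The one point requiring a little care is the interplay between the two notions of maximal solution: on the one hand $Y$ as the reconstruction of the regularity-structure fixed point~\eqref{e:fix-pt-multi-Y}, extended to a maximal solution in $(\state\times\hat\mfG^{0,\rho})^\sol$ as in the construction of Proposition~\ref{prop:bar_X_max_sols}; on the other hand $g\act X$ with $(X,g)$ the coupled maximal solution from Appendix~\ref{app:evolving_rough_g}. These agree on the initial interval $[0,\tau]$ where both exist (this is precisely the content of the proof of Proposition~\ref{prop:SPDEs_conv_zero_state}, via the computation analogous to~\cite[Sec.~2.2]{CCHS2d} showing $Y=g\act X$ for each fixed $\eps>0$, a pathwise identity which passes to the limit and then to subsequent restart intervals). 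I expect this identification of the two maximal-solution constructions — checking that the blow-up time of $Y$ as a $\state$-valued process coincides with that of $g\act X$, and that no spurious blow-up is introduced by the decomposition used in~\eqref{e:fix-pt-multi-Y} — to be the main (though routine, given Propositions~\ref{prop:bar_X_max_sols} and~\ref{prop:SPDEs_conv_zero_state}) obstacle; everything else is a direct application of the continuity of the group action and of Proposition~\ref{prop:consts-soft}.
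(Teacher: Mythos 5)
Your proposal is correct and takes essentially the same route as the paper's proof: write $Y^\eps=g^\eps\act X^\eps$ with $(X^\eps,g^\eps)$ solving the coupled system with coefficient $c^\eps$ from~\eqref{eq:c^eps_def}, use Proposition~\ref{prop:consts-soft} to see that $c^\eps$ converges to a finite limit, invoke Lemma~\ref{lem:flow_for_rough_g} for convergence of the coupled maximal solutions in $(\state\times\mfG^{0,\rho})^\sol$, and conclude via the local uniform continuity of the group action (Proposition~\ref{prop:group_action}). The ``interplay'' point you flag at the end is handled in the paper exactly as you anticipate, simply by observing that $Y=g\act X$ and that $(U,h)$ are determined by $g$ via~\eqref{eq:h_and_U_def} for all positive times up to the blow-up of $(X,g)$, so no separate identification of maximal-solution constructions is needed.
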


\begin{proof}
Consider $X,g$ as in the proof of Proposition~\ref{prop:SPDEs_conv_zero_state}.
Since $c^\eps$ in~\eqref{eq:c^eps_def} converges as $\eps\downarrow0$ to a finite limit due to Proposition~\ref{prop:consts-soft}, $(X,g)$ converges in probability in $(\state\times\mfG^{0,\rho})^\sol$ as $\eps\downarrow0$
due to Lemma~\ref{lem:flow_for_rough_g}.
Since $Y=g\act X$ and $(U,h)$ are defined from $g$ by~\eqref{eq:h_and_U_def} for all positive times (until the blow-up of $(X,g)$),
the conclusion follows from the local uniform continuity of the group action (Proposition~\ref{prop:group_action}).
\end{proof} 

The next proposition concerns the $\eps \downarrow 0$ limit of the renormalisation constants
$C_{\Gauge}^{0,\eps}$  which appear in the renormalised equations for $\bar X$.

Assume that the mollifier  $\moll$ is non-anticipative.
For any fixed   
 $\eps > 0$, after taking $\delta \downarrow 0$,
since $\bar{g}$ is adapted 
one expects that 
the law of $\bar{X}$ should remain the same
if one replaces 
$\bar{g} \xi$ by $\xi$,
in which case 
  the equation for $\bar{X}$
would simply become    
the equation for $X$.
Therefore
one expects that 
$C_{\Gauge}^{0,\eps}$ should vanish when  the mollifier  $\moll$ is non-anticipative.
However the following result is sufficient for our needs.

\begin{proposition}\label{prop:consts-soft2}
Suppose $\moll$ is non-anticipative. 
Then, for $j\in\{1,2\}$, there exists $C_{j,\Gauge}^{0,0}\in L_{G}(\mfg,\mfg)$ such that 
\begin{equ}
\lim_{\eps \downarrow 0}\;
(C_{1,\Gauge}^{0,\eps},C_{2,\Gauge}^{0,\eps})
=
(C_{1,\Gauge}^{0,0},C_{2,\Gauge}^{0,0})\;.
\end{equ}
Moreover, $(C_{1,\Gauge}^{0,0},C_{2,\Gauge}^{0,0})$ is independent of our choice of non-anticipative $\moll$.
\end{proposition}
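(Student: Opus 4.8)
The plan is to mimic the argument used for Proposition~\ref{prop:consts-soft} almost verbatim, replacing the role played there by the comparison of $Y$ with a gauge transformation of $X$ with the comparison of $\bar{X}$ with $X$ itself (as in the proof of Lemma~\ref{lem:consts-soft2_bounded}), and then invoking the injectivity result of Appendix~\ref{app:injectivity}. Concretely, we already know from Lemma~\ref{lem:consts-soft2_bounded} that $\sup_{\eps\in(0,1]}\big(|C_{1,\Gauge}^{0,\eps}|+|C_{2,\Gauge}^{0,\eps}|\big)<\infty$, so the only issue is uniqueness of subsequential limits. First I would argue by contradiction: suppose there are sequences $\eps_n,\tilde\eps_n\downarrow 0$ along which $(C_{1,\Gauge}^{0,\eps_n},C_{2,\Gauge}^{0,\eps_n})\to (C_1,C_2)$ and $(C_{1,\Gauge}^{0,\tilde\eps_n},C_{2,\Gauge}^{0,\tilde\eps_n})\to(\tilde C_1,\tilde C_2)$ with $(C_1,C_2)\neq(\tilde C_1,\tilde C_2)$. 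Choose $\sig\in\R$ so that $C\eqdef \sig^2 C_1+\sig^4 C_2$ and $\tilde C\eqdef \sig^2\tilde C_1+\sig^4\tilde C_2$ are distinct elements of $L_G(\mfg,\mfg)$, and fix $g_0\in\mfG^{0,\rho}$ with $C^{\oplus 3}\mrd g_0 g_0^{-1}\neq \tilde C^{\oplus 3}\mrd g_0 g_0^{-1}$ and arbitrary initial data $\bar x\in\state$.

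Next I would set up the relevant dynamics. Take $\mathring C^\eps_\A=0$ and $\mathring C^\eps_\h=0$, so that by \eqref{e:translated-consts} one has $\bar C^\eps_\mfh = C^{0,\eps}_{\Gauge}$, and consider $(\bar X^\eps,\bar g^\eps)$ solving \eqref{eq:renorm_bar_a}+(last line of \eqref{eq:SPDE_for_bar_A}) with initial data $(\bar x, g_0)$, which by Proposition~\ref{prop:bar_X_max_sols} converges in probability in $(\state\times\hat\mfG^{0,\rho})^\sol$ as $\eps\downarrow 0$ along each of the two subsequences. As in the proof of Lemma~\ref{lem:consts-soft2_bounded}, introduce the white-noise transformation $T^\eps(\xi)=\bar g^\eps\xi$ on $[0,\tau]$ and $T^\eps(\xi)=\xi$ off $[0,\tau]$ (well-defined and law-preserving since $\moll$ is non-anticipative and $\bar g^\eps$ is $\mbF$-adapted), so that on $[0,\tau]$, $\bar X^\eps$ coincides with $\tilde X^\eps$, where $(\tilde X^\eps,\tilde g^\eps)=\mathcal{A}^{\BPHZ,\scriptscriptstyle\lsol}_\sig[0, C^{0,\eps}_{\Gauge}, \bar x, g_0]$ in the notation of Remark~\ref{rem:injectivity_local} — that is, $\tilde X^\eps$ solves \eqref{eq:Xg-gaugecovar} but \emph{without} the term $\bar C^\eps_\mfh\,\mrd\tilde g\tilde g^{-1}$ since that constant vanishes with our choice $\mathring C^\eps_\h=0$; wait — more carefully, here we want to keep $\bar C^\eps_\mfh = C^{0,\eps}_{\Gauge}$ visible, so $\tilde X^\eps$ solves the SYM equation \eqref{e:SPDE-for-X} with $\mathring C^\eps_\A = 0$ driven by $\moll^\eps*T^\eps(\xi)$, which is equal in law to $\moll^\eps*\xi$, hence by Theorem~\ref{thm:local-exist-sigma} $\tilde X^\eps$ converges in law (and so in probability) in $\state^\sol$ to the solution of the deterministic equation $\partial_t\tilde X=\Delta\tilde X+\tilde X\partial\tilde X+\tilde X^3-(0^{\oplus3}\oplus m^2)\tilde X$, independently of the subsequence. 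On the other hand, by Remark~\ref{rem:role_of_sigma} and the small-$\sig$ structure, the genuine $\bar X^\eps$ along the two subsequences converges to the solutions of \eqref{eq:deterministic1} with $C^{0,0}_{\Gauge}$ replaced by $C$ and $\tilde C$ respectively, and these solutions differ for small times because $\mrd g_0 g_0^{-1}\neq 0$ and $C^{\oplus3}\mrd g_0 g_0^{-1}\neq\tilde C^{\oplus3}\mrd g_0 g_0^{-1}$; but they all equal $\tilde X^\eps$ (hence its common limit) on $[0,\tau]$, contradiction. To make the contradiction fully rigorous one passes through $(\state\times\hat\mfG^{0,\rho})^{\lsol}$ and uses \eqref{eq:observable_injectivity_local} from Lemma~\ref{lemma:injectivity_in_law} to cancel stochastic integrals and the common $\big(0^{\oplus 3}\oplus(-m^2)\big)X$ terms, deducing $C^{\oplus3} h\,1_{(0,\tau)}=\tilde C^{\oplus3}h\,1_{(0,\tau)}$ a.s., which is impossible since $\lim_{t\downarrow0}h(t)=\mrd g_0 g_0^{-1}$.

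For the mollifier-independence statement, I would run the identical argument with two different non-anticipative mollifiers $\moll$ and $\moll'$ along sequences $\eps\downarrow 0$: the key observation, exactly as in the last paragraph of the proof of Proposition~\ref{prop:consts-soft}, is that the limiting object against which we test — here the deterministic limit of $\tilde X^\eps$, or more precisely the limiting local solution $(\tau,X,U,h)\in(\state\times\hat\mfG^{0,\rho})^{\lsol}$ obtained from the limiting BPHZ model — is insensitive to the choice of mollifier (BPHZ convergence), and the a priori bound from Lemma~\ref{lem:consts-soft2_bounded} lets one choose $\tau$ uniformly. Thus any two subsequential limits of $(C^{0,\eps}_{1,\Gauge},C^{0,\eps}_{2,\Gauge})$ obtained with possibly different mollifiers must coincide, yielding a single limit $(C^{0,0}_{1,\Gauge},C^{0,0}_{2,\Gauge})$. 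The main obstacle I anticipate is bookkeeping: making sure that with the chosen constants $\mathring C^\eps_\A=\mathring C^\eps_\h=0$ the equation for $\bar X^\eps$ really reduces, after the noise transformation $T^\eps$, to an honest SYM equation \eqref{e:SPDE-for-X} whose $\eps\downarrow 0$ limit is mollifier- and subsequence-independent — i.e.\ checking that the only surviving $\eps$-dependent counterterm on the transformed side is $C^{0,\eps}_{\Gauge}$ multiplying $\mrd\bar g\bar g^{-1}$ (which is exactly what disappears under $\xi\mapsto T^\eps(\xi)$ up to the stopping time) — together with verifying that all the convergence invoked (Proposition~\ref{prop:bar_X_max_sols}, Theorem~\ref{thm:local-exist-sigma}, Lemma~\ref{lem:flow_for_rough_g}) applies in $(\state\times\hat\mfG^{0,\rho})^{\lsol}$ and is compatible with the injectivity statement of Lemma~\ref{lemma:injectivity_in_law}; none of this is deep, but it requires care to align all the hypotheses.
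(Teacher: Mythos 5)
Your overall strategy is the right one and matches the paper's: compare $\bar X^\eps$ with the process obtained by rotating the noise by the adapted gauge transformation, use that the rotation preserves the law of the white noise (here non-anticipativity of $\moll$ is essential), and conclude via the injectivity-in-law statement of Lemma~\ref{lemma:injectivity_in_law}; mollifier-independence then follows because the limiting BPHZ model, and hence the object against which one tests, does not depend on the non-anticipative mollifier. However, your choice of constants derails the middle of the argument. Taking $\mathring C^\eps_\h=0$ gives $\bar C^\eps_\mfh=C^{0,\eps}_{\Gauge}$, so after the noise rotation the term $C^{0,\eps}_{\Gauge}\,(\mrd\tilde g)\tilde g^{-1}$ does \emph{not} disappear: $\tilde X^\eps$ solves the additive-noise coupled system \eqref{e:SPDE-for-X_with_g2}+\eqref{eq:SPDE_for_g_coupled2} with $c^\eps=C^{0,\eps}_{\Gauge}$, not the plain SYM equation \eqref{e:SPDE-for-X}. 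Worse, with your choice the \emph{untransformed} limits along the two subsequences are both $\mathcal{A}^{\BPHZ}_\sig[0,0,\bar x,g_0]$ (the $\mathring C_\mfh$-slot is $0$ for both), so any contradiction would have to be extracted from the additive-noise side — and Lemma~\ref{lemma:injectivity_in_law} is stated for $\mathcal{A}^{\BPHZ}_\sig$, i.e.\ for the limits of \eqref{eq:renorm_bar_a}/\eqref{eq:renorm_b}, not for the limit of \eqref{e:SPDE-for-X_with_g2}, which is a genuinely different (additive-noise) system. The paper's choice $\mathring C^\eps_\h=-C^{0,\eps}_{\Gauge}$ is what makes everything click: then $\bar C^\eps_\mfh=0$, the rotated system is \eqref{e:SPDE-for-X_with_g2} with $c^\eps=0$ (so its limit exists and is subsequence-independent by Lemma~\ref{lem:flow_for_rough_g2}), while by Proposition~\ref{prop:bar_X_max_sols} the untransformed limits are $\mathcal{A}^{\BPHZ}_\sig[0,-D,\bar x,g_0]$ and $\mathcal{A}^{\BPHZ}_\sig[0,-\tilde D,\bar x,g_0]$ with $D\neq\tilde D$; both being equal in law to the same object contradicts Lemma~\ref{lemma:injectivity_in_law} directly.

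A second, related slip: your appeal to the deterministic limits \eqref{eq:deterministic1}--\eqref{eq:deterministic2} via ``the small-$\sig$ structure'' is out of place here. That comparison is only valid in the regime $\sig^\eps\downarrow 0$, which is the mechanism of the \emph{boundedness} Lemma~\ref{lem:consts-soft2_bounded} (where the constants are assumed to diverge so that a vanishing $\sig^\eps$ can be tuned to produce a nonzero limit). In the present proposition $\sig$ is fixed and nonzero, the constants are already bounded, and the limits are genuinely stochastic; the only tool available to separate them is the injectivity lemma applied to the stochastic limits, exactly as you do in your final ``to make this fully rigorous'' sentence. So the correct proof is obtained from your proposal by deleting the deterministic detour and replacing $\mathring C^\eps_\h=0$ by $\mathring C^\eps_\h=-C^{0,\eps}_{\Gauge}$ throughout, after which the bookkeeping in \eqref{e:translated-consts} puts the subsequence dependence precisely in the $\mathring C_\mfh$-argument of $\mathcal{A}^{\BPHZ}_\sig$, where Lemma~\ref{lemma:injectivity_in_law} can detect it via $\lim_{n}\E[O_n(\cdot)]=-D^{\oplus 3}\mrd g_0\,g_0^{-1}$ versus $-\tilde D^{\oplus 3}\mrd g_0\,g_0^{-1}$.
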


\begin{proof} 
We first prove the limit as $\eps \downarrow 0$ for a fixed choice of mollifier. 
Recall that by  Lemma~\ref{lem:consts-soft2_bounded} we have $\sup_{\e\in(0,1]} (|C_{1,\Gauge}^{0,\eps}|+|C_{2,\Gauge}^{0,\eps}|) < \infty$.
To prove convergence we again argue by contradiction and suppose that we have sequences $\eps_n$ and $\tilde\eps_n$ going to $0$ along which we have the distinct limits
\begin{equs}[eq:subsequential2]
{}(C_{1,\Gauge}^{0,\eps_n},C_{2,\Gauge}^{0,\eps_n})
& \rightarrow 
(C_{1,\Gauge}^{0,0},C_{2,\Gauge}^{0,0})\\
\text{and}
\quad
(C_{1,\Gauge}^{0,\tilde\eps_n},C_{2,\Gauge}^{0,\tilde\eps_n})
& \rightarrow 
(\tilde{C}_{1,\Gauge}^{0,0},\tilde{C}_{2,\Gauge}^{0,0})\;.
\end{equs}
We fix $\sig \in \R$ such that, if we set $D = \sig^{2} C_{1,\Gauge}^{0,0} + \sig^{4} C_{2,\Gauge}^{0,0}$ and  $\tilde{D} = \sig^{2} \tilde{C}_{1,\Gauge}^{0,0} + \sig^{4} \tilde{C}_{2,\Gauge}^{0,0}$, one has $D \not = \tilde{D}$.  
Fix $\bar{g}_{0} \in \mfG^{0,\rho}$ such that \begin{equ}\label{eq:diff_init_conds}
D^{\oplus 3} \mrd \bar{g}_{0} \bar{g}_{0}^{-1} \not = \tilde{D}^{\oplus 3} \mrd \bar{g}_{0} \bar{g}_{0}^{-1}
\end{equ}
along with some arbitrary $\bar{X}_{0} \in \state$.

Again, for $\eps > 0$, and this time making $\eps$-dependence explicit,
let $\bar{X}^\eps,\bar{g}^\eps$ be the maximal solution to \eqref{eq:renorm_bar_a} with $\sig^{\eps} = \sig$, $\mathring{C}^{\eps}_{\h} = -\sig^{2} C_{1,\Gauge}^{0,\eps} - \sig^{4} C_{2,\Gauge}^{0,\eps}$,
and $\mathring{C}^{\eps}_{\A} = 0$ started from initial data $(\bar{X}_{0},\bar{g}_0)$.
By Proposition~\ref{prop:bar_X_max_sols} we have the convergence in probability
\begin{equs}
 (\bar{X}^{\eps_n},\bar{U}^{\eps_n},\bar{h}^{\eps_n}) &\rightarrow \mathcal{A}_{\sig}^{\BPHZ}[0, -D,X_0, g_0]\\
\text{and}
\quad  
(\bar{X}^{\tilde\eps_n},\bar{U}^{\tilde\eps_n},\bar{h}^{\tilde\eps_n}) & \rightarrow \mathcal{A}_{\sig}^{\BPHZ}[0, -\tilde{D},X_0, g_0]\;.
 \end{equs}
Here $\bar{U}^{\eps}, \bar{h}^{\eps}$ are obtained from $\bar g^{\eps}$ as in \eqref{eq:h_and_U_def} and $\mathcal{A}_{\sig}^{\BPHZ}$ is the BPHZ solution map given in \eqref{equ:constants_to_solution_map}.

Now, let $(X^{\eps},g^{\eps})$ be the maximal solution to \eqref{e:SPDE-for-X_with_g2}--\eqref{eq:SPDE_for_g_coupled2}  with $\mathring{C}^{\eps}_{\A} = 0$ and $c^{\eps} = 0$ started from initial data $(\bar{X}_{0},\bar{g}_0)$. 
Due to our choice of renormalisation constants, $(X^{\eps},g^{\eps})$ is equal  to $(\bar{X}^{\eps},\bar{g}^{\eps})$ in law. 
Moreover, by Lemma~\ref{lem:flow_for_rough_g2}, $(X^{\eps},g^{\eps})$ converges in probability as $\eps \downarrow 0$ to a limit $(X,g)$. 

It follows that $\mathcal{A}_{\sig}^{\BPHZ}[0, -D,X_0, g_0]\eqlaw \mathcal{A}_{\sig}^{\BPHZ}[0, -\tilde{D},X_0, g_0]$ but this combined with \eqref{eq:diff_init_conds} gives a contradiction with Lemma~\ref{lemma:injectivity_in_law} since 
\begin{equs}
\lim_{n \rightarrow \infty} \E \big[ O_{n} (\mathcal{A}_{\sig}^{\BPHZ}[0, -D,X_0, g_0]) \big] &= -D^{\oplus 3} \mrd g_{0} g_{0}^{-1}\\
\text{and} \quad \lim_{n \rightarrow \infty} \E \big[ O_{n} ( \mathcal{A}_{\sig}^{\BPHZ}[0, -\tilde{D},X_0, g_0])  \big] 
&= -\tilde{D}^{\oplus 3} \mrd g_{0} g_{0}^{-1}\;.
 \end{equs}
The above argument finishes the proof of the first statement. 
The second statement, regarding independence of the limit with respect to our choice of non-anticipative mollifier $\chi$, can be also be proven using the above argument
\dash the relation between the first and second statement of this proposition mirrors what is explained in the final paragraph
of the proof of Proposition~\ref{prop:consts-soft2} \dash again the limit $(X,g)$ of $(X^{\eps},g^{\eps})$ above does not depend on the choice of mollifier. 
Thus we can repeat the argument above for two sequences obtained by taking $\eps \downarrow 0$ for two different non-anticipative mollifiers. 
\end{proof}

\begin{proof}[of Theorem~\ref{thm:gauge_covar}]\label{proof:gauge_covar}
Part~\ref{pt:delta_to_zero} was already proven in Proposition~\ref{prop:SPDEs_conv_zero_space-time}.

We now prove part~\ref{pt:gauge_covar}. We will apply Proposition~\ref{prop:SPDEs_conv_zero_state} (together with Proposition~\ref{prop:bar_X_max_sols}
and Corollary~\ref{cor:Y_system_conv}) with $\sig_\eps=\sig=1$.
Note that Proposition~\ref{prop:SPDEs_conv_zero_state} holds for any choice of converging sequence $(\mathring{C}_{\A}^\eps, \mathring{C}_{\h}^\eps)$.
We now find the claimed $\eps$-independent $\check{C}$, so that for any $\mathring{C}_{\A}$ as in Theorem~\ref{thm:gauge_covar},
we can choose suitable $(\mathring{C}_{\A}^\eps, \mathring{C}_{\h}^\eps)$ for the systems in Proposition~\ref{prop:SPDEs_conv_zero_state}
so that 
the two systems~\eqref{eq:SPDE_for_B}+\eqref{eq:SPDE_for_bar_A} being compared in Theorem~\ref{thm:gauge_covar}
and the two systems~\eqref{eq:renorm_b}+\eqref{eq:renorm_bar_a}
being compared in Proposition~\ref{prop:SPDEs_conv_zero_state} match, at least up to order $o(1)$.
To match~\eqref{eq:SPDE_for_B} and~\eqref{eq:renorm_b} we require
\begin{equ}[e:match-consts-goal_B]
C^\eps_{\YM} + \mathring C_\A = C^\eps_{\YM} + \mathring C^\eps_\A\;,
\qquad
C^\eps_{\YM} + \mathring{C}_{\A} =\mathring{C}_{\h}^\eps+ C^\eps_{\Gauge} \;,
\end{equ}
while to match~\eqref{eq:SPDE_for_bar_A} and~\eqref{eq:renorm_bar_a} to order $o(1)$ we require
\begin{equ}[e:match-consts-goal_A]
C^\eps_{\YM} + \mathring C_\A = C^\eps_{\YM} + \mathring C^\eps_\A\;,
\qquad
\mathring{C}_{\A} - \check{C} = \mathring{C}_{\h}^\eps + C^{0,\eps}_{\Gauge} + o(1)\;.
\end{equ}
To satisfy the first identities in~\eqref{e:match-consts-goal_B}--\eqref{e:match-consts-goal_A}, we take $\mathring{C}_{\A}^\eps = \mathring{C}_{\A}$.

For the second identities,
by Proposition~\ref{prop:consts-soft}, one has convergence of 
$C^{\eps}_{\YM}  - C^{\eps}_{\Gauge}$.
We therefore set
\begin{equ}
\mathring{C}_{\h}^\eps \eqdef 
 C^{\eps}_{\YM}  - C^{\eps}_{\Gauge} + \mathring{C}_{\A} \;,
\end{equ}
so that the second identity in~\eqref{e:match-consts-goal_B} is satisfied and $\lim_{\eps\downarrow0}\mathring{C}_{\h}^\eps $ exists.
Furthermore, by Proposition~\ref{prop:consts-soft2} and the non-anticipative assumption 
on $\moll$, one has convergence of $C^{0,\eps}_{\Gauge}$.
We therefore set
\begin{equ}[e:def-C-bar]
\check{C}  \eqdef  \lim_{\eps\downarrow 0}\;
 (C^{\eps}_{\Gauge} -C^{0,\eps}_{\Gauge}  - C^{\eps}_{\YM} )\;,
\end{equ}
so that the second identity in~\eqref{e:match-consts-goal_A} is satisfied.
Since the difference between the constants in the two systems~\eqref{eq:SPDE_for_bar_A} and~\eqref{eq:renorm_bar_a} vanishes,
stability of the fixed point problem~\eqref{e:fix-pt-multi-Xbar} with respect to $\mathring C_\mfh$
 implies that the two systems converge in probability to the same limit.

The claimed convergence in probability in part~\ref{pt:gauge_covar} therefore follows from Proposition~\ref{prop:SPDEs_conv_zero_state} with the above choice of $(\mathring{C}_{\A}^\eps, \mathring{C}_{\h}^\eps)$
together with Proposition~\ref{prop:bar_X_max_sols}
and Corollary~\ref{cor:Y_system_conv}.
Indeed, Proposition~\ref{prop:bar_X_max_sols}
and Corollary~\ref{cor:Y_system_conv} imply that the two systems converge separately as maximal solutions in probability in $(\state\times\tilde{\mfG}^{0,\rho})^\sol$ as $\eps\downarrow 0$,
while Proposition~\ref{prop:SPDEs_conv_zero_state} implies the two limits are equal on an interval $[0,\tau]$ where $\tau>0$ is an $\mbF$-stopping time
bounded stochastically from below by a distribution depending only on the size of the initial condition.
Since both limits are (time-homogenous) $(\state\times\tilde{\mfG}^{0,\rho})$-valued strong Markov process with respect to $\mbF$, it follows that they are equal almost surely as $(\state\times\tilde{\mfG}^{0,\rho})^\sol$-valued random variables.

Furthermore, $\check C$ is unique
by the injectivity in law given by  Lemma~\ref{lemma:injectivity_in_law} \dash any alternative choice of $\check{C}$ would mean that each of the each of the two systems again converge to solutions of the form
$\mathcal{A}_{1}^{\BPHZ}[\mathring{C}_{\A},\mathring{C}_{\h},x,g_0]$, but now with distinct choices of $(\mathring{C}_{\A},\mathring{C}_{\h})$ for each of the systems.
This completes the proof of part~\ref{pt:gauge_covar}.

For part~\ref{pt:indep_moll} of the theorem, observe that the right hand side of \eqref{e:def-C-bar} is independent of  $\mathring{C}_{\A}$ thanks to Remark~\ref{rem:bare-m-renorm_2} and independent of our specific choice of non-anticipative $\chi$ thanks to the second statements in Propositions~\ref{prop:consts-soft}+\ref{prop:consts-soft2}. 

The final part~\ref{pt:final_gauge_covar} of the theorem is mostly a reformulation of the other parts of the theorem. 
Note that  $(g \act X, U, h)$ is almost surely equal to the maximal solution $(Y,U,h) = (B,\Psi,U,h)$ (as given in part~\ref{pt:gauge_covar} of the theorem) started from the initial data $(g_0 \act x, \brho(g_0), \mrd g_0 g_0^{-1})$. 
On the other hand, if $(\bar{X},\bar{U},\bar{h})$ (again, the limiting solution given in part~\ref{pt:gauge_covar}) is started from initial data  $(g_0 \act x, \brho(g_0), \mrd g_0 g_0^{-1})$  then we have $(Y,U,h) = (\bar{X},\bar{U},\bar{h})$ almost surely, in particular $(Y,U,h) \eqlaw (\bar{X},\bar{U},\bar{h})$. 

Finally, we have that $(\bar{X},\bar{U},\bar{h})   \eqlaw (\tilde{X}, \tilde{U}, \tilde{h})$ for all choices of  $(x,g_0) \in \state \times \mfG^{0,\rho}$ if and only if $\mathring{C}_{\A} = \check{C}$. 
To see that  $\mathring{C}_{\A} = \check{C}$ is a sufficient condition for this equality in law, note that, for $\eps > 0$ and non-anticipative mollifiers,
we have equality in law for the solutions of \eqref{eq:SPDE_for_bar_A} and those of \eqref{e:SPDE-for-X_with_g2}+\eqref{eq:SPDE_for_g_coupled2} (with $c^{\eps} = 0$). 

To see that  $\mathring{C}_{\A} = \check{C}$ is a necessary condition
for $(\bar{X},\bar{U},\bar{h})   \eqlaw (\tilde{X}, \tilde{U}, \tilde{h})$, note that, by the injectivity in law given by Lemma~\ref{lemma:injectivity_in_law}, there exists a choice of $(g_0 \act x, g_0) \in \state \times \mfG^{0,\rho}$ for which limits of the law of $(\bar{X},\bar{U},\bar{h})$  started from  $(g_0 \act x, g_0)$ has a different law then that of the limit of the maximal solutions to a modification of \eqref{eq:SPDE_for_bar_A} where one again starts from $(g_0 \act x, g_0)$ but drops the term $(\mathring{C}_{\A} - \check{C})(\partial_i \bar{g} )\bar{g}^{-1}$. 
However, this latter law is equal to the law of $(\tilde{X}, \tilde{U}, \tilde{h})$ (again, since the dynamics only differ by a rotation of the noise by an adapted gauge transformation). 
This shows that if $\mathring{C}_{\A} \not = \check{C}$ then there exists initial data for which $(\bar{X},\bar{U},\bar{h})   \not \eqlaw (\tilde{X}, \tilde{U}, \tilde{h})$.
\end{proof}

\section{Markov process on gauge orbits}
\label{sec:Markov}

We finally turn to the existence and uniqueness of a Markov process on the orbit space $\mfO$ associated to the SPDE~\eqref{eq:SPDE_for_A}, which is the second part of Theorem~\ref{theo:meta}.

We first adapt to our setting the notion of a generative probability measure
from~\cite{CCHS2d} in order to specify the way in which a Markov process is canonical.
Our definition differs slightly from that of~\cite{CCHS2d} due to the different structure of the quotient space.

\begin{definition}\label{def:solves_SYMH}
Unless otherwise stated, a \emph{white noise} $\xi=(\xi_1,\xi_2, \xi_3,\zeta)$ is understood to be an $E=\mfg^3\oplus\higgsvec$-valued white noise on $\R\times\T^3$
defined on a probability space $(\Omega^{\noise},\mcF, \P)$.
A filtration $\mbF=(\mcF_t)_{t \geq 0}$ is said to be \emph{admissible} for $\xi$ if $\xi$ is adapted to $\mbF$ and $\xi\vert_{[t,\infty)}$ is independent of $\mcF_t$ for all $t \geq 0$.
By the \emph{SYMH solution map} (driven by $\xi$) we mean the random map
\begin{equ}
\Lambda \colon \{(s,t)\,:\,0\leq s\leq t<\infty\}\times \hat\state \to L^0(\Omega^\noise;\hat \state)
\end{equ}
for which $t\mapsto \Lambda_{s,t}(x)$ is the $\eps\downarrow0$ limit in probability in $\state^\sol$ of the solutions to~\eqref{eq:SPDE_for_A} with initial condition $x$ at initial time $s$, and
with a mollifier $\moll$ and operators $(C^{\eps}_{\A},C^{\eps}_{\Phi})$ defined by~\eqref{e:SYM_constants} with  $\mathring C_\A=\check C$ and $\sig^\eps=1$, where $\check C$ is the unique $\moll$-independent operator in
Theorem~\ref{thm:gauge_covar}\ref{pt:indep_moll}.
\end{definition}

We will frequently say that $X$ solves SYMH driven by $\xi$ with initial condition $x\in\hat\state$ and time $s\geq 0$ to mean that $X(t)=\Lambda_{s,t}(x)$.
Remark that $\Lambda_{s,s}(x)=x$ by construction.
We will drop the reference to $s$ whenever $s=0$.

For fixed $s,t,x$, note that $\Lambda_{s,t}(x)\colon \Omega^\noise \to \hat\state$ is a random variable and we make no claims about the $\P$-a.s. continuity in $x$
(the difficulty here being the extra stochastic objects in Lemma \ref{lem:PX0Psi-prob} that need to be constructed from the initial data).
Nonetheless, $t\mapsto \Lambda_{s,t}(x)$ is $\P$-a.s. continuous since it is $\P$-a.s. in $\state^\sol$.
Moreover, $\state\ni x\mapsto \Lambda_{s,s+\cdot}(x) \in L^0(\Omega^\noise;\hat\state)$ is continuous in probability for any $s\geq 0$.

Furthermore, for an admissible filtration $\mbF$ and an $\mbF$-stopping time $\tau$, if $X_0$ is an $\mbF_\tau$-measurable random variable, then $t\mapsto \Lambda_{\tau,\tau+t}(X_0)$ is well-defined as a $\state^\sol$-valued random variable and is adapted to the filtration $\{\mcF_{\tau+t}\}_{t\geq 0}$.
Indeed, this follows from independence of $\tilde{\Psi}_{\eps} $ and $X_0$ in the notation of Lemma \ref{lem:PX0Psi-prob}.
Furthermore, we can readily verify the flow property
\begin{equ}[eq:flow_prop]
\Lambda_{t,u}(\Lambda_{s,t}(X_0)) = \Lambda_{s,u}(X_0) \quad \textnormal{a.s.}
\end{equ}
for any $\mbF$-stopping times $s\leq t\leq u$ and $\mbF_s$-measurable $X_0$.
Indeed,~\eqref{eq:flow_prop} follows from the analogous identity at the level of the mollification $\xi^\eps$ combined with continuity of $x\mapsto \Lambda_{s,t}(x)$ in probability.

For a metric space $F$, denote by $D(\R_+,F)$
the usual Skorokhod space of c{\`a}dl{\`a}g functions.
We extend the gauge equivalent relation $\sim$ to $\hat\state=\state\sqcup\{\skull\}$
by imposing that $X\sim \skull\Leftrightarrow X=\skull$.

Recall that
we have fixed 
the parameters $\eta,\beta,\delta,\alpha,\theta$
in the way described in the beginning of Section~\ref{sec:renorm-A} 
which is required to define the space
$(\state,\Sigma)\equiv (\state_{\eta,\beta,\delta,\alpha,\theta},\Sigma_{\eta,\beta,\delta,\alpha,\theta})$ as in Definition~\ref{def:state}.

Consider $\bar\eta\in(\eta,-\frac12)$,
$\bar\delta\in(\frac34,\delta$) such that $\beta<-2(1-\bar\delta)-$,
and $\bar\alpha\in(\alpha,\frac12)$.
Note that $(\bar\eta,\beta,\bar\delta)$ also satisfies~\eqref{eq:CI}. 
Denote $\bar\Sigma\equiv \Sigma_{\bar\eta,\beta,\bar\delta,\bar\alpha,\theta}$.

\begin{definition}\label{def:gen}
A probability measure $\mu$ on $D(\R_+,\hat\state)$ is called \emph{generative} if 
there exists a filtered probability space $(\Omega^{\noise},\mcF,(\mcF_t)_{t \geq 0}, \P)$
supporting a white noise $\xi$
for which the filtration $\mbF=(\mcF_t)_{t \geq 0}$ is admissible
and a $D(\R_+,\hat\state)$-valued random variable $X$ on $\Omega^{\noise}$ with the following properties.
\begin{enumerate}
\item\label{pt:time_zero} The law of $X$ is $\mu$ and $X(0)$ is $\mcF_0$-measurable.

\item\label{pt:dynamics} Let $\Lambda$ denote the SYMH solution map.
There exists an non-decreasing sequence of 
$\mbF$-stopping times $(\varsigma_j)_{j=0}^\infty$,
such that a.s. $\varsigma_0=0$ and, for all $j \geq 0$,
\begin{enumerate}
\item\label{pt:solves_SPDE} one has $X(t) = \Lambda_{\varsigma_j,t}(X(\varsigma_j))$ for all $t\in[\varsigma_j,\varsigma_{j+1})$, and

\item\label{pt:gauge_at_jumps} the random variable $X(\varsigma_{j+1})$ is $\mcF_{\varsigma_{j+1}}$-measurable and
\begin{equ}
X(\varsigma_{j+1}) \sim \Lambda_{\varsigma_j,\varsigma_{j+1}}(X(\varsigma_j))\;.
\end{equ}
\end{enumerate} 

\item\label{pt:honest_blow_up} Let $T^* \eqdef \inf\{t \geq 0 \,:\, X(t) = \skull\}$.
Then a.s. $\lim_{j\to\infty}\varsigma_j=T^*$.
Furthermore, on the event $\{T^*<\infty\}$,
almost surely
\begin{itemize}
\item $X\equiv \skull$ on $[T^*,\infty)$, and
\item one has
\begin{equ}\label{eq:honest_blow_up}
\lim_{t\nearrow T^*} \inf_{\substack{Y\in\state\\Y\sim X(t)}} \bar\Sigma(Y,0) = \infty\;.
\end{equ}
\end{itemize}
\end{enumerate}
If there exists $x\in\hat\state$ such that 
$X(0)=x$ almost surely, then we call $x$ the initial condition of $\mu$.
\end{definition}

\begin{remark}
The meaning of item~\ref{pt:dynamics} is that, before blow-up, the dynamic $X$ runs on the interval $[\varsigma_j,\varsigma_{j+1})$ according to SYMH, after which it jumps to a new point in its gauge orbit and restarts SYMH at time $\varsigma_{j+1}$.
The main point of item~\ref{pt:honest_blow_up}
is that the whole gauge orbit $[X(t)]$ ``blows up''
at time $T^*$ provided we measure ``size'' in a slightly stronger sense than $\Sigma$.
We use $\bar\Sigma$ in~\eqref{eq:honest_blow_up} due to technical issues with measurable selections;
using $\Sigma$ in~\eqref{eq:honest_blow_up} would be more natural,
but we do not know if Theorem~\ref{thm:Markov_process}\ref{pt:gen_exists} below remains true in this case. The reason why this is nevertheless sensible is that even if we start with
an initial condition in $\CS$ for which $\bar \Sigma$ is infinite, it is immediately finite
so that the infimum in \eqref{eq:honest_blow_up} is almost surely finite for all $t \in (0,T^*)$.
\end{remark}

\begin{remark}
In the setting of Definition~\ref{def:gen},
if $Z\colon\Omega^{\noise}\to\hat\state$ is $\mcF_s$-measurable, 
then $t\mapsto \Lambda_{s,t}(Z)$ is adapted to $(\mcF_t)_{t \geq s}$.
Hence $X$ is adapted to $\mbF$, $T^*$ is an $\mbF$-stopping time,
the event~\eqref{eq:honest_blow_up} is $\mcF_{T^*}$-measurable, and $T^*$ is predictable.
\end{remark}

Define the quotient space $\hat\mfO\eqdef \hat\state/{\sim}$,
which we can readily identify, as a set, with $\mfO\sqcup\{\skull\}$.
Note that $\hat\mfO$ is not Hausdorff,
but there is a simple description of its topology in terms of the (completely Hausdorff) topology of $\mfO$.
Indeed, $O\subset\hat\mfO$ is open if and only if either ($O=\hat\mfO$) or ($\skull\notin O$ and $O\subset\mfO$ is open),
see Proposition~\ref{prop:hat_mfO_top}. 

Let $\pi\colon\hat\state\to\hat\mfO$ denote the canonical projection map.
If $\mu$ is generative, then thanks to Definition~\ref{def:gen},
the pushforward $\pi_*\mu$ is a probability measure
on $\CC(\R_+,\hat\mfO)$, instead of just on $D(\R_+,\hat\mfO)$. Our main result
regarding the construction of a suitable Markov process on $\mfO$ is as follows.

\begin{theorem}\label{thm:Markov_process}
\begin{enumerate}[label=(\roman*)]
\item \label{pt:gen_exists} For every $x\in\hat\state$, there exists a generative 
probability measure $\mu$ with initial condition $x$.

\item \label{pt:unique_Markov}
For every $z\in\hat\mfO$, there exists a (necessarily unique) probability measure $\P^z$ on $\CC(\R_+,\hat\mfO)$
such that, for every $x\in z$
and generative probability measure $\mu$ with initial condition $x$,
$\pi_*\mu=\P^z$.
Furthermore, $\{\P^z\}_{z\in\hat\mfO}$ define the transition functions of a time homogeneous Markov process on $\hat\mfO$.
\end{enumerate}
\end{theorem}

The proof of Theorem~\ref{thm:Markov_process} is postponed to Section~\ref{sec:MarkovProcess}.
Before that, we provide a number of results showing how to exploit the gauge covariance
obtained in Theorem~\ref{thm:gauge_covar} in this setting.

\subsection{Coupling SYMH with gauge-equivalent initial conditions}
\label{subsec:coupling}

Throughout this subsection, we fix a white noise $\xi$ defined on a probability space $(\Omega^{\noise},\CF,\P)$ and let $\mbF=(\mcF_t)_{t\geq0}$\label{pageref:mbF2} denote its canonical filtration.
Let furthermore $x,\bar x\in \state$
and $X=(A,\Phi)\in\state^\sol$ solve SYMH driven by $\xi$ with initial condition $X(0)=x$.

If $\bar x = x^{g(0)}$ for some $g(0)\in\mfG^1$,
it follows readily from Theorem~\ref{thm:gauge_covar}
that $\bar X \eqdef X^g$ solves SYMH driven by $g\xi = (\Ad_g\xi_1,\Ad_{g}\xi_2,\Ad_g\xi_3,g\zeta)$
with initial condition $\bar x$,  
where $g$ is the $\eps\downarrow0$ limit of the solutions to~\eqref{eq:SPDE_for_g_wrt_A}
with initial condition $g(0)$ (and where $(A,\Phi)=X$).
The point of this subsection is to prove that if only $\bar x \sim x$,
then a similar statement still holds true.
Namely, we have the following result.

\begin{proposition}\label{prop:gauge_covar_couple}
Suppose $x\sim\bar x$.
Then, on the same probability space $(\Omega^{\noise},\CF,\P)$,
there exists a process $(\bar X,g)$ and a white noise $\bar \xi$, both adapted to $\mbF$, 
such that $\bar X$ solves SYMH driven by $\bar \xi$ with initial condition $\bar x$
and such that $g(t)\in\mfG^{\frac32-}$ and $\bar X(t) = X(t)^{g(t)}$
for all $t>0$ inside the interval of existence of $(\bar X, g)\in\state\times \mfG^{\frac32-}$.
Furthermore, $g$ cannot blow up in $\mfG^{\frac32-}$ before $\Sigma(X)+\Sigma(\bar X)$ does.
\end{proposition}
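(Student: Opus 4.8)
The plan is to lift the gauge covariance result Theorem~\ref{thm:gauge_covar}\ref{pt:final_gauge_covar}, which is stated for initial data related by an \emph{actual} gauge transformation $g_0 \in \mfG^{0,\rho}$, to the case where $x \sim \bar x$ only in the sense of Definition~\ref{def:ymhflow}, i.e.\ $\tilde\ymhflow(x) = \tilde\ymhflow(\bar x)$. The key geometric input is Proposition~\ref{prop:back_unique}: since $x \sim \bar x$ in $\CS$ and both lie in $\state \hookrightarrow \init$, we know the YMH flows agree modulo gauge at all positive times. The strategy is therefore to run the deterministic DeTurck--YMH flow for a short time $s>0$, at which point $\ymhflow_s(x)$ and $\ymhflow_s(\bar x)$ are \emph{smooth} and genuinely gauge-equivalent (via some smooth $g_s$), apply the gauge-covariant coupling of Theorem~\ref{thm:gauge_covar} starting from time $s$, and then let $s\downarrow 0$, using the continuity properties of $\ymhflow_t$ near $t=0$ on $\CS$ (Theorem~\ref{thm:main_state_space}\ref{pt:gauge_equiv_extend}, Proposition~\ref{prop:YM_flow_minus_heat}) together with the estimate $|g|_{\CC^\nu}\lesssim_{\Sigma(X)+\Sigma(X^g)} 1$ from Theorem~\ref{thm:g_bound}.

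Concretely, first I would observe that, by Definition~\ref{def:ymhflow} and the hypothesis $x\sim\bar x$, one has $[\ymhflow_t(x)] = [\ymhflow_t(\bar x)]$ for all $t\in(0,T_x\wedge T_{\bar x})$; by Proposition~\ref{prop:back_unique} (more precisely its smooth version together with Lemma~\ref{lem:holonomy_estimates}) there is a unique smooth path $t\mapsto \CH^x_t(\cdot)$ of gauge transformations realising this, and in particular $\ymhflow_t(x)^{k_t} = \ymhflow_t(\bar x)$ for a suitable $k_t\in\mfG^{\frac32-}$. The point of Theorem~\ref{thm:g_bound}, applied to $X = \ymhflow_t(x)$ (which lies in $\state$ with $\Sigma$-norm controlled by $\Sigma(x)$ via Proposition~\ref{prop:YM_flow_minus_heat} and the definition of the metric) and $X^{k_t} = \ymhflow_t(\bar x)$, is that $|k_t|_{\CC^\nu} \lesssim_{\Sigma(x)+\Sigma(\bar x)} 1$ uniformly in small $t$, so that a subsequential limit $k_0 := \lim_{t\downarrow 0} k_t$ exists in $\mfG^{0,\rho'}$ for $\rho' < \nu$, realising $x^{k_0} = \bar x$ in $\Omega^{\rho'}$ — in fact one recovers that $x$ and $\bar x$ are gauge-equivalent in $\Omega^{\rho}$ in the sense of Section~\ref{subsec:backwards_unique}, which is what we ultimately need.

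Having established that $\bar x = x^{g_0}$ for some $g_0 \in \mfG^{0,\rho}$, I would then invoke Theorem~\ref{thm:gauge_covar}\ref{pt:final_gauge_covar} directly: with $\mathring C_\A = \check C$ (the canonical choice defining SYMH in Definition~\ref{def:solves_SYMH}), writing $(X,g) = \mathcal A^{\BPHZ}[\check C, x, g_0]$ for the coupled $(A,\Phi,g)$ system of Appendix~\ref{app:evolving_rough_g} (equivalently, $g$ is the $\eps\downarrow 0$ limit of solutions to~\eqref{eq:SPDE_for_g_wrt_A} with $A$ the first component of $X$ and initial condition $g_0$), the theorem gives $g\act X \eqlaw \tilde X$ where $\tilde X$ solves SYMH with initial condition $g_0\act x = \bar x$. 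To upgrade this from equality in law to an \emph{a.s.} coupling on the \emph{same} probability space, I set $\bar X := X^g = g\act X$ and $\bar\xi := g\xi$ (the pointwise rotated noise, well-defined in the spaces where convergence takes place as noted after~\eqref{eq:SPDE_for_g_wrt_A}); since $g$ is adapted to $\mbF$ and acts orthogonally on $E$, It\^o isometry shows $\bar\xi$ is again an $\mbF$-adapted white noise, and the gauge covariance of the (renormalised, $\eps>0$) equation~\eqref{eq:SPDE_for_B}, passed to the limit, shows $\bar X$ solves SYMH driven by $\bar\xi$ with initial condition $\bar x$. The regularity $g(t)\in\mfG^{\frac32-}$ and the identity $\bar X(t) = X(t)^{g(t)}$ for positive $t$, as well as the statement that $g$ cannot blow up in $\mfG^{\frac32-}$ before $\Sigma(X)+\Sigma(\bar X)$ does, follow from the well-posedness of the coupled system in Appendix~\ref{app:evolving_rough_g} combined once more with Theorem~\ref{thm:g_bound}: the $\CC^\nu$-norm of $g(t)$ (hence, after the immediate parabolic smoothing from solving~\eqref{e:final_hU}-type equations, its $\mfG^{\frac32-}$ norm) is controlled by $\Sigma(X(t))+\Sigma(\bar X(t))$, so a blow-up of $g$ forces a blow-up of $\Sigma(X)+\Sigma(\bar X)$.

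The main obstacle is the passage $s\downarrow 0$ in the first part — i.e.\ promoting $x\sim\bar x$ in the sense of the flow to genuine gauge-equivalence $\bar x = x^{g_0}$ with $g_0$ of sufficient regularity — and ensuring this is done \emph{measurably} and compatibly with the stochastic coupling so that one can start the $(\bar X,g)$ system from $(x,\bar x)$ rather than from $(\ymhflow_s(x),\ymhflow_s(\bar x))$. The compactness argument via Theorem~\ref{thm:g_bound} yields existence of $g_0$ but not obviously its uniqueness or measurable dependence; however, since $g_0$ is needed only as initial data and the conclusion concerns $(\bar X, g)$ which is then governed by a well-posed (in the regularity-structures sense) coupled system depending continuously on $g_0$, one can fix any measurable selection of $g_0$ and the resulting coupled process is unaffected by the choice up to the relations asserted (as in the uniqueness discussion at the end of the proof of Theorem~\ref{thm:gauge_covar}). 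A secondary technical point is that Theorem~\ref{thm:gauge_covar} is stated for $g_0\in\mfG^{0,\rho}$ with $\rho\in(\tfrac12,1)$ fixed, while the natural regularity emerging from Theorem~\ref{thm:g_bound} is $\CC^\nu$ with $\nu=\tfrac12-$; one checks $\nu>\rho$ is \emph{not} automatic, so I would instead feed $g_0$ into the system at the level $\mfG^{0,\rho}$ using that $\CC^{\nu}\hookrightarrow\mfG^{0,\rho}$ for suitable $\rho<\nu$, which is exactly the regime in which the coupled well-posedness of Appendix~\ref{app:evolving_rough_g} and the parabolic smoothing of $g$ operate.
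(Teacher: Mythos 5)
There is a genuine gap in your first step, and it is precisely the point the paper is designed to avoid. You reduce the problem to showing that $x\sim\bar x$ in $\CS$ implies $\bar x = x^{g_0}$ for some $g_0\in\mfG^{0,\rho}$, and then feed $g_0$ into Theorem~\ref{thm:gauge_covar}\ref{pt:final_gauge_covar}. But your compactness argument via Theorem~\ref{thm:g_bound} only produces $k_t$ bounded in $\CC^{\nu}$ with $\nu=\tfrac12-$, hence a subsequential limit $k_0$ in $\CC^{\nu'}$ with $\nu'<\nu<\tfrac12$. This is \emph{below} the threshold at which anything works: the group action $\mfG^{0,\rho}\times\CS\to\CS$ of Proposition~\ref{prop:group_action} requires $\rho>\tfrac12$ (condition~\eqref{eq:CGI} forces $\rho\ge \tfrac32-\delta>\tfrac12$, and indeed $\Ad_{k_0}A$ with $A\in\CC^\eta$, $\eta<-\tfrac12$, is not even a well-defined distribution for $k_0\in\CC^{\nu'}$ with $\nu'<\tfrac12$), the spaces $\Omega^{\rho}$ are only defined for $\rho\in(\tfrac12,1]$, and Theorem~\ref{thm:gauge_covar} fixes $\rho\in(\tfrac12,1)$. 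Your proposed fix --- ``feed $g_0$ in at the level $\mfG^{0,\rho}$ for suitable $\rho<\nu$'' --- does not help, since that forces $\rho<\tfrac12$. So the identity $x^{k_0}=\bar x$ cannot be formed, and the reduction fails. The authors explicitly flag in the introduction that they do not know (and doubt) whether $\sim$ on $\CS$ is induced by a gauge group acting on $\CS$; see also Remark~\ref{rem:why_U_h}, which notes that the gauge data arising in this proposition has regularity $\nu\le\tfrac12$.

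The paper's proof sidesteps this by never forming $x^{g_0}$ at time zero. It sets $x_n=\ymhflow_{1/n}(x)$, $\bar x_n=\ymhflow_{1/n}(\bar x)$, takes the \emph{smooth} $g_n(0)$ with $x_n^{g_n(0)}=\bar x_n$, and runs the coupled $(X_n,g_n)$ system of Appendix~\ref{app:evolving_rough_g} together with the rotated noise $g_n\xi$; the limit $n\to\infty$ is then taken at the level of the \emph{solutions}, using Lemma~\ref{lem:Sigmas_converg}, Theorem~\ref{thm:g_bound} (to get $g_n(0)\to g(0)$ in $\mfG^{0,\nu-}$ along a subsequence), and crucially Lemma~\ref{lem:flow_for_rough_g}, whose well-posedness for the coupled system holds for $g(0)\in\mfG^{0,\nu}$ with \emph{any} $\nu>0$, however small --- because $g$ regularises instantly, the relation $\bar X(t)=X(t)^{g(t)}$ only needs to hold for $t>0$, exactly as stated. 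Theorem~\ref{thm:gauge_covar} is applied to each smooth datum $(x_n,g_n(0))$, not to the rough limit, and the noise $\bar\xi$ is recovered a posteriori from the limit $\bar X$ via Remark~\ref{rem:recover_noise} rather than defined directly as $g\xi$ acting on the $t=0$ data. If you restructure your argument so that the $s\downarrow 0$ (resp.\ $n\to\infty$) limit is taken on the coupled SPDE solutions rather than on the initial gauge transformation, the rest of your outline (noise rotation, It\^o isometry, the blow-up criterion via Theorem~\ref{thm:g_bound}) is sound.
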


\begin{remark}
The proof of Proposition~\ref{prop:gauge_covar_couple} will reveal that $g$ solves~\eqref{eq:SPDE_for_g_wrt_A}
with some initial condition $g^{(0)}\in\mfG^{\frac12-}$.
\end{remark}

\begin{lemma}\label{lem:Sigmas_converg}
For every $x\in\state$,
\begin{enumerate}[label=(\roman*)]
\item\label{pt:F_t} $\lim_{t\downarrow 0}\Sigma(\ymhflow_t(x),x)=0$,
\item\label{pt:P_F_t} $\lim_{s\downarrow0}\sup_{t\in [0,T]} \Sigma(\CP_s \ymhflow_t (x), \ymhflow_t (x)) = 0$,
where $T\leq \Poly (\Theta(x)^{-1})$ is a time of existence of $\ymhflow_t(x)$ appearing in Proposition~\ref{prop:YM_flow_minus_heat}.
\end{enumerate}
\end{lemma}

\begin{proof}
\ref{pt:F_t}
Writing $p_t \eqdef \CP_{t}x$ and
$\ymhflow_t x = p_t + r_t$, it follows from Proposition~\ref{prop:Theta_cont_zero}\ref{pt:heat_Sigma}
that $\lim_{t\to0}\Sigma(p_t,x)=0$. 
Observe further that $|r_t|_{\CC^{\hat\beta}}\to 0$ by Proposition~\ref{prop:YM_flow_minus_heat}.
Since $\hat\beta\geq 2\theta(\alpha-1)$ by assumption~\eqref{eq:bar_beta_cond1},
it follows from Lemma~\ref{lem:heatgr_Besov_embed} that $\lim_{t\to0}\heatgr{r_t}_{\alpha,\theta}=0$.
Furthermore $|r_t|_{\CC^\eta}\to 0$ since, by \eqref{eq:bar_beta_cond2} and the conditions on 
$\eta$ and $\delta$ (which in particular imply that $\eta + \delta < \f12$ and therefore $\hat \eta > \eta$), one has $\eta<\hat\beta$.
Finally, by Lemma~\ref{lem:perturbation}\ref{pt:perturbation},
\begin{equ}
\fancynorm{p_t+r_t;x}_{\beta,\delta} \lesssim \fancynorm{p_t;x}_{\beta,\delta}
+ \fancynorm{r_t}_{\beta,\delta}
+ (|p_t|_{\CC^\eta}+|x|_{\CC^\eta})|r_t|_{\CC^{\hat\eta}}\;,
\end{equ}
with $\hat \eta$ as in \eqref{eq:bar_beta_cond2}. As $t\to0$,
the first term on the right-hand side converges to $0$ since $\Theta(p_t,x)\to0$,
the second term converges converges to $0$ since
\begin{equ}
\fancynorm{r_t}_{\beta,\delta}\leq\fancynorm{r_t}_{0,\delta} \lesssim |r_t|^2_{C^{\hat\eta}}
\end{equ}
by Lemma~\ref{lem:perturbation}\ref{pt:fancynorm_0} and since $\hat\beta \geq \hat\eta>\frac12-\delta$ by assumption~\eqref{eq:bar_beta_cond2},
and the third term converges to $0$ also because of~\eqref{eq:bar_beta_cond2}.

\ref{pt:P_F_t} As above, write $\CF_t x= p_t+r_t$.
Since $\CP_t$ is a contraction for the norm $\heatgr{\cdot}_{\alpha,\theta}$ by Lemma~\ref{lem:CP_contraction}\ref{pt:heatgr_contraction},
it follows that
\begin{equ}
\sup_{t\in[0,T]}\heatgr{\CP_s p_t - p_t}_{\alpha,\theta}
= \sup_{t\in[0,T]}\heatgr{\CP_t (\CP_s x - x)}_{\alpha,\theta}
\leq \heatgr{\CP_s x - x}_{\alpha,\theta}\;,
\end{equ}
which converges to $0$ as $s\downarrow0$ by Proposition~\ref{prop:Theta_cont_zero}\ref{pt:heat_Sigma}.
By an identical argument,
\begin{equ}[eq:Ps_pt_bound]
\lim_{s\downarrow 0}\sup_{t\in[0,T]}|\CP_s p_t - p_t|_{\CC^\eta}= 0\;.
\end{equ}
Furthermore, by Proposition~\ref{prop:YM_flow_minus_heat},
$\lim_{t\downarrow0}|r_t|_{\CC^{\hat\beta}}= 0$ and
$\sup_{t\in [\eps,T]}|r_t|_{L^\infty}<\infty$ for every $\eps>0$.
Using that $|\CP_s f - f|_{\CC^{\hat\beta}}\lesssim s^{-\hat\beta/2}|f|_{L^\infty}$ and $\hat\beta<0$,
it follows from~\eqref{eq:Ps_rt}
that
\begin{equ}[eq:Ps_rt_bound]
\lim_{s\downarrow0}\sup_{t\in[0,T]}|\CP_s r_t - r_t|_{\CC^{\hat\beta}}=0\;.
\end{equ}
Therefore, using $\hat\beta\geq 2\theta(\alpha-1)$ and Lemma~\ref{lem:heatgr_Besov_embed} to estimate
$\heatgr{\CP_s r_t - r_t}_{\alpha,\theta}
\lesssim |\CP_s r_t - r_t|_{\CC^{\hat\beta}}$
we obtain
\begin{equ}[eq:Ps_rt]
\lim_{s\downarrow0}\sup_{t\in[0,T]}\heatgr{\CP_s r_t - r_t}_{\alpha,\theta} =0 \;.
\end{equ}
Since $\eta<-\frac12<\hat\beta$,~\eqref{eq:Ps_rt_bound} also implies
$\lim_{s\downarrow0}\sup_{t\in[0,T]}|\CP_s r_t - r_t|_{\CC^\eta}=0$.

It remains only to consider the terms with $\fancynorm{\cdot;\cdot}_{\beta,\delta}$. 
By Lemma~\ref{lem:perturbation}\ref{pt:perturbation},
uniformly in $t\geq0$,
\begin{equs}
\fancynorm{\CP_s (p_t+r_t) ; p_t+r_t}_{\beta,\delta}
&\lesssim
\fancynorm{\CP_s p_t ; p_t}_{\beta,\delta}
+ \fancynorm{\CP_s r_t ; r_t}_{\beta,\delta}
\\
&\qquad + |\CP_s p_t - p_t|_{\CC^\eta}|r_t|_{\CC^{\hat\eta}}
+|\CP_s r_t - r_t|_{\CC^{\hat\eta}}|x|_{\CC^\eta}\;,
\end{equs}
for $\hat\eta$ as in \eqref{eq:bar_beta_cond2}.
Since $\hat\beta\geq \hat\eta$, it follows from~\eqref{eq:Ps_pt_bound}-\eqref{eq:Ps_rt_bound} that the final two terms converge to $0$ as $s\downarrow0$ uniformly in $t\in[0,T]$.
For the first term, observe that, by~\eqref{eq:Ph_X_Y},
\begin{equ}
 \sup_{t\in[0,T]} \fancynorm{\CP_s p_t ; p_t}_{\beta,\delta}
=
 \sup_{t\in[0,T]} \fancynorm{\CP_t p_s ; \CP_t x}_{\beta,\delta}
\lesssim
|p_s-x|_{\CC^\eta}|x|_{\CC^\eta} + \fancynorm{p_s;x}_{\beta,\delta}\;.
\end{equ}
Since $\lim_{s\downarrow0}\Theta(p_s,x)=0$ by 
Proposition~\ref{prop:Theta_cont_zero}\ref{pt:heat_Theta},
\begin{equ}
\lim_{s\downarrow0}\sup_{t\in[0,T]}\fancynorm{\CP_s p_t ; p_t}_{\beta,\delta}
=0\;.
\end{equ}
Furthermore, by Lemma~\ref{lem:perturbation}\ref{pt:fancynorm_0}, uniformly in $t\geq0$,
\begin{equ}
\fancynorm{\CP_s r_t ; r_t}_{\beta,\delta}
\lesssim
|\CP_s r_t - r_t|_{\CC^{\hat\eta}}|r_t|_{\CC^{\hat\eta}}\;.
\end{equ}
The final term again converges to $0$ as $s\downarrow0$ uniformly in $t\in[0,T]$.
\end{proof}

\begin{proof}[of Proposition~\ref{prop:gauge_covar_couple}]
Let us define $x_n,\bar x_n\in \CC^\infty(\T^3,E)$ by
\begin{equ}
x_n\eqdef \ymhflow_{1/n}(x)\;,
\quad \bar x_n\eqdef \ymhflow_{1/n}(\bar x)\;,
\end{equ}
which exist for $n$ sufficiently large.
Let $g_{n}(0) \in \mfG^\infty$
such that $x_n^{g_{n}(0)} = \bar x_n$.
It follows from Lemma~\ref{lem:Sigmas_converg}\ref{pt:F_t} and Theorem~\ref{thm:g_bound} that, for some $\nu<\frac12$,
\begin{equ}
\limsup_{n\to\infty} |g_{n}(0)|_{\CC^\nu} < \infty\;.
\end{equ}
In particular, passing to a subsequence, we can assume $g_{n}(0)\to g(0)$ in $\mfG^{0,\nu-}$.
Let $X_n=(A_n,\Phi_n)$ solve SYMH driven by $\xi$ with initial condition $x_n$
and let $g_n$ solve~\eqref{eq:SPDE_for_g_wrt_A}
driven by $A_n$
with initial condition $g_{n}(0)$. 

Consider further the process $\xi_n$ given by $g_n\xi$ on $[0,T_{X_n,g_n})$ and by $\xi$ outside this interval, where we set, as in~\cite[Sec.~1.5.1]{CCHS2d},
\begin{equ}
T_{X,g}\eqdef\inf\{t\geq0\,:\,(X,g)(t)=\skull\}\;.
\end{equ}
Remark that $\xi_n$ is a white noise by It{\^o} isometry.
Let $\bar X_n$ 
solve SYMH driven by $\xi_n$
with initial condition $\bar x_n$.
Observe that $\bar X_n$ and $X_n^{g_n}$ are strong Markov processes,
for which, by Theorem~\ref{thm:gauge_covar},
$
\bar X_n = X_n^{g_n}
$
on an interval $[0,\tau]$ where $\tau$ is a stopping time bounded stochastically from below by the size of $(X_n(0), g_n(0))$ in $\state\times\mfG^{0,\rho}$.
It follows that
\begin{equ}\label{eq:bar_X_n_X_n_equal}
\bar X_n=X_n^{g_n} \;\; \text{ on the interval of existence of } \;(\bar X_n, g_n)\;.
\end{equ}

Next, we claim that
\begin{equ}\label{eq:X_n_to_X}
\lim_{n\to\infty}(X_n,X_n^{g_n}) = (X,X^g)\;\; \text{ in probability in }\; (\state\times\state)^\sol\;,
\end{equ}
where $g$ solves~\eqref{eq:SPDE_for_g_wrt_A}
with initial condition $g(0)$
and where we \textit{define} $X^g(0) = \bar x$ (for $t>0$, note that $g(t)\in\mfG^{\frac32-}$ and thus $X^g(t)$ makes sense by Proposition~\ref{prop:group_action}).
To prove~\eqref{eq:X_n_to_X}, remark that
$\Sigma(x_n,x)\to0$ by Lemma~\ref{lem:Sigmas_converg}\ref{pt:F_t}.
It thus follows from Lemma~\ref{lem:flow_for_rough_g} with $c^\eps=0$ therein
that $(X_n,g_n) \to (X,g)$ in probability
in $(\state\times \mfG^{0,\nu-})^\sol$.
Therefore, for any $\lambda,\delta>0$,
\begin{equ}
\lim_{n\to\infty }\P \Big[\sup_{t\in [\lambda, T_{X,g}-\lambda]}|g_n(t)-g(t)|_{\mcC^{\frac32-}} > \delta\Big] = 0\;.
\end{equ}
It follows by joint continuity of the group action from Proposition~\ref{prop:group_action} and the estimates on gauge transformations from Theorem~\ref{thm:g_bound}
(which ensures that $|g|_{\CC^{1/2-}}$ and thus $|g|_{\CC^{3/2-}}$
cannot blow up before $\Sigma(X)+\Sigma(X^g)$ does, hence $T_{X,g}=T_{X,X^g}$) that
\begin{equ}[eq:X_X_n_from_zero]
\lim_{n\to\infty }\P \Big[\sup_{t\in [\lambda, T_{X,X^g}-\lambda]} \{\Sigma(X_n(t),X(t)) + \Sigma(X_n^{g_n}(t),X^g(t)) \}> \delta\Big] = 0\;.
\end{equ}
It remains to handle the time interval $[0,\lambda]$ for which we can proceed similar to the proof of Proposition~\ref{prop:SPDEs_conv_zero_state}.
Specifically, it holds that
\begin{equ}\label{eq:X_n_at_zero}
\lim_{\lambda\downarrow0}\limsup_{n\to\infty} \P
\Big[
\sup_{t\in[0,\lambda]} \Sigma(X^{g_n}_n(t),X^{g_n}_n(0)) > \delta
\Big]
= 0\qquad \forall \delta>0\;.
\end{equ}
Indeed, this follows from $X_n^{g_n} = \bar X_n$, which solves the SYMH driven by $\xi_n$ with initial condition $\bar x_n$, and $\lim_{s\downarrow 0}\sup_{n}\Sigma(\CP_s \bar x_n,\bar x_n)\to 0$ by Lemma~\ref{lem:Sigmas_converg}\ref{pt:P_F_t},
combined with the arguments from the proof of Theorem~\ref{thm:local-exist-sigma} to handle the remainder terms.
The proof of~\eqref{eq:X_n_to_X} now follows from combining~\eqref{eq:X_X_n_from_zero}-\eqref{eq:X_n_at_zero} and the facts that $\Sigma(\bar x,\bar x_n)\to 0$ by Lemma~\ref{lem:Sigmas_converg}\ref{pt:F_t}
and that $X_n\to X$ in probability in $\state^\sol$.

Finally, since $\xi_n\eqlaw\xi$ and $\Sigma(\bar x_n,\bar x)\to 0$ by Lemma~\ref{lem:Sigmas_converg}, $\bar X_n\convlaw \bar X$ in $\state^\sol$, where $\bar X$  is equal in law to the solution to SYMH with initial condition $\bar x$.
By Remark~\ref{rem:recover_noise},
there exists a white noise $\bar \xi$ on the probability space $(\Omega^{\noise},\CF,\P)$ which is adapted to $(\CF_t)_{t \geq 0}$ such that $\bar X$ solves SYMH driven by $\bar \xi$
with initial condition $\bar x$.
Combining with~\eqref{eq:X_n_to_X} and~\eqref{eq:bar_X_n_X_n_equal},
we obtain the first claim in the proposition statement.

For the second claim we first note that Theorem~\ref{thm:g_bound} gives control of $|g|_{\CC^{1/2-}}$ in terms of $\Sigma(X)+\Sigma(\bar{X})$.
Next we point out that, by Lemma~\ref{lem:flow_for_rough_g}, for any $\nu > 0$,
we can start the limiting $\eps \downarrow 0$ dynamic \eqref{e:SPDE-for-X_with_g}-\eqref{eq:SPDE_for_g_coupled}  with initial data $g \in \mfG^{\nu}$. 
Moreover, the limiting dynamic has maximal solutions in a space where the $g$ component belongs to $\mfG^{\frac32-}$. 
Therefore, $g$ can blow up in $\mfG^{\frac32-}$ only if it blows up in $\mfG^{\nu}$ for every $\nu>0$.
\end{proof}

\subsection{Measurable selections}
\label{subsec:measurable_selections}

Define $m\colon \state \to [0,\infty]$ by
\begin{equ}\label{eq:def_m}
m(X) \eqdef \inf_{Y\sim X} \bar\Sigma(Y)\;,
\end{equ}
where $\sim$ is the equivalence relation on $\state\subset\init$ defined in Definition~\ref{def:ymhflow}.
The following is the main result of this subsection.

\begin{theorem}\label{thm:selection}
There exists a Borel function $S\colon \state \to\state$
such that for all $X\in\state$
\begin{itemize}
\item $S(X)\sim X$,
\item $\bar\Sigma(S(X)) \leq 2m(X)$ if $m(X) < \infty$, and
\item $S(X)=X$ if $m(X)=\infty$.
\end{itemize}
\end{theorem}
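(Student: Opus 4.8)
The plan is to combine the compactness statement of Proposition~\ref{prop:compact}\ref{pt:Sigma_compact} and the closedness of the gauge‑orbit relation (Lemma~\ref{lem:gauge_orbits_closed}) with a classical measurable selection theorem. The one structural fact needed first is that $m$ is Borel, and in fact I would show it is lower semicontinuous on $(\state,\Sigma)$. Indeed, suppose $X_n\to X$ in $\state$ with $\sup_n m(X_n)\le c<\infty$. For each $n$ pick $Y_n\sim X_n$ with $\bar\Sigma(Y_n)\le c+1$. The set $\{Z:\bar\Sigma(Z)\le c+1\}$ is $\Sigma$‑compact by Proposition~\ref{prop:compact}\ref{pt:Sigma_compact}, so a subsequence of $(Y_n)$ converges in $(\state,\Sigma)$, hence also in $\init$, to some $Y$. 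Since $X_n\to X$ in $\init$ and $\sim$ is closed in $\init^2$ (Lemma~\ref{lem:gauge_orbits_closed}), we get $Y\sim X$; and $\bar\Sigma(Y)\le\liminf_n\bar\Sigma(Y_n)\le c$ by lower semicontinuity of the constituents of $\bar\Sigma$ (Lemma~\ref{lem:Theta_Sigma_lower_semicont}, using that $\Sigma$‑convergence implies convergence in the weaker topologies appearing there). Hence $m(X)\le c$, so $\{m\le c\}$ is closed, $m$ is lower semicontinuous, and in particular Borel.

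Next I would set up the selection, working in the Polish space $(\state,\Sigma)$ (complete by Theorem~\ref{thm:main_state_space}, separable by Definition~\ref{def:state}). For $q>0$ put
\[
H_q \eqdef \{(X,Y)\in\state\times\state : Y\sim X,\ \bar\Sigma(Y)\le q\}\;.
\]
The set $\{(X,Y)\in\state^2:Y\sim X\}$ is closed in $\state^2$, being the preimage of the closed set of Lemma~\ref{lem:gauge_orbits_closed} under the continuous inclusion $\state^2\hookrightarrow\init^2$; and $\{Y:\bar\Sigma(Y)\le q\}$ is closed by lower semicontinuity of $\bar\Sigma$ and is moreover contained in $\state$ and $\Sigma$‑compact by Proposition~\ref{prop:compact}\ref{pt:Sigma_compact}. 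Hence $H_q$ is closed in $\state^2$, each section $H_q(X)$ is a closed subset of the compact metric space $K_q\eqdef\{Z:\bar\Sigma(Z)\le q\}$, and $H_q\subset\state\times K_q$. Since $K_q$ is compact, the projection $\Omega_q\eqdef\{X:H_q(X)\neq\emptyset\}$ is closed in $\state$. On $\Omega_q$ the multifunction $X\mapsto H_q(X)$ has closed graph and nonempty compact values in the compact metric space $K_q$, hence is upper semicontinuous, so $\{X\in\Omega_q:H_q(X)\cap U\neq\emptyset\}$ is $F_\sigma$ for every open $U$ and the multifunction is weakly Borel measurable. By the Kuratowski--Ryll-Nardzewski selection theorem it admits a Borel selection $s_q\colon\Omega_q\to K_q$ with $s_q(X)\in H_q(X)$.

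Finally I would paste these together along a dyadic decomposition. Set $B_j\eqdef\{X\in\state:2^{j-1}\le m(X)<2^j\}$ for $j\in\Z$; since $m$ is Borel these are Borel, they partition $\{0<m<\infty\}$, and if $X\in B_j$ then $m(X)<2^j$, so by definition of the infimum there is $Y\sim X$ with $\bar\Sigma(Y)<2^j$, i.e.\ $X\in\Omega_{2^j}$, so $B_j\subset\Omega_{2^j}$. Define $S\colon\state\to\state$ by $S(X)\eqdef 0$ on $\{m=0\}$, $S(X)\eqdef s_{2^j}(X)$ on $B_j$, and $S(X)\eqdef X$ on $\{m=\infty\}$; this is a countable Borel partition of $\state$ on each piece of which $S$ is Borel, so $S$ is Borel. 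One checks $S(X)\sim X$ in all cases: on $\{m=0\}$ the lower‑semicontinuity argument with $c=0$ produces $Y\sim X$ with $\bar\Sigma(Y)=0$, i.e.\ $Y=0$; on $B_j$ this is part of $s_{2^j}(X)\in H_{2^j}(X)$; on $\{m=\infty\}$ it is trivial. And $\bar\Sigma(S(X))\le 2m(X)$ whenever $m(X)<\infty$: this is $0\le 0$ on $\{m=0\}$, while on $B_j$ one has $\bar\Sigma(s_{2^j}(X))\le 2^j\le 2m(X)$. Finally $S(X)=X$ on $\{m=\infty\}$ by definition, giving all three properties.

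I do not expect a serious obstacle: the proof rests entirely on the compactness of $\bar\Sigma$‑sublevel sets (Proposition~\ref{prop:compact}\ref{pt:Sigma_compact}) and the closedness of $\sim$ (Lemma~\ref{lem:gauge_orbits_closed}), both already available, plus standard descriptive set theory. The only point requiring care is obtaining a genuinely \emph{Borel} (rather than merely universally or analytically measurable) selection, which is why I localise the problem to the compact target spaces $K_q$, making the relevant multifunctions upper semicontinuous so that Kuratowski--Ryll-Nardzewski applies directly; the dyadic bookkeeping and the handling of the degenerate strata $\{m=0\}$ and $\{m=\infty\}$ are the remaining routine steps.
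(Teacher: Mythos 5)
Your proof is correct, and it rests on exactly the same three pillars as the paper's argument: lower semicontinuity of $m$ (your first paragraph is essentially identical to the paper's Lemma~\ref{lem:m_lower_semicont}), compactness of the $\bar\Sigma$-sublevel sets from Proposition~\ref{prop:compact}\ref{pt:Sigma_compact}, and closedness of $\sim$ from Lemma~\ref{lem:gauge_orbits_closed}. Where you diverge is in the selection machinery. The paper builds a single Borel graph $\Gamma=\{(X,X):m(X)=\infty\}\cup\{(X,Y):X\sim Y,\ \bar\Sigma(Y)\le 2m(Y)<\infty\}$, checks that its sections are nonempty and compact (Lemma~\ref{lem:Gamma_compact}), and invokes a selection theorem for \emph{Borel} sets in products of Polish spaces with compact sections (\cite[Thm~6.9.6]{Bogachev07}); note that $\Gamma$ there is genuinely only Borel, not closed, since its definition involves the merely lower semicontinuous $m$, so the full strength of that theorem is used. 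You instead keep the graphs closed by freezing the bound at a fixed level $q$: each $H_q$ is closed with compact sections contained in the compact set $K_q$, the associated multifunction is upper semicontinuous and hence weakly Borel measurable, and Kuratowski--Ryll-Nardzewski applies. The price is the dyadic pasting over $B_j=\{2^{j-1}\le m<2^j\}$ and the separate treatment of $\{m=0\}$ (where your observation that the orbit then contains $0$ is correct and handles that stratum cleanly); the gain is that you only ever invoke the more elementary selection theorem for closed-valued measurable multifunctions rather than the Arsenin--Kunugui-type result for Borel graphs. Both routes deliver the same bound $\bar\Sigma(S(X))\le 2m(X)$, yours via $\bar\Sigma(s_{2^j}(X))\le 2^j\le 2m(X)$ on $B_j$, the paper's by forcing the selection to land in $B$.
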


\begin{proof}[of Theorem~\ref{thm:selection}]
We start with the following preliminary statement.

\begin{lemma}\label{lem:m_lower_semicont}
The function $m$ given in \eqref{eq:def_m} is lower semi-continuous on $\state$.
\end{lemma}

\begin{proof}
Let $X_n\to X$ in $\state$ and consider $Y_n\in\state$ such that $Y_n\sim X_n$ and $\bar\Sigma(Y_n)-m(X_n)<\frac1n$.
Passing to a subsequence, we may suppose that $\lim_{n\to\infty}m(X_n)=\liminf_{n\to\infty}m(X_n)$.
If $m(X_n) \to \infty$, then there is nothing to prove.
Otherwise, $\sup_n \bar\Sigma(Y_n) < \infty$,
so by the compact embedding due to Proposition~\ref{prop:compact}\ref{pt:Sigma_compact},
there exists $Y\in\state$ such that $\Sigma(Y_n,Y)\to 0$
and
$\bar\Sigma(Y) \leq \liminf_n \bar\Sigma(Y_n) = \lim_n m(X_n)$.
Since $(X_n,Y_n)\to (X,Y)$ in $\state^2$
and $X_n\sim Y_n$, it follows that $X\sim Y$ by Lemma~\ref{lem:gauge_orbits_closed}.
\end{proof}

Consider the sets $B \eqdef \{X\in\state \,:\, \bar\Sigma(X) \leq 2m(X) < \infty\}$
and
$\Gamma\subset \state^2$
defined by the disjoint union
\begin{equ}
\Gamma \eqdef \{(X,X)\,:\, m(X)=\infty\} \cup \{(X,Y)\,:\, X\sim Y\,, \; Y\in B
\}\;.
\end{equ}

\begin{lemma}\label{lem:Gamma_compact}
For every $X\in\state$, $\{Y\in \state\,:\,(X,Y)\in\Gamma\}$
is non-empty and compact.
\end{lemma}

\begin{proof}
If $m(X)=\infty$, the result is obvious.
Otherwise the result follows from the invariance of $m$ on $[X]$,
the fact that $[X]$ is closed in $\state$ by Lemma~\ref{lem:gauge_orbits_closed},
and the fact that $\{Y\in\state\,:\, \bar\Sigma(Y)\leq 2m(X)\}$ is compact in $\state$ by Proposition~\ref{prop:compact}\ref{pt:Sigma_compact}.
\end{proof}

We now have all the ingredients in place to complete the proof of Theorem~\ref{thm:selection}.
Since $\bar\Sigma,m\colon\state\to[0,\infty]$
are lower semi-continuous (Lemmas~\ref{lem:Theta_Sigma_lower_semicont} and~\ref{lem:m_lower_semicont}),
$B$ is Borel in $\state$.
Moreover, $\{(X,Y)\in\state^2\,:\, X\sim Y\}$ is closed in $\state^2$
(Lemma~\ref{lem:gauge_orbits_closed})
and thus $\Gamma$ is Borel in $\state^2$.
The conclusion follows from Lemma~\ref{lem:Gamma_compact} and~\cite[Thm~6.9.6]{Bogachev07} since $\state$ is a Polish space.
\end{proof}

%

\subsection{Proof of Theorem~\ref{thm:Markov_process}}\label{sec:MarkovProcess}

\begin{proof}[of Theorem~\ref{thm:Markov_process}]
\ref{pt:gen_exists}
We extend the function $S$ in Theorem~\ref{thm:selection} to a Borel function $S\colon\hat\state \to \hat\state$ by $S(\skull)=\skull$.
Let $\xi$ be a white noise defined on a probability space $(\Omega^{\noise},\mcF,\P)$
and let $\mbF=(\mcF_t)_{t\geq0}$ be the canonical filtration generated by $\xi$.

Consider any $x \in \hat\state$.
We define a c{\`a}dl{\`a}g process $X\colon\Omega^{\noise}\to D(\R_+, \hat\state)$ with $X(0)=x$,
and a sequence of $\mbF$-stopping times $(\varsigma_j)_{j=0}^\infty$ as follows.
The idea is to run the SYMH equation until the first time $t\geq \varsigma_j$  such that $\bar\Sigma(X(t)) \geq
1+4m(X(t))$, at which point we use $S$ to jump to a new representative of $[X(t)]$
and then restart SYMH, setting $\varsigma_{j+1}=t$.
At time $t=0$, however, it is possible that $\bar\Sigma(x)=m(x)=\infty$, in which case this procedure does not work.
We therefore treat $t=0$ as a special case.

If $x=\skull$, we define $X(t)=\skull$ for all $t\in\R_+$ and $\varsigma_j=0$ for all $j\geq0$.
Suppose now that $x\neq\skull$.
Set $\varsigma_0 = 0$ and $X(0)=x$.
Define $Y \in \mcC([0,\infty),\hat\state)$
by
$Y(t) =
\Lambda_{0,t}(x)$
and $R \in \mcC([0,\infty),\hat\R_+)$ by
\begin{equ}
R(t)\eqdef \Sigma(Y(t))-\Sigma(x)\;.
\end{equ}
Define further
\begin{equ}
\varsigma_{1}
\eqdef
\inf
\{
t > 0 \,:\, R(t)=1)\;.
\end{equ}
We then define $X(t) = Y(t)$ for $t\in(0,\varsigma_{1})$
and $X(\varsigma_{1}) = S(Y(\varsigma_{1}))$.
Observe that a.s.
\begin{equ}
0<\varsigma_1 < \inf\{t>0\,:\, R(t)=\infty\}
\end{equ}
and thus $\bar\Sigma(X(\varsigma_1))<\infty$ because $\Sigma(Y(t))<\infty\Rightarrow\bar \Sigma (Y(t))<\infty$ for all $t>0$.

Having defined $\varsigma_1$,
consider now $j \geq 1$.
If $\varsigma_j=\infty$, we set $\varsigma_{j+1}=\infty$.
Otherwise, if $\varsigma_j<\infty$, 
suppose that $X$ is defined on $[0,\varsigma_{j}]$.
If $X(\varsigma_j)=\skull$, then define $\varsigma_j=\varsigma_{j+1}$.
Suppose now that $X(\varsigma_j)\neq\skull$.
Define $Y \in \mcC([\varsigma_{j},\infty),\hat\state)$
by
\begin{equ}
Y(t) \eqdef
\Lambda_{\varsigma_{j},t}(X(\varsigma_{j}))
\end{equ}
and
\begin{equ}
\varsigma_{j+1}
\eqdef
\inf
\{
t > \varsigma_j \,:\, \bar\Sigma(Y(t)) \geq
1+4m(Y(t))\}\;.
\end{equ}
We then define $X(t) = Y(t)$ for $t\in(\varsigma_j,\varsigma_{j+1})$
and $X(\varsigma_{j+1}) = S(Y(\varsigma_{j+1}))$.
Observe that the map $[\varsigma_j,\infty)\to\hat\R_+$, $t\mapsto \bar\Sigma(Y(t))$, is a.s. continuous.
Furthermore, $\bar\Sigma(Y(\varsigma_j)) \leq 2m(Y(\varsigma_j))$
and the map $[\varsigma_j,\infty) \ni t \mapsto m(Y(t))$ is lower semi-continuous (Lemma~\ref{lem:m_lower_semicont}).
Therefore, 
a.s. $\varsigma_{j+1}>\varsigma_j$.

Let $T^*\eqdef \lim_{j\to\infty}\varsigma_j$.
If $T^*<\infty$, we define $X(t)=\skull$ for all $t\in[T^*,\infty)$.
We have now defined $X(t)\in\hat\state$ for all $t\in \R_+$ and, by construction, $T^* = \inf\{t \geq 0 \,:\, X(t) = \skull\}$.
Moreover, $X$ takes values in $D(\R_+,\hat\state)$.

To prove~\ref{pt:gen_exists}, it remains to show that
\begin{equ}\label{eq:m_blow_up}
\P\Big[T^*<\infty\,,\, \liminf_{t\nearrow T^*}m(X(t)) \leq M\Big] = 0\qquad \forall M>0\;.
\end{equ}
Since, by construction, $\bar\Sigma(X(t))\in[m(X(t)),1+4m(X(t))]$ for all $t\geq \varsigma_1$, it suffices to prove that
\begin{equ}\label{eq:barSigma_blow_up}
\P\Big[T^*<\infty\,,\, \liminf_{t\nearrow T^*}\bar\Sigma(X(t)) \leq M
\Big] = 0\qquad \forall M>0\;.
\end{equ}
To this end,
consider $M>1$.
For any stopping time $\tau < T^*$, on the event $\bar\Sigma(X(\tau))\leq M$, we have
\begin{equ}[eq:fluctuation]
\sup_{t\in [\tau, \tau+\delta]}\bar\Sigma(\Lambda_{\tau,t}(X(\tau))) \leq \bar\Sigma(X(\tau))+1/2
\end{equ}
where $\delta\in (0,1)$ is random and depends on $X(\tau)$ and the realisation of $\xi$ on $[\tau,\tau+1]$,
but is stochastically bounded from below by a function of $M$
in the sense that there exists $\eps=\eps(M)\in (0,1)$ such that 
\begin{equ}[eq:delta_eps]
\P[\delta > \eps \mid \bar\Sigma(X(\tau))\leq M ] > \eps\;.
\end{equ}
Indeed,~\eqref{eq:fluctuation}-\eqref{eq:delta_eps} follows from the fact that $\bar\Sigma(\CP_r Z) \leq (1+Cr)\bar\Sigma(Z)$ for any $Z\in\state$ by Lemma~\ref{lem:CP_contraction},
from the stochastic bounds on $\sup_{r\in[0,h]}r^{-\kappa}\bar\Sigma(\Psi_r,\CP_r\Psi_0)$ for the SHE $\Psi$ from~\eqref{eq:SHE_Theta_zero} and Proposition~\ref{prop:SHE_heatgr_estimates} and the stochastic bounds in Lemmas~\ref{lem:tildePsi-cov}-\ref{lem:PX0Psi-prob} (we use that $\tau$ is a stopping time in all of these stochastic bounds),
and from the perturbation argument similar to that in the proof of Theorem~\ref{thm:local-exist-sigma} that uses Lemma~\ref{lem:perturbation}
(see~\eqref{eq:perturbation_X}).


Consider the sequence of stopping times $\tau_j = \inf\{t\geq \varsigma_j \,:\, \bar\Sigma (X(t)) \leq M\}$.
For $j\geq 2$, we have
\begin{equ}
\bar\Sigma(X(\varsigma_{j})) \leq \frac12\lim_{t\nearrow \varsigma_j}
\{\bar\Sigma( X(t)) - 1\}\;,
\end{equ}
and thus, if $\tau_j <\varsigma_{j+1}$ and
\begin{equ}
\sup_{t\in[\tau_j,\tau_j+\eps]}\bar\Sigma(\Lambda_{\tau_j,t}(X(\tau_j))) \leq \bar\Sigma(X(\tau_j))+1\;,
\end{equ}
then by the flow property~\eqref{eq:flow_prop} (which allows us to restart the dynamic at $\tau_j$), we have
\begin{equ}
\bar\Sigma(X(\varsigma_{j+1})) \leq \bar\Sigma(X(\tau_{j}))/2 \quad
\text{or} \quad \varsigma_{j+1} - \varsigma_j\geq \eps\;.
\end{equ}
Therefore, by~\eqref{eq:fluctuation}-\eqref{eq:delta_eps}, a.s.
\begin{equ}
\bone_{\tau_j \geq \varsigma_{j+1}} +
\P\big[
\bar\Sigma(X(\varsigma_{j+1}))\leq \bar\Sigma(X(\tau_j))/2
\text{ or } \varsigma_{j+1} - \tau_j\geq \eps
\,\big|\,
\mcF_{\tau_j}
\big] > \eps\;.
\end{equ}
Remark also that $\bar\Sigma(X(\varsigma_{j+1}))\leq \bar\Sigma(X(\tau_j))/2$ implies
$\bar\Sigma(X(\varsigma_{j+1}))\leq M/2$ and thus $\tau_{j+1}=\varsigma_{j+1}$. 
It now follows from the strong Markov property of $\{X_{\tau_1+t}\}_{t\geq0}$ that, for any integer $q\geq1$,
\begin{equ}[eq:q_bound]
\bone_{\tau_j \geq \varsigma_{j+1}} +
\P\big[
\bar\Sigma(X(\varsigma_{j+q}))\leq 2^{-q}M
\text{ or } \varsigma_{j+1} - \tau_j\geq \eps
\,\big|\,
\mcF_{\tau_j}
\big] > \eps^q\;.
\end{equ}
Finally, since $\bar\Sigma(X(\varsigma_k))\geq 1$ for all $k\geq 1$, by taking any $q$ in~\eqref{eq:q_bound} such that $2^{-q}M<1/2$ and by~\eqref{eq:fluctuation}-\eqref{eq:delta_eps}, we obtain
\begin{equ}
\bone_{\tau_j \geq \varsigma_{j+1}} +
\P\big[
\varsigma_{j+q+1} - \varsigma_{j} \geq \eps
\,\big|\,
\mcF_{\tau_j}
\big] > \eps^{q+1}\;.
\end{equ}
It readily follows that, on the event $T^*<\infty$, there are only finitely many $j$ such that $\tau_j< \varsigma_{j+1}$,
which concludes the proof of~\eqref{eq:barSigma_blow_up} and thus of~\eqref{eq:m_blow_up}.


\ref{pt:unique_Markov}
The idea, like in the proof of~\cite[Thm.~2.13(ii)]{CCHS2d}, is to couple any generative 
probability measure $\bar \mu$ to the law of the 
process $X$ constructed in part~\ref{pt:gen_exists}.
Consider $x,\bar x\in\hat\state$ with $x\sim \bar x$
and a generative probability measure $\bar\mu$ on $D(\R_+,\hat\state)$ with initial condition $\bar x$.
Let $(\Omega^{\noise},\mcF,\mbF=(\mcF_t)_{t \geq 0}, \P)$, $\bar\xi$, $\bar X\colon\Omega^{\noise}\to D(\R_+,\hat\state)$,
and $\bar T^*$
denote respectively the corresponding filtered probability space, white noise, random variable,
and blow-up time as in Definition~\ref{def:gen}.

It follows from Proposition~\ref{prop:gauge_covar_couple}
that
there exists, on the same filtered probability space $(\Omega^{\noise},\mcF,\mbF, \P)$,
a c{\`a}dl{\`a}g process $X\colon\R_+\to\hat\state$ constructed 
as in part~\ref{pt:gen_exists} using a white 
noise $\xi$ for which $\mbF$ is admissible and such that $X \sim \bar X$ and $X(0)=x$.
In particular, the pushforwards $\pi_*\bar\mu$ and $\pi_*\mu$ coincide,
where $\mu$ is the law of $X$.

To complete the proof, it remains to show that 
for the process $X$ from part~\ref{pt:gen_exists}
with any initial condition $x\in\hat\state$, the projected process
$\pi X\in\mcC(\R_+,\hat\mfO)$ is Markov.
However, this follows from
taking $\bar \mu$ in the above argument as the law of $X$ with initial condition $\bar x \sim x$. 
\end{proof}

\appendix

\section{Singular modelled distributions}
\label{app:Singular modelled distributions}

In this appendix we collect some useful results 
on singular modelled distributions.
We write $P = \{(t,x) \,:\, t=0\}$ for the time $0$ hyperplane.
Recall that the reconstruction operator $\tilde \CR \colon \cD^{\gamma,\eta}_{\alpha} \to \CD'(\R^{d+1}\setminus P)$\label{def:tildeR} 
 defined in \cite[Sec.~6.1]{Hairer14} is local, and there is in 
full generality no way of canonically extending $\tilde \CR$ to an operator $\CR \colon \cD^{\gamma,\eta}_{\alpha} \to \CD'(\R^{d+1})$. 
Here $\alpha$ denotes the lowest degree of the modelled distribution as usual.
 \cite[Prop.~6.9]{Hairer14} provides such a unique extension under the assumption $\alpha\wedge \eta >-2$ (which is then also required by the integration results \cite[Prop.~6.16]{Hairer14})
  which is insufficient for our purposes in Section~\ref{sec:renorm-A} and~\ref{sec:gauge}.
Below we collect some results which require weaker conditions.

As in~\cite[Sec.~7]{CCHS2d}, it will furthermore be important for us to have short-time convolution estimates on intervals $[0,\tau]$
which depend only on the model up to time $\tau$.
This is used to ensure that a time $\tau>0$ on which fixed point maps are contractions is a stopping time.

We write $\bar \cD^{\gamma,\eta} = \cD^{\gamma,\eta} \cap \cD^\eta$ 
as well as $\hat \cD^{\gamma,\eta} \subset \bar \cD^{\gamma,\eta}$ for the subspace of those  \label{def:Dhat-space}
functions $f\in \bar \cD^{\gamma,\eta}$ such that $f(t,x) = 0$ for $t \le 0$.
Similarly to \cite[Lem.~6.5]{Hairer14} one can show that these are closed subspaces
of $\cD^{\gamma,\eta}$, so that we endow them with the usual norms $\$f\$_{\gamma,\eta}$.
We also note that $\bone_+ f= f$ for all $f\in\hat\cD^{\gamma,\eta}$.

\begin{remark}\label{rem:DDhat-same}
If $\eta \le \alpha$, then 
 $\hat{\cD}^{\gamma,\eta}_\alpha$ simply coincides with the space of all $f \in \cD^{\gamma,\eta}_\alpha$ which vanish for $t \le 0$.
\end{remark}

Recall from~\cite[Appendix~A]{CCHS2d} that $\omega\in\CC^{\alpha\wedge\eta}$ is called compatible with $f\in\cD^{\gamma,\eta}_\alpha$, where $\gamma>0$, if $\omega(\psi)=\tilde\CR f(\psi)$ for all $\psi\in\CC^\infty_c(\R^{d+1}\setminus P)$.
For the proofs of the following results, we refer to 
\cite[Appendix~A]{CCHS2d}.

\begin{theorem}\label{thm:reconstructDomain}
Let $\gamma > 0$ and $\eta \in (-2,\gamma]$. 
There exists a unique continuous linear operator
$\CR:\hat\cD^{\gamma,\eta}_\alpha\to\CC^{\eta\wedge \alpha}$ such that
$\CR f$ is compatible with $f$.
\end{theorem}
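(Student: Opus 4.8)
The plan is to prove Theorem~\ref{thm:reconstructDomain} by the extension procedure of \cite[Prop.~6.9]{Hairer14} and \cite{MateBoundary}, following \cite[App.~A]{CCHS2d} essentially verbatim; the only point to keep track of is that restriction to the subspace $\hat\cD^{\gamma,\eta}_\alpha$ of modelled distributions vanishing for $t\le0$ weakens the usual hypothesis $\eta\wedge\alpha>-2$ of \cite[Prop.~6.9]{Hairer14} to the stated $\eta>-2$, the number $2$ being the parabolic-scaled codimension of the hyperplane $P=\{t=0\}$.

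For the construction, I would fix a smooth function $\psi$ supported in $\{t\in[\tfrac14,1]\}$ (in the parabolic scaling) with $\sum_{n\ge0}\psi(2^{2n}t)\equiv1$ for $t\in(0,t_0)$ for some $t_0>0$, and set $\psi_n(t)=\psi(2^{2n}t)$. For $f\in\hat\cD^{\gamma,\eta}_\alpha$ each $\psi_n f$ is supported in $\{t\gtrsim2^{-2n}\}$, hence away from $P$, so that $\tilde\CR(\psi_n f)=\psi_n\,\tilde\CR f$ is an honest element of $\CD'(\R^{d+1})$ requiring no extension. I then define $\CR f$ by summing these pieces near $P$ and gluing on $\tilde\CR f$ for $t$ bounded away from $0$ (a complementary cutoff); on any compact subset of $\R^{d+1}\setminus P$ only finitely many $\psi_n$ are nonzero and they sum to $\bone_{t>0}$ there, so the pieces are consistent and $\CR f$ is compatible with $f$, while linearity of $f\mapsto\CR f$ is clear.

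The substance of the argument is to show that $\sum_{n\ge0}\psi_n\,\tilde\CR f$ converges in $\CC^{\eta\wedge\alpha}$, with the $\CC^{\eta\wedge\alpha}$-norm on $O_\tau=[-1,\tau]\times\T^3$ controlled locally uniformly by a modelled-distribution norm of $f$ and a norm of the model restricted to $O_\tau$ only, so that the contraction times $\tau$ arising in the applications remain stopping times. Testing $\sum_n\psi_n\,\tilde\CR f$ against a rescaled test function $\varphi^\lambda_z$, I would split according to the parabolic distance $\|z\|_P$: for $\|z\|_P\gtrsim\lambda$ only boundedly many $n$ contribute and one invokes the standard local $\cD^{\gamma,\eta}_\alpha$ reconstruction bound, whereas for $\|z\|_P\lesssim\lambda$ one applies the singular-reconstruction estimate to each $\psi_n\,\tilde\CR f$ — effectively a test function at scale $\sim2^{-n}$ supported in a slab of thickness $\sim2^{-n}$ at distance $\sim2^{-n}$ from $P$ — and sums over $2^{-n}\lesssim\lambda$; the resulting series is geometric with ratio governed by $\eta+2>0$, which is exactly where the hypothesis is used. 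Continuity of $f\mapsto\CR f$ then follows from the same quantitative bounds. I expect this step — and in particular the bookkeeping needed to keep all constants depending only on data over $O_\tau$ — to be the main technical work; it is carried out in detail in \cite[App.~A]{CCHS2d}.

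Finally, for uniqueness, suppose $\CR_1,\CR_2$ are continuous linear maps into $\CC^{\eta\wedge\alpha}$ both compatible with $f$. Then $g\eqdef\CR_1f-\CR_2f$ agrees with $\tilde\CR f$ on $\{t>0\}$ and vanishes on $\{t<0\}$ (since $f$, hence $\tilde\CR f$, vanishes there), so $g$ is supported on $P$. Using the dyadic-in-time decomposition of $\varphi^\lambda_z$ for $z\in P$ as above, any such $g$ satisfies the improved bound $|g(\varphi^\lambda_z)|\lesssim\lambda^{\eta}$ near $P$ — the improvement over $\lambda^{\eta\wedge\alpha}$ being precisely a consequence of $f$ vanishing for $t\le0$ — and since the only distributions supported on the parabolic hyperplane $P$ are finite combinations of normal derivatives of surface measure, all of parabolic regularity at most $-2$, the condition $\eta>-2$ forces $g=0$. (Equivalently, one may argue by density: approximate $f$ in $\hat\cD^{\gamma,\eta'}_\alpha$ with $\eta'\in(-2,\eta)$ by modelled distributions vanishing in a full neighbourhood of $P$, for which the reconstruction is unambiguous, and pass to the limit using the continuity just established.)
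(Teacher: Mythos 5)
Your construction of $\CR$ is the right one and is essentially the argument that the paper delegates to \cite[App.~A]{CCHS2d} (itself following \cite{Hairer14,MateBoundary}): the dyadic-in-time decomposition near $P$, the improved local bound coming from $f\in\cD^{\eta}$ together with the vanishing of $f$ for $t\le0$, and the summability condition $\eta+2>0$ (with $2$ the parabolic codimension of $P$) are all exactly where the content lies, and you correctly flag that the estimates must be kept local to $O_\tau$.

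The uniqueness argument, however, has a genuine gap. You claim that $g\eqdef\CR_1f-\CR_2f$ (which is indeed supported on $P$) inherits the improved bound $|g(\varphi^\lambda_z)|\lesssim\lambda^{\eta}$ ``from the dyadic decomposition''. It does not: every dyadic piece $\psi_n\varphi^\lambda_z$ is supported away from $P$, so $g$ pairs to \emph{zero} with each of them while $g(\varphi^\lambda_z)$ need not vanish (take $g=\delta(t)\otimes u$ with $u$ smooth); the decomposition therefore carries no information about $g(\varphi^\lambda_z)$, and this is precisely the mechanism by which extensions across $P$ fail to be unique. A priori $g$ is only an element of $\CC^{\eta\wedge\alpha}$ supported on $P$, and in the regime of interest $\alpha\le-2$ (e.g.\ $\alpha=-\tfrac52-\kappa$) such distributions exist in abundance. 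Indeed, with only ``compatible + continuous + linear'' one can perturb any valid $\CR$ by $f\mapsto\ell(f)\,\delta(t)\otimes u$ for a bounded linear functional $\ell$ on $\hat\cD^{\gamma,\eta}_\alpha$, so uniqueness cannot follow from these properties alone; the characterisation must additionally record the local approximation property that your construction does satisfy, namely $|\langle\CR f-\Pi_zf(z),\varphi^\lambda_z\rangle|\lesssim\lambda^{\gamma}\,(\cdots)$ up to $P$, equivalently the bound $\lambda^{\eta}$ for test functions centred on $P$. With that clause, the difference of two admissible operators is supported on $P$ \emph{and} is $O(\lambda^{\eta})$ there, and $\eta>-2$ kills it — which is the argument in the cited references. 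Your fallback density argument does not repair the gap either: even for $f$ vanishing in a neighbourhood of $P$, compatibility alone does not determine $\CR f$ (the same $P$-supported ambiguity persists), and the approximation $f_\delta\to f$ holds only in $\hat\cD^{\gamma,\eta'}$ with $\eta'<\eta$, a topology in which the two given operators are not assumed continuous.
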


\begin{lemma}\label{lem:multiply-hatD}
For $F_i \in \bar \cD^{\gamma_i,\eta_i}_{\alpha_i}$ with $\alpha_i \le 0 < \gamma_i$ and $\eta_i \le \gamma_i$, one has
$F_1 \cdot F_2 \in \bar\cD^{\gamma, \eta}_{\alpha_1+\alpha_2}$ with $\gamma = (\alpha_1 + \gamma_2)\wedge (\alpha_2 + \gamma_1)$ and $\eta = (\alpha_1 + \eta_2)\wedge (\alpha_2 + \eta_1) \wedge (\eta_1 + \eta_2)$.
Furthermore, if $F_i \in \hat \cD^{\gamma_i,\eta_i}_{\alpha_i}$, then
$F_1 \cdot F_2 \in \hat\cD^{\gamma, \eta_1 + \eta_2}_{\alpha_1+\alpha_2}$.
\end{lemma}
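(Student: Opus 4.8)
\textbf{Proof proposal for Lemma~\ref{lem:multiply-hatD}.}
The plan is to reduce everything to the classical multiplication estimate for singular modelled distributions, \cite[Prop.~6.12]{Hairer14}, which already handles the $\cD^{\gamma,\eta}_\alpha$-membership, and then to upgrade the statement in two ways: first, to track that the product lands in $\bar\cD$ (i.e.\ also in the plain $\cD^\eta$ space, not merely in $\cD^{\gamma,\eta}$), which is the point of stating $\bar\cD$ rather than $\cD$; and second, to observe that when both factors vanish for $t\le 0$ the product does too, and that the improved exponent $\eta_1+\eta_2$ (rather than the worse value $(\alpha_1+\eta_2)\wedge(\alpha_2+\eta_1)\wedge(\eta_1+\eta_2)$) is then the correct one. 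First I would recall the definitions: $f\in\cD^{\gamma,\eta}_\alpha$ means $f$ takes values in $\mcb{T}_{<\gamma}$, the lowest-degree component lies in $\mcb{T}_\alpha$, and the local bounds $\|f(z)\|_\ell \lesssim (\|z\|_P)^{(\eta-\ell)\wedge 0}$ and $\|f(z)-\Gamma_{z\bar z}f(\bar z)\|_\ell \lesssim \|z-\bar z\|^{\gamma-\ell}(\|z\|_P\wedge\|\bar z\|_P)^{\eta-\gamma}$ hold uniformly on compacts away from $P$; membership in $\cD^\eta$ is the special case where one also asks $\|f(z)\|_\ell \lesssim (\|z\|_P)^{\eta-\ell}$ for \emph{all} $\ell$ including $\ell<\eta$. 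The product $f_1 f_2$ is defined componentwise using the product on $\mcb{T}$ and then truncated at degree $\gamma$.

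The key steps, in order. (i) Apply \cite[Prop.~6.12]{Hairer14} directly to $F_1\cdot F_2$: since $\alpha_i\le 0$, the hypotheses there are met and one gets $F_1 F_2 \in \cD^{\gamma,\eta}_{\alpha_1+\alpha_2}$ with exactly the $\gamma$ and $\eta$ claimed. (ii) To see $F_1 F_2\in\cD^\eta$ as well, revisit the pointwise bound in the proof of that proposition: writing $(F_1 F_2)(z) = \sum F_1(z)\otimes F_2(z)$ truncated, a term of homogeneity $\ell = \ell_1+\ell_2$ is bounded by $\|F_1(z)\|_{\ell_1}\|F_2(z)\|_{\ell_2}$; using $F_i\in\cD^{\eta_i}$ we may bound $\|F_i(z)\|_{\ell_i}\lesssim (\|z\|_P)^{\eta_i-\ell_i}$ \emph{without} the $\wedge 0$, so the product term is $\lesssim (\|z\|_P)^{\eta_1+\eta_2-\ell}$, which is $\ge (\|z\|_P)^{\eta-\ell}$ precisely because $\eta\le\eta_1+\eta_2$; this gives the $\cD^\eta$-bound and hence $F_1 F_2\in\bar\cD^{\gamma,\eta}_{\alpha_1+\alpha_2}$. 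The translation-bound half of the $\cD^\eta$ condition is handled the same way, splitting $F_1(z)F_2(z)-\Gamma(F_1 F_2)(\bar z)$ into the usual two cross terms and applying the $\cD^{\eta_i}$ estimates to each factor. (iii) For the final sentence: if $F_i\in\hat\cD^{\gamma_i,\eta_i}_{\alpha_i}$ then $F_i(t,x)=0$ for $t\le 0$, so $(F_1 F_2)(t,x)=0$ for $t\le 0$, i.e.\ the product is in $\hat\cD$; moreover once we know the product vanishes on $\{t\le 0\}$, the local bound near $P$ can be re-derived by Taylor-expanding each factor against its value at a point with the \emph{same} time coordinate $t>0$ (as in \cite[Lem.~6.5]{Hairer14} / the proof of Theorem~\ref{thm:reconstructDomain}), and this argument produces the exponent $\eta_1+\eta_2$ directly rather than the minimum with the mixed terms — the mixed terms $\alpha_i+\eta_j$ only arise from the truncation interplay and are dominated once the vanishing is used to control the low-homogeneity pieces. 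So in the $\hat\cD$ case one gets $F_1 F_2\in\hat\cD^{\gamma,\eta_1+\eta_2}_{\alpha_1+\alpha_2}$ with $\gamma$ as before.

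The main obstacle I anticipate is step (iii): carefully justifying that the vanishing at $t\le 0$ really does let one replace the $\eta$ of \cite[Prop.~6.12]{Hairer14} by the larger $\eta_1+\eta_2$. One has to be careful that the truncation of the product at degree $\gamma$ does not reintroduce the bad exponents; concretely, the components of $F_1(z)\otimes F_2(z)$ of degree $\ge \gamma$ that get discarded are exactly the ones responsible for the worse bound in the general case, and one needs to check that discarding them is legitimate and that the remaining components, together with the $t$-coordinate-preserving Taylor expansion, satisfy the $\hat\cD^{\gamma,\eta_1+\eta_2}$ bounds. This is essentially a bookkeeping exercise combining the argument of \cite[Prop.~6.12]{Hairer14} with the one behind Remark~\ref{rem:DDhat-same}, but it is the only place where something genuinely needs to be said beyond citing \cite{Hairer14}. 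Everything else — the $\gamma$ exponent, the $\alpha_1+\alpha_2$ lowest degree, continuity of the bilinear map, and the short-time/compact-set nature of the bounds — follows verbatim from the cited results.
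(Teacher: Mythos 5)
The paper does not actually prove this lemma here --- it defers to \cite[Appendix~A]{CCHS2d} --- so I am comparing your argument against the intended one there. Your skeleton (use \cite[Prop.~6.12]{Hairer14} for the $\cD^{\gamma,\eta}$ part, a separate argument for the $\cD^{\eta}$ part, and the vanishing at $t\le 0$ for the $\hat\cD$ improvement) is correct, but step (ii) rests on a false bound. Membership in $\cD^{\eta_i}$ --- the ordinary, unweighted space of modelled distributions of order $\eta_i$ --- does \emph{not} give $\|F_i(z)\|_{\ell}\lesssim \|z\|_P^{\eta_i-\ell}$ for $\ell<\eta_i$ (here $\|z\|_P$ is the parabolic distance to $P$); it only gives $\|F_i(z)\|_\ell\lesssim 1$ uniformly up to $P$, together with the translation bound with exponent $\eta_i-\ell$. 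A constant map $z\mapsto\tau$ with $\deg\tau=\alpha_i<\eta_i$ and $\Gamma\tau=\tau$ lies in $\bar\cD^{\gamma_i,\eta_i}_{\alpha_i}$ but does not vanish like $\|z\|_P^{\eta_i-\alpha_i}$. The vanishing bound you invoke is exactly the one that characterises $\hat\cD$: there one compares $F_i(z)$ with $F_i(\bar z)=0$ at a point $\bar z$ with time coordinate $\le 0$ and $\|z-\bar z\|\approx\|z\|_P$, and the $\cD^{\eta_i}$ translation bound then yields $\|F_i(z)\|_\ell\lesssim\|z\|_P^{\eta_i-\ell}$ for \emph{all} $\ell$. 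Used for general elements of $\bar\cD$, your argument would prove the first claim with $\eta=\eta_1+\eta_2$, which is false: take $F_1\equiv\tau_1$ of degree $\alpha_1$ and $F_2$ with a component of degree $\ell_2\ge\eta_2$ of size $\|z\|_P^{\eta_2-\ell_2}$; the product then has a component of degree $\alpha_1+\ell_2$ of size $\|z\|_P^{(\alpha_1+\eta_2)-(\alpha_1+\ell_2)}$, so the exponent $\alpha_1+\eta_2$ in $\eta$ is genuinely needed. The correct route for the $\cD^\eta$ half of the first claim is to apply the non-singular multiplication bound \cite[Thm.~4.7]{Hairer14} to the $F_i$ viewed in $\cD^{\eta_i}_{\alpha_i}$ (note that only components of degree $<\eta_i$ can contribute to a product component of degree $<\eta$, since $\ell_1+\ell_2<\eta\le\alpha_2+\eta_1$ and $\ell_2\ge\alpha_2$ force $\ell_1<\eta_1$), which gives $F_1F_2\in\cD^{(\eta_1+\alpha_2)\wedge(\eta_2+\alpha_1)}\subset\cD^{\eta}$.

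Step (iii) has the right idea but the stated mechanism does not work: Taylor-expanding each factor against its value at a point with the \emph{same} time coordinate never sees the vanishing; the comparison point must lie in $\{t\le 0\}$. Once that is corrected, the improved pointwise bounds $\|F_i(z)\|_\ell\lesssim\|z\|_P^{\eta_i-\ell}$ (all $\ell$, no $\wedge\,0$) hold for $F_i\in\hat\cD^{\gamma_i,\eta_i}_{\alpha_i}$, and rerunning the computation of \cite[Prop.~6.12]{Hairer14} with these inputs produces the exponent $\eta_1+\eta_2$ directly; the truncation at degree $\gamma$ causes no trouble since it only discards terms. In summary the gap is localised but real: the characterisation of $\cD^\eta$ you start from is wrong, step (ii) proves a false intermediate statement and should be replaced by an appeal to \cite[Thm.~4.7]{Hairer14}, and the improved pointwise bound belongs in step (iii), where the vanishing for $t\le 0$ actually makes it true.
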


Remark that the  multiplication bound \cite[Prop.~6.12]{Hairer14}
and Lemma~\ref{lem:multiply-hatD} also hold for the $\eps$-dependent norms
in Section~\ref{sec:sol-multi}.

Below we assume that 
we have an abstract integration map $\CI$  of order $\beta$\footnote{Note that we write $\beta$ in this section following the convention as in Schauder estimates in~\cite{Hairer14}, and it shouldn't be confused with the parameter $\beta$ in Section~\ref{sec:state_space}.}
and admissible models $Z,\bar Z$ realising a non-anticipative kernel $K$ for $\CI$.
Following~\cite[Sec.~4.5]{MateBoundary} and~\cite[Appendix~A.3]{CCHS2d}, given a space-time distribution $\omega$
and a modelled distribution $f$,
we write $\mcb{K}^\omega f$ for the modelled distribution
defined as in  \cite[Sec.~5]{Hairer14} but with $\CR f$ in the definition replaced by $\omega$.

Working henceforth in the periodic setting $\R\times\T^d$,
we use the shorthand $O_T=[-1,T]\times\T^d$ for $T>0$.
Recall from~\cite[Appendix~A.1]{CCHS2d} the semi-norm $\$Z\$_T$ defined as the smallest constant $C$ such that, for all homogeneous elements $\tau$ in our regularity structure, 
\begin{equ}
\bigl| \bigl(\Pi_x \tau\bigr)(\phi_x^\lambda)\bigr| \le C \|\tau\| \lambda^{\deg\tau}\;,
\end{equ}
for all $\phi \in \CB^r$, $x \in O_T$, and $\lambda \in (0,1]$ such that
$B_\s(x,2\lambda) \subset O_T$, and
\begin{equ}
\|\Gamma_{x,y} \tau\|_\alpha \le C \|\tau\| \, \|x-y\|_\s^{\deg \tau - \alpha}\;,
\end{equ}
for all $x,y \in O_T$.
The pseudo-metric $\$Z;\bar Z\$_T$ is defined analogously.
We will write
$|\act|_{ \cD^{\gamma,\eta};T}$ for the modelled distribution semi-norm associated to the set $O_T$, and likewise for $\hat\cD^{\gamma,\eta}$.
We similarly define $|\omega|_{\CC^\alpha_T}$
for $\omega\in\CD'(\R\times\T^d)$ and $\alpha\in\R$ as the smallest constant $C$ such that
\begin{equ}
|\omega(\phi^\lambda_x)| \leq C\lambda^{\alpha}
\end{equ}
for all $\phi \in \CB^r$, $x \in O_T$, and $\lambda \in (0,1]$ such that
$B_\s(x,2\lambda) \subset O_T$.

\begin{lemma}\label{lem:Schauder-input}
Fix $\gamma>0$. Let $f\in \cD^{\gamma,\eta}_\alpha$ and let $\omega\in \CC^{\eta\wedge \alpha}$ be compatible with $f$.
Set $\bar\gamma=\gamma+\beta$, $\bar\eta=(\eta\wedge \alpha)+\beta$,
which are assumed to be non-integers,
 $\bar\alpha=(\alpha+\beta)\wedge 0$ 
and $\bar\eta\wedge\bar\alpha >-2$. Then $\mcb{K}^\omega f \in \cD^{\bar\gamma,\bar\eta}_{\bar \alpha}$,
and  one has $\CR\mcb{K}^\omega f = K * \omega$.

Furthermore, if $\bar f\in \cD^{\gamma,\eta}_\alpha$ is a modelled distribution
with respect to $\bar Z$ and $\bar\omega\in \CC^{\eta\wedge \alpha}$ is compatible with $\bar f$, then 
 \begin{equ}[e:conti-in-zeta]
 |\mcb{K}^\omega \bone_+f ; \mcb{K}^{\bar\omega}\bone_+\bar f|_{ \cD^{\bar\gamma,\bar\eta};T} 
 \lesssim
   |f;\bar f|_{\cD^{\gamma,\eta};T}
  + \$Z;\bar Z\$_{T}
+  |\omega-\bar\omega|_{\CC^{\eta\wedge \alpha}_T}
 \end{equ}
 locally uniformly in models, modelled distributions and space-time distributions $\omega$, $\bar\omega$.
 Finally, the above bound also holds uniformly in $\eps$
for the $\eps$-dependent norms 
on models and modelled distributions defined in  \cite[Sec.~7.2]{CCHS2d}.
\end{lemma}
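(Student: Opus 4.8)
The plan is to adapt the proof of the singular Schauder estimate, \cite[Prop.~6.16]{Hairer14}, in the refined form used in \cite[Sec.~4.5]{MateBoundary} and \cite[App.~A.3]{CCHS2d}, the only new ingredient being that the reconstruction $\tilde\CR f$, which a priori is defined only on $\R^{d+1}\setminus P$, is replaced wherever it would appear by the externally supplied distribution $\omega$. Since $\omega$ is compatible with $f$, the two agree on test functions supported away from $P$, so none of the classical estimates away from $P$ are affected; all the new content concentrates near $P$, where the relaxed hypothesis $\bar\eta\wedge\bar\alpha>-2$ (in place of $\eta\wedge\alpha>-2$) is exactly what is needed.

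First I would recall that $\mcb{K}^\omega f$ is defined by the usual expansion of the abstract integration map of \cite[Sec.~5]{Hairer14}, namely $\mcb{K}^\omega f=\mcb{K} f+\sum_{|k|<\bar\gamma}\tfrac{X^k}{k!}\big(D^kK*\omega\big)(\,\cdot\,)$ together with the low-degree Taylor corrections, the point being that $\CR f$ has been swapped for $\omega$ in the terms of the second sum. Decomposing the non-anticipative kernel $K=\sum_{n\ge0}K_n$ into dyadic pieces splits $\mcb{K}^\omega f$ into a sum over scales, and, as in \cite[Thm.~5.12, Prop.~6.16]{Hairer14}, further into a ``far from $P$'' part (finitely many scales) and a ``near $P$'' part. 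Running the scale-by-scale bounds of \cite{Hairer14} together with the $t=0$ refinements of \cite[Sec.~4.5]{MateBoundary}, every term that the classical proof controls through $\CR f$ is now controlled directly through $\omega$ via bounds of the form $|(K_n*\omega)(x)|\lesssim 2^{-n(\eta\wedge\alpha+\beta)}$, which use only $\omega\in\CC^{\eta\wedge\alpha}$. One reads off the output homogeneities $\bar\gamma=\gamma+\beta$, $\bar\eta=(\eta\wedge\alpha)+\beta$, $\bar\alpha=(\alpha+\beta)\wedge0$; the non-integrality of $\bar\gamma,\bar\eta$ is used to discard exact-polynomial resonances, and the summation of the near-$P$ scales converges precisely when $\bar\eta\wedge\bar\alpha>-2$. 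Crucially, since $\omega$ is given by hand, the reconstruction theorem for $f$ is never invoked, so $\eta\wedge\alpha>-2$ is not required. This yields $\mcb{K}^\omega f\in\cD^{\bar\gamma,\bar\eta}_{\bar\alpha}$.

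For the identity $\CR\mcb{K}^\omega f=K*\omega$, I would argue as in \cite[App.~A.3]{CCHS2d}: $K$ is supported in a ball of fixed radius $r_0$, and for any test function $\psi$ supported at parabolic distance $>r_0$ from $P$ one has $(K*\omega)(\psi)=(K*\tilde\CR f)(\psi)=\tilde\CR(\mcb{K}^\omega f)(\psi)$, by compatibility of $\omega$ with $f$ and the local reconstruction of the abstract integral; hence $K*\omega$ is compatible with $\mcb{K}^\omega f$. Since $\bar\eta\wedge\bar\alpha>-2$ and, after multiplying by $\bone_+$ as in the application, $\mcb{K}^\omega\bone_+f\in\hat\cD^{\bar\gamma,\bar\eta}_{\bar\alpha}$, uniqueness of the reconstruction (Theorem~\ref{thm:reconstructDomain}) forces $\CR\mcb{K}^\omega f=K*\omega$. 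The continuity estimate \eqref{e:conti-in-zeta} then follows by the standard multilinearity argument: every bound above is affine or multilinear in the triple $(Z,f,\omega)$, so subtracting the objects built from $(\bar Z,\bar f,\bar\omega)$ and inserting one factor at a time (using $\bone_+g=g$ for $g\in\hat\cD$) produces the three terms on the right-hand side, and local uniformity in the data is immediate from the form of the estimates. Finally, since $K$ is non-anticipative, all constants depend only on the model and on $\omega$ restricted to $O_T$, which gives the short-time statement, and the $\eps$-dependent norms of \cite[Sec.~7.2]{CCHS2d} are built so that the kernel decomposition and every scale-by-scale bound go through verbatim with $\eps$-weights, giving the last claim.

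I expect the main obstacle to be the second step: carefully redoing the $t=0$ analysis of the singular Schauder estimate with $\omega$ in place of $\CR f$, and checking that the near-$P$ scale sums close under $\bar\eta\wedge\bar\alpha>-2$ alone. In particular one must verify that the low-degree Taylor subtractions defining $\mcb{K}^\omega f$ remain well-defined and produce the advertised exponents even when $\eta\wedge\alpha$ is very negative — this is exactly where compatibility of $\omega$ with $f$ is used, to pin down the away-from-$P$ behaviour and thereby make sense of those subtractions.
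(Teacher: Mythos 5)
Your proposal is correct and follows essentially the same route as the paper, which does not prove the lemma in-line but defers to \cite[Appendix~A]{CCHS2d}: one reruns the singular Schauder estimate of \cite[Prop.~6.16]{Hairer14} (in the form of \cite{MateBoundary}) with the externally supplied $\omega$ substituted for $\tilde\CR f$, uses compatibility to handle the region away from $P$, and observes that only the output condition $\bar\eta\wedge\bar\alpha>-2$ is needed since the reconstruction theorem for $f$ itself is never invoked. The only cosmetic point is that the identity $\CR\mcb{K}^\omega f=K*\omega$ is most cleanly obtained from the uniqueness of the extension for $\cD^{\bar\gamma,\bar\eta}_{\bar\alpha}$ under $\bar\eta\wedge\bar\alpha>-2$ (i.e.\ \cite[Prop.~6.9]{Hairer14}) rather than via multiplication by $\bone_+$ and Theorem~\ref{thm:reconstructDomain}, but this changes nothing of substance.
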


 For the spaces $\hat \cD^{\gamma,\eta}$, we have the following version of Schauder estimate.

\begin{theorem}\label{thm:integration}
Let $\gamma>0$, and $\eta > -2$ such that $\gamma+\beta\notin \N$ and $\eta+\beta\notin\N$.
Then, there exists an operator 
$\mcb{K} \colon \bar \cD^{\gamma,\eta} \to \bar\cD^{\gamma+\beta,\eta+\beta}$ 
which also maps $\hat \cD^{\gamma,\eta}$ to $\hat\cD^{\gamma+\beta,\eta+\beta}$ and
such that 
$\CR \mcb{K} f = K * \CR f$ with the reconstruction $\CR$ from Theorem~\ref{thm:reconstructDomain}.
Furthermore, 
for $T\in(0,1)$ and $\kappa\geq 0$
\begin{equs}[eq:short_time]
|\mcb K f |_{\cD^{\gamma+\beta-\kappa,\eta+\beta-\kappa};T}
&\lesssim T^{\kappa/2}|f|_{\cD^{\gamma,\eta};T}\;,
\\
|\mcb K  f; \mcb K \bar f |_{\cD^{\gamma+\beta-\kappa,\eta+\beta-\kappa};T}
&\lesssim T^{\kappa/2}(|f;\bar f|_{\cD^{\gamma,\eta};T} + \$Z;\bar Z\$_{T})\;,
\end{equs}
where the first proportionality constant depends only on $\$Z\$_{T}$
and the second depends on $\$Z\$_{T}+\$\bar Z\$_{T}+|f |_{\cD^{\gamma,\eta};T}+| \bar f |_{\cD^{\gamma,\eta};T}$. 
\end{theorem}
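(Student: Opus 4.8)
The statement to prove is Theorem~\ref{thm:integration}, the singular Schauder estimate for the spaces $\hat\cD^{\gamma,\eta}$ together with the short-time convolution bounds~\eqref{eq:short_time}. The plan is to deduce everything from the existing machinery in the literature, namely the classical Schauder estimate \cite[Prop.~6.16]{Hairer14} for the $\cD^{\gamma,\eta}_\alpha$ spaces (which requires $\eta\wedge\alpha>-2$) together with the boundary-singular refinements of \cite{MateBoundary}, by showing that vanishing of $f$ on $\{t\le 0\}$ lets us bypass the condition $\alpha>-2$. The key structural observation is that the abstract integration map $\mcb K = \mcb K_\beta + R\CR$ can be defined on $\bar\cD^{\gamma,\eta}$ using the reconstruction operator $\CR$ of Theorem~\ref{thm:reconstructDomain}, which exists precisely because $\eta>-2$ and $f$ vanishes on $P$; the point is that although the lowest-degree exponent $\alpha$ of the modelled distribution may be below $-2$, the only obstruction to defining $R\CR$ is reconstructability, and that is governed by $\eta$ (and the vanishing) rather than $\alpha$.

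First I would recall the construction of $\mcb K$ on $\hat\cD^{\gamma,\eta}$ following \cite[Sec.~6]{MateBoundary}: one sets $\mcb K f = \mcb K_\beta f + R\CR f$ where $\mcb K_\beta$ is the purely local abstract integration operator built from the Taylor expansion of $K$ against $\Pi_x f(x)$ (this part never sees the global reconstruction and is unproblematic since $K$ is supported away from the origin at large scales and the local bounds only use the model seminorms), and $R$ is the operator of \cite[Eq.~(7.7)]{Hairer14} realising convolution with the large-scale remainder $G-K$, now applied to $\CR f \in \CC^{\eta\wedge\alpha}$ from Theorem~\ref{thm:reconstructDomain}. One must check that $\mcb K f$ again vanishes for $t\le 0$: this follows because $f$ does, hence $\Pi_x f(x)$ is supported on $\{t\ge 0\}$ in the relevant sense and $K$ is non-anticipative, while $\CR f$ is supported on $\{t\ge0\}$ by compatibility and the non-anticipative property, and $G-K$ is a smooth non-anticipative kernel. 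The membership $\mcb K f\in\hat\cD^{\gamma+\beta,\eta+\beta}$ and the identity $\CR\mcb K f = K*\CR f$ then follow by rerunning the proof of \cite[Thm.~5.12]{Hairer14} / \cite[Prop.~6.16]{Hairer14} line by line, using that the degree-below-$-2$ components of $f$ are handled by $\mcb K_\beta$ alone (which poses no integrability problem in time since it is a finite Taylor sum, not a genuine reconstruction) and only the components of degree $>-2$ feed into $R\CR$. I would essentially cite \cite[Sec.~6]{MateBoundary} here since this is exactly their setting.

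For the short-time estimates~\eqref{eq:short_time}, the plan is to localise all the seminorms to $O_T=[-1,T]\times\T^d$ and track the gain of a power of $T$. The mechanism is standard: decomposing the kernel $K$ dyadically as $K=\sum_n K_n$ with $K_n$ supported at scale $2^{-n}$, the contribution of scales $2^{-n}\gtrsim \sqrt T$ to $\mcb K f$ on $O_T$ involves only the behaviour of $f$ and the model on a slightly enlarged $O_{cT}$, and summing the geometric series in the exponent $\gamma+\beta-\kappa$ against the available regularity $\gamma+\beta$ produces the factor $T^{\kappa/2}$; the scales $2^{-n}\lesssim \sqrt T$ are handled by the reconstruction bound near $P$, where the $\eta$-weight supplies the decay. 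Concretely I would invoke the refinement \cite[Prop.~A.4]{CCHS2d} of \cite[Thm.~7.1]{Hairer14} (already used in the proof of Lemma~\ref{lem:fixedptpblmclose}) which packages precisely this short-time gain for non-anticipative kernels, combined with Lemma~\ref{lem:Schauder-input} for the $R\CR$ part (whose continuity estimate~\eqref{e:conti-in-zeta} is already localised to $O_T$). The difference bound follows by the usual bilinearity/telescoping argument, splitting $\mcb K f - \mcb K\bar f$ into a term with the model $Z$ fixed (controlled by $|f;\bar f|_{\cD^{\gamma,\eta};T}$) and a term with $f$ fixed (controlled by $\$Z;\bar Z\$_T$), which is why the second constant is allowed to depend on the sizes of both $f$ and $\bar f$ and both models.

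The main obstacle is purely bookkeeping: verifying carefully that the pieces of $f$ of negative degree $\le -2$ genuinely never require a global reconstruction — i.e.\ that $\mcb K_\beta$ absorbs them and that the remainder fed into $R$ always has regularity $>-2$ — and that all of this is compatible with the vanishing condition at $t=0$ so that one lands back in $\hat\cD$ rather than merely $\bar\cD$. This is exactly the subtlety that \cite{MateBoundary} was written to handle, so modulo adapting their statements to our notation and to the localised seminorms $|\cdot|_{\cD^{\gamma,\eta};T}$ there is no genuinely new difficulty; I would therefore keep this proof short and refer to \cite[Sec.~6]{MateBoundary} and \cite[Prop.~A.4]{CCHS2d} for the details, spelling out only the non-anticipativity check and the degree bookkeeping.
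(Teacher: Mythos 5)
Your proposal is correct and takes essentially the same route as the paper, which gives no self-contained argument here but defers to \cite[Appendix~A]{CCHS2d} (itself adapting \cite{Hairer14} and \cite{MateBoundary}): the same decomposition $\mcb{K}=\mcb{K}_\beta+R\CR$ with $\CR$ taken from Theorem~\ref{thm:reconstructDomain}, non-anticipativity of $K$ to land back in $\hat\cD$, and the dyadic\slash telescoping arguments for \eqref{eq:short_time} via \cite[Prop.~A.4]{CCHS2d}. One minor imprecision: the $\eta\wedge\alpha>-2$ obstruction does not sit in $R\CR$ (convolving $\CR f$ with the smooth kernel $G-K$ is harmless whatever the regularity of $\CR f$) but in the non-local part of $\mcb{K}_\beta$ near $P$, where it is precisely the vanishing of $f$ on $\{t\le 0\}$ together with $\eta>-2$ that makes the relevant integrals converge --- which is exactly the bookkeeping you correctly defer to \cite{MateBoundary}.
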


Write $\mcb{K}$ and $\bar{\mcb{K}}$
for integration operators on modelled distributions
corresponding to 
$\mfz$ and $\mfm$.
Recall from \eqref{e:K-Keps-assign}
that they represent the kernels $K$ and $K^\eps = K \ast \moll^{\eps}$.
We assume that $\moll$ is non-anticipative and therefore so is $K \ast \moll^{\eps}$.
Recall the fixed parameter $\varsigma \in  (0,\kappa]$ and the norms $|\cdot|_{\hat\cD^{\gamma,\eta,\eps};T}$ in Section~\ref{subsubsec:prob}.
The following results are from  \cite[Sec.~7]{CCHS2d}.

\begin{lemma}\label{lem:K-barK-hat}
Fix $\gamma  > 0$ and $\eta < \gamma$  such that $\gamma + 2 - \kappa \not \in \N$, $\eta + 2 - \kappa \not \in \N$, and $\eta>-2$.
Suppose that $\moll^{\eps}$ is non-anticipative.
Then, for fixed $M > 0$, one has for all $T\in(0,1)$
\[
|\mcb{K} f - \bar{\mcb{K}} f|_{\hat\cD^{\gamma + 2 - \kappa, \eta  + 2 - \kappa,\eps};T} \lesssim \eps^{\varsigma} |f|_{\hat\cD^{\gamma,\eta,\eps};T}
\]
uniformly in $\eps \in (0,1]$, $Z \in \mathscr{M}_{\eps}$ with $\$Z\$_{\eps;T} \le M$, and $f \in \hat\cD^{\gamma,\eta} \ltimes Z$.
\end{lemma}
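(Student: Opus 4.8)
Looking at this, the final statement is Lemma~\ref{lem:K-barK-hat}, which bounds the difference between the two integration operators $\mcb{K}$ (realising the kernel $K$) and $\bar{\mcb{K}}$ (realising $K^\eps = K*\moll^\eps$) acting on the same modelled distribution $f$, measured in the $\eps$-dependent norms.

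\textbf{Overall approach.} The plan is to reduce the statement to the analogous bound for the non-truncated Schauder operators $\CK_\gamma$ (convolution against $G$ and $G^\eps = G*\moll^\eps$ respectively) plus the remainder pieces $R$, $\bar R$, and then to exploit that $K - K^\eps$ is a smooth kernel which is ``small of order $\eps^\varsigma$'' in the kernel norms $\$\cdot\$_{\beta;m}$ introduced in~\cite[Sec.~10.3]{Hairer14} (as used repeatedly in this paper, e.g.\ in Lemma~\ref{lem:omegas_converge} and~\cite[Sec.~7.2.2]{CCHS2d}). Concretely, $K - K^\eps$ decomposes as $\sum_n (K_n - K^\eps_n)$ over dyadic scales, and on scales $2^{-n} \gtrsim \eps$ the difference vanishes to high order while on scales $2^{-n} \lesssim \eps$ one has $\$K_n - K^\eps_n\$ \lesssim \eps^{\varsigma} 2^{n\varsigma}\cdot(\textnormal{scale factors})$, exactly the mechanism by which the $\$\cdot\$_\eps$ norms are designed to absorb such differences.

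\textbf{Key steps.} First I would recall the definition of $\mcb{K}$ and $\bar{\mcb{K}}$ from~\cite[Sec.~5]{Hairer14}/Theorem~\ref{thm:integration}: each is $\CK_{\mfz} + R\CR$ (resp.\ with $K^\eps$, $G^\eps - K^\eps$), where $\CK_{\mfz}$ is the ``local'' part built from the Taylor expansion of the kernel against $\Pi_x f$. Their difference is thus controlled by (i) the difference of the local parts, which is built from $K - K^\eps$ paired against $\Pi_x f$ and its Taylor remainders, and (ii) the difference of the smooth remainder operators $R - \bar R$ applied to $\CR f$, where $R - \bar R$ realises convolution with $(G - G^\eps) - (K - K^\eps)$, a globally smooth function. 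Step two: bound the local part. Here one repeats the proof of~\cite[Prop.~6.16]{Hairer14} (or the proof of Theorem~\ref{thm:integration}) verbatim but tracking the single extra factor coming from replacing one occurrence of $K$ by $K - K^\eps$; since the relevant kernel seminorms satisfy $\$K - K^\eps\$_{\beta;m} \lesssim \eps^\varsigma$ when measured against the $\eps$-rescaled norms, and since the $\hat\cD^{\gamma,\eta,\eps}$ norm of $f$ already carries the compensating $\eps$-powers, the product comes out as $\eps^\varsigma |f|_{\hat\cD^{\gamma,\eta,\eps};T}$. Step three: the vanishing condition at $t\le 0$ is inherited since $K$, $K^\eps$ are non-anticipative (this is why the hypothesis is stated), so $\mcb{K}f - \bar{\mcb{K}}f$ lies in $\hat\cD$; and the required non-integer conditions on $\gamma + 2 - \kappa$, $\eta + 2 - \kappa$ together with $\eta > -2$ ensure all the Schauder estimates used are applicable. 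Step four: the remainder difference $R - \bar R$ applied to $\CR f$ is the easiest piece — it is a convolution with a smooth, compactly supported kernel of size $O(\eps^\varsigma)$ in the relevant $\mcC^r$-type norms, mapping $\CC^{\eta\wedge\alpha}$ (where $\CR f$ lives by Theorem~\ref{thm:reconstructDomain}) into the polynomial sector with the stated regularity gain, again producing a factor $\eps^\varsigma$.

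\textbf{Main obstacle.} The genuinely delicate point is the uniformity in $\eps$ and the bookkeeping of the $\eps$-dependent seminorms: one must check that every estimate in the adaptation of~\cite[Prop.~6.16]{Hairer14} is performed with the correct $\eps$-rescaled kernel and modelled-distribution norms, so that the single ``gained'' factor $\eps^\varsigma$ is not cancelled by a compensating loss elsewhere (in particular near $t = 0$, where the $\hat\cD$ weight $\eta + 2 - \kappa$ interacts with the scaling). This is essentially the same technical core as~\cite[Lem.~7.24, Sec.~7.2.2]{CCHS2d}, so I would lean on those computations; the only new feature relative to the $d=2$ case is that here $\beta = 2 - \kappa$ and the noise regularity is $-5/2 - \kappa$, but since the statement is phrased abstractly in terms of $\gamma$, $\eta$, $\beta$ with the hypotheses $\eta > -2$ and the non-integer conditions, the proof goes through with no structural change. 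I would therefore present the proof as a short reduction to the $d=2$ argument of~\cite{CCHS2d}, citing~\cite[Sec.~7.2]{CCHS2d} for the details of the $\eps$-norm manipulations and only pointing out that the non-anticipative assumption guarantees membership in the $\hat\cD$ spaces.
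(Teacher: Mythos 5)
Your proposal is sound and matches the paper's treatment: the paper gives no proof of this lemma at all, simply citing \cite[Sec.~7]{CCHS2d}, and your sketch (splitting into the local part built from $K-K^\eps$ plus the remainder $R-\bar R$, using the $O(\eps^\varsigma)$ smallness of $K-K^\eps$ in the $\eps$-weighted kernel norms, and non-anticipativity of $K$ and $K^\eps$ for membership in the $\hat\cD$ spaces) is exactly the argument of the cited $d=2$ result, which transfers verbatim since the statement is phrased abstractly in $\gamma,\eta,\kappa$. No gaps.
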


\begin{lemma}\label{lem:Schauder-input-KK}
Under the same assumptions as Lemma~\ref{lem:Schauder-input} with $\beta=2-\kappa$ and for fixed $M>0$
one has for all $T\in(0,1)$
\begin{equ}
| \mcb{K}^\omega \bone_+ f -\bar{\mcb{K}}^\omega \bone_+ f |_{\cD^{\bar\gamma-\kappa,\bar\eta,\eps};T}
\lesssim \eps^\varsigma
| f |_{\cD^{\gamma,\eta,\eps};T}
\end{equ} 
uniformly in $\eps \in (0,1]$, $Z \in \mathscr{M}_{\eps}$ with $\$Z\$_{\eps;T} \le M$, and $f \in \cD^{\gamma,\eta} \ltimes Z$.
\end{lemma}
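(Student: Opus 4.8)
The plan is to re-run the proof of the ``input'' Schauder estimate Lemma~\ref{lem:Schauder-input} (cf.\ \cite[Sec.~4.5]{MateBoundary} and \cite[Appendix~A]{CCHS2d}) with the kernel $K-K^{\eps}$ in place of $K$, exploiting that $\mcb{K}^{\omega}$ and $\bar{\mcb{K}}^{\omega}$ use the \emph{same} input distribution $\omega$. First I would record that, since $\moll$ and hence $K^{\eps}=K\ast\moll^{\eps}$ are non-anticipative and of the same order $\beta=2-\kappa$ as $K$, one can decompose $K-K^{\eps}=\sum_{n\geq0}(K_{n}-K^{\eps}_{n})$ into the usual dyadic pieces, with each $K_{n}-K^{\eps}_{n}$ supported at scale $2^{-n}$ and obeying the kernel bounds of an order-$\beta$ kernel but weakened by a factor $\eps^{\varsigma}$ at scales $2^{-n}\lesssim\eps$; after giving up $\kappa$ of the regularity exponent these contributions sum to a bound of size $\eps^{\varsigma}$. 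This is exactly the input in \cite[Sec.~7.2.2]{CCHS2d} (where the parameter called `$\theta$' there is our $\varsigma$) that already underlies Lemma~\ref{lem:K-barK-hat} for the non-input operators; the present statement is its ``input'' analogue.

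Next I would write $\mcb{K}^{\omega}\bone_+ f-\bar{\mcb{K}}^{\omega}\bone_+ f$ as the sum of the three standard pieces of a Schauder-type operator, now built from $K-K^{\eps}$: the ``abstract integration'' piece assembled from the local Taylor jets of the $K_{n}-K^{\eps}_{n}$; the polynomial-valued piece $J_{K-K^{\eps}}(\cdot)(\bone_+ f)(\cdot)$; and the ``remainder'' piece, which by the very definition of $\mcb{K}^{\omega}$ has reconstruction replaced by $\omega$ and therefore takes the form $\sum_{n}\bigl(\omega-\Pi_{x}(\bone_+ f)(x)\bigr)\bigl((K_{n}-K^{\eps}_{n})^{(\gamma)}_{x}\bigr)$, the superscript $(\gamma)$ denoting the kernel with its order-$\gamma$ Taylor polynomial removed. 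The key structural observation is that the $\omega$-dependent parts of the two operators cancel up to the kernel difference, so $\omega$ is never paired against a full kernel, only against $K_{n}-K^{\eps}_{n}$. The first two pieces are then estimated exactly as in the proof of Theorem~\ref{thm:integration} and \cite[Sec.~5]{Hairer14}, feeding in the $\eps^{\varsigma}$-weakened kernel bounds and the uniform control $\$Z\$_{\eps;T}\le M$; the remainder piece is handled as in \cite[Appendix~A]{CCHS2d}, using compatibility of $\omega$ with $\bone_+ f$ away from the hyperplane $P=\{t=0\}$ together with the near-$P$ analysis that exploits the vanishing of $\bone_+ f$ on $\{t\le0\}$, again gaining $\eps^{\varsigma}$ from the kernel difference. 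Summing the geometric series in $n$ — the loss $\bar\gamma\to\bar\gamma-\kappa$ (the exponent $\bar\eta$ being kept) providing summability — yields the bound. The localisation to $O_{T}=[-1,T]\times\T^{d}$, and the claimed uniformity in $\eps$, are automatic since $K$ and $K^{\eps}$ are non-anticipative, exactly as in Lemma~\ref{lem:K-barK-hat} and Theorem~\ref{thm:integration}.

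I expect the main obstacle to be bookkeeping rather than anything conceptual: one has to organise the proof of the input Schauder estimate so that the kernel enters ``linearly'' and can be specialised to $K-K^{\eps}$, and then check that every occurrence of a kernel norm can be upgraded to its $\eps^{\varsigma}$-small version at the cost of at most $\kappa$ in the regularity exponents and a multiplicative constant depending only on $M$, uniformly in $\eps\in(0,1]$. The one genuinely delicate point is the behaviour near $t=0$: there $\omega$ and $\tilde\CR(\bone_+ f)$ need not agree, so the remainder piece cannot be controlled by the plain reconstruction bound; but because $\bone_+ f$ vanishes on $\{t\le0\}$ the relevant contributions localise near $P$ and are controlled by the same scale-counting argument as in Lemma~\ref{lem:Schauder-input} and \cite[Appendix~A]{CCHS2d}, now enjoying the extra $\eps^{\varsigma}$ gain coming from the kernel difference. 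With these ingredients assembled the estimate closes.
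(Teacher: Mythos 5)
Your proposal is correct and follows the same route as the source the paper defers to: the paper gives no proof here but cites \cite[Sec.~7]{CCHS2d}, where the argument is exactly the one you describe — both operators share the same input $\omega$, so the difference is a Schauder-type operator built from $K-K^{\eps}$, whose dyadic pieces satisfy order-$\beta$ kernel bounds with an extra factor $\eps^{\varsigma}$ after sacrificing $\kappa$ in the exponent, and the three standard pieces (local jet, polynomial part $J$, and the $\omega$-remainder) are then estimated as in the input Schauder lemma. Your identification of the key structural point (that $\omega$ is only ever paired against the kernel difference) and of the delicate step near $t=0$ matches the intended proof.
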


\section{YMH flow without DeTurck term}
\label{app:YMH flow without DeTurck term}

In this appendix we collect some useful results on the YMH flow (without DeTurck and $\Phi^3$ term) 
which reads
\begin{equation}\label{eq:YMH_no_DeTurck}
\begin{split}
\partial_t A =& -\mrd_A^* F_A - \mathbf{B}(\mrd_{A}\Phi\otimes\Phi)\;,\\
\partial_t\Phi  =& - \mrd_{ A}^* \mrd_A \Phi\;,
\end{split}
\end{equation}
or, in coordinates,
\begin{equs}
\partial_t A_i
&= \Delta A_i  - \partial_{ji}^2 A_j + [A_j,2\partial_j A_i - \partial_i A_j + [A_j,A_i]]\\
{}& \qquad 
+ [\partial_j A_j, A_i] -\mathbf{B} ( (\partial_{i} \Phi + A_{i}\Phi)\otimes \Phi)\;,\\
\partial_t\Phi  &= 
\Delta \Phi + (\partial_{j}A_{j})\Phi + 2 A_{j} \partial_{j}\Phi + A_{j}^{2}\Phi \;.
\end{equs}

In this section we set $E=\mfg^d\oplus \higgsvec$ where $d \in \{2,3\}$. 

\begin{lemma}[Theorems~2.6 and~3.7 of~\cite{HongTian2004}]
\label{lem:global_sol_no_deturck} 
For $d=2,3$ and any $(A_0,\Phi_0)\in\CC^\infty(\T^d,E)$, there exists a unique solution to~\eqref{eq:YMH_no_DeTurck} in $\CC^\infty(\R_+\times \T^d, E)$
with initial condition $(A_0,\Phi_0)$.
\end{lemma}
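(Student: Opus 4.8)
The statement is exactly \cite[Thms.~2.6 and~3.7]{HongTian2004} transcribed to the torus, so the plan is to recall how that argument runs and to note the two small adaptations needed (periodic domain, and the coupling to $\Phi$). The system \eqref{eq:YMH_no_DeTurck} is only degenerate parabolic, because of gauge invariance, so the first step is the DeTurck trick: add the DeTurck--Zwanziger term $\mrd_A\mrd^*A$ to the equation for $A$ (equivalently $+\partial_i\partial_jA_j$ in coordinates) and $-(\mrd^*A)\Phi$ to the equation for $\Phi$, obtaining a strictly parabolic quasilinear system with smooth coefficients. Standard parabolic theory — a contraction argument in parabolic H\"older or Sobolev spaces followed by bootstrapping — then yields, for any smooth initial data, a unique short-time solution $(\hat A,\hat\Phi)\in\CC^\infty((0,T_0)\times\T^d,E)$ depending smoothly on the data. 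One recovers a solution of \eqref{eq:YMH_no_DeTurck} by setting $(A,\Phi)=(\hat A,\hat\Phi)^g$ where $g$ solves the linear ODE $g^{-1}\partial_t g=-\mrd^*\hat A$ with $g(0)=\id$; smoothness of $g$ is immediate since $\hat A$ is smooth and the equation is linear. This is precisely the correspondence recorded in Lemma~\ref{lem:deturck_to_no_deturck}.

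Uniqueness among smooth solutions of \eqref{eq:YMH_no_DeTurck} then follows by pulling any two solutions back to solutions of the strictly parabolic DeTurck flow, via the inverses of the (smooth) gauge transformations above, where uniqueness is classical; alternatively one runs a Gronwall-type energy estimate directly on the difference of two solutions.

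The crux is global existence, i.e.\ ruling out finite-time blow-up, and this is where I would follow \cite{HongTian2004} most closely. The Yang--Mills--Higgs energy $\mcE(A,\Phi)=\int_{\T^d}\bigl(|F_A|^2+|\mrd_A\Phi|^2\bigr)$ is non-increasing along the flow, with the identity $\tfrac{d}{dt}\mcE(A(t),\Phi(t))=-2\int_{\T^d}\bigl(|\partial_tA|^2+|\partial_t\Phi|^2\bigr)$, and for $d\in\{2,3\}$ the flow is scaling-subcritical (the energy carries scaling dimension $4-d>0$). The decisive analytic input is an $\varepsilon$-regularity lemma: there is $\varepsilon_0>0$ such that, in a suitable local gauge, whenever the energy of $(A,\Phi)$ on a parabolic cylinder of radius $r$ is below $\varepsilon_0$, all space-time derivatives of the solution on the half-size cylinder are bounded by constants depending only on $r$. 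Since by subcriticality no fixed amount of energy can concentrate at a point as $t$ approaches a putative maximal time $T^{*}<\infty$, this estimate propagates smooth bounds up to $T^{*}$, so the solution extends past $T^{*}$, a contradiction; a final bootstrap upgrades this to $(A,\Phi)\in\CC^\infty(\R_+\times\T^d,E)$. The Higgs component needs nothing beyond what $A$ needs: given $A$, the field $\Phi$ solves a linear parabolic equation whose gradient is already controlled by $\mcE$, so it is carried along by the same $\varepsilon$-regularity and bootstrap. The main obstacle is exactly this $\varepsilon$-regularity / no-concentration step in $d=3$ (in $d=2$ it is considerably softer), which is the technical heart of \cite{HongTian2004}; in the write-up of the present paper one simply invokes that reference.
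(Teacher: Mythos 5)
The paper does not actually prove this lemma: it is stated as a citation of \cite{HongTian2004}, and the only "proof content" supplied is the remark immediately following it, which records the two adaptations needed — that the argument of \cite{HongTian2004} (written for the adjoint Higgs bundle in $d=3$) goes through verbatim for a general Higgs bundle, and that the case $d=2$ is obtained from $d=3$ by dimensional reduction, i.e.\ by considering fields with $A_3=0$ and $\partial_3 X=0$. Your outline of the underlying argument (DeTurck trick for short-time existence, recovery of the original flow via the ODE for $g$ as in Lemma~\ref{lem:deturck_to_no_deturck}, energy monotonicity, $\varepsilon$-regularity and non-concentration in the subcritical dimensions $d\le 3$) is an accurate description of the strategy of the cited reference, and you correctly identify both adaptations. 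The one genuine divergence is in $d=2$: you propose that the argument is "considerably softer" there, whereas the paper's route is the cleaner reduction to $d=3$ just described, which avoids redoing anything.

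Two small cautions on steps you state too quickly. First, uniqueness: to pull an \emph{arbitrary} smooth solution of \eqref{eq:YMH_no_DeTurck} back to a solution of the DeTurck flow you cannot simply "invert the gauge transformations above" — those are only defined once you already have the DeTurck solution. One must instead solve a parabolic (harmonic-map-heat-flow type) equation for the gauge transformation $h$ with $h^{-1}\partial_t h = \mrd^*(A^h)$, exactly as in the DeTurck uniqueness argument for Ricci flow; and the alternative of a direct Gronwall estimate on the difference of two solutions does not work naively, since the unmodified system is only degenerate parabolic. Second, your claim that $\Phi$ "needs nothing beyond what $A$ needs" because it solves a linear equation given $A$ is correct here only because \eqref{eq:YMH_no_DeTurck} has the $|\Phi|^2\Phi$ term dropped (which is precisely why the paper uses this flow); it is worth saying so explicitly.
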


\begin{remark}
It is assumed in~\cite{HongTian2004} that the Higgs bundle $\CV$ is the adjoint bundle, i.e.\ the setting of Remark~\ref{rem:V=g}, and that $d=3$.
The same proof, however, works in the case of a general Higgs bundle;
the case $d=2$ follows by considering fields $X=(A,\Phi)$ with $A_3=0$ and $\partial_3 X=0$.
\end{remark}

\begin{definition}\label{def:flow_no_deturck}
Let $\flow\colon\CC^\infty(\T^d,E)\to \CC^\infty(\R_+\times \T^d, E)$ be the map taking $X = (A_0,\Phi_0)$ to the smooth solution of~\eqref{eq:YMH_no_DeTurck}
with initial condition $X$.
\end{definition}

A classical way to obtain solutions to~\eqref{eq:YMH_no_DeTurck}
is given by the following lemma, the proof of which is standard.

\begin{lemma}\label{lem:deturck_to_no_deturck}
Let $X\in \CC^\infty(\T^d,E)$ and $\ymhflow(X)=(a,\phi)\in\CC^\infty([0,T_X)\times\T^d,E)$ as defined in Definition~\ref{def:ymhflow} (the solution of the YMH flow with DeTurck term and initial condition $X$).
Let $g\colon [0,T_X)\to \CC^\infty(\T^d,G)$ solve
\begin{equ}\label{eq:g_deturck}
g^{-1} \partial_t g = -\mrd^* a\;,\quad g = \id\;.
\end{equ}
Then $(A,\Phi)\eqdef \ymhflow(X)^g \colon [0,T_X) \to \CC(\T^d,E)$ solves~\eqref{eq:YMH_no_DeTurck}
with initial condition $X$.
\end{lemma}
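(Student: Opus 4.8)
The plan is to verify directly, by a gauge-transformation computation, that $(A,\Phi) = \ymhflow(X)^g$ solves~\eqref{eq:YMH_no_DeTurck}, given that $(a,\phi) = \ymhflow(X)$ solves the DeTurck-regularised flow~\eqref{eq:ymh_flow_deturck} and $g$ solves the ODE~\eqref{eq:g_deturck}. First I would record the relevant covariance/equivariance properties of the YMH structure under a time-dependent gauge transformation: writing $\omega \eqdef -\mrd^* a$, so that $g^{-1}\partial_t g = \omega$, one has the standard transformation rules $F_{A} = \Ad_g F_a$ (curvature is tensorial), $\mrd_A \Phi = g\,\mrd_a \phi$ (since $\Phi = g\phi$ and $A = \Ad_g a - (\mrd g)g^{-1}$), $\mrd_A^* F_A = \Ad_g(\mrd_a^* F_a)$, $\mrd_A^*\mrd_A\Phi = g(\mrd_a^*\mrd_a\phi)$, and the inhomogeneous term $\mrd_A \omega_{\mathrm{gauge}}$ appearing from differentiating the gauge transformation in time. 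Concretely, differentiating $A = \Ad_g a - (\mrd g)g^{-1}$ in $t$ and using $\partial_t g = g\omega$ gives
\begin{equ}
\partial_t A = \Ad_g(\partial_t a) + \Ad_g[\omega, a] - \mrd_A(\mrd \omega \text{ terms})\;,
\end{equ}
and one checks that the net effect is $\partial_t A = \Ad_g(\partial_t a + \mrd_a \omega)$, while $\partial_t \Phi = g(\partial_t\phi + \omega\phi)$.

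With these identities in hand, the second step is pure substitution. Since $(a,\phi)$ solves~\eqref{eq:ymh_flow_deturck}, namely $\partial_t a = -\mrd_a^* F_a - \mrd_a\mrd^* a - \mathbf{B}(\mrd_a\phi\otimes\phi)$ and $\partial_t\phi = -\mrd_a^*\mrd_a\phi + (\mrd^* a)\phi$, and since $\omega = -\mrd^* a$, we get $\partial_t a + \mrd_a\omega = -\mrd_a^* F_a - \mathbf{B}(\mrd_a\phi\otimes\phi)$ — the DeTurck term $\mrd_a\mrd^* a = -\mrd_a\omega$ is exactly cancelled — and $\partial_t\phi + \omega\phi = -\mrd_a^*\mrd_a\phi$. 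Applying $\Ad_g$ (resp.\ $g\cdot$) and using the covariance identities above then yields $\partial_t A = -\mrd_A^* F_A - \mathbf{B}(\mrd_A\Phi\otimes\Phi)$ and $\partial_t\Phi = -\mrd_A^*\mrd_A\Phi$, which is precisely~\eqref{eq:YMH_no_DeTurck}. Here one uses the covariance property $\Ad_g\mathbf{B}(u\otimes v) = \mathbf{B}(gu\otimes gv)$ of $\mathbf{B}$ noted after~\eqref{e:explicit_covar_noise} to handle the Higgs coupling term. For the initial condition: at $t=0$ we have $g(0) = \id$, so $(A(0),\Phi(0)) = \ymhflow_0(X)^{\id} = \ymhflow_0(X) = X$.

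The only genuine point requiring care is that~\eqref{eq:g_deturck} indeed has a smooth solution $g\colon[0,T_X)\to\CC^\infty(\T^d,G)$: this is a linear (in $g$, after passing to a matrix representation) ODE in an infinite-dimensional space whose coefficient $-\mrd^* a(t,\cdot) = \partial_j a_j(t,\cdot)$ is smooth in space and continuous (indeed smooth) in $t$ on $[0,T_X)$ by Proposition~\ref{prop:YM_flow_minus_heat} and parabolic smoothing, so standard existence theory for Banach-space-valued linear ODEs (or the explicit iterated-integral/time-ordered-exponential construction) gives a unique global-on-$[0,T_X)$ solution taking values in $G$, with the $G$-valuedness preserved because the right-hand side lies in the Lie algebra $\mfg$. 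This, together with the smoothness of $(a,\phi)$ for $t>0$ and continuity up to $t=0$ built into Definition~\ref{def:ymhflow}, ensures $(A,\Phi)$ is a bona fide smooth solution on $[0,T_X)$ with the claimed initial data; I expect no real obstacle here, only bookkeeping, and indeed the lemma statement itself flags the proof as ``standard''.
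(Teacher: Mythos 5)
The paper gives no proof of this lemma at all (it is introduced with ``the proof of which is standard''), and your plan is exactly the standard DeTurck-trick computation the authors have in mind: transform by a time-dependent gauge transformation whose generator is the DeTurck vector field $\omega=-\mrd^*a$, use gauge covariance of $\mrd_A^*F_A$, $\mrd_A^*\mrd_A\Phi$ and $\mathbf{B}$, and observe the cancellation. One sign needs fixing, though: with the convention $g^{-1}\partial_t g=\omega$ of~\eqref{eq:g_deturck}, differentiating $A=\Ad_g a-(\mrd g)g^{-1}$ gives $\partial_t A=\Ad_g\bigl(\partial_t a+[\omega,a]-\mrd\omega\bigr)=\Ad_g\bigl(\partial_t a-\mrd_a\omega\bigr)$, not $\Ad_g(\partial_t a+\mrd_a\omega)$; it is this \emph{minus} sign that cancels the DeTurck term, since $-\mrd_a\mrd^*a=+\mrd_a\omega$ and hence $\partial_t a-\mrd_a\omega=-\mrd_a^*F_a-\mathbf{B}(\mrd_a\phi\otimes\phi)$. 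Your displayed identity ``$\partial_t a+\mrd_a\omega=-\mrd_a^*F_a-\mathbf{B}$'' is likewise off by the same sign (as written it would leave $2\mrd_a\omega$ uncancelled), but the two slips are consistent and the conclusion, the $\Phi$-equation, the treatment of the initial condition, and the existence argument for $g$ are all correct.
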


\begin{corollary}\label{cor:flow_continuous}
For every $t\geq0$, $\CC^\infty\ni X\mapsto \flow_t(X)\in\CC^\infty$ is continuous.
\end{corollary}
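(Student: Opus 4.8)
\textbf{Proof plan for Corollary~\ref{cor:flow_continuous}.}
The statement asserts that, for each fixed $t\ge0$, the map $X\mapsto \flow_t(X)$ from $\CC^\infty(\T^d,E)$ to $\CC^\infty(\T^d,E)$ is continuous, where $\flow$ is the YMH flow without DeTurck term. The natural strategy is to factor this map through the DeTurck flow $\ymhflow$ and a gauge transformation, as provided by Lemma~\ref{lem:deturck_to_no_deturck}. Concretely, for $X\in\CC^\infty(\T^d,E)$ with DeTurck flow $\ymhflow(X)=(a,\phi)$, we have $\flow_t(X) = \ymhflow_t(X)^{g(t)}$ where $g$ solves the linear-in-$g$ transport-type ODE~\eqref{eq:g_deturck}, namely $g^{-1}\partial_t g = -\mrd^* a$ with $g(0)=\id$. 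So continuity of $X\mapsto\flow_t(X)$ follows from (i) continuity of $X\mapsto \ymhflow(X)$ as a map into an appropriate space of smooth space-time functions on $[0,t]\times\T^d$, (ii) continuity of $(a)\mapsto g$ solving~\eqref{eq:g_deturck}, and (iii) continuity of the gauge action $(Y,h)\mapsto Y^h$ on smooth data. Since we only care about continuity between $\CC^\infty$ spaces (i.e. with respect to every $\CC^k$ seminorm simultaneously), it suffices to establish each of these in every $\CC^k$ norm on a fixed time interval.

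The plan is as follows. First I would fix $t>0$ (the case $t=0$ is trivial since $\flow_0=\id$) and a convergent sequence $X_n\to X$ in $\CC^\infty(\T^d,E)$. By standard parabolic regularity theory for the DeTurck--YMH flow (which is a nondegenerate quasilinear parabolic system, as used throughout Section~\ref{sec:state_space}; see Proposition~\ref{prop:YM_flow_minus_heat} and the discussion around Definition~\ref{def:ymhflow}), the solution $\ymhflow(X_n)$ exists on a time interval whose length is bounded below uniformly for $X_n$ in a bounded set of $\CC^\infty$, and $\ymhflow(X_n)\to\ymhflow(X)$ in $\CC^\infty([0,T]\times\T^d,E)$ for $T$ small; iterating this (restarting the flow from the smooth values at intermediate times, which also converge) extends the convergence to the full interval $[0,t]$ on which $\ymhflow(X)$ exists — here one uses that $T_X$ is lower semicontinuous in $X$ for the topology of $\CC^\infty$, so that for $n$ large $\ymhflow(X_n)$ is defined on $[0,t]$. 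Second, given convergence $a_n\to a$ in $\CC^\infty([0,t]\times\T^d,\mfg^d)$, the map $a\mapsto g$ defined by~\eqref{eq:g_deturck} is continuous: $g$ solves a pointwise-in-$x$ linear ODE in the Lie group $G$ driven by the smooth coefficient $-\mrd^* a$, and differentiating the ODE in $x$ repeatedly (and using Grönwall) gives continuous dependence of $g$ in every $\CC^k$ norm on $[0,t]\times\T^d$; alternatively one may invoke Lemma~\ref{lem:analyticg}-type reasoning (the ODEs for $h_i=g^{-1}\partial_i g$ and $U=\brho(g)$) which yields the same bounds. Third, the gauge action $(Y,h)\mapsto Y^h = (\Ad_h A - (\mrd h)h^{-1}, h\Phi)$ is visibly continuous as a map $\CC^\infty\times\CC^\infty\to\CC^\infty$ (it is a polynomial expression in $h$, $h^{-1}$, their derivatives, and the components of $Y$). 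Composing these three continuity statements and evaluating at time $t$ yields $\flow_t(X_n)=\ymhflow_t(X_n)^{g_n(t)} \to \ymhflow_t(X)^{g(t)} = \flow_t(X)$ in $\CC^\infty(\T^d,E)$, which is the claim.

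The main obstacle — though it is more bookkeeping than genuine difficulty — is step (i): controlling the interval of existence uniformly and propagating $\CC^\infty$ convergence of $\ymhflow(X_n)$ all the way up to the (possibly large) time $t<T_X$, since short-time parabolic estimates only directly give convergence on a short interval. This is handled by the standard continuation argument: convergence in $\CC^\infty$ on $[0,\tau]$ for small $\tau$ implies convergence of the restarted initial data $\ymhflow_\tau(X_n)\to\ymhflow_\tau(X)$ in $\CC^\infty(\T^d,E)$, and since $\ymhflow(X)$ stays smooth and bounded on compact subintervals of $[0,T_X)$, one can cover $[0,t]$ by finitely many such short intervals with uniform lower bounds on the existence time along the sequence. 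I note that essentially this argument is already implicit in Appendix~\ref{app:YMH flow without DeTurck term} and in Lemma~\ref{lem:YMH} (real-analyticity of $\flow(X)$), so no new analytic input beyond the classical parabolic theory and Lemma~\ref{lem:deturck_to_no_deturck} is required.
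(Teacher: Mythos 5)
Your decomposition $\flow_t(X)=\ymhflow_t(X)^{g(t)}$ via Lemma~\ref{lem:deturck_to_no_deturck}, together with the three continuity ingredients (short-time continuity of the DeTurck flow, continuity of $a\mapsto g$ solving~\eqref{eq:g_deturck}, and continuity of the gauge action on smooth data), is exactly what the paper uses. The gap is in your continuation step. You propagate the convergence $\ymhflow(X_n)\to\ymhflow(X)$ along the \emph{DeTurck} flow, and you explicitly restrict to ``the time $t<T_X$''. But the corollary is claimed for every $t\geq 0$, and the DeTurck flow $\ymhflow(X)$ may blow up at a finite time $T_X$ even though $\flow(X)$ is global (Lemma~\ref{lem:global_sol_no_deturck}); this is precisely the phenomenon emphasised in Section~\ref{subsec:main_theorems}, where the time-dependent gauge transformation can prevent the flow from blowing up without changing its projection to $\mfO^\infty$. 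For $t\geq T_X$ your argument gives nothing: the identity $\flow_t(X)=\ymhflow_t(X)^{g(t)}$ is simply unavailable, and lower semicontinuity of $T_X$ does not help.

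The fix is to patch along the global flow $\flow$ rather than along $\ymhflow$, which is what the paper does. By Proposition~\ref{prop:YM_flow_minus_heat} the time interval on which $Y\mapsto(\ymhflow_s(Y),g_s)$ is defined and continuous at a given smooth point is bounded below in terms of the $L^\infty$ norm of that point alone, and $s\mapsto|\flow_s(X)|_{L^\infty}$ stays bounded on $[0,t]$ by global smooth existence. One therefore chooses $0=\tau_0<\cdots<\tau_N=t$ with increments small enough that on each step the map $Y\mapsto\flow_{\tau_{k+1}-\tau_k}(Y)$ is continuous at $\flow_{\tau_k}(X)$ (via the short-time decomposition), and concludes by the semigroup property of $\flow$. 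With this modification the rest of your argument goes through.
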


\begin{proof}
For $X\in\CC^\infty$,
by Proposition~\ref{prop:YM_flow_minus_heat}
there exists $t>0$ sufficiently small (depending, say, only on $|X|_{L^\infty}$) such that $\CC^\infty\ni Y\mapsto (\ymhflow_t(Y),g_t) \in \CC^\infty$
is continuous at $X$, where $g$ solves~\eqref{eq:g_deturck}
with $\ymhflow(Y)=(a,\phi)$.
Hence $Y\mapsto \flow_t(Y)$ is continuous at $X$.
Continuity for arbitrary $t\geq0$ follows
from non-explosion of $|X|_{L^\infty}$ (Lemma~\ref{lem:global_sol_no_deturck}) and patching together intervals.
\end{proof}

For $\rho\in(\frac12,1]$, recall the action of $\mfG^{0,\rho}$ on $\Omega^\rho \supset\CC^\infty(\T^d,E)$
from
Section~\ref{subsec:backwards_unique}.

\begin{lemma}\label{lem:gauge_for_flow}
Suppose $X,Y\in\CC^\infty(\T^d,E)$
and $X^g=Y$ for some $g\in\mfG^\rho$ and $\rho\in(\frac12,1]$.
Then $g\in \CC^\infty(\T^d,G)$
and $\flow_t(X)^g = \flow_t(Y)$ for all $t \geq 0$.
\end{lemma}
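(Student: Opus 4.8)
The plan is to first upgrade the regularity of $g$ from $\CC^\rho$ to $\CC^\infty$ by a purely algebraic bootstrap, and then to deduce the identity $\flow_t(X)^g = \flow_t(Y)$ from the gauge covariance of the flow \eqref{eq:YMH_no_DeTurck} together with the uniqueness statement in Lemma~\ref{lem:global_sol_no_deturck}.

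For the first step, write $X = (A,\Phi)$ and $Y = (B,\Psi)$ for the decompositions according to $E = \mfg^d\oplus\higgsvec$, so that $A,B,\Phi,\Psi$ are smooth. Unpacking $X^g = Y$ via \eqref{e:gauge-transformation}, the connection component gives
\begin{equ}
(\mrd g)g^{-1} = \Ad_g A - B\;.
\end{equ}
A priori the left-hand side lies only in $\CC^{\rho-1}$, and it is precisely the hypothesis $\rho>\tfrac12$ which makes the product $(\mrd g)g^{-1}$ — hence the gauge action on $\Omega^\rho$ and the hypothesis $X^g=Y$ itself — meaningful. Since $G$ is compact and embedded in a space of matrices, $h\mapsto h^{-1}$ and $h\mapsto\Ad_h$ are smooth, so $g\in\CC^k(\T^d,G)$ implies $g^{-1},\Ad_g\in\CC^k$ for every $k>0$; as $A,B$ are smooth, the right-hand side above lies in $\CC^\rho$. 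Hence $(\mrd g)g^{-1}\in\CC^\rho$, so $\mrd g = \big((\mrd g)g^{-1}\big)\,g\in\CC^\rho$ (a product of bounded Hölder functions), whence $g\in\CC^{1+\rho}$. Feeding this back into the displayed identity improves its right-hand side to $\CC^{1+\rho}$, so $\mrd g\in\CC^{1+\rho}$ and $g\in\CC^{2+\rho}$; iterating yields $g\in\CC^\infty(\T^d,G)$. (The $\Phi$-component identity $g\Phi=\Psi$ is not needed.)

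For the second step, note that $\flow(X)\in\CC^\infty(\R_+\times\T^d,E)$ by Lemma~\ref{lem:global_sol_no_deturck}, and that $g$ is now a smooth, time-independent gauge transformation, so $t\mapsto\flow_t(X)^g$ is a smooth $E$-valued function on $\R_+\times\T^d$. Equation \eqref{eq:YMH_no_DeTurck} is the $L^2$-gradient flow of the gauge-invariant functional $(A,\Phi)\mapsto\int_{\T^d}\big(|F_A|^2+|\mrd_A\Phi|^2\big)$ and is therefore covariant under time-independent gauge transformations; this can also be read off directly from the coordinate form of \eqref{eq:YMH_no_DeTurck} using $F_{A^g}=\Ad_g F_A$, $\mrd_{A^g}(g\Phi) = g\,\mrd_A\Phi$, and the equivariance of $\mrd_A^*$ under $g$. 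Consequently $t\mapsto\flow_t(X)^g$ solves \eqref{eq:YMH_no_DeTurck} with initial condition $\flow_0(X)^g = X^g = Y$. Since $t\mapsto\flow_t(Y)$ is also a smooth solution with the same initial condition, the uniqueness part of Lemma~\ref{lem:global_sol_no_deturck} gives $\flow_t(X)^g=\flow_t(Y)$ for all $t\ge0$.

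The argument is essentially routine and I do not anticipate a genuine obstacle. The only point requiring attention is the regularity bootstrap for $g$: one must track product and composition estimates in Hölder spaces and observe that the structural identity $(\mrd g)g^{-1}=\Ad_g A - B$ makes $(\mrd g)g^{-1}$ as regular as $g$ itself, so that each use of the identity gains $g$ one derivative and the iteration closes; the gauge covariance of \eqref{eq:YMH_no_DeTurck} is classical.
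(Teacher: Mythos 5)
Your proof is correct and follows essentially the same route as the paper: the paper's argument is exactly the bootstrap $\mrd g = (\Ad_g A - A^g)g$ to upgrade $g$ to $\CC^\infty$, and it then dismisses the identity $\flow_t(X)^g=\flow_t(Y)$ as clear, which you have simply spelled out via gauge covariance of \eqref{eq:YMH_no_DeTurck} and the uniqueness in Lemma~\ref{lem:global_sol_no_deturck}.
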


\begin{proof}
$\CC^\alpha$ control on $g$ gives $\CC^{\alpha}$ control on $\mrd g$ by writing $\mrd g = (\Ad_g A - A^g)g$ where $X=(A,\Phi)$.
Hence $g$ is smooth.
The fact that $\flow(X)^g = \flow(Y)$
is clear.
\end{proof}
\section{Evolving rough gauge transformations}
\label{app:evolving_rough_g}

In this appendix, we extend the analysis performed in Section~\ref{sec:renorm-A}
for the system~\eqref{e:SPDE-for-X} to the coupled system
\begin{equs}
\partial_t X &= \Delta X + X \partial X + X^3 
+( (C_{\A}^{\eps})^{\oplus 3} \oplus C_{\Phi}^{\eps} ) X + (c^\eps g^{-1}\mrd g \oplus 0) +
 \xi^\eps\;,
\\
X_0 & = (a,\phi) \in \state \;, \label{e:SPDE-for-X_with_g} 
\end{equs} 
and
\begin{equ}\label{eq:SPDE_for_g_coupled}
g^{-1}(\partial_t g)
= \partial_j(g^{-1}\partial_j g)+ [A_j,g^{-1}\partial_j g]\;, \quad g_0 \in \mfG^{0,\nu}\;,
\end{equ}
where $X=(A,\Phi)$, $\nu>0$ and $c^\eps\in L(\mfg,\mfg)$.
Note that~\eqref{eq:SPDE_for_g_coupled} is identical to~\eqref{eq:SPDE_for_g_wrt_A}.
The following result provides well-posedness and stability of $(X,g)$,
which is used in Sections~\ref{sec:gauge} and~\ref{sec:Markov}.

\begin{lemma}\label{lem:flow_for_rough_g}
Let $(X^\eps,g^\eps)$ solve~\eqref{e:SPDE-for-X_with_g}-\eqref{eq:SPDE_for_g_coupled} with $\nu\in(0,\frac32)$, $c^\eps\in L(\mfg,\mfg)$ such that $\lim_{\eps\downarrow0}c^\eps$ exists,
and $C_{\A}^{\eps}, C_{\Phi}^{\eps}$ as in Theorem~\ref{thm:local_exist}.
Then $(X^\eps,g^{\eps})$ converges to a process $(X,g)$ in probability in $(\state\times\mfG^{0,\nu})^\sol$.
Furthermore, the solution map
\begin{equ}[eq:sol_map_X_g]
\state \times \mfG^{0,\nu} \ni ( X_0, g_0 ) \mapsto (X,g)\in L^0(\Omega^\noise;(\state\times\mfG^{0,\nu})^\sol)
\end{equ}
is continuous, where $L^0(\Omega^\noise;(\state\times\mfG^{0,\nu})^\sol)$ is the space of $(\state\times\mfG^{0,\nu})^\sol$-valued random variables defined on the underlying probability space $\Omega^\noise$ equipped with the topology of convergence in probability.
\end{lemma}

\begin{proof}
To circumvent the issue that the target space $G$ is nonlinear, we can assume without loss of generality
that for some $n \in \N$, we embed $G \subset O(n)$,  $\mfg\subset \mfo(n)$ (the Lie algebra of $O(n)$), and $\higgsvec\subset \R^n$.
Then $A_j$ and $g$ just take values in $\R^{n \times n}$ which is the linear space of $n$ by $n$ matrices. 
Therefore we can exchange the term $c^\eps g^{-1}\mrd g \oplus 0$ in~\eqref{e:SPDE-for-X_with_g} for $c^\eps g^*\mrd g \oplus 0$ and equation~\eqref{eq:SPDE_for_g_coupled} for the equation
\begin{equ}[e:g-equ-A]
(\partial_t - \Delta)g
= - (\partial_j g)  g^{\ast} \partial_j g 
+ g [A_j,g^{\ast}\partial_j g]\;,
\end{equ}
where $g$ takes values in $\R^{n \times n}$ and we are using the fact that, for $M \in O(n)$, $M^{\ast} = M^{-1}$. 
Here the bracket is just the matrix commutator.

Since the solution theory for $(X,g)$ is similar to that of~\eqref{e:SPDE-for-X}, we only sketch the main differences.
We enlarge the regularity structure introduced in Section~\ref{sec:renorm-A} to treat~\eqref{e:SPDE-for-X_with_g} and~\eqref{e:g-equ-A} together, and
verify that the corresponding BPHZ character does not introduce any renormalisation into \eqref{e:g-equ-A} nor any additional renormalisation into~\eqref{e:SPDE-for-X_with_g}.
This means that the dynamic for $g$ remains compatible with \eqref{eq:SPDE_for_g_coupled}, in particular it preserves $G$ as the target space if the initial data is $G$-valued, and 
the dynamic for $A_j$  also preserves the Lie subalgebra $\mfg$ as the target space if the initial data is $\mfg$-valued.
We then verify that the enlarged abstract fixed point problem is also well-posed. 
Arguing the convergence of the BPHZ models can be done in the same way as before.

To enlarge the regularity structure we introduce a new type $\mfk$, namely we set $\mfL_+ = \{\mfz,\mfk\}$. 
The target and kernel space assignments are enlarged by setting $W_\mfk =  \R^{n\times n}$ and $\mathcal{K}_{\mfk} = \R$. 
We also set
$\deg( \mfk )
=  2 $
and $\reg(\mfk)=  3/2 - 5\kappa $.
We also have an enlarged nonlinearity by reading off of the right-hand side of \eqref{e:g-equ-A}
and adding a term corresponding to $(c^\eps g^{*}\mrd g ,0)$ to the nonlinearity~\eqref{e:YMH-nonlinearity} for $X$.

We use the shorthand $\pr{g}$ for the component $A_{(\mfk,0)}$ of $\pr{\mathbf{A}} = (A_{o})_{o \in \CE} \in \mcb{A}$. Writing $\CH$ for the coherent expansion for the solution to \eqref{e:g-equ-A}, we then have 
\[
q_{<\frac32}\CH =\pr{g} \bone + \pr{\partial_j g} \mathbf{X}_j
 + \pr{g} [\mcb{I}_{\mfk} ( \mcb{I}_{\mfz} \bar{\bXi}), \pr{g}^{\ast} \pr{\partial g}]\;.
\]
Here and below $q_{< L}$ denotes the projection
onto degrees $< L$, and we used the summation convention in the bracket as in Remark~\ref{rem:bracket_overload}.
We also have $q_{<1}\CH^{\ast} = \pr{g}^{\ast}\bone$.     
A simple power counting argument shows that the only trees $\tau$ where one has both $\deg(\tau) < 0$ and $\bar\bUpsilon_{\mfk}[\tau] \not = 0$ are
$\<IXi>$, $\<I[IXiI'Xi]_notriangle>$ and 
$\<IXi> \, \partial_j \mcb{I}_{\mfk}( \<IXi>)$.
The first two trees  are planted and thus do not contribute to renormalisation.
By parity (similarly to Lemma~\ref{lem:symbols_vanish})
the last one  vanishes when hit by the BPHZ character.

To see that no additional renormalisation appears in~\eqref{e:SPDE-for-X_with_g} beyond that already shown for~\eqref{e:SPDE-for-X},
it suffices to observe that tree of lowest degree added to the expansion of $X$ by the presence of $(c^\eps g^{*}\mrd g,0)$ is $\<I'[IXi]_notriangle>$, which has positive degree.

The fixed point problem
 \begin{equ}\label{eq:mod_dis_for_g}
\CH =
\CG_\mfk \mathbf{1}_{+}\Big(
- \partial_j \CH  \CH^{\ast} \partial_j \CH
+ \CH [\CA_j,\CH^{\ast}\partial_j \CH]
 \Big)
  + \CP g_0
 \end{equ}
 can be solved in $\cD^{\gamma,\nu}$
with $\gamma=1+$
 since, for the term $\partial_j \CH  \CH^{\ast} \partial_j \CH$ one has
 \[
 \cD_0^{\gamma,\nu-1} \times \cD_0^{\gamma,\nu}\times \cD_0^{\gamma,\nu-1}\to \cD_0^{\gamma,2\nu-2}
 \]
 and for the term
 $\CH [\CA_j,\CH^{\ast}\partial_j \CH]$  one has
 \[
  \cD_0^{\gamma,\nu} \times  \cD_{-\frac12 -}^{\gamma,-\frac12 -}
 \times \cD_0^{\gamma,\nu}\times \cD_0^{\gamma,\nu-1}
 \to \cD_{-\frac12-}^{\gamma-\frac12-,\nu-\frac32-}\;.
 \]
Here $2\nu-2 > -2$, and
 $\nu-\frac32 > -\frac32$, so \cite[Prop.~6.16]{Hairer14} applies.
The abstract fixed point problem for the $X$-component is solved in essentially the same way as in the proof of Theorem~\ref{thm:local-exist-sigma}.

We note that continuity at time $t=0$ of $\mcR\mcH$ follows from the fact that the initial condition $g_0$ is in $\mfG^{0,\nu}$, on which the heat semigroup is strongly continuous,
and the fact that the first term on the right-hand side of~\eqref{eq:mod_dis_for_g} goes to zero in $\cD^{\gamma+\frac32-, \nu+\frac12-}_0$ for short times by~\cite[Thm.~7.1]{Hairer14}.

The final claim on continuity of the map~\eqref{eq:sol_map_X_g} follows from Lemma~\ref{lem:PX0Psi-prob}, which
shows continuity (in $L^p$ and thus in probability) with respect to $X_0$ of the singular products of $\CP X_0$ and the SHE $\tilde \Psi$ appearing in the fixed point problem,
combined with standard arguments showing (pathwise) continuity of the solution in the initial data.
\end{proof}

We state the following straightforward variant of Lemma~\ref{lem:flow_for_rough_g} which 
is used in the proof of Propositions~\ref{prop:SPDEs_conv_zero_state} and~\ref{prop:consts-soft2}. 

\begin{lemma}\label{lem:flow_for_rough_g2}
The statement of Lemma~\ref{lem:flow_for_rough_g} stills holds with \eqref{e:SPDE-for-X_with_g}-\eqref{eq:SPDE_for_g_coupled} replaced by 
\begin{equs}
\partial_t X &= \Delta X + X \partial X + X^3 
+( (C_{\A}^{\eps})^{\oplus 3} \oplus C_{\Phi}^{\eps} ) X + (c^\eps \mrd g g^{-1} \oplus 0) +
\xi^\eps\;,
\\
X_0 & = (a,\phi) \in \state \;, \label{e:SPDE-for-X_with_g2} 
\end{equs} 
and
\begin{equ}\label{eq:SPDE_for_g_coupled2}
(\partial_t g)  g^{-1} 
= \partial_j((\partial_j g)g^{-1})+ [A_j, (\partial_j g) g^{-1}]\;, \quad g_0 \in \mfG^{0,\nu}\;.
\end{equ}
\end{lemma}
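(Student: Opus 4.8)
The statement to prove is Lemma~\ref{lem:flow_for_rough_g2}, which asserts that the analogue of Lemma~\ref{lem:flow_for_rough_g} holds for the system \eqref{e:SPDE-for-X_with_g2}--\eqref{eq:SPDE_for_g_coupled2}, where the lower-order term $c^\eps g^{-1}\mrd g$ is replaced by $c^\eps \mrd g g^{-1}$ and the evolution equation for $g$ is written in the ``left-invariant'' form $(\partial_t g)g^{-1} = \partial_j((\partial_j g)g^{-1}) + [A_j,(\partial_j g)g^{-1}]$ rather than the ``right-invariant'' form.

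The plan is to follow the proof of Lemma~\ref{lem:flow_for_rough_g} essentially verbatim, tracking the (purely cosmetic) changes caused by moving the $g^{-1}$ from the right to the left. First I would again embed $G\subset O(n)$, $\mfg\subset\mfo(n)$, $\higgsvec\subset\R^n$, so that $g$ and $A_j$ take values in the linear space $\R^{n\times n}$, and replace $g^{-1}$ by $g^{\ast}$. The equation \eqref{eq:SPDE_for_g_coupled2} then becomes, in analogy with \eqref{e:g-equ-A},
\begin{equ}
(\partial_t - \Delta)g = -(\partial_j g)\,g^{\ast}(\partial_j g) + [A_j,(\partial_j g)g^{\ast}]\,g\;,
\end{equ}
(one obtains this by expanding $\partial_j((\partial_j g)g^{-1}) = (\partial_j^2 g)g^{-1} + (\partial_j g)\partial_j(g^{-1})$ and using $\partial_j(g^{-1}) = -g^{-1}(\partial_j g)g^{-1}$, then multiplying through by $g$ on the right), which is of exactly the same form as \eqref{e:g-equ-A} up to reordering of factors. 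Likewise the extra term $c^\eps g^{-1}\mrd g\oplus 0$ in \eqref{e:SPDE-for-X_with_g} becomes $c^\eps(\mrd g)g^{\ast}\oplus 0$ in \eqref{e:SPDE-for-X_with_g2}. Since the regularity structure constructed in the proof of Lemma~\ref{lem:flow_for_rough_g} only records degrees, noise/kernel types, and the combinatorial structure of iterated substitutions — none of which is sensitive to the order of matrix multiplication in the nonlinearity — the same enlarged regularity structure (with the type $\mfk$, $W_\mfk = \R^{n\times n}$, $\mcK_\mfk = \R$, $\deg(\mfk)=2$, $\reg(\mfk) = 3/2-5\kappa$) applies here without change.

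The key steps are then: (i) verify that BPHZ renormalisation generates no counterterm in the $g$-equation and no additional counterterm in the $X$-equation. This is identical to the argument in Lemma~\ref{lem:flow_for_rough_g}: by power counting, the only trees $\tau$ with $\deg(\tau)<0$ and $\bar\bUpsilon_\mfk[\tau]\neq 0$ are $\<IXi>$, $\<I[IXiI'Xi]_notriangle>$, and $\<IXi>\,\partial_j\mcb{I}_\mfk(\<IXi>)$; the first two are planted and contribute nothing, while the last vanishes when hit by the BPHZ character by a parity argument (as in Lemma~\ref{lem:symbols_vanish}, using a sign flip of a spatial coordinate), and the lowest-degree tree added to the $X$-expansion by the term $c^\eps(\mrd g)g^\ast$ is $\<I'[IXi]_notriangle>$, which has positive degree. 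Because no renormalisation touches the $g$-equation, its solution stays $G$-valued when started from $\mfG$-valued data, and the $\mfg$-valuedness of the $A$-component is preserved as well. (ii) Solve the enlarged abstract fixed point problem: the equation for $\CH$ (the modelled distribution for $g$) is posed in $\cD^{\gamma,\nu}$ with $\gamma = 1+$ and closes by exactly the same multiplication-and-Schauder bookkeeping as \eqref{eq:mod_dis_for_g} — the reordered products $-\partial_j\CH\,\CH^\ast\partial_j\CH$ and $[\CA_j,(\partial_j\CH)\CH^\ast]\CH$ land in the same $\cD$-spaces as before (e.g.\ $\cD_0^{\gamma,2\nu-2}$ and $\cD_{-1/2-}^{\gamma-1/2-,\nu-3/2-}$ respectively), and $2\nu-2>-2$, $\nu-3/2>-3/2$, so \cite[Prop.~6.16]{Hairer14} applies; the $X$-component fixed point is handled exactly as in the proof of Theorem~\ref{thm:local-exist-sigma}. (iii) Convergence of the BPHZ models is argued just as in Lemma~\ref{lem:conv_of_models}. (iv) Continuity at $t=0$ of $\mcR\CH$ follows from strong continuity of the heat semigroup on $\mfG^{0,\nu}$ together with the short-time smallness of the nonlinear part of \eqref{eq:mod_dis_for_g} in $\cD^{\gamma+3/2-,\nu+1/2-}_0$ via \cite[Thm.~7.1]{Hairer14}.

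There is no genuine obstacle here: the content is entirely contained in the proof of Lemma~\ref{lem:flow_for_rough_g}, and the only thing to check is that none of the arguments used the particular placement of $g^{-1}$. The mildly delicate point — and the one I would write out carefully — is the algebraic rewriting of \eqref{eq:SPDE_for_g_coupled2} into a polynomial equation in $(g, \partial g, A)$ with values in $\R^{n\times n}$, and the observation that the resulting nonlinearity has the same tree expansion (same types, same degrees, same symmetry factors) as in the right-invariant case, so that Lemma~\ref{lem:flow_for_rough_g}'s power-counting and parity verdicts transfer unchanged. Given that, the proof reduces to the single sentence ``the proof of Lemma~\ref{lem:flow_for_rough_g} applies mutatis mutandis,'' and I would phrase it that way after recording the rewritten equation.
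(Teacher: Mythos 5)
Your proposal is correct and matches the paper's treatment: the paper states Lemma~\ref{lem:flow_for_rough_g2} as a ``straightforward variant'' of Lemma~\ref{lem:flow_for_rough_g} and gives no separate proof, relying on exactly the mutatis-mutandis transfer you describe. Your rewriting of \eqref{eq:SPDE_for_g_coupled2} as $(\partial_t-\Delta)g = -(\partial_j g)\,g^{\ast}(\partial_j g) + [A_j,(\partial_j g)g^{\ast}]\,g$ is the correct analogue of \eqref{e:g-equ-A}, and your observation that the power-counting, parity, fixed-point and $t=0$-continuity arguments are insensitive to the placement of $g^{-1}$ is precisely what justifies the paper's omission.
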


\begin{remark}\label{rem:why_U_h}
It is natural to ask why we use two different approaches for formulating the evolution of our gauge transformation in linear spaces: 
for the proof of Lemma~\ref{lem:flow_for_rough_g} we view $G$ as a group of matrices while 
in Section~\ref{sec:gauge} we adopt the viewpoint of tracking the pair $(U,h)$ as in \eqref{eq:h_and_U_def}. 

In Section~\ref{sec:gauge}, $(U,h)$ are the natural variables for tracking how our gauge transformations act on our gauge field (which makes $h$ appear) and on our $E$-valued white noise (which makes $U$ appear).
This also makes them good variables for keeping track of the renormalisation in \eqref{eq: B system} and \eqref{eq: bar a system}. 

On the other hand, $h$ as introduced in \eqref{eq:h_and_U_def} is not well-defined for the initial data treated in Lemma~\ref{lem:flow_for_rough_g} when $\nu \le \f12$, which is the class of initial data that is used
in Proposition~\ref{prop:gauge_covar_couple}.
\end{remark}
\section{Injectivity of the solution map}\label{app:injectivity}
Our objective in this appendix is to prove that one can recover, from the outputs of the limiting solution maps for \eqref{eq:renorm_bar_a} and \eqref{eq:renorm_b}, the values of the constants $(\mathring{C}_{\A},\mathring{C}_{\h}) \in L(\mfg, \mfg)^{2}$. 
This tells us that these solution maps have injectivity properties as functions of these constants, and this is a key ingredient for the proof of  the convergence statements in Propositions~\ref{prop:consts-soft} and \ref{prop:consts-soft2}.
Our arguments here are in the same in spirit as \cite[Thm~3.5]{StringManifold}, but adapted to the present setting where our solution contains more singular components. 

We introduce the same notations for the set of types $\Lab = \Lab_{+} \sqcup \Lab_{-}$, rule $R$, kernel and target space assignments, corresponding regularity structure $\mcb{T}$, and kernel assignments $K^{(\eps)}$ and noise assignments $\zeta^{\delta,\eps}$ that we defined in 
Section~\ref{subsec:reg_struct_guage_transform} (see in particular page~\pageref{eq:noise_assign_gauge}).
Recall (see \cite{BHZ19})  that models can be described by non-recentred evaluation maps $\PPi$, and for the rest of this appendix we use this notation when referring to models. 
We then write $\PPi_{\delta,\eps}$ for the canonical lift of  the kernel assignment $K^{(\eps)}$ from~\eqref{e:K-Keps-assign} and noise assignment $\zeta^{\delta,\eps}$ from~\eqref{eq:noise_assign_gauge},  
$\PPi^{\BPHZ}_{\delta,\eps}$ for the corresponding BPHZ lift, and 
$\ell^{\delta,\eps}_{\BPHZ}$ for the BPHZ character that shifts $\PPi_{\delta,\eps}$ to $\PPi^{\BPHZ}_{\delta,\eps}$.
By BPHZ convergence (see Lemma~\ref{lem:conv_of_models2}) for the family of models $\PPi^{\BPHZ}_{\delta,\eps}$,  we obtain a limiting random model $\PPi^{\BPHZ}_{0,\eps} = \lim_{\delta \downarrow 0} \PPi^{\BPHZ}_{\delta,\eps}$, as
well as a limit $\PPi^{\BPHZ} = \lim_{\eps \downarrow 0} \PPi^{\BPHZ}_{0,\eps}$.

Fix initial conditions $\bar x = (\bar a,\bar \phi)\in\state$ and $g_0\in \mfG^{0,\rho}$
as well as $\mathring{C}^{}_{\A}, \mathring{C}^{}_{\h} \in L(\mfg,\mfg)$ and $\sig\in\R$.
We write ${\Omega}^{\noise}$
for the canonical probability space for the underlying white noise on which 
all the random models introduced above are defined.
By Proposition~\ref{prop:bar_X_max_sols}, we have a mapping
\begin{equs}[equ:constants_to_solution_map]
 L(\mfg,\mfg)^{2}\times \state\times\mfG^{0,\rho} \ni (\mathring{C}_{\A},\mathring{C}_{\h},\bar x, g_0) &\mapsto \mathcal{A}_{\sig}^{\BPHZ}[\mathring{C}_{\A},\mathring{C}_{\h},\bar x, g_0] \in L^{0}_{\sol} \;,
\end{equs}
where $L^{0}_{\sol}$ is the space of equivalence classes (modulo null sets) of measurable maps from ${\Omega}^{\noise}$ to $(\state \times \tilde{\mfG}^{0,\rho})^{\sol}$ and $\mathcal{A}_{\sig}^{\BPHZ}[\mathring{C}_{\A},\mathring{C}_{\h},\bar x, g_0] $ is the corresponding maximal solution map for both\footnote{See Corollary~\ref{cor:Y_system_conv}.} \eqref{eq:renorm_bar_a} and \eqref{eq:renorm_b} and the $(U,h)$ \slash $(\bar{U},\bar{h})$ obtained from their corresponding gauge transformations $g$ \slash $\bar{g}$ using \eqref{eq:h_and_U_def}.

Our main result for this section is the following lemma. 
\begin{lemma}\label{lemma:injectivity_in_law}
There exists a sequence of measurable maps $(O_{n})_{n \in \N}$ from $(\state \times \tilde{\mfG}^{0,\rho})^{\sol}$
to $\mcb{D}'(\T^{3},E)$ such that, for any $\sig \in \R$, $\mathring{C}_{\A},\mathring{C}_{\h} \in L(\mfg,\mfg)$,
and $(\bar x,g_{0}) \in \state \times \mfG^{0,\rho}$,
one has 
\[
\lim_{n \rightarrow \infty} \E[ O_{n} \big(\mathcal{A}_{\sig}^{\BPHZ}[\mathring{C}_{\A},\mathring{C}_{\h},\bar x,g_0]\big)]
=
\mathring{C}_{\mfz} \bar{x} +  \mathring{C}_{\mfh} (\mrd g_0) g_0^{-1}\;. 
\] 
Above  $\mathring{C}_{\mfz}$, $\mathring{C}_{\mfh}$ are defined from $\mathring{C}_{\A},\mathring{C}_{\h}$ as in \eqref{e:ring-C-zh}. 
\end{lemma}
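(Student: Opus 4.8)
\textbf{Proof plan for Lemma~\ref{lemma:injectivity_in_law}.}
The plan is to extract the constants $\mathring{C}_{\A}$ and $\mathring{C}_{\h}$ by probing the solution against a family of test functions that isolates the \emph{most singular} deterministic part of $\mathcal{A}_{\sig}^{\BPHZ}[\mathring{C}_{\A},\mathring{C}_{\h},\bar x,g_0]$, namely the term $\mathring{C}_{\mfz}X + \mathring{C}_{\mfh}h$ appearing in the renormalised equations \eqref{eq:renorm_bar_a}--\eqref{eq:renorm_b}. Concretely, for a solution $(X,U,h)$ (or $(\bar X,\bar U,\bar h)$, which agrees with it in law by Corollary~\ref{cor:Y_system_conv}), I would work with the mild formulation: on a small time interval $(0,t)$ one has
\begin{equ}
X(t) = \CP_t \bar x + \int_0^t \CP_{t-s}\big( X\partial X + X^3 + \mathring C_\mfz X + \mathring C_\mfh h + (\textnormal{renormalisation}) + \textnormal{noise}\big)(s)\,\mrd s\;.
\end{equ}
The idea is that after subtracting off the (explicit, known) stochastic heat evolution of $\bar x$ and the contributions of all the stochastic objects, and after rescaling suitably, the leading contribution as $t\downarrow 0$ (equivalently as $n\to\infty$ with $t = t_n \downarrow 0$) to an appropriately normalised linear functional is precisely $\mathring C_\mfz \bar x + \mathring C_\mfh h(0^+) = \mathring C_\mfz \bar x + \mathring C_\mfh (\mrd g_0)g_0^{-1}$, using $\lim_{t\downarrow 0}h(t) = (\mrd g_0)g_0^{-1}$ (which follows from the solution theory of Appendix~\ref{app:evolving_rough_g} / Lemma~\ref{lem:flow_for_rough_g} since $g$ solves the parabolic equation \eqref{eq:SPDE_for_g_coupled} with initial condition $g_0$, and $\CR\CH$ is continuous at $t=0$ in $\mfG^{0,\rho}$).

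The concrete construction of $O_n$ I would take to be of the form: fix a small $t_n\downarrow 0$ (say $t_n = 2^{-n}$) and set
\begin{equ}
O_n(X,U,h) \eqdef t_n^{-1}\Big( \CR\hat\CY(t_n) - (\textnormal{quadratic remainder terms evaluated at }t_n)\Big)\;,
\end{equ}
where $\hat\CY$ is the ``remainder'' modelled distribution from the decomposition \eqref{e:def-tildeY} (i.e.\ $\CY$ with $\CP Y_0$, $\bPsi^{U_0}$ and the stochastic objects removed), read off directly from the trajectory. The key structural point, already established in the solution theory of Section~\ref{sec:sol-multi}, is that $\hat\CY \in \hat\cD^{\gamma_\mfz,\hat\beta}_{-\f12-2\kappa}$ with $\hat\beta > -\f12$, so $\CR\hat\CY \in \CC([0,\tau],\CC^{\hat\beta})$ is actually a genuine function near $t=0$ which vanishes at $t=0$, and the only contributions to $\CR\hat\CY(t)$ that are \emph{linear} in $t$ (rather than $o(t)$) come from the terms $\CG_\mfz\bone_+(\mathring C_\mfz\CY + \mathring C_\mfh\CH)$: indeed $\mathring C_\mfz \CP_t Y_0 \to \mathring C_\mfz \bar x$ and $\mathring C_\mfh (\CR\CH)(t)\to \mathring C_\mfh (\mrd g_0)g_0^{-1}$ as $t\downarrow 0$, so $t^{-1}\int_0^t \CP_{t-s}(\mathring C_\mfz\CP_s Y_0 + \mathring C_\mfh (\CR\CH)(s))\,\mrd s \to \mathring C_\mfz\bar x + \mathring C_\mfh(\mrd g_0)g_0^{-1}$; meanwhile all the genuinely nonlinear/singular terms ($\CY^3$, the products $\CP Y_0\partial\hat\CY$, $\hat\CY\partial\hat\CY$, $\bPsi^{U_0}\partial(\cdot)$, the $\CG^\omega$ terms, and $\hat\CU\bar\bXi$) have $\hat\cD$-exponents that make their reconstructions $O(t^{\kappa'})$-smaller than linear, hence give vanishing contribution after dividing by $t$ and letting $t\downarrow 0$. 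Taking expectations removes the mean-zero stochastic fluctuations and Wick-renormalised objects.

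The main obstacle I anticipate is \emph{bookkeeping the power counting carefully enough} to be sure that \emph{every} term other than $\mathring C_\mfz\CY + \mathring C_\mfh\CH$ contributes $o(t)$ to $\CR\hat\CY(t)$ (in a mode strong enough to survive testing against $\psi^\lambda_x$ and taking expectations), and in particular that the renormalisation counterterms themselves — which by Proposition~\ref{prop:renorm_g_eqn} are $C^\eps_{\YM}X + C^\eps_{\Gauge}h + C^\eps_{\Higgs}\Phi$, i.e.\ of exactly the same form as $\mathring C_\mfz\CY + \mathring C_\mfh\CH$! — do not interfere. This is handled by noting that we do \emph{not} try to separate $\mathring C_\mfz$ from the BPHZ constants; rather, the lemma as stated only claims to recover $\mathring C_\mfz$ and $\mathring C_\mfh$ \emph{given} that the renormalisation constants are fixed functions of the mollifier (so in the $\eps\downarrow 0$ limit they are fixed), and the whole point in Propositions~\ref{prop:consts-soft}--\ref{prop:consts-soft2} is to use this to pin down $\mathring C_\mfh$ as a function of initial data. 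So in fact one should define $O_n$ to read off $\mathring C_\mfz\CY + \mathring C_\mfh\CH + C_{\YMH}X + C_{\GaugeH}h$ and then \emph{subtract the known limiting renormalisation constants} $C_{\YMH}^0, C_{\GaugeH}^0$; since these are deterministic and $\eps$-independent in the limit, this is legitimate. A secondary technical point is justifying the interchange of $\lim_{n}$ and $\E$, which follows from uniform (in $n$, for $t_n$ small) moment bounds on $t_n^{-1}\CR\hat\CY(t_n)$ coming from the uniform-in-model estimates of Theorem~\ref{thm:integration} together with the moment bounds on the models from Lemma~\ref{lem:conv_of_models2} and on the stochastic objects from Lemma~\ref{lem:omegas_converge}. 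I would also remark that measurability of $O_n$ as a function on $(\state\times\hat\mfG^{0,\rho})^\sol$ follows because $\CR\hat\CY(t_n)$, being the reconstruction of the remainder, is a continuous functional of the trajectory $(X,U,h)$ on $[0,t_n]$ via the fixed point map and continuity of reconstruction.
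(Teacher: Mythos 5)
There are genuine gaps in your approach, and they are exactly the points where the paper's proof does its real work. First, well-definedness/measurability: your $O_n$ is built from $\CR\hat\CY(t_n)$, where $\hat\CY$ comes from the decomposition $\CY = \CP Y_0 + \bPsi^{U_0} + \hat\CY$. But $\CR\hat\CY(t_n) = \bar X(t_n) - \CP_{t_n}\bar x - \Psi^{U_0}(t_n)$, and $\Psi^{U_0} = K*(\sig\,\CP U_0\,\xi)$ is a stochastic integral against the noise — it is \emph{not} a measurable function of the trajectory $(X,U,h)$; the same is true of the ``quadratic remainder terms'' $\CG^{\omega_\ell}(\cdots)$ you propose to subtract. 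Recovering these objects from the trajectory is essentially equivalent to recovering the noise from the solution, which is a \emph{consequence} of this lemma (Remark~\ref{rem:recover_noise}), so your argument is circular. The paper circumvents this by never touching the model: it defines the observable as the limit of $\partial_t\bar X^{\hat\eps} - \Delta\bar X^{\hat\eps} - N^{\hat\eps}(\bar X,\bar U,\bar h)$, where $N^{\hat\eps}$ is built from classical products of \emph{mollified} trajectories plus linear counterterms (Lemma~\ref{lemma:observable}); proving that this converges to the renormalised PDE residual is the content of the enlarged regularity structure and the shifted models of Lemma~\ref{lem:shifted_model}, for which your proposal has no analogue.

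Second, the power counting underlying your $t_n^{-1}$ rescaling fails. For instance $\CG_\mfz\bone_+(\CY^3)$ contributes to $\CR\hat\CY(t)$ a term of size roughly $\int_0^t s^{3\eta/2}\,\mrd s \asymp t^{1+3\eta/2}$ with $1+3\eta/2\in(0,\tf14)$ since $\eta\in(-\tf23,-\tf12)$; this is far larger than $O(t)$, so $t_n^{-1}\CR\hat\CY(t_n)$ diverges. Likewise $\CG_\mfz(\CP Y_0\,\partial\CP Y_0)$ is only $O(1)$ in $\CC^{\hat\beta}$ by \eqref{e:PN-bound}, not $o(t)$. This is why the paper does not rescale the mild solution at a single small time; instead it tests the exact space-time residual $\mathring C_\mfz\bar X + \mathring C_\mfh\bar h + \sig\eta^{\tilde\tau}$ against $\psi\,\tilde\psi_n$ with $\tilde\psi_n$ supported in $(1/n,2/n)$, so the singular nonlinearities are cancelled identically rather than estimated. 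Finally, your assertion that ``taking expectations removes the stochastic fluctuations'' hides a nontrivial step: one must identify $\tilde\CR(\CU\bXi)$ as a genuine It\^o integral (Lemma~\ref{lemma:noise_term}), which uses non-anticipativity of the kernels and the vanishing of the BPHZ character on trees with a noise at the root (Lemma~\ref{lemma:noise_at_root_vanish}). The $t\downarrow 0$ limit of $\mathring C_\mfz\bar X(t) + \mathring C_\mfh\bar h(t)$ and the uniform integrability via a stopping time are the easy parts, and there your reasoning matches the paper's.
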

The abstract fixed point problem used in the proof of Proposition~\ref{prop:bar_X_max_sols} will not be suitable for our analysis in this section and it will be preferable to use the abstract fixed point problem used to prove the $\eps \downarrow 0$ convergence of local solutions in Propositions~\ref{prop:SPDEs_conv_zero_space-time}+\ref{prop:SPDEs_conv_zero_state}. 

For that reason, we give another construction of maximal solutions obtained by patching together the limits of local solutions. Fix again initial conditions $\bar x = (\bar a,\bar \phi)\in\state$ and $g_0\in \mfG^{0,\rho}$
as well as $\mathring{C}^{}_{\A}, \mathring{C}^{}_{\h} \in L(\mfg,\mfg)$ and $\sig\in\R$.
Let $\mbF$ denote the canonical filtration generated by the noise $\xi$.

Let $\tau_1\in(0,1)$ be the $\mbF$-stopping time from Proposition~\ref{prop:SPDEs_conv_zero_state} corresponding to initial condition $( \bar{x},g_{0})$,
constants $\mathring C^\eps_\A\equiv \mathring C_\A$, $\mathring C^\eps_\h\equiv \mathring C_\h$, and $\sig^\eps=\sig$.
We first define $(\bar X,\bar U,\bar h)$ on $[0,\tau_1]$
as the $\eps\downarrow0$ limit of the solution to~\eqref{eq:renorm_bar_a}
as in Propositions~~\ref{prop:SPDEs_conv_zero_space-time}\slash\ref{prop:SPDEs_conv_zero_state}. 

We now let $\tau_2-\tau_1\in(0,1)$ be the stopping time from Proposition~\ref{prop:SPDEs_conv_zero_state} 
corresponding to the new `initial time' $\tau_1$ and initial condition $(\bar X,\bar U, \bar h)(\tau_1)$.\footnote{Propositions~\ref{prop:SPDEs_conv_zero_space-time} and~\ref{prop:SPDEs_conv_zero_state} were stated in terms of initial conditions in $\state\times\mfG^{0,\rho}$,
but of course can be equivalently stated in terms of initial conditions in $\state\times\tilde{\mfG}^{0,\rho}$.}
We then extend $(\bar X,\bar U,\bar h)$ to $[\tau_1,\tau_2]$
by defining it as the $\eps\downarrow0$ limit of the corresponding solution to~\eqref{eq:renorm_bar_a}.

We proceed in this way defining $(\bar X,\bar U,\bar h)$ as an element of $\CC([0,\tau^*) , \state\times\tilde{\mfG}^{0,\rho})$
where $\tau^* = \lim_{n\to\infty} \tau_n$.
We then extend $(\bar X,\bar U,\bar h)$ to a function $[0,\infty)\to (\state\times\tilde{\mfG}^{0,\rho}) \sqcup \{\skull\}$ by setting 
$(\bar X,\bar U,\bar h)(t) = \skull$ for all $t\geq \tau^*$.
We then have the following lemma.
\begin{lemma}\label{lem:patching}
With the above construction, one has $(\bar X,\bar U,\bar h) \in (\state\times\tilde{\mfG}^{0,\rho})^\sol$
and $(\bar{X},\bar{U},\bar{h}) = \mathcal{A}_{\sig}^{\BPHZ}[\mathring{C}_{\A},\mathring{C}_{\h},\bar x, g_0]$. 
\end{lemma}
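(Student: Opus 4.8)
\textbf{Proof proposal for Lemma~\ref{lem:patching}.}
The statement is essentially a tautology that requires checking that the ``patching'' construction above produces exactly the same object as the abstract construction of Proposition~\ref{prop:bar_X_max_sols}, and moreover that this object indeed lies in $(\state\times\hat\mfG^{0,\rho})^\sol$. The plan is to first verify the latter membership directly from the construction, and then to compare the two constructions on each dyadic-type time interval by uniqueness.

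First I would argue that $(\bar X,\bar U,\bar h)$ as built by patching belongs to $(\state\times\hat\mfG^{0,\rho})^\sol$. By construction it is an element of $\CC([0,\tau^*),\state\times\hat\mfG^{0,\rho})$ followed by the cemetery state, so the only thing to check is that it ``blows up'' at $\tau^*$ whenever $\tau^*<\infty$, i.e.\ $\lim_{t\nearrow\tau^*}\Sigma(\bar X(t),0)=\infty$ (together with the corresponding statement for $\bar U,\bar h$ in $\hat\mfG^{0,\rho}$). This is where the main work lies. The point is that the stopping times $\tau_{n+1}-\tau_n$ from Proposition~\ref{prop:SPDEs_conv_zero_state} are bounded stochastically from below by a quantity depending only on the size of the initial condition $(\bar X,\bar U,\bar h)(\tau_n)$ in $\state\times\hat\mfG^{0,\rho}$ (and the realisation of the noise on a bounded neighbourhood). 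Hence if $\sup_n \Sigma(\bar X(\tau_n),0)+|\bar g(\tau_n)|_{\mfG^\rho}<\infty$ along a subsequence with $\tau_n\to\tau^*<\infty$, then $\tau_{n+1}-\tau_n$ would be bounded below along that subsequence, contradicting $\tau_n\to\tau^*$. Thus $\tau^*$ is a genuine blow-up time in the $\state\times\hat\mfG^{0,\rho}$ topology. One must be slightly careful that the stopping times can be taken adapted: since the mollifier $\moll$ and kernel $K$ are non-anticipative, the $\eps$-input of size $r$ over time $t$ depends only on the noise up to time $t$, exactly as in the proof of Proposition~\ref{prop:SPDEs_conv_zero_space-time}, so each $\tau_n$ is indeed an $\mbF$-stopping time.

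Next I would identify the patched object with $\mathcal{A}_{\sig}^{\BPHZ}[\mathring{C}_{\A},\mathring{C}_{\h},\bar x,g_0]$. For this, recall that both $(\bar X,\bar U,\bar h)$ constructed here and the maximal solution constructed in Proposition~\ref{prop:bar_X_max_sols} are, on each interval $(\tau_n,\tau_{n+1}]$ (after centering time), the $\eps\downarrow0$ limit of the reconstruction of the fixed point problem~\eqref{e:fix-pt-multi-Xbar}+\eqref{eq:CY_bar_equation}+\eqref{e:fix-pt-HU} (respectively the remainder fixed point problem~\eqref{eq:tilde_X_FPP}+\eqref{e:fix-pt-HU}), both with underlying model $Z^{0,\eps}_{\BPHZ}$ and with initial data at time $\tau_n$ depending only on the noise restricted to $[\tau_{n-1},\tau_n]$ modulo centering. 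As remarked on page~\pageref{page:reconstructions_match}, for $\eps=0$ the reconstructions of these two fixed point problems coincide on each such interval; and by uniqueness of the limit (both are obtained as limits of the \emph{same} reconstructed solutions for $\eps>0$ up to the $o(1)$ terms $\CW$ and $\omega_\ell$ which vanish as $\eps\downarrow0$ by Lemma~\ref{lem:omegas_converge}), they agree. One then patches over all $n$ and uses that both constructions declare the solution dead at the first accumulation point of the $\tau_n$; since the two constructions produce the same sequence $(\tau_n)$ (the stopping times in Proposition~\ref{prop:bar_X_max_sols} and in Proposition~\ref{prop:SPDEs_conv_zero_state} are defined through the same $\eps$-input size condition and the same initial data), the maximal existence times coincide, whence the two $(\state\times\hat\mfG^{0,\rho})^\sol$-valued random variables are almost surely equal.

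The main obstacle I anticipate is purely bookkeeping: making sure that the ``initial data at time $\tau_n$'' used in the patching construction is genuinely measurable with respect to $\mcF_{\tau_n}$ and that the stopping-time bounds from Proposition~\ref{prop:SPDEs_conv_zero_state} can be iterated with uniform (in $n$, for fixed realisation) control — this is exactly the content of~\cite[Remarks~7.21,~7.22]{CCHS2d} adapted to the present setting, and was already invoked in the proof of Proposition~\ref{prop:bar_X_max_sols}. Once these measurability and adaptedness points are in place, the identification is immediate from uniqueness of fixed points and of reconstructions.
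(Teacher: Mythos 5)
Your overall strategy is the same as the paper's. Part (2) of your argument is essentially the paper's (which simply points back to the final paragraph of the proof of Proposition~\ref{prop:bar_X_max_sols}), and your extra discussion there is acceptable: even though the epochs used in Proposition~\ref{prop:bar_X_max_sols} need not literally coincide with the $\tau_n$ of the patching construction, local agreement of two continuous $\state\times\hat\mfG^{0,\rho}$-valued processes together with equality of their blow-up times is all that is required.

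The genuine gap is in part (1). Membership in $(\state\times\hat\mfG^{0,\rho})^\sol$ requires $\lim_{t\nearrow\tau^*}(\bar X,\bar U,\bar h)(t)=\skull$, i.e.\ divergence of $\Sigma(\bar X(t))+|(\bar U,\bar h)(t)|_{\hat\mfG^{0,\rho}}$ as $t\nearrow\tau^*$ over \emph{all} times $t$, not merely along the sequence $(\tau_n)$. Your contradiction argument only controls the sizes at the times $\tau_n$ and does not exclude the norm dipping back to a bounded value at intermediate times arbitrarily close to $\tau^*$ (recall $\tau_{n+1}-\tau_n$ is determined by the size of the data at $\tau_n$ and the noise, not as a hitting time of the solution's norm, so nothing a priori prevents such oscillation). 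The paper closes this by proving an additional property: for \emph{any} $\mbF$-stopping time $\sigma<\tau^*$ there exists a stopping time $\bar\sigma>\sigma$ with $\bar\sigma-\sigma$ stochastically bounded below in terms of the size at $\sigma$ and with
\begin{equ}
\sup_{t\in[\sigma,\bar\sigma]}\big\{\Sigma(\bar X(t))+|(\bar U,\bar h)(t)|_{\hat\mfG^{0,\rho}}\big\}
\le 1+2\big(\Sigma(\bar X(\sigma))+|(\bar U,\bar h)(\sigma)|_{\hat\mfG^{0,\rho}}\big)\;;
\end{equ}
applying this to hitting times of bounded sets close to $\tau^*$ rules out the dips. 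A second, more minor, issue: ``stochastically bounded from below'' is a statement about conditional laws, so the deduction ``sizes bounded along a subsequence $\Rightarrow$ the increments $\tau_{n+1}-\tau_n$ are bounded below along that subsequence'' is not pathwise valid as written; one needs the conditional independence of $\tau_{n+1}-\tau_n$ from $\tau_1,\dots,\tau_n$ given $(\bar X,\bar U,\bar h)(\tau_n)$ together with a Borel--Cantelli type argument to conclude that, almost surely on the event where the sizes stay bounded, infinitely many increments exceed a fixed $\delta>0$.
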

\begin{proof}
For the first statement, observe that $\tau_{n+1}-\tau_n$ is independent of $\tau_1,\tau_2,\ldots, \tau_n$ when conditioned on $(\bar X,\bar U,\bar h)(\tau_n)$
and is stochastically bounded from below by a distribution depending only on
$\Sigma(\bar X(\tau_n)) +|(\bar U,\bar h)(\tau_n)|_{\tilde{\mfG}^{0,\rho}}$.
Furthermore, for any $\mbF$-stopping time $\sigma<\tau^*$, there exists another $\mbF$-stopping time $\bar \sigma>\sigma$ such that $\bar\sigma-\sigma$ is stochastically bounded from below by a distribution depending only on $\Sigma(\bar X(\sigma)) +|(\bar U,\bar h)(\sigma)|_{\tilde{\mfG}^{0,\rho}}$
and such that
\begin{equ}
\sup_{t\in [\sigma,\bar\sigma]} \{\Sigma(\bar X(t)) +|(\bar U,\bar h)(t)|_{\tilde{\mfG}^{0,\rho}}\}
\leq 1+2(\Sigma(\bar X(\sigma)) +|(\bar U,\bar h)(\sigma)|_{\tilde{\mfG}^{0,\rho}})\;.
\end{equ}
It follows that, on the event $\tau^*<\infty$, almost surely $\lim_{t\nearrow\tau^*}(\bar X,\bar U,\bar h)(t) = \skull$
and therefore $(\bar X,\bar U,\bar h) \in (\state\times\tilde{\mfG}^{0,\rho})^\sol$.

The second statement follows from the last paragraph of the proof of Proposition~\ref{prop:bar_X_max_sols} on pg. \pageref{page:reconstructions_match}. 
\end{proof}

We now state a version of \cite[Lemma~3.6]{StringManifold}. 
\begin{lemma}\label{lemma:noise_term} 
Fix $ (\mathring{C}_{\A},\mathring{C}_{\h}) \in L(\mfg,\mfg)^{2}$ and $(\bar{x} , g_0) \in \state \times \mfG^{0,\rho}$.

Let $(\CY,\CU,\CH)$ be the local solution, associated to the model $\PPi^{\BPHZ}$ along with the rest of the limiting $\eps$-inputs, of the abstract fixed point problem 
  \eqref{e:fix-pt-HU}+\eqref{e:fix-pt-multi-Xbar} with constants $\mathring{C}_{\A},\mathring{C}_{\h}$ 
and initial condition $(\bar{x},U_0,h_0)$ (the latter two are determined from $g_0$ via \eqref{eq:h_and_U_def}) promised by Lemma~\ref{lem:fixedptpblmclose}, and let $\tau$ be the associated $\mbF$-stopping time given in the statement of that lemma. 
Let  $(\bar{X}, \bar{U} ,\bar{h}) = \CR (\CY,\CU,\CH)$.   

Then, the (random) distribution
$\eta \eqdef  \tilde{\CR} \big( \CU \bXi \big) \in \mcb{D}'((0,\tau) \times \T^3,E)$, where $\tilde{\CR}$ is the local (defined away from time $t=0$) reconstruction operator for the model $\PPi^{\BPHZ}$, admits an extension to a random element of $\mcb{D}'((0,1) \times \T^3,E)$ such that for every (deterministic) $E$-valued space-time test function $\psi$,
\begin{equ}\label{eq:ItoIntegral}
\eta(\psi)= \sig \int_0^{\tau}   \scal{ \bar{U} \psi, \mrd W(s)}\;,
\end{equ}
where the integral is in the It\^o sense and
$\sig W$ is the $L^2(\T^{3},E)$-cylindrical Wiener process associated to the noise $\sig \xi = \CR \bXi$.
\end{lemma}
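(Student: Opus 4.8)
\textbf{Proof plan for Lemma~\ref{lemma:noise_term}.}
The plan is to identify the modelled distribution $\CU\bXi$ explicitly at low order and recognise its reconstruction as a stochastic integral against the cylindrical Wiener process. First I would recall that $\bXi = \id_E \in \mcb{T}[\mfl]\otimes E$ (or $\mcb{T}[\bar\mfl]\otimes E$ for the $(B,\Psi)$ system, but the argument is identical), and that $\CU \in \cD^{\gamma_\mfu,\rho}$ takes values in the function-type sector built on $\{\mbX^k\}$ and $\mcb{I}_\mfu$-planted trees of positive degree. Consequently the product $\CU\bXi$ is a modelled distribution of the form $\bar U(z)\bXi + (\text{higher order terms})$, where $\bar U = \CR\CU$ is the reconstruction of $\CU$, which by Lemma~\ref{lem:fixedptpblmclose} is a continuous adapted process with values in $\CC^{3/2-3\kappa}$. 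The only term in $\CU\bXi$ whose degree is below $0$ is $\bar U(z)\bXi$ itself (degree $-5/2-\kappa$); all the other contributions come from pairing $\mcb{I}_\mfu$-planted trees or monomials $\mbX^k$ with $\bXi$ and, since the $\mcb{I}_\mfu$-planted trees have degree $>1$, these are modelled distributions of positive degree whose reconstruction is simply the pointwise product of a continuous function with $\PPi^{\BPHZ}\bXi$.

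The next step is to compute $\tilde\CR(\CU\bXi)$ using these two observations. For the higher-order terms, since $\PPi^{\BPHZ}$ acts on $\bXi$ by the limiting (BPHZ-renormalised) white noise — but crucially, as in Lemma~\ref{lem:symbols_vanish}, no renormalisation occurs for planted trees such as $\mcb{I}_\mfl(\bone)$ itself — the reconstruction of a term $V\cdot\bXi$ with $V$ taking values in the polynomial/function-type sector is $(\tilde\CR V)\cdot\xi$ in the sense of the pairing of a continuous (in fact $\CC^{3/2-}$) function against white noise, which is classically well-defined as a stochastic integral. For the leading term $\bar U(z)\bXi$, the reconstruction is precisely the distribution $\psi\mapsto \int \scal{\bar U(s,y)\psi(s,y),\xi(\mrd s,\mrd y)}$, again a well-defined It\^o integral because $\bar U$ is adapted (the model and all $\eps$-inputs are adapted since $\moll$ and $K$ are non-anticipative) and the integrand lies in $L^2_{\mathrm{loc}}$. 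Summing, one gets that $\tilde\CR(\CU\bXi)$, tested against $\psi$, equals $\sig\int_0^\cdot \scal{\bar U(s)\psi(s),\mrd W(s)}$ on $(0,\tau)\times\T^3$, and since the right-hand side is a bona fide space-time distribution defined on all of $(0,1)\times\T^3$ (the It\^o integral makes sense up to time $\tau$ regardless of the $t=0$ behaviour), this furnishes the desired extension $\eta$. The identification $\sig W = \CR\bXi$ as a cylindrical Wiener process follows from the definition of the noise assignment $\zeta^{\delta,\eps} = \sig^\eps\xi^\eps$ (with $\sig^\eps = \sig$ here) and the fact that the BPHZ lift of white noise reconstructs $\bXi$ to the noise itself.

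The main obstacle I anticipate is making rigorous the claim that ``reconstruction of $V\cdot\bXi$ is the stochastic integral'' uniformly in the decomposition — that is, showing that the local reconstruction $\tilde\CR$ (which a priori is only defined up to the choice of global extension) agrees with the It\^o integral, and that the two notions of ``multiplication by white noise'' (the regularity-structures one via $\PPi^{\BPHZ}$ and the probabilistic It\^o one) coincide. This is essentially the content of the comparison between Wick products and It\^o integrals; the standard route is to first verify the identity for the smooth models $\PPi_{\delta,\eps}$ (where reconstruction is literally pointwise multiplication and the ``It\^o integral'' is a Riemann integral against a smooth function), check that both sides converge — the left by the already-established convergence of models and continuity of reconstruction, the right by It\^o isometry together with the convergence $\sig^\eps\xi^\eps\to\sig\xi$ and $\bar U^\eps \to \bar U$ — and conclude by uniqueness of limits. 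A secondary technical point is that one must check the higher-order contributions to $\CU\bXi$ genuinely vanish under $\tilde\CR$ in the limit or combine into lower-order continuous corrections; this is a power-counting check of the type already performed repeatedly in Section~\ref{sec:renorm_gauge_transformed}, using that $\deg(\mcb{I}_\mfu(\tilde\tau))\geq 3/2$ for all relevant $\tilde\tau$, so that the corresponding products with $\bXi$ have degree $>-1$ and reconstruct to honest functions.
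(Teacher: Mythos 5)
Your overall strategy --- approximate $\PPi^{\BPHZ}$ by the regularised models, identify $\eta^{\eps}(\psi)$ at finite $\eps$, and pass to the limit using It\^o isometry together with the convergence of models and of $\bar U^{\eps}$ --- is the same as the paper's. However, there is a genuine gap at the key step: the treatment of the renormalisation of the product $\CU\bXi$. For the BPHZ model at finite $\eps$ the reconstruction of $\CU\bXi$ is \emph{not} simply the pointwise product $\bar U^{\eps}\,\sig\xi$; by the machinery of \cite{BCCH21} it equals that product \emph{plus} the counterterms $\sum_{\tree\in\mfT_-}(\ell^{0,\eps}_{\BPHZ}\otimes\id)\bbUpsilon^{\bar F}_{\mfm}[\tree]$, and one must show these vanish (rather than, say, diverge as $\eps\downarrow 0$). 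Your appeal to the parity statement for planted trees does not address this: the problematic trees are the \emph{unplanted} products of the form $\mcb{I}_{\mfu}(\tilde\tau)\,\Xi$ arising from the non-polynomial components of $\CU$. Your power counting for these is also off: e.g.\ $\deg\big(\mcb{I}_{\mfu}(\mcb{I}_{\mfz}(\bar\Xi))\,\Xi\big)\approx \tfrac32-\tfrac52=-1$, i.e.\ these trees lie in $\mfT_-$ and are \emph{a priori} subject to renormalisation, so ``degree $>-1$, hence reconstructs to an honest function'' neither holds numerically nor would it settle the question.

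The missing ingredient is the ``noise at the root'' argument (Lemma~\ref{lemma:noise_at_root_vanish} in the paper): every $\tree$ with $\bbUpsilon^{\bar F}_{\mfm}[\tree]\neq 0$ has an occurrence of $\mfl$ at its root, and for a \emph{non-anticipative} mollifier the negative twisted antipode applied to such a $\tree$ produces only forests containing a factor $\mcb{I}_{\mfl}(\bone)\tilde\tree$ in which every noise of $\tilde\tree$ is incident to an $\eps$-regularised kernel; by independence the expectation factorises and vanishes since $\E[\xi(\phi)]=0$. This is where the non-anticipativity hypothesis enters and it cannot be replaced by the parity or power-counting arguments you invoke. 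Once this vanishing is established, the rest of your argument (adaptedness of $\bar U$, well-posedness of the It\^o integral, stability under $\eps\downarrow 0$) goes through as in the paper.
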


\begin{proof}
First note that the quantity \eqref{eq:ItoIntegral} is well-defined since
$s \mapsto U(s,\cdot)$ is continuous and adapted.
To obtain \eqref{eq:ItoIntegral}, we approximate $\PPi^{\BPHZ}$ with the models $\PPi_{0,\eps}^{\BPHZ}$ (using a non-anticipative mollifier $\chi$) and let $\eta^{\eps}$ be the analog of $\eta$ but with the role of $\PPi^{\BPHZ}$ replaced with $\PPi_{0,\eps}^{\BPHZ}$ along with the corresponding regularised $\eps$-inputs. 

Now, on the one hand, we have $\lim_{\eps \downarrow 0}  \eta^{\eps}(\psi) = \eta(\psi)$. 
On the other hand, thanks to \cite{BCCH21}, it follows that if we write $(\bar{X}^{\eps},\bar{U}^{\eps},\bar{h}^{\eps})$  for the relevant objects defined from $\PPi_{0,\eps}^{\BPHZ}$ along with the associated $\eps$-inputs, one has the identity
\begin{equs}
\eta^{\eps}(\psi) 
&=  \sig \int_0^{\tau}   \scal{\bar{U}^{\eps} \psi,\mrd W(s)} \\
{}& \quad 
+ 
\sum_{\tree \in \mfT_{-}} 
\int_0^{\tau}  \scal{\psi, (\ell^{0,\eps}_{\BPHZ} \otimes \id) \bbUpsilon^{\bar{F}}_{\mfm}[\tree](\bar{X}^{\eps},\bar{U}^{\eps},\bar{h}^{\eps})} \,\mrd s\;,
\end{equs}
where $\ell^{0,\eps}_{\BPHZ}$ is defined as in \eqref{eq:bphz_char_delta_0} \dash note that $\bbUpsilon^{\bar{F}}_{\mfm}[\tree]$ can be written as a function of the pointwise values of the smooth (until blow-up) functions $(\bar{X}^{\eps},\bar{U}^{\eps},\bar{h}^{\eps})$. 

Observe that $\tree \in \mfT_{-}$ and $\bbUpsilon_{\mfm}^{\bar{F}}[\tree] \not = 0$ forces $\tree$ to have an occurrence of $\mfl$ at the root, so by Lemma~\ref{lemma:noise_at_root_vanish} below  the second term above vanishes. 
The result then follows by the stability of the It\^o integral.  
\end{proof}
The above proof used the following lemma regarding properties of the renormalisation character $\ell^{0,\eps}_{\BPHZ}$ in \eqref{eq:bphz_char_delta_0}. 

\begin{lemma}\label{lemma:noise_at_root_vanish}
Suppose that $\moll$ is non-anticipative
and that $ \ell^{0,\eps}_{\BPHZ}$ is given by \eqref{eq:bphz_char_delta_0}.
Then, for any $\tree \in  \mfT^{\bar{F}} $ that has an occurrence of $\mfl$ at the root, one has $\ell^{0,\eps}_{\BPHZ}[\tree] = 0$.  
\end{lemma}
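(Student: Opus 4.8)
The final statement is Lemma~\ref{lemma:noise_at_root_vanish}: for a non-anticipative mollifier $\moll$ and $\ell^{0,\eps}_{\BPHZ}$ as in~\eqref{eq:bphz_char_delta_0}, one has $\ell^{0,\eps}_{\BPHZ}[\tree] = 0$ for any $\tree \in \mfT^{\bar F}$ with an occurrence of $\mfl$ at the root.

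\textbf{Approach.} The plan is to exploit the causal structure of the problem: the noise $\mfl$ at the root represents white noise $\xi$ evaluated at the root vertex, while non-anticipativity of $\moll$ (and hence of $K^\eps = K * \moll^\eps$) forces every kernel edge attached below to propagate information only forward in time. Recall from~\eqref{eq:bphz_char_delta_0} that $\ell^{0,\eps}_{\BPHZ} = \bar\PPi^{\BPHZ}_{0,\eps} \circ \tilde{\mcb A}_-$ where $\bar\PPi^{\BPHZ}_{0,\eps}[\sigma] = \E[\PPi^{\BPHZ}_{0,\eps}[\sigma](0)]$, and that the negative twisted antipode $\tilde{\mcb A}_-$ preserves the property of having $\mfl$ at the root (since $\tilde{\mcb A}_-$ acts by ``pruning'' but always keeps the root and the subtrees containing it appear as planted forests on one side of the coproduct). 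So it suffices to show $\bar\PPi^{\BPHZ}_{0,\eps}[\tree] = 0$ whenever $\tree$ has $\mfl$ at the root, and by the same recursive structure it suffices to show this for the canonical lift $\bar\PPi_{0,\eps}[\tree] = \E[\PPi_{0,\eps}[\tree](0)]$ together with the subdivergence subtractions, each of which is again of the same form (an integral of kernels against the root delta, with an $\mfl$-noise at the root).

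\textbf{Key steps.} First I would reduce, via the formula $\ell^{0,\eps}_{\BPHZ} = \bar\PPi_{0,\eps}^{\BPHZ}\circ\tilde{\mcb A}_-$ and the recursive definition of $\tilde{\mcb A}_-$ from~\cite[Sec.~5.5]{CCHS2d}, to showing that $\E[\PPi_{0,\eps}[\tree](0)] = 0$ for every (sub)tree $\tree$ with $\mfl$ at the root; here one uses that $\tree \in \mfT^{\bar F}$ means any edge of type $\mfl$ is either at the root or incident below to an edge of type $(\mfm,p)$, and an $\eps$-regularised kernel $K^\eps$, so all kernels in $\tree$ except possibly those landing directly at the root are smooth and non-anticipative. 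Second, I would write $\E[\PPi_{0,\eps}[\tree](0)]$ as a sum over Wick pairings of the noise leaves; the leaf at the root is $\xi(0)$ (really $\xi$ tested near the root after the $\delta \downarrow 0$ limit has been taken in the $\mfm$-edges so the $\mfl$-leaf below $\mfm$ becomes ``$\bar\mfl$-like'', i.e.\ its covariance is against $D^p K^\eps$). Third, the crucial point: the root vertex $x_\rho$ is set to $0$ by $\delta(x_\rho)$, and every edge leaving the root carries the non-anticipative kernel $K$ or $\nabla K$ or $K^\eps$, which is supported on $\{t \ge t_\rho\}$, i.e.\ on positive times. But the Wick contraction pairing the root noise $\xi(x_\rho)$ with another leaf $\xi(x_v)$ produces the covariance $\delta(x_\rho - x_v)$ (or its $\moll^\eps$-mollification), and $x_v$ is a descendant of the root reached only through non-anticipative kernels, forcing the time coordinate of $x_v$ to be $> 0$ on the support of the integrand; since $x_\rho = 0$, the mollified covariance $\chi^\eps_\delta(x_\rho - x_v)$ between a time-$0$ point and a strictly-positive-time point vanishes (for $\delta = 0$; and for the $\delta \downarrow 0$ limit this is exactly the argument already used in Lemma~\ref{lem:omegas_converge} and in the proof of Proposition~\ref{prop:renorm_g_eqn}, cf.~\eqref{eq:work_delta_to_0}). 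Hence the integrand is identically zero and $\E[\PPi_{0,\eps}[\tree](0)] = 0$. I would carefully check the one edge case where the $\mfl$-root-leaf is paired with \emph{itself} — impossible for a single leaf — or where the tree has a single noise (then the expectation is zero by centring), so the Wick-pairing sum is genuinely empty or vanishing term by term.

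\textbf{Main obstacle.} The main subtlety is bookkeeping the interaction between the $\delta \downarrow 0$ limit (which turns the $\mfl$-leaves sitting under $\mfm$-edges into effectively $\bar\mfl$-type contributions with smooth covariance) and the causality argument for the root leaf, making sure that in the limiting expression the covariance edge emanating from the root still only ``sees'' strictly positive times on the support of the non-anticipative kernels below it. One has to confirm that the negative twisted antipode and the extraction of subdivergences do not create a term in which the root noise is paired with a point at time exactly $0$ through a non-trivial kernel — but this cannot happen precisely because every internal kernel is non-anticipative and the integration variable for any vertex strictly below the root is constrained to positive times by the product of heat kernels, while the root is pinned at $0$. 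The rest is a routine (if notation-heavy) induction mirroring the proof of Lemma~\ref{lem:symbols_vanish} and the $\delta\downarrow0$ analysis in Proposition~\ref{prop:renorm_g_eqn}.
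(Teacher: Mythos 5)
Your overall strategy coincides with the paper's: push the claim through the negative twisted antipode (each forest it produces contains a factor with $\mfl$ at its root and all remaining noises sitting under $\eps$-regularised kernels), then kill the expectation of that factor by Wick-pairing the root noise and using non-anticipativity of $\moll$ together with the pinning of the root at time $0$. The paper packages the second step as independence: in the $\delta\downarrow 0$ limit, $\xi^{\delta}(0)$ decorrelates from every noise appearing in $\tilde\tree$, so $\bar\PPi[\mcb{I}_{\mfl}(\bone)\tilde\tree]=\bar\PPi[\mcb{I}_{\mfl}(\bone)]\cdot\bar\PPi[\tilde\tree]=0$.

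However, the specific vanishing mechanism you invoke rests on a reversed causality claim and fails as stated. With non-anticipative kernels and the root pinned at the origin, a vertex $x_v$ reached from the root through kernel edges sits at time $t_v\le 0$, \emph{not} $t_v>0$: the mild formulation reads $u(0)=\int_{s\le 0}K(0-s)\cdots\,\mrd s$, and the paper's own computation in the proof of Lemma~\ref{lem:omegas_converge} integrates over domains of the form $\{w_1\le z,\ y_1\le z\}$. Hence the support $\{|t_z|\lesssim\delta^2\}$ of the mollified covariance $\E[\xi^{\delta}(0)\xi^{\delta}(z)]$ is \emph{not} disjoint from the region where the paired leaf lives, and "a time-$0$ point cannot coincide with a strictly-positive-time point" proves nothing here. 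The pairing actually survives the support considerations and one must compute it: since $\tree\in\mfT^{\bar F}$, the paired non-root noise sits directly below an $\mfm$-edge, so the pairing contributes $\int D^{p}K^{\eps}(x_w-z)\,\E[\xi^{\delta}(0)\xi^{\delta}(z)]\,\mrd z\to D^{p}K^{\eps}(x_w)$ as $\delta\downarrow 0$, and \emph{this} vanishes because $K^{\eps}=K*\moll^{\eps}$ is smooth and supported in $\{t\ge 0\}$ while $t_{x_w}\le 0$. The $\eps$-regularisation is essential at exactly this point (for an unregularised $\nabla K$ the limit $\nabla K(x_w)$ would be singular near $x_w=0$ rather than zero), which is why the defining constraint of $\mfT^{\bar F}$ — every non-root $\mfl$ lies under an $\mfm$-edge — enters the proof; in your write-up the regularisation is only used for smoothness of the limiting character, not for the vanishing itself. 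Once the time direction and the mechanism are corrected, the rest of your argument (antipode reduction, centring of the single-noise case) goes through and agrees with the paper.
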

\begin{proof}
The action of the negative twisted antipode on $\tree$ generates a linear combination of forests, and each such forest must contain at least one tree  of the form $\mcb{I}_{\mfl}(\bone) \tilde{\tree}$ 
where every noise in $\tilde{\tree}$ is incident to an $\eps$-regularised kernel. 
For $\delta > 0$ sufficiently small, $\bar{\PPi}_{\delta,\eps}[\mcb{I}_{\mfl}(\bone) \tilde{\tree}] =  \bar{\PPi}_{\delta,\eps}[\mcb{I}_{\mfl}(\bone)] \cdot \bar{\PPi}_{\delta,\eps}[ \tilde{\tree}]$ by independence, and since $\bar{\PPi}_{\delta,\eps}[\mcb{I}_{\mfl}(\bone)]  = 0$ the proof is completed.
\end{proof} 
%
The key step in constructing the $O_{n}$'s promised in  Lemma~\ref{lemma:injectivity_in_law} is 
constructing, as function of the limiting $\eps \downarrow 0$ solution $(X,U,h)$ to \eqref{eq:renorm_bar_a}, the nonlinear expression $\CN(X,U,h) =  \text{\ ``}X^3 + X \partial X  \text{\ ''}$ \dash which are the limits of the corresponding terms on the right-hand side of \eqref{eq:renorm_bar_a}).
We will have to define $\CN(X,U,h)$ in a renormalised sense, that is by regularising $(X,U,h)$ at a scale $\hat{\eps} > 0$ and defining a regularised and renormalised $\CN_{\hat{\eps}}(\bullet, \bullet,\bullet)$. 

To do this we introduce modelled distributions that describe 
the regularisation of $(X,U,h)$\footnote{Which we note are different from the solutions to the regularised equation.}, and show that the reconstruction of $\CN$ applied to the regularised modelled distribution for $X$ converges to the reconstruction of $\CN$ applied to the modelled distribution for the original, unregularised $X$. 
At the algebraic level, this regularisation of $(X,U,h)$ is encoded by introducing new labels corresponding to regularised kernels. Intermediate quantities appearing when comparing the two reconstructions described above will involve \emph{both} regularised kernels and unregularised kernels which makes adding labels necessary (as opposed to just modifying our assignment of kernels to labels). 

Finally, to complete the argument one needs to know that reconstruction of our nonlinearity evaluated at these regularised modelled distributions makes sense as an observable on $(\state \times \tilde{\mfG}^{0,\rho})^{\sol}$ \dash but this last point can be justified with arguments from \cite{BCCH21}. 

Let $\MM$ be the set of models on $\mcb{T}$ that are admissible with the kernel assignment $K^{(0)}$ \dash note that $\PPi^{\BPHZ}  \in \MM$. 
We then define a larger set of labels by adding a duplicate type $\hat{\mft}$
 for each  $\mft \in \Lab_{+}$. 
 Our new set of kernel types is then $\hat{\mfL}_{+} = \mfL_{+} \sqcup \{ \hat{\mft}: \mft \in \Lab_{+}\}$, and our new set of types is $\hat{\mfL} = \hat{\mfL}_{+} \sqcup \mfL_{-}$. 
We also set, for each $\mft \in \Lab_{+}$, $\deg(\hat{\mft}) = \deg(\mft)$, $W_{\hat{\mft}} \simeq W_{\mft}$, and $\mathcal{K}_{\hat{\mft}} \simeq \mathcal{K}_{\mft}$.

We extend the rule $R$ to obtain a rule $\hat{R}$ on $\hat{\mfL}$ as follows: for any $\mft \in \Lab$ we set $\hat{R}(\mft) = R(\mft)$ and, for any $\mft \in \Lab_{+}$, we set $\hat{R}(\hat{\mft})$ to be the collection of all the node types $\hat{N}$ that can be obtained by taking $N \in R(\mft)$ and replacing any number of instances of edge types $(\mft,p) \in \Lab_{+} \times \N^{d+1}$ with $(\hat{\mft},p)$. 
 
We define the set of trees $\hat{\mfT} \supset \mfT$ to be those that strongly conform to $\hat{R}$ but where we also enforce that any edge of type $(\hat{\mft},p)$ has to be incident to the root. 
Since the latter constraint is preserved by coproducts, $\hat{\mfT}$ determines a regularity structure $\hat{\mcb{T}} \supset \mcb{T}$. 

Finally we introduce a parameter $\hat{\eps} \in [0,1]$ and define the kernel assignment $K^{[\hat{\eps}]} = (K^{[\hat{\eps}]}_{\mfb}: \mfb \in \hat{\mfL}_{+})$ by setting, for each $\mft \in \Lab_{+}$,  $K^{[\hat{\eps}]}_{\mft} = K^{(0)}_{\mft}$ and 
 $K^{[\hat{\eps}]}_{\hat{\mft}} = K^{(0)}_{\mft} \ast \moll_{\hat{\eps}}$. 
We write $\hat{\MM}_{\hat{\eps}}$ for the collection of $K^{[\hat{\eps}]}$-admissible models on $\hat{\mcb{T}}$ and then set $\hat{\MM} = \bigcup_{\hat \eps} \hat{\MM}_{\hat{\eps}}$. 
We denote by $\pi_{0}$ the natural restriction map $\pi_{0}: \hat{\MM} \rightarrow \MM$.

We define a mapping $\hat{\mfT} \ni \hat{\tree} \mapsto \hat{\pi}_{0}(\hat{\tree}) \in \mfT$ by replacing, for every $\mft \in \Lab_{+}$ and $p \in \N^{d+1}$, all edges of type $(\hat{\mft},p)$ in $\hat{\tree}$ with edges of type $(\mft,p)$. 
Overloading notation, this mapping on trees also induces a linear map\footnote{Note that $\hat{\pi}_{0}$ may not be an isomorphism because one may have $\dim(\mcb{T}[\hat{\tree}]) > \dim( \mcb{T}[\hat{\pi}_{0} (\hat{\tree})])$, there will be some symmetrisation in the action of $\hat{\pi}_{0}[\hat{\tree}]$ when $\hat{\pi}_{0}(\hat{\tree})$ has more symmetries than $\hat{\tree}$.} $\hat{\pi}_{0}[\hat{\tree}]\colon \mcb{T}[\hat{\tree}] \rightarrow \mcb{T}[\hat{\pi}_{0}( \hat{\tree})]$. 
We again denote by $\hat{\pi}_{0}$ the corresponding linear map $\hat{\pi}_{0}\colon \hat{\mcb{T}} \rightarrow \mcb{T}$. 
Right composition by $\hat{\pi}_{0}$ gives us another map $\iota: \MM \rightarrow \hat{\MM}_{0}$ and we note that $\pi_{0} \circ \iota = \id$. 

For every $\hat{\eps} > 0$ we define a map $\CE_{\hat{\eps}}\colon \MM \rightarrow \hat{\MM}_{\hat{\eps}}$ as follows. Given $\PPi \in \MM$ we fix $\CE_{\hat{\eps}}(\PPi)$ to be the unique model in $\hat{\MM}_{\hat{\eps}}$ that satisfies
\begin{itemize}
\item $\CE_{\hat{\eps}}(\PPi) [ \tree ] = \PPi[\tree]$ for $\tree \in \mfT$
\item  For $\tree = \bar{\tree} \prod_{j=1}^{n} \mcb{I}_{(\hat{\mft}_{j},p_{j})}(\tree_{j}) \in \mfT$ with $\bar{\tree},\tree_{j} \in \mfT$,  one has
\begin{equ}\label{eq:formula_for_extended_model}
\CE_{\hat{\eps}}(\PPi) [ \tree ] = \PPi[\bar{\tree}]  \Big(  \bigotimes_{j=1}^{n}\partial^{p_j}K_{\mft_j} \ast \moll^{\hat{\eps}}  \ast \PPi [\tree_{j}] \Big)\;. 
\end{equ}
\end{itemize}
Note that $\pi_{0} \circ \CE_{\hat{\eps}} = \id$. 
Below, we write $\hat{\mathfrak{R}}$ for the renormalisation group for $\hat{\mcb{T}}$.
We can now state and prove our analog of \cite[Lem~3.7]{StringManifold}.  
\begin{lemma}\label{lem:shifted_model}
There exists a sequence $( \hat{M}_{\hat{\eps}}: \hat{\eps} \in (0,1])$ of elements of $\hat{\mathfrak{R}}$ that all leave $\mcb{T}$ invariant and such that
the models
\begin{equ}\label{eq:shifted_model}
\hat{\PPi}_{\hat{\eps}} \eqdef \CE_{\hat{\eps}}(\PPi^{\BPHZ})  \circ \hat{M}_{\hat{\eps}}
\end{equ}
 converge in probability to $\iota (\PPi^{\BPHZ})$ as $\hat{\eps} \downarrow 0$. 
 Moreover, one has $\pi_0(\hat{\PPi}_{\hat{\eps}}) = \PPi^{\BPHZ}$. 
\end{lemma}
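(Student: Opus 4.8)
\textbf{Proof proposal for Lemma~\ref{lem:shifted_model}.}
The plan is to construct the renormalisation group elements $\hat M_{\hat\eps}$ by the BPHZ recipe applied to the enlarged regularity structure $\hat{\mcb{T}}$, and then to show that the renormalised models $\hat\PPi_{\hat\eps}$ converge to $\iota(\PPi^{\BPHZ})$ using BPHZ convergence together with the observation that the ``hatted'' kernels $K^{[\hat\eps]}_{\hat\mft}$ converge to the unhatted ones as $\hat\eps\downarrow0$. First I would note that the canonical lift of the kernel assignment $K^{[\hat\eps]}$ and (mollified) noise assignment, which we may call $\PPi^{\mathrm{can}}_{\hat\eps}$, satisfies $\CE_{\hat\eps}(\PPi^{\mathrm{can}}_{0,\bar\eps}) = $ the canonical lift of $K^{[\hat\eps]}$ over the regularised noise, by the explicit formula~\eqref{eq:formula_for_extended_model}. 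One then defines $\hat\ell^{\hat\eps}_{\BPHZ}$ to be the BPHZ character for this canonical lift on $\hat{\mcb{T}}$, sets $\hat M_{\hat\eps} = M_{\hat\ell^{\hat\eps}_{\BPHZ}}$, and defines $\hat\PPi_{\hat\eps}$ as in~\eqref{eq:shifted_model} (after first taking the auxiliary mollification parameter of the underlying noise to zero, exactly as in the passage from $\PPi^{\BPHZ}_{\delta,\eps}$ to $\PPi^{\BPHZ}_{0,\eps}$ earlier in the excerpt, which is licit since the extra noise mollification is benign).

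The claim that $\hat M_{\hat\eps}$ leaves $\mcb{T}$ invariant, and that $\hat M_{\hat\eps}$ restricted to $\mcb{T}$ coincides with the BPHZ renormalisation for $\mcb{T}$, follows from the structure of $\hat R$: every tree $\tau\in\mfT_-\subset\hat{\mfT}_-$ has all its hatted edges incident to the root only if it has no hatted edges at all (since $\tau\in\mfT$), so the negative twisted antipode and coproduct computations for $\tau\in\mfT_-$ never leave $\mcb{T}$, giving $\hat\ell^{\hat\eps}_{\BPHZ}[\tau] = \ell^{\mathrm{can}}_{\BPHZ,\hat\eps}[\tau]$ which in turn converges to the value used to define $\PPi^{\BPHZ}$. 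Consequently $\pi_0(\hat\PPi_{\hat\eps}) = \PPi^{\BPHZ}$: restricting~\eqref{eq:shifted_model} to $\mcb{T}$, the operator $\CE_{\hat\eps}$ acts as the identity on $\mcb{T}$ (first bullet in the definition of $\CE_{\hat\eps}$), and $\hat M_{\hat\eps}$ restricted to $\mcb{T}$ is the BPHZ renormalisation whose composition with the canonical lift is by definition $\PPi^{\BPHZ}$.

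The convergence statement $\hat\PPi_{\hat\eps}\to\iota(\PPi^{\BPHZ})$ is then exactly a BPHZ-convergence statement for the family of kernel assignments $K^{[\hat\eps]}$ on $\hat{\mcb{T}}$: one checks that $K^{[\hat\eps]}_{\hat\mft} = K^{(0)}_{\mft}\ast\moll^{\hat\eps} \to K^{(0)}_{\mft}$ in the appropriate kernel (semi)norms at a polynomial rate in $\hat\eps$ (this is the standard estimate $\$K - K\ast\moll^{\hat\eps}\$_{\beta;m}\lesssim \hat\eps^{\varsigma}$ used repeatedly in the excerpt, e.g.\ in the proof of Lemma~\ref{lem:omegas_converge}), that the two copies of the noise driving the hatted and unhatted subtrees are the \emph{same} white noise (so one is in the setting of a single uniformly compatible Gaussian noise), and that the degree bookkeeping is unchanged since $\deg(\hat\mft)=\deg(\mft)$ and $\reg$ is extended in the obvious subcritical way. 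Then one invokes \cite[Thm.~2.15]{CH16} for uniform bounds and \cite[Thm.~2.31]{CH16} for convergence, exactly as in Lemma~\ref{lem:conv_of_models2}; the limiting model is the one built from $K^{(0)}$ on $\hat{\mcb{T}}$, which is precisely $\iota(\PPi^{\BPHZ})$ since $\iota = \PPi^{\BPHZ}\circ\hat\pi_0$ collapses hatted edges to unhatted ones and the limiting kernels agree. The main obstacle is bookkeeping rather than conceptual: one must verify carefully that the extra constraint defining $\hat{\mfT}$ (hatted edges incident to the root only) is compatible with the structure needed to run \cite{CH16}, in particular that it is stable under the relevant coproducts so that $\hat{\mcb{T}}$ is genuinely a regularity structure and that the enlarged rule $\hat R$ is subcritical and admits a suitable completion; once that is in place everything reduces to the cited black-box results and the elementary kernel estimate.
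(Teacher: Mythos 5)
There is a genuine gap in your construction of $\hat{M}_{\hat{\eps}}$. You take $\hat{M}_{\hat{\eps}}$ to be the \emph{full} BPHZ renormalisation map associated to the \emph{canonical} lift of the kernel assignment $K^{[\hat{\eps}]}$, so that, as you yourself note, its restriction to $\mcb{T}$ is the ordinary BPHZ renormalisation $M_{\ell_{\BPHZ}}$ of $\mcb{T}$. But in \eqref{eq:shifted_model} this map is composed with $\CE_{\hat{\eps}}(\PPi^{\BPHZ})$, whose restriction to $\mcb{T}$ is the \emph{already renormalised} model $\PPi^{\BPHZ}$, not the canonical lift. Your final step ("the BPHZ renormalisation whose composition with the canonical lift is by definition $\PPi^{\BPHZ}$") therefore proves the wrong identity: what you actually get is $\pi_0(\hat{\PPi}_{\hat{\eps}}) = \PPi^{\BPHZ}\circ M_{\ell_{\BPHZ}} \neq \PPi^{\BPHZ}$, i.e.\ the $\mcb{T}$-part is renormalised twice. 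Worse, the character you propose does not even survive the removal of the auxiliary noise mollification: on $\tau\in\mfT_-$ it equals the usual $\ell^{\eps}_{\BPHZ}[\tau]$, whose entries (e.g.\ the mass counterterms) diverge as the mollification is removed, so "the value used to define $\PPi^{\BPHZ}$" is not a finite number and $\hat{M}_{\hat{\eps}}$ is not well defined in the limit.

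The fix is to renormalise the \emph{extended, already-renormalised} model rather than the canonical one: mollify the noise to get smooth BPHZ models $\PPi^{\BPHZ}_{\eps}$, and let $\hat{M}_{\hat{\eps},\eps}$ be the BPHZ renormalisation of $\CE_{\hat{\eps}}(\PPi^{\BPHZ}_{\eps})$ itself. Because $\E\big[\CE_{\hat{\eps}}(\PPi^{\BPHZ}_{\eps})[\tilde{\tau}](0)\big] = \E\big[\PPi^{\BPHZ}_{\eps}[\tilde{\tau}](0)\big] = 0$ for every $\tilde{\tau}\in\mfT_-$, the resulting character \emph{vanishes identically on} $\mfT_-$ and is only nontrivial on the genuinely new trees containing hatted edges; this is simultaneously what gives $\pi_0(\hat{\PPi}_{\hat{\eps}}) = \PPi^{\BPHZ}$ and what makes the $\eps\downarrow 0$ limit of the character finite. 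Note also that this last limit is not "benign" as you claim: the character is built from \emph{pointwise} expectations $\E\big[\CE_{\hat{\eps}}(\PPi^{\BPHZ}_{\eps})[\tau](0)\big]$, whose convergence does not follow directly from convergence of the models (which only controls testing against rescaled test functions). One needs a separate argument, using stationarity of the models in the sense of \cite{BHZ19}, to rewrite these pointwise expectations as expectations tested against a fixed test function (with a vanishing first moment when $\tau$ carries a polynomial decoration) before convergence of the models can be invoked. The remainder of your argument — convergence of $\hat{\PPi}_{\hat{\eps}}$ to $\iota(\PPi^{\BPHZ})$ via stability of the BPHZ lift under convergence of the kernel assignment — is the right idea once the construction is repaired.
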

\begin{proof}
First note that the statement $\pi_0(\hat{\PPi}_{\hat{\eps}}) = \PPi^{\BPHZ}$ will follow from showing that  $M_{\hat{\eps}}$ leaves $\mcb{T}$ invariant thanks to the identity \eqref{eq:shifted_model}.

The general idea is to define $\hat{\PPi}_{\hat{\eps}}$ to be the  BPHZ renormalisation of the random model $\CE_{\hat{\eps}}(\PPi^{\BPHZ})$.
However, we can not directly BPHZ renormalise the random model $\CE_{\hat{\eps}}(\PPi^{\BPHZ})$ since $\CE_{\hat{\eps}}(\PPi^{\BPHZ})$ is not a smooth model (simply because $\PPi^{\BPHZ}$ is not a smooth model). 
We can suppose, by regularising the noise, that we have a collection of smooth random BPHZ models $\PPi_{\eps}^{\BPHZ}$ that converge to $\PPi^{\BPHZ}$  as $\eps \downarrow 0$ in every $L^{p}$. 
 
We then define $\hat{\PPi}_{\hat{\eps},\eps}$ to be the BPHZ renormalisation of $\CE_{\hat{\eps}}(\PPi^{\BPHZ}_\eps)$, and let $M_{\hat{\eps},\eps}$ be the corresponding element in renormalisation group that satisfies 
\[
\hat{\PPi}_{\hat{\eps},\eps} = \CE_{\hat{\eps}}(\PPi^{\BPHZ}_\eps) \circ \hat{M}_{\hat{\eps},\eps}\;.
\]
By stability of the BPHZ lift, $\hat{\PPi}_{\hat{\eps},\eps}$ converges to $\iota( \PPi^{\BPHZ})$
whichever way one takes $\eps,\hat{\eps} \downarrow 0$. 

Next, we argue that that $\hat{M}_{\hat{\eps},\eps}$ leaves $\mcb{T}$ invariant. 
Writing $\ell_{\hat{\eps},\eps}$ for the character that defines $\hat{M}_{\hat{\eps},\eps}$, 
it suffices to verify that $\tree \in \mfT_{-} \subset \mfT \Rightarrow \ell_{\hat{\eps},\eps}[\tree] = 0$.
Recall that $\ell_{\hat{\eps},\eps}$ is given by the composition of the negative twisted antipode with  $\E[\CE_{\hat{\eps}}(\PPi^{\BPHZ}_\eps)[\cdot](0)]$. 
The negative twisted antipode takes trees in $\tree \in \mfT_{-} $ into forests of trees in $\mfT$ which contain at least one tree $\tilde{\tree} \in \mfT_{-}$. 
Note that for $\tilde{\tree} \in \mfT_{-}$ we have $\E[\CE_{\hat{\eps}}(\PPi^{\BPHZ}_\eps)[\tilde{\tree}](0)] = \E[\PPi^{\BPHZ}_\eps[\tilde{\tree}](0)] = 0$ since $\PPi^{\BPHZ}_{\eps}$ is a BPHZ model in $\MM$.
Therefore, $\ell_{\hat{\eps},\eps}[\tree] = 0$.

What remains to be shown is that, for fixed $\hat{\eps} > 0$,  the limit $M_{\hat{\eps}}\eqdef  \lim_{\eps \downarrow 0} M_{\hat{\eps},\eps}  $ exists. 
Here, it suffices to show that for every $\tree \in \mfT$, $\E[\CE_{\hat{\eps}}(\PPi^{\BPHZ}_\eps)[\tree](0)]$ converges as $\eps \downarrow 0$.

Since the models $\CE_{\hat{\eps}}(\PPi^{\BPHZ}_\eps)$ themselves converge, we have the convergence of $\E\big[\CE_{\hat{\eps}}(\PPi^{\BPHZ}_\eps)[\tree](0)\big]$ for $\deg(\tree) > 0$ as $\eps \downarrow 0$. 
If $\tree \in \mfT_{-}$ then we may assume that $\tree \in \hat{\mfT}^{\even}_{-}$, where $\hat{\mfT}^{\even}_{-}$ consists of all $\tree \in \hat{\mfT}_{-}$ which have even parity in both space and noise, since otherwise the expectation vanishes. 

Now, if $\tree \in \hat{\mfT}^{\even}_{-}$ has a vanishing polynomial label then $\CE_{\hat{\eps}}(\PPi^{\BPHZ}_\eps)[\tree](\cdot)$ is space-time stationary, and so for any fixed smooth space-time test function $\phi$ integrating to $1$, we have
\begin{equ}\label{eq:stationary_symbol}
\E\big[\CE_{\hat{\eps}}(\PPi^{\BPHZ}_\eps)[\tree](0)\big]
=
\E\big[\CE_{\hat{\eps}}(\PPi^{\BPHZ}_\eps)[\tree](\phi)\big]
\end{equ}
Again, convergence of the random models $\CE_{\hat{\eps}}(\PPi^{\BPHZ}_\eps)[\tree]$ implies the convergence of the right-hand side above. 

Now suppose that $\tree \in \hat{\mfT}^{\even}_{-}$ has a non-vanishing polynomial label, then it must be the case that $\tree$ contains precisely one factor of $\mbX_{j}$ for some $j \in \{1,2,3\}$. 
Since $\CE_{\hat{\eps}}(\PPi^{\BPHZ}_\eps)$ is a ``stationary model'' in the sense of \cite[Def~6.17]{BHZ19} it follows that, for any $h \in \R^{d+1}$, one has
\[
\CE_{\hat{\eps}}(\PPi^{\BPHZ}_\eps)[\tree](\cdot + h)
\quad
\text{ and }
\quad
\CE_{\hat{\eps}}(\PPi^{\BPHZ}_\eps)[\tree](\cdot )
+
h_{j}
\CE_{\hat{\eps}}(\PPi^{\BPHZ}_\eps)[\bar{\tree}](\cdot )
\]
are equal in law, where $\bar{\tree}$ is obtained from $\tree$ by removing the factor $\mbX_{j}$. 
It follows that, for any fixed smooth space-time test function $\phi$ integrating to $1$ that satisfies $\int_{\R^{d+1}} z_{j} \phi(z) = 0$, one again has \eqref{eq:stationary_symbol}.
\end{proof}

We can now state the key construction in our argument for proving Lemma~\ref{lemma:injectivity_in_law}. 

\begin{lemma}\label{lemma:observable} 
Let $\tilde{\tau}$ be a stopping time with respect to the canonical filtration on $(\state \times \tilde{\mfG}^{0,\rho})^\sol$ dominated by the blow-up time $\tau^{\ast}$. 

There exists a measurable map 
$O^{\tilde{\tau}} \colon (\state \times \tilde{\mfG}^{0,\rho})^\sol \rightarrow \mcb{D}'((0,1) \times \T^{3},E)$ 
with the following property.
 
Fix any $(\mathring{C}_{\A},\mathring{C}_{\h}) \in L(\mfg,\mfg)^{2}$ and $(\bar{x},g_0) \in \state \times \mfG^{0,\rho}$ and write $(\bar{X},\bar{U},\bar{h}) =  \mathcal{A}_{\sig}^{\BPHZ}[\mathring{C}_{\A},\mathring{C}_{\h},\bar x,g_0]$.
Then 
\begin{equ}\label{eq:observable_injectivity}
O^{\tilde{\tau}}( \bar{X},\bar{U},\bar{h} )
=
 \big[ \mathring{C}_{\mfz} \bar{X} +  \mathring{C}_{\mfh} \bar{h} \big]1_{(0,\tilde{\tau})} + \sig \eta^{\tilde{\tau}} 
\end{equ}
for almost every $\xi \in {\Omega}^{\noise}$ where the random distribution $\eta^{\tilde{\tau}}$ is defined by setting, for any deterministic $\psi \in \mcb{D}( (0,1) \times \T^{3}, E)$, 
\[
\eta^{\tilde{\tau}}(\psi) = \int_{0}^{\tilde{\tau}}   \scal{ \bar{U} \psi, \mrd W(s)}\ \mrd s\;.
\]
Above, $W$ is the $L^2(\T^{3},E)$-cylindrical Wiener process associated to the noise $\xi$.
\end{lemma}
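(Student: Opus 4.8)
The plan is to construct the observable $O^{\tilde\tau}$ by building, out of the solution triple $(\bar X,\bar U,\bar h)$, a modelled distribution whose reconstruction realises the right-hand side of the abstract fixed point problem \eqref{e:fix-pt-multi-Xbar}, and then to subtract off the pieces of that right-hand side that we can already make sense of directly in terms of $(\bar X,\bar U,\bar h)$ \dash namely the polynomial terms $\CP \bar X\,\partial\CP\bar X$, $\bar X^3$, $\CG_\mfz$ applied to the various $\hat\CY\partial(\cdot)$ products, $\CH$, and $\CW$-type terms \dash leaving only $\mathring C_\mfz\bar X + \mathring C_\mfh\bar h$ plus the ``noise term'' $\tilde\CR(\CU\bXi)$ which by Lemma~\ref{lemma:noise_term} is exactly the It\^o integral $\sig\eta^{\tilde\tau}$. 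The point of the enlarged regularity structure $\hat{\mcb T}$ and the models $\hat\PPi_{\hat\eps}$ from Lemma~\ref{lem:shifted_model} is that the only genuinely ill-defined object, $\CU\bXi$, can be approximated: for $\hat\eps>0$ the corresponding regularised solution is an honest smooth-in-time adapted process, so one may apply the renormalised equation of \cite{BCCH21} to it, and by Lemma~\ref{lemma:noise_at_root_vanish} no spurious counterterms appear at the root, so passing $\hat\eps\downarrow0$ recovers precisely $\sig\eta^{\tilde\tau}$.

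Concretely, first I would fix the non-anticipative mollifier $\chi$ and, for each $\hat\eps\in(0,1]$, run the fixed point problem \eqref{e:fix-pt-HU}+\eqref{e:fix-pt-multi-Xbar} against the model $\hat\PPi_{\hat\eps}$ (with the corresponding regularised $\hat\eps$-inputs), obtaining local solutions $(\CY_{\hat\eps},\CU_{\hat\eps},\CH_{\hat\eps})$ on a time interval that, by Lemma~\ref{lem:fixedptpblmclose} and the convergence in Lemma~\ref{lem:shifted_model}, can be taken uniform in $\hat\eps$ (stopped at $\tilde\tau$, which is an $\mbF$-stopping time since $\chi$ and $K$ are non-anticipative). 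Set $(\bar X_{\hat\eps},\bar U_{\hat\eps},\bar h_{\hat\eps})=\CR_{\hat\eps}(\CY_{\hat\eps},\CU_{\hat\eps},\CH_{\hat\eps})$. Because $\hat\PPi_{\hat\eps}$ restricts to $\PPi^\BPHZ$ under $\pi_0$ and since $\hat M_{\hat\eps}$ leaves $\mcb T$ invariant, all the already-defined modelled distributions (polynomial-sector terms, $\CH$, $\CW$, the products with $\hat\CY$) reconstruct to the same distributions whether we use $\PPi^\BPHZ$ or $\hat\PPi_{\hat\eps}$; only the $\mcb I_\mfm(\CU\bXi)$ branch genuinely changes. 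So I would define
\begin{equ}
O^{\tilde\tau}(\bar X,\bar U,\bar h) \eqdef \lim_{\hat\eps\downarrow 0}\Big( (\partial_t-\Delta)\bar X_{\hat\eps} - \bar X_{\hat\eps}\partial\bar X_{\hat\eps} - \bar X_{\hat\eps}^3 - (\mathring C_{\YM}^{0,\cdot})\text{-terms} \Big)1_{(0,\tilde\tau)}
\end{equ}
modulo stochastic integrals; more precisely $O^{\tilde\tau}$ is $\bigl[\mathring C_\mfz\bar X+\mathring C_\mfh\bar h\bigr]1_{(0,\tilde\tau)}$ plus $\sig$ times the reconstruction of $\CU\bXi$, which by \eqref{e:omega123}, Remark~\ref{rmk:multi-coherent}, Proposition~\ref{prop:renorm_g_eqn} (giving the renormalised equation) and Lemma~\ref{lemma:noise_term} (identifying the reconstruction of $\CU\bXi$ with the It\^o integral) equals exactly \eqref{eq:observable_injectivity}. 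Measurability of $O^{\tilde\tau}$ as a map on $(\state\times\hat\mfG^{0,\rho})^\sol$ follows because each $\hat\eps$-approximation is a continuous function of the inputs (Lemma~\ref{lem:fixedptpblmclose}, continuity of reconstruction) and the limit is a.s.\ convergent; to make it genuinely a function of the abstract output rather than of the noise one invokes, exactly as in the $d=2$ setting of \cite{CCHS2d} and in \cite{StringManifold}, that the solution determines the driving model locally and hence the approximating sequence.

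Finally, Lemma~\ref{lemma:injectivity_in_law} follows by taking expectations. Choose a smooth test function $\psi_n$ on $(0,1)\times\T^3$ of the form $\psi_n(t,x)=n\phi(nt)\theta(x)$ with $\phi\ge0$ supported in $(0,1)$ integrating to $1$ and $\theta\in\CC^\infty(\T^3,E)$, so that $\psi_n$ concentrates at time $0^+$; then $O^{\tilde\tau}(\cdots)(\psi_n)$ tested against any fixed deterministic $\tilde\tau$-compatible stopping scheme converges, as $n\to\infty$, to $\langle \mathring C_\mfz\bar x+\mathring C_\mfh (\mrd g_0)g_0^{-1},\theta\rangle$ because $\bar X(t)\to\bar x$ in $\state$ and $\bar h(t)\to(\mrd g_0)g_0^{-1}$ as $t\downarrow0$ (by $\CS$-continuity at $t=0$, Lemma~\ref{lem:Sigmas_converg}, and the continuity of $\CR\CH$ at $t=0$ established in Appendix~\ref{app:evolving_rough_g}), while the It\^o integral term $\sig\eta^{\tilde\tau}(\psi_n)$ has mean zero for every $n$ and its variance $\sim\sig^2\int_0^{\tilde\tau}|\psi_n(s,\cdot)|^2\,\mrd s$ is controlled but does not interfere with the expectation. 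Defining $O_n\eqdef O^{\tilde\tau_n}(\cdot)(\psi_n)$ for a suitable sequence $\tilde\tau_n$ bounded away from $0$ then gives $\lim_n\E[O_n(\mathcal A_\sig^\BPHZ[\cdots])]=\mathring C_\mfz\bar x+\mathring C_\mfh(\mrd g_0)g_0^{-1}$ as claimed. The main obstacle I expect is the bookkeeping in the $\hat\eps\downarrow0$ limit: one must verify carefully that passing from $\hat\PPi_{\hat\eps}$ to $\iota(\PPi^\BPHZ)$ commutes with solving the fixed point problem up to the stopping time $\tilde\tau$, that the renormalised equation supplied by \cite{BCCH21} for the regularised problem really does have no extra counterterms on the noise branch (this is where Lemma~\ref{lemma:noise_at_root_vanish} and the non-anticipativeness of $\chi$ are essential), and that the subtraction of the ``already-defined'' terms is legitimate \dash i.e.\ that those terms reconstruct identically under $\PPi^\BPHZ$ and $\hat\PPi_{\hat\eps}$ \dash which requires tracking exactly which trees carry $\hat{\mft}$-edges. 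The analytic estimates themselves are routine given Lemma~\ref{lem:multiply-hatD}, Theorem~\ref{thm:integration} and Lemma~\ref{lem:Schauder-input}.
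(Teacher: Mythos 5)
Your overall strategy is the paper's: duplicate the kernel labels, use the shifted models $\hat{\PPi}_{\hat{\eps}}$ of Lemma~\ref{lem:shifted_model}, identify the reconstruction of $\CU\bXi$ with the It\^o integral via Lemma~\ref{lemma:noise_term}, and (for the corollary) concentrate test functions at $t=0^{+}$. But there is a genuine gap at the central step, namely how $O^{\tilde{\tau}}$ becomes a measurable map on the path space $(\state \times \hat{\mfG}^{0,\rho})^{\sol}$ rather than a functional of the noise. You propose to resolve this by invoking that ``the solution determines the driving model locally and hence the approximating sequence''; this is not established anywhere and is precisely the issue the construction is designed to circumvent. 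The paper's resolution is to augment the fixed point problem with a second set of components $\CX^{[\hat{\eps}]}$ obtained by integrating the \emph{same} nonlinearity against the $\hat{\eps}$-mollified kernels, and then to prove the identity \eqref{e:mollified_renormalised_nonlin}: because $\CE_{\hat{\eps}}(\PPi^{\BPHZ})$ is multiplicative on the relevant sector, $\tilde{\CR}\,\mathfrak{N}(\CY^{[\hat{\eps}]})$ equals the \emph{classical} pointwise expression $\bar{X}^{\hat{\eps}}\partial\bar{X}^{\hat{\eps}} + (\bar{X}^{\hat{\eps}})^{3} - (0^{\oplus 3}\oplus m^{2})\bar{X}^{\hat{\eps}}$ plus explicit deterministic linear counterterms $c_{\hat{\eps},1}\bar{X} + c_{\hat{\eps},2}\bar{X}^{\hat{\eps}} + c_{\hat{\eps},3}\bar{h}$ in the reconstructed fields. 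Since $\bar{X}^{\hat{\eps}}$ is a deterministic mollification of the path, this $N^{\hat{\eps}}(\bar{X},\bar{U},\bar{h})$ is defined on \emph{all} of $(\state \times \hat{\mfG}^{0,\rho})^{\sol}$, and $O^{\tilde{\tau}}$ is then the limit of $\bigl[\partial_{t}\bar{X}^{\hat{\eps}_{n}} - \Delta\bar{X}^{\hat{\eps}_{n}} - N^{\hat{\eps}_{n}}(\bar{X},\bar{U},\bar{h})\bigr]1_{(0,\tilde{\tau})}$ when it exists and $0$ otherwise. Without this identity your definition is either circular (you also write $O^{\tilde{\tau}}$ as $[\mathring{C}_{\mfz}\bar{X} + \mathring{C}_{\mfh}\bar{h}]1_{(0,\tilde{\tau})} + \sig\,\tilde{\CR}(\CU\bXi)$, which presupposes the very constants the observable is meant to extract) or noise-dependent.

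A second, related misreading: you state that the enlarged structure exists because ``the only genuinely ill-defined object, $\CU\bXi$, can be approximated'' and that under $\hat{\PPi}_{\hat{\eps}}$ only the $\mcb{I}_{\mfm}(\CU\bXi)$ branch changes. In the paper the noise branch is handled entirely by Lemmas~\ref{lemma:noise_term} and~\ref{lemma:noise_at_root_vanish}; the duplicated labels $\hat{\mft}$ instead regularise the \emph{outermost} kernel of the whole nonlinearity $\mathfrak{N}(\CY)$, so that its reconstruction becomes a product of genuine functions. Deriving the counterterms in \eqref{e:mollified_renormalised_nonlin} then requires the $\check{\mfz}$-component argument of \cite[Sec.~2.8.3]{BCCH21} together with the parity and grafting analysis behind Lemma~\ref{lem:renorm_linear_in_Y_and_h}; this is exactly the bookkeeping you flag as ``the main obstacle'' and defer, but it is where the substance of the proof lies. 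The final concentration argument and the epoch-by-epoch patching via Lemma~\ref{lem:patching} are as in the paper.
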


\begin{proof}
Without loss of generality, we set $\sig = 1$ and drop it from the notation. 
Heuristically we want to define $O(\bar{X},\bar{U},\bar{h})$ as
\begin{equ}
\text{``\ } 
\Big[ \partial_{t}\bar{X} - \Delta  \bar{X}  + (0^{\oplus 3} \oplus m^2) \bar{X}
- \bar{X}^{3}- \bar{X} \partial \bar{X}
 \Big]  \text{\ ''} \text{ on } (0, \tilde{\tau}) \times \T^{3}\;,
\end{equ}
where we are using the notation of \eqref{e:XdX-X3} for the fourth and fifth terms on the right-hand side.

The first three terms on the right-hand side are canonically well-defined but the fourth and fifth are not. 
However, we know how to construct, for $(\bar{X},\bar{U},\bar{h})$ in the statement of the lemma, the renormalised nonlinearities $\bar{X}^{3}$ and  $\bar{X} \partial \bar{X}$, and the crucial point is that the construction of these nonlinearities has no dependence on the choice of $\mathring{C}_{\A}$ and $\mathring{C}_{\h}$ because our renormalisation counter-terms do not depend on these constants.\footnote{For independence with respect to $\mathring{C}_{\A}$ our particular choice of the renormalisation character plays a role here, see Remark~\ref{rem:bare-m-renorm_2}.}

Let $\mathcal{X} = (\mathcal{X}_{\mft}: \mft \in \Lab_{+})$ be the limiting modelled distributions that solve the fixed point problems \eqref{e:fix-pt-HU} and \eqref{e:fix-pt-multi-Xbar} \dash in particular, 
\begin{equ}[e:component_reminder]
\CY = \mathcal{X}_{\mfa} + \mathcal{X}_{\mfm}\;,\quad
\CU = \mathcal{X}_{\mfu}\;,
\quad \text{ and }\quad
\CH = \mathcal{X}_{\mfh} + \mathcal{X}_{\mfh'}\;.
\end{equ}

We perform a trivial enlargement of our fixed point problems by including a second set of components which represent the $\hat{\eps}$-mollification of the first.

Note that, for $\hat{\eps} \ge 0$, one can pose and solve a fixed point problem for modelled distributions $\CX = (\CX_{\mft}: \mft \in \Lab_{+})$ and  $\CX^{[\hat{\eps}]} = (\CX^{[\hat{\eps}]}_{\hat{\mft}}: \mft \in \Lab_{+})$ of the form 
\begin{equs}[e:timeloc_fixpblm]
\mathcal{X}_{\mft} 
&= 
\mathcal{G}^{\vec{\omega}}_{\mft}
\big[
\bar{F}_{\mft} \big( (\mathcal{X}_{\mft}: \mft \in \Lab_{+}) \big) 
\big]\;,\\
\mathcal{X}^{[\hat{\eps}]}_{\hat{\mft}} 
&= 
\mathcal{G}^{[\hat{\eps}],\vec{\omega}}_{\hat{\mft}}
\big[
\bar{F}_{\mft} \big( ( \mathcal{X}_{\mft}: \mft \in \Lab_{+}) \big) 
\big]\;,
\end{equs}
where what we wrote for the system for $\CX$ is a shorthand for the system appearing in Lemma~\ref{lem:fixedptpblmclose}, and $\vec{\omega}$ represents the data regarding ill-defined products at $t=0$ that is needed to pose the fixed point problem.  

Then $\CG^{[\hat{\eps}],\vec{\omega}}_{\hat{\mft}}$ is an analogous shorthand for the abstract lift of the kernel $G \ast \moll^{\hat{\eps}}$ or $\nabla G \ast \moll^{\hat{\eps}}$ with rough part $K^{[\hat{\eps}]}_{\hat{\mft}}$ and augmented by the same data as the $\tilde{\mathcal{X}}$ \dash  note that the same collection of products ill-defined at $t=0$ appear in both equations above. 

Furthermore, for any $\hat{\eps} \ge 0$, the fixed point problem above, augmented by the $\eps$-inputs $\omega$ built from $\xi$ along with the model $\PPi^{\BPHZ}$, admits solutions in appropriate spaces of modelled distributions on $[0,\hat{\tau}) \times \T^{3}$ for some $\hat{\tau} > 0$.  
Henceforth we use the notation $\mathcal{X}$ and $\mathcal{X}^{[\hat{\eps}]}$ to refer to the solutions to these fixed point problems. 

We define $(\CY^{[\hat{\eps}]}, \CU^{[\hat{\eps}]},\CH^{[\hat{\eps}]})$ analogously to \eqref{e:component_reminder} but with the role of $\CX$ replaced by $\CX^{[\hat{\eps}]}$. 
We also set, for $\mathring{\CY} \in \{\CY, \CY^{[\hat{\eps}]}\}$, 
\[
\mathfrak{N}(\mathring{\CY})
\eqdef
\mathring{\CY} \partial \mathring{\CY} + \mathring{\CY}^3 -  (0^{\oplus 3} \oplus m^2) \mathring{\CY}\;.
\]
For any model $\hat{\PPi} \in \hat{\MM}$ which satisfies $\pi_{0}(\hat{\PPi}) = \PPi^{\BPHZ}$, we have $\mathcal{R} (\CY,\CU,\CH) = (\bar{X},\bar{U},\bar{h})$ on $[0,\hat{\tau}) \times \T^{3}$ since the equation for $\CX$ does not involve the trees of $\hat{\mfT} \setminus \mfT$,
and we also have the distributional identity
\[
 \big[ \mathring{C}_{\mfz} \bar{X} +  \mathring{C}_{\mfh} h \big] + 
\eta^{\hat{\tau}}
=
\partial_{t} \tilde{\CR} \CY - \Delta \tilde{\CR} \CY
 - \tilde{\CR} \mathfrak{N}(\CY) \quad \text{on} \quad (0,\hat{\tau}) \times \T^3  \;,
\] 
where again $\tilde{\CR}$ is the local reconstruction operator (which avoids $t=0$). 
To finish our proof we would need to construct $ \tilde{\CR} \mathfrak{N}(\CY)$ as a functional of $(\bar{X},\bar{U},\bar{h})$.  

If $\hat{\PPi} \in \hat{\MM}$ is of the form $\iota (\PPi)$ for $\PPi \in \MM$ then one has $\tilde{\CR} \circ \hat{\pi}_{0} = \tilde{\CR}$ and  $\hat{\pi}_{0} \CY^{[0]} = \CY$. 
In particular, we have  $\hat{\pi}
  \mathfrak{N}( \CY^{[0]} ) = 
\mathfrak{N}( \CY )$ and  $\tilde{\CR}    \mathfrak{N}( \CY^{[0]} )= \tilde{\CR} \mathfrak{N}( \CY )$. 
 
For each $\hat{\eps} > 0$, let  $\hat{\PPi}_{\hat{\eps}}$ be as given by \eqref{eq:shifted_model} in Lemma~\ref{lem:shifted_model}. 
We fix a subsequence $\hat{\eps}_{n} \downarrow 0$ such that the models $\hat{\PPi}_{\hat{\eps}_n}$ converge almost surely as $n \rightarrow \infty$. 

By the observations above and Lemma~\ref{lem:shifted_model} we have that, as distributions on $(0,\hat{\tau}) \times \T^{3}$,  
$\tilde{\CR} 
\mathfrak{N}(\CY^{[\hat{\eps}_n]})$ converges to $\tilde{\CR} 
\mathfrak{N}(\CY)$ almost surely as $n \rightarrow \infty$.

Moreover, we claim that there exist $c_{\hat{\eps},1}, c_{\hat{\eps},2} \in L(\mfg^3,\mfg^3)$ and $c_{\hat{\eps},3} \in L(\mfg^3,E)$ such that 
\begin{equs}
\tilde{\CR}  
\mathfrak{N}(\CY^{[\hat{\eps}]})
&=
\bar{X}^{\hat{\eps}}\partial \bar{X}^{\hat{\eps}} + (\bar{X}^{\hat{\eps}})^3  -  (0^{\oplus 3} \oplus m^2) \bar{X}^{\hat{\eps}}
+
\big[
c_{\hat{\eps},1} X
+
c_{\hat{\eps},2} \hat{X}^{\eps}
+
c_{\hat{\eps},3} h
\big] \\
{}&\eqdef 
N^{\hat{\eps}}(\bar{X},\bar{U},\bar{h})
\;,\label{e:mollified_renormalised_nonlin}
\end{equs}

In order to justify this claim one can argue as in  \cite[Sec.~2.8.3]{BCCH21} and include  $\mathfrak{N}(\CY^{[\hat{\eps}]})$ as yet another component of our system associated to some new label $\check{\mfz}$, we write $\check{F} = (\check{F}_{\mft} : \mft \in \hat{\mfL} \sqcup \{\check{\mfz}\})$ the non-linearity for this bigger system. 
Since the model $\CE_{\hat{\eps}}(\PPi^{\BPHZ})$ is a multiplicative model on the sector that $\mathfrak{N}(\CY^{[\hat{\eps}]})$ lives in and $\CE_{\hat{\eps}}(\PPi^{\BPHZ})$ agrees with $\hat{\PPi}_{\hat{\eps}}$ on this same sector, the renormalised nonlinearity that allows us to calculate the left hand side of \eqref{e:mollified_renormalised_nonlin} is 
\[
\check{F}_{\check{\mfz}}(\mathbf{A}) + 
\sum_{\tree \in \hat{\mfT}_{-}(R)}
(\ell_{\hat{\eps}}  \otimes \id_{E}) \bbUpsilon_{\check{\mfz}}[\tree](\mathbf{A})\;,
\]
where $\ell_{\hat{\eps}}$ is the character corresponding to the operator $\hat{M}_{\hat{\eps}}$ in Lemma~\ref{lem:shifted_model}. 
To argue that the second term above agrees with the functional form seen in the bracketed term appearing in the second line of \eqref{e:mollified_renormalised_nonlin}, one observes that the trees contributing to the above formula are obtained as graftings onto trees $\tilde{\tree}$ where $\tilde{\tree}$ is obtained from taking one of the trees in $\big\{ \theta(\hat{\tree}): \hat{\tree} \in \mfT^{\YMH} \setminus \{ \mcb{I}_{\bar{\mfl}}(\bone) \big\}$ and replacing all of its kernel edge $(\mft,p)$ incident to the root in $\theta(\hat{\tree})$  with their $\hat{\eps}$-regularised versions $(\hat{\mft},p)$ \dash here $\theta$ is the map that ``takes $(Y,U,h)$ trees to $(\bar{X},\bar{U},\bar{h})$ trees'' described on p.~\pageref{pageref:Theta}. 
Note that changing edges in this way does not change noise, spatial parity, or degree. 
One can then by hand, work through the recursive formula that defines $\bbUpsilon_{\check{\mfz}}$ and afterwards take advantage of the fact that $\bUpsilon^{\check{F}}_{\hat{\mft}} = \bUpsilon^{\check{F}}_{\mft} = \bUpsilon^{\bar{F}}_{\mft}$ for $\mft \in \{\mfm, \mfh, \mfh', \mfz\}$ to apply the same arguments as in the proof of Lemma~\ref{lem:renorm_linear_in_Y_and_h} to obtain the desired claim.

The key takeaway from \eqref{e:mollified_renormalised_nonlin} is that $N^{\hat{\eps}}(\bar{X},\bar{U},\bar{h})$ is well-defined on all of $(\state \times \tilde{\mfG}^{0,\rho})^{\sol}$.
Finally, we set $O^{\tilde{\tau}}(\bar{X},\bar{U},\bar{h})$ to be the limit of 
\begin{equ}\label{eq:observable_def}
\lim_{n \rightarrow \infty} \;
\big[ 
 \partial_{t} \bar{X}^{\hat{\eps}_{n}} - \Delta \bar{X}^{\hat{\eps}_{n}} 
 - N^{\hat{\eps}_n}(\bar{X},\bar{U},\bar{h}) 
 \big] 
 1_{(0,\tilde{\tau})}\;,
 \end{equ}
if this limit exists in $ \mcb{D}'((0,1) \times \T^{3},E)$, otherwise we set $O^{\tilde{\tau}}(\bar{X},\bar{U},\bar{h}) = 0$. 

Now, by Lemma~\ref{lem:patching}, $\mathcal{A}_{\sig}^{\BPHZ}[\mathring{C}_{\A},\mathring{C}_{\h},\bar x,g_0]$ can be obtained as the patching of the limits of local solutions obtained from Proposition~\ref{prop:SPDEs_conv_zero_state} .  
In particular, the argument above gives us \eqref{eq:observable_injectivity} almost everywhere on ${\Omega}^{\noise}$ if one replaces $\tilde{\tau}$ both in  \eqref{eq:observable_injectivity} and \eqref{eq:observable_def} with $\tau_{1} \wedge \tilde{\tau}$ where $\tau_{1} > 0$ is the $\mbF$-stopping time coming from Proposition~\ref{prop:SPDEs_conv_zero_state} as described in the patching construction of $(\bar{X},\bar{U},\bar{h})$. However, the same argument can again be applied on every epoch $[\tau_{j} \wedge \tilde{\tau} ,\tau_{j+1} \wedge \tilde{\tau}]$ and patching the observables obtained on each epoch together gives us the desired result.  
\end{proof}

\begin{proof}[of Lemma~\ref{lemma:injectivity_in_law}]
Define 
\begin{equ}\label{eq:tilde_tau_def}
\tilde{\tau} \eqdef \inf \Big\{ s > 0: \|(\bar{X}(s),\bar{U}(s),\bar{h}(s))\| > 1+4  \|(\bar{X}(0),\bar{U}(0),\bar{h}(0))\| \Big\}\;,
\end{equ}
where $\|(x,u,h)\|$ in~\eqref{eq:tilde_tau_def} is shorthand for $\Sigma(x) + |(u,h)|_{\tilde{\mfG}^{0,\rho}}$.

Given $\psi \in \CC^\infty(\T^3,E)$ and $Z \in  (\state \times \tilde{\mfG}^{0,\rho})^\sol$, we define 
\[
O_{n}(Z)(\psi) = O^{\tilde{\tau}}(Z)(\psi \tilde{\psi}_{n})\;,
\]
where we choose some $\tilde{\psi}_{n} \in \CC^\infty\big((1/n,2/n),\R\big)$ integrating to $1$.

The desired result then follows by using that $\eta^{\tilde{\tau}}(\psi \tilde{\psi}_{n})$ has expectation $0$
and that the $\mathring{C}_{\mfz} X +  \mathring{C}_{\mfh} h$ appearing on the right-hand side of \eqref{eq:observable_injectivity}
are actually functions of time with values in  $\CC^{\eta}(\T^3,E)$, are uniformly bounded over time and ${\Omega}^{\noise}$ by the definition of $\tilde{\tau}$, and converge almost surely to the desired deterministic values as one takes $t \downarrow 0$.
\end{proof}

\begin{remark}\label{rem:recover_noise}
For fixed $(\mathring{C}_{\A},\mathring{C}_{\h}) \in L(\mfg,\mfg)^{2}$ and $(X_0,g_0) \in \state \times \mfG^{0,\rho}$, Lemma~\ref{lemma:observable} allows us to recover $\eta^{\tilde{\tau}}$ directly from $ \mathcal{A}^{\BPHZ}[\mathring{C}_{\A},\mathring{C}_{\h},\bar x,g_0]$. 
In particular, $x \mapsto  \mathcal{A}^{\BPHZ}[\mathring{C}_{\A},\mathring{C}_{\h},\bar x, \id_{G}] = (X,U,h)$,
restricted to the $X$ component, is a local solution map to \eqref{eq:SPDE_for_A} (where $\mathring{C}_{\h}$ plays no role). 
By applying Lemma~\ref{lemma:observable}  with $\tilde{\tau} = \tau^{\ast}$, we have $\tilde\eta^{\tilde{\tau}} = \xi \restr_{(0, \tau^\star)}$  \dash this means that we can recover the noise $\xi$ directly from the solution of  \eqref{eq:SPDE_for_A} from positive times up to the blow-up time of $X$. 
\end{remark}

\begin{remark}\label{rem:injectivity_local}
Our proof of Lemma~\ref{lemma:observable}
can also be carried out for \textit{local} solutions to \eqref{eq:renorm_bar_a} and \eqref{eq:renorm_b}.
Indeed, one can define a space of stopped local solutions 
\[
(\state \times \tilde{\mfG}^{0,\rho})^{\lsol}
=
\big\{ (\tau,Z) \in (0,1] \times \CC([0,1],\state \times \tilde{\mfG}^{0,\rho}): Z_{s} = Z_{\tau} \text{ for all } s \ge \tau \big\}\;.
\]
Then Propositions~\ref{prop:SPDEs_conv_zero_space-time}+\ref{prop:SPDEs_conv_zero_state} give us, for each fixed $\sig \in \R$, a mapping
\begin{equ}[equ:constants_to_solution_map2]
 L(\mfg,\mfg)^{2}\times \state\times\mfG^{0,\rho} \ni (\mathring{C}_{\A},\mathring{C}_{\h},\bar x, g_0) \mapsto \mathcal{A}_{\sig}^{\BPHZ, \scriptscriptstyle{\lsol}}[\mathring{C}_{\A},\mathring{C}_{\h},\bar x, g_0] \in L^{0}_{\lsol} \;.
\end{equ}
Here $L^{0}_{\lsol}$ is the space of equivalence classes (modulo null sets) of measurable maps from ${\Omega}^{\noise}$ to $(\state \times \tilde{\mfG}^{0,\rho})^{\lsol}$.
Furthermore, writing $(\tau,Z) = \mathcal{A}_{\sig}^{\BPHZ, \scriptscriptstyle{\lsol}}[\mathring{C}_{\A},\mathring{C}_{\h},\bar x, g_0]$, we can enforce that $\tau$ is an $\mbF$-stopping time. 

One can then replace the role of $\tilde{\tau}$ with $\tau$ coming from $(\tau,Z) \in (\state \times \tilde{\mfG}^{0,\rho})^{\lsol}$ in  the argument of Lemma~\ref{lemma:observable}.
This gives a measurable map $O^{\lsol} \colon (\state \times \tilde{\mfG}^{0,\rho})^{\lsol} \rightarrow \mcb{D}'((0,1) \times \T^{3},E)$ such that, for any $(\mathring{C}_{\A},\mathring{C}_{\h}) \in L(\mfg,\mfg)^{2}$ and $(\bar{x},g_0) \in \state \times \mfG^{0,\rho}$, if we write $(\tau,\bar{X},\bar{U},\bar{h}) =  \mathcal{A}_{\sig}^{\BPHZ, \scriptscriptstyle{\lsol}}[\mathring{C}_{\A},\mathring{C}_{\h},\bar x, g_0]$,
then 
\begin{equ}\label{eq:observable_injectivity_local}
O^{\lsol}( \tau,\bar{X},\bar{U},\bar{h} )
=
 \big[ \mathring{C}_{\mfz} \bar{X} +  \mathring{C}_{\mfh} \bar{h} \big]1_{(0,\tau)} + \sig \int_{0}^{\tau}  \scal{ \bar{U} \psi, \mrd W(s)}\ \mrd s\;,
\end{equ}
where $W$ is the $E$-valued Wiener process associated to $\xi$. 
\end{remark}

\section{Symbolic index}

We collect in this appendix commonly used symbols of the article.

 \begin{center}
\renewcommand{\arraystretch}{1.1}
\begin{longtable}{lll}
\toprule
Symbol & Meaning & Page\\
\midrule
\endfirsthead
\toprule
Symbol & Meaning & Page\\
\midrule
\endhead
\bottomrule
\endfoot
\bottomrule
\endlastfoot
$\heatgr{\act}_{\alpha,\theta}$ & 
Extended norm on $\Omega\CD'$ & 
\pageref{def:heatgr}\\
$\$\act\$_{\eps}$ & $\eps$-dependent seminorm on models & \pageref{norm_eps_pageref}\\
$ \mathcal{A}^{\BPHZ}_{\act}(\act)$ &
BPHZ maximal solution map to SYM equation&
\pageref{SYM_solmap} \\
$ \mathcal{A}_{\sig}^{\BPHZ}[\act]$ &
BPHZ local solution map to gauge transformed system &
\pageref{equ:constants_to_solution_map} \\
$\mcb{A}$ &
Target space of jets of noise
and solution $\mcb{A}\eqdef \prod_{o\in\CE}W_o$ &
\pageref{mcb A}\\
$\CB^{\beta,\delta}$ &
Banach space for the lifted quadratic objects &
\pageref{def:B-beta-delta} \\
$\mathring{C}_{\A}$, $m^2$, $\mathring{C}_{\h}$ &
Parameters in unrenormalised equations &
\pageref{thm:local_exist},~\pageref{eq: B system} \\
$C^{\eps}_{\YM}$, $C^{\eps}_{\Higgs}$ &
BPHZ mass renormalisation maps for $(A,\Phi)$ &
\pageref{thm:local_exist},~\pageref{thm:local-exist-sigma} \\
$C_{\A}^{\eps}$, $C_{\Phi}^{\eps}$ &
Masses in BPHZ renormalised equations for $(A,\Phi)$ &
\pageref{eq:SPDE_for_A},~\pageref{e:SPDE-for-X} \\
$ C^{\delta,\eps}_{\YM}$, $C^{\delta,\eps}_{\Higgs}$ &
BPHZ mass renormalisation maps for $(\bar A,\bar\Phi)$ &
\pageref{prop:renorm_g_eqn} \\
$C^{\eps}_{\Gauge}$, $C^{\delta,\eps}_{\Gauge}$ & 
BPHZ  ``$h$-renormalisation'' in $B$ or $\bar A$ equation &
\pageref{prop:renorm_g_eqn} \\
$\check{C}$ &
The crucial map for obtaining gauge covariance &
\pageref{thm:gauge_covar} \\
$\hat\cD^{\gamma,\eta}$ &
A subspace of  $\cD^{\gamma,\eta}$ &
\pageref{def:Dhat-space} \\
$d_{\eps}(\act,\act)$ & $\eps$-dependent metric  on models  ($d_1$ is $d_\eps$ with $\eps=1$)& \pageref{d_eps_pageref}\\
$E$ & 
Shorthand for $E=\mfg^3\oplus \higgsvec$ & 
\pageref{def of E}\\
$\CE$ &
 The set of edge types $\CE = \Lab \times \N^{d+1}$ &
\pageref{def CE} \\
$\flow_t(X)$ &
Deterministic YMH flow without DeTurck term &
\pageref{def:flow_no_deturck} \\
$\ymhflow_t(X)$ & 
Deterministic YMH flow with DeTurck term & 
\pageref{def:ymhflow}\\
$\mbF$ &
Filtration $\mbF=(\mcF_t)_{t\geq 0}$ generated by the noise &
\pageref{pageref:mbF},\pageref{pageref:mbF2} \\
$F^\sol$ &
Functions into $\hat F\eqdef F\cup\{\skull\}$ with finite-time blow up &
\pageref{ref:Fsol} \\
$\mfG^\rho$ & 
The gauge group $\CC^\rho(\T^3,G)$ & 
\pageref{mfG page ref}\\
$\mfG^{0,\rho}$ & 
Closure of smooth functions in $\mfG^\rho$ & 
\pageref{mfG^0 page ref}\\
$\tilde{\mfG}^{\rho}$ & 
Space $\CC^\rho  \times \CC^{\rho - 1}$ ``of $(U,h)$'s''& 
\pageref{hatmfG^rho page ref}\\
$\tilde{\mfG}^{0,\rho}$ & 
Closure of smooth functions in $\tilde{\mfG}^\rho$ & 
\pageref{hat_mfG^0rho page ref}\\
$\CG_\mfz$ &  Integration operator   $\CG_\mfz \eqdef \mcb{K}_{\mfz} + R \mathcal{R}$ & \pageref{e:abs-fix-pt-X0}\\
$\init$ & 
``First half'' of the space for initial conditions & 
\pageref{def:init}\\
$\mcb{I}_{\mft}$ &
Integration operator for type $\mft$ on regularity structures &
\pageref{eq:integration_intertwining}\\
$K$ &
Truncation of the heat kernel $G$ &
\pageref{truncation of heat}\\
$K^{(\eps)}$ &
Kernel assignment for the gauge transformed system &
\pageref{Keps-assign}\\
$ \mcb{K}_{\mfz} $  & Integration operator for type $\mfz$ on modelled distributions & \pageref{e:abs-fix-pt-X0}\\
$\bar{\mcb{K}}$  &
Short-hand for $\mcb{K}_\mfm$, the abstract integration for $K^\eps$  &
\pageref{bar-mcbK} \\
 $\mcb{K}^\omega $ &
 Integration operator with an input distribution $\omega$  &
\pageref{lem:Schauder-input} \\
$L_G(V,V)$ &
$G$-invariant linear maps from a space $V$ to itself &
\pageref{eq:SPDE_for_A}\\
$\mathscr{M}_\eps$ &
The family of $K^{(\eps)}$-admissible models &
\pageref{model page ref} \\
$\CN$ & 
Quadratic mapping $X\mapsto \CP_t X\otimes \nabla \CP_t X$ & 
\pageref{eq:CN_def}\\
$\mfO^\infty$ &
Smooth quotient space &
\pageref{mfO infinity} \\
$\mfO$ &
Quotient space $\mfO\eqdef \state/{\sim}$ &
\pageref{def:orbits}\\
$\Omega^\rho$ & 
The Banach space $\Omega^1_{\gr\rho}\times \CC^{\rho-1}(\higgsvec)$ & 
\pageref{omega rho page ref}\\
$\CP$ &
$\CP_t X \eqdef e^{t\Delta}X$, or its lift as a modelled distribution&
\pageref{def:N-B},~\pageref{harmonic extension}\\
 $R$ &  The operator realising convolution with $G - K$ & \pageref{e:abs-fix-pt-X0}\\
$\tilde \CR$ &
``Local'' reconstruction operator away from $\{t=0\}$ &
\pageref{def:tildeR} \\
$\state$ & 
State space for stochastic YMH flow & 
\pageref{def:state}\\
$\Sigma$ & 
Metric (resp.\ extended metric) on $\state$ (resp.\ $\init$) & 
\pageref{e:def-Sigma}\\
$\Theta$ &
Metric on $\init=\init_{\eta,\beta,\delta}$ &
\pageref{def:init} \\
$W_\mft$ &
Target space assignment for type $\mft$ &
\pageref{W additive},~\pageref{e:system-target space assignment}
\end{longtable}
 \end{center}


\endappendix

\bibliographystyle{Martin}
\bibliography{./refs}


\end{document}